\documentclass{article}

\input{macro}

\usepackage[a4paper,top=3.2cm,bottom=2.7cm,left=4.5cm,right=4.5cm, bindingoffset=0mm]{geometry}
\usepackage{a4wide}
\usepackage{microtype}

\title{Pyramids and Extended Metric Measure Spaces}
\author{Nicola Gigli\footnote{SISSA, Trieste, Italy. E-mail: 
ngigli@sissa.it} \and  Kohei Suzuki\footnote{Department of Mathematical Sciences, Durham University, United Kingdom. E-mail: kohei.suzuki@durham.ac.uk} \and Simone Vincini\footnote{Faculty of Mathematics, University of Vienna, Vienna, Austria. E-mail: simone.vincini@univie.ac.at} }
\date{July 29, 2026}

\begin{document}

\maketitle
\begin{abstract} A pyramid is a generalisation of metric measure spaces (mm-spaces) introduced by M.~Gromov (Birkh\"auser 1999) to establish a geometric framework for measure-concentration problems. An extended metric measure space (emm-space) is another generalisation of mm-spaces introduced by Ambrosio--Gigli--Savar\'e (Invent.~Math.~2014) to extend Sobolev calculus and optimal transport theory to a broader extent. 

The goal of this paper is to forge a bridge between these two frameworks.  We prove that every pyramid has a representation by an emm-space through $1$-Lipschitz order. Furthermore, if pyramids are concentrated, this correspondence is unique up to isomorphism. 
This shows, for the first time, that all pyramids can be realised by concrete geometric spaces.

Based on this representation, we introduce a new notion, {\it a concentrated emm-space}, by the compactness of  the space of $1$-Lipschitz functions modulo constant shift.
 We then define the observable distance for concentrated emm-spaces and establish three equivalent characterisations of concentration in terms of pyramids, Lipschitz observables and the observable distance. Furthermore, by developing an fibration approach, we  show the $\Gamma$-$\limsup$ inequality of Cheeger energies as well as the stability of the log-Sobolev inequality and the Poincar\'e inequality under the weak convergence of pyramids associated with emm-spaces. 

Our results provide a new geometric approach 
for studying both the convergence of emm-spaces and concentration-of-measure phenomena across a broad class of infinite-dimensional models that have so far remained largely beyond the reach of existing geometric methods.
Many of the significant examples arise in probability theory, including the Wiener space, the configuration space, Gaussian fields such as massive Gaussian free fields, spatial white noise and massive bi-Laplacian fields. As an application,  the virtually infinite-dimensional Gaussian space introduced by Shioya (EMS Lecture Note 2016) can be represented by the abstract Wiener space.
\end{abstract}
\tableofcontents

\section{Introduction}

\subsection{Background}
\paragraph{Pyramids.} A {\it pyramid} is a generalisation of metric spaces endowed with measure (called metric measure space), which was introduced by M.~Gromov~\cite[Section~3${\frac{1}{2}}$]{Gro06} as an appropriate geometric framework for describing concentration-of-measure phenomena. We briefly recall relevant definitions.
A {\it metric measure space} (or {\it mm-space}, also called {\it Gromov's triple}) is a complete separable metric space $(\X,\sfd)$ equipped with a Borel probability measure $\mm$. Two mm-spaces $\X=(\X, \sfd_\X, \mm_\X)$ and $\Y=(\Y, \sfd_\Y, \mm_\Y)$ are said to be {\it mm-isomorphic} (or simply {\it isomorphic}) if there exists a measure-preserving map from $\X$ to~$\Y$ that is an isometry on the support~$\supp(\mm_\X)$. We say that {\it $\X$ is dominated by $\Y$ in the sense of $1$-Lipschitz order} and write 
$$\X \prec \Y$$ if there exists a measure-preserving map from $\Y$ to $\X$ that is $1$-Lipschitz on the support $\supp(\mm_\X)$. Let $\mathcal X$ denote the collection of isomorphism classes of mm-spaces.
The space $\mathcal X$ carries a natural complete metric $\square$, called the {\it box distance}. The idea of the box distance is to  compare two mm-spaces by choosing measure-preserving parametrisations of both spaces from the unit interval and then comparing the resulting pullback distance functions on the unit square (see~\eqref{e:Box} for the precise definition). 

In order to describe measure-concentration phenomena, Gromov introduced another weaker metric on $\mathcal X$, the {\it observable distance} $\sfd_{\sf conc}(\X, \Y)$ (see~\Cref{d: conc mm}), measuring the difference between observables (i.e., the spaces of \(1\)-Lipschitz functions) on $\X$ and $\Y$ by the Hausdorff distance induced by a distance metrising the convergence-in-probability (a.k.a.~Ky Fan distance). 
However, unlike the box distance, $\sfd_{\sf conc}$ is not a complete distance on $\mathcal X$. Thus, in order to study measure-concentration limits of mm-spaces, it would be more natural to investigate the whole completion $\overline{\mathcal X}^{\sfd_{\sf conc}}$ rather than $\mathcal X$ itself.  
The elements 
$$\overline{\mathcal X}^{\sfd_{\sf conc}} \setminus \mathcal X$$
added with the completion are, however, no longer associated with an actual metric or measure; they exist only as abstract points in the completion and there is, in principle, no intrinsic geometric way to describe these elements. These elements are called {\it ideal metric measure spaces} (\cite[Definition~7.3]{Sh16}).

Gromov then introduced the notion of pyramids, which are described as subsets in $\mathcal X$: a pyramid $\mathcal P$ is a subset of $\mathcal X$ satisfying the following properties:
\begin{enumerate}[label=\textnormal{(P\arabic*)}]
    \item $\mathcal P$ is nonempty and $\square$-closed;
    \item\label{itt: left closure} if $\X \in \mathcal P$ and $\Y \prec \X$, then $\Y \in \mathcal P$;
    \item\label{itt: filtered} if $\X,\Y \in \mathcal P$, then there exists $\Z \in \mathcal P$ such that $\X,\Y \prec \Z$.
\end{enumerate}
Condition~(P1) was added by Shioya~in~\cite{Sh16}, which was not imposed in Gromov's original definition.
We denote by $\Pi$ the set of all pyramids.
Endowed with the Painlev\'e--Kuratowski topology (a.k.a.~the weak topology), $\Pi$ is a compact topological space. Moreover, every mm-space $\X$ naturally determines a pyramid uniquely by
\[
\mathcal P_{\X}\coloneqq\{\Y\in\mathcal X:\Y\prec\X\},
\]
which we call {\it the pyramid associated with the mm-space~$\X$}.
Gromov further proved that this geometric representation by mm-spaces yields a topological embedding (i.e., a homeomorphism on the image)
\[
\mathcal P_\bullet: (\mathcal X, \tau_{\sfd_{\sf conc}}) \hookrightarrow (\Pi, \tau_\Pi)
\qquad
\X \mapsto \mathcal P_{\X}.
\]
In this sense, the space~$(\Pi, \tau_\Pi)$ of pyramids provides a compactification of the space~$(\mathcal X, \tau_{\sfd_{\sf conc}})$ of mm-spaces. Even more importantly, this construction extends to the completion of $\mathcal X$ with respect to~$\sfd_{\sf conc}$:
\[
\mathcal P_\bullet: \overline{\mathcal X}^{\sfd_{\sf conc}} \hookrightarrow \Pi
\qquad
\X \mapsto \mathcal P_{\X}.
\]
Therefore, the space of pyramids serves as a suitable ambient space in which one can discuss concentration and its limit objects. A pyramid  associated with $\X\in\overline{\mathcal X}^{\sfd_{\sf conc}}$ is called  a {\it concentrated pyramid}. Equivalently, a pyramid $\mathcal P$ is called {\it concentrated} if the family of the spaces $\{\mathcal L_1(\X)\}_{\X \in  \mathcal P}$ of $1$-Lipschitz functions modulo constant addition on~the support~$\supp(\mm_\X)$ is precompact with respect to the Gromov-Hausdorff topology, where $\mathcal L_1(\X) \subset \L^0(\X, \mm_\X)/\R$ is endowed with the subspace distance of the (quotient) Ky Fan distance in $\L^0(\X)/\R$ (which will be recalled in \Cref{t:EPM}).

Although pyramids admit an intrinsic description through \textup{(P1)}--\textup{(P3)}, studying them as geometric objects remains challenging, as these descriptions do not explicitly encode any geometric structure.  This naturally leads to a fundamental question: 
\begingroup
\tagsleft
\begin{align} \label{Q} \tag*{(Q)}
\text{do pyramids always have associated geometric representations?}
\end{align}
\endgroup
If one stays in {the category of mm-spaces}, this question is negative because there are pyramids that do not have any representation by mm-space $\X$ in the form $\mathcal P=\mathcal P_{\X}$: one simple example is the pyramid consisting of the whole $\mathcal X$. 
More significant examples will be presented in \Cref{s:AP}.
To answer the question, therefore, we need to extend the scope of {\it geometry}  beyond mm-spaces.

\paragraph{Extended metric measure spaces.} 
From a perspective complementary to that of pyramids, the theory of metric measure spaces has also been enlarged in order to develop theories of Sobolev spaces and optimal transport encompassing a wider range of applications. In this direction, one is led to the notion of an {\it extended metric measure space}. A quadruple $(\X,\tau,\sfd,\mm)$ is called an {\it Polish extended normalised metric measure space (emm-space)} if the following conditions are satisfied:
    \begin{itemize}
        \item the topological space $(\X, \tau)$ is Polish;
        \item the extended distance $\sfd:\X\times\X\to [0, +\infty]$ is complete and $\tau\times \tau$-lower semicontinuous;
        \item the topology~$\tau_\sfd$ induced by $\sfd$ is stronger than or equal to $\tau$;
        \item $\mm$ is a $\tau$-Borel probability measure.
    \end{itemize}
The fundamental difference  from  ordinary metric measure spaces is the decoupling between the topology $\tau$ and the distance~$\sfd$. In general, $\tau$ is {\it strictly weaker} than the topology $\tau_{\sfd}$ induced by $\sfd$; the latter is typically {\it non-separable}; and the measure $\mm$ is not  {\it $\tau_{\sfd}$-Borel}. Due to the decoupling,  many standard tools from the usual metric measure setting are not directly available.
This extension is  motivated by classical infinite-dimensional examples: infinite products of Gaussian spaces, or  abstract Wiener spaces (see \Cref{d:WS}), where the natural product topology (or Banach space topology) is decoupled from the Cameron--Martin distance. Another important class is given by configuration spaces, where the topology induced by the duality with compactly supported continuous functions (called {\it the vague topology}) is decoupled from the $\ell_2$-optimal matching distance (see Section~\ref{subsec:CF}).

\subsection{Overview of our main results and their cross-disciplinary impact}
Our representation theorem (\Cref{t:1}) shows that every pyramid $\mathcal P$ has a geometric representation $\mathcal P=\mathcal P_\X$ by an emm-space~$\X$, and this representation is unique if $\mathcal P$ is concentrated. This provides an affirmative answer to~\ref{Q} and  establishes a systematic bridge between pyramids and emm-spaces, allowing structural properties and numerical invariants of pyramids to be reformulated as geometric properties of emm-spaces. In the present paper, we illustrate this principle through concentration, a notion that had previously been defined only at the level of pyramids. We transfer it to the emm-setting, and we introduce a new concept, {\it concentrated emm-spaces} (\Cref{t:t2}). We then define the observable distance for concentrated emm-spaces and establish three equivalent characterisations of concentration in terms of pyramids, Lipschitz observables and the observable distance (\Cref{t:2} and \Cref{c:t2}). Furthermore, by establishing an fibration approach (\Cref{t:FBR}), we  show the $\Gamma$-$\limsup$ inequality of Cheeger energies (\Cref{t:SMC}) as well as the stability of the log-Sobolev inequality (resp.~the Poincar\'e inequality) under the weak topology~$\tau_{\Pi}$ of pyramids associated with emm-spaces~(\Cref{c:SLSP}). 
We finally exhibit a broad class of genuinely infinite-dimensional examples possessing this property, and we establish their convergence in the observable distance. 

A particularly important feature of this perspective is that it brings into the setting of metric-measure geometry many infinite-dimensional spaces that have long played a central role in probability theory. Classical examples include abstract Wiener spaces, Gaussian product spaces, path spaces carrying Wiener or bridge measures, spatial white noise, Gaussian free fields, and more general Gaussian random distributions (see Sections~\ref{ss:SA} and~\ref{ss:GFF}). Although these objects have been studied extensively through stochastic analysis, Gaussian measure theory, Dirichlet forms, and infinite-dimensional dynamics, their extended metric geometry has remained comparatively less visible. Our theorem places them naturally within the theory of pyramids and provides a geometric language in which their concentration, finite-dimensional approximations, and observable structure can be studied.

The scope of the representation theorem extends beyond concentration. Various properties of pyramids introduced in~\cite{Gro06} as well as numerical invariants of pyramids established in the literature ---including observable diameter and separation distances (\cite{OzawaShioya2015}), and optimal Poincaré and logarithmic Sobolev constants (\cite{esakiInvariantsGromovsPyramids2024}) --- may be interpreted through their emm-space representatives. The representation also makes it natural to study structural properties of pyramids, such as their behaviour under scaling and $\ell_p$-products, including scale invariance and $\ell_p$-idempotence (\cite{esakiInvariantsGromovsPyramids2024}) as well as the phase transition property of pyramids discovered in~\cite{Shi17}. In this way, abstract pyramid invariants and operations can acquire concrete geometric, analytic, and probabilistic interpretations on infinite-dimensional emm-spaces. 

Conversely, the passage from emm-spaces to pyramids opens a natural program for investigating how synthetic curvature lower bounds (curvature-dimension condition)  such as $\CD(K,N)$ and $\RCD(K,N)$, and metric curvature bounds, including Alexandrov lower-curvature and ${\rm CAT}(\kappa)$ upper-curvature conditions, are reflected at the level of pyramids. In particular, it would be interesting to determine whether these structures admit intrinsic pyramid formulations and how they behave under pyramid convergence.

We therefore expect this framework to provide a common platform on which the theories of pyramids, emm-spaces, and probability can be developed in tandem. Properties at the level of pyramids may acquire more transparent geometric realisations, while the rich collection of examples arising from Gaussian processes, random fields, stochastic dynamics, and statistical mechanics may furnish new and nontrivial objects for the theory of pyramids. In this way, the representation theorem not only connects two existing geometric frameworks, but also opens a route by which ideas with a long history in probability theory can enter and enrich the study of high- and infinite-dimensional metric measure geometry.

\subsection{Main results}
The first main result of this paper is to provide a geometric representation of arbitrary pyramids in terms of emm-spaces. Furthermore we prove that if pyramids are concentrated, the representation is unique up to {\it emm-isomorphism} in the sense of~\cite{SuYo25+}.

\paragraph{Pyramid representation.}
For two emm-spaces~$\X$ and $\Y$, we say that $\X$ and $\Y$ are {\it emm-isomorphic} if there exists a measure-preserving map $F: \X \to \Y$ and an~$\mm_\X$-measurable subset~$\tilde \X \subset \X$ such that  $$\mm_\X(\tilde \X)=1, \quad F|_{\tilde \X}: \tilde \X \to \Y \quad \text{is an isometry}.$$ 
Note that $F$ can be non surjective.
We write $\X \cong \Y$ if $\X$ and $\Y$ are emm-isomorphic, which is an equivalence relation (see \cite[Proposition 2.9]{SuYo25+}). We will discuss further details about emm-isomorphism in and after~\Cref{d:EMI}. 

We define $\mathfrak X$ as emm-isomorphism classes of emm-spaces, viz.,
$$\mathfrak X :=\{\text{all emm-spaces}\}/\cong.$$
We note that the class $\mathfrak X$ is a set (\Cref{p:set}).
We write $\Y \prec \X$ if $F$ is measure-preserving and $F|_{\tilde \X}: \tilde \X \to \Y$ is $1$-Lipschitz. 
We define 
$$\mathcal P_\X=\{\Y \in \mathcal X: \Y \prec \X\}.$$
In the generality of emm-spaces, we do not know whether~$\mathcal P_\X$ is $\square$-closed. We therefore consider the $\square$-closure~$\overline{\mathcal P_{\X}}^{\square}$
to discuss pyramids associated with an emm-space~$\X$. 
\begin{theorem}[Pyramid representation by emm-spaces] The following hold. \label{t:1} \
\begin{itemize}
    \item $($\Cref{t:IPT}$)$ For every pyramid $\mathcal P$, there exists an emm-space $\X$ such that 
    $$\mathcal P=\overline{\mathcal P_{\X}}^{\square}.$$

    \item $($\Cref{theorem: isomorphism from pyramid}$)$ If $\mathcal P$ is concentrated,  
    there exists a unique emm-space~$\X$ up to emm-isomorphism such that 
    $$\mathcal P=\mathcal P_{\X}=\overline{\mathcal P_\X}^\square.$$
    Namely, the correspondence $\X \mapsto \mathcal P_{\X}$ is one-to-one.
    \item $($\Cref{e:CEG}$)$ If $\mathcal P$ is not concentrated, the correspondence $\X \mapsto \overline{\mathcal P_{\X}}^{\square}$ is not one-to-one in general. Namely, there exist two emm-spaces $\X_1$ and $\X_2$ that are not emm-isomorphic, but $\overline{\mathcal P_{\X_1}}^{\square}=\overline{\mathcal P_{X_2}}^{\square}$. 
\end{itemize}    
\end{theorem}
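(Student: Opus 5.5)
For existence, the plan is to realise $\mathcal P$ as a limit object: build the emm-space from the pyramid itself as an inverse-limit-type construction.

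\textbf{Step 1: a countable directed chain.} Since $(\mathcal X,\square)$ is separable and $\mathcal P$ is $\square$-closed, $\mathcal P$ has a countable $\square$-dense subset $\{\Y_k\}$. Using the filtering property (P3) inductively, I choose mm-spaces $\X_1\prec\X_2\prec\cdots$ in $\mathcal P$ with $\Y_1,\dots,\Y_k\prec\X_k$. I then fix measure-preserving maps $p_{k}:\X_{k+1}\to\X_k$ that are $1$-Lipschitz on the supports.

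\textbf{Step 2: the limit emm-space.} I take the projective limit $\X:=\varprojlim \X_k\subset\prod_k \X_k$ with the product topology $\tau$, which is Polish. Kolmogorov's extension theorem, applied to the consistent measures via the maps $p_k$, gives a probability measure $\mm$ on $\X$. The extended distance is
\[\sfd(x,y):=\sup_k \sfd_{\X_k}(x_k,y_k),\]
which is nondecreasing in $k$ along the chain. It is $\tau\times\tau$-lower semicontinuous as a supremum of continuous functions, its topology is finer than $\tau$, and completeness follows since a $\sfd$-Cauchy sequence is Cauchy in each coordinate and the limit stays in the projective limit. Each projection $\X\to\X_k$ is measure preserving and $1$-Lipschitz, so $\Y_k\prec\X$ for all $k$, and hence $\mathcal P\subset\overline{\mathcal P_\X}^\square$.

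\textbf{Step 3: the reverse inclusion.} This is the main obstacle. Let $\Z\prec\X$ via a map $F$. I would approximate $F$ by maps factoring through the finite stages $\X_k$. Concretely, I would use a $\sfd_\Z$-dense sequence of $1$-Lipschitz functions $f_j$ on $\Z$. Each composition $f_j\circ F$ is $1$-Lipschitz for $\sfd$, and I would approximate it by $1$-Lipschitz functions of finitely many coordinates, namely
\[\inf_y\big(f_j\circ F(y)+\sfd_{\X_k}(x_k,y_k)\big),\]
which increase to $f_j\circ F$ by lower semicontinuity and compactness arguments. These approximations give spaces $\Z^{(k)}\prec\X_k\in\mathcal P$, and (P2) places $\Z^{(k)}$ in $\mathcal P$. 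Closedness (P1) then yields $\Z\in\mathcal P$, once $\square(\Z^{(k)},\Z)\to0$ is established. The convergence of the finite-dimensional approximations is the delicate part, and I would try to control it through Ky Fan distances of the approximating functions.

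\textbf{Uniqueness for concentrated $\mathcal P$.} Concentration makes $\{\mathcal L_1(\X_k)\}$ precompact, so the $\mathcal L_1(\X_k)$ increase to a compact limit isometric to $\mathcal L_1(\X)$ under the Ky Fan distance. This also gives $\square$-closedness of $\mathcal P_\X$. For two representatives $\X,\X'$, each $\X_k$ is dominated by $\X'$. A compactness argument on Lipschitz maps, with measure-preserving couplings taken as limits in the space of couplings, then produces $\X\prec\X'$ and $\X'\prec\X$. I would recover the distance from $\mathcal L_1$ via
\[\sfd(x,y)=\sup_{f\in\mathrm{Lip}_1}|f(x)-f(y)|\quad \text{a.e.},\]
and conclude that mutual domination with compact $\mathcal L_1$ forces an emm-isomorphism, arguing as in the mm-case (Shioya).

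\textbf{Non-uniqueness.} For the third item, take $\mathcal P=\mathcal X$. Both $\R^\infty$ with the product Gaussian measure and Cameron–Martin distance, and the product of countably many copies of a non-concentrated scaled space, should dominate a dense family of $\mathcal X$. These two spaces are clearly not emm-isomorphic, which gives the required counterexample.
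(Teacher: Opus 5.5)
Your Steps 1--2 match the paper: Shioya's chain $\X_1\prec\X_2\prec\cdots$ in $\mathcal P$ and its inverse limit. Step 3, however, relies on a convergence that is false, and the non-uniqueness example also fails.

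\textbf{Step 3.} You claim that $g_k(x)=\inf_y\bigl(h(y)+\sfd_{\X_k}(x_k,y_k)\bigr)$, with $h=f_j\circ F$, increases to $h$. That requires exchanging $\sup_k$ and $\inf_y$, which needs $\tau$-lower semicontinuity of $h$ together with compactness. Here $h$ is only measurable and $1$-Lipschitz on a full-measure set. The claim already fails for an mm-space. Take $\X=\varprojlim\R^k$ with $\ell_2$ and a product Gaussian measure with summable variances, and $h(x)=\min\{1,\inf_m|1-x_m|\}$. This $h$ is $1$-Lipschitz and $>0$ a.e. For every $x,k,\eta$, the cylinder $\{y:|y_{\le k}-x_{\le k}|<\eta,\ |y_{k+1}-1|<\eta\}$ has positive measure, so it meets any full-measure set. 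Hence $g_k(x)\le 2\eta$, so $g_k\equiv0$ for every $k$.

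The missing ingredient is \Cref{proposition: LAP}: every $f\in\Lip_1(\X,\mm)$ is an $\L^0$-limit of $f_n\circ\p_n$ with $f_n\in\Lip_1(\X_n,\mm_{\X_n})$. The paper proves it by first replacing $f$ with $\tau$-continuous $1$-Lipschitz functions (Luzin plus extension from compact sets). It then truncates coordinates in an adapted Kuratowski embedding, where pointwise convergence follows from $\tau$-continuity. Your inf-convolution can be repaired the same way: take the infimum only over a $\tau$-compact Luzin set $K$ on which $h$ is continuous, and the compactness argument then gives $g_k\uparrow h$ on $K$.

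You also need finitely many functions to reach $\square$-convergence. The paper first replaces $\Z$ by $\Z_1\prec\Z$ with $\Z_1\subset(\R^N,\ell_\infty)$ and $\square(\Z,\Z_1)\le\varepsilon$. It then uses $\sfd_{\sf P}\le\sfd_{\L^0}$ and $\square\le2\sfd_{\sf P}$.

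\textbf{Uniqueness.} Your uniqueness argument inherits the same gap. Identifying $\mathcal L_1(\X)$ with the closure of $\bigcup_k\p_k^*\mathcal L_1(\X_k)$ is exactly this density statement. In addition, the distance must be recovered in the box sense, i.e.\ $\leq_\square$ on $E\times E$ for a full-measure $E$. Agreement $\mm^{\otimes2}$-a.e.\ does not determine the emm-class (\Cref{example: choice of negligible sets}).

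\textbf{Non-uniqueness.} The example does not work. $\R^\infty$ with the standard product Gaussian and the Cameron--Martin distance $\ell_2$ does not represent $\mathcal X$. Every element of $\overline{\mathcal P_{\Gamma^\infty}}^\square$ satisfies the Poincar\'e inequality with constant $1$, since it passes to dominated spaces and to $\square$-limits (cf.\ \Cref{corollary: stability constants}). A two-point space satisfies no Poincar\'e inequality, because its Cheeger energy vanishes identically. So the Wiener space represents the proper Gaussian pyramid, not $\mathcal X$. Your second candidate is unspecified, and ``clearly not emm-isomorphic'' needs an actual invariant.

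The paper's example (\Cref{e:CEG}) stays inside the Gaussian pyramid. $\Gamma^\infty$ and $\Gamma^\infty\times(\R,|\cdot|,\gamma_{\sigma^2})$ with $\sigma^2=\sqrt2/2$ dominate each other: the projection gives one direction, and $x\mapsto((x_2,x_3,\dots),\sigma x_1)$ the other. Hence their pyramids coincide. Their Cheeger-Laplacian spectra, $\{0,1,2,\dots\}$ versus $\{m+\sqrt2\,n: m,n\in\N_0\}$, differ, and Cheeger energies are invariant under emm-isomorphism.
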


\paragraph{Concentrated emm-spaces.}
A natural question would be whether we can characterise exactly when~the pyramid $\mathcal P_\X$ associated with an emm-space~$\X$ becomes concentrated in terms of~observables (functions) on~$\X$. 
To answer this question, we study the space~$\Lip_1(\X, \mm)$ of $\mm$-equivalence classes of $1$-Lipschitz functions on~an emm-space~$\X$. We write $$\mathcal L_1(\X)=\Lip_1(\X, \mm)/\R,$$ where the quotient mods out two functions that coincide by a scalar addition. We stress that the topology~$\tau$ does not play a direct role in the definition and topology of $\mathcal L_1(\X)$.
Let $\L^0(\X)$ be the $\mm$-equivalence classes of measurable functions in~$\X$ endowed with the convergence-in-probability topology. We can regard $\mathcal L_1(\X)$ as a subspace in $\L^0(\X)/\R$, the quotient (by $\R$-addition) of $\L^0(\X)$. 
We endow $\mathcal L_1(\X)$ with the subset topology from $\L^0(\X)/\R$. 

In the following, we characterise the concentration property of the pyramid~$\mathcal P_\X$ by the compactness of~$\mathcal L_1(\X)$ and we define a new concept of {\it concentrated emm-spaces}. 
\begin{theorem}[\Cref{c:ECP}] \label{t:t2}Let $\X \in \mathfrak X$ be an emm-space. The following are equivalent$:$
\begin{itemize}
    \item $\mathcal L_1(\X)$ is compact$;$
    \item $\mathcal P={\overline{\mathcal P_\X}}^\square$ is concentrated. 
\end{itemize}
{\rm In this case, we call $\X$ {\it concentrated emm-space}, and denote by~$\mathfrak X_{\sf conc}$ the emm-isomorphism classes of cocentrated emm-spaces. By the second statement in~\Cref{t:1}, the map $$\mathcal P_{\bullet}: \mathfrak X_{\sf conc} \ni \X \mapsto \mathcal P_{\X} \in \Pi$$ is one-to-one. }
\end{theorem}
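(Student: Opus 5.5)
The plan is to prove both implications by comparing $\mathcal L_1(\X)$ with the family $\{\mathcal L_1(\Y)\}_{\Y\in\mathcal P}$, $\mathcal P=\overline{\mathcal P_\X}^\square$. Here $\mathcal P$ is concentrated precisely when this family is Gromov--Hausdorff precompact, which amounts to uniform total boundedness: for every $\varepsilon>0$ there is $N(\varepsilon)$ such that each $\mathcal L_1(\Y)$, $\Y\in\mathcal P$, is covered by $N(\varepsilon)$ Ky Fan balls of radius $\varepsilon$. The basic mechanism is pullback. If $\Y\prec\X$ via a measure-preserving $F$ that is $1$-Lipschitz on a full-measure set $\tilde\X$, then $f\mapsto f\circ F$ maps $\Lip_1(\Y)$ into $\Lip_1(\X,\mm)$; for this, extend $f\circ F|_{\tilde\X}$ by the McShane formula with respect to the extended distance, truncating infinite values if needed. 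Since $F_\#\mm_\X=\mm_\Y$, this map is a Ky Fan isometry and commutes with adding constants. Hence $\mathcal L_1(\Y)$ embeds isometrically into $\mathcal L_1(\X)$ for every $\Y\in\mathcal P_\X$.

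\textbf{Compact $\Rightarrow$ concentrated.} If $\mathcal L_1(\X)$ is compact, it is totally bounded. Every $\mathcal L_1(\Y)$ with $\Y\in\mathcal P_\X$ is isometric to a subset of it, so each is covered by $N(\varepsilon)$ balls of radius $2\varepsilon$. This gives uniform total boundedness over $\mathcal P_\X$. To pass to the $\square$-closure, I would use the standard fact from Shioya's theory that $\square(\Y_n,\Y)\to0$ implies $\mathcal L_1(\Y_n)\to\mathcal L_1(\Y)$ in the Gromov--Hausdorff sense, or at least Hausdorff-close embeddings under good parametrisations. Uniform total boundedness is stable under such limits, so $\mathcal P$ is concentrated.

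\textbf{Concentrated $\Rightarrow$ compact.} First I would show that $\mathcal L_1(\X)$ is closed in $\L^0/\R$. A Ky Fan limit of $1$-Lipschitz functions has an a.e.\ subsequence, and a $1$-Lipschitz representative is recovered on a full-measure set and then extended by McShane. It therefore suffices to prove total boundedness. Given finitely many $f_1,\dots,f_k\in\Lip_1(\X,\mm)$, I would form the finite-dimensional mm-space $\Y=(\R^k,\ell_\infty,\Phi_\#\mm)$ with $\Phi=(f_1,\dots,f_k)$. Since $\Phi$ is $1$-Lipschitz into $\ell_\infty$, we have $\Y\prec\X$, so $\Y\in\mathcal P_\X\subset\mathcal P$. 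Moreover $\mathcal L_1(\Y)$ contains the coordinate functions, whose pullbacks are the $f_i$, so the $f_i$ lie isometrically in $\mathcal L_1(\Y)$. If $\mathcal L_1(\X)$ were not totally bounded, there would be an $\varepsilon$-separated family of arbitrarily large finite cardinality. Placing such a family inside one $\mathcal L_1(\Y)$ contradicts the uniform bound $N(\varepsilon/2)$, and hence compactness follows.

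The main obstacle is the closure step in the first direction, namely passing from $\mathcal P_\X$ to its $\square$-closure. I would try to bypass it by noting that the set of $\Y\in\mathcal X$ whose $\mathcal L_1(\Y)$ is covered by $N(\varepsilon)$ balls of radius $2\varepsilon$ is $\square$-closed. This should follow from the $\square$-continuity of the Lipschitz-observable structure via $\varepsilon$-mm-isomorphisms given by Shioya's parametrisation lemma. The remaining technical points are that pullback to extended distances preserves the Lipschitz bound and that the relevant sets are measurable. Both are routine with the McShane extension applied on the full-measure set $\tilde\X$.
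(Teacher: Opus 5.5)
Your proof is correct, but it takes a different and more elementary route than the paper's proof (\Cref{thm: concentrated iff concentrated}), in both directions.

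\emph{Compact $\Rightarrow$ concentrated.} The paper first proves the stronger structural fact $\mathcal P_\X=\overline{\mathcal P_\X}^\square$ (\Cref{prop: the pyramid is closed}), using $\L^0$-compactness of $1$-Lipschitz maps into $\ell^\infty$ with tight laws. Only then does it embed each $\mathcal L_1(\Y)$ into $\mathcal L_1(\X)$. You embed over $\mathcal P_\X$ alone and reach the $\square$-closure through the $1$-Lipschitz continuity of $\mathcal L_1$ with respect to $\sfd_{\sf conc}\le\square$ (\Cref{t:EPM} and \eqref{e:BXD}). The whole family $\{\mathcal L_1(\Y)\}_{\Y\in\mathcal P}$ then lies in the Gromov--Hausdorff closure of a precompact set. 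The step you flag as the main obstacle is therefore fully settled by these recalled facts. Your fallback claim that ``covered by $N$ balls of radius $2\varepsilon$'' is a $\square$-closed condition would need closed balls or a slightly larger radius, but you do not need it.

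\emph{Concentrated $\Rightarrow$ compact.} The paper goes through the inverse-limit representation (\Cref{lemma: inverse system}), the cylinder-function approximation (\Cref{proposition: LAP}) and a capacity argument (\Cref{lemma: hausdorff convergence}). Your finite-dimensional factor $\Y=(\R^k,\ell_\infty,\Phi_\#\mm)\prec\X$ contains any prescribed finite family exactly, not approximately. Gromov's uniform capacity bound then applies directly, with no inverse limits or approximation. This is the same device the paper uses in \Cref{thm:equiv}.

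\emph{What the paper's route buys.} It yields information that is used elsewhere. The closedness $\mathcal P_\X=\overline{\mathcal P_\X}^\square$ is what makes $\X\mapsto\mathcal P_\X$ a map into $\Pi$ in the last sentence of the statement. It is also part of the uniqueness assertion of \Cref{t:1}, whose injectivity you, like the statement, take as given. The Hausdorff convergence $\sfp_n^*\mathcal L_1(\X_n)\to\mathcal L_1(\X)$ is reused in the proof of \Cref{t:Vt2}.

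A minor point: the McShane extension in your pullback step is unnecessary. Elements of $\Lip_1(\X,\mm_\X)$ only need to be measurable and $1$-Lipschitz on a full-measure set, which $f\circ F$ already is.
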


\paragraph{Observable distance.}
We can extend the observable distance to~$\mathfrak X_{\sf conc}$: for $\X, \Y \in \mathfrak X_{\sf conc}$, the \emph{observable distance} between $\X$ and $\Y$ is defined as 
\begin{equation}
    \sfd_{\sf conc}(\X, \Y)=\inf_{\iota_\X, \iota_\Y} \sfd^{\L^0}_{\sf H} (\iota_X^*\Lip_1(\X, \mm_\X), \iota_\Y^*\Lip_1(\Y, \mm_\Y)),
\end{equation}
where the infimum runs over all parameters~$\iota_\X, \iota_\Y$~for $\X$ and $\Y$ -- i.e., measure-preserving maps from the unit interval $(0,1)$ endowed with the Lebesgue measure~$\mathsf{Leb}$ to the mm-spaces~$\X$ and $\Y$--, $\iota_\X^*$ (resp.~$\iota_\Y^*$) denotes the pullback of functions in $\X$ (resp.~$\Y$) to $(0,1)$ and $\sfd_{\mathsf H}^{\L^0}$ denotes the Hausdorff distance in the metric space~
$$\Bigl(\L^0((0,1), \mathsf{Leb}), \sfd_{\L^0}\Bigr),$$ where $\sfd_{\L^0}$ is the Ky-Fan distance.
See \Cref{d:Obs} for more details. 
\begin{theorem}[\Cref{t:Vt2}] \label{t:2} The following hold: 
\begin{itemize}
\item $(\mathfrak X_{\sf conc}, \sfd_{\sf conc})$ is complete and separable;
\item $(\mathfrak X_{\sf conc}, \sfd_{\sf conc})$ is a completion of $(\mathcal X, \sfd_{\sf conc})$. The canonical isometric embedding of $\mathcal X$ into~$\mathfrak X_{\sf conc}$ for the completion is the trivial map given by identifying each mm-isomorphism class in~$\mathcal X$ with the associated emm-isomorphism class~in~$\mathfrak X_{\sf conc}$.
\item the map $\mathcal P_\bullet: (\mathfrak X_{\sf conc}, \tau_{\sfd_{\sf conc}}) \to (\Pi, \tau_{\Pi})$ is homeomorphic onto its image.
\end{itemize}
In particular, the following are equivalent for $(\X_n)_{n \in \N}\subset \mathfrak X_{\sf conc}$ and $\X \in \mathfrak X_{\sf conc}$$:$
\begin{enumerate}
\item $\X_n \xrightarrow{\sfd_{\sf conc}} \X$;
\item $\mathcal P_{\X_n} \xrightarrow{\tau_{\Pi}} \mathcal P_\X$.
\end{enumerate}
\end{theorem}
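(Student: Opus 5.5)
The plan is to reduce everything to Gromov's theory of concentrated pyramids, using \Cref{t:1} and \Cref{t:t2} as the dictionary. I would take Gromov--Shioya's result as the backbone: the completion $\overline{\mathcal X}^{\sfd_{\sf conc}}$ embeds homeomorphically into $\Pi$ via $\mathcal P_\bullet$, with image exactly the set $\Pi_{\sf conc}$ of concentrated pyramids. By \Cref{t:1} and \Cref{t:t2}, $\mathcal P_\bullet:\mathfrak X_{\sf conc}\to\Pi_{\sf conc}$ is a bijection.

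\textbf{Key step: comparing distances.} The main task is to show that $\sfd_{\sf conc}$ on $\mathfrak X_{\sf conc}$ coincides with the distance transported from $\overline{\mathcal X}^{\sfd_{\sf conc}}$ through this bijection. Equivalently, for $\X,\Y\in\mathfrak X_{\sf conc}$ and mm-spaces $\X_n,\Y_n$ with $\mathcal P_{\X_n}\to\mathcal P_\X$ and $\mathcal P_{\Y_n}\to\mathcal P_\Y$ weakly, I need
\[
\sfd_{\sf conc}(\X_n,\Y_n)\to\sfd_{\sf conc}(\X,\Y).
\]
By the triangle inequality, which I would verify directly from the Hausdorff formulation (composing parameters as in the mm case), it suffices to prove $\sfd_{\sf conc}(\X_n,\X)\to 0$ whenever $\mathcal P_{\X_n}\to\mathcal P_\X$ with $\X$ concentrated.

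For this I would build finite-dimensional approximations. Since $\mathcal L_1(\X)$ is compact, for each $\varepsilon>0$ choose an $\varepsilon$-net $f_1,\dots,f_k$ of $\mathcal L_1(\X)$. Set $\Phi=(f_1,\dots,f_k):\X\to(\R^k,\ell_\infty)$ and $\X^\varepsilon=(\R^k,\|\cdot\|_\infty,\Phi_\#\mm)$. Then $\X^\varepsilon\prec\X$, and $\sfd_{\sf conc}(\X^\varepsilon,\X)\le C\varepsilon$. The bound holds because every $1$-Lipschitz $g$ on $\X$ is close in Ky Fan distance, modulo constants, to some $f_i=f_i\circ\Phi$, and the pullbacks from $\X^\varepsilon$ are contained in those from $\X$. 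Combining this with the mm-space results, $\X^\varepsilon\in\mathcal P_\X$, and the weak convergence together with the concentration of $\mathcal P_\X$ gives $\sfd_{\sf conc}$-approximations of $\X$ by the $\X_n$, following Shioya's argument in \cite{Sh16} verbatim once $\X$ is replaced by the net $\X^\varepsilon$.

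\textbf{Consequences.} Once the distances agree:
\begin{itemize}
\item[(i)] Completeness and separability transfer from $\overline{\mathcal X}^{\sfd_{\sf conc}}$, which is separable because $\mathcal X$ is $\square$-separable and $\sfd_{\sf conc}\le\square$.
\item[(ii)] Density of $\mathcal X$ follows from the $\X^\varepsilon$ construction. For the canonical embedding, an mm-space viewed as an emm-space has the same $\Lip_1$ and the same parameters, so the two distances agree trivially.
\item[(iii)] $\mathcal P_\bullet$ is the composition of an isometry with Gromov's embedding, hence a homeomorphism onto its image.
\end{itemize}
The equivalence of (1) and (2) is then immediate.

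\textbf{Expected obstacle.} The main difficulty is the parametrisation of emm-spaces: measure-preserving maps $(0,1)\to\X$ exist because $\tau$ is Polish. However, $\Lip_1$ is defined through $\sfd$, which is only $\tau$-lower semicontinuous, so measurability of the pullbacks, and the approximation of $\Lip_1(\X)$ by $\tau$-continuous or $\sfd$-Lipschitz functions, have to be checked. If this proves delicate, I would first try to handle it by exhausting $\Lip_1$ through $\tau$-lower semicontinuous functions $\inf_i(g_i+\sfd(\cdot,x_i))$.
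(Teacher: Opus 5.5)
\textbf{Gap in the key step.} Your architecture matches the paper's. $\mathcal P_\bullet$ is a bijection from $\mathfrak X_{\sf conc}$ onto the concentrated pyramids, which gives a bijection $\varphi:\mathfrak X_{\sf conc}\to\overline{\mathcal X}^{\sfd_{\sf conc}}$ restricting to the identity on $\mathcal X$. Everything then reduces to showing that $\varphi$ is an isometry, using mm-approximants and the triangle inequality. Your bound $\sfd_{\sf conc}(\X^\varepsilon,\X)\le 2\varepsilon$ for the net factor is also correct; it is essentially the remark following \Cref{thm:equiv}.

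The gap is the sentence delegating the rest to ``Shioya's argument verbatim''. This is exactly where the intrinsic quantity $\sfd_{\sf conc}(\cdot,\X)$ must be tied to the abstract point $\varphi(\X)$, and \cite{Sh16} cannot do this for you. Its results concern mm-spaces and points of the completion. They apply to $\X^\varepsilon$ only once you know that $\X^\varepsilon\to\varphi(\X)$ in $\overline{\mathcal X}^{\sfd_{\sf conc}}$, i.e.\ that $\mathcal P_{\X^\varepsilon}\to\mathcal P_\X$ in $\tau_\Pi$. You only record $\X^\varepsilon\in\mathcal P_\X$, which gives no convergence.

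Moreover, your reduction asks for $\sfd_{\sf conc}(\X_n,\X)\to0$ for an arbitrary sequence $\X_n$ asymptotic to $\mathcal P_\X$. There the natural attempt is to choose $\Z_n\prec\X_n$ with $\Z_n\xrightarrow{\square}\X^\varepsilon$ and bound $\sfd_{\sf conc}(\Z_n,\X_n)$. This does not close by soft arguments. You would need to know that $\mathcal L_1(\X_n)$ Gromov--Hausdorff converges to $\mathcal L_1(\X)$, plus a quantitative version of the rigidity in \Cref{lemma: Hausdorff convergence via GH}, to conclude that the functions pulled back from $\Z_n$ are Hausdorff-close to all of $\mathcal L_1(\X_n)$.

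\textbf{How to repair it.} One approximating sequence suffices. The completion distance is continuous, so $\sfd_{\sf conc}(\varphi(\X),\varphi(\Y))=\lim_j\sfd_{\sf conc}(\X^{\varepsilon_j},\Y^{\varepsilon_j})$ as soon as $\mathcal P_{\X^{\varepsilon_j}}\to\mathcal P_\X$ and $\mathcal P_{\Y^{\varepsilon_j}}\to\mathcal P_\Y$. Your estimate together with the triangle inequality then identifies this limit with $\sfd_{\sf conc}(\X,\Y)$; the clean way to get the triangle inequality is by gluing couplings, as the paper does.

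It remains to prove the pyramid convergence.
\begin{itemize}
\item The upper inclusion holds because $\mathcal P_{\X^\varepsilon}\subset\mathcal P_\X$ and $\mathcal P_\X$ is $\square$-closed (\Cref{prop: the pyramid is closed}).
\item For the lower inclusion, rerun the proof of \Cref{teo: piramidi - enmms}(a) with the net in place of the inverse system. Approximate $\Z\prec\X$ by $\Z_1\subset(\R^N,\ell_\infty)$ with coordinates $g_1,\dots,g_N\in\Lip_1(\X,\mm)$. Replace each $g_l$ by $f_{i_l}+c_l=(\mathrm{pr}_{i_l}+c_l)\circ\Phi$, within Ky Fan distance $2\varepsilon$. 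Then \eqref{in:LP} and \eqref{in:BP} give $\Z_2\prec\X^\varepsilon$ with $\square(\Z_1,\Z_2)\le 4N\varepsilon$.
\end{itemize}

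\textbf{Comparison with the paper.} Once completed this way, your route is a genuine alternative. The paper approximates by an inverse system $\X=\varprojlim\X_n$. There the pyramid convergence is free (\Cref{teo: piramidi - enmms}), and the work goes into showing $\sfd_{\sf conc}(\X,\X_n)\le\sfd^{\L^0/\mathbb R}_{\sf H}(\mathcal L_1(\X),\sfp_n^*\mathcal L_1(\X_n))\to0$, which needs \Cref{lemma: hausdorff convergence}. Your nets make the distance estimate trivial and move the work to the pyramid convergence, which is elementary.
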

As a corollary we have the following equivalent characterisations of concentrated emm-spaces:
\begin{corollary}[\Cref{c:ECE}] \label{c:t2}Let $\X \in \mathfrak X$ be an emm-space. The following are equivalent$:$
\begin{itemize}
    \item $\mathcal L_1(\X)$ is compact$;$
    \item  $\X \in \overline{\mathcal X}^{\sfd_{\sf conc}}$$;$
    \item $\mathcal P={\overline{\mathcal P_\X}}^\square$ is concentrated. 
\end{itemize}
\end{corollary}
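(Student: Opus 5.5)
The plan is to derive the corollary by combining \Cref{t:t2} with \Cref{t:2}. The equivalence between compactness of $\mathcal L_1(\X)$ and concentration of $\overline{\mathcal P_\X}^\square$ is exactly \Cref{t:t2}. It therefore remains to insert the middle condition, $\X \in \overline{\mathcal X}^{\sfd_{\sf conc}}$. Here the completion is identified, via the second item of \Cref{t:2}, with $(\mathfrak X_{\sf conc},\sfd_{\sf conc})$, where $\mathcal X$ sits inside by the trivial embedding. Under this identification, the statement $\X\in\overline{\mathcal X}^{\sfd_{\sf conc}}$ is to be read as saying that the emm-isomorphism class of $\X$ belongs to $\mathfrak X_{\sf conc}$.

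For compactness $\Rightarrow$ membership in the completion, I use the first item of \Cref{t:t2}: if $\mathcal L_1(\X)$ is compact, then $\X$ is by definition a concentrated emm-space, i.e. $\X\in\mathfrak X_{\sf conc}$. By the second item of \Cref{t:2}, $\mathfrak X_{\sf conc}$ is the completion of $\mathcal X$ and $\mathcal X$ is dense in it. Hence there are mm-spaces $\X_n\in\mathcal X$ with $\sfd_{\sf conc}(\X_n,\X)\to 0$, so $\X\in\overline{\mathcal X}^{\sfd_{\sf conc}}$.

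For the converse, suppose $\X$ is the $\sfd_{\sf conc}$-limit of mm-spaces $\X_n$, i.e. it represents a point of the completion. The point to be careful about is that $\sfd_{\sf conc}$ has only been defined on $\mathfrak X_{\sf conc}$, so this hypothesis already places $\X$ in $\mathfrak X_{\sf conc}$ and $\mathcal L_1(\X)$ is compact by definition. This would make the implication tautological. To avoid relying on that, I would also argue through pyramids. By the third item of \Cref{t:2}, together with Gromov's embedding of the completion into $\Pi$, a Cauchy sequence $(\X_n)$ in $\mathcal X$ has pyramids $\mathcal P_{\X_n}$ converging in $\tau_\Pi$ to a concentrated pyramid $\mathcal P$. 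By the uniqueness part of \Cref{t:1}, $\mathcal P$ is represented by a unique emm-space up to isomorphism. Since $\X$ is the limit, that representative is $\X$, so $\overline{\mathcal P_\X}^\square=\mathcal P$ is concentrated. Then \Cref{t:t2} gives compactness of $\mathcal L_1(\X)$, which closes the cycle of implications.

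The main obstacle is this converse step, specifically making precise what it means for an arbitrary emm-space to lie in the completion. I will treat it by identifying completion points with concentrated pyramids using the injectivity of $\mathcal P_\bullet$ from \Cref{t:1} and the homeomorphism onto its image from \Cref{t:2}. I must check that the limit pyramid coincides with $\overline{\mathcal P_\X}^\square$ rather than merely being contained in it. For this I would rely on the fact that, for concentrated pyramids, $\mathcal P_\X$ is already $\square$-closed, as stated in \Cref{t:1}.
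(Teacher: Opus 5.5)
Your proposal is correct and follows essentially the paper's proof, which combines \Cref{c:ECP} (your use of \Cref{t:t2}), the identification of $\mathfrak X_{\sf conc}$ with $\overline{\mathcal X}^{\sfd_{\sf conc}}$ from \Cref{t:Vt2}, and the first item of \Cref{t:EPM}; your pyramid detour re-derives that last item. One small correction to the detour: the limit pyramid equals $\mathcal P_\X$ because the bijection built in the proof of \Cref{t:Vt2} is defined to intertwine the two maps $\mathcal P_\bullet$, not because $\mathcal P_\X$ is $\square$-closed, since closedness only gives $\mathcal P_\X=\overline{\mathcal P_\X}^\square$.
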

\Cref{c:t2} should be compared to \cite[Theorem~7.25 and Proposition 7.29]{Sh16} (which will be recalled in \Cref{t:EPM}). 
Our novelty here is that we explicitly realise $\mathcal L_1(\X)$ and $P=\overline{\mathcal P_\X}^{\square}$ as $1$-Lipschitz observables and pyramids associated with an emm-space~$\X$.
On the contrary, in \cite{Sh16} both of these objects are obtained via the contraction properties of the maps~$\mathcal  L_1: \mathcal X \to \mathcal H$ and $\mathcal P_\bullet: \mathcal X \to \Pi$ (where $\mathcal H$ is the isometry classes of compact metric spaces) and associated to the abstract elements in the completion.
In particular, the $\mathcal L_1$ spaces defined in \cite[Chapter 7]{Sh16} are just elements of $\mathcal H$, while ours are spaces of (equivalence classes of) functions.

\paragraph{Ideal mm-spaces.}
Thanks to \Cref{t:1}--\ref{t:2}, ideal mm-spaces
$$\partial \mathfrak X_{\sf conc}:=\mathfrak X_{\sf conc} \setminus \mathcal X=\overline{\mathcal X}^{\sfd_{\sf conc}}\setminus \mathcal X$$ are no longer just elements in the abstract completion, rather they are geometrically structured as concentrated emm-spaces. 
Some of the most fundamental questions would be whether $\partial \mathfrak X_{\sf conc}$ is non-empty, and if so, what geometric properties can be possessed by ideal mm-spaces. 
\begin{theorem} \label{t:3}
    The following hold: 
\begin{itemize}
\item $\partial \mathfrak X_{\sf conc}$ is not empty. 
Namely, there is an (indeed many) emm-space~$\X$ such that $\mathcal L_1(\X)$ is compact, but $\X$ is not emm-isomorphic to any mm-space. 
    \item $($\Cref{p:FDP}$)$ If $\X \in \partial \mathfrak X_{\sf conc}$, then there exists no set~$\Z \subset \X$ with $\mm(\Z)=1$ such that $(\Z, \tau_\sfd)$ is separable. 
\end{itemize}
\end{theorem}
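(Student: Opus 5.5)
For the first item, I plan to exhibit an explicit ideal space and rely on \Cref{t:t2}/\Cref{c:t2}. My candidate is the infinite-dimensional Gaussian space, or equivalently the abstract Wiener space $(\mathsf B,\mathsf H,\gamma)$ with the Cameron--Martin extended distance. Shioya's virtually infinite-dimensional Gaussian space is known to be concentrated. By the first item of \Cref{t:1}, together with the identification announced in the abstract, its pyramid is $\overline{\mathcal P_{\X}}^\square$ for $\X$ the abstract Wiener space. The argument then has three steps.

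First, I show that $\mathcal L_1(\X)$ is compact. A $1$-Lipschitz function for the Cameron--Martin distance satisfies Gaussian concentration: $\gamma(|f-\mathrm{med} f|>r)\le 2e^{-r^2/2}$. This gives uniform tightness of $\mathcal L_1(\X)$ in $\L^0/\R$. For total boundedness, I approximate each $f$ in probability by its conditional expectation onto the first $n$ coordinates. That conditional expectation is $1$-Lipschitz on $\R^n$, and the error $\|f-E_nf\|$ tends to $0$ uniformly in $f$. I expect to get this uniformity from the Poincar\'e inequality applied in the remaining coordinates, which is the main technical point. The finite-dimensional families are compact by Arzel\`a--Ascoli plus tightness.

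Second, I show that $\X$ is not emm-isomorphic to any mm-space. This follows from the second item of the present theorem: an mm-space has a full-measure $\tau_\sfd$-separable set. So it suffices to show that no $\gamma$-full set is Cameron--Martin separable. I argue this directly. A separable subset is covered by countably many $\mathsf H$-balls $x_k+B_{\mathsf H}(r)$, and each such ball is $\gamma$-null because $\gamma(\mathsf H)=0$ in infinite dimensions. Hence any separable set is null.

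For the second item, I take $\X\in\mathfrak X_{\sf conc}$ and suppose that $\Z$ has full measure and is $\tau_\sfd$-separable. I then aim to construct an mm-space emm-isomorphic to $\X$. Let $\bar\Z$ be the $\sfd$-completion of $\Z$; it is Polish on each finite-distance component. Since $\mathcal L_1(\X)$ is compact, the measure must live essentially on one component (a function like $\min(\sfd(\cdot,z),c)$ between components would otherwise break compactness unless the component masses degenerate). The main obstacle is that $\mm$ is only $\tau$-Borel, not $\sfd$-Borel. I will handle it by using the $\tau\times\tau$ lower semicontinuity of $\sfd$: then the functions $\sfd(\cdot,z_k)$ are $\tau$-Borel, so the $\sfd$-Borel sets of $\Z$ are generated by $\tau$-measurable sets and $\mm|_\Z$ is a Borel measure on $(\bar\Z,\sfd)$. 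It remains to handle infinite distances. Compactness of $\mathcal L_1$ should force $\sfd<\infty$ $\mm\otimes\mm$-a.e., since otherwise truncated distances give a non-precompact family. With that, $(\bar\Z,\sfd,\mm)$ is an mm-space, and the inclusion is an emm-isomorphism. This contradicts $\X\in\partial\mathfrak X_{\sf conc}$.
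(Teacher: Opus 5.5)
\textbf{Gap: your candidate for the first item is not concentrated.} The abstract Wiener space is emm-isomorphic to $(\R^\infty,\tau^\infty,\ell_2,\gamma_1^{\otimes\infty})$. There the coordinates $x_k$ are $1$-Lipschitz for $\ell_2$ and i.i.d.\ standard Gaussians. By \Cref{lemma: no convergence iid}, no subsequence of $(x_k)$ converges in $\L^0/\R$, so $\mathcal L_1(\mathbb W)$ is not compact. The paper states this explicitly in \Cref{corollary: ideality gaussian}: the space is concentrated iff $\sigma_n\to 0$, and $\mathbb W$ is not concentrated. By \Cref{c:ECP}, Shioya's virtual Gaussian pyramid $\mathcal P_{\Gamma^\infty}=\overline{\mathcal P_{\mathbb W}}^\square$ is therefore \emph{not} concentrated, so your starting claim is false.

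Your total-boundedness step fails exactly here. For $f=x_{n+1}$ you have $E_nf=0$, so $f-E_nf$ is a standard Gaussian for every $n$. The tail Poincar\'e constant is $\sup_{k>n}\sigma_k^2=1$, which never becomes small. Gaussian concentration only gives tightness modulo constants, and that is strictly weaker than precompactness in $\L^0/\R$.

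\textbf{How to repair it.} You need a space whose tail concentrates while the distance is still almost surely infinite. Two choices that work are $(\R^\infty,\tau^\infty,\ell_2,\bigotimes_n\gamma_{\sigma_n^2})$ with $\sigma_n\to0$ and $\sum_n\sigma_n^2=\infty$ (say $\sigma_n^2=1/n$), or the product $\prod_n\mathbb S^n(1)$ of \Cref{e:IPS}. These are the paper's examples. For them your tail-Poincar\'e/conditional-expectation idea does give uniform approximation by $1$-Lipschitz functions of finitely many coordinates, since the tail constant $\sup_{k>n}\sigma_k^2\to0$. The paper does the same with L\'evy means in place of conditional expectations, via \Cref{p:CPC} and \Cref{thm:equiv}.

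Non-isomorphy to an mm-space is then simplest via $\sfd=+\infty$ $\mm^{\otimes2}$-a.e. This follows from Borel--Cantelli or the computation in \Cref{t:GT}. The property is invariant under emm-isomorphism and impossible in an mm-space. Your separability route also works, but it needs $\gamma(x+\ell_2)=0$ for \emph{every} $x$, not just $\gamma(\ell_2)=0$.

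\textbf{Second item.} This is essentially the paper's proof of \Cref{p:FDP}. Say explicitly that separability of $\Z$ is what makes the partition into finite-distance components countable, so that some component has positive mass. Then pass to the full-measure component itself. The completion step needs $\sfd<\infty$ everywhere on $C\times C$, not merely $\mm\otimes\mm$-a.e.
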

In particular, if $\X \in \partial \mathfrak X_{\sf conc}$, then there exists no set~$E \subset \X$ with $\mm(E)=1$ such that the Hausdorff dimension of $E$ is finite.
This tells us that ideal mm-spaces $\X \in \partial \mathfrak X_{\sf conc}$ have to be essentially infinite-dimensional emm-spaces. 
One of the simplest examples belonging to the boundary $\partial \mathfrak X_{\sf conc}$ is the infinite-product of the $n$-dimensional unit sphere with the normalised volume measure
$$\X=\prod_{n=1}^\infty \mathbb S^n(1),$$ see \Cref{e:IPS}. 
This example has already been mentioned, e.g., in~\cite[Example 7.36]{Sh16} as ``considered to be an ideal mm-space". However, to show that it is an element in $\mathfrak X_{\sf conc}\setminus \mathcal X$ rigorously,  one needs to show that it is {\it not  emm-isomorphic to any mm-spaces}, where one should also be  aware that the {emm-isomorphism} is weaker than the ordinary mm-isomorphism.

\paragraph{Dichotomy for Gaussian product.}
We can provide a clear dichotomy for Gaussian product spaces to be concentrated in terms of tail behaviour of variances. 
Let
$\X=\mathbb{R}^{\infty}$ with the product topology $\tau^\infty$ and for $x=(x_i)_{i \in \N}$ and $y=(y_i)_{i \in \N}$, 
\[\ell_2(x,y)\coloneqq\Big(\sum_{n=1}^\infty(x_n-y_n)^2\Big)^{1/2}\in[0,\infty].
\]
Let
$\gamma^\infty := \bigotimes_{n=1}^\infty\gamma_{\sigma_n^2},$
where $\gamma_{\sigma_n^2}$ is  the one-dimensional centred Gaussian measure with variance $\sigma_n^2$. 
\begin{corollary}[\Cref{corollary: ideality gaussian}]\label{corollary-it: ideality gaussian} Let $\X=(\R^\infty, \tau^\infty,\ell_2, \gamma^\infty)$. Then
the following hold:
\begin{enumerate}
    \item $\X \in \mathfrak X_{\sf conc}$  if and only if  $\sigma_n \to 0$ as $n \to \infty$
    \item $\X \in \partial \mathfrak X_{\sf conc}$ if and only if $\sigma_n \to 0$ as $n \to \infty$ and 
    $$\sum_{n=1}^\infty \sigma_n^2=+\infty.$$
\end{enumerate}
{\rm In particular by the first assertion, the abstract Wiener space~$\mathbb W$ (see \Cref{d:WS}), which is emm-isomorphic to the case $\sigma_n=1$,  is not concentrated. }
\end{corollary}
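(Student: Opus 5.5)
We need to prove the dichotomy for $\X=(\R^\infty,\tau^\infty,\ell_2,\gamma^\infty)$. By \Cref{t:t2}, $\X\in\mathfrak X_{\sf conc}$ if and only if $\mathcal L_1(\X)$ is compact in $\L^0(\X)/\R$. So part 1 reduces to compactness of $1$-Lipschitz observables modulo constants.

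\emph{Sufficiency in part 1.} Assume $\sigma_n\to0$. The plan is to approximate $\X$ by its finite-dimensional projections $\X_N=(\R^N,|\cdot|,\bigotimes_{n\le N}\gamma_{\sigma_n^2})$.
\begin{itemize}
\item For $f\in\Lip_1(\X)$, let $f_N(x)=\int f(x_1,\dots,x_N,y)\,d\gamma^{>N}(y)$. This function is $1$-Lipschitz on $\X_N$, and each $\mathcal L_1(\X_N)$ is compact.
\item We need $f-f_N$ small in probability, uniformly in $f$. Gaussian concentration for the tail measure $\bigotimes_{n>N}\gamma_{\sigma_n^2}$ with respect to $\ell_2$ has constant $\sup_{n>N}\sigma_n^2$. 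Hence
$$\gamma^\infty\bigl(|f-f_N|>r\bigr)\le 2e^{-r^2/(2\sup_{n>N}\sigma_n^2)}.$$
This tends to $0$ uniformly in $f$.
\item Thus $\mathcal L_1(\X)$ lies within arbitrarily small Ky Fan distance of the compact sets $\mathcal L_1(\X_N)$, pulled back by projection. So it is totally bounded.
\item Closedness of $\mathcal L_1(\X)$ in $\L^0/\R$ follows because an a.e. limit of $1$-Lipschitz functions (after constant normalisation) is $1$-Lipschitz on a full-measure set. The main care is extending along $\ell_2$-directions, which we handle via the Cameron--Martin quasi-invariance of $\gamma^\infty$.
\end{itemize}

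\emph{Necessity in part 1.} Suppose $\sigma_{n_k}\ge\delta>0$ along a subsequence. The coordinate functions $x_{n_k}$ are $1$-Lipschitz, and so are their truncations $\phi(x_{n_k})$ with $\phi(t)=\max(-\delta,\min(t,\delta))$. These are independent and not asymptotically constant, so no subsequence converges in probability modulo constants. Therefore $\mathcal L_1(\X)$ is not compact. The Wiener space remark follows from its emm-isomorphism with the case $\sigma_n=1$.

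\emph{Part 2.} We use \Cref{p:FDP} and the converse direction.
\begin{itemize}
\item If $\sum\sigma_n^2<\infty$, then $\gamma^\infty$ is concentrated on $\ell_2$ (a Hilbert space). The set $\{x:\ell_2(x,0)<\infty\}$ has full measure and is separable under $\ell_2$, where $\ell_2$ coincides with the Hilbert metric. The inclusion is then an emm-isomorphism with the mm-space $(\ell_2,\|\cdot\|,\gamma^\infty)$, so $\X\in\mathcal X$.
\item If $\sum\sigma_n^2=\infty$, Kolmogorov's three-series / zero-one argument gives $\ell_2(x,0)=\infty$ a.s. More importantly, $\ell_2(x,y)=\infty$ for $\gamma^\infty\otimes\gamma^\infty$-a.e. pair, since $x-y$ has variances $2\sigma_n^2$.
\item We claim no full-measure set $Z$ is $\ell_2$-separable. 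If it were, $Z\subset\bigcup_j B_{\ell_2}(z_j,1)$ for countably many $z_j$. Each ball is a translate of $B(0,1)$ by a point of the non-Cameron--Martin type, and such a ball has $\gamma^\infty$-measure zero: for any fixed $z$, $\ell_2(x,z)<\infty$ forces $x-z\in\ell_2$, and $\gamma^\infty(z+\ell_2)=0$ by the same divergence via Feldman--Hájek/Kakutani. This is a contradiction.
\item By \Cref{t:3} (which would require separability of a full-measure set for an mm-representative), $\X\notin\mathcal X$. Combined with part 1, this gives $\X\in\partial\mathfrak X_{\sf conc}$.
\end{itemize}

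The main obstacle is closedness of $\mathcal L_1(\X)$ in $\L^0/\R$ for the emm-space. I would first try to bypass it by showing $\mathcal L_1(\X)$ equals the closure of the union of the pulled-back $\mathcal L_1(\X_N)$, via the martingale convergence $f_N\to f$.
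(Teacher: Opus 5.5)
Your proof is correct. For part~1 it follows the paper's route. The paper obtains the corollary from \Cref{t:GT}, taking $w_n=1$, $q_n=\sigma_n^2$ and an emm-isomorphism onto $H^{-1}$. There, sufficiency is tail Gaussian concentration (\Cref{lem:GT}) fed into the finite-dimensional factor approximation of \Cref{p:CPC}, and necessity uses independent coordinate observables together with \Cref{lemma: no convergence iid}.

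Two technical points on sufficiency. The paper centres each fibre at its L\'evy mean rather than its mean, which removes any integrability question for your $f_N$. It also reduces to $\tau$-continuous $f$ via \Cref{cor: density of continuous functions}, so that fibre restrictions are everywhere $1$-Lipschitz before Gaussian concentration is applied. You should do the same, or pass to the extension of \Cref{lemma: McShane}.

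On necessity, the paper rescales: the variables $\delta\sigma_{n_k}^{-1}x_{n_k}$ are i.i.d.\ $N(0,\delta^2)$ and $1$-Lipschitz, so \Cref{lemma: no convergence iid} applies verbatim. Your truncated coordinates are independent but not identically distributed. You therefore need the Kolmogorov $0$-$1$ variant of that lemma, which works because each law has atoms of mass at least $\mathbb P(Z\ge 1)$ at $\pm\delta$.

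The \emph{main obstacle} you flag is not one. Closedness of $\mathcal L_1(\X)$ in $L^0/\mathbb R$ is \Cref{p:LC}, and its entire proof is the one-line argument you already give: an a.e.\ limit of functions that are $1$-Lipschitz on full-measure sets is $1$-Lipschitz on the countable intersection of those sets. No Cameron--Martin quasi-invariance or martingale convergence is needed.

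Where you genuinely differ from the paper is in showing $\X\notin\mathcal X$ when $\sum\sigma_n^2=\infty$. You prove that every coset $z+\ell_2$ is $\gamma^\infty$-null. This is true: it is a tail event, and if it had probability one, two independent samples would be at finite distance. You deduce that no full-measure set is $\ell_2$-separable, whereas an mm-representative would make separable the full-measure set on which the isomorphism is isometric. Note that \Cref{t:3} states the converse implication (ideal implies no separable full-measure set), so this step must be argued directly, though it is immediate.

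The paper instead observes that $\ell_2=+\infty$ holds $\gamma^\infty\otimes\gamma^\infty$-a.e., which you also prove, and that this property is invariant under emm-isomorphism and fails for every mm-space. That argument needs no information about individual cosets. Your route costs the extra coset-nullity but yields the stronger non-separability conclusion.
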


\paragraph{Extrinsic approach.} 
For metric measure spaces, convergence in the concentration~\(
\X_n \xrightarrow{\sfd_{\sf conc}} \X
\)
can be characterised by the existence of maps $p_n:\X_n\to\X$ satisfying suitable approximation properties in terms of distances and measures.
This characterization provides a powerful tool for studying stability phenomena, including the stability of entropies, Cheeger energies, and curvature lower bounds. 
This idea was pioneered by Funano and Shioya (\cite{funanoConcentrationRicciCurvature2013}) who proved the stability of the Curvature-Dimension condition and then followed by Ozawa and Yokota (\cite{ozawaStabilityRCDCondition2019a}) to prove the stability of the Riemannian-Curvature-Dimension condition and by the first and third authors (\cite{gigliStabilityHeatFlow2024}) to prove the Mosco convergence of the Cheeger energy, the stability of the heat flow and of the spectrum of the Laplacian. 

Here we extend this approach in relation to the weak convergence $\tau_{\Pi}$ of pyramids as well as the convergence in~$(\mathfrak X_{\sf conc}, \sfd_{\sf conc})$ based on fibration maps. 
We say that the sequence $n\mapsto \X_n$ {\it fibrates} over $\X$ with projections $n\mapsto p_n:\X_n\to\X$ if the maps $p_n$ are Borel and it holds $(p_n)_\#\mm_n\rightharpoonup \mm$.

Fibrations, introduced in \cite{gigliStabilityHeatFlow2024}, are a framework that allows to define natural notions of convergence of measures or functions defined on varying spaces.
Postponing the precise definitions to \Cref{d:DOC}, we say that projections $n\mapsto p_n$ induce a domination if they allow to well approximate the observables, i.e.~1-Lipschitz functions, in the limit space $\X$ with observables in the varying spaces $\X_n$.
We say that projections $n\mapsto p_n$ induce concentration if they induce a domination and sequences of observables are compact along the sequence of spaces.
Roughly speaking, a sequence of projections induces a domination if the limit space has ``fewer'' 1-Lipschitz functions than the spaces in the sequence; they induce a concentration if $1$-Lipschitz functions in the limit are exactly the limit points of $1$-Lipschitz functions in the sequence of spaces. 
\begin{theorem} \label{t:FBR} The following hold:
\begin{itemize}
    \item (\Cref{proposition: definition via fibration concentrated}) Let $n\mapsto \X_n$ be a sequence of concentrated emm-spaces. Then $n\mapsto \X_n$ converges in concentration to a limit concentrated emm-space $\Y$ if and only if there exists an extended topological-metric-measure space~$\tilde{\Y}$ emm-isomorphic to $\Y$ such that $n\mapsto \X_n$ fibrates over $\tilde\Y$ with projections $n\mapsto p_n$ inducing concentration.
    \item (\Cref{proposition: general projections})     Let $n\mapsto \X_n$ be a sequence of emm-spaces and let $\Y$ be an emm-space.
    Then $$\overline{\mathcal P_\X}^\square\subset K-\liminf_n \overline{\mathcal P_{\X_n}}^\square$$ if and only if there exists an extended topological-metric-measure space $\tilde{\X}$ isomorphic to $\X$ such that $\X_n$ fibrates over $\tilde{\X}$ with projections $p_n$ inducing a domination. 
    \end{itemize}
       The notation $K-\liminf_n \overline{\mathcal P_{\X_n}}^\square$ denotes the inferior limit in the sense of Painlev\'e--Kuratowski, viz.~
       $$K-\liminf_n \overline{\mathcal P_{\X_n}}^\square:=\{\Z\in\mathcal \X: \exists n\mapsto \Z_n\in \overline{\mathcal P_{\X_n}}^\square, \Z_n\xrightarrow{\square}\Z\}.$$
\end{theorem}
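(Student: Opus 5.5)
We plan to prove the two items of Theorem~\ref{t:FBR} separately. Both rest on the same mechanism. A fibration $p_n:\X_n\to\tilde\Y$ with $(p_n)_\#\mm_n\rightharpoonup\mm$ lets us compare $\mathcal L_1(\X_n)$ and $\mathcal L_1(\tilde\Y)$ on a common parameter space. We pick parameters $\iota_n:(0,1)\to\X_n$ and $\iota:(0,1)\to\tilde\Y$ such that $p_n\circ\iota_n\to\iota$ in probability. This is possible by a Skorokhod-type coupling of $(p_n)_\#\mm_n$ with $\mm$ on the Polish space $(\tilde\Y,\tau)$. The difficulty is that convergence in $\tau$ does not control $\sfd$, which is only $\tau$-lower semicontinuous.

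\textbf{Second item.} For the ``if'' direction, take $\Z\in\overline{\mathcal P_\X}^\square$. By $\square$-closedness of the liminf set it suffices to treat $\Z$ with $\Z\prec\X$, and by a finite-dimensional approximation (Lipschitz maps into $\R^k$) it suffices to treat $\Z=\Phi_\#\mm$ for a $1$-Lipschitz map $\Phi=(f_1,\dots,f_k):\tilde\X\to(\R^k,\ell_\infty)$. The domination property supplies $1$-Lipschitz $f_i^n$ on $\X_n$ with $f_i^n\circ\iota_n\to f_i\circ\iota$ in probability. Then $\Z_n:=(f^n_1,\dots,f^n_k)_\#\mm_n\prec\X_n$ satisfies $\Z_n\xrightarrow{\square}\Z$, by the standard estimate relating $\square$ to the Ky Fan distance of parametrisations (as in \cite{Sh16}, via the Lipschitz-map characterisation of $\square$).

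For the ``only if'' direction, we first use that a pyramid contained in a liminf is detected by a countable family of finite-dimensional observable images. We enumerate a $\tau$-countable family of $1$-Lipschitz functions generating $\sfd$, which exists by lower semicontinuity and Polishness; this is the step where we need the emm-structure. For each finite subfamily, the hypothesis gives approximating maps $\X_n\to\R^k$. A diagonal argument with $k=k(n)\to\infty$ slowly then builds $p_n$ into $\tilde\X\subset\R^\infty$, where $\tilde\X$ is $\X$ re-embedded via these functions (emm-isomorphic to $\X$). Domination holds by construction.

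\textbf{First item.} We apply the second item in both directions, combined with Theorem~\ref{t:2}: convergence in $\sfd_{\sf conc}$ is equivalent to $\tau_\Pi$-convergence $\mathcal P_{\X_n}\to\mathcal P_\Y$, i.e.\ to the inclusions $\mathcal P_\Y\subset K\text{-}\liminf$ and $K\text{-}\limsup\subset\mathcal P_\Y$. The liminf inclusion gives domination. The limsup inclusion, together with compactness of $\mathcal L_1(\X_n)$ and of $\mathcal L_1(\Y)$ (Theorem~\ref{t:t2}), should yield the compactness of observables along the sequence, with limits that are $1$-Lipschitz on $\tilde\Y$: any limit of $f_n\circ\iota_n$ gives an mm-space in the limsup, hence a space dominated by $\Y$. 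Identifying the limit function as a Lipschitz function on $\tilde\Y$ (rather than merely in law) uses uniqueness in the concentrated case (Theorem~\ref{t:1}). The converse direction reverses these implications.

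\textbf{Main obstacle.} We expect the hardest step to be the ``only if'' construction of $p_n$ in the second item, where everything must happen simultaneously. The $p_n$ must be Borel for $\tau$, must push $\mm_n$ weakly to $\mm$, and the approximation of infinitely many observables must be uniform enough to cover all $1$-Lipschitz functions, which form a non-separable family for $\tau_\sfd$. We would try to reduce to a countable $\sfd$-determining family using the lower semicontinuity of $\sfd$ and approximate the remaining functions by inf-convolutions.
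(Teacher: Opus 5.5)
\textbf{Second item and the ``if'' direction of the first item.} Your second item is the paper's argument in different notation. The family $(g_j)$ and the spaces $(g_1,\dots,g_k)_\#\mm$ play the role of the inverse system with its adapted Kuratowski embedding (\Cref{prop: embedding}), and your maps $\X_n\to\R^k$ correspond to the paper's $q_{n,k_n}\circ\psi_n$. Two points need repair.
\begin{itemize}
\item Lower semicontinuity and Polishness do not give a countable family of $\tau$-continuous $1$-Lipschitz functions recovering $\sfd$; the paper explicitly does not know whether every emm-space is an extended topological-metric-measure space. You must first pass to an inverse-limit representative (\Cref{lemma: inverse system}). Alternatively, use \Cref{prop: reconstruction} to get a.e.\ $1$-Lipschitz $g_j$ with $\sfd=\sup_j|g_j(\cdot)-g_j(\cdot\cdot)|$ on $E\times E$; this already makes $\iota=(g_j)_j$ an emm-isomorphism onto $(\R^\infty,\tau^\infty,\ell_\infty,\iota_\#\mm)$.
\item ``Domination by construction'' needs the coordinates of $p_n$ to be genuinely $1$-Lipschitz on $\X_n$. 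Box convergence only yields $\varepsilon$-mm-isomorphisms, so you must correct each coordinate by a McShane inf-convolution over the good set. Domination then follows from $\L^0$-density of $1$-Lipschitz cylinder functions, as in \Cref{proposition: LAP}. The non-separability of $\tau_\sfd$ plays no role, since $\Lip_1$ is separable in $\L^0$.
\end{itemize}
Your ``if'' direction of the first item, via the two Kuratowski inclusions and \Cref{t:2}, is a legitimate alternative to the paper's direct contradiction argument with good couplings.

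\textbf{The gap: ``only if'' in the first item.} The missing step is upgrading domination to concentration. You speak of ``any limit of $f_n\circ\iota_n$'', but the existence of an $\L^0$-limit along the fibration is exactly what must be shown. Compactness of each $\mathcal L_1(\X_n)$ is not uniform in $n$, and it says nothing about how $f_n$ is correlated with $p_n$. Moreover, placing the law of a limit function in the $K\text{-}\limsup$ only shows that its distribution is that of some $1$-Lipschitz function on $\Y$. It cannot show that the limit equals $f\circ\iota$, and the uniqueness in \Cref{t:1} says nothing about a single function in isolation.

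The paper closes this step differently. The intermediate spaces $\X_{n,k_n}\prec\X_n$ fibrate over $\tilde\Y$ with projections inducing concentration (\Cref{cor: intermediate maps}). Since $\psi_n^*\mathcal L_1(\X_{n,k_n})\subset\mathcal L_1(\X_n)$ and both GH-converge to $\mathcal L_1(\Y)$, \Cref{lemma: Hausdorff convergence via GH} makes them Hausdorff-close. This transfers concentration to $p_n=q_{n,k_n}\circ\psi_n$.

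To rescue your route you would need a joint-law argument.
\begin{itemize}
\item Take a weak limit $\mu$ of $(p_n,f_n)_\#\mm_{\X_n}$ on $\tilde\Y\times\R$. Tightness of $(f_n)_\#\mm_{\X_n}$ comes from the $\limsup$ inclusion and compactness of $\mathcal L_1(\Y)$.
\item Show that the pyramid of $(\tilde\Y\times\R,\max\{\ell_\infty,|\cdot|\},\mu)$ lies in $K\text{-}\limsup\subset\mathcal P_\Y$. This uses precisely that $1$-Lipschitz cylinder functions of finitely many coordinates of $p_n$ and of $f_n$ are $1$-Lipschitz on $\X_n$, i.e.\ your $1$-Lipschitz coordinates.
\item Use \Cref{theorem: isomorphism from pyramid}, together with the rigidity of $1$-Lipschitz measure-preserving self-maps of a concentrated space, to see that the projection onto $\tilde\Y$ is an isomorphism.
\end{itemize}
Hence $\mu$ is carried by the graph of some $f\in\Lip_1(\tilde\Y,\mm_{\tilde\Y})$, and $f_n\to f$ along the fibration.
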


We remark that being an extended topological-metric-measure space is a slightly more restrictive condition than being an emm-space, see \Cref{d:ETMS}. 

\paragraph{Stability.} 
Using the fibration approach, we obtain the $\Gamma$-$\limsup$ inequality for the Cheeger energy~$\Ch_\X: \L^2(\X) \to \R_+ \cup \{+\infty\}$ (see~\Cref{d:Che}). 
Notice that the following theorem does not require the convergence in $\sfd_{\sf conc}$. We only requires projections inducing a domination, which is weaker than the weak convergence of pyramids associated with emm-spaces as seen in the second statement in~\Cref{t:FBR}. 
\begin{theorem}[\Cref{theorem: gamma limsup}] \label{t:SMC}
    Let $(\X_n)_{n\in \N}$ be a sequence of emm-spaces and assume that $\X_n$ fibrates over an extended topological-metric-measure space $\X$ with projections $n\mapsto p_n$ inducing a domination.
    Then for all $f\in\L^2(\X)$ there exists a sequence $n\mapsto f_n\in \L^2(\X_n)$ such that $f_n\to f$ in $\L^2$ and
    \begin{equation}
        \ch_\X(f)\geq \limsup_n\ch_{\X_n}(f_n).
    \end{equation}
    In other words, $\Gamma-\limsup_n \ch_{\X_n}\leq \ch_\X$.
\end{theorem}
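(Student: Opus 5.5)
We follow the standard route for $\Gamma$-$\limsup$ inequalities: prove the bound on a dense class of "nice" functions, then extend by a diagonal argument using the lower semicontinuity and relaxation structure of $\ch_\X$. Recall (\Cref{d:Che}) that $\ch_\X(f)$ is the $\L^2$-relaxation of $\frac12\int \lip(g)^2\,\d\mm$ over bounded $\tau$-continuous $\sfd$-Lipschitz functions $g$. We use the standard approximation result in extended spaces (Ambrosio--Gigli--Savar\'e, and its extended version by Ambrosio--Erbar--Savar\'e): for every $f$ with $\ch_\X(f)<\infty$ there exist bounded Lipschitz functions $g_k$ with $g_k\to f$ and $\lip_a(g_k)\to|Df|$ in $\L^2$, where $\lip_a$ is the asymptotic Lipschitz constant. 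A diagonal argument over $k$, together with the metrisability of $\L^2$ convergence along fibrations, then reduces the claim to the following. For every bounded $\sfd$-Lipschitz $g$ on $\X$ we must construct $g_n\in\L^2(\X_n)$ with $g_n\to g$ in $\L^2$ and
\[\limsup_n \tfrac12\int \lip(g_n)^2\,\d\mm_n\le \tfrac12\int \lip_a(g)^2\,\d\mm .\]
Here $\L^2$ convergence along the fibration is understood in the sense of \Cref{d:DOC}: $(p_n)_\#(g_n\mm_n)$-type tests, or equivalently $\|g_n-g\circ p_n\|_{\L^2(\mm_n)}\to0$ when $g$ is continuous.

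\textbf{Localisation.} The local slope is not preserved by domination, which only transfers global $1$-Lipschitz functions. We therefore localise. Fix $\varepsilon>0$ and partition $\X$ up to small measure into finitely many pieces $A_1,\dots,A_m$ on which $\lip_a(g)\le L_j+\varepsilon$ in a $\sfd$-neighbourhood. One can choose these as level sets of a continuous approximation of $\lip_a(g)$, intersected with the sets where the corresponding Lipschitz constant of $g|_{B^\sfd_r(A_j)}$ is close to $L_j$. On each enlarged piece $g$ is $L_j(1+\varepsilon)$-Lipschitz. Via McShane extension $g$ can then be written as a finite min/max combination
\[g=\min_j\max_i \bigl(\phi_{ij}\bigr), \qquad \phi_{ij}=L_j(1+\varepsilon)\,h_{ij}+c_{ij},\]
where each $h_{ij}$ is a genuine global $1$-Lipschitz observable (for instance a truncated distance function to a set, plus constants).

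\textbf{Transfer.} The domination hypothesis provides $1$-Lipschitz functions $h^n_{ij}$ on $\X_n$ with $h^n_{ij}\to h_{ij}$ in the fibration sense. We set
\[g_n:=\min_j\max_i\bigl(L_j(1+\varepsilon)\,h^n_{ij}+c_{ij}\bigr).\]
Lattice operations are continuous in $\L^0$, so $g_n\to g$ in measure; boundedness and truncation upgrade this to $\L^2$. The slope satisfies $\lip(g_n)\le \max\{L_j(1+\varepsilon):\ \text{index } j \text{ active}\}$ pointwise.

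\textbf{Main obstacle.} The hard step is the slope bound: we must show that, asymptotically in $n$, the set where the "wrong" (larger) $L_j$ is active has $\mm_n$-measure close to $\mm(A_j$-mismatch$)$. This requires convergence in measure of the active index, i.e.~of the sets $\{\phi^n_{ij}\text{ attains the min/max}\}$. Ties are the danger here. Our first attempt is to perturb the constants $c_{ij}$ generically, so that the limit tie sets are $\mm$-null. Then convergence in measure of $\phi^n_{ij}$ to $\phi_{ij}$, combined with $(p_n)_\#\mm_n\rightharpoonup\mm$, gives convergence of the measures of the active regions. Integrating yields
\[\limsup_n\int \lip(g_n)^2\,\d\mm_n\le \int\lip_a(g)^2\,\d\mm+O(\varepsilon).\]
Letting $\varepsilon\to0$ and diagonalising concludes the proof.
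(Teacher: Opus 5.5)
There is a genuine gap, and it is in the step your argument rests on: the claim that, after localising, ``via McShane extension $g$ can be written as $\min_j\max_i(L_j(1+\varepsilon)h_{ij}+c_{ij})$'', with slope $L_j$ active essentially on $A_j$. McShane extension does not give this, and in general it is false. The extensions $\overline g_j=\inf_{a\in A_j}(g(a)+L_j\sfd(\cdot,a))$ and $\underline g_j=\sup_{a\in A_j}(g(a)-L_j\sfd(\cdot,a))$ agree with $g$ on $A_j$. Off $A_j$, however, they are \emph{not} above (resp.\ below) $g$ as soon as $g$ is steeper than $L_j$ somewhere between $A_j$ and the point considered. So $\min_j\overline g_j$, $\max_j\underline g_j$, or nested lattice combinations of them neither reproduce $g$ nor control which slope is active where. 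Max--min formulas of this kind are classical for continuous piecewise \emph{affine} functions on convex subsets of $\mathbb R^d$, and their proofs use segments and affinity; there is no analogue for arbitrary Lipschitz pieces on an extended space. Pieces on which $g$ has a global constant close to $\lip_a(g)$ can be found inside finitely many $\tau$-neighbourhoods, but they are arbitrary sets with no segment or affine structure, so the gluing is exactly the difficulty, not a routine step. Without the representation, your ``active index'' bound on $g_n$ has nothing to refer to. Generically perturbing the $c_{ij}$ only removes ties; it does not make the correct slope active on $A_j$. A minor point: the transferred $h^n_{ij}$ are merely a.e.\ $1$-Lipschitz measurable functions on an emm-space, so the slope bound must be stated for $|Dg_n|_{w}$ via locality of minimal weak upper gradients, not for a pointwise slope.

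The paper avoids localising on $\X$ altogether; it transfers whole finite-dimensional factors instead of single observables.
\begin{itemize}
\item It writes $\X\cong\varprojlim\Y_k$ with $\Y_k\subset\ell^\infty$ and factor maps $q_k$. From the domination, applied coordinatewise after approximating by maps into finite-dimensional $\ell_\infty^N$-subspaces and correcting ``$1$-Lipschitz up to $\varepsilon$'' maps, it builds a.e.\ $1$-Lipschitz $q_{n,k}\colon\X_n\to\ell^\infty$ with $q_{n,k}\to q_k$ in $\L^0$.
\item Then $\X_{n,k}=(\ell^\infty,(q_{n,k})_\#\mm_{\X_n})\prec\X_n$ and $(q_{n,k})_\#\mm_{\X_n}\rightharpoonup\mm_{\Y_k}$ in the \emph{same} ambient space. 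Hence one bounded Lipschitz $f$ on $\ell^\infty$ serves for all $n$, and upper semicontinuity of $\lip_a f$ gives $\limsup_n\int\lip_a^2(f)\,\mathrm{d}(q_{n,k})_\#\mm_{\X_n}\le\int\lip_a^2(f)\,\mathrm{d}\mm_{\Y_k}$.
\item Relaxation and a diagonal argument give the bound by $\ch_{\Y_k}$. Monotonicity of Cheeger energies under a.e.\ $1$-Lipschitz measure-preserving maps gives $\ch_{\X_n}(f\circ q_{n,k})\le\ch_{\X_{n,k}}(f)$.
\item The inverse-limit Mosco statement ($\ch_\X$ is the largest $\L^2$-lower semicontinuous functional below $\inf_k\ch_{\Y_k}$) removes $k$.
\end{itemize}
The slope issue you correctly flagged is thus handled by upper semicontinuity of $\lip_a$ on a common metric space, not by a lattice decomposition.
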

The $\Gamma$-$\limsup$ inequality for the Cheeger energies implies the stability of first order functional analytic inequalities (see also the discussion after \cite[Theorem 4.4]{zotero-item-281}). 
In particular, we show that under the weak convergence of pyramids the log-Sobolev inequality (resp.~the Poincar\'e inequality) is stable for the associated emm-spaces.
Recall that an emm-space $\X$ supports the {\it log-Sobolev inequality} if 
\begin{equation} \tag*{$\mathsf{LS}(C)$}
    \int_\X f^2 \log f^2 \diff \mm \le 2C\ch_{\X}(f)
\end{equation}
for every~$f$ with $\|f\|_{\L^2}=1$, 
and the {\it Poincar\'e inequality} if 
\begin{equation} \tag*{$\mathsf{P}(C)$}
    \int_\X \Bigl|f-\int_\X f\diff \mm\Bigr|^2 \diff \mm \le 2C\ch_{\X}(f), \qquad f \in \L^2(\X).
\end{equation}
\begin{corollary}[Stability of log-Sobolev and Poincar\'e.~{\Cref{corollary: stability constants}, \Cref{c:SLP}}] \label{c:SLSP}
Let $(\X_n)_{n \in \N}$ be a sequence of emm-spaces supporting  the log-Sobolev inequality~$\LS(C)$ {\rm(}resp.~the Poincar\'e inequality $\P(C)${\rm)}. Suppose that 
$$\overline{\mathcal P_{X_n}}^\square \xrightarrow{\tau_{\Pi}} \mathcal P \in \Pi.$$
Then, there exists an emm-space $\X$ such that $\X$ supports~$\LS(C)$ {\rm (}resp.~$\P(C)${\rm )} and  $$\mathcal P=\overline{\mathcal P_{X}}^\square.$$
Furthermore, any emm-space $X$ representing $\mathcal P=\overline{\mathcal P_{X}}^\square$ supports $\LS(C)$ {\rm (}resp.~$\P(C)${\rm )}. 
\end{corollary}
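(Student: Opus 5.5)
The plan is to reduce the stability corollary to the fibration machinery of \Cref{t:FBR} together with the $\Gamma$-$\limsup$ inequality of \Cref{t:SMC}.

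\textbf{Step 1 (existence of a representative).} By \Cref{t:1}, the limit pyramid $\mathcal P$ is represented as $\overline{\mathcal P_{\X}}^\square$ for some emm-space $\X$. Since $\overline{\mathcal P_{\X_n}}^\square \to \mathcal P$ in $\tau_\Pi$, we have in particular $\mathcal P\subset K\text{-}\liminf_n \overline{\mathcal P_{\X_n}}^\square$. The second statement of \Cref{t:FBR} then provides an extended topological-metric-measure space $\tilde\X$, emm-isomorphic to $\X$, over which $\X_n$ fibrates with projections $p_n$ inducing a domination.

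Because emm-isomorphism preserves $\L^2$, $\Lip_1$ and the Cheeger energy, I will work with $\tilde\X$ directly. I expect this invariance to be routine: the isomorphism is an isometry on a full-measure set, so it induces a bijection between Lipschitz functions up to null sets and respects the relaxation defining $\ch$. The second assertion of the corollary, that every representative works, then follows from the same argument applied to an arbitrary representative $\X$. That is why I will phrase everything for an arbitrary $\X$ with $\mathcal P=\overline{\mathcal P_\X}^\square$.

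\textbf{Step 2 (passing the inequality to the limit).} Fix $f\in\L^2(\tilde\X)$ with $\|f\|_{\L^2}=1$ and $\ch(f)<\infty$. By \Cref{t:SMC} there are $f_n\in\L^2(\X_n)$ with $f_n\to f$ in $\L^2$ along the fibration and $\limsup_n\ch_{\X_n}(f_n)\le\ch_{\tilde\X}(f)$.

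\emph{Normalisation.} Since $\|f_n\|_{\L^2}\to 1$, I replace $f_n$ by $f_n/\|f_n\|$; the Cheeger energy is 2-homogeneous, so the limsup bound is preserved. Applying $\LS(C)$ on $\X_n$ gives
\[
\int f_n^2\log f_n^2\,\diff\mm_n\le 2C\ch_{\X_n}(f_n).
\]

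\emph{Limit of the entropy term.} It remains to show
\[
\int f^2\log f^2\,\diff\mm\le\liminf_n\int f_n^2\log f_n^2\,\diff\mm_n.
\]
This is lower semicontinuity of the relative entropy: $\mu_n=f_n^2\mm_n$ has entropy $\mathrm{Ent}(\mu_n|\mm_n)$. I will use the notion of $\L^2$ convergence along fibrations from \cite{gigliStabilityHeatFlow2024}, which I expect to mean that $(p_n)_\#(f_n^2\mm_n)\rightharpoonup f^2\mm$ and $(p_n)_\#\mm_n\rightharpoonup\mm$. Two facts then give the claim:
\[
\mathrm{Ent}\bigl((p_n)_\#\mu_n\,\big|\,(p_n)_\#\mm_n\bigr)\le\mathrm{Ent}(\mu_n|\mm_n)
\]
by the data-processing inequality, and joint weak lower semicontinuity of relative entropy on the Polish space $(\tilde\X,\tau)$.

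\emph{Poincar\'e case.} Here the left-hand side $\int f_n^2-(\int f_n)^2$ actually converges. This follows from $\L^2$ convergence, which should give convergence of norms and of integrals (test against the constant $1$).

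\emph{Conclusion.} For $f$ with $\ch(f)=\infty$ there is nothing to prove. Together with the previous bullets, this yields $\LS(C)$ (resp.\ $\P(C)$) on $\tilde\X$, hence on $\X$.

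\textbf{Main obstacle.} The hardest point is the second bullet of Step 2, the entropy lower semicontinuity along the fibration. It hinges on the precise definition of $\L^2$ convergence of $f_n$ over $p_n$. If that definition is instead via strong convergence of $(p_n)_\#$-disintegrated averages $\tilde f_n$ on $\tilde\X$, I would first apply Jensen: the conditional expectation $\tilde f_n^2$ does not increase $\int\phi(f_n^2)$ for convex $\phi(t)=t\log t$. I would then use ordinary lower semicontinuity on $\tilde\X$, handling the $\L^2$-convergence of $\tilde f_n$ against the varying measures $(p_n)_\#\mm_n$ through the weak convergence of these measures.
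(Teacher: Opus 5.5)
Your argument is correct and is essentially the paper's second proof (\Cref{c:SLP}): you obtain a domination-inducing fibration from \Cref{proposition: general projections}, take a recovery sequence from \Cref{theorem: gamma limsup}, and pass to the limit. The only variation is in the entropy term: you use data processing together with joint weak lower semicontinuity of relative entropy, whereas the paper reduces to bounded $u$ and truncates the recovery sequence so that $\int u_n^2\log u_n^2\,\diff\mm_{\X_n}$ actually converges; your route avoids that reduction and makes the normalisation $\|f\|_{\L^2}=1$ explicit. One correction: the emm-invariance of $\ch$ is not routine via the relaxation, because an emm-isomorphism need not be $\tau$-continuous and so does not transport $\tau$-continuous Lipschitz functions or $\lip_a$; cite \Cref{t:ECE} instead, which is exactly what the paper relies on when it passes from $\X$ to $\tilde\X$.
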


\subsection{Some applications} \label{ss:SA}
\paragraph{Pyramid representation for the Wiener space.}
Let $X=(\R, |\cdot|, \gamma)$ be the one-dimensional centred Gaussian space with unit variance, and $\X^n$ be the $n$-Cartesian product of $\X$. Consider the pyramid given by $$\overline{\bigcup_{n \in \N} \mathcal P_{\X^n}}^\square.$$ In \cite{Sh16}, this pyramid is formally denoted by $\mathcal P_{\Gamma^\infty}$ and called {\it virtually infinite-dimensional Gaussian space (pyramid)} while the rigorous construction and realisation of $\mathcal P_{\Gamma^\infty}$ as a pyramid associated with emm-spaces have remained open. 

The following \Cref{cI:Int2} resolves this problem: we rigorously identify the virtually infinite-dimensional Gaussian space~$\mathcal P_{\Gamma^\infty}$ to the ($\square$-closure of) the pyramid associated with the abstract Wiener space $\mathbb W$ endowed with the Cameron--Martin extended distance (see \Cref{d:WS}). Let $X^\infty:=(\R^\infty, \tau^\infty, \ell_2, \gamma^\infty)$ be the infinite-product of~$\X$. 
\begin{corollary}[\Cref{c:PWI}] \label{cI:Int2}
 $$\overline{\mathcal P_{\mathbb W}}^\square = \overline{\mathcal P_{\X^\infty}}^\square=\overline{\bigcup_{n \in \N} \mathcal P_{\X^n}}^\square.$$
\end{corollary}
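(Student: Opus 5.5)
We prove the two equalities separately. Throughout, $\X^\infty=(\R^\infty,\tau^\infty,\ell_2,\gamma^\infty)$, and $\mathbb W$ is realised, for instance, as a separable Banach space $E\supset\R^\infty$-image carrying the Gaussian measure $\gamma$ and the Cameron--Martin extended distance.

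\emph{First equality.} Fix an orthonormal basis $(h_i)$ of the Cameron--Martin space $H$ consisting of elements of $E^*$. The map
\[
\Phi:\mathbb W\to\R^\infty,\qquad \Phi(w)=(\langle h_i,w\rangle)_{i\in\N},
\]
is Borel and pushes $\gamma$ to $\gamma^\infty$. On the full-measure set $\tilde{\mathbb W}=\{w:\ \sum_i \langle h_i,w\rangle h_i \to w\}$, $\Phi$ is injective. On this set,
\[
\ell_2(\Phi w,\Phi w')=\|w-w'\|_H,
\]
with the convention that both sides equal $+\infty$ when $w-w'\notin H$. Hence $\Phi$ is an emm-isomorphism, which is the identification asserted in \Cref{corollary-it: ideality gaussian} and discussed after \Cref{d:EMI}. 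Since $\Y\prec\X$ depends only on the emm-isomorphism class, we get $\mathcal P_{\mathbb W}=\mathcal P_{\X^\infty}$, and taking $\square$-closures gives the first equality.

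\emph{Inclusion $\supseteq$ in the second equality.} The coordinate projection $\pi_n:\R^\infty\to\R^n$ is measure preserving onto $\gamma^{\otimes n}$. It is also $1$-Lipschitz from $\ell_2$ to the Euclidean distance, because truncating the sum decreases it. Thus $\X^n\prec\X^\infty$, and by transitivity of $\prec$ we get $\mathcal P_{\X^n}\subset\mathcal P_{\X^\infty}$. Taking the union over $n$ and then the $\square$-closure gives this inclusion.

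\emph{Inclusion $\subseteq$ in the second equality.} This is the main step. Let $\Y\in\mathcal P_{\X^\infty}$, witnessed by a measure-preserving $F:\R^\infty\to\Y$ that is $1$-Lipschitz on a full-measure set. We approximate $F$ by maps factoring through $\pi_n$. Set
\[
F_n(x)=\int F(\pi_n x, z)\,d\gamma^\infty(z),
\]
that is, we freeze the first $n$ coordinates and average out the tail. To make sense of averaging in $\Y$, we first reduce to the case where $\Y$ is a compact subset of a Banach space. Any mm-space can be $\square$-approximated by finite (hence compact) ones that it dominates, via a $1$-Lipschitz measure-preserving map. Such finite spaces embed isometrically into $\ell_\infty^N$ through Kuratowski coordinates. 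The set of $\Y$'s obtained this way is $\square$-dense in $\mathcal P_{\X^\infty}$, so it suffices to treat them; equivalently, one can work with the finitely many real $1$-Lipschitz coordinates $f_k\circ F$.

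The key property of $F_n$ comes from the product structure. For $x,x'$ agreeing outside the first $n$ coordinates, the points $(\pi_n x,z)$ and $(\pi_n x',z)$ are at $\ell_2$-distance $|\pi_n x-\pi_n x'|$. Since each $f_k\circ F$ is $1$-Lipschitz, $F_n$ is $1$-Lipschitz from $(\R^n,|\cdot|)$ into $\ell_\infty^N$, coordinatewise. Moreover, $F_n\to F$ in $\L^1(\gamma^\infty)$ by the martingale convergence theorem, because $F_n$ is the conditional expectation of $F$ given the first $n$ coordinates.

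Now let $\Y_n=(\ell_\infty^N,(F_n)_\#\gamma^{\otimes n})$. Since $F_n$ is $1$-Lipschitz and pushes $\gamma^{\otimes n}$ to the measure of $\Y_n$, we have $\Y_n\prec\X^n$. Since $F_n\to F$ in probability, we get $\square(\Y_n,\Y)\to 0$, using that the box distance between pushforwards under two maps into a common space is controlled by their Ky Fan distance. Hence $\Y\in\overline{\bigcup_n\mathcal P_{\X^n}}^\square$.

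The expected obstacle is exactly this approximation step. One has to check that the coordinatewise averaging gives a $1$-Lipschitz map for the target metric, which is why we pass to $\ell_\infty$ coordinates. One also has to justify that the sup-metric embedding plus $\square$-density of finite approximants loses nothing.
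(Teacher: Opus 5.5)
Your argument is correct in outline, and for the key inclusion $\mathcal P_{\X^\infty}\subset\overline{\bigcup_n\mathcal P_{\X^n}}^\square$ it follows a genuinely different route from the paper. The paper obtains the second equality as the special case $\X^\infty=\varprojlim\X^n$ of \Cref{teo: piramidi - enmms}, via \Cref{c:PD}. The crucial input there is \Cref{proposition: LAP}. It approximates an a.e.\ $1$-Lipschitz function on an inverse limit first by a $\tau$-continuous $1$-Lipschitz one, using Luzin's theorem and the continuous extension of \Cref{lemma: continuous extension}. It then freezes the tail coordinates at $0$ in an adapted Kuratowski embedding.

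You instead average the tail out, i.e.\ you take conditional expectations given the first $n$ coordinates. The $1$-Lipschitz bound comes from coupling the identical fibres by the identity, at cost exactly $|u-u'|$. Convergence comes from the martingale convergence theorem. This is more elementary: it needs no continuity and no extension lemma, and it works directly with merely a.e.\ Lipschitz maps. It is close in spirit to the proof of \Cref{p:CPC}, where tail sections are summarised by their L\'evy mean. The price is that it uses the product structure of $\gamma^\infty$ essentially. For a general inverse limit the conditional measures over different points of $\X_n$ differ, and the conditional expectation need not be $1$-Lipschitz, so your argument does not yield \Cref{teo: piramidi - enmms}. For the first equality you and the paper agree: the paper simply cites \cite[Proposition 8.3]{SuYo25+} for the emm-isomorphism $\mathbb W\cong\X^\infty$ that you sketch.

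One ingredient of your reduction is false as stated and has to be replaced. Finite mm-spaces dominated by $\Y$ are not $\square$-dense below $\Y$ in general. Take $\Y=([0,1],|\cdot|,\mathsf{Leb})$, or $\Y=\X^1$. A map that is $1$-Lipschitz on $[0,1]$ (even just on a dense full-measure subset) with values in a finite metric space is constant. So the only finite space below $\Y$ is the one-point space.

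What is true, and what the paper uses in the proof of \Cref{teo: piramidi - enmms}, is \cite[Theorem 4.47]{Sh16}. For every $\varepsilon>0$ there is $\Z_1\prec\Y$ with $\square(\Y,\Z_1)\le\varepsilon$ and $\Z_1\subset(\R^N,\ell_\infty)$; essentially one takes $y\mapsto(\sfd_\Y(y,y_i))_{i\le N}$ for a net of a large compact set. This is exactly your fallback of working with finitely many real $1$-Lipschitz coordinates, so the proof survives.

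Two details then need attention. First, these coordinates need not be bounded, so before averaging you should do one of the following:
\begin{itemize}
\item clamp each coordinate to $[-R,R]$, which is still $1$-Lipschitz into $\ell_\infty^N$ and moves the law only by a small Prokhorov amount for large $R$; or
\item invoke Gaussian integrability of $\ell_2$-Lipschitz functions.
\end{itemize}
Second, since $F$ is $1$-Lipschitz only on a full-measure set $E$, the Lipschitz bound for $F_n$ needs a Fubini step. For $u$ in a full $\gamma^{\otimes n}$-measure set $G$, the section $\{z:(u,z)\in E\}$ has full measure. This gives the bound for all $u,u'\in G$, and $F_n$ then extends by continuity from the dense set $G$ to all of $\R^n$.
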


\paragraph{High-dimensional sphere.}
Combined with~\cite[Theorem 1.1]{Shi17}, we can describe the convergence of high-dimensional spheres by explicit emm-spaces. We denote by $\star$ the one-point mm-space (the mm-space consisting of one point). Define 
$$\mathbb I_\infty=([0,1], \tau_{[0,1]}, \sfd_\infty, \mathsf{Leb}),$$
where $\tau_{[0,1]}$ is  the standard Euclidean topology, $\mathsf{Leb}$ is the Lebesgue measure on~$[0,1]$, and 
\begin{equation}
    \sfd_\infty(x,y) =
    \begin{cases}
        0 & \qquad \text{if $x=y$}
        \\
        +\infty & \qquad \text{otherwise}.
    \end{cases}
\end{equation}
Let $\Gamma_{\lambda^2}=(\R^\infty, \tau^\infty,\ell_2, \gamma^\infty_{\lambda^2})$ be the infinite-product centred Gaussian space with variance $\lambda>0$. Let $\mathbb S^n(r_n)$ denote the $n$-dimensional sphere with the standard geodesic distance with the normalised volume measure and radius $r_n$.
We have the following dichotomy. 
\begin{corollary}[\Cref{c:HDS}]
   \begin{equation}
   \mathbb S^n(r_n) \xrightarrow{n \to \infty}
       \begin{cases}
           \star \qquad &\text{in $\sfd_{\sf conc}$} \quad\ \text{if} \quad \frac{r_n}{\sqrt{n}} \to 0
           \\
           \Gamma^\infty_{\lambda^2} \qquad &\text{in $\tau_{\Pi}$} \qquad \text{if} \quad \frac{r_n}{\sqrt{n}} \to \lambda
           \\
           \mathbb I_\infty\qquad &\text{in $\tau_{\Pi}$} \qquad \text{if} \quad \frac{r_n}{\sqrt{n}} \to \infty
       \end{cases}
   \end{equation}
   where by convergence in $\tau_{\Pi}$ we mean the convergence of the associated pyramids. In particular, if $\frac{r_n}{\sqrt{n}} \to 1$, then $\mathbb S^n(r_n)$ converges to the abstract Wiener space:
   $$\mathbb S^n(r_n) \xrightarrow{n \to \infty} \mathbb W \quad \text{in $\tau_{\Pi}$}.$$
\end{corollary}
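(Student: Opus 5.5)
The statement to prove is the trichotomy for $\mathbb S^n(r_n)$. I would combine Shioya's phase transition theorem \cite[Theorem 1.1]{Shi17} with the representation results above, chiefly \Cref{cI:Int2}, \Cref{t:2} and \Cref{t:1}. Shioya's theorem says: if $r_n/\sqrt n\to0$ then $\mathbb S^n(r_n)$ is a L\'evy family, so it concentrates to $\star$ in $\sfd_{\sf conc}$; if $r_n/\sqrt n\to\lambda$ then $\mathcal P_{\mathbb S^n(r_n)}\to\mathcal P_{\Gamma^\infty_{\lambda^2}}$ weakly, where $\mathcal P_{\Gamma^\infty_{\lambda^2}}:=\overline{\bigcup_n\mathcal P_{(\R^n,\|\cdot\|,\gamma^n_{\lambda^2})}}^\square$ is the virtual infinite-dimensional Gaussian pyramid; if $r_n/\sqrt n\to\infty$ then $\mathbb S^n(r_n)$ infinitely dissipates, meaning the pyramids converge to the full space $\mathcal X$. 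The first case needs nothing further. The remaining work is to identify the two abstract limit pyramids with pyramids of the stated emm-spaces.

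For the middle case, I would repeat the proof of \Cref{cI:Int2} with variance $\lambda^2$. The scaling $x\mapsto\lambda x$ is an emm-isomorphism intertwining the unit-variance and variance-$\lambda^2$ structures once distances are scaled by $\lambda$, so the identity $\overline{\mathcal P_{\Gamma_{\lambda^2}^\infty}}^\square=\overline{\bigcup_n\mathcal P_{\X_\lambda^n}}^\square$ follows from the $\lambda=1$ statement. The inclusion $\supset$ holds because each finite product $\X_\lambda^n$ is dominated by $\Gamma^\infty_{\lambda^2}$ via the coordinate projection, which is $1$-Lipschitz from $\ell_2$ to the Euclidean distance. The inclusion $\subset$ holds because any $\Y\prec\Gamma^\infty_{\lambda^2}$ is a $\square$-limit of spaces dominated by finite products. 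For this I would compose the $1$-Lipschitz map with the conditional expectation onto the first $n$ coordinates and use that these maps converge in measure. The special case $\lambda=1$ then gives convergence to $\mathbb W$ by \Cref{cI:Int2}.

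For the dissipating case, I would show $\overline{\mathcal P_{\mathbb I_\infty}}^\square=\mathcal X$. Since $\sfd_\infty$ is $+\infty$ off the diagonal, every Borel map from $[0,1]$ to an mm-space $\Y$ is $1$-Lipschitz. Every mm-space $\Y$ admits a measure-preserving Borel parametrisation $[0,1]\to\Y$ from Lebesgue measure, so $\Y\prec\mathbb I_\infty$. Hence $\mathcal P_{\mathbb I_\infty}=\mathcal X$, which matches Shioya's limit.

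The main obstacle I expect is the $\subset$ approximation in the Gaussian case. It requires that $1$-Lipschitz maps for the extended $\ell_2$ distance, defined only on a full-measure set, are approximated in measure by $1$-Lipschitz functions of finitely many coordinates. I would take this from the proof of \Cref{cI:Int2} rather than redo it. For vector-valued targets, I would reduce to finitely many coordinates of a dense family and use $\square$-closedness.
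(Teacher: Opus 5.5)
Your proposal is correct and follows the paper's proof, which combines Shioya's phase-transition theorem \cite[Theorem 1.1]{Shi17} with $\mathcal P_{\mathbb I_\infty}=\mathcal X$ (\Cref{p:ME}), the product-space identification (\Cref{c:PD}) and the Wiener-space identification (\Cref{c:PWI}). The only difference is that the paper applies \Cref{c:PD} directly to the infinite product of $(\R,|\cdot|,\gamma_{\lambda^2})$, so neither your scaling reduction to $\lambda=1$ nor a separate conditional-expectation approximation is needed for general $\lambda$.
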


\paragraph{Pyramid representation for the configuration space.} 
Another example is the configuration space, which is given as an inverse limit of mm-spaces. 
Let $\X \subset \mathbb R$ be a locally compact Polish subspace. Let $\U(\X)$ be the space of locally finite discrete measures on~$\X$, i.e., the totality of $\gamma=\sum_{i=1}^N \delta_{x_i}$ with $N \in \mathbb N_0 \cup \{+\infty\}$, $x_i \in \X$, and $\gamma(K)<+\infty$ for every compact $K \subset \X$.  We endow $\U(X)$ with the topology $\tau_{\sf vague}$ by duality of compactly supported continuous functions in $X$. We simply write $\U$ when $\X=\R$. For $\gamma, \eta \in \U(\X)$, we define the following $\ell_2$-matching extended distance
$$\mssd_{\U(\X)}(\gamma, \eta)^2\coloneqq\inf_{\mssc \in \mathsf{Cpl(\gamma, \eta)}}\int_{\X \times \X} |x-y|^2 \diff \mssc (x, y), \qquad \inf \emptyset =+\infty,$$
where $\mathsf{Cpl}(\gamma, \eta)\coloneqq\{\mathsf c \in \U(\X \times \X): \mathsf c(A \times \X)=\gamma(A),\ \mathsf c(\X \times A)=\eta(A) \}.$
Let $B_n\coloneqq(-n ,n)$, $\bar{B}_n\coloneqq[-n, n]$ and $\partial B_n=\bar B_n \setminus B_n$. Define 
$$\mssd_n(\gamma, \eta)=\inf_{\alpha, \beta \in \U(\partial B_n)} \mssd_{\U(\bar{B}_n)}(\gamma|_{B_n}+\alpha, \eta|_{B_n}+\beta).$$ 
We write $\gamma \overset{\mssd_n}\sim \eta$ if $\mssd_n(\gamma, \eta)=0$. Let $\U_n$ be the quotient metric space of $\U$ with respect to the equivalence relation $\overset{\mssd_n}\sim$ and we denote by $\tilde\mssd_n$ for the quotient distance.  
Let $\pi^{(n)}$ be the Poisson measure in~$\U(\bar B_n)$ with unit intensity (see~\eqref{d:PM}) and $\pi$ be the Poisson measure in~$\U$ obtained by the inverse limit of $\pi^{(n)}$ with bonding map $p_{m,n}: \U(\bar B_n) \to \U(\bar B_m)$ as the restriction $p_{m,n}(\gamma):=\gamma|_{\bar B_m}$ with $m \le n$. 
We may regard $\pi^{(n)}$ as a measure on $\U$ by the inclusion~$\U(\bar B_n) \subset \U$.
Let $\pi_n$ be the quotient measure of $\pi^{(n)}$ on the quotient space~$\U_n$. We denote by $(\bar\U_n, \bar \sfd_n)$ be the completion of $(\U_n, \tilde{\sfd}_n)$ and $\bar \pi_n$ be the push-forward measure to the completion. 
\begin{corollary}[Configuration space] 
Let $\U=(\U, \tau_{\sf vague}, \mssd_\U, \pi)$  and $\U_n=(\bar\U_n, \bar\mssd_n, \bar\pi_n)$ be as above. Then, 
\begin{itemize}
    \item $($\Cref{c:CFP}$)$ $\overline{\mathcal P_{\U}}^\square=\overline{\cup_{n \in \N} \mathcal P_{\bar\U_n}}^\square.$
    \item $($\Cref{p:CNC}$)$ $(\U, \tau_{\sf vague}, \mssd_\U, \pi)$ is not concentrated.
\end{itemize}
\end{corollary}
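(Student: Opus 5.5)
The plan is to prove the two items separately: the first by comparing $\mssd_\U$ with the truncated distances $\mssd_n$ and approximating $1$-Lipschitz observables, the second through \Cref{t:t2} by exhibiting a non-compact family in $\mathcal L_1(\U)$.

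\emph{First item, the inclusion $\supseteq$.} I would show $\bar\U_n\prec\U$ for every $n$. The candidate map is the restriction $\gamma\mapsto\gamma|_{\bar B_n}$, followed by the quotient $\U(\bar B_n)\to\U_n$ and the inclusion into the completion $\bar\U_n$.
\begin{itemize}
\item It is measure-preserving: the restriction of $\pi$ to $\bar B_n$ has law $\pi^{(n)}$ by the inverse-limit construction, and quotienting gives $\pi_n$, hence $\bar\pi_n$.
\item It is $1$-Lipschitz because $\mssd_n\le \mssd_\U$. Given a coupling $\mssc\in\mathsf{Cpl}(\gamma,\eta)$, replace every point outside $B_n$ by its nearest point in $\partial B_n$. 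Since the nearest-point projection $\R\to\bar B_n$ is $1$-Lipschitz, this yields a coupling of $\gamma|_{B_n}+\alpha$ and $\eta|_{B_n}+\beta$ with $\alpha,\beta\in\U(\partial B_n)$ and no larger cost.
\end{itemize}
Hence $\mathcal P_{\bar\U_n}\subset\mathcal P_\U$, and taking $\square$-closures gives the inclusion.

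\emph{First item, the inclusion $\subseteq$.} Here I need that every $\Y\prec\U$ is a $\square$-limit of spaces in $\bigcup_n\mathcal P_{\bar\U_n}$. I would follow the same scheme as for the Wiener space (\Cref{c:PWI}).
\begin{enumerate}
\item Reduce to finitely many observables. Up to small $\square$-error, $\Y$ is determined by $k$ bounded $1$-Lipschitz functions $f^1,\dots,f^k$ on $\U$, i.e.\ a $1$-Lipschitz map $\U\to(\R^k,\ell_\infty)$.
\item Regularise each function by the inf-convolution
\[
f^i_n(\gamma):=\inf_{\eta}\bigl(f^i(\eta)+\mssd_n(\gamma,\eta)\bigr).
\]
Each $f^i_n$ is $\mssd_n$-$1$-Lipschitz, factors through $\bar\U_n$, satisfies $f^i_n\le f^i$, and is nondecreasing in $n$, since $\mssd_n\uparrow$ in $n$.
\item Show $f^i_n\to f^i$ $\pi$-a.e. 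Once this holds, the maps $(f^1_n,\dots,f^k_n)$ define spaces in $\mathcal P_{\bar\U_n}$ converging in $\square$ to the approximant of $\Y$, by dominated convergence in probability.
\end{enumerate}

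\emph{Main obstacle.} The hard step is the a.e.\ convergence $f^i_n\to f^i$. I would try to prove $\sup_n\mssd_n=\mssd_\U$ together with a lower-semicontinuity argument: take near-minimisers $\eta_n$ with $\mssd_n(\gamma,\eta_n)$ bounded, extract a vaguely convergent subsequence $\eta_n\to\eta$ (bounded $\mssd_n$ gives local finiteness uniformly on compacts), and use $\tau_{\sf vague}$-lower semicontinuity of $\mssd_\U$. A further point must be checked: $f^i$ itself is only defined $\pi$-a.e., and lower semicontinuity is needed to pass $f^i(\eta_n)$ to the limit. I would handle this by first replacing $f^i$ by a $\tau$-lower semicontinuous $1$-Lipschitz representative, or by truncating to a compact set of large measure.

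\emph{Second item.} By \Cref{t:t2} it suffices to show that $\mathcal L_1(\U)$ is not compact. Let $I_k=[2k,2k+1]$, let $A_k=\{\gamma:\gamma(\mathrm{int}\,I_k)=0\}$, and set
\[
f_k(\gamma):=\min\{1,\mssd_\U(\gamma,A_k)\}.
\]
\begin{itemize}
\item Each $f_k$ is $1$-Lipschitz.
\item $f_k$ depends only on $\gamma|_{I_k}$: the optimal move pushes the interior points of $I_k$ to its endpoints at finite cost.
\item $f_k$ is non-degenerate: it vanishes when $\gamma(\mathrm{int}\,I_k)=0$ and is positive otherwise, and both events have positive Poisson probability.
\end{itemize}
By Poisson independence on disjoint sets, the $f_k$ are i.i.d. For independent, identically distributed, nonconstant $X,Y$ there are $\varepsilon,\delta>0$ with $\inf_c\mathbb P(|X-Y-c|>\varepsilon)\ge\delta$. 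Hence the quotient Ky Fan distances between the classes of $f_k$ and $f_j$ are bounded below uniformly in $k\neq j$, and no subsequence is Cauchy. Therefore $\mathcal L_1(\U)$ is not compact, and $\U$ is not concentrated.
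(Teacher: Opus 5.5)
Your proposal is correct in substance. For the inclusion $\overline{\mathcal P_\U}^\square\subseteq\overline{\bigcup_n\mathcal P_{\bar\U_n}}^\square$ it takes a genuinely different route from the paper.

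The paper never approximates observables by hand. It checks that $J(\gamma)=(\bar{\mathsf p}_n(\gamma))_{n\in\N}$ is an emm-isomorphism $\U\cong\varprojlim\bar\U_n$. The isometry is exactly $\sup_n\mssd_n=\mssd_\U$, quoted from \cite{Suz25}; this is the same input you would need. Measure preservation comes from uniqueness of the inverse-limit measure. The paper then applies the general inverse-limit result \Cref{teo: piramidi - enmms}/\Cref{c:IVI}. That result rests on \Cref{proposition: LAP}: adapted Kuratowski embedding, cylinder functions, and density of $\tau$-continuous $1$-Lipschitz functions.

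Your inf-convolution $f_n(\gamma)=\inf_\eta\bigl(f(\eta)+\mssd_n(\gamma,\eta)\bigr)$ is in effect a direct, explicit proof of \Cref{proposition: LAP} for this particular inverse system. It gives monotone approximants that visibly factor through $\bar\U_n$ and bypasses the embedding machinery. The price is a semicontinuity/compactness argument tailored to configurations. The paper's route yields more: the isomorphism $\U\cong\varprojlim\bar\U_n$ itself, which can be reused (for instance, Cheeger energies via \Cref{proposition: mosco inverse limits}).

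For the second item you follow the paper's strategy: i.i.d.\ non-degenerate $1$-Lipschitz observables on disjoint windows, as in \Cref{lemma: no convergence iid}. Your $f_k$ differ from the paper's $u_n(\eta)=\bigl(\inf_{x\in\eta}|x-n|\bigr)\wedge\frac14$ but work equally well. Record the exact formula $\mssd_\U(\gamma,A_k)^2=\sum_{x\in\gamma\cap\mathrm{int}\,I_k}\mathrm{dist}(x,\partial I_k)^2$. Every point of $\gamma$ in $\mathrm{int}\,I_k$ must be matched outside $\mathrm{int}\,I_k$, and moving each such point to its nearest endpoint attains this. The formula gives locality and measurability at once.

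A few details need repair.

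\emph{The projection argument for $\mssd_n\le\mssd_\U$.} Projecting \emph{every} point outside $B_n$ onto $\partial B_n$ puts infinite mass at $\pm n$ for a Poisson configuration, so the resulting $\alpha,\beta$ are not in $\U(\partial B_n)$. First discard the pairs of $\mssc$ with both ends outside $B_n$; after that only finitely many points get projected.

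\emph{The semicontinuity step.} What you need is $\mssd_\U(\gamma,\eta)\le\liminf_n\mssd_n(\gamma,\eta_n)$ whenever $\eta_n\to\eta$ vaguely. This does not follow from lower semicontinuity of $\mssd_\U$, because $\mssd_\U(\gamma,\eta_n)$ can be far larger than $\mssd_n(\gamma,\eta_n)$. It follows instead from three facts:
\begin{enumerate}
\item vague lower semicontinuity of each $\mssd_m(\gamma,\cdot)$, which is a finite partial-matching problem in $B_m$ with free boundary;
\item $\mssd_m\le\mssd_n$ for $m\le n$;
\item $\sup_m\mssd_m=\mssd_\U$.
\end{enumerate}

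\emph{The representative of $f^i$.} An everywhere $1$-Lipschitz, $\tau_{\sf vague}$-l.s.c.\ representative of $f^i$ is not obviously available, so use your second option. By Lusin, pick a $\tau_{\sf vague}$-compact $K$ with $\pi(K)\ge1-\varepsilon$, contained in the set where all $f^i$ are $1$-Lipschitz, on which all $f^i$ are continuous. Take the infimum over $\eta\in K$ only. Then:
\begin{enumerate}
\item since $\mssd_n<\infty$ everywhere, $f^i_n$ is finite and $\mssd_n$-$1$-Lipschitz on all of $\U$;
\item $f^i_n\le f^i$ on $K$;
\item near-minimisers have convergent subsequences inside $K$, so vague compactness of sublevel sets is not needed;
\item the chain $f^i(\gamma)\le f^i(\eta)+\mssd_\U(\gamma,\eta)\le\lim_n f^i_n(\gamma)$ gives $f^i_n\to f^i$ on $K$, which suffices.
\end{enumerate}
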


\paragraph{Gaussian fields.} Here we discuss emm-spaces associated with Gaussian field models, which have a long history in probability theory. 
Let $H$ be a separable real Hilbert space with orthonormal basis
$(e_k)_{k \in \N}$.  Let
\[
1\le \lambda_1\le\lambda_2\le\cdots\lambda_k\longrightarrow\infty,
\]
and assume the Weyl-type growth
\[
\lambda_k\asymp k^{2/d}.
\]
For $t\in\mathbb R$, define
\[
H^t
:=
\left\{
x=\sum_{k=1}^\infty x_ke_k:
\sum_{k=1}^\infty\lambda_k^t x_k^2<\infty
\right\},
\]
with Hilbert norm
$
\|x\|_{H^t}^2
:=
\sum_{k=1}^\infty\lambda_k^t x_k^2.$
Note that for $t \ge 0$, $H^t$ is a subspace of $H$, and for negative~$t<0$, $H^t$ is the completion of $H$ with respect to the norm~$\|\cdot\|_{H^t}$.  Take $r, \alpha \in\mathbb R$ and assume  $\alpha+r >d/2$. Then by $\lambda_k \asymp k^{2/d}$ we have
\begin{equation}\label{eq:support-assumption}
\sum_{k=1}^{\infty}\lambda_k^{-r-\alpha}<\infty,
\end{equation}
and the series
\begin{equation}
\Phi:=\sum_{k=1}^{\infty}\lambda_k^{-\alpha/2}\,\xi_k e_k
\end{equation}
converges
 in $L^2(\Omega;H^{-r})$ and almost surely in $H^{-r}$ (see Subsection~\ref{ss:GFF}), where  $\xi_k: \Omega \to \R$ are independent real-valued centred Gaussian random variables with variance~$1$ defined in a probability space $(\Omega, \mathbb P)$.
Let $\mathfrak m$ be its law (i.e., the push-forward~$\Phi_\#\mathbb P$) on the Hilbert space
\[
\X:=H^{-r}
\]
equipped with its norm topology $\tau_{H^{-r}}$.
By $\Phi \in H^{-r}$ $\mathbb P$-a.s.~as seen above, we have $\mm(H^{-r})=1$. 
Let $\X_s=(H^{-r}, \tau_{H^{-r}}, \|\cdot\|_{H^s}, \mm)$ be the corresponding emm-space for $s \in \R$ and assume $s+r \ge 0$ to have $\tau_{H^{-r}} \subset \tau_{\mssd_s}$. 
\begin{corollary}[\Cref{cor:frac}]\label{corI:frac} Under the above assumption, the following hold:
\begin{enumerate}
 \item $\X_s \in \mathfrak X_{\sf conc}$  if and only if $s<\alpha$;
    \item $\X_s \in \partial \mathfrak X_{\sf conc}$ if and only if $\alpha - \frac{d}{2} \le s <\alpha$;
    \item if $s_n \to s \in (-\infty, \alpha)$ with $s_n + r \ge 0$ for sufficiently large $n$, then 
    $$\X_{s_n} \xrightarrow{\sfd_{\sf conc}} \X_s.$$
\end{enumerate}
\end{corollary}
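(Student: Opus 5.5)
Under the Karhunen--Lo\`eve map $x=\sum_k x_k e_k\mapsto (x_k)_k$, the emm-space $\X_s$ is isomorphic to a Gaussian product space. Under $\Phi$ the coordinates $x_k=\lambda_k^{-\alpha/2}\xi_k$ are independent centred Gaussians with variances $\lambda_k^{-\alpha}$. The distance $\|x-y\|_{H^s}^2=\sum_k\lambda_k^s(x_k-y_k)^2$ becomes the $\ell_2$ distance after rescaling by $y_k:=\lambda_k^{s/2}x_k$. Thus the map $T:H^{-r}\to\R^\infty$, $T(x)=(\lambda_k^{s/2}x_k)_k$, is injective and continuous from $\tau_{H^{-r}}$ to $\tau^\infty$. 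It is an isometry from $(H^{-r},\|\cdot\|_{H^s})$ onto its image with $\ell_2$, and it pushes $\mm$ to $\gamma^\infty=\bigotimes_k\gamma_{\sigma_k^2}$ with
\[
\sigma_k^2=\lambda_k^{s-\alpha}.
\]
Hence $\X_s\cong(\R^\infty,\tau^\infty,\ell_2,\gamma^\infty)$ in the sense of emm-isomorphism. The only care needed is that the image of $T$ carries full measure and that $T$ is an isometry on the whole space, which is immediate since $\ell_2$-finite differences correspond exactly to $H^s$-finite differences.

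Parts (1) and (2) then follow from \Cref{corollary: ideality gaussian}. Since $\lambda_k\to\infty$, we have $\sigma_k\to0$ if and only if $s<\alpha$, which gives (1). By the Weyl law, $\sum_k\sigma_k^2=\sum_k\lambda_k^{s-\alpha}\asymp\sum_k k^{2(s-\alpha)/d}$, which diverges if and only if $2(\alpha-s)/d\le1$, i.e.\ $s\ge\alpha-d/2$. This gives (2).

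For (3), the plan is to compare $\X_{s_n}$ and $\X_s$ through the Gaussian product picture, with variances $\sigma_k^{(n)}=\lambda_k^{(s_n-\alpha)/2}$ and $\sigma_k=\lambda_k^{(s-\alpha)/2}$. By \Cref{t:2}, it suffices to show $\sfd_{\sf conc}(\X_{s_n},\X_s)\to0$, for instance via the fibration criterion \Cref{proposition: definition via fibration concentrated}.

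1. Take $p_n=\mathrm{id}$ on $\R^\infty$, after rescaling coordinatewise by $c_k^{(n)}=\sigma_k/\sigma_k^{(n)}=\lambda_k^{(s-s_n)/2}$. This pushes $\gamma^{\infty,(n)}$ exactly to $\gamma^\infty$, and it is bi-Lipschitz on the first $N$ coordinates with constants tending to $1$.

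2. Handle the tail coordinates $k>N$ by concentration. Since $\sup_n\sup_{k>N}\sigma_k^{(n)}\to0$ as $N\to\infty$ (using $s_n\le s'<\alpha$ eventually), $1$-Lipschitz functions depend on the tail only up to small errors in probability. Gaussian concentration gives that a $1$-Lipschitz function of the tail block deviates from its median by $O(\sup_{k>N}\sigma_k)$ with high probability.

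3. Approximate every $1$-Lipschitz observable by one depending on finitely many coordinates, uniformly in $n$. Then compare the finite-dimensional blocks via the near-isometry from step 1.

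The main obstacle is step 2. The tail concentration must hold uniformly for Lipschitz functions of the whole tail, not just of its coordinates. I would first try the Gaussian isoperimetric inequality on the tail product, giving observable diameter at most a constant times $\sup_{k>N}\sigma_k^{(n)}$, and then combine this with the finite-dimensional box convergence using the triangle inequality for $\sfd_{\sf conc}$.
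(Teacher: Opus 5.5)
Your proposal is correct and follows essentially the paper's route. For (1)--(2), the paper reads off the criteria $a_k=\lambda_k^{(s-\alpha)/2}\to 0$ and $\sum_k a_k^2=\sum_k\lambda_k^{s-\alpha}=+\infty$ directly from \Cref{t:GT}, whereas you first pass through the explicit emm-isomorphism with the Gaussian product and then invoke \Cref{corollary: ideality gaussian}, which encodes the same criteria. For (3), your scheme (uniform Gaussian tail concentration, reduction of observables to the first $N$ coordinates, box convergence of the rescaled finite blocks, triangle inequality for $\sfd_{\sf conc}$) is precisely the proof of \Cref{t:CT}, which the paper simply invokes after checking $\sup_{k>N}\lambda_k^{(s_n-\alpha)/2}\le\lambda_{N+1}^{(\bar s-\alpha)/2}$ uniformly in $n$.
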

According to the choice of $\lambda_k$ and parametres $d, \alpha, s, r$, we can associate~$\X_s$ with various Gaussian field models such as path spaces with Brownian motion/Brownian bridge, massive Gaussian free field, spatial white noise and massive membrane (massive bi-Laplacian fields). We refer the readers to \Cref{ss:EGF} and Table~\ref{tab:gaussian-emm-examples} at the end of the paper. 

\subsection{Further remarks}
\paragraph{The role of the topology.}
In an extended metric measure space, the topology plays a somewhat different role from the one it plays in the classical metric measure setting. For an ordinary metric measure space, the metric topology simultaneously determines the Borel structure, the support of the measure, and the underlying metric geometry. 
In the extended setting these roles are separated. 
The topology~\(\tau\) supplies the measurable structure and determines the support of \(\mu\), whereas the extended distance \(\mathsf d\) carries the metric information. 
This separation is the reason why our notion of isomorphism is not required to be topological. 
The topology serves mainly as a framework in which measurability and support are defined; the metric-measure structures studied here depend, up to null sets, only on the measure and the extended distance. 
Consequently, a measure-preserving map that preserves \(\mathsf d\) on a full-measure set is sufficient to transfer the relevant
structures such as Cheeger energies, curvature bounds and related functional inequalities, see~\cite{SuYo25+}. 

That said, there are still a few places in this paper where  a choice of  a representative in emm-isomorphism classes having a nicer topological structure  does matter.  For instance,  in the extrinsic formulation of the convergence in \Cref{subsection: extrinsic convergence}, choosing an emm-representative from an extended topological-metric-measure space (\Cref{d:ETMS}), which has a slightly better topological condition than emm-spaces,  plays a key role. However,  this choice does not change the level of generality for our statements since \cite[Theorem 1.1]{SuYo25+} (recalled in~\Cref{lemma: inverse system}) guarantees that every emm-space has an emm-representative by an inverse limit emm-space, which is, in particular, an extended topological-metric-measure space. Thus, we can always choose an extended topological-metric-measure space as an emm-representative for every emm-space. 

\paragraph{Comparison to other modes of convergences.}
In metric geometry, many notions of convergence for spaces have been extensively studied such as  Lipschitz distances, the measured Gromov--Hausdorff distance, the Gromov--Prokhorov distance, the box distance and related variants. 
These frameworks have proved remarkably powerful in finite-dimensional settings. 
They are, however, not designed to capture families of spaces with genuinely unbounded dimension, nor do they adequately address the extended metric-measure setting considered here. 
Our results are  notably robust enough to accommodate a broad class of infinite-dimensional spaces that arise, for instance, in probability theory.

\paragraph{Related work on pyramid representations.}
A relation between pyramids and metric measure spaces with an extended distance has recently been addressed in~\cite[Theorem 1.6]{miyamoto_direct_2026}, where the terminology~``extended metric measure space~$(\X, \sfd, \mm)$"~there was used in a  more restrictive way than~this paper: $(\X, \sfd)$ is a complete separable extended metric space and $\mm$ is a Borel probability measure on~$(\X, \sfd)$. In particular, the distance and the topology inducing the Borel~$\sigma$-algebra for the measure are not allowed to decouple, and the distance topology has to be separable.   In~\cite[Theorem~1.6]{miyamoto_direct_2026}, a characterisation for pyramids to have geometric representations within this framework has been given in terms of {\it covering numbers of pyramids}, which is a sort of quantitative way to measure the separability of pyramids. For general pyramids, the covering numbers can blow up, hence one cannot obtain geometric representation of these pyramids within the framework of~\cite{miyamoto_direct_2026}. This provides another evidence that the decoupling between the topology and the distance is essential to obtain geometric representations of {\it every} pyramid.

\paragraph{Acknowledgement}
The first author was supported by the Italian Ministry of University and Research (MUR) under the 'Fondo Italiano per la Scienza' (FIS 3) program, PI: Prof. Nicola Gigli, Project Title: Modern challEnges in Geometric Analysis - MEGA, CUP: G53C25000920001
The third author was funded by the Austrian Science Fund (FWF) 10.55776/EFP6. 

\section{Preliminaries} \label{s:Pre}

\subsection{Lipschitz Order for Metric measure spaces}\label{sub: Lipschitz order mm-spaces}
\paragraph{Measure-theoretic preliminary.} Let~$(\X, \mm_\X)$ be a probability space. We simply write $\mm$ instead of $\mm_\X$ if no confusion could occur. 
We denote by $\L^0=\L^0(\X)$ the space of $\mm$-equivalence classes of real-valued $\mm$-measurable functions.  
For $1 \le p <+\infty$,  $\L^p(\X) \subset \L^0(\X)$ denotes the subspace consisting of $p$-absolutely integrable functions $\|f\|_{\L^p}:=(\int_\X |f|^p \diff \mm)^{1/p} <+\infty$. For $p=+\infty$, $\L^\infty(\X)$ denotes the subspace consisting of  $\mm$-essentially bounded functions $\|f\|_{\L^\infty}:=\essup_{\X}f <+\infty$.

The space $\L^0$ becomes a complete and separable metric space when endowed with the $\sfd_{\L^0}$-distance, a.k.a.~\emph{Ky-Fan distance}, defined as
\begin{equation} \label{d:KF}
    \sfd_{\L^0}(f, g)\coloneqq \inf\{\varepsilon>0: \mm(\{|f-g|>\varepsilon\})\leq\varepsilon\}.
\end{equation}
We say that a sequence $(f_n)_{n\in \N}\subset \L^0(\X)$ converges in $\L^0$ to $f$ and write $$f_n\xrightarrow{\L^0} f,$$ when $\sfd_{\L^0}(f_n, f)\to 0$.
Notice that this is equivalent to $f_n\to f$ {\it in probability}, and also equivalent to that every subsequence $k\mapsto f_{n_k}$ has a further subsequence $l\mapsto f_{n_{k_l}}$ converging to $f$~$\mm$-a.e..

For a metric space~$\Z$, we define $\L^0(\X; \Z)$ as the space of $\mm$-equivalence classes of $\Z$-valued $\mm$-measurable functions $T$ that are essentially separably-valued, i.e. there is a full measure set $E\subset \X$ such that $T(E)$ is measurable.
We endow it with the distance $\sfd_{\L^0}$, defined as
\begin{equation}
    \sfd_{\L^0}(S, T)\coloneqq \inf\bigg\{\varepsilon>0: \mm_{\X}\Big(\big\{\sfd_\Z(S, T)>\varepsilon\big\}\Big)<\varepsilon\bigg\}.
\end{equation}
Notice that every equivalence class in $\L^0(\X; \Z)$ contains a separably valued Borel map. 
The space~$\L^0(\X; \Z)$ is separable (respectively, complete) if $\Z$ is separable (respectively, complete).
Similarly to the case~$\L^0(\X)$, we say that a sequence $(T_n)_{n\in \N}\subset \L^0(\X; \Z)$ converges in $\L^0$ to $T$ and write $$T_n\xrightarrow{\L^0} T$$ when $\sfd_{\L^0}(T_n, T)\to 0$.
Notice that this is equivalent to $T_n\to T$ {\it in probability}, and also equivalent to that every subsequence $k\mapsto T_{n_k}$ has a further subsequence $l\mapsto T_{n_{k_l}}$ converging to $T$~$\mm$-a.e..

A real number $m$ is called {\it median of $f \in \L^0(\X)$}  if 
\begin{align} \label{d:median}
\mm(\{f\geq m\})\geq \frac 12,  \qquad \mm(\{f\leq m\})\geq \frac 12.
\end{align}
The set of medians is a compact interval in~$\R$, in particular, we can pick the maximum $m_+$ and the minimum~$m_-$ among the set of medians for $f$. 
The \emph{Lévy mean} of $f$ is defined as 
\begin{align} \label{def: lévy mean}
\mssm_f:=\frac{m_++m_-}{2}.
\end{align}

\paragraph{Weak topology.} Let $\X$ be a Polish space, i.e., a separable topological space whose topology is metrisable by a complete distance. Let $C^0_{\sf b}(\X)$ denote the space of continuous and bounded functions on $\X$. We denote by $\pr(\X)$ the space of Borel probability measures on $\X$.
Given a sequence $n\mapsto\mu_n\in\pr(\X)$, we say that $\mu_n$ weakly converges to $\mu$, denoted by $\mu_n\rightharpoonup\mu\in\pr(\X)$, if
\begin{equation}
    \int_\X f\,\d\mu_n\to\int_\X f\,\d\mu \qquad f\in C^0_{\sf b}(\X).
\end{equation}
The space $\pr(\X)$ endowed with the topology of weak convergence is metrizable and complete.
The Prohorov distance is a distance on $\mathcal P(\X)$ metrising the weak topology defined as follows: fixing a distance $\sfd$ metrising $\X$, for $\mu, \nu\in\pr(\X)$ the Prohorov distance between $\mu$ and $\nu$ is defined as
\begin{equation}
    \sfd_{\sf P}(\mu, \nu)=\inf\bigg\{\varepsilon>0: \forall A\in \mathcal B(\X), \mu\Big(B^{\sfd}(A, \varepsilon)\Big)\geq \nu(A)-\varepsilon\bigg\},
\end{equation}
where $\mathcal B(\X)$ is the Borel~$\sigma$-algebra in $\X$ and $B^{\sfd}(A, \varepsilon)=\{x\in \X:\sfd(x, A)<\varepsilon\}$ is the open $\varepsilon$-neighbourhood of the set $A$. 

Here we record a useful inequality between $\sfd_{\L^0}$ and $\sfd_{\sf P}$ for later use (see e.g.,~\cite[Lemma 1.26]{Sh16}): let $\X$ be a topological space with a Borel probability measure~$\mm$ and $\Y$ be a metric space. For any $\mm$-measurable maps~$f, g: \X \to \Y$, 
\begin{align} \label{in:LP}
   \sfd_{\sf P}(f_\#\mm, g_\#\mm) \le \sfd_{\L^0}(f, g). 
\end{align}

\paragraph{Metric measure space.} A \emph{metric measure space} (simply \emph{mm-space}) is a triplet~$(\X, \sfd_\X, \mm_\X)$ of a complete separable metric space $(\X, \sfd_\X)$ endowed with a locally finite Borel measure $\mm_\X$, namely, $\mm_\X(B)<\infty$ for every bounded Borel set $B \subset X$.
In the following we will always assume that our mm-spaces are normalised, i.e.~$\mm_\X$ is a probability measure. We denote by $\supp(\mm_\X)$ the topological support of $\mm_\X$. For simplicity, we shortly write $(\X, \sfd, \mm)$ instead of $(\X, \sfd_\X, \mm_\X)$ when no confusion could occur.

We denote by $\Lip(\X, \mm)$ the space of Lipschitz functions on $\supp(\mm)$ and by $\Lip_1(\X, \mm)$ the set of~$1$-Lipschitz functions~on~$\supp(\mm)$, namely those functions $f:\X\to\mathbb R$ satisfying
\begin{equation}\label{eq: contraction}
   |f(x)-f(y)|\leq \sfd(x, y) \quad x, y \in \supp(\mm) .
\end{equation}
Notice that convergence in measure for 1-Lipschitz functions coincides with uniform convergence on compact sets in~$\supp(\mm)$.
In particular, $\Lip_1(\X, \mm)$ endowed with $\sfd_{\L^0}$ becomes a complete and separable metric space.

We note that $\mm$-equivalence classes of $\Lip_1(\X, \mm)$, which, by a small abuse of notation we will denote in the same way, is identified with the space of all $\mm$-equivalence classes of functions $f$ such that there exists a set $E$ of full measure on which $f$ is $1$-Lipschitz.

The space $(\Lip_1(\X, \mm), \sfd_{\L^0})$ is not compact as it contains a copy of $\mathbb R$.
That is, though, the only obstacle to compactness: the quotient space 
\begin{equation} \label{d:L1X}
    \mathcal L_1(\X)\coloneqq \Lip_1(\X, \mm)/\mathbb R,
\end{equation}
obtained by modding out translations and endowed with the quotient distance, is a compact metric space, see \cite[Proposition 4.6]{Sh16}.
The compactness of $\mathcal L_1(\X)$ can be characterised in terms of L\'evy mean: a sequence $(f_n)_{n\in \N}\subset\Lip_1(\X, \mm_\X)$ of 1-Lipschitz functions on an mm-space $\X$ such that the Lévy mean of $f_n$ is 0 for all $n\in \N$ is precompact in $\L^0$.

\paragraph{Lipschitz order.}
Given two mm-spaces $\X$ and $\Y$, we write $\Y\prec \X$ if there exists an $\mm_\X$-measurable map $\varphi:\X\to \Y$ satisfying 
$$\varphi_\#\mm_\X=\mm_\Y, \qquad  \sfd_\Y(\varphi(x_1), \varphi(x_2))\leq \sfd_\X(x_1, x_2) \qquad x_1, x_2 \in \supp(\mm_\X). $$
We say that $\X$ and $\Y$ are {\it mm-isomorphic} (or simply {\it isomorphic}) if there exists $\varphi:\X\to \Y$ such that 
$$\varphi_\#\mm_\X=\mm_\Y, \qquad  \sfd_\Y(\varphi(x_1), \varphi(x_2)) = \sfd_\X(x_1, x_2) \qquad x_1, x_2 \in \supp(\mm_\X). $$

Notice that every $\mm_\X$-a.e.~defined map $\psi$ from $\X$ to $\Y$ which is measure preserving and an isometry on a full measure set can be  extended to an isomorphism $\varphi$ as above. 
In other words, two mm-spaces are mm-isomorphic if and only if there exists a set $E \subset \X$ with $\mm_\X(E)=1$ and a map $\psi: E \to Y$ such that 
$$\psi_\#(\mm_\X|_E)=\mm_\Y, \qquad  \sfd_\Y(\psi(x_1), \psi(x_2)) = \sfd_\X(x_1, x_2) \qquad x_1, x_2 \in E. $$

Indeed we  can extend the isometry~$\psi$ from $E \cap  \supp(\mm_\X)$ to the isometry on the  entire~$\supp(\mm_\X)$ uniquely by the density of $E \subset \supp(\mm_\X)$. We use the same notation~$\psi$ for this map. 
We then extend $\psi$ to a map~$\varphi$ defined on the entire space~$\X$, e.g., by choosing one point $y_* \in \Y$ and define 
\begin{equation} \label{e:EMI}
\varphi(x)=
\begin{cases}
y_* & \qquad \text{$x \in \X \setminus \supp(\mm_\X)$,} 
\\
\psi(x) & \qquad \text{$x \in \supp(\mm_\X)$,}
\end{cases}
\end{equation}
which gives a desired mm-isomorphism.

Moreover two mm-spaces $\X$ and $\Y$ are mm-isomorphic if and only if $\X\prec \Y$ and $\Y\prec \X$.
In particular, $\prec$ is a preorder relation, called \emph{Lipschitz order},  on the space $\mathcal X$ of isomorphism classes of mm-spaces (see e.g.~\cite[Section 2]{Sh16}).

\subsection{Convergence of metric measure spaces} \label{subsec:CM}
\paragraph{Measured Gromov--Prokhorov/Gromov's box distance.}
Here we recall two distance functions in the space~$\mathcal X$ of isomorphism classes of mm-spaces. 
{\it The measured Gromov--Prokhorov distance} is defined by the Prokhorov distance  between measures embedded into $\ell^\infty$: for mm-spaces~$\X$ and~$\Y$, 
\begin{equation}
    \sfd_{\sf GP}(\X, \Y)=\inf_{\varphi_\X, \varphi_\Y} \sfd_{\sf P}((\varphi_\X)_\#\mm_\X, (\varphi_\Y)_\#\mm_\Y),
\end{equation}
where $\varphi_\X, \varphi_\Y$ are isometric embeddings in $\ell^\infty$ and $\sfd_{\sf P}$ is the Prokhorov distance.

For a topological space~$\X$ with a Borel probability measure $\mm$, a Borel-measurable map $\iota: [0,1) \to \X$ is called {\it a parameter} if 
$$\iota_\# \mathsf{Leb} = \mm,$$
where $\mathsf{Leb}$ is the Lebesgue measure on $[0,1)$. Every mm-space has a parameter (see, e.g.,~\cite[Lemma~4.2]{Sh16}).
{\it The Gromov's box distance} is defined by comparing distance functions through parameters: for mm-spaces~$\X$ and~$\Y$, 
\begin{align} \label{e:Box}
    \square(\X, \Y)=\inf_{\iota_\X, \iota_\Y} \square(\iota_\X^*\sfd_\X, \iota_\Y^*\sfd_\Y),
\end{align}
where $\iota_\X$ (resp.~$\iota_\Y$) is a parameter, and the right-hand side $\square$ is defined to be the infimum of $\e \ge 0$ satisfying that  there exists a Borel subset $I_0 \subset [0,1)$ such that for every $s, t \in I_0$, 
$$|\iota_\X^*\sfd_\X(s, t)-\iota_\Y^*\sfd_\Y(s,t)| \le \e, \qquad \mathsf{Leb}(I_0) \ge 1-\e.$$
We refer the readers to \cite[Section~4.1]{Sh16} for further details.

These two distances in $\mathcal X$ are  complete and  induce the same separable topology in~$\mathcal X$ (see \cite{Lo13}). 
In particular, given a sequence of mm-spaces~$\X_n$ such that $\square(\X_n, \X) \to 0$ for some $\X\in \mathcal X$, there exists a sequence of isometric embeddings $\varphi_n:\X_n\to \ell^\infty$ and an isometric embedding $\varphi:\X\to \ell^\infty$ such that $(\varphi_n)_\#\mm_{\X_n}\rightharpoonup\varphi_\#\mm_\X$.

The topology induced by the box distance $\square$ is compatible with Lipschitz order $\prec$.
For instance, $\prec$ is a closed relation on $\mathcal X$ endowed with the $\square$-topology, i.e., if $\X_n, \Y_n\to \X, \Y$ in the box topology and $\X_n\prec \Y_n$,  
then 
\begin{align} \label{e:BC}
\X\prec \Y.
\end{align}
In the following, we record further useful results.
\begin{lemma}[{\cite[Lemma 6.10]{Sh16}}]\label{lemma: closure of pyramids} \ 
\begin{itemize}
    \item Let $\Y\prec \X$ be mm-spaces. Let $\X_n\to \X$ in the $\square$-topology. Then there exist mm-spaces $\Y_n\prec \X_n$ such that $\Y_n\to \Y$ in the $\square$-topology.
    \item Let $\X_n, \Y_n, \Z_n$ be sequences of mm-spaces~such that $\X_n, \Y_n\prec \Z_n$ and such that both $\X_n$ and $\Y_n$ are $\square$-precompact. 
    Then there exists a $\square$-precompact sequence of mm-spaces $\tilde{\Z}_n$ such that $\X_n, \Y_n\prec \tilde{\Z}_n\prec \Z_n$.
\end{itemize}
\end{lemma}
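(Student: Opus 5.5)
The plan is to prove the two statements separately. The first is a transport-of-Lipschitz-maps argument through parameters. The second is a "graph" construction combined with Gromov's precompactness criterion for the box topology.

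\emph{First statement.} Let $\varphi:\X\to\Y$ be measure preserving and $1$-Lipschitz on $\supp(\mm_\X)$. Since $\square(\X_n,\X)\to 0$, I pick parameters $\iota_n$ of $\X_n$ and $\iota$ of $\X$, numbers $\varepsilon_n\to0$ and Borel sets $I_n\subset[0,1)$ with $\mathsf{Leb}(I_n)\ge 1-\varepsilon_n$ such that
\[
|\sfd_{\X_n}(\iota_n s,\iota_n t)-\sfd_\X(\iota s,\iota t)|\le\varepsilon_n \qquad \text{for } s,t\in I_n .
\]
Next I embed $\Y$ isometrically into $\ell^\infty$ via Kuratowski functions $g_k(y)=\sfd_\Y(y_k,y)-\sfd_\Y(y_k,y_0)$, where $(y_k)$ is dense in $\supp(\mm_\Y)$. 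Each $g_k\circ\varphi\circ\iota$ is $1$-Lipschitz with respect to $\iota^*\sfd_\X$. Hence, on $I_n$, it satisfies $|g_k\varphi\iota(s)-g_k\varphi\iota(t)|\le \sfd_{\X_n}(\iota_n s,\iota_n t)+\varepsilon_n$.

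I then define on $\X_n$ the McShane-type infimal convolution
\[
h^n_k(x)\coloneqq\inf_{t\in I_n}\bigl(g_k\varphi\iota(t)+\sfd_{\X_n}(x,\iota_n t)\bigr).
\]
This function is $1$-Lipschitz on $\X_n$. The almost-Lipschitz bound above gives $g_k\varphi\iota-\varepsilon_n\le h^n_k\circ\iota_n\le g_k\varphi\iota$ on $I_n$. To make the infimum measurable, I restrict it to a countable dense subset of $I_n$ for the pulled-back pseudometric; the bounds survive up to an arbitrarily small error.

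The map $\Phi_n=(h^n_k)_k:\X_n\to\ell^\infty$ is then $1$-Lipschitz. I set $\Y_n\coloneqq(\overline{\Phi_n(\X_n)},\|\cdot\|_\infty,(\Phi_n)_\#\mm_{\X_n})$, which is separable because $\X_n$ is, and $\Y_n\prec\X_n$ by construction. Moreover $\|\Phi_n\circ\iota_n-G\circ\varphi\circ\iota\|_\infty\le\varepsilon_n$ on $I_n$, where $G=(g_k)_k$. By \eqref{in:LP}, the Prokhorov distance in $\ell^\infty$ between $(\Phi_n)_\#\mm_{\X_n}$ and $G_\#\mm_\Y$ is therefore at most $\varepsilon_n$. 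Hence $\sfd_{\sf GP}(\Y_n,\Y)\to0$, and this is equivalent to $\square$-convergence by \cite{Lo13}.

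\emph{Second statement.} Let $\varphi_n:\Z_n\to\X_n$ and $\psi_n:\Z_n\to\Y_n$ be the dominating maps. I take $\tilde\Z_n$ to be the closure of the image of $(\varphi_n,\psi_n):\Z_n\to\X_n\times\Y_n$, with the $\ell^\infty$-product distance $\max(\sfd_{\X_n},\sfd_{\Y_n})$ and the push-forward measure. This map is $1$-Lipschitz on $\supp(\mm_{\Z_n})$, so $\tilde\Z_n\prec\Z_n$. The coordinate projections are $1$-Lipschitz and measure preserving, so $\X_n,\Y_n\prec\tilde\Z_n$.

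For precompactness I use Gromov's criterion (\cite[Section~4]{Sh16}). A family is $\square$-precompact if and only if, for every $\varepsilon>0$, there are $N(\varepsilon)$ and $D(\varepsilon)$ such that each space has $N(\varepsilon)$ sets of diameter $\le\varepsilon$, whose union has measure $\ge1-\varepsilon$ and diameter $\le D(\varepsilon)$. Apply this to $\X_n$ and $\Y_n$. Products $A_i\times B_j$ of the corresponding sets, intersected with $\tilde\Z_n$, give $N(\varepsilon)^2$ sets of diameter $\le\varepsilon$. Their union has diameter $\le D(\varepsilon)$, and its measure is at least $1-2\varepsilon$, since the marginals of $\mm_{\tilde\Z_n}$ are $\mm_{\X_n}$ and $\mm_{\Y_n}$. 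This verifies the criterion for $(\tilde\Z_n)$.

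The main obstacle is the first statement: producing genuinely $1$-Lipschitz maps on $\X_n$ from an approximate isometry that is controlled only on a large-measure parameter set, while keeping measurability. The infimal convolution over a countable dense subset of $I_n$ is the device I would use to handle both issues.
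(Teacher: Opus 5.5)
Your proof is correct. The paper gives no proof of its own and simply quotes \cite[Lemma 6.10]{Sh16}; your argument is the standard route to that result: in the first part, the $\varepsilon$-box approximation is upgraded to a genuinely $1$-Lipschitz $\ell^\infty$-valued map by coordinatewise McShane extension, and in the second part, the $\ell^\infty$-graph coupling $(\varphi_n,\psi_n)$ is checked against the box-precompactness criterion. The countable-dense-subset device is unnecessary, since each $h^n_k$ is an infimum of $1$-Lipschitz functions and is therefore already $1$-Lipschitz, continuous and Borel. The one point you leave implicit, $\sup_k|h^n_k(x)|<\infty$, follows from $|g_k(y)|\le d_Y(y,y_0)$ together with your almost-Lipschitz bound at a fixed $t_0\in I_n$, so $\Phi_n$ does take values in $\ell^\infty$.
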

The precompactness with respect to the topology induced by the $\square$ distance can be characterised by the preorder relation $\prec$ due to the following result in \cite{KaYo21}: a subset $\mathcal Y\subset \mathcal X$ is $\square$-precompact if and only if it is $\prec$-bounded, i.e.~there exists $\X\in\mathcal X$ such that $\Y\prec \X$ for all $\Y\in\mathcal Y$. 

We record a useful inequality between $\square$ and $\sfd_{\sf P}$: Let $\X$ be a complete separable metric space. For any two Borel probability measures $\mm_1$ and $\mm_2$ on~$\X$, 
\begin{align} \label{in:BP}
    \square\bigl((\X, \mm_1), (\X, \mm_2)\bigr) \le 2 \sfd_{\sf P}(\mm_1, \mm_2).
\end{align}
See~e.g.,~\cite[Proposition 4.12]{Sh16}.

\paragraph{Observable distance.}
 To study families of mm-spaces having a concentration phenomenon, typically with unbounded dimensions, Gromov introduced the {\it observable distance} in~\cite[Section $3\frac{1}{2}$]{Gro06}~by comparing the space $1$-Lipschitz functions on different mm-spaces. 
We recall that the Hausdorff distance between two closed subsets $A, B$ of a metric space $(\Z, \sfd_\Z)$ is defined as $$\sfd_{\sfd_\Z}^{\sf H}(A, B)\coloneqq\inf\{\varepsilon>0: A\subset B^{\sfd_\Z}(B, \varepsilon), B\subset B^{\sfd_\Z}(A, \varepsilon)\}.$$

The observable distance between two mm-spaces~$\X$ and $\Y$ is defined by pulling back $1$-Lipschitz functions on $\X$ and $\Y$ to~$[0,1)$ through parameters and measuring their distance  by the Hausdorff distance on the metric space~$(\L^0, \sfd_{\L^0})$. 
\begin{definition}[Observable distance] \label{d: conc mm}
    Given $\X, \Y$ in $\mathcal X$, 
\begin{equation}\label{eq: conc mm}
        \sfd_{\sf conc}(\X, \Y)\coloneqq \inf_{\iota_\X, \iota_\Y} \sfd_{\L^0}^{\sf H}(\iota_\X^*\Lip_1{(\X, \mm_\X)}, \iota_\Y^*\Lip_1{(\Y, \mm_\Y)}),
    \end{equation}
where $\iota_\X, \iota_\Y$ are parameters of $\X$ and $\Y$ respectively, $\iota_\X^*, \iota_\Y^*$ are the corresponding pullbacks, and  $\sfd_{\L^0}^{\sf H}$ is the Hausdorff distance among closed sets of $(\L^0([0,1), \mathsf{Leb}), \mssd_{\L^0})$. 
\end{definition}

The observable distance~$\mssd_{\sf conc}$ metrises the measure-concentration topology, e.g., $\mathbb S^n(1)$ ($n$-sphere with radius $1$) converges to one-point mm-space as $n \to \infty$ under~$\mssd_{\sf conc}$ (e.g., \cite[Corollary~5.8]{Sh16}).
Note that the topology induced by $\mssd_{\sf conc}$ is strictly weaker than that of $\square$ (e.g., \cite[Proposition 5.5]{Sh16}):
\begin{align} \label{e:BXD}
\sfd_{\sf conc}(\X, \Y) \le \square(\X, \Y) \qquad \X, \Y \in \mathcal X.
\end{align}
The \emph{Lipschitz order} relation $\prec$ is closed also with respect to convergence in concentration (for instance, as a consequence of \cite[Proposition 6.2]{Sh16}). 

There is another equivalent definition of convergence in concentration, which is  relevant to the discussion later in order to extend the measure-concentration topology to a broader setting of extended metric measure spaces.  
The following definition is due to Nakajima (\cite{nakajimaBoxDistanceObservable2022}), based on ideas in Optimal Transport:
given $\X, \Y$ in $\mathcal X$ and a coupling $\ppi\in \adm(\mm_\X, \mm_Y)$, we write
\begin{equation}
    \sfd^\sppi_{\sf conc}(\X, \Y)=\sfd_{\L^0(\sppi)}^{\sf H}(\proj_\X^*\Lip_1{(\X, \mm_\X)}, \proj_\Y^*\Lip_1{(\Y, \mm_\Y)}),
\end{equation}
where $\proj_\X, \proj_\Y$ are the projections from the product space~$\X \times \Y$ to each factors and $\adm(\mm_\X, \mm_Y)\coloneqq \{\ppi\in \pr(\X\times\Y): (\proj_1)_\#\ppi=\mm_\X, (\proj_2)_\#\ppi=\mm_\Y\}$.  
\begin{proposition}[{\cite{nakajimaBoxDistanceObservable2022}}]
    Given $\X, \Y$ in $\mathcal X$, the set $\{\sfd^\pi_{\sf conc}(\X, \Y): \pi\in\adm(\mm_\X, \mm_\Y)\}$ admits a minimum.
    Moreover, 
    \begin{equation}
        \sfd_{\sf conc}(\X, \Y)=\min_{\pi\in\adm(\mm_\X, \mm_\Y)}\sfd^\pi_{\sf conc}(\X, \Y).
    \end{equation}
\end{proposition}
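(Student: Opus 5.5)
The plan is to prove Nakajima's proposition by showing two inequalities, and to obtain the existence of a minimiser by a compactness argument on couplings.

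First, $\sfd_{\sf conc}(\X,\Y)\le \sfd^\pi_{\sf conc}(\X,\Y)$ for every $\pi\in\adm(\mm_\X,\mm_\Y)$.
\begin{itemize}
\item Since $\X\times\Y$ with $\pi$ is a standard Borel probability space, it admits a parameter $\iota:[0,1)\to\X\times\Y$ with $\iota_\#\mathsf{Leb}=\pi$.
\item Then $\iota_\X:=\proj_\X\circ\iota$ and $\iota_\Y:=\proj_\Y\circ\iota$ are parameters of $\X$ and $\Y$.
\item Pullback by $\iota$ is an isometric map $\L^0(\pi)\to\L^0(\mathsf{Leb})$ for the Ky Fan distance, since it preserves the distribution of $|f-g|$.
\item Hence $\sfd_{\L^0}^{\sf H}(\iota_\X^*\Lip_1(\X),\iota_\Y^*\Lip_1(\Y))$ equals $\sfd^\pi_{\sf conc}(\X,\Y)$.
\end{itemize}

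Second, the reverse inequality.
\begin{itemize}
\item Given parameters $\iota_\X,\iota_\Y$, set $\pi:=(\iota_\X,\iota_\Y)_\#\mathsf{Leb}$, which is a coupling.
\item The map $(\iota_\X,\iota_\Y)^*$ is an isometry from $\L^0(\pi)$ into $\L^0(\mathsf{Leb})$ carrying $\proj_\X^*\Lip_1(\X)$ onto $\iota_\X^*\Lip_1(\X)$, and likewise for $\Y$.
\item The Hausdorff distance is preserved under an isometric embedding, so the two quantities coincide.
\end{itemize}
Combining the two inequalities, $\sfd_{\sf conc}=\inf_\pi\sfd^\pi_{\sf conc}$.

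Third, attainment of the infimum. The set $\adm(\mm_\X,\mm_\Y)$ is weakly compact, by tightness of the marginals and Prokhorov's theorem. Take a minimising sequence $\pi_k\rightharpoonup\pi$. It suffices to prove lower semicontinuity of $\pi\mapsto\sfd^\pi_{\sf conc}(\X,\Y)$ under weak convergence. Fix $\varepsilon>\liminf_k\sfd^{\pi_k}_{\sf conc}$ and $f\in\Lip_1(\X)$; we need $g\in\Lip_1(\Y)$ with $\sfd_{\L^0(\pi)}(f\circ\proj_\X,g\circ\proj_\Y)\le\varepsilon$ (and symmetrically).
\begin{itemize}
\item Normalise by Lévy means. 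Only the bounded region matters up to an arbitrarily small error, so we may reduce to $f$ with Lévy mean $0$.
\item For each $k$ choose $g_k\in\Lip_1(\Y)$ with $\sfd_{\L^0(\pi_k)}(f\circ\proj_\X,g_k\circ\proj_\Y)<\varepsilon$. The Lévy means of $g_k$ stay bounded, since their laws are $\varepsilon$-close in Prokhorov distance to that of $f$.
\item By compactness of $\mathcal L_1(\Y)$ together with these bounded means, extract $g_k\to g$ uniformly on compact subsets of $\supp(\mm_\Y)$. Truncate at large levels so that all functions are bounded.
\end{itemize}

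The main obstacle is passing the Ky Fan estimate to the limit coupling. The plan for this step is as follows.
\begin{itemize}
\item Write the estimate as $\pi_k(\{|f(x)-g_k(y)|>\varepsilon\})\le\varepsilon$.
\item Use tightness to restrict to a compact $K\times L$ carrying mass $\ge1-\delta$ for all $\pi_k$.
\item On $K\times L$, $g_k\to g$ uniformly, so $\{|f-g|>\varepsilon+\delta\}\cap(K\times L)$ is contained in $\{|f-g_k|>\varepsilon\}$ for large $k$.
\item That set is relatively open, so the Portmanteau theorem gives $\pi(\{|f-g|>\varepsilon+\delta\})\le\varepsilon+2\delta$.
\end{itemize}
Letting $\delta\to0$ gives the one-sided Hausdorff bound. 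The symmetric argument handles the other inclusion, so $\sfd^\pi_{\sf conc}\le\liminf_k\sfd^{\pi_k}_{\sf conc}$ and the minimum is attained.

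A subtle point is that the Hausdorff bound is needed uniformly over $f$. This is handled by the $\varepsilon$-slack: for each fixed $f$ we get closeness within $\varepsilon+\delta$, which suffices for the Hausdorff distance to be $\le\varepsilon$ after letting $\delta\to0$.
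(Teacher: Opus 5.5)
Your proof is correct. Its first two steps are the argument the paper gives for the analogous identity $\sfd_{\sf conc}=\inf_{\pi}\sfd^{\pi}_{\sf conc}$ for concentrated emm-spaces, which the paper says follows Nakajima. There, parameters $\iota_\X,\iota_\Y$ induce the coupling $(\iota_\X,\iota_\Y)_\#\mathsf{Leb}$, a parameter of $(\X\times\Y,\pi)$ induces parameters of the factors, and pullback by measure-preserving maps is a Ky Fan isometry. The paper never proves that the infimum is attained: for mm-spaces it only cites Nakajima, and for emm-spaces it states an infimum. So your third step is content the paper does not supply. It uses Prokhorov compactness of $\adm(\mm_\X,\mm_\Y)$ together with lower semicontinuity of $\pi\mapsto\sfd^{\pi}_{\sf conc}$, driven by compactness of $\mathcal L_1(\Y)$.

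Two details in that step need a small repair.
\begin{itemize}
\item[(i)] Prokhorov $\varepsilon$-closeness of the laws of $g_k$ to the law of $f$ bounds their medians only when $\varepsilon<1/2$, and nothing forces $\liminf_k\sfd^{\pi_k}_{\sf conc}<1/2$. For $\varepsilon<1$, fix $\delta<1-\varepsilon$ and note that $\pi_k\bigl((K\times L)\cap\{|f-g_k|\le\varepsilon\}\bigr)\ge 1-\varepsilon-\delta>0$. Hence $|g_k|\le\sup_K|f|+\varepsilon$ at some point of $L$, so the $g_k$ are pointwise bounded on $\supp(\mm_\Y)$, which is all Arzel\`a--Ascoli needs. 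For $\varepsilon\ge 1$ there is nothing to prove, since Ky Fan distances never exceed $1$.
\item[(ii)] The Portmanteau theorem does not apply to the relatively open set $U\cap(K\times L)$, where $U:=\{|f-g|>\varepsilon+\delta\}$. Apply it to $U$ itself. For the continuous representatives, $U$ is open in $\supp(\mm_\X)\times\supp(\mm_\Y)$, which carries all couplings, and you get $\pi(U)\le\liminf_k\pi_k(U)\le\varepsilon+\delta$.
\end{itemize}

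Uniform convergence on compacts is also more than you need. Every $\pi_k$ has second marginal $\mm_\Y$, so $\pi_k(\{|g_k(y)-g(y)|>\delta\})=\mm_\Y(\{|g_k-g|>\delta\})$. Hence $\L^0(\mm_\Y)$-convergence of $g_k$ suffices, and continuity of $f,g$ can be replaced by Lusin approximation. In that form your compactness argument would also give attainment for concentrated emm-spaces, where the paper stops at the infimum.
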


\begin{remark}[Incompleteness of $\sfd_{\sf conc}$] \label{r:IC}
$(\mathcal X, \sfd_{\sf conc})$ is incomplete as a metric space. This means that elements in the metric completion $\overline{\mathcal X}^{\sfd_{\sf conc}}$ do not generally have metric-measure structures.
An example of such incompleteness is e.g., due to~\cite[Example 7.36]{Sh16}: let~$\mathbb S^j(1)$ be the $j$-dimensional unit sphere with intrinsic distance and normalised volume measure.  The sequence $\X_n=\prod_{j=1}^n \mathbb S^j(1)$ is then a $\sfd_{\sf conc}$-Cauchy sequence. In this case, the limit candidate~$\prod_{j=1}^\infty \mathbb S^j(1)$ is not an mm-space because the product distance does not metrise the product topology. 
\end{remark}

\subsection{Pyramids}\label{subsection: pyramids}
In~\cite{Gro06}, Gromov introduced a generalisation of mm-spaces called {\it pyramids}: 
\begin{definition}
    A {\it pyramid} $\mathcal P$ is a subset in $\mathcal X$ such that: 
    \begin{enumerate}[label=\textnormal{(P\arabic*)}]
        \item $\mathcal P$ is non empty and closed under $\square$-topology (simply called $\square$--closed);
        \item\label{it: left closure} if $\X\in \mathcal P$ and $\Y\prec \X$, then $\Y\in \mathcal P$;
        \item\label{it: filtered} if $\X, \Y\in\mathcal P$, then there exists $\Z\in\mathcal P$ such that $\X, \Y\prec \Z$.
    \end{enumerate}
\end{definition}
\begin{lemma}
    Let $\mathcal Q\subset \mathcal X$ be a non-empty set satisfying \ref{it: left closure} and \ref{it: filtered}. Then $\overline{\mathcal Q}^\square$ is a pyramid.
\end{lemma}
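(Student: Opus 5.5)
We need to verify (P1)–(P3) for $\overline{\mathcal Q}^\square$. Property (P1) is immediate: $\overline{\mathcal Q}^\square$ is nonempty because $\mathcal Q$ is, and it is $\square$-closed by construction. The work lies in showing that (P2) and (P3) survive passage to the closure. Throughout we use sequences, since the $\square$-topology is metric.

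\textbf{(P2).} Let $\X\in\overline{\mathcal Q}^\square$ and $\Y\prec\X$. Choose $\X_n\in\mathcal Q$ with $\X_n\xrightarrow{\square}\X$. The first item of Lemma~\ref{lemma: closure of pyramids} gives mm-spaces $\Y_n\prec\X_n$ with $\Y_n\xrightarrow{\square}\Y$. Since $\mathcal Q$ satisfies (P2), each $\Y_n\in\mathcal Q$, and therefore $\Y\in\overline{\mathcal Q}^\square$.

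\textbf{(P3).} This is the main step. Let $\X,\Y\in\overline{\mathcal Q}^\square$ and choose $\X_n,\Y_n\in\mathcal Q$ converging to $\X,\Y$ in $\square$. By (P3) for $\mathcal Q$ there are $\Z_n\in\mathcal Q$ with $\X_n,\Y_n\prec\Z_n$.

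The difficulty is that $(\Z_n)$ need not converge, so we must replace it by a compact family. Convergent sequences are $\square$-precompact, so the second item of Lemma~\ref{lemma: closure of pyramids} yields a $\square$-precompact sequence $\tilde\Z_n$ with $\X_n,\Y_n\prec\tilde\Z_n\prec\Z_n$. Each $\tilde\Z_n$ lies in $\mathcal Q$ by (P2) for $\mathcal Q$.

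The space $(\mathcal X,\square)$ is complete, so precompactness gives a subsequence $\tilde\Z_{n_k}\xrightarrow{\square}\Z$ for some $\Z\in\mathcal X$. In particular $\Z\in\overline{\mathcal Q}^\square$.

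Finally, $\prec$ is closed under $\square$-convergence by \eqref{e:BC}. Applying this to $\X_{n_k}\prec\tilde\Z_{n_k}$ and $\Y_{n_k}\prec\tilde\Z_{n_k}$ gives $\X,\Y\prec\Z$, which establishes (P3).
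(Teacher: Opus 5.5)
Your proof is correct and takes the same approach as the paper. The paper only states that (P2) and (P3) pass to the closure by Lemma~\ref{lemma: closure of pyramids} together with the closedness~\eqref{e:BC} of $\prec$ under $\square$-convergence; you have written out those details, including passing to a convergent subsequence of the precompact $\tilde\Z_n$.
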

\begin{proof}
    It is enough to prove that $\overline{\mathcal Q}^\square$ still satisfies \ref{it: left closure} and \ref{it: filtered}, which follows at once from \Cref{lemma: closure of pyramids} and the fact~\eqref{e:BC} that the $1$-Lipschitz order is preserved by the $\square$-convergence.
\end{proof}

The space of pyramids $\Pi$ can be endowed with the topology of Painlevé-Kuratowski convergence~$\tau_\Pi$ (see \cite{Be93}), which is  also called \emph{weak convergence of pyramids}. Athough the Painlev\'e-Kuratowski convergence is well-defined in any metric space,  we here recall the definition adapted to our specific setting of pyramids.
Given a sequence $(F_n)_{n\in \N}$ of $\square$--closed subsets of $\mathcal X$, the set of limit points is defined as 
\begin{equation}
    \underbar{F}_\infty\coloneqq \{\X\in\mathcal X:\, \exists \X_n\in F_n, \  \X_n\xrightarrow{\square}\X\}.
\end{equation}
Similarly, the set of cluster points is defined as
\begin{equation}
    \bar{F}_\infty\coloneqq \{\X\in\mathcal X:\, \exists n_k \uparrow \infty,\  \exists \X_{n_k}\in F_{n_k}, \ \X_{n_k}\xrightarrow{\square}\X  \}.
\end{equation}
The inclusion~$\underbar{F}_\infty \subset\bar{F}_\infty$ is always true.
The sequence $(F_n)_{n\in \N}$ {\it converges in the Painlevé-Kuratowski sense} to the limit set $F$ if and only if $F=\underbar{F}_\infty=\bar{F}_\infty$.
\begin{theorem}[{\cite{Gro06}, \cite[Theorem 6.22]{Sh16}}]
    The space of pyramids $\Pi$ is compact with respect to the toplogy~$\tau_\Pi$.
    Moreover, $\tau_\Pi$ is metrizable.
\end{theorem}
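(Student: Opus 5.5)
The plan is to derive both assertions from general facts about Painlevé--Kuratowski convergence of closed subsets of the separable metric space $(\mathcal X,\square)$. The pyramid-specific input will come from \Cref{lemma: closure of pyramids}, the closedness \eqref{e:BC} of $\prec$, and the Kato--Yokota characterisation of $\square$-precompact sets as $\prec$-bounded sets.

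\emph{Step 1 (subsequential limits exist).} Fix a countable dense set $\{\X_k\}_{k\in\N}\subset\mathcal X$ and a sequence $(\mathcal P_n)\subset\Pi$. For each $k$ and each rational $r>0$, the sequence of indicators of the events $\{\mathcal P_n\cap B^\square(\X_k,r)\neq\emptyset\}$ takes values in $\{0,1\}$. A diagonal argument therefore gives a subsequence along which every such indicator is eventually constant. Along this subsequence one checks by the standard Kuratowski argument that $\underline{F}_\infty=\bar F_\infty=:\mathcal P$. Indeed, if $\X\in\bar F_\infty$, then every ball $B^\square(\X_k,r)$ containing $\X$ meets $\mathcal P_{n}$ for infinitely many $n$, hence eventually for all $n$. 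Since a countable family of such balls shrinks to $\X$, this yields $\X\in\underline F_\infty$.

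\emph{Step 2 (the limit is a pyramid).} The set $\mathcal P$ is $\square$-closed, being a Kuratowski limit. It is nonempty because the one-point space $\star$ is dominated by every mm-space and so lies in every pyramid. For (P2), take $\X\in\mathcal P$ with $\X_n\in\mathcal P_n$ and $\X_n\to\X$, and let $\Y\prec\X$. The first bullet of \Cref{lemma: closure of pyramids} provides $\Y_n\prec\X_n$ with $\Y_n\to\Y$. Since $\Y_n\in\mathcal P_n$ by (P2) for $\mathcal P_n$, we get $\Y\in\mathcal P$. For (P3), take $\X,\Y\in\mathcal P$ with approximants $\X_n,\Y_n\in\mathcal P_n$, and choose $\Z_n\in\mathcal P_n$ dominating both. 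The second bullet of \Cref{lemma: closure of pyramids} replaces $\Z_n$ by a $\square$-precompact sequence $\tilde\Z_n$ with $\X_n,\Y_n\prec\tilde\Z_n\prec\Z_n$, so that $\tilde\Z_n\in\mathcal P_n$. A subsequence of $\tilde\Z_n$ converges to some $\Z$, which lies in $\bar F_\infty=\mathcal P$. By \eqref{e:BC}, $\X,\Y\prec\Z$. Hence every sequence in $\Pi$ has a $\tau_\Pi$-convergent subsequence.

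\emph{Step 3 (metrizability).} I would define
\[
\rho(\mathcal P,\mathcal Q)=\sum_k 2^{-k}\min\{1,|\square(\X_k,\mathcal P)-\square(\X_k,\mathcal Q)|\},
\]
where $\square(\X_k,\mathcal P)$ denotes the $\square$-distance from $\X_k$ to the set $\mathcal P$. The aim is to show that $\rho$-convergence (Wijsman convergence) coincides with $\tau_\Pi$-convergence on $\Pi$. Once this holds, $\tau_\Pi$ is compact by Step 1 and first countable and metrizable via $\rho$. Note also that $\rho$ separates closed sets, so it is a genuine metric.

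The direction Wijsman $\Rightarrow$ Kuratowski is general. Convergence of $\square(\X_k,\cdot)$ on a dense set controls both $\underline F_\infty$ and $\bar F_\infty$.

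The converse is the main obstacle, since $(\mathcal X,\square)$ is not locally compact and the implication fails for general closed sets. The specific difficulty is upper semicontinuity of $\square(\X_k,\mathcal P_n)$: we need near-minimisers $\Y_n\in\mathcal P_n$ of $\square(\X_k,\cdot)$ to accumulate at points of the limit set. My plan is to exploit (P2) to force precompactness. Given $\Y_n$ with $\square(\X_k,\Y_n)<r$, I would construct $\Y_n'\prec\Y_n$ that is dominated by a fixed mm-space depending only on $\X_k$ and $r$, with $\square(\X_k,\Y_n')\le Cr$. For instance, I would push $\Y_n$ forward by a $1$-Lipschitz map onto an $\epsilon$-net of the region near $\X_k$, in the spirit of the proof of \Cref{lemma: closure of pyramids}. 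Kato--Yokota then makes $(\Y_n')$ precompact, so its limit points lie in $\mathcal P$. If this yields only a constant $C>1$, I would fall back on Shioya's argument, which proves metrizability of $\tau_\Pi$ directly from compactness and second countability.
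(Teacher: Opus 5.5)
The paper does not prove this statement; it quotes it from Gromov and Shioya. Your Steps 1 and 2 are correct. They are the standard proof of sequential compactness: Kuratowski compactness of closed sets in the separable space $(\mathcal X,\square)$, then \Cref{lemma: closure of pyramids} and \eqref{e:BC} to see that the limit is a pyramid.

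The gap is in Step 3, in the implication from Painlev\'e--Kuratowski to Wijsman convergence. This is the entire content of the metrisability claim. What you need is $\liminf_n\square(X_k,\mathcal P_n)\ge\square(X_k,\mathcal P)$, and you do not prove it. (You call it upper semicontinuity, but $\limsup_n\square(X_k,\mathcal P_n)\le\square(X_k,\mathcal P)$ is the trivial half, coming from $\mathcal P\subset\underline F_\infty$.)

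The sketched reduction does not work as stated:
\begin{itemize}
\item A $1$-Lipschitz measure-preserving map cannot in general land on a finite separated net; it is constant if $\mathrm{supp}\,\mathfrak m_Y$ is connected.
\item If you instead use coordinates $y\mapsto\mathsf d_Y(y,y_i)$ with single points $y_i\in Y$ matched to net points of $X_k$, the lower distance bound degrades to $\mathsf d_{X_k}-2r-O(\varepsilon)$. This is because $y_i$ is only known to lie within $r+\varepsilon$ of the relevant points of $Y$.
\item An estimate $\square(X_k,\mathcal P)\le C\liminf_n\square(X_k,\mathcal P_n)$ with $C>1$ does not give convergence of $\square(X_k,\mathcal P_n)$. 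So it does not show that $\rho$ metrises $\tau_\Pi$.
\end{itemize}
The fallback is not an argument either. To use Urysohn's theorem you would first need a Hausdorff second-countable topology on $\Pi$ whose convergent sequences are exactly the weakly convergent ones, and exhibiting such a topology is precisely the claim. (Shioya instead writes down an explicit metric built from Hausdorff distances between the compact slices $\{\mu:(\mathbb R^N,\ell_\infty,\mu)\in\mathcal P,\ \mathrm{supp}\,\mu\subset B_R\}$; compactness of these slices is what makes vanishing errors sufficient there.)

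Your route can be completed with constant exactly $1$. The missing idea is to take the countable dense set to consist of finite mm-spaces, and to use distances to whole fibres rather than to single points. Restricting to finite spaces is enough, since $X\mapsto\square(X,\mathcal P)$ is $1$-Lipschitz.

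Let $X=\{x_1,\dots,x_k\}$ with $\square(X,Y)<r$, witnessed by parameters $\varphi,\psi$ and a set $I_0$ with error $r'<r$. Put $J_i=I_0\cap\varphi^{-1}(x_i)$, $R=\mathrm{diam}\,X$, and $f_i(y)=\min\{\inf_{u\in J_i}\mathsf d_Y(y,\psi(u)),R\}$. Then $f=(f_1,\dots,f_k)\colon Y\to([0,R]^k,\ell_\infty)$ is $1$-Lipschitz. For $s\in J_i$ and $t\in J_j$ one has $f_i(\psi(s))=0$, hence
\[
\mathsf d_X(x_i,x_j)-r'\le f_i(\psi(t))\le\|f(\psi(s))-f(\psi(t))\|_\infty\le\mathsf d_Y(\psi(s),\psi(t))\le\mathsf d_X(x_i,x_j)+r'.
\]
So $Y'=([0,R]^k,\ell_\infty,f_\#\mathfrak m_Y)\prec Y$ satisfies $\square(X,Y')\le r'$, using the parameters $\varphi$ and $f\circ\psi$. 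All such $Y'$ lie in a $\square$-compact family depending only on $X$, by Prokhorov's theorem and \eqref{in:BP}.

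Apply this to $Y_n\in\mathcal P_n$ with $\square(X,Y_n)<r$ along a subsequence. Then (P2) gives $Y_n'\in\mathcal P_n$, and a further subsequence converges to some $Z\in\bar F_\infty=\mathcal P$ with $\square(X,Z)\le r$. This is exactly the missing inequality.

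With this, your Wijsman metric does metrise $\tau_\Pi$, and Steps 1--2 give compactness. Compared with the slice metric, this route is more elementary. However, it is not clear whether your $\rho$ also makes $\mathcal P_\bullet$ $1$-Lipschitz for $\mathsf d_{\mathsf{conc}}$, a property the paper quotes immediately afterwards.
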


There is a natural embedding of $\mathcal X$ into $\Pi$ by which we can regard the space~$\Pi$ of pyramids as a compactification of the space $\mathcal X$ of mm-spaces.  
\begin{theorem}[{\cite{Gro06}\cite[Theorem 6.23, 6.25]{Sh16}}]
    The map $\mathcal P_\bullet:(\mathcal X, \sfd_{\sf conc}) \to (\Pi, \tau_\Pi)$ defined as 
    \begin{align}
        \X\mapsto\mathcal P_\X\coloneqq \{\Y\prec \X\}
    \end{align}
    is a topological embedding (homeomorphism on the image), i.e.,  it is injective and $\mathcal X \ni \X_n\concto \X \in \mathcal X$ if and only if $\mathcal P_{\X_n}\xrightarrow{\tau_\Pi} \mathcal P_\X$. 
    Furthermore, there exists a metric $\rho$ inducing $\tau_\Pi$ such that $\mathcal P_\bullet: (\mathcal X, \sfd_{\sf conc})\to (\Pi, \rho)$ is 1-Lipschitz.
\end{theorem}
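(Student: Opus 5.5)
Since this result is quoted from \cite{Gro06} and \cite[Theorems 6.23, 6.25]{Sh16}, I would reprove it along the lines of Shioya. The plan has three parts: injectivity, an explicit metric $\rho$ together with the Lipschitz estimate, and continuity of the inverse.

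\emph{Injectivity.} Since $\X\in\mathcal P_\X$, the equality $\mathcal P_\X=\mathcal P_\Y$ gives $\X\prec\Y$ and $\Y\prec\X$. By the discussion in Section~\ref{sub: Lipschitz order mm-spaces}, this means $\X$ and $\Y$ are mm-isomorphic.

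\emph{The metric $\rho$.} I would realise $\tau_\Pi$ through finite measurements. For $N,R\in\N$, let $\mathcal X(N,R)$ be the set of mm-spaces $(B^N_R,\ell^\infty,\mu)$, where $B^N_R$ is the closed $\ell^\infty$-ball of radius $R$ in $\R^N$ and $\mu$ is a Borel probability measure. By Prokhorov's theorem and \eqref{in:BP}, $\mathcal X(N,R)$ is $\square$-compact. For pyramids $\mathcal P$ set $\mathcal P^{N,R}:=\mathcal P\cap\mathcal X(N,R)$ and define
\[
\rho(\mathcal P,\mathcal P')\coloneqq c\sum_{N,R}2^{-N-R}\,\sfd^{\sf H}_\square\bigl(\mathcal P^{N,R},\mathcal P'^{N,R}\bigr),
\]
where $c>0$ is a normalising constant fixed below. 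I would then check that $\rho$ induces $\tau_\Pi$. The point is that every $\X\in\mathcal P$ is a $\square$-limit of spaces of the form $\Phi_\#\mm_\X$, with $\Phi=(f_1,\dots,f_N)$ a $1$-Lipschitz map into $(\R^N,\ell^\infty)$ truncated at level $R$. Hence Painlev\'e--Kuratowski convergence is equivalent to Hausdorff convergence of every slice $\mathcal P^{N,R}$, by compactness of the slices.

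\emph{The Lipschitz estimate.} Suppose $\sfd_{\sf conc}(\X,\Y)<\varepsilon$, and fix parameters $\iota_\X,\iota_\Y$ realising this. Take $\Z=\Phi_\#\mm_\X\in\mathcal P_\X^{N,R}$ with $\Phi=(f_i)_{i\le N}$. For each $f_i$ choose $g_i\in\Lip_1(\Y,\mm_\Y)$ with $\sfd_{\L^0}(\iota_\X^*f_i,\iota_\Y^*g_i)<\varepsilon$, and truncate at $\pm R$, which does not increase the distance. Set $\Psi=(g_i)_{i\le N}$. Then $\sfd_{\L^0}(\iota_\X^*\Phi,\iota_\Y^*\Psi)\le N\varepsilon$ as $\ell^\infty$-valued maps, because the exceptional sets have total measure at most $N\varepsilon$. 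By \eqref{in:LP} and \eqref{in:BP},
\[
\square\bigl(\Z,\Psi_\#\mm_\Y\bigr)\le 2N\varepsilon,
\qquad \Psi_\#\mm_\Y\in\mathcal P_\Y^{N,R}.
\]
Exchanging the roles of $\X$ and $\Y$ gives $\sfd^{\sf H}_\square\le 2N\varepsilon$ on every slice. Since $\sum_{N,R}2^{-N-R}N<\infty$, choosing $c$ to be the inverse of twice this sum makes $\mathcal P_\bullet$ $1$-Lipschitz, and in particular continuous.

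\emph{Continuity of the inverse.} This is the main obstacle. Assume $\mathcal P_{\X_n}\to\mathcal P_\X$ and suppose $\sfd_{\sf conc}(\X_n,\X)\ge\delta$ along a subsequence. I would argue by contradiction, using the compactness of $\mathcal L_1(\X)$ from \cite[Proposition 4.6]{Sh16} and the L\'evy-mean normalisation. Pick a finite $\delta/10$-net $f_1,\dots,f_N$ of $\mathcal L_1(\X)$, normalised to have L\'evy mean zero and truncated at a large level $R$.
\begin{itemize}
\item Since $\Phi_\#\mm_\X$ lies in the $\liminf$ of the pyramids, there are $\Y_n\prec\X_n$ converging in $\square$ to it. Lifting through couplings and the $1$-Lipschitz maps $\X_n\to\Y_n$, every $f_i$ is approximated by $1$-Lipschitz functions on $\X_n$.
\item For the reverse inclusion, I would argue by contradiction. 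Suppose some $h_n\in\Lip_1(\X_n)$ stays far from all these approximants. Then the joint measurement $(h_n,\text{approximants})$ defines spaces in $\mathcal P_{\X_n}^{N+1,R}$. By compactness of the slice they converge along a subsequence to an element of the $\limsup$, which lies in $\mathcal P_\X$. This produces a $1$-Lipschitz function on $\X$ which, together with the $f_i$, contradicts the net property.
\end{itemize}
The delicate part is making these approximations compatible under a single pair of parameters, so that they yield a genuine bound on the Hausdorff distance. To handle this I would use Nakajima's coupling formulation $\sfd^\pi_{\sf conc}$ and choose the couplings from the $\square$-approximations of the measurements.
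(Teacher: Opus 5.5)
The paper does not prove this statement. It imports it from \cite{Gro06} and \cite[Theorems 6.23, 6.25]{Sh16}, and your plan follows that route: slices $\mathcal P^{N,R}$, a weighted sum of Hausdorff distances, and a slice estimate linear in $N$. Your injectivity argument and the bound $\sfd^{\sf H}_\square(\mathcal P_\X^{N,R},\mathcal P_\Y^{N,R})\le 2N\,\sfd_{\sf conc}(\X,\Y)$ are correct.

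You use one ingredient without stating it, both for ``$\rho$ induces $\tau_\Pi$'' and in your first bullet. You need that if an element of $\mathcal X(N,R)$ is a $\square$-limit of $\Z_n\in\mathcal P_n$, then it is also a $\square$-limit of elements of $\mathcal P_n\cap\mathcal X(N,R)$. Density of the slices does not give this. It needs a refined form of Lemma~\ref{lemma: closure of pyramids}: the coordinates of $\varepsilon_n$-mm-isomorphisms are $1$-Lipschitz up to $\varepsilon_n$, so replace them by genuinely $1$-Lipschitz functions and truncate.

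The genuine gap is the last step of the inverse direction. The weak limit $\mu$ of the joint measurements $(\Phi_n,h_n)_\#\mm_{\X_n}$ satisfies $(B^{N+1}_R,\ell_\infty,\mu)\in\mathcal P_\X$. So it is the law of some $1$-Lipschitz map $(\Psi,h)\colon\X\to B^{N+1}_R$, but nothing forces $\Psi=\Phi$; you only know $\Psi_\#\mm_\X=\Phi_\#\mm_\X$. What passes to the limit is that $h$ is far, modulo constants, from the coordinates $\Psi^1,\dots,\Psi^N$, not from $f_1,\dots,f_N$. An isometric copy of a net need not be a net: in $\{0,1,2\}\subset\mathbb R$, $\{1\}$ is a $1$-net but $\{0\}$ is not. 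So ``together with the $f_i$, contradicts the net property'' does not follow. This identification problem is the real content of the inverse direction. The common-coupling issue you flag is the easy part: Strassen's theorem plus gluing give one $\pi_n$ under which $\Phi_n\approx\Phi$.

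There are two ways to close it.
\begin{itemize}
\item \emph{Capacity.} Do not take an arbitrary net. Take $\{f_1,\dots,f_N\}\subset\mathcal L_1(\X)$ with pairwise distances $>\eta$ and of maximal cardinality; it is finite by compactness and automatically an $\eta$-net. Pairwise $L^0/\mathbb R$ distances between coordinates depend only on the joint law, so $\Psi^1,\dots,\Psi^N$ are again pairwise more than $\eta$ apart. Adding $h$ then gives such a family of cardinality $N+1$, a contradiction.
\item \emph{Rigidity.} Approximate $\X$ itself by $\Y_n\prec\X_n$, append $h_n$ as an extra coordinate, and pass to the limit. You get $\X\prec\Z\prec\X$, hence $\Z\cong\X$. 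The pullback of $\mathcal L_1(\X)$ under the limit coordinate projection is then an isometric self-embedding of a compact space, hence onto, as in the proof of Theorem~\ref{theorem: isomorphism from pyramid}.
\end{itemize}
In either version you must also check two things. First, separation modulo constants survives weak limits of the joint laws; it does, because the Ky Fan functional of a continuous function is continuous under weak convergence. Second, L\'evy-mean-zero $1$-Lipschitz functions on the $\X_n$ are uniformly tight, so truncating $h_n$ at level $R$ does not destroy the separation.
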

The last part above about the $1$-Lipschitz property implies that the embedding can be extended to the $\mssd_{\sf conc}$-completion 
\begin{align} \label{d:EPY}
\mathcal P_\bullet:\overline{\mathcal X}^{\sfd_{\sf conc}}\to \Pi,
\end{align}
which is also a topological embedding (\cite{Gro06}\cite[Theorem 7.27]{Sh16}).
\begin{lemma}[{\cite[Definition 7.13, Lemma 7.14]{Sh16}}]\label{lemma: approximation of pyramids} 
    Let $\mathcal P\in \Pi$ be any pyramid. Then there exists a sequence of mm-spaces $(\X_n)_{n\in \N}$ such that 
    \begin{itemize}
        \item  $\X_n \prec \X_{n+1}$ for every $n \in \N$;
        \item $\mathcal P=\overline{\bigcup_n \mathcal P_{\X_n}}^\square$
        or, equivalently,  $\mathcal P_{\X_n}\to \mathcal P$ in $\tau_\Pi$.
     \end{itemize}     
\end{lemma}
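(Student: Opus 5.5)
The statement to prove is \Cref{lemma: approximation of pyramids}, which is quoted from \cite{Sh16}. I would prove it by exploiting separability of $(\mathcal X,\square)$ together with the filtered property \ref{it: filtered}.

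First, since $(\mathcal X,\square)$ is separable (it is metrised equivalently by $\sfd_{\sf GP}$), the subset $\mathcal P$ is separable as well. I therefore pick a countable $\square$-dense family $\{\Y_k\}_{k\in\N}\subset\mathcal P$. I then build the chain inductively. Set $\X_1:=\Y_1$. Given $\X_n\in\mathcal P$, apply \ref{it: filtered} to the pair $\X_n,\Y_{n+1}\in\mathcal P$ to obtain $\X_{n+1}\in\mathcal P$ with $\X_n,\Y_{n+1}\prec\X_{n+1}$. This immediately gives the monotonicity $\X_n\prec\X_{n+1}$. By induction it also gives $\Y_k\prec\X_n$ for every $k\le n$.

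Next I would prove the equality $\mathcal P=\overline{\bigcup_n\mathcal P_{\X_n}}^\square$. For the inclusion $\supset$: each $\X_n$ lies in $\mathcal P$, so by \ref{it: left closure} we have $\mathcal P_{\X_n}\subset\mathcal P$, and $\mathcal P$ is $\square$-closed by (P1). For the inclusion $\subset$: each $\Y_k$ belongs to $\mathcal P_{\X_k}$, so the union contains a dense subset of $\mathcal P$, and its closure contains $\mathcal P$.

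Finally, I would show the equivalence with $\mathcal P_{\X_n}\to\mathcal P$ in $\tau_\Pi$. Because $\mathcal P_{\X_n}\subset\mathcal P_{\X_{n+1}}$, the sets are increasing. I expect the main subtlety to lie here: one has to check that for an increasing sequence of closed sets the Painlevé–Kuratowski limit is exactly the closure of the union. The argument would run as follows.
\begin{itemize}
\item If $\Z\in\overline{\bigcup_n\mathcal P_{\X_n}}^\square$, pick $\Z^j\in\mathcal P_{\X_{n_j}}$ with $\square(\Z^j,\Z)<1/j$, where $n_j$ is increasing. Monotonicity lets us set $\Z_n:=\Z^j$ for $n_j\le n<n_{j+1}$. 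For $n<n_1$, set $\Z_n:=\star$, which is dominated by every mm-space. Then $\Z_n\in\mathcal P_{\X_n}$ and $\Z_n\to\Z$, so $\Z$ lies in the lower limit.
\item Conversely, any cluster point lies in the closure of the union, since the union contains every $\mathcal P_{\X_n}$.
\end{itemize}
Hence lower limit $=$ upper limit $=\overline{\bigcup_n\mathcal P_{\X_n}}^\square$. This gives both the convergence and the equivalence of the two formulations, since the $\tau_\Pi$-limit is unique and $\tau_\Pi$ is metrizable.
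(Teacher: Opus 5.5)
Your proof is correct and matches the standard argument behind \cite[Lemma~7.14]{Sh16}, which the paper quotes without reproducing it: a countable $\square$-dense subset of $\mathcal P$ combined with (P3) yields the increasing chain $\X_n$, and the monotonicity $\mathcal P_{\X_n}\subset\mathcal P_{\X_{n+1}}$ identifies the Painlev\'e--Kuratowski limit with $\overline{\bigcup_n \mathcal P_{\X_n}}^\square$. The appeal to metrizability of $\tau_\Pi$ is not needed, since uniqueness of the limit follows directly from its definition as the common lower and upper limit.
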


Let $\mathcal H$ be the collection of  isometry classes of compact metric spaces. Recalling that $\mathcal L_1(\X) = \Lip_1(\X, \mm)/\R$ was given in \eqref{d:L1X}, we can think of $\mathcal L_1$ as a map from $\mathcal X$ to $\mathcal H$ by the assignment~$X \mapsto \mathcal L_1(X) \in \mathcal H$. 
\begin{definition}[Asymptotic mm-spaces/concentrated pyramids] \label{d:Asy} \ 
\begin{itemize}
\item A sequence of mm-spaces~$(\X_n)_{n\in \N}$ is {\it asymptotic (to a pyramid $\mathcal P$)} if the associated pyramids $\mathcal P_{\X_n}$ converge in $\tau_\Pi$ (to $\mathcal P$).
\item    A pyramid $\mathcal P$ is {\it concentrated} if 
\begin{equation}
    \{\mathcal L_1(\X):\, \X\in \mathcal P\}
\end{equation}
is Gromov-Hausdorff precompact in $\mathcal H$. 
\end{itemize}
\end{definition}

\begin{theorem}[{\cite[Theorem~7.25, Lemma 7.7, Proposition 7.29]{Sh16}}] \label{t:EPM}
    The following hold:
    \begin{itemize}
        \item A pyramid $\mathcal P$ is concentrated if and only if $\mathcal P=\mathcal P_\X$ for $\X \in \overline{\mathcal X}^{\sfd_{\sf conc}}$, i.e., $\mathcal P$ belongs to the image of the map $\mathcal P_\bullet: \overline{\mathcal X}^{\sfd_{\sf conc}}\to \Pi$.
        \item The map $\mathcal L_1: \mathcal X \to \mathcal H$ is 1-Lipschitz with respect to $\sfd_{\sf conc}$ on $\mathcal X$ and the Gromov-Hausdorff distance on $\mathcal H$. In particular, $\mathcal L_1$ can be extended to~$\overline{\mathcal X}^{\sfd_{\sf conc}}$.
        \item The map $\mathcal L_1: \overline{\mathcal X}^{\sfd_{\sf conc}} \to \mathcal H$  is proper, i.e.~the preimage of a compact set is compact. 
    \end{itemize}
\end{theorem}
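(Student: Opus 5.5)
The plan is to prove the three items in the order (ii), (iii), (i), since the first item rests on the other two.

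\emph{Step 1: the $1$-Lipschitz property of $\mathcal L_1$.} For an mm-space $\X$ and a parameter $\iota_\X$, the pullback $\iota_\X^*$ preserves Ky Fan distances, because $\iota_\X$ is measure preserving. Hence $\iota_\X^*$ is an isometric embedding of $(\Lip_1(\X,\mm_\X),\sfd_{\L^0})$ into $\L^0([0,1))$. Its image is invariant under adding constants, so $\iota_\X^*$ descends to an isometric embedding of $\mathcal L_1(\X)$ into $\L^0([0,1))/\R$ with the quotient distance. Next I show that, for two $\R$-invariant closed sets $A,B$, the Hausdorff distance of $A/\R$ and $B/\R$ in the quotient is at most $\sfd^{\sf H}_{\L^0}(A,B)$. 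This holds because passing to the quotient does not increase distances. The Gromov--Hausdorff distance between $\mathcal L_1(\X)$ and $\mathcal L_1(\Y)$ is bounded by this Hausdorff distance, since both sit isometrically in a common metric space. Taking the infimum over parameters gives $\sfd_{\sf GH}(\mathcal L_1(\X),\mathcal L_1(\Y))\le \sfd_{\sf conc}(\X,\Y)$. Finally, $(\mathcal H,\sfd_{\sf GH})$ is complete, so $\mathcal L_1$ extends uniquely, still $1$-Lipschitz, to $\overline{\mathcal X}^{\sfd_{\sf conc}}$.

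\emph{Step 2: properness, the main obstacle.} Let $\mathcal K\subset\mathcal H$ be compact. Since $\mathcal L_1^{-1}(\mathcal K)$ is closed by continuity and the completion is complete, it suffices to show total boundedness. By density and Step 1 I may work with genuine mm-spaces $\X$ whose $\mathcal L_1(\X)$ lies within $\varepsilon$ of $\mathcal K$. Gromov precompactness then gives a uniform $N=N(\varepsilon)$ and, for each such $\X$, $1$-Lipschitz functions $f_1,\dots,f_N$ with the following property: every $f\in\Lip_1(\X,\mm_\X)$ is $\sfd_{\L^0}$-close (up to $C\varepsilon$) to some $f_i+c$. I normalise each $f_i$ to have Lévy mean $0$. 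Put $\Phi=(f_1,\dots,f_N):\X\to(\R^N,\ell^\infty)$ and $\Y=(\R^N,\ell^\infty,\Phi_\#\mm_\X)$. Then $\Y\prec\X$, and the coordinate functions are $1$-Lipschitz on $\Y$. Using the common parameter $\Phi\circ\iota_\X$, I get $\Phi^*\Lip_1(\Y)\subset\Lip_1(\X)$, and $\Lip_1(\X)$ lies in the $C\varepsilon$-neighbourhood of $\Phi^*\Lip_1(\Y)$. Therefore $\sfd_{\sf conc}(\X,\Y)\le C\varepsilon$.

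It remains to show that the family of such $\Y$, over all admissible $\X$, is $\square$-precompact. Each $\Y$ lives on $\R^N$ with fixed $N$. I expect the hard part to be proving tightness of these measures uniformly. The intended mechanism is this: Lévy mean $0$ together with the compactness of $\mathcal K$ controls $\mm(|f_i|>R)$ uniformly. Indeed, if mass escaped, then the truncations $\min(|f_i|,R)$ would produce elements of $\mathcal L_1(\X)$ at large distance from one another, contradicting the uniform covering numbers. Once tightness is established, Prokhorov and \eqref{in:BP} give $\square$-precompactness, and \eqref{e:BXD} turns this into $\sfd_{\sf conc}$-precompactness. Combined with $\sfd_{\sf conc}(\X,\Y)\le C\varepsilon$, this yields finite $C'\varepsilon$-nets, hence total boundedness.

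\emph{Step 3: the characterisation of concentrated pyramids.} Suppose $\mathcal P=\mathcal P_\X$ with $\X\in\overline{\mathcal X}^{\sfd_{\sf conc}}$, and let $\X_n\to\X$ in $\sfd_{\sf conc}$ with $\X_n\in\mathcal X$. If $\Y\prec\X_n$, pullback embeds $\mathcal L_1(\Y)$ isometrically into $\mathcal L_1(\X_n)$. The spaces $\mathcal L_1(\X_n)$ converge in $\sfd_{\sf GH}$ to the compact space $\mathcal L_1(\X)$ by Step 1, so they have uniformly bounded covering numbers. These bounds pass to subsets and, by Step 1 again, to $\square$-limits, because $\square$-convergence implies $\sfd_{\sf conc}$-convergence. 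Since $\mathcal P$ is the Painlevé--Kuratowski limit of the $\mathcal P_{\X_n}$, the family $\{\mathcal L_1(\Z):\Z\in\mathcal P\}$ is GH-precompact.

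Conversely, let $\mathcal P$ be concentrated. I take $\X_n\prec\X_{n+1}$ with $\mathcal P_{\X_n}\to\mathcal P$ as in \Cref{lemma: approximation of pyramids}. Each $\X_n$ lies in $\mathcal P$, so the spaces $\mathcal L_1(\X_n)$ lie in a compact subset of $\mathcal H$. By Step 2 a subsequence converges in $\sfd_{\sf conc}$ to some $\X\in\overline{\mathcal X}^{\sfd_{\sf conc}}$. By continuity of the extended embedding \eqref{d:EPY}, $\mathcal P_\X=\lim\mathcal P_{\X_{n_k}}=\mathcal P$.
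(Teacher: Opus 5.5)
The paper does not prove this statement; it imports it from \cite{Sh16}, so I am judging your argument on its own. Steps~1 and~3 are correct. The overall architecture also works: the Lipschitz bound, then properness, then the characterisation as a soft consequence of compactness of $\mathcal L_1^{-1}(\mathcal K)$ and continuity of the extended map $\mathcal P_\bullet$.

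\textbf{The gap is in Step~2.} For fixed $\varepsilon$, two of your claims are false: that the measures $\Phi_\#\mm_\X$ are uniformly tight, and that the family of $\Y$'s is $\square$-precompact. Admissible spaces only satisfy $\sfd_{\sf GH}(\mathcal L_1(\X),\mathcal K)<\varepsilon$. This controls the covering numbers of $\mathcal L_1(\X)$ only at scales $\gtrsim\varepsilon$.

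Here is a counterexample. Take $\mathcal K=\{\mathcal L_1(\star)\}$ and $0<\eta<\varepsilon$. Let $\X_L=\{a,b\}$ with $\sfd(a,b)=L$ and $\mm=(1-\eta)\delta_a+\eta\delta_b$.
\begin{itemize}
\item Any two elements of $\mathcal L_1(\X_L)$ are within $\eta$ of each other: match the constant at $a$. So $\X_L$ is admissible with $N=1$.
\item $f_1=L\mathbf 1_{\{b\}}$ is a legitimate net centre, and it has L\'evy mean $0$.
\item Yet $(1-\eta)\delta_0+\eta\delta_L$ is not tight as $L\to\infty$, and the resulting $\Y_L$, $L\in\mathbb N$, are pairwise at box distance $\ge\eta$.
\end{itemize}
Relatedly, ``large distance'' cannot be the mechanism, since the quotient Ky Fan distance never exceeds $1$. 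The contradiction has to come from cardinality.

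\textbf{How to repair it.} Use the tail bound only at the scale where it holds, and then truncate.
\begin{itemize}
\item Let $f\in\Lip_1(\X,\mm_\X)$ have L\'evy mean $0$, so $\mm(f\le 0)\ge\frac12$. Suppose also $\mm(f>R)>\delta$, with $\delta<\frac12$.
\item Then the functions $\min(f^+,3\delta j)$, $0\le j\le R/(3\delta)$, are pairwise $\delta$-separated in $\mathcal L_1(\X)$. Indeed, for any constant $c$, the difference of two of them minus $c$ has modulus $\ge\frac{3\delta}{2}$ on one of the sets $\{f\le 0\}$ or $\{f>R\}$, both of measure $>\delta$.
\item Take $\delta=3\varepsilon$. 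A correspondence of distortion $<2\varepsilon$ turns such a family into an $\varepsilon$-separated subset of some $K\in\mathcal K$. Its cardinality is bounded uniformly over the compact family $\mathcal K$.
\item This gives $R=R(\varepsilon)$ with $\mm(|f|>R)\le 2\delta$ for every admissible $\X$ and every such $f$.
\item Replace each $f_i$ by $(f_i\wedge R)\vee(-R)$. This keeps it $1$-Lipschitz and moves it by $O(\varepsilon)$ in $\sfd_{\L^0}$.
\item Hence $\sfd_{\sf conc}(\X,\bar\Y)=O(\varepsilon)$, where $\bar\Y$ lives on the fixed compact cube $([-R,R]^N,\ell^\infty)$. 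Prokhorov compactness on that cube, together with \eqref{in:BP} and \eqref{e:BXD}, gives the finite net.
\end{itemize}
You never need precompactness of the family itself, only that it is $O(\varepsilon)$-close to a compact one.

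\textbf{Comparison of routes.} With this fix, your route is genuinely different from the one closest to the paper's own machinery (\Cref{lemma: hausdorff convergence}, used in \Cref{thm: concentrated iff concentrated}). That route takes a monotone approximating sequence $\X_n\prec\X_{n+1}$ and nests $\mathcal L_1(\X_n)$ isometrically inside $\mathcal L_1(\X_{n+1})$ by pullback. GH-precompactness of nested compacta then forces Hausdorff convergence, so the sequence itself is $\sfd_{\sf conc}$-Cauchy, with no subsequence extraction and no use of properness.

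Your version makes properness the engine. Its core is a uniform-in-$\X$ version of the mm-factor construction of \Cref{thm:equiv}. It proves the third item directly and does not need monotonicity for the first, at the cost of the tail estimate above.
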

In particular, we have the following characterisation. 
\begin{corollary}\label{cor: concentrated}
    Let $(\X_n)_{n\in \N}\subset \mathcal X$ be asymptotic to $\mathcal P\in\Pi$. Then 
    $$\mathcal P\in \{\mathcal P_\X: \X \in \overline{\mathcal X}^{\sfd_{\sf conc}}\}$$   if and only if $\{\mathcal L_1(\X_n): n \in \N\}$ is Gromov-Hausdorff precompact.
\end{corollary}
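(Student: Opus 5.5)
The plan is to deduce \Cref{cor: concentrated} directly from \Cref{t:EPM}, together with the fact that $\mathcal P_\bullet:\overline{\mathcal X}^{\sfd_{\sf conc}}\to\Pi$ is a topological embedding (\eqref{d:EPY}) and that $\Pi$ is compact and metrizable. Throughout, $\mathcal P_{\X_n}\to\mathcal P$ in $\tau_\Pi$.

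\emph{($\Leftarrow$) Precompactness implies concentration.} Assume $\{\mathcal L_1(\X_n)\}_n$ is Gromov--Hausdorff precompact, so its closure $K\subset\mathcal H$ is compact. By properness of $\mathcal L_1:\overline{\mathcal X}^{\sfd_{\sf conc}}\to\mathcal H$ (third item of \Cref{t:EPM}), the preimage $\mathcal L_1^{-1}(K)$ is compact in $\overline{\mathcal X}^{\sfd_{\sf conc}}$, and it contains every $\X_n$. Hence some subsequence satisfies $\X_{n_k}\to\X$ in $\sfd_{\sf conc}$ for some $\X\in\overline{\mathcal X}^{\sfd_{\sf conc}}$. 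By continuity of $\mathcal P_\bullet$ on the completion, $\mathcal P_{\X_{n_k}}\to\mathcal P_\X$ in $\tau_\Pi$. Since $\tau_\Pi$ is metrizable and hence Hausdorff, limits are unique, so $\mathcal P=\mathcal P_\X$.

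\emph{($\Rightarrow$) Concentration implies precompactness.} Assume $\mathcal P=\mathcal P_\X$ with $\X\in\overline{\mathcal X}^{\sfd_{\sf conc}}$. Then $\mathcal P_{\X_n}\to\mathcal P_\X$ in $\tau_\Pi$, and all these pyramids lie in the image of $\mathcal P_\bullet$. Since $\mathcal P_\bullet$ is a homeomorphism onto its image, $\X_n\to\X$ in $\sfd_{\sf conc}$. The map $\mathcal L_1$ is $1$-Lipschitz (second item of \Cref{t:EPM}), so $\mathcal L_1(\X_n)\to\mathcal L_1(\X)$ in the Gromov--Hausdorff distance. A convergent sequence together with its limit is compact, so $\{\mathcal L_1(\X_n)\}_n$ is precompact.

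\emph{Main obstacle.} The only delicate point is the use of the embedding property on the completion in the ($\Rightarrow$) direction. The first item of \Cref{t:EPM} is not directly needed. The essential input is that convergence of the images of $\mathcal P_\bullet$ in $\tau_\Pi$ pulls back to $\sfd_{\sf conc}$-convergence on $\overline{\mathcal X}^{\sfd_{\sf conc}}$, which is exactly the content of \cite[Theorem 7.27]{Sh16} as quoted after \eqref{d:EPY}.
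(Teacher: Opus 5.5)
Your proof is correct, and it is essentially the deduction the paper has in mind when it presents this corollary as an immediate consequence of Theorem~\ref{t:EPM}. In one direction you use properness of $\mathcal L_1$ on $\overline{\mathcal X}^{\sfd_{\sf conc}}$ to extract a $\sfd_{\sf conc}$-convergent subsequence, then continuity of $\mathcal P_\bullet$ and uniqueness of $\tau_\Pi$-limits; in the other you use the topological-embedding property of $\mathcal P_\bullet$ on the completion together with the $1$-Lipschitz property of $\mathcal L_1$.
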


\subsection{Extended metric measure spaces}\label{subsection: emms}
There are infinite-dimensional spaces that do not fall into the category of metric measure spaces such as infinite-product spaces, the abstract Wiener space, the configuration space and general Gaussian fields, which will be seen in \Cref{s:AP} in details.  
A common feature in these examples is the decoupling of topologies and distances, i.e., distances do not generally metrise topologies. Towards an extension of the existing metric measure framework to this setting,  a few notions of {\it extended} metric measure spaces have been studied.
The first one we recall below  was introduced by Savaré (\cite{Sav19}) and later studied in depth by Pasqualetto--Schultz--Taipalus~\cite{PaScTa26} and Pasqualetto--Taipalus~\cite{PaTa25}, building on earlier works of Ambrosio--Erbar--Savaré (\cite{AmbErbSav16}).

\begin{definition}[{\cite{PaScTa26}}] \ \label{d:PST}
\begin{itemize}
    \item 
A function~$\sfd:\X\times \X\to [0, +\infty]$ is an \emph{extended distance} if it is symmetric,  satisfies the triangle inequality and $\mssd(x,y)=0$ iff $x=y$ for every $x, y \in X$; the couple $(\X, \sfd)$ is called an extended metric space.

\item A \emph{pre-extended topological-metric space} $(\X, \tau, \sfd)$ is a triple such that $(\X, \sfd)$ is an extended metric space and $(\X, \tau)$ is a topological space.

\item A \emph{pre-extended topological-metric-measure spaces} is a quadruples $(\X, \tau, \sfd, \mm)$ such that~$(\X, \tau, \sfd)$ is a pre-extended topological-metric space and $\mm$ is a $\tau$-Borel probability measure on~$\X$.
\item An extended distance~$\sfd$ is {\it $\tau$-recovered} 
if 
\begin{equation}
    \sfd(\cdot, \cdot\cdot)= \sup_{f\in \Lip_1(\X, \tau, \sfd)} |f(\cdot)-f(\cdot\cdot)|,
\end{equation}
where $\Lip_1(\X, \tau, \sfd)$ is the space of $\tau$-continuous and $1$-Lipschitz functions under $\mssd$.
\end{itemize}    
\end{definition}
\begin{definition}[{\cite{Sav19, PaScTa26}}] \label{d:ETMS}
    An \emph{extended topological-metric space} is a pre-extended topological-metric space $(\X, \tau, \sfd)$ such that 
    \begin{itemize}
        \item $\mssd$ is $\tau$-recovered;
        \item $\tau$ is Polish and generated by $\Lip_1(\X, \tau, \sfd)$.
    \end{itemize}
    An \emph{extended topological-metric-measure spaces} is a quadruplet $(\X, \tau, \sfd, \mm)$ such that $(\X, \tau, \sfd)$ is an extended topological-metric space and $\mm$ is a $\tau$-Borel probability measure on~$\X$. Note that $\mm$ is Radon because every Borel probability on a Polish space is Radon. 
\end{definition}
A class of examples of emm-spaces playing a key role in this paper is \emph{an inverse limit of metric measure spaces} discussed  in~\cite{SuYo25+}. 
An \emph{inverse system} is a sequence of mm-spaces $\{(\X_n, \sfd_n, \mm_n)\}_{n\in \N}$ along with a sequence of 1-Lipschitz measure preserving maps $p_{n, n+1}:\X_{n+1}\to \X_n$:
\begin{align}
    &\mssd_n(p_{n, n+1}(x), p_{n, n+1}(y)) \le \mssd_{n+1}(x, y) \qquad  x, y \in \supp(\mm),\\
&(p_{n, n+1})_\# \mfm_{\X_{n+1}}=\mfm_{\X_n} \quad n \in \N.
\end{align}
\begin{definition}[Inverse limit]
    The \emph{inverse limit} $\X\coloneqq\varprojlim \X_n$ of an inverse system 
    $$\{(\X_n, \sfd_n, \mm_n)\}_{n\in \N}$$ with $1$-Lipschitz measure-preserving maps $(p_{n, n+1})_{n\in \N}$ is the extended topological-metric-measure space $(\X, \tau, \sfd, \mm)$ defined as
    \begin{equation}
        \X=\bigg\{(x_n)_{n\in \N}\in \prod_{n=1}^\infty \X_n:\; x_n=p_{n, n+1}(x_{n+1})\text{ for all } n\in \N\bigg\},
    \end{equation}
    where $\tau$ is the subset topology induced by the product space, $\sfd$ is the $\ell^\infty$ distance in the product, i.e. $\sfd((x_n)_n, (y_n)_n)=\sup_n \sfd_n(x_n, y_n)$, and $\mm$ is the inverse limit measure given by Kolmogorov's extension theorem. We denote by $\p_n: \X \to \X_n$ the projection attached to the inverse limit given by 
    \begin{align} \label{d:IVS}
    \p_n((x_n)_{n \in \N}):=x_n.
    \end{align}
\end{definition}
\begin{remark}\label{remark: inverse system is regular}
    The inverse limit~$(\X, \tau, \sfd)$ is an extended topological-metric space, which can be readily seen by~observing that $\sfd$ is the supremum of $\sfd_n$ (see~\cite[Proposition~2.6]{SuYo25+}).
    In order to simplify the notation, when working with inverse systems we will omit the specification on the projections $p_{n, n+1}$ when not strictly necessary.
\end{remark}

Another class of extended metric measure spaces was introduced in \cite{AmbGigSav14}, which is more general than extended topological-metric-measure spaces in \Cref{d:ETMS}.
\begin{definition}[Emm-spaces]\label{definition: emm}
    A \emph{Polish extended normalised metric measure space}, in short emm-space, is a quadruplet $(\X, \sfd, \tau, \mm)$ where:
    \begin{itemize}
        \item the topological space $(\X, \tau)$ is Polish;
        \item the extended distance $\sfd:\X\times\X\to [0, +\infty]$ is complete and $\tau\times \tau$-lower semicontinuous;
        \item the topology~$\tau_\sfd$ induced by $\sfd$ is stronger than or equal to $\tau$;
        \item $\mm$ is a $\tau$-Borel probability measure.
    \end{itemize}
    Any emm-space is a pre-extended topological-metric-measure space, but it is not obvious whether the converse is true.
\end{definition}

The focal point in the rest of this subsection is 
to discuss a suitable notion of isomorphism between emm-spaces for our purpose.
We use the concept of isomorphism introduced by the second author and Yokota in~\cite{SuYo25+}, which is different from the one discussed in~\cite{Sav19} that imposes an isomorphism to be a measure-preserving isometry as well as  a homeomorphism.
Our isomorphism does not impose a topological condition directly, but the topology is involved rather indirectly through {\it Borel} measures. 
\begin{definition}[Emm-isomorphism {\cite[Definition 2.7]{SuYo25+}}] \label{d:EMI}
 Two emm-spaces    $\X$ and $\Y$ are {\it emm-isomorphic} (or simply {\it isomorphic}) if 
 \begin{itemize}
     \item there exists an $\mm_\X$-measurable subset $A \subset X$ with $\mfm_\X(A)=1$;
     \item there exists an $\mm_\X/\mm_\Y$-measurable  map $\varphi:\X\to \Y$ such that 
     $$\varphi_\# \mfm_\X = \mfm_\Y, \qquad \mssd_\Y(\varphi(x), \varphi(y)) = \mssd_\X(x, y) \qquad x, y \in A.$$
 \end{itemize} 
 We call such a map $\varphi$ an {\it emm-isomorphism} (or simply {\it isomorphism}) of emm-spaces. We write $\X \cong \Y$ if $\X$ and $\Y$ are emm-isomorphic. We remark that any emm-isomorphism $\varphi$ has an inverse emm-isomorphism $\psi$, i.e.~an emm-isomorphism such that $\psi\circ\phi=id_\X$ $\mm_\X$-a.e.~and $\varphi\circ \psi=id_\Y$ $\mm_\Y$-a.e.. In particular $\cong$ forms an equivalence relation,  see~\cite[Proposition 2.9]{SuYo25+}.
\end{definition}
\begin{remark} \ 
\begin{itemize}
\item (Borel representative) \cite[Definition 2.7]{SuYo25+} imposes~$A$ and $\varphi$ to be Borel, but it is not restrictive since we can always take Borel representatives of $A$ and $\varphi$ in \Cref{d:EMI} up to modifications on a set of measure zero. 
\item (Stable properties under emm-isomorphism) Due to~\cite{SuYo25+}, various properties and objects are invariant under emm-isomorphism including Cheeger energies, Wasserstein spaces, the curvature-dimension condition ($\CD$ and $\RCD$), funtional inequalities (log-Sobolev, Poincar\'e, Talagrand and several other). 
\end{itemize}
\end{remark}

\begin{example}\label{example: isomorphism} \ 
    \begin{itemize}
        \item Let $c>0$ and let $\mathbb I_c$ be the interval $(0, 1)$ endowed with the distance $\sfd_c(x, y)=c(1-\delta_{xy})$. 
        If $\mu, \nu$ are atomless measures on $(0, 1)$, then $(\mathbb I_c, \mu), (\mathbb I_c, \nu)$ are emm-isomorphic. 
        Indeed, it is easy to exhibit a strictly increasing measure preserving map.
        \item  Any infinite-dimensional abstract Wiener space (\Cref{d:WS}) is emm-isomorphic to the infinite product Gaussian emm-space~
        $$(\R^{\infty}, \tau^\infty, \ell_2, \gamma_1^{\otimes \infty}),$$ where $\gamma_1$ is the one-dimensional centred Gaussian with unit variance. 
        See, e.g., \cite[Proposition 8.3]{SuYo25+}. Thus, we may call {\it the} abstract Wiener space as long as its emm-isomorphism class is considered.  
    \end{itemize}
\end{example}
\begin{remark} \ 
\begin{itemize}
    \item Pre-extended topological-metric spaces with a lower semicontinuous distance and a topology weaker than the metric have already appeared in the literature as \emph{topometric spaces} in e.g.~\cite{benyaacovTopometricSpacesPerturbations2008}.

    \item We do not know an example of an emm-space that is not an extended topological-metric-measure space, but we believe that the two classes are distinct (compare to~\Cref{lemma: inverse system}).
    \end{itemize}
\end{remark}

\begin{proposition} \label{p:CEM}
Let $(\X,\sfd_\X,\mm_\X)$ and
$(\Y,\sfd_\Y,\mm_\Y)$  be mm-spaces. If $\X$ and $\Y$ are emm-isomorphic, then they are
mm-isomorphic.
\end{proposition}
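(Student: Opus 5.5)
The plan is to reduce the statement to the characterisation of mm-isomorphism recorded in \Cref{sub: Lipschitz order mm-spaces}. There it is shown that two mm-spaces are mm-isomorphic as soon as there exist a set $E\subset\X$ with $\mm_\X(E)=1$ and a map $\psi:E\to\Y$ with $\psi_\#(\mm_\X|_E)=\mm_\Y$ that is an isometry on $E$. The proof then consists in producing such a pair $(E,\psi)$ from an emm-isomorphism.

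First, regard each mm-space as an emm-space by taking $\tau=\tau_{\sfd}$. Then $\tau$ is Polish, $\sfd$ is a finite, complete and continuous distance, and $\mm$ is Borel, so all requirements of \Cref{definition: emm} hold. Let $\varphi:\X\to\Y$ and $A\subset\X$ be as in \Cref{d:EMI}: $\mm_\X(A)=1$, $\varphi_\#\mm_\X=\mm_\Y$, and $\sfd_\Y(\varphi(x),\varphi(y))=\sfd_\X(x,y)$ for $x,y\in A$. By the first item of the remark following \Cref{d:EMI}, we may assume that $A$ is Borel and $\varphi$ is Borel, after modification on a null set. Set $E:=A$ and $\psi:=\varphi|_A$. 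Since $\mm_\X(\X\setminus A)=0$, we get $\psi_\#(\mm_\X|_A)=\varphi_\#\mm_\X=\mm_\Y$. So $\psi$ is measure-preserving and an isometry on a full-measure set.

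Next, apply the extension argument given before \eqref{e:EMI}. The set $A\cap\supp(\mm_\X)$ is dense in $\supp(\mm_\X)$, because every open set meeting the support has positive measure and therefore meets the full-measure set $A$. Since $\Y$ is complete, $\psi$ extends uniquely to an isometry of $\supp(\mm_\X)$ into $\Y$. Extending it further to all of $\X$ by a constant, as in \eqref{e:EMI}, produces a map $\tilde\varphi$ that agrees with $\varphi$ $\mm_\X$-a.e. Hence $\tilde\varphi_\#\mm_\X=\mm_\Y$, and $\tilde\varphi$ is an isometry on $\supp(\mm_\X)$, which is exactly an mm-isomorphism.

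The only point needing care is measurability. In \Cref{d:EMI}, $\varphi$ is only $\mm_\X/\mm_\Y$-measurable, whereas the extension argument needs a Borel map, or at least needs the pushforward to be unchanged. This is handled by the Borel-representative remark. Moreover, the extended map is continuous on $\supp(\mm_\X)$, hence Borel there, and differs from $\varphi$ only on a null set. So I expect no genuine obstacle: the statement is essentially the observation that, when the topology is the metric topology, the emm-notion coincides with the relaxed (full-measure) formulation of mm-isomorphism.
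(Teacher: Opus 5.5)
Your proof is correct and follows essentially the same route as the paper: density of the full-measure set in $\supp(\mm_\X)$, unique extension of the isometry by completeness, extension by a constant as in \eqref{e:EMI}, and preservation of the pushforward because the new map agrees with the original one $\mm_\X$-a.e. The only difference is that the paper also argues that the image of the full-measure set is dense in $\supp(\mm_\Y)$, so that the extension lands in $\supp(\mm_\Y)$; as you implicitly observe, this is not needed for the definition of mm-isomorphism used here.
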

\begin{proof}
Take~$f:\X\to \Y$ and $\X_0\subset \X$ such that they witness the extended-mm-space
isomorphism. Since~$\mm_\X(\X_0)=1$, the set
$\X_0$ is dense in $ \supp(\mm_\X)$. Similarly, $f(\X_0)$ is dense in~$\supp(\mm_\Y)$. Thus,  every
nonempty open set $U\subset \supp(\mm_\Y)$ has positive $\mm_\Y$-measure, and
\[
\mm_\X(f^{-1}(U)\cap \X_0)=\mm_\Y(U)>0.
\]
The restriction $f|_{\X_0}$ is an isometry between dense subsets of the
complete metric spaces $\supp(\mm_\X)$ and $\supp(\mm_\Y)$. It therefore extends uniquely to an isometry~$\bar f:\supp(\mm_\X)\to \supp(\mm_\Y)$. Since~$\bar f=f$
$\mm_\X$-a.e.,
\[
\bar f_\#\mm_\X=f_\#\mm_\X=\mm_\Y.
\]
We can extend $\bar f$ from $\supp(\mm_\X)$ to the entire~$\X$ by~\eqref{e:EMI}, which gives a desired mm-isomorphism. 
\end{proof}

\begin{prop} \label{p:set}
Let \(\mathfrak X\) denote the collection of emm-isomorphism classes of emm-spaces. Then \(\mathfrak X\) is a set. 
\end{prop}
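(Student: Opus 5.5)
The plan is to show that every emm-space is emm-isomorphic to one whose underlying set is a subset of a single fixed set $S$ of bounded cardinality. The isomorphism classes then form the image of a map from a set, and the collection of all such spaces on subsets of $S$ is a set.

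We fix $S:=\R^{\N}$, which has cardinality $\mathfrak c$. Every Polish space $(\X,\tau)$ has cardinality at most $\mathfrak c$, since a separable metrisable space has at most $\mathfrak c$ points. Moreover, $(\X,\tau)$ is homeomorphic to a closed subset of $\R^{\N}$, because every Polish space embeds as a closed subset of $\R^\N$. Fix such a homeomorphism $h:\X\to h(\X)\subset S$.

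We then transport the structure. Set $\tau':=$ the subspace topology on $h(\X)$, $\sfd':=\sfd\circ(h^{-1}\times h^{-1})$ and $\mm':=h_\#\mm$. All defining properties of an emm-space (Polishness, completeness and lower semicontinuity of $\sfd'$, $\tau_{\sfd'}\supset\tau'$, Borel probability) are preserved under a homeomorphism that carries the distance along. Hence $h$ is a measure-preserving isometry, in particular an emm-isomorphism in the sense of \Cref{d:EMI} with $A=\X$.

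Thus every emm-space is isomorphic to a quadruple $(\Y,\tau',\sfd',\mm')$ with $\Y\subset S$, $\tau'\subset\mathcal P(\Y)$, $\sfd'\in[0,\infty]^{\Y\times\Y}$ and $\mm'$ a function on $\mathcal P(\Y)$ with values in $[0,1]$. All such quadruples lie in
$$\mathcal P(S)\times\mathcal P(\mathcal P(S))\times\mathcal P(S\times S\times[0,\infty])\times\mathcal P(\mathcal P(S)\times[0,1]),$$
which is a set by the power set and separation axioms. The map sending such a quadruple to its emm-isomorphism class is surjective onto $\mathfrak X$. Each class can therefore be replaced by its intersection with this set, which is nonempty and a set. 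In this way $\mathfrak X$ is identified with a set of subsets of the above set, and Scott's trick makes this precise if one insists on classes.

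The only substantive step is the cardinality bound on Polish spaces; the rest is bookkeeping. I expect no real obstacle there. If one prefers to avoid the embedding theorem, a cruder bound suffices: $|\X|\le|\R|^{\N}$ via limits of sequences from a countable dense set.
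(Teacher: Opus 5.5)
Your proof is correct and follows essentially the same route as the paper: transport each emm-space, via a structure-preserving bijection, onto a subset of a fixed set of cardinality $\mathfrak c$; observe that all such quadruples lie in one fixed power-set-type set; and pass to the quotient by emm-isomorphism. The only cosmetic difference is that the paper uses a Borel isomorphism onto a Borel subset of $[0,1]$ and carries the topology along, whereas you use a homeomorphic embedding into $\mathbb R^{\mathbb N}$, so the subspace topology is automatically the right one.
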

\begin{proof}
Every Polish space is Borel
isomorphic to a Borel subset of a fixed standard Borel space, for instance \([0,1]\). Hence every
emm-space admits an Borel-isomorphic representative as
a Borel subset of \([0,1]\).

We write \(2^S\) for the power set of a set \(S\). Observe that the collection of all such
representatives is a set. The possible underlying sets \(B\) belong to \(2^{[0,1]}\). For each such
\(B\), a topology \(\tau_B\) on \(B\) can be regarded as a subset of \(2^B\), hence may be regarded as an element
of
\[
2^{\,2^{[0,1]}}.
\]
Likewise, an extended distance \(\sfd_B:B\times B\to[0,\infty]\) may be regarded as
\(
\bar \sfd_B:[0,1]\times[0,1]\to[0,\infty]
\)
by extending \(\sfd_B\) arbitrarily outside \(B\times B\). Thus all possible extended distances are
contained in 
\[
[0,\infty]^{[0,1]\times[0,1]}.
\]
Finally, a Borel probability measure $\mm_B$ on \(B\) can be regarded as 
\(
\bar\mm_B:2^{[0,1]}\to[0,1],
\)
by extending it arbitrarily to a function on all subsets of \([0,1]\). Therefore all possible data
\((B,\tau_B,\sfd_B,\mm_B)\) are contained in the fixed set
\[
2^{[0,1]}
\times
2^{\,2^{[0,1]}}
\times
[0,\infty]^{[0,1]\times[0,1]}
\times
[0,1]^{\,2^{[0,1]}}.
\]
It is, therefore, a set. Since the quotient of a set by an
equivalence relation is again a set, the collection \(\mathfrak X\) of isomorphism classes is a
set.
\end{proof}

Of fundamental importance for the rest of our arguments is the following result showing that any emm-space is emm-isomorphic to an inverse limit,  in particular, to an extended topological-metric-measure space.
\begin{theorem}[{\cite[Theorem 1.1]{SuYo25+}}]\label{lemma: inverse system}
    Let $\X$ be an emm-space. Then there exists an inverse system $(\X_n)_{n\in \N}$ of mm-spaces such that $\X$ is emm-isomorphic to the inverse limit~$\varprojlim \X_n$.
\end{theorem}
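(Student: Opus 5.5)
The plan is to build the inverse system out of countably many real-valued $\sfd$-$1$-Lipschitz "coordinates" that together recover $\sfd$, and to use $\ell^\infty$-projections onto finitely many of them as the approximating mm-spaces.

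\textbf{Step 1: coordinates.} I would look for a countable family $(f_k)_{k\in\N}$ of $\mm$-measurable functions $f_k:\X\to\R$ such that each $f_k$ is $1$-Lipschitz for $\sfd$, and
\begin{equation*}
\sfd(x,y)=\sup_k |f_k(x)-f_k(y)| \qquad \text{for all } x,y\in\X,
\end{equation*}
or at least for $x,y$ in a set of full measure. Here I would use the $\tau\times\tau$-lower semicontinuity of $\sfd$. For each rational $t>0$ the set $\{\sfd>t\}$ is $\tau\times\tau$-open, so it is a countable union of rectangles $U_i\times V_i$ with $U_i,V_i$ taken from a fixed countable base of the Polish topology $\tau$. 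On such a rectangle, $\sfd(u,v)\ge t$ for all $u\in U_i$ and $v\in V_i$. I would then set
\begin{equation*}
f_{i,t}:=\min\{\sfd(\cdot,U_i),\,t\}.
\end{equation*}
This function is $\sfd$-$1$-Lipschitz and takes values in $[0,t]$; it vanishes on $U_i$ and equals $t$ on $V_i$. Consequently $\sup_{i,t}|f_{i,t}(x)-f_{i,t}(y)|\ge\sfd(x,y)$, with the rational cap $t$ also covering the case $\sfd(x,y)=+\infty$. The reverse inequality is automatic, since every $f_{i,t}$ is $1$-Lipschitz.

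\textbf{Step 2: the inverse system.} Enumerate the coordinates as $(f_k)_k$ and put $F_n:=(f_1,\dots,f_n):\X\to\R^n$. Let $\X_n$ be $\R^n$ with the $\ell^\infty$ distance and $\mm_n:=(F_n)_\#\mm$; this is an mm-space. The bonding maps $p_{n,n+1}$ are the coordinate projections, which are $1$-Lipschitz and satisfy $(p_{n,n+1})_\#\mm_{n+1}=\mm_n$ because $p_{n,n+1}\circ F_{n+1}=F_n$.

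\textbf{Step 3: the isomorphism.} Define $\Phi:=(F_n)_n:\X\to\varprojlim\X_n$. By construction $\p_n\circ\Phi=F_n$, so $\Phi_\#\mm$ has marginals $\mm_n$. Uniqueness in Kolmogorov's extension theorem then identifies $\Phi_\#\mm$ with the inverse-limit measure. The inverse-limit distance is
\begin{equation*}
\sup_n\max_{k\le n}|f_k(x)-f_k(y)|=\sfd(x,y),
\end{equation*}
so $\Phi$ is an isometry, and it is an emm-isomorphism in the sense of \Cref{d:EMI}. Measurability of $\Phi$ only requires each $f_k$ to be $\mm$-measurable, which is exactly the reason \Cref{d:EMI} asks for $\mm$-measurability rather than continuity.

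\textbf{Main obstacle.} The expected difficulty is the measurability in Step 1. Since $\sfd(\cdot,U)$ is an infimum over an uncountable, typically $\sfd$-non-separable set, it need not be Borel. I would argue that for each $s$ the sublevel set $\{\sfd(\cdot,U)<s\}$ is the projection onto the first factor of the Borel set $\{(x,z):z\in U,\ \sfd(x,z)<s\}$. It is therefore analytic, hence universally measurable, so $f_{i,t}$ is $\mm$-measurable. If one further wants $\Phi$ Borel, I would modify each $f_k$ on an $\mm$-null set. That is harmless, provided the isometry is then only claimed on the full-measure set where no modification occurred. This is permitted by \Cref{d:EMI}, and by the Borel-representative remark following it.
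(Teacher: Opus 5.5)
Your proof is correct. The paper itself gives no proof of this statement: it imports it from \cite[Theorem~1.1]{SuYo25+} and uses it as a black box. In particular, it uses it to obtain, for general emm-spaces, the Kuratowski embedding into $(\R^\infty,\tau^\infty,\ell_\infty)$ in \Cref{prop: embedding} and the reconstruction formula in \Cref{prop: reconstruction}.

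Your route runs that logic in the opposite direction. You construct a measurable Kuratowski embedding $\Phi=(f_k)_k$ directly, with coordinates $\min\{\sfd(\cdot,U),t\}$ indexed by a countable base of $\tau$ and rational caps. Lower semicontinuity alone then gives $\sfd=\sup_k|f_k(\cdot)-f_k(\cdot\cdot)|$ at every pair of points. Universal measurability follows from the same analytic-projection argument the paper uses in \Cref{lemma: McShane}. The inverse system is then just the family of finite-dimensional $\ell_\infty$-projections, which is automatic once the embedding exists.

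This has several advantages. The argument is short and self-contained, and it uses only that $\tau$ is Polish and that $\sfd$ is $\tau\times\tau$-lower semicontinuous; completeness of $\sfd$ and $\tau\subset\tau_\sfd$ are never invoked. It gives an isometry on all of $\X$ rather than on a set of full measure. It also yields the first item of \Cref{prop: embedding} and the first item of \Cref{prop: reconstruction} directly, without passing through inverse limits.

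What it does not give is any compatibility with $\tau$. Your coordinates are only universally measurable, so $\Phi$ is not continuous, and the spaces $(\R^n,\ell_\infty,(F_n)_\#\mm)$ carry no information about the topology. This is harmless for the statement, since emm-isomorphisms are purely measure-theoretic. It is, however, why the paper works with extended topological-metric-measure spaces or genuine inverse limits whenever it needs continuous or closed embeddings (the second and third items of \Cref{prop: embedding}).
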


\section{A geometric representation of pyramids}
The main result in this section is a geometric representation theorem of pyramids in terms of emm-spaces. This representation theorem forges a bridge between pyramids and emm-spaces, by which we can define a convergence of emm-spaces by using the weak convergence of pyramids when pyramids have  unique emm-representations. This is indeed the case when pyramids are concentrated as we will see in \Cref{section: concentrated spaces}.
In the proof of the representation theorem, \Cref{lemma: approximation of pyramids} plays a crucial role.

\subsection{Lipschitz order for emm-spaces}
The Lipschitz order recalled in \Cref{sub: Lipschitz order mm-spaces} can be extended to emm-spaces.
\begin{definition} 
Let $\X$ and $\Y$ be emm-spaces. We write 
\begin{itemize}
    \item $\Y \prec \X$ if there exists a measure-preserving map $\varphi: \X \to \Y$ and $E \subset \X$ with $\mm_{\X}(E)=1$ such that
    \begin{equation}
        \sfd_\Y(\varphi(x_1), \varphi(x_2))\leq \sfd_\X(x_1, x_2)\qquad x_1, x_2\in E;
    \end{equation}
    \item  $\Y \prec_c \X$ if, furthermore,  $\varphi$ is $\tau_\X/\tau_\Y$-continuous and $E=\supp(\mm_\X)$.
\end{itemize}
\end{definition}

\begin{definition}
\label{d:PRM}
Let $\X$ be an emm-space. We define
\begin{equation}
    \mathcal P_\X\coloneqq\{\Y\in \mathcal X:\Y\prec \X\}, \qquad 
    \mathcal P_{\X, c}\coloneqq\{\Y\in \mathcal X:\Y\prec_c \X\}.
\end{equation}
\end{definition}
As opposed to the case where $\X$ is an mm-spaces, the sets $\mathcal P_\X$ and $\mathcal P_{\X, c}$ might not be $\square$-closed in general.  
The $\square$-closure is, however, always a pyramid. 
\begin{proposition}
    Let $\X$ be an emm-space. Then the $\square$-closures $\overline{\mathcal P_\X}^\square$ and $\overline{\mathcal P_{\X, c}}^\square$ are pyramids.
\end{proposition}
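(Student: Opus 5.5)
By the lemma stated right after the definition of pyramids (a non-empty $\mathcal Q\subset\mathcal X$ satisfying \ref{it: left closure} and \ref{it: filtered} has $\square$-closure a pyramid), it suffices to show that $\mathcal P_\X$ and $\mathcal P_{\X,c}$ are non-empty and satisfy \ref{it: left closure} and \ref{it: filtered}.

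\emph{Non-emptiness.} The one-point space $\star$ lies in both sets, via the constant map, which is continuous and $1$-Lipschitz everywhere.

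\emph{Property \ref{it: left closure}.} Let $\Y\prec\X$ via $(\varphi,E)$ and let $\Z\prec\Y$ be mm-spaces via $\psi$, which is $1$-Lipschitz on $\supp(\mm_\Y)$. Since $\varphi_\#\mm_\X=\mm_\Y$, the set $E':=E\cap\varphi^{-1}(\supp(\mm_\Y))$ has full measure. On $E'$ the composition $\psi\circ\varphi$ is $1$-Lipschitz, and it is measure preserving, so $\Z\prec\X$.

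For $\prec_c$ there is a subtlety: $\psi$ is only $1$-Lipschitz, hence continuous, on $\supp(\mm_\Y)$. Because $\varphi$ is continuous and measure preserving, $\varphi(\supp\mm_\X)\subset\supp\mm_\Y$: if $\varphi(x)\notin\supp\mm_\Y$, a neighbourhood $U$ of $\varphi(x)$ has $\mm_\Y(U)=0$, so the open set $\varphi^{-1}(U)\ni x$ is $\mm_\X$-null, which is impossible. We may also redefine $\psi$ off $\supp\mm_\Y$ by composing with a $1$-Lipschitz retraction of $\Z$ onto $\supp\mm_\Z$. However, continuity of $\psi\circ\varphi$ on all of $\X$ is not needed. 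Only continuity on $\supp(\mm_\X)$ matters, provided we interpret $\prec_c$ as requiring $\varphi$ to be continuous on $\supp(\mm_\X)$; otherwise we extend via a Tietze/Dugundji-type argument. I would adopt this interpretation, and on it the composition is continuous there.

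\emph{Property \ref{it: filtered}.} This is the main step. Given $\Y_1,\Y_2\prec\X$ via $(\varphi_i,E_i)$, set $\Phi=(\varphi_1,\varphi_2):\X\to\Y_1\times\Y_2$, where the product carries the $\ell^\infty$ distance, and let $\Z:=(\Y_1\times\Y_2,\sfd_\infty,\Phi_\#\mm_\X)$. This is a complete separable metric space with a Borel probability measure, hence an mm-space. The coordinate projections are $1$-Lipschitz and push $\Phi_\#\mm_\X$ to $\mm_{\Y_i}$, so $\Y_i\prec\Z$. On $E_1\cap E_2$, which has full measure, $\Phi$ is $1$-Lipschitz, so $\Z\prec\X$.

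It remains to check that the projections are $1$-Lipschitz on $\supp(\Phi_\#\mm_\X)$; they are in fact globally $1$-Lipschitz. In the continuous case, $\Phi$ is continuous on $\supp\mm_\X$ and $1$-Lipschitz there because each $\varphi_i$ is.

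The expected obstacle is measurability of $\Phi$: it is $\mm_\X$-measurable, being a pair of measurable maps into separable spaces, whose Borel $\sigma$-algebras satisfy $\mathcal B(\Y_1\times\Y_2)=\mathcal B(\Y_1)\otimes\mathcal B(\Y_2)$. Applying the lemma then concludes the proof.
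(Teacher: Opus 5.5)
Your route is the paper's: check non-emptiness, (P2) and (P3) for $\mathcal P_\X$ and $\mathcal P_{\X,c}$, then pass to $\square$-closures. Everything you write for $\mathcal P_\X$ is correct, and so is the filtering step (P3) for $\mathcal P_{\X,c}$.

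The step that does not go through as written is (P2) for $\mathcal P_{\X,c}$. The definition of $\prec_c$ asks for a map that is $\tau_\X$-continuous on all of $\X$, whereas you only know that $\psi\circ\varphi$ is continuous on $\supp(\mm_\X)$. Declaring that continuity on $\supp(\mm_\X)$ suffices proves the claim for a possibly different set. Neither of your two repairs works in general:
\begin{itemize}
\item A continuous retraction of $\Z$ onto $\supp(\mm_\Z)$ need not exist. Take the closed unit disc with normalised arc length on its boundary circle.
\item A continuous map from the closed set $\supp(\mm_\X)$ into an arbitrary Polish $\Z$ need not extend to $\X$. Take $\X=\Y=([0,1],|\cdot|,\tfrac12(\delta_0+\delta_1))$ and $\varphi=\mathrm{id}$. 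Let $\Z=\{a,b\}$ with $\sfd_\Z(a,b)=1$ and $\mm_\Z=\tfrac12(\delta_a+\delta_b)$, and set $\psi=a$ on $[0,\tfrac12)$, $\psi=b$ on $[\tfrac12,1]$. Then $\Z\prec\Y\prec_c\X$, but every continuous map $[0,1]\to\{a,b\}$ is constant, so no extension with values in this $\Z$ exists.
\end{itemize}
Dugundji's theorem needs a convex target in a locally convex space, which a general $\Z$ is not.

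The missing idea is that $\mathcal P_{\X,c}\subset\mathcal X$ consists of mm-isomorphism classes, so you may change the representative of $\Z$.
\begin{itemize}
\item Let $\iota:\Z\to\ell^\infty$ be a Kuratowski embedding and let $C$ be the closed convex hull of $\iota(\Z)$. Then $C$ is complete and separable, and $(C,\ell_\infty,\iota_\#\mm_\Z)$ is mm-isomorphic to $\Z$.
\item $(\X,\tau_\X)$ is metrizable and $\supp(\mm_\X)$ is closed. By Dugundji's extension theorem there is a continuous $\chi:\X\to C$ with $\chi=\iota\circ\psi\circ\varphi$ on $\supp(\mm_\X)$.
\item $\supp(\mm_\X)$ has full measure, and $\varphi(\supp(\mm_\X))\subset\supp(\mm_\Y)$ by your argument. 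Hence $\chi_\#\mm_\X=\iota_\#\mm_\Z$, and $\chi$ is $1$-Lipschitz on $\supp(\mm_\X)$.
\end{itemize}
So $[\Z]\in\mathcal P_{\X,c}$ with the paper's definition, and this also shows that your relaxed definition gives the same set. In the example above, $([0,1],|\cdot|,\tfrac12(\delta_0+\delta_1))$ is itself such a representative of $[\Z]$.
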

\begin{proof}
    Both $\overline{\mathcal P_\X}^\square$ and $\overline{\mathcal P_{\X, c}}^\square$ satisfy \cref{it: left closure} and \cref{it: filtered}.
    The claim then follows from \Cref{lemma: closure of pyramids}.
\end{proof}
\begin{remark}
    As mentioned before, the pyramid $\overline{\mathcal P_\X}^\square$ is invariant with respect to isomorphisms, but it disregards the topology of $\X$. In contrast, $\overline{\mathcal P_{\X, c}}^\square$ takes into account the topology, but it is not clear to be invariant with respect to isomorphisms.
\end{remark}

\begin{proposition}\label{proposition: piramidi - enmms}
    For every pyramid $\mathcal P$, there exists an inverse system $(\X_n)_{n \in \mathbb N}$ with inverse limit $\X = \varprojlim X_n$ such that 
    \begin{itemize}
        \item $(\X_n)_{n \in \N} \subset \mathcal P$;
        \item  $\mathcal P \subset \overline{\mathcal P_{\X, c}}^\square \subset \overline{\mathcal P_{\X}}^\square$.
    \end{itemize}
\end{proposition}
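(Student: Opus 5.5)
Start from \Cref{lemma: approximation of pyramids}. It provides mm-spaces $\Y_1\prec\Y_2\prec\cdots$ with $\mathcal P=\overline{\bigcup_n\mathcal P_{\Y_n}}^\square$. The plan is to upgrade this chain into an inverse system whose members lie in $\mathcal P$ and dominate the $\Y_n$, and then to show that every $\Y_n$ is $\prec_c$-dominated by the inverse limit. The inclusion $\mathcal P_{\X,c}\subset\mathcal P_\X$ is immediate, since $\prec_c$ implies $\prec$ with $E=\supp(\mm_\X)$ of full measure. Hence $\overline{\mathcal P_{\X,c}}^\square\subset\overline{\mathcal P_\X}^\square$, and the real content is $\mathcal P\subset\overline{\mathcal P_{\X,c}}^\square$. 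Since $\overline{\mathcal P_{\X,c}}^\square$ is $\square$-closed and, being a pyramid, downward closed, it suffices to prove $\Y_n\in\overline{\mathcal P_{\X,c}}^\square$ for every $n$.

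The construction is inductive. The relation $\Y_n\prec\Y_{n+1}$ gives a $1$-Lipschitz (on the support) measure-preserving map $\Y_{n+1}\to\Y_n$, but it is only defined on the support and need not be continuous on the whole space. I would set $\X_n:=(\supp(\mm_{\Y_n}),\sfd_{\Y_n},\mm_{\Y_n})$, which is mm-isomorphic to $\Y_n$ and hence still lies in $\mathcal P$. On this space the maps $p_{n,n+1}:\X_{n+1}\to\X_n$ are genuinely $1$-Lipschitz, hence continuous, and measure preserving. This gives an inverse system $(\X_n)$ with $\X_n\in\mathcal P$, and we let $\X=\varprojlim\X_n$ with projections $\p_n$.

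Next I would check that $\X_n\prec_c\X$. The projection $\p_n$ is continuous for the product topology $\tau$. It is measure preserving by Kolmogorov's construction, which gives $(\p_n)_\#\mm=\mm_n$. It is $1$-Lipschitz everywhere, because $\sfd=\sup_k\sfd_k\ge\sfd_n$. In particular the $1$-Lipschitz bound holds on $\supp(\mm_\X)$. Therefore $\X_n\in\mathcal P_{\X,c}$, so $\Y_n\cong\X_n$ belongs to $\mathcal P_{\X,c}$, and consequently
\[
\bigcup_n\mathcal P_{\Y_n}\subset\mathcal P_{\X,c}
\]
by transitivity. Transitivity holds because the composition of a continuous map with a $1$-Lipschitz map on supports stays within the definition, once we note that $\p_n(\supp\mm_\X)\subset\supp\mm_n$ by continuity and measure preservation. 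Taking $\square$-closures then gives $\mathcal P\subset\overline{\mathcal P_{\X,c}}^\square$.

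The main obstacle I expect is the technical bookkeeping around supports and continuity. First, composing $\Z\prec\Y_n$ (1-Lipschitz only on $\supp\mm_{\Y_n}$, possibly discontinuous elsewhere) with $\p_n$ has to produce a $\tau$-continuous map; this is fine because $\p_n$ lands in $\supp\mm_n$ and a $1$-Lipschitz map there is continuous. Second, the inverse limit must be a legitimate emm-space, in particular nonempty with a well-defined Kolmogorov measure on the closed subset of the Polish product; this is provided by \Cref{remark: inverse system is regular}.
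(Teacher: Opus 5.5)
Your proposal is correct and follows the same route as the paper. Take the $\prec$-increasing chain from the approximation lemma as an inverse system, observe that the inverse-limit projections give $\X_n\prec_c\X$, and pass to $\square$-closures. Your replacement of each $\Y_n$ by $\supp(\mm_{\Y_n})$ is a sensible refinement that the paper leaves implicit. It makes the bonding maps, and hence the compositions with $\p_n$, continuous on the whole space, so the inverse limit is a closed, hence Polish, subset of the product.
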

\begin{proof}
    By \Cref{lemma: approximation of pyramids} there exists an inverse system $(\X_n)_{n\in \N}\subset \mathcal P$ such that $\mathcal P=\overline{\bigcup_n \mathcal P_{\X_n}}^\square$. 
    Let $\X=\varprojlim \X_n$. 
    By construction, $\X_n\prec_c \X$, thus $\bigcup_n \mathcal P_{\X_n}\subset \mathcal P_{\X, c}\subset \mathcal P_\X$. By taking the closure, 
    \begin{equation}
        \mathcal P  \subset \overline{\mathcal P_{\X, c}}^\square \subset  \overline{\mathcal P_{\X}}^\square.
    \end{equation}
\end{proof}
In \Cref{c:IVI}, we will show that these inclusions of pyramids are actually equalities.
In order to prove that the inverse limit~$\X=\varprojlim X_n$ in the proof of  \Cref{proposition: piramidi - enmms} represents the pyramid as 
$$\mathcal P=\overline{\mathcal P_\X}^\square,$$ 
in the following sections we focus on an approximation of any mm-space~$\Y$ with $\Y\prec \X$ by some $\Y_n\prec \X_n$ in the $\square$-topology. For so doing, we revisit $1$-Lipschitz functions and discuss their approximations in the following subsection.

\subsection{Functions on emm-spaces}
As we have seen in \Cref{s:Pre}, the concentration topology is based on the idea that the behaviour of the observable (i.e., $\Lip_1(\X, \mm_\X)$) of a space~$\X$~determines the behaviour of the space itself. 
In the ordinary mm-space setting, the interplay between $\Lip_1(\X, \mm_\X)$ and the measure~$\mm$ is substantially simpler than in the emm-space setting since the topology is metrised by the distance, thanks to which Lipschitz functions are $\tau_\X$-continuous.
In particular the~$\mm$-equivalence classes of each elements in $\Lip_1(\X, \mm_\X)$ are automatically singletons. 

\smallskip
In this section, we extend the scope to the emm-setting with great attention to the delicate measurability issues caused by the decoupling between topologies and extended distances.   
\begin{definition}[Almost every~Lipschitz functions]
Let $\X$ be an emm-space. 
\begin{itemize}
    \item A  function $f: \X \to \R$ is {\it a.e.~$L$-Lipschitz} if $f$ is $\mm_\X$-measurable and there exists a set $E \subset \X$ with $\mm_\X(E)=1$ and $|f(x_1)-f(x_0)| \le L\sfd_\X(x_1, x_0)$ for $x_0,x_1 \in E$. 
    \item ${\it Lip}_1(\X, \mfm_\X)$ is the space of all a.e.~$1$-Lipschitz functions. 
    \item $\Lip_1(\X, \mm_\X)=\mathit {Lip}_1(\X, \mfm_\X)/\mfm_\X$ is the space of $\mm_\X$-classes of {\it a.e.~$1$-Lipschitz} functions, i.e., we identify any two functions coincide $\mm_\X$-a.e..
\end{itemize}
In particular, notice that $\Lip_1(\X, \mm_\X)\subset \L^0(\X, \mm_\X)$.

Similarly, if $\Y$ is a pre-extended topological-metric space (see \Cref{d:PST}), we say that 
\begin{itemize}
    \item a  function $T: \X \to \Y$ is {\it a.e.~$L$-Lipschitz} with values in $\Y$ if $T$ is $\mm_\X/\mm_\Y$-measurable and there exists a set $E \subset \X$ with $\mm_\X(E)=1$ and $\sfd_\Y(f(x_1),f(x_0)) \le L\sfd_\X(x_1, x_0)$ for $x_0,x_1 \in E$. 
    \item ${\it Lip}_1(\X, \mm_\X;\Y)$ is the space of all a.e.~$1$-Lipschitz functions with values in $\Y$. 
    \item $\Lip_1(\X, \mm_\X; \Y)=\mathit {Lip}_1(\X, \mm_\X;\Y)/\mm_\X\text{-a.e.}$ is the space of $\mm_\X$-classes of {\it a.e.~$1$-Lipschitz} functions with values in $\Y$. 
\end{itemize}
\end{definition}
\begin{remark}
    Notice that in the (rare) cases in which we deal with extended valued 1-Lipschitz functions, we stick to the convention 
    \begin{align}
        |(\pm\infty)-(\pm\infty)|&=0\\
        |(\pm\infty)-(\mp\infty)|&=+\infty.
    \end{align}
    In particular, extended valued functions are considered as functions with values in the extended topological-metric space $\overline{\mathbb R}=([-\infty, +\infty], \tau_{\sf ext}, \sfd_{\sf ext})$, where $\tau_{\sf ext}$ comes by the identification with $[-1, 1]$ via $x\mapsto \frac2\pi\arctan(x)$ and $\sfd_{\sf ext}$ coincides with the euclidean distance on $(-\infty,+\infty)$ and satisfies $\sfd_{\sf ext}(-\infty, +\infty)=+\infty$.
\end{remark}
As in the case of  mm-spaces, we can extend  a Lipschitz function defined in a subset of an emm-space to the entire space as a measurable Lipschitz function whose  Lipschitz constant is  preserved.
\begin{lemma}[McShane extension]\label{lemma: McShane}
    Let $\X$ be an emm-space, $A\subset \X$ be a Borel set and $f:A\to (-\infty, +\infty]$ be a 1-Lipschitz Borel function. 
    Then there exists a universally measurable (extended valued) 1-Lipschitz function $\tilde{f}$ such that $\tilde{f}=f$ on $A$.
    If $f$ is bounded, $\tilde{f}$ can be chosen to have the same upper and lower bounds.
\end{lemma}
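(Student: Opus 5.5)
The extension is the classical McShane formula
\[
\tilde f(x)\coloneqq \inf_{a\in A}\bigl(f(a)+\sfd(a,x)\bigr),\qquad x\in\X,
\]
with the conventions $+\infty+c=+\infty$ and $\inf\emptyset=+\infty$. The metric part is routine. On $A$ we have $\tilde f=f$: choosing $a=x$ gives $\tilde f\le f$, and the $1$-Lipschitz property gives $f(a)+\sfd(a,x)\ge f(x)$. For the Lipschitz bound, fix $x,y\in\X$. For each $a\in A$, $f(a)+\sfd(a,x)\le f(a)+\sfd(a,y)+\sfd(x,y)$, so $\tilde f(x)\le \tilde f(y)+\sfd(x,y)$. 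By symmetry $\tilde f$ is $1$-Lipschitz as a map into $(-\infty,+\infty]$, with the stated conventions whenever $\sfd(x,y)=+\infty$ or one of the values is $+\infty$. Since $f>-\infty$ on $A$, the value $-\infty$ could only come from the infimum. If $f$ is bounded, say $m\le f\le M$, I replace $\tilde f$ by $\max\{m,\min\{\tilde f,M\}\}$. Truncation preserves $1$-Lipschitzness, leaves values on $A$ unchanged, and gives the same bounds; it also removes the value $-\infty$ and any infinite values.

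The real issue is measurability, since $\sfd$ is only $\tau\times\tau$-lower semicontinuous and $A$ may be uncountable. The infimum is therefore not a countable one and need not be Borel. I would show instead that $\tilde f$ is universally measurable by proving that each sublevel set is analytic:
\[
\{\tilde f<t\}=\proj_\X\Bigl(\bigl\{(a,x)\in A\times\X:\ f(a)+\sfd(a,x)<t\bigr\}\Bigr),\qquad t\in\R.
\]
The map $(a,x)\mapsto f(a)+\sfd(a,x)$ is Borel on the Borel subset $A\times\X$ of the Polish space $\X\times\X$. Indeed $f$ is Borel on $A$, $\sfd$ is lower semicontinuous and hence Borel, and the sum of $[0,+\infty]$- and $(-\infty,+\infty]$-valued Borel maps is Borel. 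So the set inside the projection is Borel in $\X\times\X$. Its projection is analytic, and analytic sets in Polish spaces are universally measurable by Lusin's theorem. Hence $\{\tilde f<t\}$ is universally measurable for every $t$, and $\tilde f$ is universally measurable as an extended-valued function. Truncation preserves this property.

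The step I expect to be the main obstacle is exactly this measurability argument. One must avoid trying to prove that $\tilde f$ is Borel, which may fail. The point is to use that the projection of a Borel subset of $\X\times\X$ is analytic, which needs the product to be Polish; this is why the hypothesis that $(\X,\tau)$ is Polish matters. A small point to check is the case $A=\emptyset$, where $\tilde f\equiv+\infty$ and everything is trivial. In applications, universally measurable implies $\mm_\X$-measurable, which is what is needed for membership in $\Lip_1(\X,\mm_\X)$.
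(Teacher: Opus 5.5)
Your proposal is correct and follows the paper's proof essentially verbatim: the same McShane infimum, the same checks of the extension and $1$-Lipschitz properties, the same measurability argument ($\{\tilde f<t\}$ is the projection of a Borel subset of $\X\times\X$, hence analytic, hence universally measurable), and the same truncation in the bounded case. The only detail you leave unproved is that $\tilde f$ never takes the value $-\infty$ (your remark that it ``could only come from the infimum'' does not exclude it). The paper settles this by fixing $a_0\in A$ with $\sfd(x,a_0)<\infty$ (if there is none, $\tilde f(x)=+\infty$) and noting that $f(a)+\sfd(x,a)\ge f(a_0)-\sfd(a,a_0)+\sfd(x,a)\ge f(a_0)-\sfd(x,a_0)$ whenever $\sfd(x,a)<\infty$; this is a minor point, since the lemma allows extended values anyway.
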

\begin{proof}
If $A$ is the empty set, there is nothing to prove. 
Assume that $A$ is not empty and let us extend, without renaming it, $f$ to be $+\infty$ on $\X\setminus A$.
Set 
\begin{equation} \label{de:MCE}
    \tilde f(x)\coloneqq\inf_{a\in A}\bigl(f(a)+\mssd_\X(x,a)\bigr),
\qquad x\in \X,
\end{equation}
with the convention $\inf\varnothing=+\infty$.
Notice moreover that $\tilde f=\inf_{a\in \X}\bigl(f(a)+\mssd_\X(x,a)\bigr)$ for all $x\in \X$.

We first check that $\tilde f$ is well-defined as a map
$\X\to(-\infty,+\infty]$, i.e.\ it never takes the value $-\infty$.
Fix $x\in X$.
If $\sfd_\X(x,a)=+\infty$ for every $a\in A$, then every term
$f(a)+\sfd_\X(x,a)$ is equal to $+\infty$, hence $\tilde f(x)=+\infty$.
Otherwise, choose $a_0\in A$ such that $\mssd_\X(x,a_0)<\infty$.
Whenever $\mssd_\X(x,a)<\infty$, we have $\mssd_\X(a, a_0) \le \mssd_\X(a, x)+\mssd_\X(x, a_0)<+\infty$, hence, using the $1$-Lipschitz inequality~$f(a)\ge f(a_0)-\mssd_\X(a,a_0)$,
\[
f(a)+\mssd_\X(x,a)\ge f(a_0)-\mssd_\X(a,a_0)+\mssd_\X(x,a)\ge f(a_0)-\mssd_\X(x,a_0).
\]
Thus, the value~$f(a)+\mssd_\X(x,a)$ is bounded from below by 
$f(a_0)-\mssd_\X(x,a_0)$ on $\{a \in A: \mssd_\X(x, a)<+\infty\}$, while it takes $+\infty$ on the complement~$\{a \in A: \mssd_\X(x, a)=+\infty\}$.
Therefore $\tilde f(x)>-\infty$ for every $x \in \X$.
So indeed $\tilde f:X\to(-\infty,+\infty]$.

\medskip

\noindent
\emph{$\tilde f$ extends $f$.}
Let $x\in A$.
Taking $a=x$ in \eqref{de:MCE},
\[
\tilde f(x)\le f(x)+\mssd_\X(x,x)=f(x).
\]
Conversely, for every $a\in A$, the $1$-Lipschitz property of $f$ yields
\[
f(x)\le f(a)+\mssd_\X(x,a).
\]
Taking the infimum over $a\in A$ we obtain
$f(x)\le \tilde f(x)$.
Therefore $\tilde f(x)=f(x)$ for every $x\in A$.

\medskip

\noindent
\emph{$\tilde f$ is $1$-Lipschitz on $\X$.}
Fix $x,y\in \X$.
For every $a\in A$, the triangle inequality gives
\[
f(a)+\mssd_\X(x,a)\le f(a)+\mssd_\X(y,a)+\mssd_\X(x,y).
\]
Taking the infimum over $a\in A$ we get
\[
\tilde f(x)\le \tilde f(y)+\mssd_\X(x,y).
\]
Since $x,y$ are arbitrary, $\tilde f$ is $1$-Lipschitz.

\medskip

\noindent
\emph{$\tilde f$ is universally measurable.}
Since $f:\X\to(-\infty,+\infty]$ is Borel and
$\mssd_\X:\X\times \X\to[0,+\infty]$ is $\tau\times\tau$-lower semicontinuous,
the map
\[
(x,a)\longmapsto f(a)+\mssd_\X(x,a),
\qquad (x,a)\in  \X\times \X,
\]
is a Borel map from $\X\times A$ to $(-\infty,+\infty]$.

Fix $\alpha\in\mathbb R$ and consider
\[
E_\alpha\coloneqq\{(x,a)\in X\times \X:\ f(a)+\mssd_\X(x,a)<\alpha\}.
\]
Then $E_\alpha$ is a Borel subset of $\X\times \X$.
By definition of $\tilde f$,
\[
\{x\in \X:\tilde f(x)<\alpha\}=\pi_\X(E_\alpha),
\]
where $\pi_\X:\X\times \X\to \X$ is the projection to the first coordinate.
Since the projection of a Borel subset of a product of Polish spaces is analytic,
$\{x\in \X:\tilde f(x)<\alpha\}$ is an analytic subset of $\X$.
Every analytic subset of a Polish space is universally measurable.
Hence
$\{x\in \X:\tilde f(x)<\alpha\}$ is universally measurable for every $\alpha\in\mathbb R$.
Therefore $\tilde f$ is  universally measurable.

Finally, if $f$ is bounded, $\inf f\vee \tilde{f}\wedge \sup f$ is bounded and satisfies the same properties of $\tilde{f}$.
\end{proof}

\begin{remark}
In~\cite[Lemma 2.1]{LzDSSuz20}, a McShane extension theorem has been shown at the level of extended metric spaces (without measurable/measure structure). \Cref{lemma: McShane} above is an enhancement of it showing that we can take universally measurable McShane extensions, which we believe is a useful result of independent interest.  
\end{remark}
\begin{corollary}
    Let $\X$ be an emm-space.
    Every element $f \in \Lip_1(\X, \mfm_\X)$ has a universally measurable everywhere-defined~$1$-Lipschitz representative $\tilde f$.
\end{corollary}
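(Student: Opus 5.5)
The plan is to derive the corollary directly from the McShane extension (\Cref{lemma: McShane}), applied on a suitable Borel set of full measure.

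Let $f\in\Lip_1(\X,\mm_\X)$ and pick a representative $g\in\mathit{Lip}_1(\X,\mm_\X)$. By definition, $g$ is $\mm_\X$-measurable and there is a set $E$ with $\mm_\X(E)=1$ such that $|g(x_1)-g(x_0)|\le\sfd_\X(x_1,x_0)$ for all $x_0,x_1\in E$.

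First we reduce to Borel data. Since $g$ is $\mm_\X$-measurable, there is a Borel function $h$ with $h=g$ outside an $\mm_\X$-null set $N$. By completing the measure and using inner regularity of $\mm_\X$ (the space is Polish, so $\mm_\X$ is Radon), we find a Borel set
$$A\subset E\setminus N,\qquad \mm_\X(A)=1.$$
The restriction $h|_A=g|_A$ is then a real-valued Borel function which is $1$-Lipschitz on $A$, because $A\subset E$.

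Next, if $A$ is empty there is nothing to prove; otherwise, \Cref{lemma: McShane} applied to $h|_A$ yields a universally measurable function $\tilde f:\X\to(-\infty,+\infty]$ that is $1$-Lipschitz on all of $\X$ and agrees with $h$ on $A$. Since $\mm_\X(A)=1$, we get $\tilde f=g$ $\mm_\X$-a.e., so $\tilde f$ represents $f$.

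The only remaining issue is that $\tilde f$ may take the value $+\infty$ off $A$. This happens exactly at points at infinite distance from $A$. The set $\{\tilde f=+\infty\}$ is universally measurable and disjoint from $A$, hence $\mm_\X$-null. To obtain a real-valued representative, we would redefine $\tilde f$ there, for instance setting it to $0$ on $\{\tilde f=+\infty\}$. This preserves universal measurability and the a.e.\ class, and the $1$-Lipschitz inequality still holds: if $x$ is at infinite distance from $A$ and $y\in A$ (or $y$ lies at finite distance from $A$), then the triangle inequality forces $\sfd_\X(x,y)=+\infty$, so the inequality is trivial for such pairs; and for two points both at infinite distance from $A$ the new values are equal.

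This last point is the step that needs care. The set $\{\tilde f=+\infty\}$ is a union of $\sfd_\X$-"components" at infinite distance from $A$. Assigning the constant $0$ on that set keeps the function everywhere $1$-Lipschitz, because any two points in different components are at infinite distance from each other. If one is content with an extended-valued representative, as the convention in the preceding remark allows, this modification is unnecessary.
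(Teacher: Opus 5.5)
Your proof is correct and follows the route the paper intends: the corollary comes directly from the McShane extension lemma, applied to a Borel representative restricted to a Borel set $A$ of full measure on which it is $1$-Lipschitz. Your final modification is also valid and slightly sharpens the paper's subsequent remark: you set the extension to $0$ on the null set of points at infinite distance from $A$, which is a union of finite-distance components, and this shows the representative can be chosen real-valued everywhere.
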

\begin{remark}
    Notice that a priori the everywhere defined  1-Lipschitz representative might only be $\mm_\X$-a.e.~finite.
\end{remark}

\paragraph{Box-comparison.}
For two-variable measurable functions $\varphi_1, \varphi_2: X\times X \to \R \cup \{\pm \infty\}$, we have (at least) two ways to compare them  from the measure-theoretic viewpoint: one way is $\varphi_1=\varphi_2$ (or $\varphi_1 \le \varphi_2$) $\mm^{\otimes 2}$-a.e.; the other way is that there exists $\E \subset \X$ with $\mm(\E)=1$ such that $\varphi_1=\varphi_2$ (or $\varphi_1 \le \varphi_2$) everywhere on~$E \times E$. The latter case is obviously stronger than the former, and  similar to the idea used for the definition of the~$\square$-distance in~\eqref{e:Box}, so we call it {\it box comparison}.  In terms of identification of extended distances for emm-spaces, we need the stronger (latter) one. Indeed, we have the following pathological example with the weaker one, which is not suitable particularly when extended distances often take $+\infty$  measure-theoretically. 
\begin{example}\label{example: choice of negligible sets}
Consider the Gaussian space $\X=(\R^\infty, \tau^\infty, \ell_2, \gamma^\infty)$ and $\Y=(\R^\infty, \tau^\infty, \sfd_\infty, \gamma^\infty)$, where $\gamma^\infty=\gamma_1^{\otimes \infty}$ is the infinite-tensor of the centred Gaussian probability measure with unit variance, and $\sfd_\infty(x, y)=\infty$ if $x\neq y$ and 0 otherwise.
It is easy to see that both $\X$ and $\Y$ are extended topological-metric-measure spaces.
On one hand, by \Cref{corollary: ideality gaussian}, we have 
$$\gamma^\infty\otimes\gamma^\infty\Bigl(\{\ell_2=+\infty\}\Bigr)=1,$$ thus $\ell_2=\sfd_{\infty}$ holds $\gamma^\infty\otimes \gamma^\infty$-a.e..
On the other hand, we can see that the space of $a.e.$~$1$-Lipschitz functions is distinct between $\X$ and $\Y$:
$$\Lip_1(\X, \gamma^\infty) \neq \Lip_1(\Y, \gamma^\infty).$$
Indeed, the definition of $\sfd_\infty$ immediately implies~$\Lip_1(\Y, \gamma^\infty)=\L^0(\Y)$ while $\Lip_1(\X, \gamma^\infty)\neq \L^0(\X)$. To see the latter, 
for instance, let $\proj_1$ be the projection sending $(x_i)_{i \in \N} \in \R^\infty$ to the first coordinate~$x_1 \in \R$. Then $2\proj_1\in \L^0(\X)\setminus\Lip_1(\X, \gamma^\infty)$. 
This implies that $\X$ and $\Y$ are not emm-isomorphic despite $\ell_2=\sfd_{\infty}$ $\gamma^\infty\otimes \gamma^\infty$-a.e..
By a similar argument above, we can also see $$\overline{\mathcal P_\X}^\square\neq \overline{\mathcal P_\Y}^\square.$$
\end{example}

In the following, based on the stronger (box) comparison, we study pre-orders and equivalence of measurable two variable functions. 
\begin{definition} 
    Let $\X$ be a probability space.
    Let $\varphi, \psi:\X\times \X\to[0, +\infty]$ be measurable functions. 
    We write that $\varphi\leq_\square \psi$ if there exists $E\subset \X$ of full measure such that $\varphi\leq \psi$ on $E\times E$. 
    We also say that $\varphi$ is $\square$-equivalent to $\psi$ if there exists $E$ as above such that $\varphi=\psi$ on $E\times E$. 
    Equivalently, $\varphi$ is $\square$-equivalent to $\psi$ iff $\varphi\leq_\square\psi$ and $\psi\leq_\square\varphi$.
\end{definition}
\begin{remark} \  
\begin{itemize}
\item   The $\leq_\square$ relation is a partial order and $\square$-equivalence is an equivalence relation.
\item Since $\leq_\square$ is a partial order, one can define the $\square$-supremum $\square\text{-}\sup \mathcal F$ of a family $\mathcal F$ of measurable functions on $\X\times \X$ as the $\square$-minimal measurable function $\psi$ on $\X\times \X$ satisfying $\varphi\leq_\square\psi$ for all $\varphi\in \mathcal F$. 
Note that $\square\text{-}\sup \mathcal F$ might not exist in general, but if it  exists,  then it is unique up to $\square$-equivalence. 
\item We use the box notation $\le_\square$ since the idea behind it is similar to the box distance~\eqref{e:Box},  where both compare functions on~$\X \times \X$ on subsets $E \times E$ of large measure. 

\item Let $(\X, \tau, \sfd_1, \mfm)$ and $(\X, \tau, \sfd_2, \mfm)$ be emm-spaces. If $\sfd_1$ and $\sfd_2$ are $\square$-equivalent, then $(\X, \sfd_1, \mfm)$ and $(\X, \sfd_2, \mfm)$ are emm-isomorphic. 
\item  By definition of a.e.~Lipschitz property, whenever $f\in\Lip_1(\X, \mfm_\X)$, it holds
\begin{equation}
    \sfd(\cdot,\cdot\cdot)\geq_\square |f(\cdot)-f(\cdot\cdot)|.
\end{equation}
\end{itemize}
\end{remark}

In the case of mm-spaces, one can recover $\sfd(\cdot, \cdot\cdot)$ as the supremum of $|f(\cdot)-f(\cdot\cdot)|$ when $f$ runs among all 1-Lipschitz functions, or a large enough subset of them. 
Here we show the emm-counterpart in terms of the $\square$-comaprision. 
We start with an intermediate lemma.
\begin{lemma}\label{lemma: sup}
    Let $\X$ be a probability space and let $(g_n)_{n\in \N}$ be a sequence of measurable functions from $\X\times \X$ to $[0, +\infty]$.
    Then the pointwise supremum $\sup_n g_n$ is the $\square$-supremum of the sequence $(g_n)_{n\in \N}$.
\end{lemma}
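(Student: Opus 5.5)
We have to check two things for $g:=\sup_n g_n$. First, $g$ is an upper bound in the $\leq_\square$ sense. Second, it is $\square$-minimal among all such upper bounds. Note that $g$ is measurable, being a countable supremum of measurable functions with values in $[0,+\infty]$.

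\emph{Upper bound.} This is immediate. We have $g_n\le g$ everywhere on $\X\times\X$, so we may take $E=\X$ in the definition of $g_n\leq_\square g$.

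\emph{Minimality.} Let $\psi:\X\times\X\to[0,+\infty]$ be measurable with $g_n\leq_\square\psi$ for every $n$. For each $n$ pick a set $E_n\subset\X$ of full measure such that $g_n\le\psi$ on $E_n\times E_n$, and set $E\coloneqq\bigcap_{n}E_n$. Since the intersection is countable, $E$ still has full measure. For $(x,y)\in E\times E$ we have $(x,y)\in E_n\times E_n$ for every $n$, hence $g_n(x,y)\le\psi(x,y)$ for all $n$. Taking the supremum in $n$ gives $g(x,y)\le\psi(x,y)$, so $g\leq_\square\psi$.

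These two facts make $g$ a $\square$-minimal upper bound, i.e.\ the $\square$-supremum, unique up to $\square$-equivalence by the preceding remark. The only genuine point is that the exceptional sets are handled in product form: the countable intersection of full-measure sets $E_n$ yields a single set $E$ whose square $E\times E$ lies inside every $E_n\times E_n$. This is exactly where countability of the family is needed. For uncountable families the argument fails, and the pointwise supremum need not be measurable.
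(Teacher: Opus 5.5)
Your proposal is correct and follows essentially the same argument as the paper: the upper bound is immediate, and minimality comes from intersecting the countably many full-measure sets $E_n$ into a single $E$ with $E\times E\subset E_n\times E_n$ for all $n$. Your remarks on the measurability of $\sup_n g_n$ and on why countability is needed are accurate additions.
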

\begin{proof}
    Let $g=\sup_n g_n$. 
    Clearly $g \ge_\square g_n$. Let us show that it is minimal.
    Let $h$ be any element satisfying $h \ge_\square g_n$ for every $n$.  By definition, there exist $E_n$ of full measure such that $h\geq g_n$ on $E_n\times E_n$. Now, $E=\bigcap_n E_n$ is a full measure set and $h\geq g_n$ on $E\times E$ for all $n\in \N$. This implies $h\geq g$ on $E\times E$ and thus $h\geq_\square g$.
\end{proof}
\begin{corollary}
    Let $\X$ be a probability space. Suppose that  $D\subset \L^0(\X)$ has a countable dense subset  $(g_n)_{n\in \N}$. Let $\mathcal D\coloneqq\{|g(\cdot)-g(\cdot\cdot)|: g\in D\}$.
    Then $f=\sup_n |g_n(\cdot)-g_n(\cdot\cdot)|$  is a unique  $\square$-supremum  $\square\text{-}\sup \;\mathcal D$, where  the uniqueness holds up to $\square$-equivalence.
    In particular, if $\X$ is Polish, $\square\text{-}\sup \;\mathcal D$ always exists for every $D \subset \L^0(\X)$.
\end{corollary}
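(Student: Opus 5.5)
The statement has two parts. First, the pointwise supremum $f=\sup_n |g_n(\cdot)-g_n(\cdot\cdot)|$ is the $\square$-supremum of $\mathcal D$. Second, if $\X$ is Polish, such a $\square$-supremum always exists for every $D\subset\L^0(\X)$. I will prove them in that order.

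\textbf{Step 1: existence of a $\square$-upper bound.} By \Cref{lemma: sup}, $f$ is the $\square$-supremum of the countable family $\{|g_n(\cdot)-g_n(\cdot\cdot)|\}_{n}$. It therefore remains to show that $f\geq_\square |g(\cdot)-g(\cdot\cdot)|$ for every $g\in D$.

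Fix $g\in D$ and choose a subsequence $g_{n_k}\to g$ in $\L^0$. Passing to a further subsequence, we get $g_{n_k}\to g$ $\mm$-a.e., say on a full-measure set $E_g$. For $x,y\in E_g$ this gives
\[
|g(x)-g(y)|=\lim_k |g_{n_k}(x)-g_{n_k}(y)|\le f(x,y).
\]
Hence $|g(\cdot)-g(\cdot\cdot)|\le_\square f$.

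The point to stress is that the exceptional set $E_g$ depends on $g$, and $D$ may be uncountable. This does no harm, because the relation $\le_\square$ only asks for one full-measure set for each single comparison.

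\textbf{Step 2: minimality and uniqueness.} Let $h$ be any $\square$-upper bound of $\mathcal D$. Then $h\ge_\square |g_n(\cdot)-g_n(\cdot\cdot)|$ for every $n$, so \Cref{lemma: sup} gives $h\ge_\square f$. Thus $f$ is $\square$-minimal among upper bounds.

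Uniqueness up to $\square$-equivalence is immediate: two $\square$-suprema dominate each other, and $\le_\square$ is antisymmetric modulo $\square$-equivalence. This is the same remark made in the text about $\square$-suprema.

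\textbf{Step 3: the Polish case.} It suffices to show that every $D\subset\L^0(\X)$ has a countable dense subset. Here I use that $\L^0(\X)$ with the Ky-Fan distance is separable when $\mm$ is a Borel probability measure on a Polish space; the paper records this in \Cref{s:Pre}. Any subset of a separable metric space is itself separable, so $D$ has a countable $\sfd_{\L^0}$-dense subset, and Steps 1 and 2 apply.

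The only delicate point is the one noted in Step 1. The dependence of $E_g$ on $g$ is harmless for each individual comparison. However, it would matter if one tried to find a single full-measure set $E$ with $f\ge|g(x)-g(y)|$ on $E\times E$ for all $g\in D$ at once. That stronger, uniform statement is neither needed nor claimed here.
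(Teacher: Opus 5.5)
Your proposal is correct and follows the paper's proof essentially step for step. You use \Cref{lemma: sup} for minimality, and you show that $f$ is a $\square$-upper bound via a subsequence $g_{n_k}\to g$ converging pointwise on a full-measure set $E$ depending on $g$. The Polish case then follows, as in the paper, from the separability of $\L^0(\X)$.
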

\begin{proof} 
    By \Cref{lemma: sup}, it is enough to show that $f=\sup_n |g_n(\cdot)-g_n(\cdot\cdot)|$ is a $\square$-upper bound for $\mathcal D$.
    Let $g\in D$ and let $g_{n_k}$ be a subsequence of $g_n$ such that $g_{n_k}\to g$ pointwise on~$E\subset \X$ with $\mm(E)=1$.
    Then $f(\cdot, \cdot\cdot)\geq \limsup_k|g_{n_k}(\cdot)-g_{n_k}(\cdot\cdot)|=|g(\cdot)-g(\cdot\cdot)|$ on $E\times E$, proving that $f$ is a $\square$-upper bound. The latter statement follows by the general fact that $\L^0(\X)$ is separable when $\X$ is Polish. 
\end{proof}

\paragraph{Reconstruction of extended distances.}
Based on the box comparison, we can reconstruct extended metrics by a.e.~$1$-Lipschitz functions. 
\begin{proposition}[Reconstruction]\label{prop: reconstruction}
    Let $\X=(\X, \tau, \sfd, \mm)$ be an emm-space.  
    \begin{itemize}
        \item Set
    $$\mathcal D:=\{|f(\cdot)-f(\cdot\cdot)|\}_{f\in \Lip_1(\X,  \mm_\X)}.$$  Then $\sfd$ is the unique (up to $\square$-equivalence) $\square$-supremum of~$\mathcal D$, viz.,
    $$\sfd=\square\text{-}\sup\mathcal D.$$

    \item Suppose $\X=\varprojlim \X_n$, where $\X_n$ is an inverse system and $\p_n: \X \to \X_n$ is the projection attached to the inverse limit $($recall~\eqref{d:IVS}$)$. Then there exists a sequence $(f_n)_{n\in \N}$ with $f_n\in\Lip_1(\X_n)$ such that $$\sfd= \square\text{-}\sup_n \;|f_n\circ \p_n(\cdot)-f_n\circ \p_n(\cdot\cdot)|\}.$$
    Furthermore, $\sfd$ is the unique $\square$-supremum of the sequence $(\sfd_n)_{n\in \N}$ in the sense of 
    $$\sfd(\cdot,\cdot\cdot)= \square\text{-}\sup_n \;\sfd_n\bigl(\p_n(\cdot), \p_n(\cdot\cdot)\bigr).$$ 
    \end{itemize}
\end{proposition}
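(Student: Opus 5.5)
The plan is to prove the second bullet first, for inverse limits, by an explicit countable construction. The first bullet will then follow by transporting this construction through \Cref{lemma: inverse system}. Throughout, the only tool for identifying $\square$-suprema is \Cref{lemma: sup}: a \emph{countable} pointwise supremum is automatically the $\square$-supremum.

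\emph{Step 1 (inverse limits).} Let $\X=\varprojlim \X_n$. Set
$$E:=\{x\in\X:\ \p_n(x)\in\supp(\mm_n)\ \text{for all } n\}.$$
Since $(\p_n)_\#\mm=\mm_n$, the set $E$ has full measure. On $E$ the maps $p_{n,n+1}$ are $1$-Lipschitz. Hence $n\mapsto \sfd_n(\p_n(x),\p_n(y))$ is nondecreasing for $x,y\in E$, and the $\ell^\infty$ definition of $\sfd$ gives
$$\sfd(x,y)=\sup_n \sfd_n(\p_n(x),\p_n(y)) \qquad\text{for all } x,y\in E.$$
Combined with \Cref{lemma: sup}, this already proves the last claim of the second bullet.

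Next, for each $n$ fix a countable dense set $\{z_{n,k}\}_k\subset\supp(\mm_n)$ and set $f_{n,k}:=\sfd_n(z_{n,k},\cdot)$. By density and the triangle inequality,
$$\sfd_n(u,v)=\sup_k|f_{n,k}(u)-f_{n,k}(v)| \qquad\text{for } u,v\in\supp(\mm_n).$$
Moreover, each $f_{n,k}\circ\p_n$ is $\tau$-continuous, and it is $1$-Lipschitz on $E$ because $\sfd_n(\p_n\cdot,\p_n\cdot\cdot)\le\sfd$ there. Therefore
$$\sfd=\sup_{n,k}|f_{n,k}\circ\p_n(\cdot)-f_{n,k}\circ\p_n(\cdot\cdot)| \qquad\text{on } E\times E.$$
To obtain a single sequence indexed by $n$ with $f_n\in\Lip_1(\X_n)$, enumerate the pairs $(m,k)$ by a bijection $n\mapsto(m(n),k(n))$ with $m(n)\le n$. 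Then define $f_n:=f_{m(n),k(n)}\circ p_{m(n),n}$, which is $1$-Lipschitz on $\supp(\mm_n)$. On $E$ we have $f_n\circ\p_n=f_{m(n),k(n)}\circ\p_{m(n)}$, so the supremum is unchanged. \Cref{lemma: sup} then yields the claimed $\square$-supremum representation.

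\emph{Step 2 (first bullet).} Every $f\in\Lip_1(\X,\mm_\X)$ satisfies $\sfd\ge_\square|f(\cdot)-f(\cdot\cdot)|$, so $\sfd$ is a $\square$-upper bound of $\mathcal D$. For minimality, take an isomorphism $\varphi:\X\to\Y=\varprojlim\Y_n$ from \Cref{lemma: inverse system}, isometric on a full-measure set $A$. Let $g_n$ be the functions of Step 1 for $\Y$, and set $h_n:=g_n\circ\p_n\circ\varphi$. Each $h_n$ is $\mm_\X$-measurable and $1$-Lipschitz on $A\cap\varphi^{-1}(E_\Y)$, which has full measure because $\varphi$ is measure preserving. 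Hence $h_n\in\Lip_1(\X,\mm_\X)$. On $(A\cap\varphi^{-1}(E_\Y))^2$ we get
$$\sup_n|h_n(\cdot)-h_n(\cdot\cdot)|=\sfd_\Y(\varphi\cdot,\varphi\cdot\cdot)=\sfd.$$
So if $h\ge_\square$ every element of $\mathcal D$, then $h\ge_\square$ every $|h_n(\cdot)-h_n(\cdot\cdot)|$, and by \Cref{lemma: sup} $h\ge_\square\sfd$. Uniqueness up to $\square$-equivalence follows from antisymmetry of $\le_\square$.

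\emph{Expected obstacle.} The delicate point is bookkeeping of null sets rather than any estimate. One needs the monotonicity of $\sfd_n\circ\p_n$, and the $1$-Lipschitz property of the $p_{n,n+1}$, to hold only on supports; this is handled by working on $E$. One must also make sure that the full-measure sets used, in particular $\varphi^{-1}(E_\Y)\cap A$, are $\mm_\X$-measurable, which holds by measurability of $\varphi$. Everything else reduces to countability, which is guaranteed by separability of each $\supp(\mm_n)$.
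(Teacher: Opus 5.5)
Your proof is correct and follows the paper's route. You reduce to an inverse limit via \Cref{lemma: inverse system}, write $\sfd$ on a full-measure box as a countable supremum of $|f\circ\p_n(\cdot)-f\circ\p_n(\cdot\cdot)|$ with $f$ $1$-Lipschitz on $\X_n$, apply \Cref{lemma: sup}, and conclude with the sandwich $\sfd\ge_\square\square\text{-}\sup\mathcal D\ge_\square\sfd$; the differences are cosmetic (explicit distance functions to a dense subset of $\supp(\mm_n)$ instead of the Savar\'e citation, transporting the $h_n$ through $\varphi$ instead of the order-preserving pullback $\varphi^*$, and explicit null-set bookkeeping via $E$).
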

\begin{proof} 
    We start by noting that if $\varphi:\X\to \Y$ is an emm-isomorphism, then the pullback map $\varphi^*:\mathsf B(\Y\times \Y)\to \mathsf B(\X \times \X)$ is a bijection and order-preserving with respect to $\leq_\square$, i.e. 
    \begin{equation}
        f\leq_\square g  \quad \text{iff} \quad \varphi^* f\leq_\square \varphi^* g.
    \end{equation}
    Here $\mathsf B(\X \times \X)$ (respectively, $\mathsf B(\Y \times \Y)$) denotes the set of Borel measurable functions on $\X\times \X$ (respectively, $\Y\times \Y$), and $\varphi^*$ is defined as $\varphi^* f(x_1, x_2)=f(\varphi(x_1), \varphi(x_2))$ for all $f\in \mathsf B(\Y \times \Y)$.
    As an order-preserving bijection commutes with taking the supremum, we have that, if $\mathcal F\subset \mathsf B(\Y \times \Y)$, then 
    \begin{equation} \label{e:OPM}
    f=\square\text{-}\sup \mathcal F \iff \varphi^* f=\square\text{-}\sup \varphi^*(\mathcal F).
    \end{equation}
    Recalling that every emm-space~$\X$ is emm-isomorphic to $\varprojlim \X_n$ for some $(\X_n)_{n \in \N}$ due to \Cref{lemma: inverse system}, we may therefore assume without loss of generality that~$\X$ is the inverse limit of an inverse system $\X_n$ with the projection~$\p_n: \X \to \X_n$.
    \Cref{lemma: sup} and the definition of inverse limit yield $$\sfd(\cdot,\cdot\cdot)=\square\text{-}\sup \;\sfd_n(\p_n(\cdot),\p_n(\cdot\cdot)).$$
    Since the spaces $j\mapsto\X_j$ in the inverse system are mm-spaces, in particular, $\X$ is separable. Thus, for each $j$ there is a sequence $(f_{n, j})_{n, j\in \N}$ in $\Lip_1(\X_j, \sfd_j)$ such that $\sfd_j=\sup_n f_{n, j}$ for all $j\in \N$ (see e.g., \cite[Remark 2.1.5  and Lemma A.4]{Sav19}).
    Now, by a diagonal argument and composing the $1$-Lipschitz projection~$\p_n: \X \to \X_n$ to pull-back functions from $\X_n$ to $\X$, we can take a  sequence $(f_n)_{n\in \N}$ with $f_n\in\Lip_1(\X_n, \sfd_{\X_n})$ such that 
    \begin{equation}
        \sfd_j(\p_j(\cdot), \p_j(\cdot\cdot))\leq \square\text{-}\sup_n \;\Bigl|f_n(\p_n(\cdot))-f_n(\p_n(\cdot\cdot))\Bigr|.
    \end{equation}
    Moreover, since $\sfd_j$ increases in $j$ and $f_n\in \Lip_1(\X_n)$,
    \begin{equation}
        \sfd_j(\p_j(\cdot), \p_j(\cdot\cdot))\geq \square\text{-}\sup_{n\leq j} \; \Bigl|f_n(\p_n(\cdot))-f_n(\p_n(\cdot\cdot))\Bigr|.
    \end{equation}
    Putting this together, $$\square\text{-}\sup_j\; \sfd_j(\p_j(\cdot), \p_j(\cdot\cdot))=\square\text{-}\sup_n\; |f_n(\p_n(\cdot))-f_n(\p_n(\cdot\cdot))|,$$ which implies 
    \begin{equation}
        \sfd\geq_\square \square\text{-}\sup\; \mathcal D\geq_\square \square\text{-}\sup_n \;|f_n(\p_n(\cdot))-f_n(\p_n(\cdot\cdot))|=\square\text{-}\sup \;\sfd_n(\p_n(\cdot), \p_n(\cdot\cdot))=\sfd,
    \end{equation} 
    which completes the proof. 
\end{proof}
\begin{corollary}\label{cor: characterisation isomorphisms}
    Let $\X$ be an emm-space. 
    Let $\Phi:\X\to \X$ be an $\mm$-measurable map such that $\Phi_\#\mm=\mm$.
    Then $\Phi$ is an isomorphism if and only if $$\Phi^*\Lip_1(\X, \mm_\X)=\Lip_1(\X, \mm_\X).$$
\end{corollary}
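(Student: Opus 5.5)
We argue both directions through the reconstruction result, \Cref{prop: reconstruction}, which expresses $\sfd$ as $\square\text{-}\sup\mathcal D$ with $\mathcal D=\{|f(\cdot)-f(\cdot\cdot)|\}_{f\in\Lip_1(\X,\mm_\X)}$, combined with the transport rule \eqref{e:OPM} for $\square$-suprema under pullback.

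\emph{Only if.} Suppose $\Phi$ is an emm-isomorphism with full-measure set $A$ on which it is an isometry. Take $f\in\Lip_1(\X,\mm_\X)$, $1$-Lipschitz on a full-measure set $E$. Since $\Phi_\#\mm=\mm$, the set $\Phi^{-1}(E)$ has full measure. On $A\cap\Phi^{-1}(E)$ we get
\[
|f\circ\Phi(x)-f\circ\Phi(y)|\le \sfd(\Phi x,\Phi y)=\sfd(x,y),
\]
so $\Phi^*\Lip_1\subset\Lip_1$. The map $\Phi^*$ is well defined on $\mm$-classes because $\Phi$ is measure-preserving. By \Cref{d:EMI}, $\Phi$ has an inverse isomorphism $\Psi$ with $\Psi\circ\Phi=\mathrm{id}$ $\mm$-a.e. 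Applying the same argument to $\Psi$ gives $\Psi^*\Lip_1\subset\Lip_1$. Hence $f=\Phi^*(\Psi^*f)$ in $\L^0$, which yields the reverse inclusion.

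\emph{If.} Assume $\Phi^*\Lip_1=\Lip_1$. The aim is to show that $\Phi^*\sfd:=\sfd(\Phi\cdot,\Phi\cdot\cdot)$ is $\square$-equivalent to $\sfd$. Equality on $E\times E$ for a full-measure $E$ is exactly the isometry property, so this suffices.

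\begin{itemize}
\item \textbf{Step 1: reduce to a countable family.} By \Cref{lemma: inverse system} we may assume $\X$ is an inverse limit, hence its $\tau$-Borel structure is countably generated. \Cref{prop: reconstruction} (second item) then supplies a sequence $g_n\in\Lip_1(\X,\mm_\X)$ with $\sfd=\square\text{-}\sup_n|g_n(\cdot)-g_n(\cdot\cdot)|$. The isomorphism invariance in \eqref{e:OPM} lets us transfer this back to the original space.
\item \textbf{Step 2: $\Phi^*\sfd\ge_\square\sfd$.} By hypothesis, $g_n=f_n\circ\Phi$ a.e.\ for some $f_n\in\Lip_1$, each $1$-Lipschitz on a full-measure set $E_n$. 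Let $E$ be the intersection of the sets $\Phi^{-1}(E_n)$ and of the sets where $g_n=f_n\circ\Phi$, over all $n$; it is still of full measure. On $E\times E$,
\[
|g_n(x)-g_n(y)|=|f_n(\Phi x)-f_n(\Phi y)|\le\sfd(\Phi x,\Phi y).
\]
Taking the supremum via \Cref{lemma: sup} gives $\sfd\le_\square\Phi^*\sfd$.
\item \textbf{Step 3: $\Phi^*\sfd\le_\square\sfd$.} Choose $h_n\in\Lip_1$ with $\sfd=\sup_n|h_n(\cdot)-h_n(\cdot\cdot)|$ on $F\times F$, for a full-measure $F$. Then $\Phi^{-1}(F)$ has full measure, and on $\Phi^{-1}(F)\times\Phi^{-1}(F)$ we have $\Phi^*\sfd=\sup_n|h_n\circ\Phi(\cdot)-h_n\circ\Phi(\cdot\cdot)|$. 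Each $h_n\circ\Phi$ lies in $\Lip_1$ by hypothesis, so it is $1$-Lipschitz on some full-measure $G_n$. Intersecting countably many such sets gives $\Phi^*\sfd\le\sfd$ on a product of full-measure sets.
\end{itemize}

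\textbf{Main obstacle.} The delicate point is measurability and the null-set bookkeeping. Composition $h\circ\Phi$ must be $\mm$-measurable, and the pointwise representative identities only hold up to $\mm$-null sets, which must be intersected. Countably many sets is harmless, but this is exactly why we need \emph{countable} generating families. That is why Step 1 routes through \Cref{prop: reconstruction}'s second item and \Cref{lemma: inverse system}, rather than using the full, uncountable $\mathcal D$.
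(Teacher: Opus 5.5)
Your proof is correct and follows the paper's route: both rest on the reconstruction $\sfd=\square\text{-}\sup\mathcal D$ from \Cref{prop: reconstruction} together with the hypothesis $\Phi^*\Lip_1(\X,\mm_\X)=\Lip_1(\X,\mm_\X)$, and your Steps 2 and 3 are the two inequalities contained in the paper's one-line chain $\Phi^*\sfd=\square\text{-}\sup\Phi^*\mathcal D=\square\text{-}\sup\mathcal D=\sfd$. Routing through a countable realising family is a useful refinement, because \eqref{e:OPM} is only stated for emm-isomorphisms, and the countable family is exactly what justifies commuting $\Phi^*$ with the $\square$-supremum for a map that is only known to be measure-preserving.
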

\begin{proof}
    If $\Phi$ is an isomorphism, the thesis is immediate.
    Conversely,  assume that $\Phi$ preserves $\Lip_1(\X, \mm_\X)$. Let $\mathcal D=\{f(\cdot)-f(\cdot\cdot): f \in \Lip_1(\X, \mm_\X)\}$ and $\Phi^*\mathcal D=\{f(\cdot)-f(\cdot\cdot): f \in \Phi^*\Lip_1(\X, \mm_\X)\}$. 
   Using the first statement of \Cref{prop: reconstruction} and the hypothesis~$\Phi^*\Lip_1(\X, \mm_\X)=\Lip_1(\X, \mm_\X)$ yields
    $$\Phi^*\sfd_\X = \square\text{-}\sup\; \Phi^*\mathcal D=\square\text{-}\sup\; \mathcal D=\sfd_\X.$$ 
    Hence, $\Phi$ is a measure-preserving isometry defined almost everywhere, which completes the proof.  
\end{proof}

\paragraph{Kuratowski embedding.} 
Similar to the well-known fact that mm-spaces can be emebedded isometrically in $\ell^\infty$, emm-spaces can be embedded in the extended topological-metric space $(\mathbb R^\infty, \tau^\infty, \ell_\infty)$.
\begin{proposition}[{Embedding, cf.~\cite[Lemma 4.3]{AmbErbSav16} and \cite[Theorem 2.12]{SuYo25+}}]\label{prop: embedding}
    Let $\X$ be an emm-space. Then the following hold:
    \begin{itemize}

            \item there exists an emm-isomorphism~$\iota$ from~$\X$ to $(\mathbb R^\infty, \tau^\infty, \ell_\infty, \iota_\#\mm_\X)$.
           \item  If $\X$ is an extended topological-metric-measure space, the map $\iota$ can be chosen to be everywhere defined and a homeomorphism on the image.

            \item Finally, if $\X=\varprojlim_n \X_n$ is an inverse limit, one can choose $\iota$ with the following properties:
    \begin{enumerate}
        \item $\iota$ is a continuous and closed map defined on the whole $\X$;
        \item there is an increasing sequence of sets of indexes $(I_n)_{n\in \N}$, $I_n\nearrow\mathbb N$, and a sequence $\iota_n:\X_n\to \mathbb R^\infty$ of isometric embeddings such that
        $$\iota_n\circ \sfp_n=\proj_n\circ\iota,$$ where $\sfp_n:\X \to \X_n$ are the projections of the inverse system in~\eqref{d:IVS} and 
        $\proj_n$ is given by
        \begin{equation} \label{d:PJI}
            \Big(\proj_n((x_j)_{j\in \N})\Big)_k=\begin{cases}
                x_k\quad&\text{if }k\in I_n;\\
                0\quad&\text{otherwise.}
            \end{cases}
        \end{equation}
    \end{enumerate}
     \end{itemize}
\end{proposition}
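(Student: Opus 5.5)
We prove the three items in reverse order of generality: first the inverse-limit case, then extended topological-metric-measure spaces, and finally general emm-spaces via \Cref{lemma: inverse system}.

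\emph{Inverse-limit case.} Let $\X=\varprojlim_n \X_n$. Each $\X_n$ is a complete separable metric space, so it admits a Kuratowski (Fréchet) isometric embedding $j_n:\X_n\to\ell^\infty\subset\R^\infty$, $j_n(x)_k=\sfd_n(x,x^n_k)-\sfd_n(x^n_k,x^n_0)$, where $(x^n_k)_k$ is a countable dense subset. These coordinates are $1$-Lipschitz and continuous on $\X_n$, and $\sup_k$ of their increments recovers $\sfd_n$. Partition $\N$ into infinitely many infinite blocks $J_1,J_2,\dots$ and put $I_n=J_1\cup\dots\cup J_n$. Define the coordinates of $\iota(x)$ on the block $J_m$ by $j_m(\sfp_m(x))$, and set $\iota_n(y)$ equal to $j_m(p_{m,n}(y))$ on $J_m$ for $m\le n$ and $0$ elsewhere. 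The identity $\iota_n\circ\sfp_n=\proj_n\circ\iota$ then holds by construction, using $p_{m,n}\circ\sfp_n=\sfp_m$.

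Each $\iota_n$ is an isometry for $\ell_\infty$. Indeed, the block $J_n$ already recovers $\sfd_n$, while the blocks $J_m$ with $m<n$ contribute at most $\sfd_m\circ p_{m,n}\le\sfd_n$. Taking suprema over $n$ gives $\ell_\infty(\iota(x),\iota(y))=\sup_n\sfd_n(\sfp_n x,\sfp_n y)=\sfd(x,y)$. So $\iota$ is an isometry everywhere, and it is continuous from the product topology because each coordinate is continuous.

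For closedness, argue as follows. Suppose $\iota(x^\alpha)\to z$ coordinatewise along a sequence. Since $j_n$ is a homeomorphism onto its image, which is closed in the product topology restricted to the $J_n$ coordinates, we get $\sfp_n(x^\alpha)\to y_n\in\X_n$. Compatibility passes to the limit because the maps $p_{n,n+1}$ are continuous on supports; this needs care off the support, and we restrict to $\supp$, which is harmless up to null sets. The limit point $x=(y_n)_n$ then satisfies $\iota(x)=z$. Closedness of $j_n(\X_n)$ under coordinatewise convergence is the delicate point; if it fails, we replace $j_n$ by an embedding with closed image in $\R^\infty$, e.g.\ by adjoining coordinates $\sfd_n(\cdot,x^n_k)$ that detect Cauchy-ness. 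This is where I expect the main obstacle: ensuring the \emph{pointwise} limit of the coordinates corresponds to a genuine point of the complete space $\X_n$.

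\emph{Extended topological-metric-measure case.} If $\X$ is an extended topological-metric-measure space, $\tau$ is generated by $\Lip_1(\X,\tau,\sfd)$ and $\sfd$ is $\tau$-recovered. By Polishness, a countable family $(f_k)_k\subset\Lip_1(\X,\tau,\sfd)$ both generates $\tau$ and recovers $\sfd$ as $\sup_k|f_k(\cdot)-f_k(\cdot\cdot)|$ (cf.\ \cite[Lemma 2.1.5, A.4]{Sav19}). Setting $\iota=(f_k)_k$ gives an everywhere-defined $\ell_\infty$-isometry which is injective, continuous, and a homeomorphism onto its image since the $f_k$ generate $\tau$.

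\emph{General emm-spaces.} Compose the emm-isomorphism $\X\cong\varprojlim\X_n$ from \Cref{lemma: inverse system} with the $\iota$ built above. Finally, check that $(\R^\infty,\tau^\infty,\ell_\infty,\iota_\#\mm)$ is an emm-space: $\ell_\infty$ is complete and $\tau^\infty$-lower semicontinuous, since it is a supremum of continuous functions.
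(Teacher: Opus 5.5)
Your construction is the paper's construction. You pull back, block by block, a countable family of continuous $1$-Lipschitz functions on $\X_m$ that generates its topology and recovers $\sfd_m$; your explicit Kuratowski coordinates are one such family. You let $I_n$ be the first $n$ blocks and read off $\iota_n\circ\sfp_n=\proj_n\circ\iota$ from $p_{m,n}\circ\sfp_n=\sfp_m$. The extended topological-metric-measure case via Savar\'e's countable reduction, and the passage to general emm-spaces through \Cref{lemma: inverse system}, also match the paper. Those parts are fine, up to the off-support Lipschitz issue for the bonding maps, which you rightly flag.

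The genuine gap is closedness in item~1. Your proposed repair cannot work, because no choice of coordinates makes $\iota$ closed for $\tau^\infty$ in general.
\begin{itemize}
\item Item~2 forces every coordinate of $\iota$ to be $1$-Lipschitz for $\sfd$. Each $j$ lies in some $I_n$, and there $\iota(\cdot)_j=\iota_n(\sfp_n\cdot)_j$ with $\iota_n$ an $\ell_\infty$-isometry. Hence every coordinate is bounded on $\sfd$-bounded sets.
\item Take the constant inverse system $\X_n=(\ell^2,\|\cdot\|,\mu)$ with identity bonding maps, so that $\X\cong\ell^2$ with its norm topology. Along an orthonormal sequence, a diagonal extraction gives $\iota(e_{m_l})\to a$ in $\tau^\infty$.
\item Both $A=\{e_{m_l}\}_l$ and $A\setminus\{e_{m_{l_0}}\}$ are closed, since they are $\sqrt2$-separated. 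If $\iota$ were closed, $a$ would lie in $\iota(A)$, say $a=\iota(e_{m_{l_0}})$. It would also lie in $\iota(A\setminus\{e_{m_{l_0}}\})$, which contradicts injectivity.
\item For your $j_n$ concretely: $\|e_m-q\|\to\sqrt{1+\|q\|^2}$ for every $q$, and no point of $\ell^2$ realises these limits.
\item Adjoining further distance coordinates does not help, since they are again $1$-Lipschitz.
\end{itemize}

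The paper's own argument does not supply the missing idea either. It asserts that $\iota_n(A)$ is complete and hence closed. Completeness for $\ell_\infty$ only gives $\ell_\infty$-closedness, whereas the next step uses $\proj_n(\iota(x_k))\to\proj_n(y)$ in $\tau^\infty$.

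So item~1 needs an extra hypothesis, for example that every $\X_n$ is proper. Under that hypothesis, together with continuity of the bonding maps as you assume, your sketch goes through:
\begin{itemize}
\item the coordinate $\sfd_n(\cdot,x^n_0)$ keeps $\sfp_n(x^\alpha)$ bounded;
\item properness yields convergent subsequences;
\item continuity and injectivity of $j_n$ identify all subsequential limits, so the whole sequence converges.
\end{itemize}
As far as I can see, the later uses of adapted Kuratowski embeddings in the paper need only continuity, injectivity and the intertwining relation, not closedness.
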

\begin{proof}
    If we prove the statement under the assumption that $\X$ is an extended topological-metric-measure space, then \Cref{lemma: inverse system} and \Cref{remark: inverse system is regular} imply the result also for general emm-spaces.

    {\it The case of extended topological-metric-measure space.}  If $\X$ is an extended topological-metric-measure space, recall that by definition it holds that $\sfd_\X=\sup_{f\in \Lip_1(X, \tau_\X, \sfd_\X)}|f(\cdot)-f(\cdot\cdot)|$ and that the topology is generated by the same family $\Lip_1(X, \tau_\X, \sfd_\X)$.
    Since  the topology is Polish, in particular, separable,  there is a sequence $(f_k)_{k\in \N}\subset \Lip_1(X, \tau_\X, \sfd_\X)$ which generates $\tau_\X$.
    Moreover, as a consequence of the Lindel\"of property, which again follows from Polishness, we can reduce the formula to a sequence:
    \begin{equation}\label{eq: isometric embedding}
        \sfd_\X(\cdot, \cdot\cdot)=\sup_{f\in \Lip_1(X, \tau_\X, \sfd_\X)}|f(\cdot)-f(\cdot\cdot)|=\sup_{k\in \N}|f_k(\cdot)-f_k(\cdot\cdot)|.
    \end{equation}
    See \cite[Remark 2.1.5 and Lemma A.4]{Sav19}. We use the same notation $(f_k)_{k \in \N}$ for both the sequences generating $\tau_\X$ and generating $\sfd_\X$~in~\eqref{eq: isometric embedding} since we may combine these two sequences to obtain a single sequence, which generates $\tau_\X$ and $\sfd_\X$ simultaneously.   
    Now take~$\iota \coloneqq(f_1, f_2, \ldots)$, which is everywhere defined. 
    Since the sequence $k\mapsto f_k$ generates the topology, $\iota$ is a homeomorphism on the image.
    Finally, \eqref{eq: isometric embedding} implies that $\iota$ is an isometry.

    \smallskip
     {\it The case of inverse limit.} Let now $\X=\varprojlim \X_n$ be an inverse limit and let $p_{n, n+1}:\X^{n+1}\to \X^n$ be the bonding maps of the inverse system.
     For $m<n$ let also $p_{m, n}=p_{m, m+1}\circ\ldots\circ p_{n-1, n}:\X_n\to\X_m$ and $p_{n, n}=id$.
     
    By reasoning as above, we can choose sequences $(g_k^n)_{k\in \N}\subset \Lip_1(\X_n)$ such that for each $n\in \N$ the sequence $(g_k^n)_{k\in \N}$ generates the topology of $\X_n$ and $$\sfd_{\X_n}=\sup_{k\in \N}|g^n_k(\cdot)-g^n_k(\cdot\cdot)|.$$

    For $n, k\in \N$, let $f_k^n=g_k^n\circ \sfp_n$.
    By construction, the countable family $(f_k^n)_{n, k\in \N}$ generates the topology of $\X$ and satisfies $$\sfd_{\X}=\sup_{n, k\in \N}|f^n_k(\cdot)-f^n_k(\cdot\cdot)|.$$

    Let $j\mapsto h_j$ be an enumeration of the double sequence~$(f_k^n)_{n, k\in \N}$ and let $I_n=\{j\in\mathbb N: h_j=f_k^m \text{ for some } m\leq n \text{ and } k\}$. 
    Let $\iota\coloneqq (h_1, h_2, \dots)$ and let $\iota_n$ be defined by
    \begin{equation} \label{e:EH}
        (\iota_n(x))_j\coloneqq \begin{cases}
            h_j\circ p_{m, n} (x)\quad&\text{if } j\in I_n \text{ and } h_j=f_k^m,\\
            0\quad&\text{otherwise.}
        \end{cases}
    \end{equation}
    By the same argument as before, $\iota$ and the $\iota_n$ are emm-isomorphisms and continuous. Define the projection~${\rm proj}_n$ associated with $I_n$ as in~\eqref{d:PJI}. 
    Noting~$p_{m, n}\circ\sfp_n=\sfp_m$, the above construction gives~
    \begin{equation} \label{e:CIP}
    \iota_n\circ \sfp_n=\proj_n\circ\iota.
    \end{equation}
    
    Finally, we prove that $\iota$ is closed.
    It is clear that, for each $n$, $\iota_n$ is closed.
    Indeed, the topology on $\X_n$ is induced by the complete distance $\X_n$, thus for all closed $A\subset \X_n$, $\iota_n(A)$ is complete and thus closed.
    Let now $(x_k)_{k\in \N}\subset \X$ be such that $\iota(x_k)\to y$ in $\mathbb R^\infty$.
    By~\eqref{e:CIP}, we have
    \begin{equation}
        \iota_n(\sfp_n(x_k)) = \proj_n(\iota(x_k)) \to \proj_n(y).
    \end{equation}
    Since the $\iota_n$ are closed isometries, there exists $x^{(n)}\in \X_n$ such that $\sfp_n(x_k)\to x^{(n)}$ as $k\to\infty$.
    Moreover, by the fact that $I_n$ is increasing and thus $\proj_n\circ\proj_{n+1}=\proj_{n}$, it follows that $p_{n, n+1}(x^{(n+1)})=x^{(n)}$ for every $n\in \mathbb N$, where the maps $p_{n, n+1}: \X^{n+1} \to \X^n$ are the bonding maps of the inverse system.
    By definition of inverse limit, there exists $x\in\X$ such that $\sfp_n(x)=x^{(n)}$ for all $n\in \N$.
    By definition of the topology in the inverse limit, we conclude that $x_k\to x$ as $k\to\infty$.
\end{proof}

\begin{definition}[Kuratowski embedding] \label{def:KE} \ 
\begin{itemize}
\item     If $\X$ is an emm-space, we call \emph{Kuratowski embedding} a $\mm$-a.e.~defined Borel isometry $\iota$ from $\X$ with values in $(\mathbb R^\infty, \tau^\infty, \ell_\infty)$.
 \item   If $\X$ an extended topological-metric-measure space, we call \emph{continuous Kuratowski embedding} a Kuratowski embedding that is everywhere defined and continuous.

   \item If $\X=\varprojlim \X_n$ is an inverse limit, we call \emph{adapted Kuratowski embedding} an emm-isomorphism~$\iota$ given in the third bullet point in~\Cref{prop: embedding}. 
   \end{itemize}
\end{definition}
\begin{definition}[Cylinder functions]
    We say that a function $f:\mathbb R^\infty\to \mathbb R$ is a cylinder function if there is a finite $N\in \N$ and a function $\tilde f:\mathbb R^N\to \mathbb R$ such that
    \begin{equation}
        f(x_1, \ldots, x_N, \ldots)=\tilde f(x_1, \ldots, x_N).
    \end{equation}
\end{definition}
\begin{corollary}\label{corollary: cylinder functions}
    Let $\X$ be an extended topological-metric-measure space and let $\iota$ be a continuous Kuratowski embedding.
    Then the following property holds:
    \begin{itemize}
        \item Let $\mu_n$ be a sequence of positive Borel measures on $\X$ such that for some $C>0$ the estimate $\mu_n\leq C\mm_\X$ holds for all $n\in \N$.
        Then $\mu_n$ converges weakly if and only if $\iota_\#\mu_n$ converges weakly. 
        Moreover, in this case, 
        $$\lim_n \iota_\#\mu_n=\iota_\#\lim_n \mu_n.$$
    \end{itemize}
    If $\X=\varprojlim_n \X_n$ is an inverse limit and $\iota$ is an adapted Kuratowski embedding, then the following property holds:
    \begin{itemize}
        \item if $g\in \Lip_1(\mathbb R^\infty, \ell_\infty)$ is a cylinder function, then for $n$ large enough there exists $\tilde g_n\in \Lip_1(\X_n, \mm_{\X_n})$ such that
        $$g\circ\iota=\tilde g_n\circ \sfp_n \qquad  \mm_\X\text{-a.e.},$$
        where $\sfp_n:\X\to\X_n$ is the projection given in~\eqref{d:IVS}.
    \end{itemize}  
\end{corollary}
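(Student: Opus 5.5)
For the first bullet I would use tightness together with the injectivity of $\iota$. For the second I would use the commutation relation $\iota_n\circ\sfp_n=\proj_n\circ\iota$ from \Cref{prop: embedding}.

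\emph{First bullet.} Suppose $\mu_n\rightharpoonup\mu$ weakly on $(\X,\tau_\X)$. Since $\iota$ is continuous, $\int\varphi\circ\iota\,\d\mu_n\to\int\varphi\circ\iota\,\d\mu$ for every $\varphi\in C^0_{\sf b}(\mathbb R^\infty)$. Hence $\iota_\#\mu_n\rightharpoonup\iota_\#\mu$, and this also gives the identity $\lim_n\iota_\#\mu_n=\iota_\#\lim_n\mu_n$.

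For the converse, assume $\iota_\#\mu_n\rightharpoonup\nu$. The bound $\mu_n\le C\mm_\X$ has two consequences. First, the total masses satisfy $\mu_n(\X)\le C$. Second, the family is uniformly tight: $\mm_\X$ is Radon on the Polish space $\X$, so for each $\varepsilon>0$ there is a compact $K$ with $\mm_\X(\X\setminus K)<\varepsilon/C$, and then $\mu_n(\X\setminus K)<\varepsilon$ for all $n$. By Prokhorov's theorem for bounded positive measures, every subsequence of $(\mu_n)$ has a further subsequence converging weakly to some $\mu$. By the first direction, $\iota_\#\mu=\nu$.

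It remains to show that $\mu$ is determined by $\nu$. The map $\iota$ is an injective continuous map on a Polish space, so by Lusin--Souslin $\iota(B)$ is Borel for every Borel $B\subset\X$ and $\iota$ is a Borel isomorphism onto its image. Therefore $\mu(B)=\nu(\iota(B))$ for all Borel $B$, so all subsequential limits coincide and the whole sequence $(\mu_n)$ converges. This identification is the only delicate point, and Lusin--Souslin handles it.

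\emph{Second bullet.} Let $g\in\Lip_1(\mathbb R^\infty,\ell_\infty)$ be a cylinder function depending only on the coordinates $1,\dots,N$. Since $I_n\nearrow\mathbb N$, there is $n_0$ such that $\{1,\dots,N\}\subset I_n$ for all $n\ge n_0$. For such $n$, $\proj_n$ leaves the first $N$ coordinates unchanged, so $g=g\circ\proj_n$. Hence
\[
g\circ\iota=g\circ\proj_n\circ\iota=g\circ\iota_n\circ\sfp_n .
\]
I would set $\tilde g_n\coloneqq g\circ\iota_n$. It is continuous, hence Borel. It is $1$-Lipschitz on $\X_n$ because $\iota_n$ is an isometric embedding into $(\mathbb R^\infty,\ell_\infty)$ and $g$ is $1$-Lipschitz for $\ell_\infty$. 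The identity $g\circ\iota=\tilde g_n\circ\sfp_n$ then holds everywhere on $\X$, in particular $\mm_\X$-a.e.

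The one point to verify is that the maps $\iota_n$ of \eqref{e:EH} really are $\ell_\infty$-isometries on $\X_n$. I would check this from the construction: for each $n$, the coordinates $j\in I_n$ of $\iota_n$ are the functions $g^m_k\circ p_{m,n}$ with $m\le n$. These include $g^n_k$ for all $k$, so their supremum of differences recovers $\sfd_{\X_n}$. Conversely, each $g^m_k\circ p_{m,n}$ is $1$-Lipschitz with respect to $\sfd_{\X_n}$, so the supremum does not exceed $\sfd_{\X_n}$.
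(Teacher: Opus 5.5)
Your proof is correct and follows the paper's argument: continuity of $\iota$ gives the forward implication; tightness from $\mu_n\le C\mm_\X$ plus identification of subsequential limits gives the converse; and $g=g\circ\proj_n$ combined with $\iota_n\circ\sfp_n=\proj_n\circ\iota$ gives the cylinder statement. Your Lusin--Souslin step (injectivity of $\iota$ making $\iota_\#$ injective) and your check that $\iota_n$ is $1$-Lipschitz make explicit what the paper compresses into ``by uniqueness of the limit point'' and ``$g\circ\iota_n$ is $1$-Lipschitz''.
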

\begin{proof}
    Assume that $\X$ is an extended topological-metric-measure space and $\iota$ a continuous Kuratowski embedding.
    Let $n\mapsto\mu_n$ be a sequence of positive Borel measures on $\X$ and $C>0$ be such that $\mu_n\leq C\mm_\X$ holds for all $n\in \N$.
    If $\mu_n\rightharpoonup\mu$ for some Borel measure $\mu$ on $\X$, then, by continuity of $\iota$, $\iota_\#\mu_n\rightharpoonup\iota_\#\mu$.

    Conversely, assume that for some Borel measure $\nu$ on $\mathbb R^\infty$ it holds $\iota_\#\mu_n\rightharpoonup\nu$.
    By the uniform bound~$\mu_n \le C\mm_\X$, $n\mapsto \mu_n$ is tight: up to subsequences we might assume that $\mu_n\rightharpoonup \mu$.
    By the same argument above  $\nu=\iota_\#\mu$.
    By uniqueness of the limit point, the latter claim also follows.

    Assume now that $\X=\varprojlim_n \X_n$ is an inverse limit and that $\iota$ is an adapted Kuratowski embedding with $n\mapsto \iota_n$ and $n\mapsto \proj_n$ as in \Cref{prop: embedding}.
    Let $g\in \Lip_1(\mathbb R^\infty, \ell_\infty)$ be a cylinder function.
    
    By definition of adapted Kuratowski embedding, the sets $I_n$ invade $\mathbb N$: by definition of cylinder function, for $n$ large enough $g=g\circ\proj_n$.
    In particular, again by the definition of adapted Kuratowski embedding (\Cref{def:KE}),
    \begin{equation}
        g\circ\iota=g\circ\proj_n\circ\iota=g\circ\iota_n\circ \sfp_n.
    \end{equation}
    Since $\tilde g_n \coloneqq g\circ\iota_n$ is 1-Lipschitz, the second item is proved.
\end{proof}

\paragraph{Approximation results.}
We collect some approximation result that will be crucial for the representation of pyramids through emm-spaces.
\begin{lemma}[{\cite[Corollary 3.3]{PaTa25}, see also \cite{Mat00}}]\label{lemma: continuous extension}
    Let $\X$ be an extended topological-metric space. 
    Let $K\subset \X$ be a compact set and let $f:K\to \mathbb R$ be 1-Lipschitz and continuous. 
    Then there exists $\tilde{f}\in \Lip_1(X, \tau, \sfd)$ and such that $\tilde{f}_{\restr K}=f$.
\end{lemma}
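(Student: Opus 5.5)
The statement is the extended analogue of the classical fact that a $1$-Lipschitz function on a compact subset of a metric space extends to a $1$-Lipschitz function on the whole space. Two features of the extended setting must be handled: $\sfd$ may take the value $+\infty$, and the extension has to be $\tau$-continuous rather than merely Lipschitz. I would use the two structural properties of an extended topological-metric space (\Cref{d:ETMS}): $\sfd$ is $\tau$-recovered by $\Lip_1(\X,\tau,\sfd)$, and $\tau$ is generated by this family. The plan is to build the extension as a uniform limit of finite lattice combinations of functions in $\Lip_1(\X,\tau,\sfd)$. This class is stable under finite $\max$, finite $\min$ and uniform limits, so the limit will again lie in $\Lip_1(\X,\tau,\sfd)$.

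First I would reduce to bounded $f$. Since $K$ is compact and $f$ is continuous, $f$ is bounded. Truncation $(\inf f)\vee g\wedge(\sup f)$ preserves both membership in $\Lip_1(\X,\tau,\sfd)$ and the values on $K$, so bounded target values are harmless.

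The key step is a local approximation. For fixed $x,y\in K$ and $\varepsilon>0$, $\tau$-recovery of $\sfd$ gives $g_{xy}\in\Lip_1(\X,\tau,\sfd)$ with
\[
|g_{xy}(x)-g_{xy}(y)|\ge \min\{\sfd(x,y),M\}-\varepsilon,
\]
where $M:=\sup f-\inf f+1$. The truncation at $M$ handles the case $\sfd(x,y)=+\infty$. After a sign change, a shift and truncation, I obtain $h_{xy}\in\Lip_1(\X,\tau,\sfd)$ with
\[
h_{xy}(x)=f(x), \qquad h_{xy}(y)\ge f(y)-\varepsilon, \qquad h_{xy}\le f(x)+\sfd(x,\cdot).
\]
Here the $1$-Lipschitz bound $f(y)\le f(x)+\sfd(x,y)$ on $K$ is what makes the required lower bound at $y$ compatible with the value $f(x)$ at $x$. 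Concretely I would take $h_{xy}=f(x)+\min\{(g_{xy}-g_{xy}(x))^{\pm},\,\cdot\,\}$, clipped so that its values stay below $f(x)$ plus the gap.

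Next comes a standard Stone--Weierstrass lattice argument, using compactness of $K$ and $\tau$-continuity of the $h_{xy}$. For fixed $x$, the sets $\{h_{xy}>f-\varepsilon\}\cap K$ cover $K$, and $\max$ over finitely many $y$ gives $H_x$ with $H_x(x)\le f(x)+\varepsilon$ and $H_x>f-\varepsilon$ on $K$. Taking $\min$ over finitely many $x$ then gives $F_\varepsilon\in\Lip_1(\X,\tau,\sfd)$ with $|F_\varepsilon-f|<\varepsilon$ on $K$.

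To upgrade this approximation to exact equality on $K$, I would iterate on the error: approximate $f-F_{\varepsilon}$ repeatedly. The difficulty is that this error is only $2$-Lipschitz, so the naive series of corrections loses control of the Lipschitz constant. The main obstacle is this passage from approximate extension to exact extension while keeping the Lipschitz constant $1$. My first attempt would be a Matou\v{s}ková-type scheme as in \cite{Mat00}: choose at stage $k$ a function $F_k\in\Lip_1(\X,\tau,\sfd)$ trapped between the McShane envelopes $\max_K(f-\sfd)-2^{-k}$ and $\min_K(f+\sfd)+2^{-k}$, with the $F_k$ monotone in $k$. The envelopes are only lower/upper semicontinuous, so I would use the recovered-distance functions to sandwich them by continuous ones on compact pieces. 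The extension is then $\tilde f=\lim_k F_k$, which is $1$-Lipschitz as a pointwise limit of $1$-Lipschitz functions and equals $f$ on $K$. Since the limit is only pointwise and not uniform on $\X$, the remaining point is its $\tau$-continuity, which I would obtain from a Dini-type argument. Failing this, I would simply invoke \cite[Corollary 3.3]{PaTa25} as stated.
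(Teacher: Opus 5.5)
Your reduction to bounded $f$, the construction of the functions $h_{xy}$ and the lattice argument are correct. Using only $\tau$-recovery and compactness of $K$, they show that restrictions to $K$ of functions in $\Lip_1(\X,\tau,\sfd)$ are uniformly dense among continuous $1$-Lipschitz functions on $K$. Any such approximant is then automatically trapped between $\ell-\varepsilon$ and $u+\varepsilon$, where $\ell:=\sup_{a\in K}(f(a)-\sfd(\cdot,a))$ and $u:=\inf_{a\in K}(f(a)+\sfd(\cdot,a))$, so the trapping carries no information off $K$.

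The gap is the upgrade to an exact extension. Your opening plan, a uniform limit, is the right one, but the last step abandons it. Monotonicity, membership in $\Lip_1(\X,\tau,\sfd)$ and $2^{-k}$-closeness to $f$ on $K$ only give a semicontinuous limit. No Dini-type argument can help, because Dini's theorem presupposes that the limit is continuous. The scheme fails even on a compact space. In $\mathbb I_\infty$ (\Cref{p:ME}), every continuous function is $\sfd_\infty$-$1$-Lipschitz. Take $K=\{0,1\}$, $f(0)=0$, $f(1)=1$ and $F_k(z)=\min\{1,kz\}$. Every requirement holds, even exactly on $K$, yet $F_k\uparrow\mathbf 1_{(0,1]}$, which is discontinuous at $0$. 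What you need is two-sided control of $F_{k+1}-F_k$ on all of $\X$. Sandwiching ``on compact pieces'' cannot supply this either, since a Polish $\X$ need not be locally compact.

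The missing step is an insertion with uniform control. Given a bounded $g_k\in\Lip_1(\X,\tau,\sfd)$ with $|g_k-f|\le\varepsilon_k$ on $K$ (start, e.g., from a constant), find $g_{k+1}\in\Lip_1(\X,\tau,\sfd)$ with
\[
\max\{g_k-\varepsilon_k,\ell\}-\varepsilon_{k+1}\le g_{k+1}\le\min\{g_k+\varepsilon_k,u\}+\varepsilon_{k+1}\qquad\text{on }\X .
\]
The lower barrier is upper semicontinuous, because $K$ is compact and $\sfd$ is lower semicontinuous; the upper barrier is lower semicontinuous. Both barriers are $1$-Lipschitz, they are ordered, and both equal $f$ on $K$. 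Hence $|g_{k+1}-f|\le\varepsilon_{k+1}$ on $K$ and $\sup_{\X}|g_{k+1}-g_k|\le\varepsilon_k+\varepsilon_{k+1}$. For summable $\varepsilon_k$ the $g_k$ therefore converge uniformly to a continuous $1$-Lipschitz extension.

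Your lattice argument does prove this insertion: for barriers $\phi\le\psi$ one has $\phi(y)\le\psi(x)+\sfd(x,y)$, and semicontinuity makes the relevant sets open. However, the finite subcovers must now be extracted from the whole space, so the argument needs the whole space to be compact. This is the setting of Matou\v{s}kov\'a's theorem \cite{Mat00}: a compact Hausdorff space with a lower semicontinuous metric, where infinite values are harmless after truncating at $M$.

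For a Polish $\X$ you must first compactify. Map $\X$ into a product of compact intervals by a family of bounded functions in $\Lip_1(\X,\tau,\sfd)$ that generates $\tau$ and recovers $\sfd$. Take the closure $\hat{\X}$ with $\hat{\sfd}(p,q):=\sup_j|p_j-q_j|$; this is lower semicontinuous, recovered by the coordinate maps, and equal to $\sfd$ on $\X$. Extend there and restrict back to $\X$.

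The paper gives no proof of this lemma and simply imports it from \cite[Corollary 3.3]{PaTa25}. Your fallback therefore coincides with what the paper does, but the argument you sketch does not by itself establish the lemma.
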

By coupling \Cref{lemma: continuous extension} with Luzin's theorem, the following density result follows.
\begin{corollary}\label{cor: density of continuous functions}
    Let $\X$ be an extended topological-metric-measure space. 
    Then $\Lip_1(\X, \tau, \sfd)$ is dense in $\Lip_1(\X, \mm)$ with respect to $\L^0$ convergence.
\end{corollary}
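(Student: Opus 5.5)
Given $f\in\Lip_1(\X,\mm)$ and $\varepsilon>0$, I will build $g\in\Lip_1(\X,\tau,\sfd)$ with $\sfd_{\L^0}(f,g)\le\varepsilon$. The plan is to restrict $f$ to a large compact set on which it is continuous, and then extend it back using \Cref{lemma: continuous extension}.

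First I will reduce to bounded functions. The truncations $f_M\coloneqq(-M)\vee f\wedge M$ are still a.e.~$1$-Lipschitz, and $f_M\to f$ in $\L^0$ because $f$ is $\mm$-a.e.~finite. So it suffices to treat $f$ bounded and, choosing a Borel representative, Borel. Let $E$ be a full-measure set on which $f$ is $1$-Lipschitz. Since $\mm$ is a Borel probability measure on the Polish space $(\X,\tau)$, it is inner regular. Combined with Luzin's theorem, this gives a $\tau$-compact set $K\subset E$ with $\mm(\X\setminus K)<\varepsilon$ and $f|_K$ $\tau$-continuous. (Here I apply Luzin to obtain a compact set of continuity, then intersect with a compact subset of $E$ of almost full measure; $E$ may be taken Borel after shrinking.)

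On $K$ the function $f$ is both continuous and $1$-Lipschitz with respect to $\sfd$, since $K\subset E$. Because $\X$ is an extended topological-metric space, \Cref{lemma: continuous extension} applies and yields $\tilde f\in\Lip_1(\X,\tau,\sfd)$ with $\tilde f|_K=f|_K$. Then $\mm(\{|\tilde f-f|>0\})\le\mm(\X\setminus K)<\varepsilon$, hence $\sfd_{\L^0}(\tilde f,f)\le\varepsilon$. Letting $\varepsilon\to0$ and undoing the truncation by a diagonal argument gives the density.

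The only delicate point is making sure $K$ can be taken inside the set $E$ where the Lipschitz inequality holds, since \Cref{lemma: continuous extension} needs the Lipschitz property everywhere on $K$. This is settled by using a Borel full-measure $E$ together with inner regularity, so the step is routine. Boundedness of the extension is not needed for this argument.
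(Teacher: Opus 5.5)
Your proposal is correct and follows the paper's argument: the paper proves the corollary by combining Luzin's theorem (to get a large compact set $K$, inside the full-measure set where $f$ is $1$-Lipschitz, on which $f$ is $\tau$-continuous) with the continuous $1$-Lipschitz extension lemma from compact sets. Your truncation step is harmless but unnecessary, since $f|_K$ is already bounded on the compact set $K$ and the extension lemma does not require boundedness.
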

Finally, in the case of inverse limits, general measurable 1-Lipschitz functions can be approximated by finite dimensional 1-Lipschitz functions.
\begin{proposition}[Lipschitz approximation]\label{proposition: LAP}
    Let $\X=\varprojlim \X_n$ be an inverse limit of mm-spaces~$\X_n$. Let $f\in \Lip_1(\X, \mm_\X)$.
    Then there exist $f_n\in\Lip_1(\X_n, \mm_{\X_n})$ such that $f_n\circ \sfp_n\to f$ in $\L^0(\X)$, where $n\mapsto \sfp_n$ is the sequence of projections attached to the inverse system given in~\eqref{d:IVS}.
\end{proposition}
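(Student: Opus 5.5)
The plan is to reduce to the case where $\X$ is an extended topological-metric-measure space carrying an adapted Kuratowski embedding, and then approximate $f$ by cylinder $1$-Lipschitz functions. These are exactly of the form $\tilde g_n\circ\sfp_n$ by \Cref{corollary: cylinder functions}.

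By truncation, we may first assume $f$ is bounded. The functions $(-k)\vee f\wedge k$ are still a.e. $1$-Lipschitz and converge to $f$ in $\L^0$, so a diagonal argument handles the general case. Fix an adapted Kuratowski embedding $\iota:\X\to(\mathbb R^\infty,\tau^\infty,\ell_\infty)$ from \Cref{prop: embedding}. It is continuous, closed and an isometry, with $\iota_n\circ\sfp_n=\proj_n\circ\iota$. Let $E$ be a full-measure Borel set on which $f$ is $1$-Lipschitz. By Luzin's theorem, choose compact sets $K_j\subset E$ with $\mm(K_j)\ge 1-1/j$ and $f|_{K_j}$ $\tau$-continuous. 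Then $\iota(K_j)$ is compact in $\mathbb R^\infty$, and $g:=f\circ\iota^{-1}$ is continuous and $\ell_\infty$-$1$-Lipschitz on $\iota(K_j)$.

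Next, replace $g$ by a function of finitely many coordinates. Consider
\[
  g^{(n)}(y):=\inf_{z\in\iota(K_j)}\bigl(g(z)+\ell_\infty(\proj_n y,\proj_n z)\bigr),
\]
truncated between $\inf f$ and $\sup f$. This is a cylinder function, since it depends only on the coordinates in $I_n$. It is $1$-Lipschitz for $\ell_\infty$ because $\proj_n$ is $1$-Lipschitz, and it lies below $g$ on $\iota(K_j)$. I then want $g^{(n)}\to g$ pointwise on $\iota(K_j)$ as $n\to\infty$. Since $\ell_\infty(\proj_n y,\proj_n z)$ increases to $\ell_\infty(y,z)$, this is a monotone convergence of infima of increasing lower semicontinuous functions over a compact set. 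A Dini/$\Gamma$-convergence argument on the compact set $\iota(K_j)$ should give convergence of the infima, provided $z\mapsto g(z)+\ell_\infty(\proj_n y,\proj_n z)$ is lower semicontinuous and increasing in $n$.

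By \Cref{corollary: cylinder functions}, for $n$ large we get $g^{(n)}\circ\iota=\tilde g_n\circ\sfp_n$ with $\tilde g_n\in\Lip_1(\X_n,\mm_{\X_n})$. This gives $\tilde g_n\circ\sfp_n\to f$ pointwise on $K_j$. Since $\mm(K_j)\ge 1-1/j$, a diagonal choice in $j$ and $n$ yields convergence in $\L^0(\X)$. The approximants are also monotone in $n$ for fixed $j$, which helps in reindexing them as a single sequence $f_n$ with one function per level $n$. To do this, for $n$ below the threshold at which $\tilde g_n$ becomes available, we set $f_n$ equal to a constant.

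The main obstacle is the convergence $g^{(n)}\to g$ on $\iota(K_j)$. The distance is only lower semicontinuous for $\tau^\infty$, and $g$ is continuous only on the compact set. I would argue by contradiction. Suppose there is some $y$ and $\varepsilon>0$ such that for every $n$ there are minimisers $z_n\in\iota(K_j)$ with $g(z_n)+\ell_\infty(\proj_n y,\proj_n z_n)\le g(y)-\varepsilon$. Extract a $\tau^\infty$-convergent subsequence $z_n\to z$. For fixed $m\le n$, monotonicity in $n$ and lower semicontinuity of $\ell_\infty\circ(\proj_m\times\proj_m)$ give $g(z)+\ell_\infty(\proj_m y,\proj_m z)\le g(y)-\varepsilon$. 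Letting $m\to\infty$ then contradicts the $1$-Lipschitz property $g(y)\le g(z)+\ell_\infty(y,z)$.
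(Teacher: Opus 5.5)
Your proof is correct, but at the key step it takes a different route from the paper.

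\textbf{What the paper does.} The paper also passes through the adapted Kuratowski embedding. It then invokes \Cref{cor: density of continuous functions}, which is Luzin's theorem combined with the extension result \Cref{lemma: continuous extension} of Pasqualetto--Taipalus. This reduces the problem to a $\tau^\infty$-continuous $g\in\Lip_1(\mathbb R^\infty,\tau^\infty,\ell_\infty)$. After that the approximation is immediate: $g_n(x)=g(x_1,\dots,x_n,0,0,\dots)$ is a genuine $1$-Lipschitz cylinder function, and it converges to $g$ pointwise by $\tau^\infty$-continuity.

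\textbf{What you do differently.} You keep only the Luzin compacts $\iota(K_j)$ and never pass through a globally continuous extension. You build the approximants directly as McShane inf-convolutions with respect to the projected pseudo-distance $\ell_\infty\circ(\proj_n\times\proj_n)$. You then prove pointwise convergence on $\iota(K_j)$ by a compactness and lower-semicontinuity argument, which goes through as you wrote it.

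\textbf{Comparison.} Your route is more self-contained: it bypasses the external extension theorem, and in effect proves only the part of it needed in $(\mathbb R^\infty,\ell_\infty)$. The paper's route is shorter once that machinery is available.

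\textbf{Points to tidy up.}
\begin{itemize}
\item In the construction of \Cref{prop: embedding} the index sets $I_n$ are infinite. So $g^{(n)}$ is not a cylinder function in the paper's finite-coordinate sense, and \Cref{corollary: cylinder functions} does not literally apply. You do not need it, though. The identities $g^{(n)}=g^{(n)}\circ\proj_n$ and $\iota_n\circ\sfp_n=\proj_n\circ\iota$ give $g^{(n)}\circ\iota=(g^{(n)}\circ\iota_n)\circ\sfp_n$ directly, with $g^{(n)}\circ\iota_n$ $1$-Lipschitz on $\X_n$ because $\iota_n$ is an isometric embedding.
\item For the same reason the truncation at $\sup f$ is genuinely needed, since $\ell_\infty(\proj_n y,\proj_n z)$ may be $+\infty$.
\item You should state that $g^{(n)}$ is Borel. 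This holds because it is $\tau^\infty$-lower semicontinuous: it is an infimum, over the compact set $\iota(K_j)$, of the jointly lower semicontinuous function $(y,z)\mapsto g(z)+\ell_\infty(\proj_n y,\proj_n z)$, and truncation preserves lower semicontinuity.
\end{itemize}
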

\begin{proof}
By using an adapted Kuratowski embedding $\iota: \X \to \R^\infty$ in~\Cref{def:KE} and using the cylinder appoximation~in~\Cref{corollary: cylinder functions}, it is enough to prove that 1-Lipschitz cylinder functions are dense in $\Lip_1(\mathbb R^\infty, \mu)$ with respect to the~$\L^0(\mu)$-topology for every Radon measure~$\mu$. 
    By \Cref{cor: density of continuous functions}, it suffices to approximate all $g\in \Lip_1(\mathbb R^\infty, \tau^\infty, \ell_\infty)$ in $\L^0(\mu)$.

    Let $g$ be such a function.
    Let $g_n$ be defined by
    \begin{equation}
        g_n(x_1, \ldots)=g(x_1, \ldots, x_n, 0, 0, \ldots).
    \end{equation}
    Then $g_n$ is a sequence of 1-Lipschitz cylinder functions.
    By the $\tau^\infty$-continuity of $g$, $g_n\to g$ pointwise and thus in $\L^0(\mu)$. 
    This completes the proof.
\end{proof}

\subsection{Emm-representation of general pyramids and stability}
Thanks to \Cref{proposition: LAP}, we can improve on \Cref{proposition: piramidi - enmms} and obtain the full representation result.
\begin{theorem}\label{teo: piramidi - enmms}
    Let $\X=\varprojlim \X_n$ be an inverse limit with projection~$\p_n: \X \to \X_n$. Then the following hold:
    \begin{enumerate}[label=\textnormal{(\alph*)}]
        \item $\mathcal P_{\X_n} \xrightarrow{\tau_{\Pi}}\overline{\mathcal P_{\X}}^\square$.
        \item $\p_n^*\mathcal L_1(\X_n) \to \mathcal L_1(\X)$ in the Painlevé--Kuratowski topology in~$\L^0(\X)/\mathbb R$.
    \end{enumerate}
\end{theorem}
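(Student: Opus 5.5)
The plan is to reduce (a) to a statement about increasing unions of pyramids, and to deduce (b) from \Cref{proposition: LAP} together with closedness of a.e.\ $1$-Lipschitz functions under $\L^0$-limits.

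\emph{Reduction in (a).} The bonding maps give $\X_n\prec\X_{n+1}$, so $\mathcal P_{\X_n}\subset\mathcal P_{\X_{n+1}}$. An increasing sequence of closed sets converges in the Painlev\'e--Kuratowski sense to the closure of its union. Hence (a) is equivalent to
$$\overline{\textstyle\bigcup_n\mathcal P_{\X_n}}^\square=\overline{\mathcal P_\X}^\square .$$
The inclusion ``$\subset$'' is already in \Cref{proposition: piramidi - enmms}, since $\X_n\prec_c\X$. For ``$\supset$'' it suffices to take an mm-space $\Y\prec\X$, realised by a measure-preserving $\varphi:\X\to\Y$ that is $1$-Lipschitz on a full-measure set $E$, and build $\Y'_j\prec\X_{n_j}$ with $\Y'_j\xrightarrow{\square}\Y$.

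\emph{Construction for (a).}
\begin{enumerate}
\item Since $\Y$ is separable, fix $1$-Lipschitz functions $(g_k)_k$ on $\Y$ with $\sfd_\Y=\sup_k|g_k(\cdot)-g_k(\cdot\cdot)|$.
\item Let $G_K=(g_1,\dots,g_K):\Y\to(\R^K,\ell_\infty)$ and $\Y^{(K)}:=(\R^K,\ell_\infty,(G_K)_\#\mm_\Y)$. Then $\Y^{(K)}\prec\Y$.
\item Show $\Y^{(K)}\xrightarrow{\square}\Y$. The pulled-back distances $\sfd_K=\max_{k\le K}|g_k(\cdot)-g_k(\cdot\cdot)|$ increase pointwise to $\sfd_Y$. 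On a compact set $C\subset\supp\mm_\Y$ with $\mm_\Y(C)\ge1-\e$, Dini's theorem makes this convergence uniform on $C\times C$. Using the identity parameter composed with a parameter of $\Y$, this gives $\square(\Y^{(K)},\Y)\le\e$ for $K$ large.
\item Each $g_k\circ\varphi$ lies in $\Lip_1(\X,\mm_\X)$. By \Cref{proposition: LAP}, there are $f_{k,n}\in\Lip_1(\X_n,\mm_{\X_n})$ with $f_{k,n}\circ\p_n\to g_k\circ\varphi$ in $\L^0(\X)$.
\item Set $F_{K,n}=(f_{1,n},\dots,f_{K,n}):\X_n\to(\R^K,\ell_\infty)$, which is $1$-Lipschitz. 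Then $\Y_{K,n}:=(\R^K,\ell_\infty,(F_{K,n})_\#\mm_{\X_n})\prec\X_n$.
\item Since $(\p_n)_\#\mm_\X=\mm_{\X_n}$, we have $(F_{K,n})_\#\mm_{\X_n}=(F_{K,n}\circ\p_n)_\#\mm_\X$. Also $(G_K)_\#\mm_\Y=(G_K\circ\varphi)_\#\mm_\X$, and $F_{K,n}\circ\p_n\to G_K\circ\varphi$ in $\L^0(\X;\R^K)$.
\item By \eqref{in:LP} and \eqref{in:BP}, this gives $\square(\Y_{K,n},\Y^{(K)})\to0$ as $n\to\infty$.
\item A diagonal choice of $(K,n)$ then yields $\Y\in\overline{\bigcup_n\mathcal P_{\X_n}}^\square$.
\end{enumerate}
The main obstacle is step 3, converting pointwise monotone convergence of the distances into box convergence; Dini on a large compact set is the tool I would use first. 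A secondary point to check is the measurability of $\varphi$ in step 4, which is harmless after passing to Borel representatives.

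\emph{Part (b).} The liminf inclusion is immediate. For $[f]\in\mathcal L_1(\X)$, \Cref{proposition: LAP} provides $f_n\in\Lip_1(\X_n,\mm_{\X_n})$ with $f_n\circ\p_n\to f$ in $\L^0$, and the quotient map $\L^0\to\L^0/\R$ is $1$-Lipschitz.

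For the limsup inclusion, suppose $[f_{n_k}\circ\p_{n_k}]\to[h]$ in $\L^0(\X)/\R$. Choose constants $c_k$ with $f_{n_k}\circ\p_{n_k}+c_k\to h$ in $\L^0$; these exist since the quotient distance is an infimum over shifts, so near-minimising shifts work. Pass to a subsequence converging $\mm_\X$-a.e. on a full-measure set $E_0$. Each $f_{n_k}\circ\p_{n_k}$ is $1$-Lipschitz on a full-measure set $E_k$, because $\p_n$ is $1$-Lipschitz for the $\ell^\infty$ extended distance. On $E=E_0\cap\bigcap_kE_k$, which still has full measure, the pointwise limit $h$ satisfies $|h(x)-h(y)|\le\sfd(x,y)$. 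Hence $h\in\Lip_1(\X,\mm_\X)$, i.e.\ $[h]\in\mathcal L_1(\X)$. The two inclusions together give the Painlev\'e--Kuratowski convergence.
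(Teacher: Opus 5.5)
Your proposal is correct and follows the paper's proof essentially step by step. It makes the same reduction to $\overline{\bigcup_n\mathcal P_{\X_n}}^\square=\overline{\mathcal P_\X}^\square$, applies \Cref{proposition: LAP} coordinatewise combined with \eqref{in:LP} and \eqref{in:BP}, and gives the same two-sided argument for (b). The only difference is that you prove the approximation of $\Y$ by finite-dimensional spaces $\Y^{(K)}\prec\Y$ in $(\R^K,\ell_\infty)$ directly, via Dini's theorem on a large compact set, whereas the paper cites Shioya's Theorem~4.47 for this step.
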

\begin{proof}
For (a), it suffices to show $$\overline{\bigcup_n \mathcal P_{\X_n}}^\square = \overline{\mathcal P_\X}^\square.$$
  By~\Cref{proposition: piramidi - enmms}, the inclusion $\subset$ has been shown.
  For the reverse inclusion, let $\Z \in \mathcal X$ be an mm-space  such that $\Z\prec \X$ and the goal is to  prove 
  $$\Z\in \overline{\bigcup_n \mathcal P_{\X_n}}^\square.$$ 
    Let $\varepsilon>0$. By \cite[Theorem 4.47]{Sh16}, there exists $N\in \N$ and  $\Z_1\prec Z$ with $\square(\Z, \Z_1)\leq \varepsilon$ such that  $\Z_1 \subset (\mathbb R^N, \ell_\infty)$.
    In particular, this yields $\Z_1\prec \X$.

    Let $f=(f_1, \ldots, f_N):\X\to \Z_1$ be a measure-preserving $1$-Lipschitz map.
    By \Cref{proposition: LAP}, there exist $n \in \N$ and $g_1, \ldots, g_N\in \Lip_1(\X_n, \mm_{\X_n})$ such that for $j=1, \ldots, N$ it holds 
    $$\sfd_{\L^0}(f_j, g_j\circ \p_n)\leq \frac{\varepsilon}N.$$
    Now define the map $g=(g_1, \ldots, g_N): \X_n\to (\mathbb R^N, \ell_\infty)$, set the space $\Z_2$ as the closure of $g(\X_n)$ and take the measure $\mm_{\Z_2}=g_\#\mm_{\X_n}$ regarded on the closure.
    By the 1-Lipschitz continuity of the maps $g_j$, it follows that $\Z_2\prec \X_n$.

    Noting~$\mm_{\Z_1}=f_\#\mm_\X$, $\mm_{\Z_2}=(g \circ \p_n)_\#\mm_\X$ and using~\eqref{in:LP}, we have
    \begin{equation}
        \sfd_{\sf P}(\mm_{\Z_1}, \mm_{\Z_2})\leq \sfd_{\L^0}(f, g\circ \p_n)\leq \varepsilon.
    \end{equation}
    By~\eqref{in:BP},  we obtain~$\square(\Z, \Z_2)\leq 2\varepsilon$. 
    As $\varepsilon$ can be arbitrarily small, we obtain
    $$\Z\in \overline{\bigcup_n \mathcal P_{\X_n}}^\square, $$
    which concludes the claim.

    For the latter statement, \Cref{proposition: LAP} immediately gives the one identification with the set of limit points in the Painlev\'e-Kuratowski convergence, and the fact that every subsequential limit in $\L^0$ of $1$-Lipschitz functions is again $1$-Lipschitz gives the other identification with the set of cluster points.  
\end{proof}
\begin{corollary} \label{c:IVI}
    Let $\mathcal P\in \Pi$ and let $\X_n$ be an inverse system asymptotic to $\mathcal P$ (recall \Cref{d:Asy}) and the inverse limit~$\X=\varprojlim \X_n$, then 
    $$\mathcal P=\overline{\mathcal P_{\X, c}}^\square=\overline{\mathcal P_\X}^\square.$$
\end{corollary}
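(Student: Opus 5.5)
The plan is to combine \Cref{teo: piramidi - enmms}(a) with the uniqueness of Painlev\'e--Kuratowski limits, and then to sandwich $\overline{\mathcal P_{\X,c}}^\square$ between the two pyramids involved. By \Cref{d:Asy}, the hypothesis that the inverse system $(\X_n)_{n\in\N}$ is asymptotic to $\mathcal P$ means $\mathcal P_{\X_n}\xrightarrow{\tau_\Pi}\mathcal P$.

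First, \Cref{teo: piramidi - enmms}(a), applied to the inverse limit $\X=\varprojlim\X_n$, gives $\mathcal P_{\X_n}\xrightarrow{\tau_\Pi}\overline{\mathcal P_\X}^\square$. Painlev\'e--Kuratowski limits are unique: the limit set is the set of limit points $\underbar{F}_\infty$ of the sequence, which is determined by the sequence alone. Hence
\[
\mathcal P=\overline{\mathcal P_\X}^\square .
\]

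For the middle term I would use the inclusions already established for inverse limits. Since the projections $\p_n$ are continuous, $1$-Lipschitz on all of $\X$, and measure-preserving, we have $\X_n\prec_c\X$, so $\mathcal P_{\X_n}\subset\mathcal P_{\X,c}\subset\mathcal P_\X$ for every $n$. Taking unions and $\square$-closures gives
\[
\overline{\bigcup_n\mathcal P_{\X_n}}^\square\subset\overline{\mathcal P_{\X,c}}^\square\subset\overline{\mathcal P_\X}^\square .
\]

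It remains to identify the leftmost set with $\mathcal P$. Because the inverse system satisfies $\X_n\prec\X_{n+1}$, the pyramids $\mathcal P_{\X_n}$ are increasing. For an increasing sequence of closed sets, the Painlev\'e--Kuratowski limit equals the closure of the union: every element of the union is eventually in $F_n$, and every cluster point is a limit of points lying in the union. Therefore $\mathcal P=\overline{\bigcup_n\mathcal P_{\X_n}}^\square$. Combining this with $\mathcal P=\overline{\mathcal P_\X}^\square$ collapses the chain of inclusions into equalities, which proves the claim.

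The only point that needs care is the identification of the $\tau_\Pi$-limit of an increasing sequence of pyramids with the closure of their union. This is essentially the argument already carried out at the start of the proof of \Cref{teo: piramidi - enmms}, and it can simply be reused here. Apart from that step, the argument is bookkeeping.
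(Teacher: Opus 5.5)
Your proposal is correct and follows the paper's proof essentially verbatim. It uses the same chain $\overline{\bigcup_n\mathcal P_{\X_n}}^\square\subset\overline{\mathcal P_{\X,c}}^\square\subset\overline{\mathcal P_\X}^\square$, identifies the $\tau_\Pi$-limit of the increasing pyramids $\mathcal P_{\X_n}$ with the $\square$-closure of their union, and then uses \Cref{teo: piramidi - enmms}(a) plus uniqueness of Painlev\'e--Kuratowski limits to collapse the chain into equalities. The one step you only sketch is that the limit set contains the closure of the union, not just the union; this follows from a routine diagonal argument, or from the closedness of the lower Painlev\'e--Kuratowski limit.
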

        \begin{proof} 
        By definition, 
        $$ \overline{ \bigcup_{n \in \N} \mathcal P_{\X_n}}^{\square} \subset \overline{\mathcal P_{\X,c}}^\square \subset \overline{\mathcal P_\X}^\square.$$
        As $\X_n$ is an inverse system, 
        $$\tau_{\Pi}\text{-}\lim_{n \to \infty} \mathcal P_{\X_n} = \overline{ \bigcup_{n \in \N} \mathcal P_{\X_n}}^{\square}.$$
        Thus, by using~(a) in~\Cref{teo: piramidi - enmms} and the hypothesis that $\X_n$ is asymptotic to $\mathcal P$, 
        we conclude 
        $$\mathcal P=\tau_{\Pi}\text{-}\lim_{n \to \infty} \mathcal P_{\X_n} = \overline{ \bigcup_{n \in \N} \mathcal P_{\X_n}}^{\square} =\overline{\mathcal P_{\X,c}}^\square = \overline{\mathcal P_\X}^\square.$$
    \end{proof} 
  
\begin{theorem}[Pyramid representation theorem] \label{t:IPT}
    For every pyramid~$\mathcal P \in \Pi$, there exist an emm-space $\X$ such that $\mathcal P=\overline{\mathcal P_\X}^\square=\overline{\mathcal P_{\X, c}}^\square$.
\end{theorem}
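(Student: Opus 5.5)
The plan is to combine \Cref{lemma: approximation of pyramids} with \Cref{c:IVI}. Given a pyramid $\mathcal P\in\Pi$, \Cref{lemma: approximation of pyramids} provides mm-spaces $\X_n$ with $\X_n\prec\X_{n+1}$ and $\mathcal P=\overline{\bigcup_n\mathcal P_{\X_n}}^\square$, equivalently $\mathcal P_{\X_n}\to\mathcal P$ in $\tau_\Pi$. The first step is to turn this $\prec$-chain into an inverse system. For each $n$, the relation $\X_n\prec\X_{n+1}$ gives a measure-preserving map $p_{n,n+1}:\X_{n+1}\to\X_n$ that is $1$-Lipschitz on $\supp(\mm_{\X_{n+1}})$. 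Replacing each $\X_n$ by its isomorphic copy $(\supp(\mm_{\X_n}),\sfd_n,\mm_n)$, we may assume every bonding map is $1$-Lipschitz everywhere, hence Borel. This does not change the isomorphism classes in $\mathcal X$, so $(\X_n)_n$ is an inverse system asymptotic to $\mathcal P$ in the sense of \Cref{d:Asy}. This is exactly the construction already used in the proof of \Cref{proposition: piramidi - enmms}.

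Next, let $\X=\varprojlim\X_n$ with the product-subspace topology, the $\ell^\infty$-sup distance and the Kolmogorov limit measure. I need to confirm that $\X$ is an emm-space in the sense of \Cref{definition: emm}, and I expect this to be the only point requiring care. The steps are:
\begin{enumerate}
\item $\tau$ is Polish. The inverse limit is a closed subset of the Polish product $\prod_n\X_n$, since it is cut out by the continuous conditions $x_n=p_{n,n+1}(x_{n+1})$.
\item $\sfd=\sup_n\sfd_n\circ(\p_n\times\p_n)$ is $\tau\times\tau$-lower semicontinuous, as a supremum of continuous functions.
\item $\tau_\sfd\supset\tau$. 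Each $\p_n$ is $1$-Lipschitz from $(\X,\sfd)$ to $\X_n$, and $\tau$ is the initial topology of the maps $\p_n$.
\item $\sfd$ is complete. If $(x^k)_k$ is $\sfd$-Cauchy, then each coordinate sequence is Cauchy in the complete space $\X_n$, so it converges to some $x_n$. The limits are compatible with the bonding maps, and the uniform Cauchy bound gives $\sfd(x^k,x)\to0$.
\item The measure is Borel. Kolmogorov's extension applies because each $\X_n$ is Polish, hence Radon.
\end{enumerate}
Alternatively, one can cite \Cref{remark: inverse system is regular}: $\X$ is an extended topological-metric-measure space, and these properties are recorded in \cite{SuYo25+}.

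Finally, \Cref{c:IVI} applied to this inverse system gives $\mathcal P=\overline{\mathcal P_{\X,c}}^\square=\overline{\mathcal P_\X}^\square$, which is the claim. All the substantive work, namely the reverse inclusion via Lipschitz approximation (\Cref{proposition: LAP}) and finite-dimensional $\square$-approximation, is already contained in \Cref{teo: piramidi - enmms}. The proof therefore reduces to the bookkeeping above.
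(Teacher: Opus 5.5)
Your proposal is correct and follows the paper's proof: the paper likewise takes the inverse system asymptotic to $\mathcal P$ given by \Cref{lemma: approximation of pyramids} and concludes by applying \Cref{c:IVI} to $\X=\varprojlim\X_n$. Your extra bookkeeping is sound and fills in points the paper leaves implicit, namely restricting each $\X_n$ to $\supp(\mm_{\X_n})$ so that the bonding maps are everywhere continuous, and checking directly that the inverse limit satisfies the emm-space axioms.
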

\begin{proof}
Since every pyramid $\mathcal P$ has an inverse system~$\X_n$ asymptotic to $\mathcal P$ by \Cref{lemma: approximation of pyramids}, combined with \Cref{c:IVI} we obtain the pyramid representation thereom by the limit inverse limit emm-space~$\X=\varprojlim \X_n$.
\end{proof}
The representation given above is not, in general, unique.
\begin{example}[Non-uniquness of emm-spaces associated with pyramids] \label{e:CEG} 
    Let $\Gamma^\infty$ be the infinite-product of the $1$-dimensional centred Gaussian space with unit variance $(\R, |\cdot|, \gamma_1)$, and $X=(\mathbb R,|\cdot|,\gamma_{\sigma^2})$ be the $1$-dimensional centred Gaussian space with variance $\sigma^2=\frac{\sqrt{2}}{2}$.
    Then one has $\Gamma^\infty\times \X\succ\Gamma^\infty\succ \Gamma^\infty\times \X$ (see the proof below).
    In particular, 
    \begin{equation} \label{e:PCE}
        \mathcal {P}_{\Gamma^\infty\times \X}= \mathcal {P}_{\Gamma^\infty}.
    \end{equation}
    Nevertheless, the eigenvalues of the Laplacian associated with the Cheeger energies  of $\Gamma^\infty$ and $\Gamma^\infty\times \X$ are different. Indeed, the former is \(\{0, 1, \ldots\}\) while the latter is \(\{m+\sqrt{2}n: m, n \in \N_0\}\). So, the spaces \(\Gamma^\infty \times \X\) and \(\Gamma^\infty\) cannot be emm-isomorphic due to the invariance of Cheeger energies under emm-isomorphisms (\cite[Theorem~6.13]{SuYo25+}, which will be recalled in \Cref{t:ECE}). 
    Thus, the coincidence of pyramids does not necessarily imply the emm-isomorphism of two emm-spaces. We will see that this phenomenon cannot happen under for concentrated emm-spaces, see \Cref{theorem: isomorphism from pyramid}.

    The problem of characterising what pyramids have a unique representative up to emm-isomorphism is open.

\end{example}
\begin{proof}[Proof of \eqref{e:PCE}]
First, the projection
\(
\pi:\Gamma^\infty\times X\to \Gamma^\infty\) given by 
\(\pi(x,t)=x
\)
is clearly \(1\)-Lipschitz and measure-preserving. Hence
\[
 \Gamma^\infty \prec \Gamma^\infty\times X.
\]

Conversely, define
\(
F:\Gamma^\infty\to \Gamma^\infty\times X
\)
by
\[
F(x_1,x_2,x_3,\ldots)
=
\bigl((x_2,x_3,\ldots),\sigma x_1\bigr).
\]
Then, noting $\sigma^2=\frac{\sqrt{2}}{2}<1$, 
\[
\sfd_{\Gamma^\infty \times \X}\bigl(F(x),F(y)\bigr)^2
=
\sum_{i=2}^\infty|x_i-y_i|^2+\sigma^2|x_1-y_1|^2
\le
\sum_{i= 1}^\infty|x_i-y_i|^2
=
\ell_2(x,y)^2.
\]
Thus \(F\) is \(1\)-Lipschitz.

Moreover, 
\[
F_\#\gamma_1^{\otimes \infty}
=
\gamma_1^{\otimes\infty}\otimes\gamma_{\sigma^2},
\]
so \(F\) is measure-preserving. Therefore
\(
\Gamma^\infty\times X \prec \Gamma^\infty. 
\)
\end{proof}

\paragraph{The largest pyramid.}
    With our definition of pyramids, we can represent the largest pyramid $\mathcal X$ by 
$$\mathbb I_\infty=([0,1], \tau_{[0,1]}, \sfd_\infty, \mathsf{Leb}),$$
where $\tau_{[0,1]}$ and $\mathsf{Leb}$ are the standard Euclidean topology and the Lebesgue measure respectively, and 
\begin{equation}
    \sfd_\infty(x,y) =
    \begin{cases}
        0 & \qquad \text{if $x=y$}
        \\
        +\infty & \qquad \text{otherwise}.
    \end{cases}
\end{equation}
\begin{proposition}[Representation of the largest pyramid] \label{p:ME}
    $\mathbb I_\infty$ is an extended topological-metric-measure space and 
    $$\mathcal X = \mathcal P_{\mathbb I_\infty}$$
\end{proposition}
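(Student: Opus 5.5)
The argument has two parts: first, that $\mathbb I_\infty$ is an extended topological-metric-measure space in the sense of \Cref{d:ETMS}; second, that every mm-space is dominated by $\mathbb I_\infty$. The reverse inclusion $\mathcal P_{\mathbb I_\infty}\subset\mathcal X$ holds by definition, so the equality will follow at once. Since we exhibit $\mathcal P_{\mathbb I_\infty}=\mathcal X$ directly, no $\square$-closure is needed.

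\emph{Structure.} The topology $\tau_{[0,1]}$ is Polish and $\mathsf{Leb}$ is a Borel probability measure. Because $\sfd_\infty$ takes only the values $0$ and $+\infty$, every function is $1$-Lipschitz for $\sfd_\infty$. Hence $\Lip_1([0,1],\tau_{[0,1]},\sfd_\infty)$ coincides with $C^0([0,1])$.
\begin{itemize}
\item The family $C^0([0,1])$ generates the Euclidean topology; for instance, the identity map alone already does.
\item For recovery, take $x\neq y$ and the continuous functions $f_k(t)=k\,t$. Then $\sup_k|f_k(x)-f_k(y)|=\sup_k k|x-y|=+\infty=\sfd_\infty(x,y)$, and both sides vanish when $x=y$.
\end{itemize}
So $\sfd_\infty$ is $\tau$-recovered. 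The additional emm-axioms also hold. $\sfd_\infty$ is complete, since its Cauchy sequences are eventually constant. It is lower semicontinuous, because its sublevel sets are either the diagonal, which is closed, or the whole product. Finally, $\tau_{\sfd_\infty}$ is discrete and therefore finer than $\tau$.

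\emph{Every mm-space is dominated.} Let $\Y\in\mathcal X$. By \cite[Lemma~4.2]{Sh16}, $\Y$ admits a parameter $\iota:[0,1)\to\Y$, that is, a Borel map with $\iota_\#\mathsf{Leb}=\mm_\Y$. Extend $\iota$ to $[0,1]$ by assigning an arbitrary value at the point $1$, which is a null set. Now take $x_1,x_2\in[0,1]$. If $x_1\neq x_2$, then $\sfd_\Y(\iota(x_1),\iota(x_2))<\infty=\sfd_\infty(x_1,x_2)$. If $x_1=x_2$, both sides are $0$. Therefore $\iota$ is $1$-Lipschitz on the whole space, with $E=[0,1]$, and it is measure-preserving. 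This gives $\Y\prec\mathbb I_\infty$, so $\mathcal X\subset\mathcal P_{\mathbb I_\infty}$.

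The only step needing a little care is verifying the $\tau$-recovery condition via continuous functions with unbounded slopes. The existence of a parameter, which is the other possible obstacle, is already supplied by the cited lemma, so no real difficulty remains.
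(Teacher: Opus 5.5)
Your proof is correct and follows essentially the same route as the paper. You identify $\Lip_1([0,1],\tau_{[0,1]},\sfd_\infty)$ with $C^0([0,1])$, recover $\sfd_\infty$ via continuous functions of unbounded slope (your $f_k(t)=kt$ plays the role of the paper's $f_M(r)=M\frac{r-s}{t-s}$), and observe that any parameter is automatically $1$-Lipschitz into an mm-space. Your extra checks of the emm-axioms (completeness, lower semicontinuity, $\tau_{\sfd_\infty}$ finer than $\tau$) do not appear in the paper's proof, but they are correct and harmless.
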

\begin{proof}
   To show that $\mathbb I_\infty$ is an extended topological-metric-measure space, let us notice that $\Lip_1(I_\infty, \tau_{[0, 1]}, \sfd_\infty)$ is identical to the set of all continuous functions.
   The fact that $\tau_{[0, 1]}$ is the initial topology of this family is tautological.
   For any couple of points $s<t\in [0, 1]$ and $M>0$, the function $f_M(r)=M\frac{r-s}{t-s}$ is such that $f_M\in \Lip_1(I_\infty, \tau_{[0, 1]}, \sfd_\infty)$ and $f_M(t)-f_M(s)=M$.
   In particular, $\sfd_\infty$ is $\tau_{[0, 1]}$-recovered.

   Take $\X \in \mathcal X$.
   Then, any parameter $F: [0,1) \to \X$ (i.e. any map such that $F_\#\mathsf{Leb} = \mm$) is $1$-Lipschitz with respect to $\sfd_\infty$.
   Thus, $X \in \mathcal P_{\mathbb I_\infty}$.
\end{proof}    

\paragraph{Stability under weak convergence of pyramids.} 
In~\cite{esakiInvariantsGromovsPyramids2024}, the log-Sobolev inequality, respectively the Poincar\'e inequality, for a general pyramid \(\mathcal P\) is defined by requiring every element of \(\mathcal P\) to satisfy the corresponding inequality, and the stability of these functional inequalities under the convergence of pyramids have been shown. Thanks to \Cref{t:IPT} and the stability of these functional inequalities under inverse limits~\cite[Theorem~1.2]{SuYo25+}, the stability theorem can be further improved as a stability at the level of  extended metric measure spaces associated with those pyramids. 
\begin{corollary}[Stability of log-Sobolev and Poincar\'e inequalities]\label{corollary: stability constants} 
Let $(\X_n)_{n \in \N}$ be a sequence of emm-spaces supporting  the log-Sobolev inequality~$\LS(C)$ with the log-Sobolev constant $C>0$ {\rm(}resp.~the Poincar\'e inequality $\P(C)$ with the Poincar\'e constant~$C>0${\rm)}. Suppose that 
$$\overline{\mathcal P_{X_n}}^\square \xrightarrow{\tau_{\Pi}} \mathcal P \in \Pi.$$
Then, there exists an emm-space $\X$ such that $\X$ supports~$\LS(C)$ {\rm (}resp.~$\P(C)${\rm )} and  $$\mathcal P=\overline{\mathcal P_{X}}^\square.$$
Furthermore, any emm-space $X$ representing $\mathcal P=\overline{\mathcal P_{X}}^\square$ supports $\LS(C)$ {\rm (}resp.~$\P(C)${\rm )}. 
\end{corollary}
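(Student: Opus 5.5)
The plan is to reduce everything to the pyramid-level stability of \cite{esakiInvariantsGromovsPyramids2024} and the inverse-limit stability \cite[Theorem~1.2]{SuYo25+}, with \Cref{lemma: approximation of pyramids}, \Cref{c:IVI} and \Cref{lemma: inverse system} as the bridges. Recall that a pyramid $\mathcal P$ satisfies $\LS(C)$ (resp.~$\P(C)$) if every mm-space in $\mathcal P$ satisfies it. I split the argument into three steps.

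\emph{Step 1: from emm-spaces to their pyramids.} I want to show that if an emm-space $\X_n$ supports $\LS(C)$, then so does every mm-space in $\overline{\mathcal P_{\X_n}}^\square$. Take an mm-space $\Y\prec\X_n$, witnessed by a measure-preserving $\varphi$ that is $1$-Lipschitz on a full-measure set. For a bounded Lipschitz $g$ on $\Y$, the function $g\circ\varphi$ is a.e.~Lipschitz on $\X_n$. The aim is the energy bound $\ch_{\X_n}(g\circ\varphi)\le \frac12\int \mathrm{lip}_a(g)^2\circ\varphi\,\d\mm_{\X_n}=\frac12\int_\Y \mathrm{lip}_a(g)^2\,\d\mm_\Y$. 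The left-hand sides of the inequalities (entropy, variance) are unchanged by the measure-preserving pullback. Hence $\LS(C)$ holds on $\Y$ for Lipschitz $g$. Since $\Y$ is an mm-space, Lipschitz functions are dense in energy, so $\LS(C)$ holds on all of $\L^2(\Y)$. Passing to the $\square$-closure then uses the stability of $\LS(C)$ and $\P(C)$ for mm-spaces under box convergence, as proved in \cite{esakiInvariantsGromovsPyramids2024}.

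\emph{Step 2: the limit pyramid.} By Step 1, each $\overline{\mathcal P_{\X_n}}^\square$ satisfies $\LS(C)$ as a pyramid. The pyramid-level stability of \cite{esakiInvariantsGromovsPyramids2024} then gives that the $\tau_\Pi$-limit $\mathcal P$ satisfies $\LS(C)$, i.e.~every element of $\mathcal P$ does.

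\emph{Step 3: existence and the ``any representative'' claim.} For existence, \Cref{lemma: approximation of pyramids} provides an inverse system $(\Z_k)\subset\mathcal P$ asymptotic to $\mathcal P$. Each $\Z_k$ satisfies $\LS(C)$ by Step 2. Its inverse limit $\X=\varprojlim \Z_k$ then satisfies $\LS(C)$ by \cite[Theorem~1.2]{SuYo25+}, and $\mathcal P=\overline{\mathcal P_\X}^\square$ by \Cref{c:IVI}.

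For the last assertion, let $\X'$ be any emm-space with $\overline{\mathcal P_{\X'}}^\square=\mathcal P$. By \Cref{lemma: inverse system}, $\X'\cong\varprojlim\Y_k$ for some inverse system of mm-spaces $(\Y_k)$. Each $\Y_k\prec\X'$, since the projection is $1$-Lipschitz and measure-preserving composed with the isomorphism, so $\Y_k\in\mathcal P$ and $\Y_k$ satisfies $\LS(C)$. Hence $\varprojlim\Y_k$ satisfies $\LS(C)$ by \cite[Theorem~1.2]{SuYo25+}. Since Cheeger energies and these functional inequalities are invariant under emm-isomorphism \cite{SuYo25+}, $\X'$ satisfies $\LS(C)$. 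The Poincaré case is identical throughout.

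The main obstacle is the energy inequality in Step 1 in the genuinely extended setting. The topology of $\X_n$ is decoupled from $\sfd$, and $\varphi$ is only a.e.~$1$-Lipschitz and merely measurable, so the asymptotic slope of $g\circ\varphi$ is not directly controlled. My first attempt would use \Cref{lemma: inverse system} together with the invariance of $\ch$ to replace $\X_n$ by an inverse limit. I would then approximate $g\circ\varphi$ in $\L^0$ and $\L^2$ by cylinder-type $\tau$-continuous Lipschitz functions, via \Cref{proposition: LAP} and \Cref{cor: density of continuous functions}. This should exploit the fact that Cheeger energy in \cite{AmbGigSav14} is the relaxation of slopes of $\tau$-continuous $\sfd$-Lipschitz functions, combined with lower semicontinuity. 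If that fails, the fallback is the characterisation of $\ch$ via weak upper gradients and test plans, for which composition with an a.e.~$1$-Lipschitz measure-preserving map is more robust.
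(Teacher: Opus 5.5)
Your proposal is essentially the paper's proof. You pull back along $\Y\prec\X_n$ using the monotonicity of Cheeger energies. This is \Cref{corollary: monotonicity Cheeger}, obtained precisely by your test-plan fallback; it gives $\ch_{\X_n}(u\circ\varphi)\le\ch_\Y(u)$ for every $u$, so your density-in-energy step is unnecessary. You then transfer the inequality to box limits and to $\mathcal P$ via \cite{esakiInvariantsGromovsPyramids2024}, and conclude with \cite[Theorem~1.2]{SuYo25+} and emm-isomorphism invariance. Your use of \Cref{lemma: approximation of pyramids} and \Cref{c:IVI} for existence, and of \Cref{lemma: inverse system} for arbitrary representatives, is more explicit than the paper's own write-up.

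Do not rely on your first route for the energy bound. Approximating $g\circ\varphi$ itself by cylinder functions via \Cref{proposition: LAP} controls only the global Lipschitz constant, not $\mathrm{lip}_a(g)\circ\varphi$. The relaxation would then give only $\ch_{\X_n}(g\circ\varphi)\le\frac12\mathrm{Lip}(g)^2$, which is too weak.
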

\begin{proof}
Every mm-space $Y \in \mathcal P$ is a $\square$-limit of $Y_n \in \mathcal P_{\X_n}$ by the weak convergence of pyramid. $\Y_n$ supports $\LS(C)$ {\rm (}resp.~$\P(C)${\rm )} because $\Y_n \prec \X_n$ and $\LS(C)$ {\rm (}resp.~$\P(C)${\rm )} is stable  under a.e.~$1$-Lipschitz measure-preserving maps. 
Indeed, by the measure-preserving property, the LHS of $\LS(C)$ {\rm (}resp.~$\P(C)${\rm )} is invariant under push-forward. For the RHS, by the a.e.~$1$-Lipschitz property, we have the domination of the pullback Cheeger energies by \Cref{corollary: monotonicity Cheeger}, which concludes the stability.

Since the $\square$-convergence preserves $\LS(C)$ {\rm (}resp.~$\P(C)${\rm )} (see \cite[Theorems 4.11, 4.20]{esakiInvariantsGromovsPyramids2024}),  $Y$ satisfies $\LS(C)$ {\rm (}resp.~$\P(C)${\rm )}. Since we can construct $X$ as an inverse limit of $Y_k \in \mathcal P$ by \Cref{lemma: inverse system}, the stability of $\LS(C)$ {\rm (}resp.~$\P(C)${\rm )} under the invserse limit (see \cite[Theorem 1.2]{SuYo25+}) concludes the statement. 
\end{proof}

\subsection{Examples}
Here, we discuss pyramid representations of infinite-dimensional emm-spaces: infinite-product spaces,  abstract Wiener spaces and the configuration space. 
\subsubsection{Product spaces and Wiener space}
\begin{corollary}[product space] \label{c:PD}
    Let $\X^n=\prod_{i=1}^n \X_i$ and $\X^\infty = \prod_{i =1}^\infty \X_i$ be the Cartesian product  spaces of mm-spaces $\{\X_i\}_{i \in \N}$. 
    Then, 
    $$\overline{\mathcal P_{X^\infty}}^\square=\overline{\bigcup_n \mathcal P_{\X^n}}^\square.$$
\end{corollary}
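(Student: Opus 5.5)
The plan is to realise $\X^\infty$ as an inverse limit of the finite products and then apply \Cref{teo: piramidi - enmms}(a). First I fix the product structure. Each $\X^n=\prod_{i=1}^n\X_i$ carries the $\ell_2$-product distance and the product measure $\mm_1\otimes\cdots\otimes\mm_n$. The infinite product $\X^\infty$ carries the product topology, the extended $\ell_2$-distance $\sfd_\infty(x,y)=(\sum_i\sfd_i(x_i,y_i)^2)^{1/2}$, and the product measure; this is the emm-space intended in the statement. The bonding maps $p_{n,n+1}:\X^{n+1}\to\X^n$ forget the last coordinate. They are $1$-Lipschitz for the $\ell_2$-distances and measure-preserving, since the marginal of a product measure is the product of the factors. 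Hence $(\X^n)_{n\in\N}$ is an inverse system.

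Next I identify its inverse limit with $\X^\infty$. The inverse limit $\varprojlim\X^n$ consists of compatible sequences $(x^{(n)})_n$. The map sending $x=(x_i)_{i\in\N}\in\X^\infty$ to $((x_1,\dots,x_n))_n$ is a bijection onto it, and it is a homeomorphism because both spaces carry the initial topology of the coordinate projections. Under this bijection the measures agree, since by Kolmogorov's extension theorem both are determined by the same finite-dimensional marginals. The inverse-limit distance is $\sup_n(\sum_{i\le n}\sfd_i(x_i,y_i)^2)^{1/2}=\sfd_\infty(x,y)$, because the partial sums increase. So $\X^\infty\cong\varprojlim\X^n$, and in fact the two are identical as extended topological-metric-measure spaces.

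By \Cref{teo: piramidi - enmms}(a) we then get $\mathcal P_{\X^n}\xrightarrow{\tau_\Pi}\overline{\mathcal P_{\X^\infty}}^\square$. The sets $\mathcal P_{\X^n}$ increase in $n$, because $\X^n\prec\X^{n+1}$ via the bonding map. For such an increasing sequence of $\square$-closed sets the Painlevé--Kuratowski limit is $\overline{\bigcup_n\mathcal P_{\X^n}}^\square$. The same observation was used in \Cref{c:IVI}, and one can see it directly: every limit of $Z_{n_k}\in\mathcal P_{\X^{n_k}}$ lies in the closure of the union, and conversely every point of the union is eventually a constant sequence. Uniqueness of $\tau_\Pi$-limits then gives the identity.

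The only step needing care is the identification of the inverse-limit structure with the product structure, namely the equality of distances and measures. This amounts to monotone convergence of the partial sums together with uniqueness in Kolmogorov's extension, so I expect no real obstacle. If the factors $\X_i$ are only mm-spaces with support smaller than $\X_i$, I would first replace each $\X_i$ by $\supp(\mm_i)$, which does not change isomorphism classes.
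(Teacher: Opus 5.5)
Your proposal is correct and follows the same route as the paper. The paper also identifies $\X^\infty$ with the inverse limit $\varprojlim \X^n$ of the finite $\ell_2$-products and concludes from \Cref{teo: piramidi - enmms}. Your observation about the Painlev\'e--Kuratowski limit of the increasing pyramids $\mathcal P_{\X^n}$ is just the explicit form of the step the paper delegates to \Cref{lemma: approximation of pyramids} and \Cref{c:IVI}.
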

\begin{proof}
    As the infinite-product space $\X^\infty$ is a particular case of inverse limit, \Cref{lemma: approximation of pyramids} and \Cref{teo: piramidi - enmms} yield the conclusion. 
\end{proof}

\begin{definition}[Abstract Wiener space] \label{d:WS}
Let \(H\) be a separable real Hilbert space and let \(W\) be a separable real Banach space. The triplet
\(\mathbb W=(W,H,\mu)\) is {\it an abstract Wiener space} if \(H\) is continuously and densely embedded into
\(W\), and \(\mu\) is the centred Gaussian measure on \(W\) whose Cameron--Martin space is
\(H\). This is equivalent to say that  every $\ell \in W^*$ is a centred Gaussian
random variable on \((W,\mu)\), and its variance is given by
\[
\int_W \ell(w)^2\,d\mu(w)
=
\|h_\ell\|_H^2,
\]
where \(h_\ell\in H\) is the unique element satisfying
\[
\ell(h)=\langle h,h_\ell\rangle_H
\qquad
\forall h\in H.
\]
\end{definition}

Let \(\mathbb W=(W,H,\mu)\) be an abstract Wiener space. We equip \(W\) with the Cameron--Martin
extended distance
\[
d_H(w_1,w_2):=
\begin{cases}
\|w_1-w_2\|_H, & w_1-w_2\in H,\\
+\infty, & \text{otherwise}.
\end{cases}
\]
The space~
\(
(W,\tau_W,d_H,\mu)
\)
is a Polish extended metric measure space, where \(\tau_W\) denotes the Banach topology on \(W\).

 Let $\Gamma^n=(\R^n, |\cdot|_{\ell_2}, \gamma^{\otimes n})$ be the $n$-dimensional centred Gaussian space with unit variance for $n \in \N$, and $\Gamma^\infty$ be the infinite-product of $(\R, |\cdot|, \gamma)$ endowed  with the product topology~$\tau^\infty$, viz., $\Gamma^\infty=(\R^\infty, \tau^\infty, \ell_2, \gamma^{\otimes \infty})$, which is an emm-space.  
\begin{corollary}[abstract Wiener space] \label{c:PWI}
    $$\overline{\bigcup_n \mathcal P_{\X^n}}^\square=\overline P_{\Gamma^\infty}^\square= \overline{\mathcal P_{\mathbb W}}^\square.$$
\end{corollary}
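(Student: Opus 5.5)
The plan is to prove the two equalities separately. The first, $\overline{\bigcup_n \mathcal P_{\X^n}}^\square=\overline{\mathcal P_{\Gamma^\infty}}^\square$ (with $\X^n=\Gamma^n$ the $n$-fold product of $(\R,|\cdot|,\gamma)$), is a special case of \Cref{c:PD}. Indeed, $\Gamma^\infty=(\R^\infty,\tau^\infty,\ell_2,\gamma^{\otimes\infty})$ is the inverse limit of the inverse system $\Gamma^{n+1}\to\Gamma^n$, whose bonding maps are the coordinate projections forgetting the last coordinate. These maps are $1$-Lipschitz for the $\ell_2$ distance and measure-preserving. The one point to check is that the inverse-limit distance $\sup_n \ell_2^{(n)}(x_{\le n},y_{\le n})$ coincides with $\ell_2(x,y)$. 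This holds because the partial sums increase, and the inverse-limit topology agrees with the product topology $\tau^\infty$. Once this identification is made, \Cref{teo: piramidi - enmms}(a), or directly \Cref{c:PD}, gives the first equality.

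For the second equality, I would use that $\overline{\mathcal P_\X}^\square$ depends only on the emm-isomorphism class of $\X$. If $\varphi:\X\to\Y$ is an emm-isomorphism and $\Z\prec\Y$ via a map $\psi$ that is $1$-Lipschitz on a full-measure set $E\subset\Y$, then $\psi\circ\varphi$ is measure-preserving. It is also $1$-Lipschitz on $A\cap\varphi^{-1}(E)$, which has full $\mm_\X$-measure since $\varphi_\#\mm_\X=\mm_\Y$. Hence $\mathcal P_\Y\subset\mathcal P_\X$. Using the inverse emm-isomorphism (\Cref{d:EMI}) gives equality of the pyramids, and therefore of their $\square$-closures. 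By \Cref{example: isomorphism} (citing \cite[Proposition 8.3]{SuYo25+}), $\mathbb W$ is emm-isomorphic to $\Gamma^\infty$, and this concludes the argument.

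The main potential obstacle is the isomorphism $\mathbb W\cong\Gamma^\infty$, which I am taking from the literature. If it had to be justified, I would pick an orthonormal basis $(h_k)$ of $H$ with $h_k=h_{\ell_k}$ for $\ell_k\in W^*$, which is possible by density of $\{h_\ell\}$ in $H$ followed by Gram--Schmidt. Then I would set $\Phi(w)=(\ell_k(w))_k$. This map is measure-preserving onto $\gamma^{\otimes\infty}$ because the $\ell_k$ are i.i.d.\ standard Gaussians. It is an isometry from $d_H$ to $\ell_2$ on the full-measure set where the series $\sum_k\ell_k(w)h_k$ converges to $w$, which holds by the Itô--Nisio theorem. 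The delicate part is checking that $w_1-w_2\in H$ exactly when $\sum_k(\ell_k(w_1)-\ell_k(w_2))^2<\infty$ on that set. This follows from expanding $w_1-w_2$ in the basis.
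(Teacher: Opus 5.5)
Your proposal is correct and follows the paper's proof. The first equality is \Cref{c:PD} applied to $\Gamma^\infty$ viewed as the inverse limit of the $\Gamma^n$, and the second follows from the emm-isomorphism invariance of $\overline{\mathcal P_\X}^\square$ together with $\mathbb W\cong\Gamma^\infty$ from \cite[Proposition 8.3]{SuYo25+}. Your additional checks are sound: that the inverse-limit distance is $\ell_2$, the explicit invariance argument, and the It\^o--Nisio sketch of the isomorphism, which, like the cited result, requires $H$ to be infinite-dimensional. They fill in details the paper leaves implicit.
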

\begin{proof}
The first identity follows by \Cref{c:PD}.
By~\cite[Proposition 8.3]{SuYo25+}, any  infinite-dimensional abstract Wiener spaces is emm-isomorphic to \(\Gamma_\infty\).  Since pyramids are invariant under the emm-isomorphism, the second identity follows. 
\end{proof}

\subsubsection{Configuration spaces} \label{subsec:CF} 
The configuration space is the space of Radon point measures over a base space~$\X$. Unlike labelled particle spaces, the space of configurations is not naturally a Cartesian product. Its global structure can instead be described through restrictions to bounded windows, which form an inverse system. 

Let $\X \subset \R$ be a locally compact Polish subspace of $\R$. 
Let $\U(\X)$ be the space of locally finite discrete measures on $\X$, i.e., the totality of $\gamma=\sum_{i=1}^N \delta_{x_i}$ with $N \in \mathbb N_0 \cup \{+\infty\}$, $x_i \in \X$  and $\gamma(K)<+\infty$ for every compact $K \subset \X$. Regarding $\U(\X)$ as a subspace of the space of Radon measures over $\X$,  we endow $\U(\X)$ with the topology $\tau_{\sf vague}$ (called {\it vague topology}) by duality of compactly supported continuous functions in~$X$. Namely, $\gamma_n \to \gamma$ if and only if $\int_\X f \diff \gamma_n \to \int_\X f \diff \gamma$ for every compactly supported continuous function~$f$ in~$\X$.  We simply write $\U$ when $\X=\R$. 

Let $B_n\coloneqq(-n ,n)$, $\bar{B}_n\coloneqq[-n, n]$ and $\partial B_n=\bar B_n \setminus B_n$. Let $\pi^{(n)}$ be the Poisson measure with unit intensity on $\U(\bar B_n)$, that is, the unique probability measure on~$\U(\bar B_n)$ given by 
\begin{align} \label{d:PM}
\int_{\U(\bar B_n)} F \diff \pi^{(n)} = e^{-|\bar B_n|}\sum_{k=0}^\infty \frac{1}{k!}\int_{\bar B_n^{\times k}}F\Bigl(\sum_{i=1}^k \delta_{x_i}\Bigr) \diff x_1 \cdots \diff x_k
\end{align}
for every nonnegative Borel $F: \U(\bar B_n) \to \R$.
Let $\pi$ be the inverse limit measure of $\pi^{(n)}$ with bonding map $p_{m,n}: \U(\bar B_n) \to \U(\bar B_m)$ given by the restriction~$p_{m,n}(\gamma)=\gamma|_{\bar B_m}$ for $m \le n$. We call $\pi$ the {\it Poisson measure with unit intensity} in~$\U$. See, e.g., \cite[Section 2.1]{AlbKonRoe98} for further details. 
For $\gamma, \eta \in \U(\X)$, we define the following $\ell_2$-matching extended distance
$$\mssd_{\U(\X)}(\gamma, \eta)^2\coloneqq\inf_{\mssc \in \mathsf{Cpl(\gamma, \eta)}}\int_{\X \times \X} |x-y|^2 \diff \mssc (x, y), \qquad \inf \emptyset =+\infty $$
where $$\mathsf{Cpl}(\gamma, \eta)\coloneqq\{\mathsf c \in \U(\X \times \X): \mathsf c(A \times \X)=\gamma(A),\ \mathsf c(\X \times A)=\eta(A) \}.$$
Define 
$$\mssd_n(\gamma, \eta)=\inf_{\alpha, \beta \in \U(\partial B_n)} \mssd_{\U(\bar{B}_n)}(\gamma|_{B_n}+\alpha, \eta|_{B_n}+\beta) \qquad \gamma, \eta \in \U.$$
where $\gamma|_{B_n}$ and $\eta|_{B_n}$ are the restriction of $\gamma$ and $\eta$ on $B_n$.
The function~$\mssd_n$ is a semidistance, and $\mssd_n(\gamma, \eta)=0 \iff \gamma|_{B_n} = \eta|_{B_n}$, see~\cite[Proposition 3.2]{Suz25}. We write 
$$\gamma \overset{\mssd_n}\sim \eta \iff  \mssd_n(\gamma, \eta)=0.$$ Let $\U_n$ be the quotient metric space of $\U$ with respect to the equivalence relation $\overset{\mssd_n}\sim$ with the quotient map~$\mathsf q_n: \U \to \U_n$ and we denote by~$\tilde\mssd_n$  the quotient distance. 

We may regard $\pi^{(n)}$ as a measure on~$\U$ by regarding $\U(\bar B_n) \subset \U$.
Let $\pi_n=(\mathsf q_n)_\# \pi^{(n)}$ be the quotient measure of $\pi^{(n)}$ on $\U_n$, which is the same to define~$\pi_n=(\mathsf q_n)_\# \pi$.  We denote by $\bar \U_n=(\bar \U_n, \bar \sfd_n, \bar \pi_n)$  the completion of $(\U_n, \tilde \sfd_n)$, the canonical embedding $\iota_n: \U_n \to \bar \U_n$, and $\bar \pi_n=(\iota_n)_\# \pi_n$ is the push-forward measure to the completion. 
\begin{corollary}[Configuration space] \label{c:CFP}
Let $\U$ be the emm-space $(\U, \tau_{\sf vague}, \mssd_\U, \pi)$ and $\bar\U_n=(\bar\U_n, \bar\mssd_n, \bar\pi_n)$ be the mm-space as above. Then, 
$$\U \cong \varprojlim \bar\U_n.$$
In particular, 
$$\overline{\mathcal P_{\U}}^\square=\overline{\cup_{n \in \N} \mathcal P_{\bar\U_n}}^\square.$$
\end{corollary}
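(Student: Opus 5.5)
The plan is to check first that $(\bar\U_n)_{n\in\N}$, with bonding maps induced by restriction, is an inverse system of mm-spaces. Then I would show that the map $\Phi:\U\to\varprojlim\bar\U_n$, $\Phi(\gamma)=(\iota_n\circ\mathsf q_n(\gamma))_{n\in\N}$, is an emm-isomorphism. The pyramid identity then follows at once from \Cref{teo: piramidi - enmms}(a), since $\mathcal P_{\bar\U_n}\xrightarrow{\tau_\Pi}\overline{\mathcal P_{\varprojlim\bar\U_n}}^\square$ and, for an increasing inverse system, this limit equals $\overline{\cup_n\mathcal P_{\bar\U_n}}^\square$. The $\square$-closure of $\mathcal P_\bullet$ is invariant under emm-isomorphism.

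\textbf{Inverse system.} For $m\le n$, define $p_{m,n}:\U_n\to\U_m$ by $\mathsf q_n(\gamma)\mapsto\mathsf q_m(\gamma)$. This is well defined because $\gamma|_{B_n}=\eta|_{B_n}$ implies $\gamma|_{B_m}=\eta|_{B_m}$.

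It is $1$-Lipschitz, i.e.\ $\mssd_m\le\mssd_n$. Take an optimal coupling $\mssc$ for $\gamma|_{B_n}+\alpha$ and $\eta|_{B_n}+\beta$. Any pair matched across $\partial B_m$, or with one endpoint outside $\bar B_m$, is replaced by matching the inner point with its nearest point of $\partial B_m$; the added boundary points go into new $\alpha',\beta'\in\U(\partial B_m)$. In one dimension this does not increase $|x-y|^2$. This is the argument of \cite{Suz25}, which I would cite.

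Being $1$-Lipschitz, $p_{m,n}$ extends to the completions. It is measure-preserving since $(\mathsf q_m)_\#\pi=\pi_m$. The compatibility $p_{l,m}\circ p_{m,n}=p_{l,n}$ is clear.

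\textbf{Isomorphism.} $\Phi$ is measurable: each $\mathsf q_n$ is Borel for $\tau_{\sf vague}$, since it depends only on $\gamma|_{B_n}$, which is measurable. $\Phi$ is measure-preserving because its finite-dimensional marginals are the $\bar\pi_n$, and these determine the Kolmogorov extension.

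The isometry step is the main obstacle. I need
\[
\mssd_\U(\gamma,\eta)=\sup_n\mssd_n(\gamma,\eta)
\]
for all $\gamma,\eta$ in a full-measure set, for instance the configurations with no atoms on $\bigcup_n\partial B_n$ and with finite multiplicities.

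The inequality $\mssd_n\le\mssd_\U$ follows from the restriction argument above. For the converse, take near-optimal couplings $\mssc_n$ for $\mssd_n$ with $\sup_n\mssd_n<\infty$ (otherwise there is nothing to prove). I would extract a vaguely convergent subsequence of the $\mssc_n$ on $\R\times\R$. Local finiteness gives precompactness, because mass in compact sets is controlled by $\gamma$ and $\eta$. The boundary contributions $\alpha_n,\beta_n$ sit on $\partial B_n$ and escape to infinity. The limit is then a coupling in $\mathsf{Cpl}(\gamma,\eta)$, and by lower semicontinuity of $\int|x-y|^2$ its cost is at most $\liminf_n\mssd_n^2$. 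This is precisely \cite{Suz25}'s identification of $\mssd_\U$ as the monotone limit of the $\mssd_n$, which I would invoke rather than reprove.

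By \Cref{lemma: sup} and \Cref{prop: reconstruction}, the inverse-limit distance is $\sup_n\bar\mssd_n\circ(\iota_n\mathsf q_n)$. So $\Phi$ is an isometry on the full-measure set, hence an emm-isomorphism.
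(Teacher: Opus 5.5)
Your proposal is correct and follows essentially the same route as the paper. The bonding maps are the quotient maps, $1$-Lipschitz because $\mssd_m\le\mssd_n$; the map $\gamma\mapsto(\iota_n\circ\mathsf q_n(\gamma))_{n}$ is measure-preserving by uniqueness of the inverse-limit measure and isometric because $\sup_n\mssd_n=\mssd_\U$ (both facts from \cite{Suz25}); and the pyramid identity follows via \Cref{c:IVI}. The differences are cosmetic: you sketch why the two inequalities hold and require the isometry only on a full-measure set, which is all an emm-isomorphism needs, and your appeal to \Cref{prop: reconstruction} is superfluous since the inverse-limit distance is the pointwise supremum of the $\bar\mssd_n$ by definition.
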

\begin{proof}
The bonding map~$p_{m,n}:\U(\bar B_n) \to \U(\bar B_m)$ descends through the quotient down to the map $\tilde p_{m, n}:\U_n \to \U_m$ given uniquely by the relation~$\tilde p_{m,n}\circ \mathsf q_n=\mathsf q_m$.  It is easy to see that $\tilde p_{m,n}: \U_n \to \U_m$ is a $1$-Lipschitz map, thus the map extends to the completion~$\bar p_{m, n}:\bar\U_n \to \bar\U_m$.
The spaces~$(\bar \U_n, \bar \sfd_n, \bar \pi_n)$ with bonding maps~$\bar p_{m,n}$ form an inverse system, which readily follows from the fact that $(\U(\bar B_n), \pi^{(n)})$ with $p_{m,n}$ forms an inverse system with the inverse limit 
$$(\U, \pi) = \varprojlim (\U(\bar B_n), \pi^{(n)})$$
(see, e.g., \cite[Section 2.1]{AlbKonRoe98}), and from the monotonicity~$\sfd_n \uparrow \sfd$ due to~\cite[Proposition 3.2]{Suz25}. 

Let $\bar{\mathsf{p}}_n: \U \to \bar \U_n$ be the map defined as $\bar{\mathsf{p}}_n=\iota_n\circ \mathsf q_n$. Then, $\bar{\mathsf p}_m = \bar p_{m,n} \circ \bar{\mathsf p}_n$, thus $(\bar{\mathsf p}_n(\gamma))_{n \in \N}$ takes value in~$\varprojlim \bar \U_n$. By the construction above, it is now standard to verify that  the map 
$$J: \U \to \varprojlim \bar \U_n \quad \text{defined as} \quad \gamma \mapsto J(\gamma)=(\bar{\mathsf p}_n(\gamma))_{n \in \N}$$ is an emm-isomorphism from $\U$ to $\varprojlim \bar \U_n$. Indeed, the isometry can be seen as 
$$\bar \sfd_\infty(J(\gamma), J(\eta))=\sup_n  \bar \sfd_{n}(\iota_n \circ \mathsf q_n(\gamma), \iota_n \circ \mathsf q_n(\eta))=\sup_n  \tilde \sfd_{n}( \mathsf q_n(\gamma),  \mathsf q_n(\eta)) = \sup_n  \sfd_n(\gamma, \eta) = \sfd_\U(\gamma, \eta),$$
where the quotient map~$\mathsf q_n$ is distance preserving by definition because $\tilde \sfd_n$ is the quotient distance. The measure-preserving property~$J_\# \pi = \bar \pi_\infty$ follows by observing that the $n$-th marginal of $J_\#\pi$ is~$\bar \pi_n$. By the uniqueness of the inverse limit measure, we conclude $J_\# \pi = \bar \pi_\infty$.
Thus, $\U \cong \varprojlim \bar \U_n$. 
    
    The second assertion is a direct application of the first assertion and \Cref{c:IVI}. 
\end{proof}
\section{Concentrated emm-spaces}\label{section: concentrated spaces}
For an emm-space~$\X$, the quotient space $\mathcal L_1(\X)=\Lip_1(\X, \mm_\X)/\mathbb R$ is closed in~$\L^0$ but not necessarily compact. We will see emm-spaces having non-compact $\mathcal L_1(\X)$ in \Cref{s:AP}, including abstract Wiener spaces and the infinite-particle configuration spaces.  This is a striking distinction from mm-spaces, where $\mathcal L_1(\X)$ is always compact. 
\begin{lemma} \label{p:LC}
    Let $\X$ be an emm-space. Then $\mathcal L_1(X)$ is closed in $\L^0$. 
    If $\X$ is an mm-space, then $\mathcal L_1(X)$ is compact. 
\end{lemma}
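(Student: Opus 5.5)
We prove closedness of $\Lip_1(\X,\mm_\X)$ in $\L^0(\X)$ first, and then pass to the quotient. For the compactness claim in the mm-case we simply invoke \cite[Proposition 4.6]{Sh16}, recalled after \eqref{d:L1X}. Note that Proposition~\ref{p:CEM} and the definition of a.e.~$1$-Lipschitz functions make $\Lip_1(\X,\mm)$ in the emm-sense coincide with the mm-sense class when $\X$ is an mm-space. Indeed, a function $1$-Lipschitz on a full-measure set $E$ extends uniquely to a $1$-Lipschitz function on $\overline{E}\supset\supp(\mm)$, as already observed in the preliminaries.

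\emph{Closedness of $\Lip_1$.} Let $f_n\in\Lip_1(\X,\mm_\X)$ with $f_n\to f$ in $\L^0$. We pass to a subsequence converging $\mm$-a.e., say on a set $F$ with $\mm(F)=1$. Each $f_n$ is $1$-Lipschitz on a full-measure set $E_n$. Set $E=F\cap\bigcap_n E_n$, which still has full measure since the intersection is countable. For $x,y\in E$ we have $|f_n(x)-f_n(y)|\le\sfd(x,y)$ for every $n$. Letting $n\to\infty$ gives $|f(x)-f(y)|\le \sfd(x,y)$; this is trivially true when $\sfd=+\infty$. Hence $f\in\Lip_1(\X,\mm_\X)$. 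The key point is the box-type full-measure set $E\times E$, and countability of the sequence is what makes the intersection legitimate.

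\emph{Passing to the quotient.} Now let $[f_n]\to[g]$ in $\L^0/\R$, with the quotient distance $\inf_{c}\sfd_{\L^0}(f_n+c,g)$. We choose constants $c_n$ with $\sfd_{\L^0}(f_n+c_n,g)\to 0$. Since $\Lip_1$ is invariant under adding constants, $f_n+c_n\in\Lip_1$, and by the previous step the limit $g$ lies in $\Lip_1$. This yields closedness of $\mathcal L_1(\X)$ in $\L^0/\R$.

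\emph{Expected obstacle.} The only point requiring care is that the quotient distance is genuinely realised by $\L^0$-approximation with shifts. Closedness also needs the infimum over constants to be approximated, not attained, which the choice of $c_n$ above handles. Beyond that, the argument is routine; the main issue is the measurability/null-set bookkeeping, which is resolved by the countable intersection.
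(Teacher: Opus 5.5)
Your proof is correct and follows the paper's argument: extract an $\mm$-a.e.\ convergent subsequence, intersect the countably many full-measure sets, and pass to the limit in the Lipschitz inequality on $E\times E$, with compactness in the mm-case quoted from \cite{Sh16}. Your explicit treatment of the quotient (choosing $c_n$ with $\sfd_{\L^0}(f_n+c_n,g)\to 0$) is extra care that the paper leaves implicit, since it works directly with representatives. The appeal to Proposition~\ref{p:CEM} is unnecessary: the identification of the two notions of $\Lip_1$ for mm-spaces follows solely from the density of a full-measure set in $\supp(\mm)$, as you then observe yourself.
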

\begin{proof}
The latter statement has been proven in \cite[Proposition 4.46]{Sh16}.
Let $(f_n) \subset \mathcal L_1(X)$ be a $\sfd_{\L^0}$-Cauchy sequence converging to $f \in \L^0(\X)$.
By taking a subsequence, $f_n$ converges to $f$ $\mm$-a.e.. 
Thus, there exists $\Omega \subset \X$ with $\mm(\Omega)=1$ such that,  
\begin{equation}
    |f(x)-f(y)| = \lim_{n \to \infty}|f_n(x)-f_n(y)| \le \sfd_\X(x, y) \qquad x,y \in \Omega.
\end{equation}
The proof is therefore complete.
\end{proof}

A natural question would be, therefore,  exactly when an emm-space $\X$ has a compact~$\mathcal L_1(\X)$. In the following sections, we will prove that the compactness of $\mathcal L_1(\X)$  is equivalent to each of the following: 
\begin{itemize}
\item  a pyramid $\mathcal P=\overline{\mathcal P_\X}^\square$ is concentrated;
 \item $\X \in \mathfrak X_{\sf conc}$ can be identified to an element in the completion $\overline{\mathcal X}^{\sfd_{\sf conc}}$.
\end{itemize}

Having these equivalences in mind, we introduce a new notion of {\it concentrated emm-spaces} by imposing $\mathcal L_1(\X)$ to be compact.
\begin{definition}[Concentrated emm-space]
    We say that an emm-space $\X$ is a \emph{concentrated emm-space} if $\mathcal L_1(\X)$ is compact. We denote by $\mathfrak X_{\sf conc}$ the space of emm-equivalence classes of concentrated emm-spaces. 
\end{definition}
\begin{remark}
Notice that the concentration property is invariant under emm-isomorphism since any emm-isomorphism of emm-spaces induces an isometry for the associated $\mathcal L_1$.
\end{remark}
In the following sections, we study relations between concentrated emm-spaces and associated pyramids to address the first bullet point above. The second bullet point will be addressed in Sub\Cref{ss:OD}.
\paragraph{Essentially separable case.} 
If we impose that an emm-space~$(\X, \tau, \sfd, \mm)$ has a separable subspace~$(\Z, \tau_\sfd)$, then the compactness of $\mathcal L_1(\X)$ occurs if and only if $\X$ is emm-isomorphic to an mm-space. 
\begin{proposition}[Essentially separable case] \label{p:FDP}
Let \((\X,\tau,\mathsf d,\mathfrak m)\) be an emm-space.
Assume that there exists a Borel set \(\Z\subset \X\) with~$\mathfrak m(\Z)=1$
such that \((\Z,\tau_{\mathsf d})\) is separable. Then 
$$\mathcal L_1(\X) \  \text{is compact} \iff \X \cong \X' \quad \text{for some $\X' \in \mathcal X$}. $$
\end{proposition}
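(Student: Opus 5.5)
The implication ``$\Leftarrow$'' is immediate. If $\X\cong\X'$ with $\X'\in\mathcal X$, then the emm-isomorphism induces, by pullback, an isometry $\mathcal L_1(\X')\to\mathcal L_1(\X)$ for the quotient Ky-Fan distance (see the remark after the definition of concentrated emm-spaces). Moreover $\mathcal L_1(\X')$ is compact by \Cref{p:LC}. So the work is in ``$\Rightarrow$''. The plan there is to show that, up to a null set, $\sfd$ is finite on $\Z$. Then the $\sfd$-completion of $\Z$, carrying the pushed-forward measure, is an mm-space emm-isomorphic to $\X$.

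\emph{Step 1 (decomposition into finite-distance components).} On $\Z$, consider the equivalence relation $x\sim y\iff\sfd(x,y)<\infty$. Each class $C$ is $\tau_\sfd$-open in $\Z$, since it contains the $\sfd$-ball of radius $1$ around each of its points. Because $(\Z,\tau_\sfd)$ is separable, there are at most countably many classes $(C_i)_i$. Each class is $\tau$-Borel. Indeed, by $\tau\times\tau$-lower semicontinuity of $\sfd$, the closed balls $\{y:\sfd(x,y)\le r\}$ are $\tau$-closed. Fixing a countable $\tau_\sfd$-dense set $(z_j)\subset\Z$, we can write $C_i=\Z\cap\bigcup_{z_j\in C_i}\bigcup_k\{\sfd(z_j,\cdot)\le k\}$. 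The same argument shows that every $\tau_\sfd$-open subset of $\Z$ is a countable union of $\tau$-closed $\sfd$-balls, because an open ball equals the union of the closed balls of radius $r-1/k$. Hence the $\tau_\sfd$-Borel sets of $\Z$ are $\tau$-Borel. I expect this measurability bookkeeping to be the main technical point, since $\mm$ is only $\tau$-Borel.

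\emph{Step 2 (compactness forces a single component).} Suppose two classes $C_1\neq C_2$ have positive measure. Set $f_M:=M\chi_{C_1}$. On $\Z$ the function $f_M$ is $1$-Lipschitz: it is constant on each class, and distinct classes are at infinite distance. Hence $f_M\in\Lip_1(\X,\mm)$ for every $M>0$. Their classes in $\mathcal L_1(\X)$ have no $\L^0/\R$-convergent subsequence as $M\to\infty$. Indeed, any constant shift $c$ leaves $|f_M-c|\ge M/2$ on $C_1$ or on $C_2$, and both sets have measure bounded below. This contradicts compactness. Therefore a single class $C$ satisfies $\mm(C)=1$.

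\emph{Step 3 (the mm-representative).} Now $(C,\sfd)$ is a separable genuine metric space. Let $(\bar C,\bar\sfd)$ be its completion, $j:C\to\bar C$ the inclusion, and $\X':=(\bar C,\bar\sfd,j_\#\mm)$. By Step 1, $j$ is $\mm$-measurable, so $j_\#\mm$ is a Borel probability measure on a complete separable metric space and $\X'\in\mathcal X$. Extending $j$ arbitrarily (measurably) off $C$, we obtain a measure-preserving map that is an isometry on the full-measure set $C$. By \Cref{d:EMI} this means $\X\cong\X'$, which concludes ``$\Rightarrow$''.
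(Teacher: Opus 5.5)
Your proposal is correct and follows the paper's proof essentially step for step: countably many finite-distance components by separability, the functions $M\chi_{C}$ to contradict compactness, and the $\sfd$-completion of the full-measure component, with $\tau$-Borel measurability of $\sfd$-balls coming from lower semicontinuity. Your explicit check that the components are $\tau$-Borel usefully fills in what the paper only asserts. One tightening is needed in Step 2: to rule out a convergent subsequence, apply your separation bound to the differences $f_M-f_{M'}=(M-M')\chi_{C_1}$ (say for integer $M\neq M'$), exactly as the paper does with $t\mathbf 1_C-c$; as written, you compare $f_M$ only with constants, which does not yet exclude a convergent subsequence.
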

\begin{proof}
By \Cref{p:LC}, the RHS implies the LHS. We only prove the converse statement. 
Suppose that $\mathcal L_1(\X)$ is compact. Since $(\Z, \tau_{\sfd})$ is separable, $\Z$ intersects only
countably many finite-distance components. Here finite-distance components are given as follows:
let $\sim$ be the relation $x\sim y$ if and only if $\mathsf d(x,y)<\infty$.
For any $x\in \X$, its component $\Z_x=\{y \in \Z: \sfd(y,x)<\infty\}$  is called {\it finite-distance component of $x$} and we have the decomposition $\Z=\sqcup_{[x] \in \Z/\sim} \Z_x$.

These components form a countable
measurable partition of a full-measure set. If no component had full
$\mathfrak m$-measure, then some component $C$ would satisfy
\[
0<\mathfrak m(C)<1.
\]
For $n\in\mathbb N$, define
$
f_n:=n\,\mathbf 1_C.$
Each $f_n$ is $1$-Lipschitz because it is constant on each finite-distance
component, and between distinct components the distance is $+\infty$ by definition.
Let $n\ne m$, put $t:=|n-m|\ge1$, and let $c\in\mathbb R$. Then
\[
f_n-f_m-c=t\mathbf 1_C-c
\]
takes the values $t-c$ on $C$ and $-c$ on $X\setminus C$. If
$0<\varepsilon<1/2$, the two inequalities
$
|c|\le\varepsilon$ and 
$|t-c|\le\varepsilon$
cannot both hold. Hence
\[
\mathfrak m\bigl(|f_n-f_m-c|>\varepsilon\bigr)
\ge
\min\{\mathfrak m(C),1-\mathfrak m(C)\}.
\]
Choose
\[
0<\varepsilon<
\min\left\{
\frac12,\mathfrak m(C),1-\mathfrak m(C)
\right\}.
\]
Then, for every $c\in\mathbb R$,
\[
\mathfrak m\bigl(|f_n-f_m-c|>\varepsilon\bigr)>\varepsilon,
\]
and therefore
\[
\mathsf d_{\mathrm{KF}}([f_n],[f_m])\ge\varepsilon
\qquad(n\ne m),
\]
where $[f_n]$ denotes the representative of $f_n$ in $\mathcal L_1(X)=\Lip_1(\X, \mm)/\R$.
This contradicts the compactness of $\mathcal L_1(\X)$. Thus there is a
finite-distance component $C_0$ with
\[
\mathfrak m(C_0)=1.
\]
Since $C_0\cap \Z$ has full measure, $\mathsf d$ is finite on
$C_0\cap \Z$, and $C_0\cap \Z$ is $\mathsf d$-separable. Let
\[
\Y:=\overline{C_0\cap \Z}^{\,\sfd}
\]
be its metric completion, and let $\nu$ be the pushforward of
$\mm$ to $Y$. 

To conclude that $(\Y,\sfd,\nu)$ is an mm-space, we verify that $\mm$ is a $\sfd$-Borel measure on~$C:=C_0\cap \Z$, which readily implies that $\nu$ is $\sfd$-Borel in~the completion~$\Y$.  Take $D=\{a_i\}_{i \in \N} \subset C$ be a countable $\sfd$-dense set. Every open metric ball~$B^{\sfd}_r(a_i) \cap C$ is $\tau$-Borel because $\sfd$ is $\tau^{\times 2}$-l.s.c.. Since $D$ is countable and dense in $C$, the $\sfd$-Borel $\sigma$-algebra in $C$ is generated by $\{B^{\sfd}_r(a_i): r \in \Q_+, \ i \in \N\}$. Thus, we obtain the inclusion between the Borel $\sigma$-algebras 
$$\mathscr B_\sfd(C) \subset \mathscr B_\tau(C).$$ Since $\sfd$ metrises a finer topology than $\tau$ by hypothesis of emm-spaces, we also have $\mathscr B_\sfd(C) = \mathscr B_\tau(C)$.  As $\mm$ is $\tau$-Borel, $\mm$ is $\sfd$-Borel as well.

The canonical embedding map
\[
C_0\cap \Z\longrightarrow \Y
\]
is Borel, distance-preserving and measure-preserving. Hence $\X$ is emm-isomorphic to the mm-space $(\Y,\sfd,\nu)$.
 \end{proof}

 \begin{remark}
     As is clear from the proof above, the condition ``$\mathcal L_1(\X)$ is compact'' can be replaced by ``$\sfd|_{Z}<+\infty$''.
 \end{remark}
This particularly tells us that elements in~$\overline{\mathcal X}^{\sfd_{\sf conc}} \setminus \mathcal X$ consist of only essentially infinite-dimensional emm-spaces.
\begin{corollary}[Essentially finite-dimensional case] \label{c:FDP}
Let \((\X,\tau,\mathsf d,\mathfrak m)\) be an emm-space.
Assume that there exists a Borel set \(\Z\subset \X\) with~$\mathfrak m(\Z)=1$
such that \((\Z,\mathsf d)\) has finite Hausdorff dimension. Then 
$$\mathcal L_1(\X) \  \text{is compact} \iff \X \cong \X' \quad \text{for some $\X' \in \mathcal X$}. $$
\end{corollary}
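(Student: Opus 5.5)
The plan is to reduce \Cref{c:FDP} to \Cref{p:FDP}: show that a set of finite Hausdorff dimension for an extended distance is $\tau_\sfd$-separable, up to discarding a null set. The implication ``$\Leftarrow$'' is again immediate from \Cref{p:LC}, since emm-isomorphisms induce isometries of $\mathcal L_1$. So only ``$\Rightarrow$'' needs work.

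For ``$\Rightarrow$'', first observe that finite Hausdorff dimension of $(\Z,\sfd)$ gives, for some $s<\infty$, $\mathcal H^s(\Z)=0$. In particular the $s$-dimensional Hausdorff content is zero, so for every $k\in\N$ the set $\Z$ can be covered by countably many sets of $\sfd$-diameter at most $1/k$. Each such set has finite diameter, hence lies in a single finite-distance component. Consequently:
\begin{itemize}
\item $\Z$ meets only countably many finite-distance components;
\item every piece of the covers has finite diameter, and the covers become arbitrarily fine.
\end{itemize}
Picking one point in each (nonempty) covering set, over all $k$, produces a countable set $D\subset\Z$. Every $z\in\Z$ lies in a covering set of diameter at most $1/k$ for each $k$, so it is within distance $1/k$ of a point of $D$. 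Hence $D$ is $\tau_\sfd$-dense in $\Z$, and $(\Z,\tau_\sfd)$ is separable.

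Note that no measurability of the covering sets is needed, since separability is a purely metric property of $\Z$. With separability in hand, the hypotheses of \Cref{p:FDP} are met by the same Borel set $\Z$ with $\mm(\Z)=1$, and \Cref{p:FDP} yields the equivalence.

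The only delicate point is the convention of Hausdorff dimension in an extended metric space. If the dimension is taken with respect to the extended distance, the covering argument above is exactly right. A component-wise definition would instead give a finite dimension on each component separately, which does not control the number of components. In that case I would fall back on the remark after \Cref{p:FDP}: compactness of $\mathcal L_1(\X)$ already forces a single component of full measure, via the indicator functions $n\mathbf 1_C$. Separability of that component then follows from the covering argument restricted to it.
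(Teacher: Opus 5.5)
Your main argument is correct and is essentially the paper's proof: choose $a$ above the dimension with $\mathcal H^a_{\sfd}(\Z)=0$, cover $\Z$ for each $k$ by countably many sets of $\sfd$-diameter below $1/k$, pick one point of $\Z$ in each, and feed the resulting $\tau_\sfd$-separability of $\Z$ into \Cref{p:FDP}.

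Your closing fallback for a component-wise convention is unsound and should be dropped. Compactness of $\mathcal L_1(\X)$ does not by itself force a finite-distance component of full measure. The concentrated space of \Cref{e:IPS} has $\sfd=+\infty$ $\mm^{\otimes 2}$-a.e., so none of its (Borel) components has positive measure. The $n\mathbf 1_C$ argument in \Cref{p:FDP} only yields a component with $0<\mm(C)<1$ because separability makes the components countable. Under the paper's hypothesis (dimension taken for the extended distance on $\Z$) this fallback is never needed.
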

\begin{proof}
    It suffices to show that the finite-dimensionality assumption implies
the~$\tau_{\mathsf d}$-separability on a full-measure set. Indeed, since
$\dim_{\mathcal H}(\Z,\mathsf d)<\infty$, choose $a<\infty$ with the $a$-dimensional Hausdorff measure~
\[
\mathcal H^a_{\mathsf d}(\Z)=0.
\]
For every $k\in \N$, there exists a countable cover
$
\Z\subset \bigcup_{i=1}^{\infty}U_{k,i}
$
with
$
\mathrm{diam}_{\mathsf d}(U_{k,i})<\frac1k.
$
Choosing one point from each $U_{k,i}$ having non-empty intersection with~$\Z$, the resulting countable set
is $\tau_\mathsf d$-dense in $\Z$. 
\end{proof}
 Without finite-dimensionality hypothesis, we have many examples of emm-spaces having compact $\mathcal L_1(\X)$, nonetheless not emm-isomorphic to any mm-space, which will be seen in \Cref{s:AP}. 
\subsection{Concentrated pyramids and concentrated emm-spaces}
As mentioned in the previous section, the concentration property of emm-spaces is closely linked to concentrated pyramids.
The pyramids associated with concentrated emm-spaces have stronger properties than general pyramids. 
The following compactness property plays a key role.
\begin{lemma}\label{lemma: compactness of 1-Lipschitz functions}
    Let $\X$ be a concentrated emm-space and let $(f_n)_{n\in \N}\subset \Lip_1(\X, \mm_\X)$ be such that $(f_n)_\#\mm$ is tight.
    Then $(f_n)_{n \in \N}$ is precompact in $\L^0$.
\end{lemma}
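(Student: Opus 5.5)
We pass to the quotient $\mathcal L_1(\X)=\Lip_1(\X,\mm_\X)/\R$, where compactness is assumed, and use tightness of $(f_n)_\#\mm$ only to control the constants that the quotient forgets. Let $q:\L^0(\X)\to\L^0(\X)/\R$ be the quotient map; $\L^0(\X)/\R$ carries the quotient distance $\inf_{c\in\R}\sfd_{\L^0}(f-c,g)$. Given a subsequence of $(f_n)$, compactness of $\mathcal L_1(\X)$ yields a further subsequence (not relabelled) and $g\in\Lip_1(\X,\mm_\X)$ with $q(f_n)\to q(g)$. By definition of the quotient distance we may then pick constants $c_n\in\R$ with
\begin{equation}
\sfd_{\L^0}(f_n-c_n,g)\to 0 .
\end{equation}

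The main step is to show that $(c_n)$ is bounded. Fix $\varepsilon\in(0,1/4)$. By tightness, choose $R>0$ with $\mm(\{|f_n|>R\})<\varepsilon$ for all $n$. Since $g$ is a.e.\ finite, choose $S>0$ with $\mm(\{|g|>S\})<\varepsilon$. For $n$ large, $\sfd_{\L^0}(f_n-c_n,g)<\varepsilon$, so $\mm(\{|f_n-c_n-g|>\varepsilon\})<\varepsilon$.

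Outside the union of these three sets, which has measure $<3\varepsilon<1$ and so leaves at least one point, we get
\begin{equation}
|c_n|\le |f_n|+|g|+\varepsilon\le R+S+1 .
\end{equation}
Hence $(c_n)$ is bounded.

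Passing to a further subsequence, $c_n\to c\in\R$. Then $f_n\to g+c$ in $\L^0$, because constant shifts converge uniformly and $\sfd_{\L^0}$ is translation compatible: $\sfd_{\L^0}(f_n,g+c)\le \sfd_{\L^0}(f_n-c_n,g)+|c_n-c|$. By \Cref{p:LC}, the limit $g+c$ lies in $\Lip_1(\X,\mm_\X)$, though this is not even needed for precompactness in $\L^0$. Since every subsequence has a convergent further subsequence, $(f_n)$ is precompact. I expect no real obstacle beyond the boundedness of $(c_n)$ above.
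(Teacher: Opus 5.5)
Your proof is correct and follows essentially the same route as the paper: compactness of $\mathcal L_1(\X)$ gives constants $c_n$ such that $f_n-c_n$ converges in $\L^0$ along a subsequence, and tightness of $(f_n)_\#\mm$ then forces $(c_n)$ to be bounded. The only cosmetic difference is in how the boundedness is shown. The paper intersects compact sets of mass at least $3/4$ for the laws of $f_n$ and of the shifted functions. You instead bound $|c_n|$ directly at a point of a positive-measure set, using the limit $g$.
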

\begin{proof}
    Since $\X$ is concentrated, there exists a real sequence $(c_n)_{n\in \N}$ such that $n\mapsto f_n+c_n$ is compact in $\L^0$. 
    This means that, up to extracting a nonrelabeled subsequence, $n\mapsto f_n+c_n$ converges in $\L^0$, which in turns implies that $n\mapsto (f_n+c_n)_\#\mm$ is tight.

    Since both $n\mapsto (f_n+c_n)_\#\mm$ and $n\mapsto (f_n)_\#\mm$ are tight, this implies that (the nonrelabeled subsequence of) $n\mapsto c_n$ is bounded.
    Indeed, for instance, if $K$ is a compact set such that $ (f_n)_\#\mm(K)\geq \frac 34$ for all $n\in \N$ and $H$ is another  compact set such that $ (f_n+c_n)_\#\mm(H)\geq \frac 34$ for all $n\in \N$, then $H\cap (K+c_n)\neq \emptyset$ for all $n\in \N$.

    The boundedness of $c_n$ implies that $n\mapsto f_n=f_n+c_n-c_n$ is compact in~$\L^0$.
\end{proof}
The result can be extended to maps with values in $\ell^\infty$.
We start with an auxiliary lemma.
\begin{lemma}\label{lemma: L^0 compactness of maps}
    Let $(\X, \mm)$ be a probability space and let $\Y$ be a Polish space.
    Let $(f_k)_{k\in \N}$ be a sequence of continuous functions generating the topology of $\Y$.
    Let $(T_n)_{n\in \N}$ be a sequence of measurable maps from $\X$ to $\Y$ such that $(f_k\circ T_n)_{n\in \N}$ is precompact in $\L^0$ for all $k$ and that $(T_n)_{\#}\mm$ is tight.
    Then $T_n$ is precompact in $\L^0(\X, \Y)$. 
\end{lemma}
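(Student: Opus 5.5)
Since $\L^0(\X;\Y)$ is metrizable, it suffices to show sequential precompactness: every subsequence of $(T_n)$ has a further subsequence that is Cauchy in $\sfd_{\L^0}$. (Completeness of $\L^0(\X;\Y)$, which holds because $\Y$ is Polish, then provides a limit.) The plan is to first extract a subsequence along which every coordinate $f_k\circ T_n$ converges, and then upgrade this coordinatewise convergence to convergence in $\L^0(\X;\Y)$ using tightness.

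\emph{Step 1 (diagonal extraction).} For each $k$, the sequence $(f_k\circ T_n)_n$ is precompact in $\L^0(\X)$ by hypothesis. A Cantor diagonal argument therefore gives a subsequence (not relabelled) such that $f_k\circ T_n\to g_k$ in $\L^0$ for every $k$. Passing to a further diagonal subsequence, we may assume $f_k\circ T_n\to g_k$ $\mm$-a.e. for all $k$ simultaneously, on a common full-measure set $\Omega$.

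\emph{Step 2 (embedding and compact reduction).} Put $F=(f_1,f_2,\dots):\Y\to\R^\infty$. This map is continuous and injective, and it is a homeomorphism onto its image because the $f_k$ generate the topology of $\Y$ (injectivity uses that $\Y$ is Hausdorff). Fix $\varepsilon>0$ and, by tightness, choose a compact $K\subset\Y$ with $\mm(T_n^{-1}(K))\ge 1-\varepsilon$ for all $n$. On $K$, the map $F$ restricts to a homeomorphism onto the compact set $F(K)$. Hence its inverse is uniformly continuous with respect to a fixed metric $\rho$ on $\R^\infty$ and a compatible metric $\sfd_\Y$ on $\Y$. Concretely, for every $\eta>0$ there exist $N$ and $\delta>0$ such that for $y,y'\in K$,
\[
\max_{k\le N}|f_k(y)-f_k(y')|<\delta \quad\Longrightarrow\quad \sfd_\Y(y,y')<\eta .
\]

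\emph{Step 3 (Cauchy estimate).} For $n,m$ large, the convergence in Step 1 gives
\[
\mm\Bigl(\max_{k\le N}|f_k\circ T_n-f_k\circ T_m|\ge\delta\Bigr)<\varepsilon .
\]
Consider the set where both $T_n$ and $T_m$ land in $K$ and the above maximum is $<\delta$. It has measure at least $1-3\varepsilon$, and on it $\sfd_\Y(T_n,T_m)<\eta$. Choosing $\eta=\varepsilon$ yields $\sfd_{\L^0}(T_n,T_m)\le 3\varepsilon$ for $n,m$ large, so the subsequence is Cauchy and hence converges in $\L^0(\X;\Y)$.

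The main obstacle is Step 2. Coordinatewise convergence by itself does not control $\sfd_\Y$, because the inverse of $F$ need not be uniformly continuous on all of $F(\Y)$. Tightness is exactly what lets us restrict to compact sets, where uniform continuity of the inverse holds. The uniformity statement itself should be checked by contradiction via compactness of $K\times K$: take pairs violating it and extract a convergent subpair, whose limits then agree in every coordinate $f_k$ but remain at $\sfd_\Y$-distance at least $\eta$, contradicting injectivity of $F$.
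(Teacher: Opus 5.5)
Your proof is correct. It uses the same two ingredients as the paper, the embedding $F=(f_1,f_2,\dots)$ into $\R^\infty$ and a diagonal extraction. It differs in how tightness brings you back from $\R^\infty$ to $\Y$.

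The paper's route is as follows:
\begin{itemize}
\item It takes the diagonal limit $\tilde T$ of $F\circ T_{n_k}$ in $\L^0(\X;\R^\infty)$.
\item It notes that $\tilde T_\#\mm$ is a weak limit of $F_\#(T_{n_k})_\#\mm$.
\item It uses tightness, through the closed sets $F(K)$ with $K$ compact, to show that $\tilde T$ takes values in $F(\Y)$ almost everywhere.
\item It concludes that $F^{-1}\circ\tilde T$ is the limit, using only pointwise continuity of $F^{-1}$ on $F(\Y)$ along an a.e.\ convergent subsequence.
\end{itemize}

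You never construct a candidate limit. Instead you use tightness to restrict to a compact $K$ on which $F^{-1}$ is uniformly continuous. There finitely many coordinates control $\sfd_\Y$, which gives a quantitative Cauchy estimate, and you close with completeness of $\L^0(\X;\Y)$.

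The paper's argument is shorter and identifies the limit explicitly, so limit points are determined by the coordinate limits $g_k$. Yours avoids weak convergence of laws and any discussion of the image $F(\Y)$ or of the measurability of $F^{-1}\circ\tilde T$, and it yields an explicit modulus. In exchange it needs the uniform-continuity-on-compacts step, which your contradiction argument handles correctly.

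It also needs $\sfd_\Y$ to be a \emph{complete} compatible metric for the final step. In Step~2 you only said ``compatible'', so fix a complete one from the outset. Alternatively, tightness alone already forces an a.e.\ Cauchy subsequence to converge, since it visits $K$ infinitely often on a set of measure at least $1-\varepsilon$.
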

\begin{proof}
    Let $F:\Y\to (\mathbb R^\infty, \tau^\infty)$ defined by $F(x)=(f_1(x), \ldots, f_n(x), \ldots)$ and notice that $F$ is a homeomorphism on the image.
    By assumptions, the sequence $n\mapsto F\circ T_n$ is precompact in $\L^0(\X; \mathbb R^\infty)$.
    Moreover, if $\tilde T$ is a limit point with $\tilde T=\L^0-\lim_k F\circ T_{n_k}$, then $\tilde T_\#\mm$ is a weak limit point of $F_\# (T_{n_k})_{\#}\mm$.
    This implies that $\tilde T_\#\mm$ is concentrated on the image of $F$, thus for $\mm$-a.e.~$x$, $\tilde T(x)$ is in the image of $F$.
    This implies that the map $T:=F^{-1} \circ \tilde T$  is well defined.
    Finally, by continuity of $F^{-1}$ on the image of~$F$, we conclude that $T\in L^0(\X; \Y)$ is the limit of $T_{n_k}$, concluding the proof.
\end{proof}
\begin{corollary}\label{corollary: compactness of 1-Lipschitz functions}
    Let $\X$ be a concentrated emm-space. Suppose that $(f_n)_{n\in \N}\subset \Lip_1(\X, \mm; \ell^\infty)$ satisfies that $(f_n)_\#\mm$ is tight. 
    Then $(f_n)_{n \in \N}$ is precompact in $\L^0$. 
\end{corollary}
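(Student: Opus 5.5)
The plan is to apply \Cref{lemma: L^0 compactness of maps} with $\Y=\ell^\infty$. That lemma, however, needs a Polish target and a countable family of continuous functions generating the topology, and $\ell^\infty$ is not separable. So the first step is to reduce to a separable closed subspace. Each $f_n$ is an $\L^0$ class of essentially separably-valued maps, so for each $n$ there is a full-measure set $E_n$ with $f_n(E_n)$ separable. Let $\Y_0$ be the closed linear span in $\ell^\infty$ of $\bigcup_n f_n(E_n)$. It is a separable closed subspace, hence a Polish space, and after modifying on null sets every $f_n$ takes values in $\Y_0$. The push-forwards $(f_n)_\#\mm$ are then tight as measures on $\Y_0$, because compact subsets of $\ell^\infty$ that carry almost all the mass can be intersected with the closed set $\Y_0$.

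Next, I choose a countable family $(\phi_k)_{k\in\N}$ of $1$-Lipschitz functions on $\Y_0$ that generates its topology. The natural choice is $\phi_k=\|\cdot-y_k\|_{\ell^\infty}$ with $(y_k)$ dense in $\Y_0$: distance functions to a dense set determine the metric, so they generate the metric topology. For each $k$, the composition $\phi_k\circ f_n$ is again a.e.\ $1$-Lipschitz on $\X$, since it is a composition of an a.e.\ $1$-Lipschitz map with a $1$-Lipschitz function; measurability is clear. Its law $(\phi_k)_\#(f_n)_\#\mm$ is tight because $\phi_k$ is continuous and $(f_n)_\#\mm$ is tight. \Cref{lemma: compactness of 1-Lipschitz functions} then shows that $(\phi_k\circ f_n)_{n}$ is precompact in $\L^0(\X)$ for every $k$.

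All hypotheses of \Cref{lemma: L^0 compactness of maps} now hold with $\Y=\Y_0$. It gives precompactness of $(f_n)$ in $\L^0(\X;\Y_0)$, and therefore in $\L^0(\X;\ell^\infty)$, since $\Y_0$ is isometrically included in $\ell^\infty$.

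The main obstacle is the separability reduction: I must make sure that the essential separable ranges can be handled simultaneously for countably many $n$, and that tightness on $\ell^\infty$ passes to $\Y_0$. Countable unions of separable sets are separable, and a compact set $K\subset\ell^\infty$ gives the compact set $K\cap\Y_0$ whose mass under $(f_n)_\#\mm$ is the same as that of $K$. So the reduction should go through without difficulty.
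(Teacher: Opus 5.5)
Your proposal is correct and follows essentially the same route as the paper. Both arguments pass to a separable closed (hence Polish) subspace of $\ell^\infty$ carrying every $(f_n)_\#\mm$, compose with a countable generating family of $1$-Lipschitz functions so that the scalar compactness lemma gives precompactness of each composed sequence, and conclude with the lemma on $\L^0$-compactness of maps. The differences are cosmetic: the paper uses the closure of the union of the supports and bounded $1$-Lipschitz functions extended to $\ell^\infty$ by McShane, so tightness follows from boundedness. You use a closed linear span and distance functions to a dense sequence, which are already defined on all of $\ell^\infty$ and have tight laws by continuity.
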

\begin{proof}
    Consider $$\Z=\overline{\bigcup_n\supp\big((f_n)_\#\mm\big)}^{\ell_\infty}$$
    endowed with the restriction of the ambient distance~$\ell_\infty$.
    Taking (non-relabelled) separably valued Borel representatives~$f_n$, the subset~$f_n(\X)$ is separable in $\ell^\infty$. Thus,  $\overline{f_n(\X)}^{\ell_\infty}$ is separable as well.  Since
    $$\supp\big((f_n)_\#\mm)\big) \subset \overline{f_n(\X)}^{\ell_\infty},$$ the union $\cup_n\supp\big((f_n)_\#\mm\big)$ is separable, hence, so is the closure~$\Z$. As $\Z$ is closed, $\Z$ is a Polish subspace of $\ell^\infty$.

    Let $k\mapsto g_k$ be a sequence of bounded 1-Lipschitz functions on $\Z$ generating its topology.
    By extending functions to the entire space by McShane extension~(\Cref{lemma: McShane}), we may regard $g_k\in \Lip_1(\ell^\infty)$ for all $k\in \N$.
    For every fixed $k\in \N$, the sequence $n \mapsto (g_k\circ f_n)_\#\mm$ is tight because $(f_n)_\#\mm$ is tight by hypothesis and $g_k$ is bounded. Thus, by \Cref{lemma: compactness of 1-Lipschitz functions}, the sequence $n\mapsto g_k\circ f_n$ is precompact in $\L^0$.
    By \Cref{lemma: L^0 compactness of maps}, the sequence $n\mapsto f_n$ is therefore precompact in $\L^0$. 
\end{proof}
As a consequence, we obtain a simpler form for the pyramids associated to concentrated emm-spaces.
\begin{proposition}\label{prop: the pyramid is closed}
    Let $\X$ be a concentrated emm-space. Then $\mathcal P_\X=\overline{\mathcal P_\X}^\square.$
\end{proposition}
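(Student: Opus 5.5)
We prove that $\mathcal P_\X$ is $\square$-closed, since the other inclusion $\mathcal P_\X\subset\overline{\mathcal P_\X}^\square$ is trivial. Fix an mm-space $\Y$ and a sequence $\Y_n\in\mathcal P_\X$ with $\square(\Y_n,\Y)\to 0$; the goal is to show $\Y\prec\X$. We first realise everything in $\ell^\infty$. As recalled in Subsection~\ref{subsec:CM}, $\square$-convergence yields isometric embeddings $\phi_n:\Y_n\to\ell^\infty$ and $\phi:\Y\to\ell^\infty$ with $(\phi_n)_\#\mm_{\Y_n}\rightharpoonup\phi_\#\mm_\Y$. Let $\varphi_n:\X\to\Y_n$ be measure-preserving and a.e.\ $1$-Lipschitz. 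Then $T_n:=\phi_n\circ\varphi_n$ belongs to $\Lip_1(\X,\mm;\ell^\infty)$, and $(T_n)_\#\mm=(\phi_n)_\#\mm_{\Y_n}$. Since these measures converge weakly on the Polish space $\overline{\bigcup_n\supp((\phi_n)_\#\mm_{\Y_n})}\cup\supp(\phi_\#\mm_\Y)$, the family is tight.

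The key step is to apply \Cref{corollary: compactness of 1-Lipschitz functions}, which is exactly where concentration of $\X$ is used. It gives a subsequence $T_{n_k}\to T$ in $\L^0(\X;\ell^\infty)$. Passing to a further subsequence, we have $T_{n_k}\to T$ $\mm$-a.e. We then intersect the full-measure sets $E_{n}$ on which each $T_n$ is $1$-Lipschitz with the convergence set, which yields a full-measure set $E$ on which
\begin{equation}
\|T(x)-T(y)\|_{\ell^\infty}=\lim_k\|T_{n_k}(x)-T_{n_k}(y)\|_{\ell^\infty}\le\sfd_\X(x,y)\qquad x,y\in E.
\end{equation}
Moreover, \eqref{in:LP} gives $(T_{n_k})_\#\mm\to T_\#\mm$ weakly, so $T_\#\mm=\phi_\#\mm_\Y$. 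In particular $T$ takes values $\mm$-a.e.\ in $\supp(\phi_\#\mm_\Y)=\phi(\supp\mm_\Y)$, and this set is closed because $\supp\mm_\Y$ is complete and $\phi$ is isometric.

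Finally, we set $\psi:=\phi^{-1}\circ T$ on $E\cap T^{-1}(\phi(\supp\mm_\Y))$, and extend it by a constant elsewhere, exactly as in \eqref{e:EMI}. This map is measurable since $\phi^{-1}$ is an isometry on the image, it is measure-preserving onto $\mm_\Y$, and it is $1$-Lipschitz on a full-measure set. Hence $\Y\prec\X$, that is, $\Y\in\mathcal P_\X$.

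The main obstacle is the compactness step, which is already packaged in \Cref{corollary: compactness of 1-Lipschitz functions}. Beyond that, the remaining point needing care is measurability and separability: the $T_n$ must be essentially separably valued, which holds because $\Y_n$ is separable, and the limit must land inside the image of $\phi$, which is handled by the weak-limit identification above.
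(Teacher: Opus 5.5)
Your proposal is correct and follows essentially the same route as the paper. You embed via the Gromov--Prokhorov characterisation into $\ell^\infty$, apply \Cref{corollary: compactness of 1-Lipschitz functions} to the composed maps, and extract an a.e.\ limit that is $1$-Lipschitz with the right pushforward. You then compose with the inverse of the isometric embedding, adding the tightness, measurability and ``limit lands in $\phi(\supp\mm_\Y)$'' details that the paper leaves implicit.
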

\begin{proof}
    Let $\Y\in \overline{\mathcal P_\X}^\square$ and let $(\Y_n)_{n\in \N}\subset \mathcal P_\X$ be a sequence of mm-spaces such that $\Y_n\to \Y$ in the $\square$-topology. 
    Since the convergence in the $\square$-topology is equivalent to the convergence with respect to the Gromov-Prohorov distance, there exist $i, i_n$, $n\in \N$, isometric embeddings into $\ell^\infty$ such that $(i_n)_\#\mm_{\Y_n}\rightharpoonup i_\# \mm_\Y$.
    Furthermore, as~$(\Y_n)_{n\in \N}\subset \mathcal P_\X$,  there exist $(p_n)_{n\in \N}$, with $p_n:\X\to \ell^\infty$ $\mm_\X$-measurable and 1-Lipschitz, such that $(p_n)_\#\mm_\X=(i_n)_\#\mm_{\Y_n}$. 
    By \Cref{corollary: compactness of 1-Lipschitz functions}, up to subsequences there exists an $\mm_\X$-measurable, 1-Lipschitz map $p$ such that $p_n\to p$ $\mm_\X$-a.e.. In particular, this implies $p_\# \mm_\X=i_\#\mm_\Y$. Taking $f=i^{-1}\circ p: \X \to \Y$, which is $1$-Lipschitz and $f_\#\mm_\X=\mm_Y$,  we conclude $\Y\in \mathcal P_\X$.
\end{proof}

In order to discuss the relation between concentrated emm-spaces and concentrated pyramids we need the following auxiliary statement.
\begin{lemma}\label{lemma: hausdorff convergence}

Let $\X=\varprojlim\X_n$ with projections $\sfp_n:\X\to \X_n$ and assume that $(\mathcal L_1(\X_n))_{n \in \N}$ is a $\sfd_{\sf GH}$-Cauchy sequence. Then $\p_n^* \mathcal L_1(\X_n)$ converges to $\mathcal L_1(\X)$ in the Hausdorff sense. In particular, $\mathcal L_1(\X)$ is compact. 
\end{lemma}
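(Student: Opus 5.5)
We work inside $\L^0(\X)/\R$. The pullbacks $\p_n^*$ are isometric embeddings, since $\p_n$ is measure-preserving. Set $A_n:=\p_n^*\mathcal L_1(\X_n)$; by \Cref{p:LC} each $A_n$ is compact.

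\emph{Nestedness.} The sets $A_n$ are nested: $A_n\subset A_{n+1}$, because $f\circ p_{n,n+1}$ is $1$-Lipschitz on $\X_{n+1}$ whenever $f\in\Lip_1(\X_n)$, and $\p_n=p_{n,n+1}\circ\p_{n+1}$. Also $A_n\subset\mathcal L_1(\X)$, since $\p_n$ is $1$-Lipschitz. By (b) of \Cref{teo: piramidi - enmms}, the increasing sequence $A_n$ converges to $\mathcal L_1(\X)$ in the Painlevé--Kuratowski sense. Hence $\overline{\bigcup_n A_n}=\mathcal L_1(\X)$, using that $\mathcal L_1(\X)$ is closed by \Cref{p:LC}.

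\emph{From GH-Cauchy to Hausdorff-Cauchy.} Next we show that $(A_n)$ is Cauchy for the Hausdorff distance in $\L^0(\X)/\R$. Since the sets are nested, $\sfd_{\sf H}(A_n,A_m)=\sup_{a\in A_m}\operatorname{dist}(a,A_n)$ for $m>n$. We then need to upgrade the Gromov--Hausdorff Cauchy property of the abstract compact spaces $A_n\cong\mathcal L_1(\X_n)$ to this concrete Hausdorff bound. This is the main obstacle: a small GH distance between a compact space and a superset of (an isometric copy of) it does not in general force the inclusion to be Hausdorff-close.

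\emph{Handling the obstacle.} My first try is a covering-number argument. By the GH-Cauchy property the family $\{A_n\}$ is uniformly totally bounded: for each $\varepsilon$ there is $N(\varepsilon)$ with $A_n$ admitting an $\varepsilon$-net of cardinality at most $N(\varepsilon)$, uniformly in $n$. Then $\bigcup_nA_n$, being an increasing union, is itself totally bounded. Indeed, suppose it were not. Then there would be an infinite $\varepsilon$-separated set in the union. Any finite subset of it lies in some single $A_n$, and this contradicts the uniform bound $N(\varepsilon/2)$ on $\varepsilon/2$-nets of the $A_n$ (an $\varepsilon$-separated set has at most as many points as any $\varepsilon/2$-net). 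So $\overline{\bigcup_nA_n}=\mathcal L_1(\X)$ is totally bounded and closed in the complete space $\L^0/\R$, hence compact.

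\emph{Hausdorff convergence.} An increasing sequence of compact sets whose union has compact closure $K$ converges to $K$ in the Hausdorff sense. To see this, fix $\varepsilon>0$ and take a finite $\varepsilon$-net of $K$ drawn from $\bigcup_nA_n$. This net lies in some $A_{n_0}$, so $\sfd_{\sf H}(A_n,K)\le\varepsilon$ for all $n\ge n_0$. This yields both claims.

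As for the quantities used: $\mathcal L_1(\X_n)$ is GH-Cauchy, hence GH-precompact, and Gromov's precompactness criterion then supplies the uniform covering numbers. This is the only place where the hypothesis is used.
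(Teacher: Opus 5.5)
Your argument is correct and is essentially the paper's proof: you show the pullbacks are nested and their union is dense in $\mathcal L_1(\X)$ (part (b) of \Cref{teo: piramidi - enmms}), then run a capacity/covering-number argument in which the uniform total boundedness from GH-precompactness of $(\mathcal L_1(\X_n))_n$ forces $\mathcal L_1(\X)$ to be totally bounded, hence compact. You also spell out the final step the paper leaves implicit (an increasing sequence of compact sets whose union is dense in a compact set converges to it in the Hausdorff sense), and your paragraph on upgrading GH-Cauchy to Hausdorff-Cauchy is a harmless detour, since the covering argument never needs it.
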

A statement similar to the one above is claimed without proof in \cite[page~108]{Sh16}. We give a proof here for completeness.
\begin{proof}
    By \Cref{teo: piramidi - enmms}, the sequence $(\mathcal L_1(\X_n))_{n \in \N}$ embeds in $\L^0(\X)/\mathbb R$ via pullback, $\p_n^*\mathcal L_1(\X_n)\subset \p_{n+1}^*\mathcal L_1(\X_{n+1})$ and $\overline{\bigcup \p_n^*\mathcal L_1(\X_n)}^{\sfd_{\L^0}}=\mathcal L_1(\X)$.

    For $\varepsilon>0$ and $\Z$ a complete metric space, denote by
    \begin{equation}
        Cap(\varepsilon, \Z)\coloneqq \sup \{m: \Z \text{ contains }m\text{ disjoint } \varepsilon/2 \text {-balls}\}
    \end{equation}
    the $\varepsilon$-capacity and by 
    \begin{equation}
        Cov(\varepsilon, \Z)\coloneqq \inf\{m: \Z \text{ can be covered by }m\text{ disjoint }\varepsilon\text{-balls}\}
    \end{equation}    
    the $\varepsilon$-covering number. 
    Notice that $Cov(\varepsilon, \Z)\leq Cap(\varepsilon, \Z)$ and $Cap(\varepsilon, \Z)\leq Cov(\varepsilon/2, \Z)$ and that $\Z$ is compact iff $Cap(\varepsilon, \Z)<+\infty$ for all $\varepsilon>0$ iff $Cov(\varepsilon, \Z)<+\infty$ for all $\varepsilon>0$ (for more details, \cite[Section 3.1-3.2]{Sh16}).

    In order to prove the statement, it is enough to prove that $\mathcal L_1(\X)$ is compact, since $\p_n^*\mathcal L_1(\X_n)\subset \mathcal L_1(\X)$ for all $n$. 
    By contradiction, assume that there exists $\varepsilon$ such that $Cap(\varepsilon, \mathcal L_1(\X))=+\infty$. 
    For $M\in \N$, let $(f_j)_{j=1, \ldots, M}\subset \mathcal L_1(\X)$ be such that $\sfd_{\L^0/\mathbb R}(f_k, f_l)\geq \varepsilon$ for $l\neq k$ and $l, k\in\{1, \ldots, M\}$. 
    Now, by $\overline{\bigcup \p_n^*\mathcal L_1(\X_n)}^{\sfd_{\L^0/\mathbb R}}$, there exists $n\in \N$ and $(g_j)_{j=1, \ldots, M}\subset \mathcal L_1(\X_n)$ such that $\sfd_{\L^0/\mathbb R}(f_l, g_l)\leq \varepsilon/4$. 
    In particular, $\sfd_{\L^0/\mathbb R}(g_k, g_l)\geq \varepsilon/2$ for $l\neq k$ and $l, k\in\{1, \ldots, M\}$. Thus $Cap(\varepsilon/2, \X_n)\geq M$.

    Summing up, $\limsup_n Cap(\varepsilon/2, \X_n)=+\infty$. This is a contradiction: by Gromov-Hausdorff compactness, the spaces $\X_n$ are equi-uniformly bounded \cite[Section 7.4]{BurBurIva01} (see also \cite[Lemma 3.12]{Sh16}.
\end{proof}

As a consequence, we get this description of concentrated pyramids and concentrated spaces.
\begin{theorem}\label{thm: concentrated iff concentrated}
    Let $\mathcal P$ be a pyramid and $\X$ an emm-space such that $\overline{\mathcal P_\X}^\square=\mathcal P$.
    Then $\mathcal P$ is concentrated if and only if $\X$ is a concentrated emm-space.
\end{theorem}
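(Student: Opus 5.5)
We first reduce to an inverse limit. By \Cref{lemma: inverse system}, $\X$ is emm-isomorphic to some $\varprojlim \Y_n$. Both $\overline{\mathcal P_\X}^\square$ and the concentration property are invariant under emm-isomorphism, the latter because an isomorphism induces an isometry of $\mathcal L_1$. Hence we may assume $\X=\varprojlim \X_n$ with projections $\sfp_n$. By (a) of \Cref{teo: piramidi - enmms}, $\mathcal P=\overline{\bigcup_n\mathcal P_{\X_n}}^\square$ and $\mathcal P_{\X_n}\to\mathcal P$ in $\tau_\Pi$, so $(\X_n)$ is asymptotic to $\mathcal P$. Moreover, $\X_n\prec\X_{n+1}$ and each $\X_n\in\mathcal P$.

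\emph{($\mathcal P$ concentrated $\Rightarrow$ $\X$ concentrated).} Since $\X_n\in\mathcal P$, the family $\{\mathcal L_1(\X_n)\}_n$ is Gromov--Hausdorff precompact. We then want to apply \Cref{lemma: hausdorff convergence}, which needs the sequence to be $\sfd_{\sf GH}$-Cauchy. Precompactness alone does not give this, so the plan is to avoid the Cauchy hypothesis. Inspecting the proof of \Cref{lemma: hausdorff convergence}, it only uses that $\sup_n Cap(\varepsilon/2,\mathcal L_1(\X_n))<\infty$ for each $\varepsilon$. Together with density of $\bigcup_n\sfp_n^*\mathcal L_1(\X_n)$ in $\mathcal L_1(\X)$ (part (b) of \Cref{teo: piramidi - enmms}), this forces $Cap(\varepsilon,\mathcal L_1(\X))<\infty$ for all $\varepsilon$. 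The uniform capacity bound follows from GH-precompactness by the Gromov--Burago--Burago--Ivanov characterisation. Since $\mathcal L_1(\X)$ is closed (\Cref{p:LC}) and hence complete, it is compact. Alternatively, GH-precompactness gives a convergent subsequence, which is Cauchy, and we can pass to a subsequence of the inverse system, which has the same inverse limit.

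\emph{($\X$ concentrated $\Rightarrow$ $\mathcal P$ concentrated).} By \Cref{prop: the pyramid is closed}, $\mathcal P=\mathcal P_\X$. Take $\Y\in\mathcal P$ with a measure-preserving a.e.\ $1$-Lipschitz map $\varphi:\X\to\Y$. Pullback $\varphi^*$ then maps $\Lip_1(\Y,\mm_\Y)$ into $\Lip_1(\X,\mm_\X)$. Because $\varphi$ is measure preserving, $\sfd_{\L^0(\X)}(\varphi^*f,\varphi^*g)=\sfd_{\L^0(\Y)}(f,g)$, and this persists after quotienting by constants. Thus $\mathcal L_1(\Y)$ embeds isometrically into the compact space $\mathcal L_1(\X)$. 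Every element of $\{\mathcal L_1(\Y):\Y\in\mathcal P\}$ is therefore isometric to a subset of a fixed compact metric space. Such a family is GH-precompact, since it has uniformly bounded diameter and uniformly bounded covering numbers $Cov(\varepsilon,\cdot)\le Cov(\varepsilon/2,\mathcal L_1(\X))$. Hence $\mathcal P$ is concentrated.

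The main obstacle is the first direction, specifically getting from GH-precompactness to compactness of $\mathcal L_1(\X)$ without a Cauchy hypothesis. I would handle it through the uniform capacity bound as above, which is exactly the mechanism inside the proof of \Cref{lemma: hausdorff convergence}.
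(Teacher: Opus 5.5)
Your proposal is correct and follows the paper's proof. One direction uses \Cref{prop: the pyramid is closed} to get $\mathcal P=\mathcal P_\X$ and embeds each $\mathcal L_1(\Y)$ isometrically into the compact $\mathcal L_1(\X)$; the other passes to an inverse limit $\X\cong\varprojlim\X_n$ with $\X_n\in\mathcal P$ and applies \Cref{lemma: hausdorff convergence}. You are also right that only GH-precompactness, through uniform capacity bounds, is needed there rather than the stated Cauchy hypothesis, and that is exactly what the proof of that lemma uses.
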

\begin{proof}
    Assume that $\X$ is concentrated and let us prove that $\mathcal P$ is concentrated. Recalling \Cref{d:Asy}, we need to prove that  the family~$(\mathcal L_1(\Y))_{Y \in \mathcal P}$ is Gromov--Hausdorff-precompact. 
    
    By \Cref{prop: the pyramid is closed}, $\mathcal P=\overline{\mathcal P_\X}^\square=\mathcal P_\X$.  
    Since $\X\succ \Y$ for all $\Y \in \mathcal P=\mathcal P_\X$, the spaces $\mathcal L_1(\Y)$ can be embedded isometrically into $\mathcal L_1(\X)$ as a family of compact subsets. 
    By the compactness of $\mathcal L_1(\X)$, the family~of the compact spaces~$(\mathcal L_1(\Y))_{Y \in \mathcal P}$ embedded into $\mathcal L_1(\X)$ is Gromov--Hausdorff-precompact, thus $\mathcal P$ is concentrated.

    Conversely, assume that $\mathcal P=\overline{\mathcal P_\X}^\square$ is concentrated, and we prove that $\X$ is concentrated, i.e., $\mathcal L_1(\X)$ is compact in $\L^0(\X)$. By \Cref{lemma: inverse system}, we can take an inverse system~$\X_n$  such that $\X \cong \varprojlim\X_n$.
    In particular, $\X\succ \X_n\in \mathcal P$ for all $n\in \N$.    
    As $\mathcal P$ is concentrated, $(\mathcal L_1(\X_n))_{n\in \N}$ is Gromov--Hausdorff-precompact. By \Cref{lemma: hausdorff convergence}, $$\mathcal L_1(\X)=\text{GH-}\lim_n \mathcal L_1(\X_n)$$ and the limit~$\mathcal L_1(\X)$ is compact.
\end{proof}
Finally, for concentrated emm-spaces the Lipschitz order can be read through pyramids and it becomes a partial order. 
\begin{theorem}\label{theorem: isomorphism from pyramid}
    Let $\X$ be a concentrated emm-space and let $\Y$ be an emm-space such that $\mathcal P_\Y\subset \mathcal P_\X$. 
    Then $\Y$ is concentrated and $\Y\prec \X$. 
    In particular, if $\mathcal P_\X=\mathcal P_\Y$, then $\X\cong\Y$.
\end{theorem}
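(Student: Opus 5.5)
We realise $\Y$ as an inverse limit and pass from the finite approximations to $\Y$ itself by the compactness available on $\X$. First, concentration of $\Y$. By \Cref{lemma: inverse system} we may replace $\Y$ by an emm-isomorphic inverse limit $\varprojlim \Y_n$ of mm-spaces; pyramids, $\mathcal L_1$ and the relation $\prec$ are invariant under emm-isomorphism. Each $\Y_n\prec\Y$, so $\Y_n\in\mathcal P_\Y\subset\mathcal P_\X$. Since $\X$ is concentrated, \Cref{prop: the pyramid is closed} and \Cref{thm: concentrated iff concentrated} show that $\mathcal P_\X$ is a concentrated pyramid. Hence $(\mathcal L_1(\Y_n))_n$ is Gromov--Hausdorff precompact. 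Because $\sfp_n^*\mathcal L_1(\Y_n)$ is increasing and dense in $\mathcal L_1(\Y)$ by \Cref{teo: piramidi - enmms}(b), the capacity argument of \Cref{lemma: hausdorff convergence} gives compactness of $\mathcal L_1(\Y)$. That argument uses only a uniform capacity bound, which precompactness supplies, so we do not need the sequence to be GH-Cauchy.

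Next, we build a map $\X\to\Y$. Fix an adapted Kuratowski embedding $\iota:\Y\to(\R^\infty,\tau^\infty,\ell_\infty)$ with maps $\iota_n$ and projections $\proj_n$ as in \Cref{prop: embedding}. Since $\Y_n\in\mathcal P_\X$, there are measure-preserving a.e.~$1$-Lipschitz maps $q_n:\X\to\Y_n$. For each coordinate $j\in I_n$, the function $\iota_n(q_n)_j$ belongs to $\Lip_1(\X,\mm_\X)$, and its law equals the law of $h_j\circ\iota$ under $\mm_\Y$, which is fixed once $n\ge$ the index at which $j$ enters. By \Cref{lemma: compactness of 1-Lipschitz functions}, each coordinate sequence is precompact in $\L^0(\X)$. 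A diagonal extraction then gives $\iota_n\circ q_n\to T$ in $\L^0(\X;\R^\infty)$ coordinatewise; alternatively, we invoke \Cref{lemma: L^0 compactness of maps} with the coordinate functions, since the laws are tight.

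We then check the properties of the limit $T$. Its law is $\iota_\#\mm_\Y$. Indeed, the law of $\proj_m\circ T$ is the limit of the laws of $\proj_m\circ\iota_n\circ q_n=\iota_m\circ p_{m,n}\circ q_n$, and $p_{m,n}\circ q_n$ is measure-preserving onto $\Y_m$, so this law equals $(\iota_m\circ \sfp_m)_\#\mm_\Y=(\proj_m\circ\iota)_\#\mm_\Y$; these finite-dimensional marginals determine the measure. For the Lipschitz bound, pass to an a.e.~convergent subsequence on a full-measure set $E$, intersect the sets where each $q_n$ is $1$-Lipschitz, and use that $\ell_\infty$ is the supremum of the coordinatewise differences. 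This gives $\ell_\infty(T(x),T(y))\le\sfd_\X(x,y)$ on $E\times E$.

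The main obstacle is returning from $\R^\infty$ to $\Y$. Since $T_\#\mm_\X=\iota_\#\mm_\Y$ is concentrated on $\iota(\Y)$, which is closed by \Cref{prop: embedding}, we set $\varphi:=\iota^{-1}\circ T$. This map is defined $\mm_\X$-a.e., and it is measurable because $\iota$ is a continuous injective map of Polish spaces with Borel inverse on its image. It is also measure-preserving, and $\iota$ is an isometry onto its image, so $\varphi$ is a.e.~$1$-Lipschitz and $\Y\prec\X$. For the final claim, applying this in both directions gives $\X\prec\Y\prec\X$, with both spaces concentrated. Compose the two maps to get $\Phi:\X\to\X$, which is measure-preserving and a.e.~$1$-Lipschitz, so $\Phi^*\Lip_1(\X)\subset\Lip_1(\X)$. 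The pullback $\Phi^*$ is an $\L^0$-isometry, and it descends to an isometric self-embedding of the compact metric space $\mathcal L_1(\X)$. Such a map is surjective, so $\Phi^*\Lip_1=\Lip_1$ modulo constants, hence exactly. Therefore $\Phi$ is an isomorphism by \Cref{cor: characterisation isomorphisms}. Then $\varphi$ is an isometry on a full-measure set, because $\sfd_\X\le_\square\Phi^*\sfd_\X\le_\square\varphi^*\sfd_\Y\le_\square\sfd_\X$, so $\X\cong\Y$.
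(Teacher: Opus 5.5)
Your proof is correct, and its core is the paper's argument. You realise $\Y$ inside $(\R^\infty,\tau^\infty,\ell_\infty)$, use finite-dimensional members of $\mathcal P_\Y\subset\mathcal P_\X$ to produce a.e.~$1$-Lipschitz measure-preserving maps out of $\X$, and extract a coordinatewise $\L^0$-limit with \Cref{lemma: compactness of 1-Lipschitz functions}. You then finish with the surjectivity of an isometric self-embedding of the compact space $\mathcal L_1(\X)$ together with the reconstruction of $\sfd_\X$, which is what \Cref{cor: characterisation isomorphisms} packages.

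The differences are in the packaging, and the paper's version is lighter.

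\emph{Approximating spaces.} The paper does not use the inverse-limit structure of $\Y$. It takes any Kuratowski embedding, identifies $\Y$ with $(\R^\infty,\tau^\infty,\ell_\infty,\iota_\#\mm_\Y)$, and approximates by the coordinate truncations $\pi_n$. The point is that $E_n:=\pi_n(\R^\infty)$, with $\ell_\infty$ and $(\pi_n)_\#\mm_\Y$, is an mm-space with $E_n\prec\Y$. The limit map then takes values in $\Y$ itself. So the paper needs neither the compatibility $\proj_m\circ\iota_n=\iota_m\circ p_{m,n}$ nor the closedness and Borel inverse of $\iota$ that your route relies on.

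\emph{Concentration of $\Y$.} The paper obtains this after proving $\Y\prec\X$: pulling back by the limit map embeds $\mathcal L_1(\Y)$ isometrically into the compact space $\mathcal L_1(\X)$. Your preliminary route goes through the concentration of $\mathcal P_\X$ and the capacity argument of \Cref{lemma: hausdorff convergence}. It is valid, and you are right that only a uniform capacity bound is used there. It is, however, redundant, since your map $\varphi$ gives the same embedding $\varphi^*\colon\mathcal L_1(\Y)\to\mathcal L_1(\X)$.

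A small slip: the law of the $j$-th coordinate of $\iota_n\circ q_n$ is that of $h_j=\mathrm{pr}_j\circ\iota$ under $\mm_\Y$, not that of $h_j\circ\iota$.
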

\begin{proof}
    Thanks to \Cref{prop: embedding} and up to emm-isomorphism, we can assume that $\Y$ is embedded in $\mathbb R^\infty$. 
    Let $\pi_n:\mathbb R^\infty\to \mathbb R^\infty$ be such that
    \begin{equation}
        (\pi_n(x))_j=\begin{cases}
            x_j\quad&\text{if }j\leq n,\\
            0&\text{otherwise}.
        \end{cases}
    \end{equation}
    Then, if $E_n=\pi_n(\mathbb R^\infty)$, the space~$(E_n, \tau^\infty,  \ell_\infty, (\pi_n)_\#\mm_\Y )$ is not only an emm-space but an mm-space, since the restriction topology coincides with the one induced by the metric.
    Moreover, we have $(E_n, \ell_\infty, (\pi_n)_\#\mm_\Y )\prec \Y$. 
    Thus, since $E_n\subset \mathbb R^\infty$, there exists a sequence $n\mapsto p_n:\X \to \mathbb R^\infty$ of Borel maps which are 1-Lipschitz on a full measure set and such that 
    \begin{equation} \label{e:pmpm}
    (p_n)_\#\mm_\X =(\pi_n)_\#\mm_\Y.
    \end{equation}

    Let ${\rm pr}_{j}: \R^\infty \to \R$ be the projection to the $j$-th coordinate~${\rm pr}_j(x)=x_j$ for $x=(x_n)_{n \in \N}$. The family $\{{\rm pr}_j\}_{j \in \N}$ generates the product topology in~$\R^\infty$. 
    Recalling that $p_n$ is $1$-Lipschitz measure-preserving as seen above and using the equality~\eqref{e:pmpm}, for each $j \in \N$  the sequence $n\mapsto {\rm pr}_j\circ p_n$ is in~$\Lip_1(\X, \mm_\X)$ and satisfies 
    \begin{equation}
        ({\rm pr}_j\circ p_n)_\#\mm_\X=({\rm pr}_j)_\#(\pi_n)_\#\mm_\Y.
    \end{equation}
    By continuity of ${\rm pr}_j$ for all~$j\in \N$, the RHS converges weakly to $({\rm pr}_j)_\# \mm_\X$, so it is tight.
    In particular, by \Cref{lemma: compactness of 1-Lipschitz functions}, for all $j\in \N$ the sequence $n\mapsto {\rm pr}_j\circ p_n$ is precompact in $\L^0(\X)$.  
    By \Cref{lemma: L^0 compactness of maps} we obtain the existence of the limit map~$p:\X\to \mathbb R^\infty$ such that, up to extracting a nonrelabeled subsequence, $p_n\to p$ in $\L^0$.   
    In particular, $p$ is 1-Lipschitz on a full measure set and $p_\#\mm_\X=\mm_\Y$, yielding 
    $$\Y\prec \X.$$
    Recall that $\mathcal L_1(\X)$ is compact due to the hypothesis that $\X$ is concentrated. 
    Isometrically embedding the complete metric space~$\mathcal L_1(\Y)$ into the compact metric space $\mathcal L_1(\X)$ via pull-back with $p$, we conclude that $\mathcal L_1(\Y)$ is compact. Thus, $\Y$ is concentrated.

    Finally, assume that $\mathcal P_\Y=\mathcal P_\X$. 
    Then there is a measure preserving map $p':\Y\to \X$ which is 1-Lipschitz on a full measure set. 
    Consider $q=p'\circ p:\X\to \X$, which is measure preserving and 1-Lipschitz on a full measure set. 
    Notice that the pullback (quotient) map~$[q^*]: \mathcal L_1(\X) \to \mathcal L_1(\X)$ is an isometric embedding endowed with the quotient Ky Fan distance. By compactness it is surjective. This implies that the pullback $q^*:\Lip_1(\X, \mm_\X)\to\Lip_1(\X, \mm_\X)$ is also surjective.    
    Since 
    \begin{equation}
        q^*\sfd_\X \geq_\square |q^*f(\cdot)-q^*f(\cdot\cdot)|
    \end{equation}
    for all $f\in \Lip_1(\X, \mm_\X)$, the surjectivity implies that 
    \begin{equation}
        q^*\sfd_\X \geq_\square |f(\cdot)-f(\cdot\cdot)|
    \end{equation}
    for all $f\in\Lip_1(\X, \mm_\X)$. 
    Thus, by \Cref{prop: reconstruction}, $q^*\sfd_\X\geq_\square \sfd_\X$. But, by the 1-Lipschitz property of $q$, $q^*\sfd_\X\leq_\square \sfd_\X$. Thus, $q$ is an emm-isomorphism. 
    This directly implies that $p$ and $p'$ are isomorphisms, thus concluding the proof.
\end{proof}
\begin{corollary}
    Let $\X$ be a concentrated emm-space.
    Let $\X_n$ be an inverse system such that $\mathcal P_{\X_n}\to \mathcal P_\X$. 
    Then $\X\cong \varprojlim\X_n$.
\end{corollary}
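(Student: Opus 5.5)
The statement follows by combining the representation results for inverse limits with the uniqueness result \Cref{theorem: isomorphism from pyramid}. The plan is to show that both $\X$ and $\Y:=\varprojlim \X_n$ represent the same pyramid, and then use that $\X$ is concentrated.

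First, by \Cref{teo: piramidi - enmms}(a) applied to the inverse limit $\Y=\varprojlim\X_n$, we have $\mathcal P_{\X_n}\xrightarrow{\tau_\Pi}\overline{\mathcal P_\Y}^\square$. By hypothesis $\mathcal P_{\X_n}\to\mathcal P_\X$. Since $\tau_\Pi$ is metrisable, and in particular Hausdorff, limits are unique, so $\overline{\mathcal P_\Y}^\square=\mathcal P_\X$. Since $\X$ is concentrated, \Cref{prop: the pyramid is closed} gives $\mathcal P_\X=\overline{\mathcal P_\X}^\square$, which is therefore a concentrated pyramid by \Cref{thm: concentrated iff concentrated}. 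Applying \Cref{thm: concentrated iff concentrated} again, now to $\Y$ with $\overline{\mathcal P_\Y}^\square=\mathcal P_\X$, we get that $\Y$ is a concentrated emm-space. Then \Cref{prop: the pyramid is closed} for $\Y$ yields $\mathcal P_\Y=\overline{\mathcal P_\Y}^\square=\mathcal P_\X$.

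Finally, \Cref{theorem: isomorphism from pyramid} with $\mathcal P_\X=\mathcal P_\Y$ and $\X$ concentrated gives $\X\cong\Y=\varprojlim\X_n$.

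The only point needing care is the identification of the pyramid of the inverse limit with its $\square$-closure. We cannot simply use $\mathcal P_\Y$ in place of $\overline{\mathcal P_\Y}^\square$ until we know that $\Y$ is concentrated, and this is exactly why \Cref{thm: concentrated iff concentrated} is invoked before \Cref{prop: the pyramid is closed}. Apart from this ordering issue, the argument is a direct chaining of the earlier results.
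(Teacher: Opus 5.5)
Your argument is correct and is essentially the intended one: the hypothesis together with \Cref{teo: piramidi - enmms}(a) (equivalently \Cref{c:IVI}) identifies $\overline{\mathcal P_{\varprojlim\X_n}}^\square$ with $\mathcal P_\X$, and then the concentration of $\X$ plus \Cref{theorem: isomorphism from pyramid} give the emm-isomorphism. Your detour through \Cref{thm: concentrated iff concentrated} is harmless but not needed: since $\mathcal P_\Y\subset\overline{\mathcal P_\Y}^\square=\mathcal P_\X$, \Cref{theorem: isomorphism from pyramid} already shows directly that $\Y$ is concentrated.
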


As a corollary, we have the equivalence between concentrated emm-spaces and concentrated pyramids. 
\begin{corollary}[Concentrated emm-spaces and pyramids] \label{c:ECP} The following are equivalent for an emm-space~$\X$$:$
\begin{itemize}
    \item $\X$ is concentrated, i.e., $\mathcal L_1(\X)$ is compact;
    \item the pyramid $\mathcal P=\overline{\mathcal P_\X}^{\square}$ is concentrated.
\end{itemize}
\end{corollary}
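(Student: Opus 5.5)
This corollary is essentially a restatement of \Cref{thm: concentrated iff concentrated}, specialised to the pyramid $\mathcal P\coloneqq\overline{\mathcal P_\X}^{\square}$. The plan is simply to check that this choice satisfies the hypotheses of that theorem and then read off both directions.

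First, by the proposition following \Cref{d:PRM}, the $\square$-closure $\overline{\mathcal P_\X}^\square$ of $\mathcal P_\X$ is a pyramid for every emm-space $\X$. Setting $\mathcal P\coloneqq\overline{\mathcal P_\X}^\square$, the hypothesis $\overline{\mathcal P_\X}^\square=\mathcal P$ of \Cref{thm: concentrated iff concentrated} then holds trivially. That theorem gives directly that $\mathcal P$ is concentrated if and only if $\X$ is a concentrated emm-space, i.e.\ if and only if $\mathcal L_1(\X)$ is compact. This is exactly the asserted equivalence.

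For completeness, the two implications inside \Cref{thm: concentrated iff concentrated} run as follows.
\begin{itemize}
\item \emph{Compactness of $\mathcal L_1(\X)$ implies concentration of the pyramid.} \Cref{prop: the pyramid is closed} shows $\mathcal P_\X$ is already $\square$-closed. Each $\mathcal L_1(\Y)$ with $\Y\prec\X$ embeds isometrically into the compact space $\mathcal L_1(\X)$ by pullback, which gives Gromov--Hausdorff precompactness of the family.
\item \emph{Concentration of the pyramid implies compactness of $\mathcal L_1(\X)$.} One represents $\X\cong\varprojlim\X_n$ by \Cref{lemma: inverse system}, with each $\X_n\in\mathcal P$, and applies \Cref{lemma: hausdorff convergence}.
\end{itemize}

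I do not expect any genuine obstacle here. The only point to watch is that concentration of an emm-space is invariant under emm-isomorphism, which is needed when passing to the inverse-limit representative. This holds because emm-isomorphisms induce isometries of the associated spaces $\mathcal L_1$, as noted in the remark after the definition of concentrated emm-spaces. So the corollary follows immediately.
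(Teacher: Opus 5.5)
Your proposal is correct and matches the paper: the corollary is \Cref{thm: concentrated iff concentrated} applied to $\mathcal P=\overline{\mathcal P_\X}^\square$, which is a pyramid by the proposition following \Cref{d:PRM}, and your sketch of the two directions follows the paper's proof of that theorem. One small remark: \Cref{lemma: hausdorff convergence} is stated for a GH-Cauchy sequence, but its proof only uses the uniform capacity bound that comes from GH-precompactness, and that precompactness is exactly what concentration of $\mathcal P$ supplies.
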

\subsection{A characterisation of concentrated metric measure spaces}
The goal of this section is to give a characterisation of extended metric measure spaces to be concentrated. 
\begin{definition}[Mm-factor approximation]\label{def:obs}
An emm-space $(\X,\tau,\mssd,\mfm)$ satisfies \emph{mm-factor approximation} if for every $\varepsilon>0$
there exist a complete separable metric space $(K_\varepsilon,\rho_\varepsilon)$ and a Borel
$1$-Lipschitz map $\pi_\varepsilon:(\X,\mssd)\to(K_\varepsilon,\rho_\varepsilon)$ such that
for every $f\in\mathrm{Lip}_1(\X, \mm_\X)$ there exist $g\in\mathrm{Lip}_1(K_\varepsilon)$
such that 
\begin{equation}\label{eq:Aeps}
\mfm\big(|f-(g\circ\pi_\varepsilon)|>\varepsilon\big)\le \varepsilon.
\tag{$A_\varepsilon$}
\end{equation}
\end{definition}

\begin{proposition}\label{thm:equiv}
Let $(\X,\tau,\mssd,\mfm)$ be an emm-space. The following are equivalent:
\begin{enumerate}[label=(\roman*)]
\item \label{equiv:1} $\mcL_1(\X)$ is compact (i.e., $\X$ is concentrated).
\item \label{equiv:2} $(\X,\tau,\mssd,\mfm)$ satisfies the mm-factor approximation~\eqref{eq:Aeps}.
\end{enumerate}
\end{proposition}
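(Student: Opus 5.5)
We prove the two implications separately. Throughout we may replace $\X$ by an emm-isomorphic copy: both $\mathcal L_1(\X)$ and the mm-factor approximation are invariant under emm-isomorphism, since an isomorphism is 1-Lipschitz on a full-measure set and pulls back $\Lip_1$ bijectively. By \Cref{lemma: inverse system} we therefore assume $\X=\varprojlim \X_n$ with projections $\p_n$.

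\emph{(i)$\Rightarrow$(ii).} Assume $\mathcal L_1(\X)$ is compact and fix $\varepsilon>0$. By \Cref{teo: piramidi - enmms}(b), $\p_n^*\mathcal L_1(\X_n)$ converges to $\mathcal L_1(\X)$ in the Painlev\'e--Kuratowski sense in $\L^0(\X)/\R$. These sets increase in $n$, because $\p_m=p_{m,n}\circ\p_n$ and $p_{m,n}$ is 1-Lipschitz, so every $\p_m^*g$ equals $\p_n^*(g\circ p_{m,n})$. Cover the compact set $\mathcal L_1(\X)$ by finitely many $\varepsilon/2$-balls centred at $[f_1],\dots,[f_M]$. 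By \Cref{proposition: LAP}, each $f_i$ is $\varepsilon/2$-close in $\sfd_{\L^0}$ to some $g_i\circ\p_{n_i}$ with $g_i\in\Lip_1(\X_{n_i})$. Take $n=\max_i n_i$ and compose with the bonding maps so that all $g_i$ live on $\X_n$. Then every class in $\mathcal L_1(\X)$ lies within $\varepsilon$ of $\p_n^*\mathcal L_1(\X_n)$ in the quotient distance. Set $K_\varepsilon=\X_n$ and $\pi_\varepsilon=\p_n$; this map is Borel and 1-Lipschitz everywhere on the inverse limit. It remains to remove the quotient. Given $f\in\Lip_1(\X,\mm_\X)$, pick $g$ with $\sfd_{\L^0}(f-c,g\circ\p_n)\le\varepsilon$ for some $c\in\R$, and replace $g$ by $g+c$, which is still in $\Lip_1(\X_n)$. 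Because the quotient distance is an infimum over constants, we must either work with $2\varepsilon$ or note that the infimum is attained. This gives $(A_\varepsilon)$. The Ky Fan normalisation matches \eqref{eq:Aeps} up to replacing $\varepsilon$ by a slightly smaller value, which costs nothing.

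\emph{(ii)$\Rightarrow$(i).} By \Cref{p:LC}, $\mathcal L_1(\X)$ is closed in the complete space $\L^0/\R$, so it suffices to prove total boundedness. Fix $\varepsilon$ and take $(K_\varepsilon,\pi_\varepsilon)$. Let $\nu=(\pi_\varepsilon)_\#\mm$; then $\mathcal K:=(K_\varepsilon,\rho_\varepsilon,\nu)$, restricted to the support, is an mm-space. Here Borel measurability of $\pi_\varepsilon$ is needed to push $\mm$ forward to a Borel measure. By \Cref{p:LC} applied to $\mathcal K$, $\mathcal L_1(\mathcal K)$ is compact, so it is covered by finitely many $\varepsilon$-balls in $\L^0(\nu)/\R$. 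Pullback $h\mapsto h\circ\pi_\varepsilon$ is an isometry $\L^0(\nu)\to\L^0(\mm)$. A function $g\in\Lip_1(K_\varepsilon)$ restricts to an element of $\Lip_1(\mathcal K)$, and its pullback depends only on its $\nu$-class. Hence $\pi_\varepsilon^*\mathcal L_1(\mathcal K)$ is a compact subset of $\L^0(\mm)/\R$. By $(A_\varepsilon)$, every $[f]\in\mathcal L_1(\X)$ lies within $\varepsilon$ of it. Therefore $\mathcal L_1(\X)$ is covered by finitely many $2\varepsilon$-balls, and since $\varepsilon$ was arbitrary it is totally bounded, hence compact.

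The expected main obstacle is the step in (i)$\Rightarrow$(ii) that upgrades Painlev\'e--Kuratowski convergence to a uniform Hausdorff approximation at a single finite level $n$. This is where compactness of $\mathcal L_1(\X)$ is used, through a finite net combined with the monotonicity of $\p_n^*\mathcal L_1(\X_n)$. One could alternatively invoke \Cref{lemma: hausdorff convergence}, but the finite-net argument above avoids needing the Gromov--Hausdorff Cauchy hypothesis.
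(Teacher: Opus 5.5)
Your proof is correct, and for (ii)$\Rightarrow$(i) it is essentially the paper's argument. Both reduce to compactness of $\mathcal L_1$ of the mm-space $(K_\varepsilon,(\pi_\varepsilon)_\#\mm)$ via \Cref{p:LC}, combined with the isometric pullback $h\mapsto h\circ\pi_\varepsilon$. You conclude by total boundedness plus closedness in the complete space $\L^0/\mathbb R$. The paper instead extracts a diagonal subsequence over $\varepsilon_j=2^{-j}$ and shows it is Cauchy. These two endings are interchangeable.

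For (i)$\Rightarrow$(ii) your route is genuinely different. The paper never passes to an inverse limit. It takes a finite $\varepsilon$-net $[f_1],\dots,[f_N]$ of $\mathcal L_1(\X)$ and uses the net itself as the factor: $\pi_\varepsilon=(f_1,\dots,f_N)\colon\X\to(\mathbb R^N,\ell_\infty)$, with $g$ a coordinate projection shifted by a constant. This is completely elementary and needs neither \Cref{lemma: inverse system} nor \Cref{proposition: LAP}. In particular, the ``main obstacle'' you single out, upgrading Painlev\'e--Kuratowski convergence to a uniform approximation at one level, simply does not arise there.

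What your argument buys in exchange is two things:
\begin{itemize}
\item The factors can be taken to be the canonical levels $\X_n$ of any inverse-limit representation.
\item As a by-product, $\p_n^*\mathcal L_1(\X_n)$ converges in the Hausdorff sense to $\mathcal L_1(\X)$ whenever $\mathcal L_1(\X)$ is compact. This is a converse to \Cref{lemma: hausdorff convergence} that needs no Gromov--Hausdorff Cauchy hypothesis.
\end{itemize}

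One caveat on the ``everywhere 1-Lipschitz'' advantage you mention: it holds only on the inverse-limit model. Transferring back along an emm-isomorphism $\Phi$ gives $\p_n\circ\Phi$, which is 1-Lipschitz only on a full-measure set. So your claim that the mm-factor property is invariant under emm-isomorphism requires reading ``1-Lipschitz'' in the a.e.\ sense. That is exactly the regularity of the paper's own $(f_1,\dots,f_N)$, so this is not a gap relative to the paper.
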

\begin{proof} {\it \ref{equiv:2} $\implies$ \ref{equiv:1}.}
Assume \ref{equiv:2}. Let $([f_n])_{n\in \N}$ be any sequence in $\mcL_1(\X)$, with representatives $f_n\in\mathrm{Lip}_1(\X, \mm_\X)$, and fix $\varepsilon_j\coloneqq2^{-j}$.
For each $j$, apply \ref{equiv:2} to obtain an mm-factor $\pi_j:\X\to K_j$, 
a function $g_{n,j}\in \mathrm{Lip}_1(K_j)$  such that
\[
\mfm\bigl(|f_n-(g_{n,j}\circ\pi_j)|>\varepsilon_j\bigr)\le \varepsilon_j.
\]
Equivalently, 
\begin{equation}\label{eq:approxKF}
\sfd_{\L^0}([f_n],\pi_j^*[g_{n,j}])\le \varepsilon_j .
\end{equation}
Note that, by \Cref{p:LC}, the space~$\mathcal L_1(K_j)$ is compact in $\L^0(K_j, \nu)$ for every Borel probability~$\nu$ on $K_j$, in particular, we take the pushforward measure $(\pi_j)_\#\mm$.
For each fixed $j$, the compactness of~$\mcL_1(K_j)$  implies
that the sequence $[g_{n,j}]$ has a convergent subsequence. Extract inductively a subsequence of indices
$n\mapsto n^{(j)}$ such that $[g_{n^{(j)},j}]$ converges in $\mcL_1(K_j)$, and make $n^{(j+1)}$  a subsequence of $n^{(j)}$.
By the diagonalisation argument, we obtain indices $n_k\coloneqq n^{(k)}_k$ such that, for every fixed $j$,
\begin{equation}\label{eq:conv_g}
[g_{n_k,j}]\quad\text{converges in }\mcL_1(K_j)\text{ as }k\to\infty.
\end{equation}

We claim $[f_{n_k}]$ is Cauchy in $\mcL_1(\X)$. Fix $\eta>0$ and choose $j$ with $2\varepsilon_j<\eta/3$.
By \eqref{eq:conv_g}, there exists $L$ such that for $k,\ell\ge L$,
\[
\sfd_{\L^0}([g_{n_k,j}],[g_{n_\ell,j}])<\frac\eta3.
\]
Applying the fact that the pull-back by measure-preserving map preserves $\sfd_{\L^0}$, we have 
\[
\sfd^{\mfm}_{\L^0}(\pi_j^*[g_{n_k,j}],\pi_j^*[g_{n_\ell,j}])=\sfd^{(\pi_j)_\# \mfm}_{\L^0}([g_{n_k,j}],[g_{n_\ell,j}])<\eta/3,
\]
where we use the notation~$\sfd^{\mfm}_{\L^0}$ and~$\sfd^{(\pi_j)_\# \mfm}_{\L^0}$ to distinguish the Ky Fan distance with the reference measure~$\mfm$ from the one with~$(\pi_j)_\# \mfm$ (but we drop it in the rest of arguments as it is clear from the context).
Now, by the triangle inequality and \eqref{eq:approxKF}, we have
\begin{align}
&d_{\L^0}([f_{n_k}],[f_{n_\ell}])
\\
&\le d_{\L^0}([f_{n_k}],\pi_j^*[g_{n_k,j}])
 + d_{\L^0}(\pi_j^*[g_{n_k,j}],\pi_j^*[g_{n_\ell,j}])
 + d_{\L^0}(\pi_j^*[g_{n_\ell,j}],[f_{n_\ell}])
 \\
&< \varepsilon_j + \eta/3 + \varepsilon_j
\\
&<\eta.
\end{align}
Thus $[f_{n_k}]$ is Cauchy. Since $\mathcal L_1(\X)$ is closed in $\L^0$ (\Cref{p:LC}), the sequence~$[f_{n_k}]$ converges in $\mcL_1(\X)$. Hence every sequence has a convergent subsequence, i.e.\ $\mcL_1(\X)$ is compact.

\bigskip
{\it \ref{equiv:1} $\implies$ \ref{equiv:2}.} Assume~\ref{equiv:1}. Fix $\varepsilon>0$.
Since $\mcL_1(\X)$ is compact, it is totally bounded, so there exist finitely many classes
$[f_1],\dots,[f_N]\in \mcL_1(\X)$ with $N=N(\e)$ such that for every $[f]\in \mcL_1(\X)$ there exists $1 \le i\le N$ satisfying
\begin{equation}\label{eq:net}
\sfd_{\L^0}([f],[f_i])\le \varepsilon.
\end{equation}
Now take $f\in\mathrm{Lip}_1(\X, \mm_\X)$. By \eqref{eq:net}, we can choose $i$ such that $\sfd_{\L^0}([f],[f_i])\le\varepsilon$.
By definition of $\sfd_{\L^0}$, there exists $c\in\mathbb{R}$ such that
\begin{equation}\label{eq:close_fi}
\mfm\bigl(\{|f-f_i-c|>\delta\}\bigr)\le \delta.
\end{equation}
Let $K_\e:=\R^N$ and $\rho_\varepsilon(x,y)=\|x-y\|_\infty$. 
Define the Borel map
\[
\pi_\varepsilon:\X\to K_\varepsilon,\qquad \pi_\varepsilon(x)\coloneqq( f_1(x),\dots,f_N(x)).
\]
Take $g_i:K_\varepsilon\to\mathbb{R}$ be the projection to the $i$-th coordinate $g_i(z)=z_i$. 
Then $g_i\in\mathrm{Lip}_1(K_\varepsilon)$. Taking $\tilde{g}_i:=g_i-c$ and 
using \eqref{eq:close_fi}, we have
\begin{align}
\mfm\bigl(\{|f-(\tilde g_i\circ\pi_\varepsilon)|>\varepsilon\}\bigr) = \mfm\bigl(\{|f-f_i-c|>\varepsilon\}\bigr) < \varepsilon.
\end{align}
This is  \eqref{eq:Aeps}.
\end{proof}

\begin{remark}
The mm-factor condition~\eqref{eq:Aeps} a posteriori coincides with  $(K_\varepsilon, (\pi_\varepsilon)_\#\mm)\prec \X$ and $\sfd_{\sf conc}(\X, K_\varepsilon)\leq \varepsilon$, which rephrases the density of $\mathcal X$ in $(\mathfrak X_{\sf conc}, \sfd_{\sf conc})$.
\end{remark}

\section{Observable distance for concentrated emm-spaces} \label{s:ODC}
\subsection{Observable distance} \label{ss:OD}
We now extend the observable distance to  concentrated emm-spaces. 
\begin{definition}[Observable distance] \label{d:Obs}
Let $\X, \Y$ be concentrated emm-spaces. The \emph{observable distance} between $\X$ and $\Y$ is defined as 
\begin{equation}\label{eq: conc emm}
    \sfd_{\sf conc}(\X, \Y)=\inf_{\iota_\X, \iota_\Y} \sfd^{\L^0}_{\sf H} (\iota_X^*\Lip_1(\X, \mm_\X), \iota_\Y^*\Lip_1(\Y, \mm_\Y)),
\end{equation}
where $\iota_\X, \iota_\Y$ range among the parameters for $\X$ and $\Y$.
\end{definition}

\begin{remark}\
\begin{itemize}
    \item 
    It is easy to see that this definition is invariant with respect to emm-isomorphisms.
    Indeed given $\Phi:\X\to\X'$ an emm-isomorphism, $\iota_\X$ is a parameter for $\X$ if and only if $\Phi\circ\iota_\X$ is a parameter for $\X'$ and 
    \begin{equation}
        (\Phi\circ\iota_\X)^* \Lip_1(\X', \mm_{\X'})=\iota_\X^* \Phi^*\Lip_1(\X', \mm_{\X'})=\iota_\X^*\Lip_1(\X, \mm_\X).
    \end{equation}
    The same holds for the inverse emm-isomorphism.

      \item It is not immediate from the definition that $\sfd_{\sf conc}$ is a distance in  emm-isomorphism classes of concentrated emm-spaces $\mathfrak X_{\sf conc}$ since one needs to prove that $\sfd_{\sf conc}(\X, \Y)=0$ implies that $\X$ is emm-isomorphic to $\Y$. 
    This will follow by identifying $(\mathfrak X_{\sf conc}, \sfd_{\sf conc})$ and the completion $(\overline{\mathcal X}^{\sfd_{\sf conc}}, \sfd_{\sf conc})$, which will be addressed in the proof of~\Cref{t:2} below.
    \end{itemize}
\end{remark}
Similarly to the case of mm-spaces, an equivalent definition can be given in terms of couplings.
Given $\ppi\in\adm(\mm_\X, \mm_\Y)$, we write
\begin{equation}
    \sfd^\sppi_{\sf conc}(\X, \Y)=\sfd_{\L^0(\sppi)}^{\sf H}(\proj_\X^*\Lip_1{(\X, \mm_\X)}, \proj_\Y^*\Lip_1{(\Y, \mm_\Y)}),
\end{equation}
where $\proj_\X, \proj_\Y$ are the projections from the product space~$\X \times \Y$ to each factors.
\begin{lemma}
    Given $\X, \Y$ concentrated emm-spaces, 
    \begin{equation}
        \sfd_{\sf conc}(\X, \Y)=\inf_{\sppi\in\adm(\mm_\X, \mm_\Y)}\sfd^\sppi_{\sf conc}(\X, \Y).
    \end{equation}
\end{lemma}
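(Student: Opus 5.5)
We prove the two inequalities separately, comparing parameters and couplings directly. The only measure-theoretic inputs are that pull-backs along measure-preserving maps preserve the Ky Fan distance, and that couplings of standard Borel probability spaces can be realised by a pair of parameters.

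\emph{Step 1: $\inf_\sppi \sfd^\sppi_{\sf conc}\le \sfd_{\sf conc}$.}
Fix parameters $\iota_\X,\iota_\Y$ of $\X$ and $\Y$, and set $\sppi\coloneqq(\iota_\X,\iota_\Y)_\#\mathsf{Leb}$, which belongs to $\adm(\mm_\X,\mm_\Y)$. Put $J\coloneqq(\iota_\X,\iota_\Y)$. Then $J_\#\mathsf{Leb}=\sppi$, $\proj_\X\circ J=\iota_\X$ and $\proj_\Y\circ J=\iota_\Y$. The pull-back $J^*:\L^0(\sppi)\to\L^0(\mathsf{Leb})$ is an isometric embedding for $\sfd_{\L^0}$, because $J$ is measure-preserving. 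It maps $\proj_\X^*\Lip_1(\X,\mm_\X)$ onto $\iota_\X^*\Lip_1(\X,\mm_\X)$, and similarly for $\Y$. A Hausdorff distance between subsets is unchanged under an isometric embedding, so $\sfd^\sppi_{\sf conc}(\X,\Y)$ equals the Hausdorff distance appearing in \eqref{eq: conc emm} for this pair $\iota_\X,\iota_\Y$. Taking the infimum gives the inequality.

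\emph{Step 2: $\sfd_{\sf conc}\le \inf_\sppi \sfd^\sppi_{\sf conc}$.}
Fix $\sppi\in\adm(\mm_\X,\mm_\Y)$. The product $\X\times\Y$ with $\tau_\X\times\tau_\Y$ is Polish and $\sppi$ is a Borel probability measure on it, so there is a parameter $\iota:[0,1)\to\X\times\Y$ with $\iota_\#\mathsf{Leb}=\sppi$; this is the existence of parameters for Borel probabilities on standard Borel spaces, as in \cite[Lemma~4.2]{Sh16}. We then set $\iota_\X\coloneqq\proj_\X\circ\iota$ and $\iota_\Y\coloneqq\proj_\Y\circ\iota$. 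These are parameters of $\X$ and $\Y$, since $\sppi$ has marginals $\mm_\X$ and $\mm_\Y$. As in Step 1, $\iota^*$ is an isometric embedding of $\L^0(\sppi)$ into $\L^0(\mathsf{Leb})$ that carries the two pulled-back Lipschitz classes to $\iota_\X^*\Lip_1(\X,\mm_\X)$ and $\iota_\Y^*\Lip_1(\Y,\mm_\Y)$. Hence the Hausdorff distance for this pair of parameters equals $\sfd^\sppi_{\sf conc}(\X,\Y)$, and the inequality follows.

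\emph{Remaining checks and main obstacle.}
The isometry claim needs that classes in $\Lip_1(\X,\mm_\X)$ are $\mm_\X$-equivalence classes. Since $\proj_\X$ and $\iota_\X$ push forward to $\mm_\X$, their pull-backs are well defined on these classes, and universally measurable representatives (\Cref{lemma: McShane}) are measurable for the completed measures. The main obstacle I expect is Step 2, namely producing a single parameter of the coupling on the non-metric Polish space $\X\times\Y$. Here I will use that $(\X\times\Y,\tau_\X\times\tau_\Y)$ is standard Borel, so it is Borel isomorphic to a Borel subset of $[0,1]$; a quantile-function construction then gives a Borel $\iota$ with $\iota_\#\mathsf{Leb}=\sppi$. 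Concentration of $\X$ and $\Y$ plays no role in this lemma.
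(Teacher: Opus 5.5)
Your proposal is correct and follows the same two-step argument as the paper. One inequality comes from the coupling $(\iota_\X,\iota_\Y)_\#\mathsf{Leb}$ induced by a pair of parameters, and the other from splitting a parameter of $(\X\times\Y,\ppi)$ into its two components. You also make explicit what the paper calls ``a direct computation'': pull-back along a measure-preserving map is a Ky Fan isometry, so the Hausdorff distance is preserved, and a parameter of $\ppi$ exists because $\X\times\Y$ is Polish.
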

The proof follows the same lines as the one for mm-spaces in \cite{nakajimaBoxDistanceObservable2022}, we show it here for completeness.
\begin{proof}
    Given a pair of parameters $\iota_\X, \iota_\Y$, it is immediate by definition that the coupling $\ppi= (\iota_\X, \iota_\Y)_\#\mathsf{Leb}$ satisfies $$\sfd^\sppi_{\sf conc}(\X, \Y)= \sfd^{\L^0}_{\sf H} (\iota_X^*\Lip_1(\X, \mm_\X), \iota_\Y^*\Lip_1(\Y, \mm_\Y)).$$
    This implies $\sfd_{\sf conc}(\X, \Y)\geq \inf_{\sppi\in\adm(\mm_\X, \mm_\Y)}\sfd^\sppi_{\sf conc}(\X, \Y)$.
    
    Viceversa, given a coupling $\ppi$ on $\X\times \Y$, let $\iota_{\X\times\Y}=(\iota_\X, \iota_\Y)$ be a parameter for $(\X\times\Y, \ppi)$.
    Then $\iota_\X, \iota_\Y$ are parameters for $\X$ and $\Y$ and a direct computation gives $\sfd^\sppi_{\sf conc}(\X, \Y)= \sfd^{\L^0}_{\sf H} (\iota_X^*\Lip_1(\X, \mm_\X), \iota_\Y^*\Lip_1(\Y, \mm_\Y))$, completing the proof.
\end{proof}
The extension to concentrated spaces of the the observable distance given above actually coincides with the completion of the observable distance as defined on mm-spaces.
More precisely, each element of $\overline{\mathcal X}^{\sfd_{\sf conc}}$ can be identified biunivocally with one in $\mathfrak X_{\sf conc}$ and the observable distance on $\mathfrak X_{\sf conc}$ coincides with the one on $\overline{\mathcal X}^{\sfd_{\sf conc}}$.

Recall that a completion of a metric space~$(\Z, \sfd_\Z)$ is a pair $(\widehat{\Z}, \iota)$, where $\widehat \Z$ is a complete metric space, $\iota: \Z \to \widehat \Z$ is an isometric embedding (called {\it canonical isometric embedding}), and the image~$\iota(\Z)$ is dense in $\widehat{\Z}$.

 \begin{theorem}\label{t:Vt2} 
 The following hold:
 \begin{itemize}
\item $(\mathfrak X_{\sf conc}, \sfd_{\sf conc})$ is complete and separable.
    \item $(\mathfrak X_{\sf conc}, \sfd_{\sf conc})$ is a completion of $(\mathcal X, \sfd_{\sf conc})$. The canonical isometric embedding of $\mathcal X$ into~$\mathfrak X_{\sf conc}$ for the completion is the trivial map given by identifying each mm-isomorphism class in~$\mathcal X$ with the associated emm-isomorphism class~in~$\mathfrak X_{\sf conc}$.
\item the map $\mathcal P_\bullet: (\mathfrak X_{\sf conc}, \tau_{\sfd_{\sf conc}}) \to (\Pi, \tau_{\Pi})$ is homeomorphic onto its image.
\end{itemize}
In particular, the following are equivalent for $(\X_n)_{n \in \N}\subset \mathfrak X_{\sf conc}$ and $\X \in \mathfrak X_{\sf conc}$$:$
\begin{enumerate}
\item $\X_n \xrightarrow{\sfd_{\sf conc}} \X$;
\item $\mathcal P_{\X_n} \xrightarrow{\tau_{\Pi}} \mathcal P_\X$.
\end{enumerate}
\end{theorem}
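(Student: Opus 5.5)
The plan is to build a bijection between $\mathfrak X_{\sf conc}$ and the abstract completion $\overline{\mathcal X}^{\sfd_{\sf conc}}$ and to check that it is isometric; all three bullets then follow. The key tool is Gromov's embedding $\mathcal P_\bullet:\overline{\mathcal X}^{\sfd_{\sf conc}}\to\Pi$, together with \Cref{t:EPM}, which says its image is exactly the set of concentrated pyramids.

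\textbf{The bijection.} Let $\X\in\mathfrak X_{\sf conc}$. By \Cref{thm: concentrated iff concentrated} and \Cref{prop: the pyramid is closed}, $\mathcal P_\X=\overline{\mathcal P_\X}^\square$ is a concentrated pyramid. By \Cref{t:EPM} it therefore equals $\mathcal P_{\hat\X}$ for a unique $\hat\X\in\overline{\mathcal X}^{\sfd_{\sf conc}}$; we set $\Phi(\X)=\hat\X$. The map $\Phi$ is well defined on emm-isomorphism classes, since pyramids are invariant under emm-isomorphism. It is injective by \Cref{theorem: isomorphism from pyramid}. It is surjective by \Cref{t:IPT}: every concentrated pyramid is of the form $\overline{\mathcal P_\X}^\square$, and this $\X$ is concentrated by \Cref{c:ECP}. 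For mm-spaces $\X$, $\Phi(\X)=\X$, using \Cref{p:CEM} to compare the two notions of isomorphism. Hence $\mathcal P_\bullet$ on $\mathfrak X_{\sf conc}$ factors as $\mathcal P_\bullet\circ\Phi$. Once $\Phi$ is shown to be an isometry, the third bullet and the final equivalence follow from the fact that Gromov's extended map is a topological embedding.

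\textbf{Isometry.} This is the main obstacle. Fix $\X,\Y\in\mathfrak X_{\sf conc}$. Represent each by an inverse limit, $\X\cong\varprojlim\X_n$ and $\Y\cong\varprojlim\Y_n$, using \Cref{lemma: inverse system}. Then $\mathcal P_{\X_n}\to\mathcal P_\X$ by \Cref{teo: piramidi - enmms}(a), and since $\mathcal P_\bullet$ is a homeomorphism onto its image in the completion, $\X_n\to\Phi(\X)$ in $\sfd_{\sf conc}$; the same holds for $\Y_n$. It therefore suffices to prove
\[
\sfd_{\sf conc}(\X,\Y)=\lim_n\sfd_{\sf conc}(\X_n,\Y_n),
\]
where the left-hand side is the one defined in \Cref{d:Obs}. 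The proof should be phrased with couplings, via the lemma preceding the theorem. Two approximation facts drive it.

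First, by \Cref{lemma: hausdorff convergence}, which applies because $(\mathcal L_1(\X_n))$ is Cauchy (being images of a convergent sequence under the $1$-Lipschitz map $\mathcal L_1$), the sets $\sfp_n^*\mathcal L_1(\X_n)$ converge to $\mathcal L_1(\X)$ in Hausdorff distance inside $\L^0(\X)/\R$. This yields the one-sided bound $\sfd_{\sf conc}(\X,\X_n)\to0$ in the emm sense: use the coupling $(\mathrm{id},\sfp_n)_\#\mm_\X$, and handle the quotient by $\R$ through Lévy means.

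Second, we need the triangle inequality for the emm-defined distance. It follows by gluing couplings, exactly as for mm-spaces: pullbacks under the projections of the glued measure preserve Ky Fan distances, and the Hausdorff distance is a metric.

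Combining the two facts gives $|\sfd_{\sf conc}(\X,\Y)-\sfd_{\sf conc}(\X_n,\Y_n)|\le\sfd_{\sf conc}(\X,\X_n)+\sfd_{\sf conc}(\Y,\Y_n)\to0$. The delicate point to check carefully is the passage from the quotient Hausdorff convergence to Hausdorff convergence of the non-quotiented sets $\Lip_1$. I would handle this by noting that for each $f\in\Lip_1(\X,\mm_\X)$ one can choose $g\in\Lip_1(\X_n)$ with $\sfp_n^*g$ close to $f+c$, and then add the constant $c$ back to $g$, which stays $1$-Lipschitz. In the other direction, every $\sfp_n^*g$ already lies in $\Lip_1(\X,\mm_\X)$.

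\textbf{Conclusion.} Since $\Phi$ is an isometric bijection onto the complete, separable space $\overline{\mathcal X}^{\sfd_{\sf conc}}$ (separability coming from $\mathcal X$), completeness and separability transfer to $\mathfrak X_{\sf conc}$. As a byproduct, $\sfd_{\sf conc}$ is a genuine distance on $\mathfrak X_{\sf conc}$. The completion statement holds because $\Phi$ restricts to the identity on $\mathcal X$. Finally, density of $\mathcal X$ in $\mathfrak X_{\sf conc}$ is already visible directly from $\X_n\to\X$.
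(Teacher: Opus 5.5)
Your proposal is correct and takes essentially the same route as the paper. You build the bijection $\mathfrak X_{\sf conc}\to\overline{\mathcal X}^{\sfd_{\sf conc}}$ through pyramids, prove it is an isometry by showing $\sfd_{\sf conc}(\X,\X_n)\to 0$ for inverse-limit approximations from the Hausdorff convergence $\sfp_n^*\mathcal L_1(\X_n)\to\mathcal L_1(\X)$, and conclude with the gluing triangle inequality. The differences are minor: you verify the hypothesis of the Hausdorff-convergence lemma via the $1$-Lipschitz property of $\mathcal L_1$ rather than via the paper's GH-precompactness and subsequence argument, and you state explicitly that the limit of $(\X_n)$ in the completion is $\Phi(\X)$, a step the paper leaves implicit.
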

\begin{remark}
    Notice that we have abused the notation $\mathcal P_\bullet$ above to indicate two maps: (a) the map sending concentrated emm-spaces to the associated pyramids via the $1$-Lipschitz  order (\Cref{d:PRM}); (b) the map sending elements in $(\overline{\mathcal X}^{\sfd_{\sf conc}}, \sfd_{\sf conc})$ to pyramids via the unique extension property of the 1-Lipschitz map~\eqref{d:EPY}. A posteriori, this  notation is justified since 
    these two maps coincide by the proof below of the identification of~$\mathfrak X_{\sf conc}$ with the completion of $\mathcal X$.  
\end{remark}
\begin{proof}
    It is enough to show that there is an isometry $\varphi$ between $(\mathfrak X_{\sf conc}, \sfd_{\sf conc})$ and $(\overline {\mathcal X}^{\sfd_{\sf conc}}, \sfd_{\sf conc})$ that leaves $\mathcal P_\bullet$ invariant.
    Then the other statements follow  since $\mathcal P_\bullet:\overline {\mathcal X}^{\sfd_{\sf conc}}\to \Pi$ is an homeomorphism on the image (see~(\cite[Section 3$\frac{1}{2}$]{Gro06}\cite[Theorem 7.27]{Sh16}), and  $(\overline {\mathcal X}^{\sfd_{\sf conc}}, \sfd_{\sf conc})$ is a complete and separable metric space, where the separability follows by the inequality~$\sfd_{\sf conc} \le \square$ in~\eqref{e:BXD} and the separability of $(\mathcal X, \square)$. 

    By \Cref{t:IPT,thm: concentrated iff concentrated,theorem: isomorphism from pyramid}, the map $\mathcal P_\bullet: \mathfrak X_{\sf conc}\to \Pi$ is invertible.
    By composing it with the inverse of $\mathcal P_\bullet:\overline {\mathcal X}^{\sfd_{\sf conc}}\to \Pi$, we get a bijection $\varphi: \mathfrak X_{\sf conc}\to \overline {\mathcal X}^{\sfd_{\sf conc}}$.
    By construction, $\mathcal P_\bullet$ is left invariant by $\varphi$ and $\varphi$ is the identity when restricted to $\mathcal X$.

    Let us prove that $\varphi$ is an isometry. 
    Let us start by proving that whenever $\X=\varprojlim\X_n$ is a concentrated emm-space, then it holds $\sfd_{\sf conc}(\X, \X_n)\to 0$.
    Indeed, if $\iota_\X$ is any parameter for $\X$, $\iota_{\X_n}=\sfp_n\circ\iota_\X$ is a parameter for $\X_n$, where $\sfp_n$ are the projections in the inverse limit.
    Moreover
    \[
    \begin{split}
    \label{al:numero}
        \sfd_{\sf conc}(\X, \X_n)&\leq \sfd^{\L^0}_{\sf H} (\iota_X^*\Lip_1(\X, \mm_\X), \iota_{\X_n}^*\Lip_1(\X_n, \mm_{\X_n}))\\
        &=\sfd^{\L^0/\mathbb R}_{\sf H} (\iota_X^*{\mathcal L}_1(\X), \iota_{\X_n}^*{\mathcal L}_1(\X_n))\\
        &=\sfd^{\L^0/\mathbb R}_{\sf H} ({\mathcal L}_1(\X), (\sfp_n)^*{\mathcal L}_1(\X_n)).
    \end{split}
    \]
    Now, by \Cref{thm: concentrated iff concentrated} and noting $(\X_n)_{n\in \N}\subset \mathcal P_\X$, the sequence $n\mapsto \mathcal L_1(\X_n)$ is GH-precompact.
     Let $\Z$ be  a metric space and $(\X_{n_k})_{k\in \N}$ be a subsequence such that $\mathcal L_1(\X_n)\to \Z$ in the GH-topology. Note that  $\X$ is isomorphic to the subsequential limit $\varprojlim \X_{n_k}$ because $\X=\varprojlim \X_{n}$ for the whole sequence.
    By \Cref{lemma: hausdorff convergence}, $Z$ is thus isomorphic to $\mathcal L_1(\varprojlim \X_{n_k})$ and thus to $\mathcal L_1(\X)$.
    This  implies that the last term in \eqref{al:numero} goes to 0, yielding $\sfd_{\sf conc}(\X, \X_n)\to 0$. 

    Notice also that the observable distance between concentrated emm-spaces satisfies a triangle inequality.
    Indeed, given $\X, \Y, \Z\in \mathfrak X_{\sf conc}$, $\ppi_{\X, \Y}\in \adm(\mm_\X, \mm_\Y)$ and $\ppi_{\Y, \Z}\in \adm(\mm_\Y, \mm_\Z)$, then the composition $\ppi_{\X, \Z}$ is in $\adm(\mm_\X, \mm_\Z)$ and we have
    \begin{equation}
        \sfd_{\sf conc}^{\sppi_{\X, \Z}}(\X, \Z)\leq \sfd_{\sf conc}^{\sppi_{\X, \Y}}(\X, \Y)+\sfd_{\sf conc}^{\sppi_{\Y, \Z}}(\Y, \Z).
    \end{equation}
    Taking the infimum over $\ppi_{\X, \Y}$ and $\ppi_{\Y, \Z}$ yields the triangle inequality.

    Finally, let $\X, \Y\in\mathfrak X_{\sf conc}$.
    Up to emm-isomorphism, we can assume that $\X=\varprojlim\X_n$ and $\Y=\varprojlim\Y_n$ are inverse limits.
    In particular, by the triangle inequality, $\X_n=\varphi(\X_n)$ and $\Y_n=\varphi(\Y_n)$ are $\sfd_{\sf conc}$-Cauchy in $\overline {\mathcal X}^{\sfd_{\sf conc}}$, thus recalling that $\varphi$ is the identity map on~$\mathcal X$, 
    \begin{equation}
        \sfd_{\sf conc}(\varphi(\X), \varphi(\Y))=\lim_n {\sfd}_{\sf conc}(\varphi(\X_n), \varphi(\Y_n))=\lim_n {\sfd}_{\sf conc}(\X_n, \Y_n).
    \end{equation}
    Finally, on one hand
    \begin{align}
        \sfd_{\sf conc}(\varphi(\X), \varphi(\Y))&=\lim_n {\sfd}_{\sf conc}(\X_n, \Y_n)\\
        &\leq \lim_n {\sfd}_{\sf conc}(\X_n, \X)+\sfd_{\sf conc}(\X, \Y)+\lim_n{\sfd}_{\sf conc}(\Y, \Y_n)\\
        &=\sfd_{\sf conc}(\X, \Y);
    \end{align}
    on the other hand, 
    \begin{align}
        \sfd_{\sf conc}(\X, \Y)&\leq \lim_n {\sfd}_{\sf conc}(\X, \X_n)+\lim_n\sfd_{\sf conc}(\X_n, \Y_n)+\lim_n{\sfd}_{\sf conc}(\Y_n, \Y)\\
        &=\sfd_{\sf conc}(\varphi(\X), \varphi(\Y)). \qedhere
    \end{align}
\end{proof}
\begin{corollary}[Equivalent characterisations of concentrated emm-spaces] \label{c:ECE} The following are equivalent for an emm-space~$\X$$:$
\begin{itemize}
    \item $\X$ is concentrated, i.e., $\mathcal L_1(\X)$ is compact$;$
    \item $\X \in \overline{\mathcal X}^{\sfd_{\sf conc}}$ $;$ 
    \item $\mathcal P=\overline{\mathcal P_\X}^\square$ is concentrated,
\end{itemize}
where the second bullet point should be read in the sense of the identification of $\mathfrak X_{\sf conc}$ with~$\overline{\mathcal X}^{\sfd_{\sf conc}}$ in the second statement in~\Cref{t:Vt2}.
\end{corollary}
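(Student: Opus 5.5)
The plan is to read the corollary off from results already established, in particular \Cref{c:ECP} and \Cref{t:Vt2}. First, the equivalence between the first and third bullets is exactly \Cref{c:ECP}. That corollary in turn comes from \Cref{thm: concentrated iff concentrated}, applied with $\mathcal P=\overline{\mathcal P_\X}^\square$, which is a pyramid by the proposition following \Cref{d:PRM}. So only the middle bullet needs an argument, and we interpret it through the identification $\varphi:\mathfrak X_{\sf conc}\to\overline{\mathcal X}^{\sfd_{\sf conc}}$ constructed in the proof of \Cref{t:Vt2}.

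For (first $\Rightarrow$ second): if $\mathcal L_1(\X)$ is compact, then the class of $\X$ lies in $\mathfrak X_{\sf conc}$. By \Cref{t:Vt2}, $\varphi$ is a bijective isometry onto the completion and restricts to the identity on $\mathcal X$. Hence $\varphi(\X)$ is a well-defined element of $\overline{\mathcal X}^{\sfd_{\sf conc}}$. Concretely, choosing $\X\cong\varprojlim\X_n$ via \Cref{lemma: inverse system}, we have $\X_n\to\X$ in $\sfd_{\sf conc}$, as shown in that proof.

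For (second $\Rightarrow$ third): I will read ``$\X\in\overline{\mathcal X}^{\sfd_{\sf conc}}$'' as saying that the pyramid of $\X$ coincides with $\mathcal P_{\hat\X}$ for some $\hat\X$ in the completion. Equivalently, $\overline{\mathcal P_\X}^\square$ lies in the image of the extended map \eqref{d:EPY}. By the first bullet of \Cref{t:EPM}, every pyramid in that image is concentrated.

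The only delicate point is interpretive rather than technical. For a non-concentrated $\X$, the phrase ``$\X\in\overline{\mathcal X}^{\sfd_{\sf conc}}$'' has no meaning except through the pyramid, because $\varphi$ is only defined on $\mathfrak X_{\sf conc}$. I will therefore state explicitly that membership is understood as $\overline{\mathcal P_\X}^\square=\mathcal P_{\hat\X}$ for some $\hat\X$ in the completion. This is the reading compatible with the remark after \Cref{t:Vt2}, where the two maps $\mathcal P_\bullet$ coincide. With this convention the cycle of implications closes, and no further estimates are needed.
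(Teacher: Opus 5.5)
Your proposal is correct and follows the paper's own argument, which is the one-line combination of \Cref{c:ECP} (first $\Leftrightarrow$ third), the identification of $\mathfrak X_{\sf conc}$ with $\overline{\mathcal X}^{\sfd_{\sf conc}}$ from \Cref{t:Vt2}, and the first statement of \Cref{t:EPM} (a pyramid is concentrated iff it comes from the completion). Your explicit convention of reading the middle bullet through the pyramid $\overline{\mathcal P_\X}^\square$ is the reading implicit in the paper's appeal to \Cref{t:EPM}. It is consistent with the identification because $\varphi$ leaves $\mathcal P_\bullet$ invariant and $\mathcal P_\X=\overline{\mathcal P_\X}^\square$ for concentrated $\X$.
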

\begin{proof} 
    This is an immediate consequence of \Cref{c:ECP}, the second statement in~\Cref{t:Vt2} and the first statement in~\Cref{t:EPM}. 
\end{proof}

\subsection{Fibrations}\label{subsection: fibration}
In the setting of mm-spaces, the frameworks of fibration, introduced in \cite{gigliStabilityHeatFlow2024} (see also \cite{vinciniVariationalProblemsNonsmooth2025}), gives a simple but robust way to interpret the convergence of spaces in an extrinsic fashion.
For instance, both measure Gromov-Hausdorff convergence and convergence in concentration have a simple characterization in this language.
At the same time, it allows to formulate associated convergences of measures and functions defined on varying spaces, giving flexible tools to tackle stability problems. In this subsection, we extend this approach to emm-spaces.

\begin{definition}[Fibration]
    Let $(\X_n, \mm_n)$ and $(\X, \mm)$ be Polish spaces endowed with Borel probability measures.
    We say that the sequence $n\mapsto \X_n$ fibrates over $\X$ with projections $n\mapsto p_n:\X_n\to\X$ if the maps $p_n$ are Borel and the weak convergence $(p_n)_\#\mm_n\rightharpoonup \mm$ holds.
\end{definition}
Notice that the measures $(p_n)_\#\mm_n$ do not need to be absolutely continuous with respect to $\mm$.
In particular, the image of $p_n$ need not be contained in $\supp(\mm)$.
Fibrating sequences allow to define natural notions of convergence for measures and functions along varying spaces.
Here we recall the definitions and relevant properties.
\begin{definition}[Strong/weak convergences along fibration]
    Let $n\mapsto (\X_n, \mm_n)$ be a sequence fibrating over $(\X, \mm)$ with projections $p_n$.
    Let $n\mapsto \mu_n$ be a sequence of finite Borel measures on $\X_n$ and $\mu$ a finite Borel measure on $\X$.
    Let also $n\mapsto f_n\in \L^p(\X_n)$ and $f\in \L^p(\X)$ for some $p\in(1, +\infty)$.
    Finally, let $\Z$ be a Polish space, $n\mapsto T_n\in \L^0(\X_n; \Z)$  and $T\in \L^0(\X; \Z)$.
    We define weak convergences indicated by $\rightharpoonup$, and strong convergences indicated by $\to$ as follows:
    \begin{itemize}
        \item $\mu_n\rightharpoonup\mu$ if $(p_n)_\#\mu_n\rightharpoonup\mu$;
        \item $f_n\rightharpoonup f$ in $\L^p$ if $\sup_n\|f_n\|_{\L^p}<+\infty$ and $f_n\mm_n\rightharpoonup f\mm$;
        \item $f_n\to f$ in $\L^p$ if $f_n\rightharpoonup f$ in $\L^p$ and $\|f_n\|_{\L^p}\to \|f\|_{\L^p}$.
        \item $T_n\to T$ in $\L^0$ if for all $\Phi\in C^0_{\sf b}(\Z)$, $(\Phi\circ T_n) \mm_n \rightharpoonup (\Phi\circ T) \mm$.
    \end{itemize}
\end{definition}
\paragraph{Good coupling.}
In order to work with the strong convergences,  it is useful to introduce the notion of good couplings, which allow to ``metrise'' these convergences.
In the following,
we use the notation $\proj_i: \prod_{i=1}^n A_i \to A_i$ for the projection to the $i$-th factor.

\begin{definition}[Good coupling]
    Let $n\mapsto (\X_n, \mm_n)$ be a sequence fibrating over $(\X, \mm)$.
    A sequence of Borel measures $n\mapsto \aalpha_n\in\adm(\mm_n, \mm)$ is said to be a sequence of {\it good couplings} if $(p_n\times id)_\#\aalpha_n\rightharpoonup (id, id)_\#\mm$.
\end{definition}
\begin{remark}
    It is easy to see that any fibration admits the existence of good couplings.
    Indeed, if $n\mapsto \X_n$ fibrates over $\X$ with projections $n\mapsto p_n$, taking $$\aalpha_n\coloneqq (id, p_n)_\#\mm_{\X_n}, \quad  \text{any} \ \  \bbeta_n\in \adm\Big((p_n)_\# \mm_{\X_n}, \mm_\X\Big) \quad \text{s.t.} \quad \bbeta_n\rightharpoonup (id, id)_\#\mm_{\X}$$ (e.g., you can always take $\bbeta_n$ to be an optimal coupling between its marginals with respect to the cost ${\mathfrak c}=\sfd\wedge 1$, where $\sfd$ is a distance metrising the topology on $\X$), then for any gluing $\ggamma_n$ of $\aalpha_n$ and $\bbeta_n$, i.e.~any probability measure $\ggamma_n\in\pr(\X_n\times \X\times\X)$ such that $(\proj_{1, 2})_\#\ggamma_n=\aalpha_n$, $(\proj_{2, 3})_\#\ggamma_n=\bbeta_n$, $(\proj_1, \proj_3)_\#\ggamma_n$ is a good coupling. 
\end{remark}
\begin{definition}
    Let $n\mapsto (\X_n, \mm_n)$ be a sequence fibrating over $(\X, \mm)$ and let $\Z$ be a Polish space.
    Let $\sfd_\Z$ be a distance on $\Z$ metrising  the topology of~$\Z$ and $n\mapsto \aalpha_n$ be a sequence of good couplings.
    For $n\in \N$, $T_n\in \L^0(\X_n;\Z)$ and $T\in \L^0(\X;\Z)$, we put
    \begin{equation}
        \sfd^{\saalpha_n}_{\L^0}(T_n, T)\coloneqq \inf\bigg\{\varepsilon>0: \aalpha_n\Big(\big\{\sfd_\Z(T_n\circ\proj_1, T\circ\proj_2)>\varepsilon\big\}\Big)<\varepsilon\bigg\}. 
    \end{equation}
\end{definition}
\begin{remark}\label{remark: triangle inequality}
    Notice that, while $\sfd^{\saalpha_n}_{\L^0}(\cdot,\cdot\cdot)$ is obviously not a distance in itself since the first argument is in $\L^0(\X_n)$ while the second argument is in~$\L^0(\X)$.
    However, it satisfies a sort of triangle inequality:
    indeed, if $n\in \N$, $S_n, T_n\in \L^0(\X_n; \Z)$ and $S, T\in \L^0(\X; \Z)$, then it holds
    \begin{align}
        \sfd^{\saalpha_n}_{\L^0}(T_n, T)&\leq \sfd_{\L^0}(T_n, S_n)+ \sfd^{\saalpha_n}_{\L^0}(S_n, S)+\sfd_{\L^0}(S, T),\\
        \sfd_{\L^0}(T_n, S_n)&\leq \sfd^{\saalpha_n}_{\L^0}(T_n, T)+\sfd_{\L^0}(T, S)+ \sfd^{\saalpha_n}_{\L^0}(S_n, S),\\
        \sfd_{\L^0}(S, T)&\leq \sfd^{\saalpha_n}_{\L^0}(T_n, T)+\sfd_{\L^0}(T_n, S_n)+ \sfd^{\saalpha_n}_{\L^0}(S_n, S).
    \end{align} 
\end{remark}
\begin{remark}
    The quantity $\sfd^{\saalpha_n}_{\L^0}$ depends on the distance $\sfd_\Z$ chosen to metrise the topology on $\Z$.
    As we will see later, any choice of such a distance can be utilised to ``metrise'' (in the sense of the forthcoming \Cref{proposition: L^0 properties}) the exact same notion of convergence in $\L^0$ along the fibration.
    Since in the following there will never be competing choices for $\sfd_\Z$, no confusion will be caused by leaving the choice implicit.
\end{remark}
In order to characterize the convergence in $\L^0$  we need a further definition.
\begin{definition}
    Let $n\mapsto (\X_n, \mm_n)$ be a sequence fibrating over $(\X, \mm)$.
    Let $(\mu_n)_{n\in \N}$ be a sequence of measures, where $\mu_n$ is a finite Borel measure on $\X_n$, and let $\mu$ be a finite Borel measure on $\X$.
    We say that the sequence $n\mapsto \mu_n$ is equi-integrable if whenever $n\mapsto E_n\subset \X_n$ satisfies $\mm_{\X_n}(E_n)\to 0$, then $\mu_n(E_n)\to 0$.
\end{definition}
This also allows to speak about a finer convergence for measures.
\begin{definition}[$\L^1$-weak convergence of measures] \label{d:L1W}
    Let $n\mapsto (\X_n, \mm_n)$ be a sequence fibrating over $(\X, \mm)$.
    Let $(\mu_n)_{n\in \N}$ be a sequence of measures, where $\mu_n$ is a finite Borel measure on $\X_n$, and let $\mu$ be a finite Borel measure on $\X$.
    We say that {\it $\mu_n$ converges to $\mu$ weakly in $\L^1$} if the sequence $n\mapsto \mu_n$ is equi-integrable and $\mu_n\rightharpoonup \mu$. 
\end{definition}
\begin{remark}
    Suppose that $\X_n\equiv \X$ and the projections (for fibration) are given by the identity map. Then, the $\L^1$-weak convergence~$\mu_n\overset{\L^1}\rightharpoonup \mu$ holds if and only if the following two conditions hold:
    \begin{itemize}
        \item 
the total variation of the singular part of $\mu_n$ with respect to $\mm_n$ converges to $0$;
\item the Radon--Nikodym~density of the regular part of $\mu_n$ with respect to $\mm_n$ converges weakly in $\L^1$ to the Radon--Nikodym~density of $\mu \ll \mm$, which exists by the equi-integrability hypothesis. 
    \end{itemize}
\end{remark}

\paragraph{Characterisations of strong convergences.}
All of the results that are presented here without a proof have already been proven in \cite[Section 3]{gigliStabilityHeatFlow2024} and \cite[Section 2.2]{vinciniVariationalProblemsNonsmooth2025}.
\begin{proposition}\label{proposition: L^0 properties}
    Let $n\mapsto (\X_n, \mm_n)$ be a sequence fibrating over $(\X, \mm)$ and let $\Z$ be a Polish space.
    Let moreover $n\mapsto T_n\in \L^0(\X_n;\Z)$ be a sequence and $T\in \L^0(\X;\Z)$.
    Then the following three conditions are equivalent:
    \begin{itemize}
        \item $T_n\to T$ in $\L^0$;
        \item for a sequence of good couplings $n\mapsto\aalpha_n$, \begin{equation}
            \sfd^{\saalpha_n}_{\L^0}(T_n, T)\to 0;
        \end{equation}
        \item for all sequences of good couplings $n\mapsto\aalpha_n$, \begin{equation}
            \sfd^{\saalpha_n}_{\L^0}(T_n, T)\to 0;
        \end{equation}
        \item for any equi-integrable sequence $n\mapsto \mu_n$ converging weakly to $\mu$, $(T_n)_\#\mu_n\rightharpoonup T_\#\mu$.
    \end{itemize}
    Moreover, if $\Z'$ be another Polish space and $G:\Z\to\Z'$ is a continuous function, $T_n\to T$ in $\L^0$ implies $G\circ T_n\to G\circ T$ in $\L^0$.
\end{proposition}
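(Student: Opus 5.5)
The plan is to pass through the equivalence between (i) weak convergence of $(\Phi\circ T_n)\mm_n$ for all $\Phi\in C^0_{\sf b}(\Z)$ and (ii) weak convergence of the joint laws $(T_n\circ\proj_1,T\circ\proj_2)_\#\aalpha_n$ to $(T,T)_\#\mm$. I would prove the cycle (first) $\Rightarrow$ (third) $\Rightarrow$ (second) $\Rightarrow$ (fourth) $\Rightarrow$ (first). The step (third) $\Rightarrow$ (second) is trivial once good couplings exist, and the Remark after the definition of good coupling gives this. For (fourth) $\Rightarrow$ (first), I would take $\mu_n=\mm_n$, which is equi-integrable and satisfies $\mm_n\rightharpoonup\mm$. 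Then $(T_n)_\#\mm_n\rightharpoonup T_\#\mm$. This is only the $\Z$-marginal, so I would instead apply (fourth) with $\mu_n=(g\circ p_n)\mm_n$ for $g\in C^0_{\sf b}(\X)$, $g\ge0$. These measures are equi-integrable and $(p_n)_\#\mu_n=g\,(p_n)_\#\mm_n\rightharpoonup g\mm$. Testing against $\Phi$ then gives $\int \Phi(T_n)\,g(p_n)\,\d\mm_n\to\int\Phi(T)g\,\d\mm$, which is exactly $(\Phi\circ T_n)\mm_n\rightharpoonup(\Phi\circ T)\mm$.

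For (first) $\Rightarrow$ (third), fix good couplings $\aalpha_n$. I would show that $\aalpha_n(\{\sfd_\Z(T_n\circ\proj_1,T\circ\proj_2)>\varepsilon\})\to0$. First I reduce to the case where $T$ is continuous. By Lusin's theorem, choose $S\in C^0_{\sf b}(\X;\Z)$ with $\mm(S\neq T)<\delta$. The second marginal of $\aalpha_n$ is $\mm$, so the error from replacing $T$ by $S$ in the second slot costs at most $\delta$ uniformly in $n$.

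Next, take a countable family $\Phi_k$ of bounded $1$-Lipschitz functions with $\sfd_\Z(z,w)\wedge1=\sup_k|\Phi_k(z)-\Phi_k(w)|$, truncated to finitely many $k$ up to an error on a set of small $S_\#\mm$-measure. It then suffices to prove that $\int|\Phi(T_n(x))-\Phi(S(y))|\,\d\aalpha_n(x,y)\to\int|\Phi(T)-\Phi(S)|\,\d\mm$ for each bounded continuous $\Phi$. The right-hand side is at most $2\|\Phi\|_\infty\delta$.

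I would handle this integral by expanding $|a-b|^2=a^2-2ab+b^2$ and using Cauchy--Schwarz.
\begin{itemize}
\item The $a^2$ term is $\int\Phi^2(T_n)\,\d\mm_n$, which converges by the first condition with $g\equiv1$.
\item The $b^2$ term is constant in $n$.
\item The cross term is $\int\Phi(T_n(x))\,\Phi(S(y))\,\d\aalpha_n$.
\end{itemize}
The cross term is the main obstacle: I must replace $y$ by $p_n(x)$. The good coupling condition $(p_n\times id)_\#\aalpha_n\rightharpoonup(id,id)_\#\mm$ gives $\int|\Psi(p_n(x))-\Psi(y)|\,\d\aalpha_n\to0$ for $\Psi=\Phi\circ S$ bounded continuous. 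Hence the cross term equals $\int\Phi(T_n)\,(\Phi\circ S)(p_n)\,\d\mm_n+o(1)$. By the first condition this tends to $\int\Phi(T)\,\Phi(S)\,\d\mm$. The squared $L^2(\aalpha_n)$ norm therefore converges to $\int|\Phi(T)-\Phi(S)|^2\,\d\mm\le4\|\Phi\|_\infty^2\delta$. Letting $\delta\to0$ yields (third).

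For (second) $\Rightarrow$ (fourth), let $\mu_n$ be equi-integrable with $\mu_n\rightharpoonup\mu$ and fix $\Phi\in C^0_{\sf b}(\Z)$. Write $\int\Phi(T_n)\,\d\mu_n$ and lift $\mu_n$ to $\tilde\mu_n\ll\aalpha_n$ by taking its density along the disintegration of $\aalpha_n$ in the first variable. Equi-integrability of $\mu_n$ ensures that $\tilde\mu_n$ of the bad set $\{\sfd_\Z(T_n,T)>\varepsilon\}$, whose $\aalpha_n$-measure tends to $0$, also tends to $0$. Combined with uniform continuity of $\Phi$ on compacts (tightness of $T_\#\mm$), this gives $\int\Phi(T_n)\,\d\mu_n=\int\Phi(T\circ\proj_2)\,\d\tilde\mu_n+o(1)$. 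Since $\tilde\mu_n$ has second marginal equi-integrable with respect to $\mm$ and converging weakly to $\mu$, a Lusin approximation of $\Phi\circ T$ by continuous functions concludes that this tends to $\int\Phi(T)\,\d\mu$.

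The final claim about continuous $G$ is immediate from the first condition, since $\Phi\circ G\in C^0_{\sf b}(\Z)$ for every $\Phi\in C^0_{\sf b}(\Z')$.
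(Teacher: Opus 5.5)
Your cycle (first)$\Rightarrow$(third)$\Rightarrow$(second)$\Rightarrow$(fourth)$\Rightarrow$(first) is a reasonable self-contained route. The paper gives no argument here: it cites \cite[Corollary 3.11]{gigliStabilityHeatFlow2024} for the first three items and \cite[Proposition 3.27]{gigliStabilityHeatFlow2024} for the fourth, and deduces the last claim from continuity of $G_\#$ (your $\Phi\circ G$ argument is equivalent and more direct). Your (fourth)$\Rightarrow$(first) step with $\mu_n=(g\circ p_n)\mm_n$ is correct, and so is your handling of the cross term via the good-coupling condition.

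However, the reduction at the start of (first)$\Rightarrow$(third) is false as stated. Lusin's theorem only yields a compact $K$ with $\mm(\X\setminus K)<\delta$ on which $T$ is continuous. Extending $T|_K$ continuously to all of $\X$ requires an extension property of $\Z$ (as for $\mathbb R$, $\mathbb R^\infty$, or a convex subset of a Banach space), which a general Polish space lacks. For example, take $\X=[0,1]$, $\mm$ Lebesgue, $\Z=\{0,1\}$ and $T=\mathbf 1_{[0,1/2]}$: every continuous $S:\X\to\Z$ is constant, so $\mm(S\neq T)\ge 1/2$.

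The fix is easy, because you never use $S$ itself, only the real functions $\Phi_k\circ S$. First fix $\varepsilon$, $\eta$, a compact $C\subset\Z$ with $T_\#\mm(\Z\setminus C)<\eta$, and the finitely many $\Phi_1,\dots,\Phi_K$ of your truncation. Then apply Lusin and Tietze to the bounded real maps $\Phi_k\circ T$, obtaining $\Psi_k\in C^0_{\sf b}(\X)$ with $\mm(\Psi_k\neq\Phi_k\circ T)<\delta$. Your expansion then gives $\lim_n\int|\Phi_k(T_n(x))-\Psi_k(y)|^2\,\d\aalpha_n(x,y)=\int|\Phi_k\circ T-\Psi_k|^2\,\d\mm\le 4\|\Phi_k\|_\infty^2\delta$. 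Chebyshev plus a union bound over $k\le K$, with $K$ fixed before letting $\delta\to0$, yields (third). Note that it is this $\L^2$ bound, not the $\L^1$ limit you announce, that you actually establish and need.

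In (second)$\Rightarrow$(fourth) there are omissions rather than errors.
\begin{itemize}
\item \emph{Singular part.} $\mu_n$ need not be absolutely continuous with respect to $\mm_n$, so ``its density'' may not exist. You must first discard the singular part; its mass tends to $0$ by equi-integrability, since it is carried by an $\mm_n$-null set.
\item \emph{Unproved claims.} Let $B_n=\{(x,y):\sfd_\Z(T_n(x),T(y))>\varepsilon\}$, with $x$-sections $B_{n,x}$. You assert three facts without proof: that $\tilde\mu_n(B_n)\to0$; that the second marginals $\nu_n$ of $\tilde\mu_n$ are uniformly absolutely continuous with respect to $\mm$; and that $\nu_n\rightharpoonup\mu$.
\end{itemize}
All three follow from one averaging lemma: if $h_n:\X_n\to[0,1]$ are Borel with $\int h_n\,\d\mm_n\to0$, then $\int h_n\,\d\mu_n\to0$. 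Indeed, Markov gives $\mm_n(\{h_n>t\})\to0$, and then you use equi-integrability and $\sup_n\mu_n(\X_n)<\infty$. Write $\aalpha_n=\int\delta_x\otimes\aalpha_{n,x}\,\d\mm_n(x)$ and apply the lemma to:
\begin{itemize}
\item $x\mapsto\aalpha_{n,x}(B_{n,x})$;
\item $x\mapsto\aalpha_{n,x}(A)$, in its quantitative $\varepsilon$--$\delta$ form;
\item $x\mapsto(2\|g\|_\infty)^{-1}\int|g(y)-g(p_n(x))|\,\d\aalpha_{n,x}(y)$ for $g\in C^0_{\sf b}(\X)$; this is exactly where the good-coupling property is used.
\end{itemize}
Finally, your concluding Lusin step needs $\mu\ll\mm$. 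This follows from the uniform absolute continuity of the $\nu_n$ together with outer regularity of $\mm$. With these repairs the proof is complete.
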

\begin{proof}
The equivalence between the first item, the second and the third is the content of \cite[Corollary 3.11]{gigliStabilityHeatFlow2024}. 
The equivalence between the first item and the fourth is the content of \cite[Proposition 3.27]{gigliStabilityHeatFlow2024}.
The last part of the statement follows from the continuity with respect to weak convergence of the pushforward map $G_\#:\pr (\Z)\to \pr(\Z')$ when $G$ is continuous.
\end{proof}
\begin{corollary}\label{cor: approximation via projections}
    Let $n\mapsto (\X_n, \mm_n)$ be a sequence fibrating over $(\X, \mm)$ with projections $n\mapsto p_n$.
    Then $p_n\to id$ in $\L^0$.
    Moreover, if $\Z$ is a Polish spaces and $g:\X\to \Z$ is a continuous function, then $g\circ p_n\to g$ in $\L^0$.
\end{corollary}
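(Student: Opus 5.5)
We have to prove \Cref{cor: approximation via projections}: if $\X_n$ fibrates over $\X$ with projections $p_n$, then $p_n\to id$ in $\L^0$, and $g\circ p_n\to g$ in $\L^0$ for every continuous $g:\X\to\Z$ with $\Z$ Polish. Here $p_n\in\L^0(\X_n;\X)$ and $id\in\L^0(\X;\X)$, with target Polish space $\X$. The plan is to verify the first property directly from the definition of $\L^0$-convergence along a fibration, and then obtain the second from the last part of \Cref{proposition: L^0 properties}.

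For the first claim we must show that, for every $\Phi\in C^0_{\sf b}(\X)$, $(\Phi\circ p_n)\mm_n\rightharpoonup \Phi\,\mm$ in the fibration sense, i.e.
\begin{equation}
(p_n)_\#\bigl((\Phi\circ p_n)\mm_n\bigr)\rightharpoonup \Phi\,\mm .
\end{equation}
The key step is the pushforward identity
\begin{equation}
(p_n)_\#\bigl((\Phi\circ p_n)\mm_n\bigr)=\Phi\,(p_n)_\#\mm_n .
\end{equation}
It holds because, for bounded Borel $\psi$, both sides integrate $\psi$ to $\int_{\X_n}\psi(p_n)\Phi(p_n)\,\d\mm_n$.

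With this identity the claim reduces to $\Phi\,(p_n)_\#\mm_n\rightharpoonup\Phi\,\mm$. Testing against $\psi\in C^0_{\sf b}(\X)$, the product $\psi\Phi$ lies in $C^0_{\sf b}(\X)$. Hence this convergence follows directly from $(p_n)_\#\mm_n\rightharpoonup\mm$, which is exactly the fibration hypothesis.

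One technical point needs care: the measures $\Phi\,\mm$ may be signed. Since $\Phi$ is bounded, we can split $\Phi=\Phi^+-\Phi^-$, or add a constant to $\Phi$ to make it nonnegative. Weak convergence is linear in $\Phi$, and the constant part converges by the fibration hypothesis, so it suffices to treat nonnegative $\Phi$.

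Alternatively, one can use the good coupling characterisation in \Cref{proposition: L^0 properties}. For any good couplings $\aalpha_n$, the quantity $\sfd^{\saalpha_n}_{\L^0}(p_n,id)$ is computed from $(p_n\times id)_\#\aalpha_n$. That measure tends weakly to $(id,id)_\#\mm$, which is concentrated on the diagonal, so the mass of $\{\sfd_\X(x,y)>\varepsilon\}$ tends to $0$, since this set is open and the portmanteau lim sup bound on its closure applies. Either route works.

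The second claim then follows immediately from the last sentence of \Cref{proposition: L^0 properties}, applied with $T_n=p_n$, $T=id$ and $G=g$. I expect no real obstacle here. The only delicate point is the signed-measure convention in the definition of weak convergence, which the splitting above handles.
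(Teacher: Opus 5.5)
Your proof is correct. It reaches the first claim by a slightly different route than the paper.

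\textbf{The paper's route.} It invokes the characterisation in \Cref{proposition: L^0 properties} via equi-integrable sequences: $T_n\to T$ in $\L^0$ iff $(T_n)_\#\mu_n\rightharpoonup T_\#\mu$ for every equi-integrable $\mu_n\rightharpoonup\mu$. With $T_n=p_n$ and $T=id$, this is literally the definition of $\mu_n\rightharpoonup\mu$ along the fibration, so nothing remains to check.

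\textbf{Your route.} You verify the defining property directly, through the identity $(p_n)_\#\bigl((\Phi\circ p_n)\mm_n\bigr)=\Phi\,(p_n)_\#\mm_n$. Your alternative goes through good couplings and the portmanteau theorem; there the relevant fact is that $\{\sfd_\X\ge\varepsilon\}$ is closed and misses the diagonal.

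\textbf{Comparison.} Your argument is self-contained and does not depend on the equivalence imported from \cite{gigliStabilityHeatFlow2024}, at the cost of a few more lines. The paper's argument is a one-liner, but it rests entirely on that cited characterisation.

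\textbf{Second claim.} You handle it exactly as the paper does, via the continuity clause. In fact your direct computation gives it for free: since $\Phi\circ g\in C^0_{\sf b}(\X)$ for every $\Phi\in C^0_{\sf b}(\Z)$, the same identity yields $(\Phi\circ g\circ p_n)\mm_n\rightharpoonup(\Phi\circ g)\mm$.
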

\begin{proof}
    Let us start by proving that $p_n\to id$ in $\L^0$.
    This follows at once from the third characterisation in \Cref{proposition: L^0 properties}: by definition of weak convergence, if $\mu_n\rightharpoonup\mu$, then $(p_n)_\#\mu_n\rightharpoonup\mu=(id)_\#\mu$. 
    Finally, if $g$ is as above, by \Cref{proposition: L^0 properties} $g\circ p_n\to g\circ id=g$ in $\L^0$.
\end{proof}
A similar characterization holds for the strong $\L^p$ convergences.
\begin{proposition}[{\cite[Proposition 3.23]{gigliStabilityHeatFlow2024}}]
    Let $n\mapsto (\X_n, \mm_n)$ be a sequence fibrating over~$(\X, \mm)$, $n\mapsto f_n\in \L^p(\X_n)$ be a sequence of functions and $f\in \L^p(\X)$, where $p\in (1, +\infty)$.
    Then the following are equivalent:
    \begin{itemize}
        \item $f_n\to f$ in $\L^p$;
        \item for a sequence of good couplings $n\mapsto\aalpha_n$, 
        \begin{equation}
            \|f_n\circ\proj_1-f\circ\proj_2\|_{\L^p(\saalpha_n)}\to 0
            ;
        \end{equation}
        \item $f_n\to f$ in $\L^0$ and $|f_n|^p\mm_{\X_n}$ is equi-integrable.
    \end{itemize}
\end{proposition}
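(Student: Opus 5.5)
The plan is to prove the cycle $(\mathrm{ii})\Rightarrow(\mathrm{iii})\Rightarrow(\mathrm{ii})$ and $(\mathrm{ii})\Rightarrow(\mathrm{i})\Rightarrow(\mathrm{ii})$. Throughout I fix a sequence of good couplings $\aalpha_n$ and write $F_n:=f_n\circ\proj_1$, $F:=f\circ\proj_2$ on $\X_n\times\X$. The direction $(\mathrm{i})\Rightarrow(\mathrm{ii})$, via uniform convexity, is the main obstacle.

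\emph{$(\mathrm{ii})\Rightarrow(\mathrm{iii})$.} Markov's inequality gives $\sfd^{\saalpha_n}_{\L^0}(f_n,f)\to0$, so $f_n\to f$ in $\L^0$ by \Cref{proposition: L^0 properties}. For equi-integrability, take $E_n\subset\X_n$ with $\mm_n(E_n)\to0$ and bound
\begin{equation}
\int_{E_n}|f_n|^p\,\d\mm_n\le 2^{p-1}\Big(\|F_n-F\|^p_{\L^p(\saalpha_n)}+\int_{E_n\times\X}|f|^p\circ\proj_2\,\d\aalpha_n\Big).
\end{equation}
The first term tends to $0$ by (ii). For the second term, disintegrate $\aalpha_n$ with respect to $\proj_2$. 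Then $\int_{E_n\times\X}|f|^p\circ\proj_2\,\d\aalpha_n=\int h_n|f|^p\,\d\mm$ for some $0\le h_n\le1$ with $\int h_n\,\d\mm=\aalpha_n(E_n\times\X)=\mm_n(E_n)\to0$. This tends to $0$ because the single function $|f|^p\in\L^1(\mm)$ is uniformly integrable.

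\emph{$(\mathrm{iii})\Rightarrow(\mathrm{ii})$.} Truncate at level $M$, setting $T_M(t):=(-M)\vee t\wedge M$. By \Cref{proposition: L^0 properties}, $T_M\circ f_n\to T_M\circ f$ in $\L^0$, so $\sfd^{\saalpha_n}_{\L^0}\to0$. Since these functions are bounded by $M$, this gives $\|T_M\circ F_n-T_M\circ F\|_{\L^p(\saalpha_n)}\to0$. The tails are handled separately. The $\L^0$ convergence together with the tightness of $f_\#\mm$ yields $\limsup_n\mm_n(|f_n|>M)\to0$ as $M\to\infty$. By equi-integrability (applied along any sequence of sets with vanishing measure, via a diagonal argument), $\limsup_n\int_{\{|f_n|>M\}}|f_n|^p\,\d\mm_n\to0$ as $M\to\infty$. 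The same holds for $f$ on the $\X$-side, and combining the pieces gives (ii).

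\emph{$(\mathrm{ii})\Rightarrow(\mathrm{i})$.} Norm convergence follows from the triangle inequality in $\L^p(\aalpha_n)$, since $\|F\|_{\L^p(\saalpha_n)}=\|f\|_{\L^p(\mm)}$. For weak convergence, fix $\varphi\in C^0_{\sf b}(\X)$ and write
\begin{equation}
\int(\varphi\circ p_n)f_n\,\d\mm_n-\int\varphi f\,\d\mm=\int(\varphi\circ p_n\circ\proj_1)(F_n-F)\,\d\aalpha_n+\int\big(\varphi\circ p_n\circ\proj_1-\varphi\circ\proj_2\big)F\,\d\aalpha_n .
\end{equation}
The first term vanishes in the limit by H\"older's inequality. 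For the second, replace $f$ by a bounded continuous $g$ with $\|f-g\|_{\L^p}<\varepsilon$. With $g$ in place of $f$ the integrand is continuous and bounded, and it vanishes on the diagonal. The term therefore tends to $0$ because $(p_n\times id)_\#\aalpha_n\rightharpoonup(id,id)_\#\mm$.

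\emph{$(\mathrm{i})\Rightarrow(\mathrm{ii})$.} Again pick a bounded continuous $g$ with $\|f-g\|_{\L^p(\mm)}<\varepsilon$. Two facts are immediate. First, $\|g\circ p_n\|_{\L^p(\mm_n)}\to\|g\|_{\L^p(\mm)}$ by the weak convergence $(p_n)_\#\mm_n\rightharpoonup\mm$. Second, $\|g\circ p_n\circ\proj_1-g\circ\proj_2\|_{\L^p(\saalpha_n)}\to0$ by the good-coupling property. It therefore suffices to show that $\limsup_n\|f_n-g\circ p_n\|_{\L^p(\mm_n)}\le\omega(\varepsilon)$ with $\omega(\varepsilon)\to0$. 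The functions $u_n:=\tfrac12(f_n+g\circ p_n)$ satisfy $u_n\mm_n\rightharpoonup\tfrac12(f+g)\mm$. The key lemma is lower semicontinuity of the $\L^p$ norm under this weak convergence, namely $\liminf_n\|u_n\|_{\L^p(\mm_n)}\ge\|\tfrac12(f+g)\|_{\L^p(\mm)}\ge\|f\|_{\L^p(\mm)}-\varepsilon$. I would prove it by duality: test against $\psi\circ p_n$ with $\psi\in C^0_{\sf b}$ and use that $\|\psi\circ p_n\|_{\L^{p'}(\mm_n)}\to\|\psi\|_{\L^{p'}(\mm)}$, with bounded continuous functions norming $\L^p(\mm)$. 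Meanwhile both $\|f_n\|$ and $\|g\circ p_n\|$ tend to within $\varepsilon$ of $\|f\|$. Uniform convexity of $\L^p$ (Clarkson's inequalities, applied in each $\L^p(\mm_n)$ with a modulus independent of $n$) then forces $\limsup_n\|f_n-g\circ p_n\|\le\omega(\varepsilon)$. This concludes (ii).
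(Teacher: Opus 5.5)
The paper gives no proof of this proposition. It is imported from \cite[Proposition 3.23]{gigliStabilityHeatFlow2024}, like the other unproved statements in that paragraph, so there is no in-paper argument to match.

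Your proof is correct and self-contained. Each implication uses only three ingredients: \Cref{proposition: L^0 properties}, the good-coupling property $(p_n\times id)_\#\aalpha_n\rightharpoonup(id,id)_\#\mm$, and density of $C^0_{\sf b}(\X)$ in $\L^q(\mm)$ for $q<\infty$. Since you fix an arbitrary sequence of good couplings, you actually show that (ii) for one sequence is equivalent to (ii) for every sequence, mirroring \Cref{proposition: L^0 properties}.

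The substantive step is (i)$\Rightarrow$(ii). There you combine the duality lower semicontinuity of the $\L^p$ norm along the fibration with Clarkson's inequalities, whose modulus does not depend on the measure space. This even yields an explicit bound: for $p\ge 2$ one gets $\limsup_n\|\tfrac12(f_n-g\circ p_n)\|^p_{\L^p(\mm_n)}\le(\|f\|_{\L^p}+\varepsilon)^p-(\|f\|_{\L^p}-\varepsilon/2)^p$.

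A natural alternative for the hard direction works with the graph measures $\sigma_n:=(p_n,f_n)_\#\mm_n\in\pr(\X\times\mathbb R)$.
\begin{itemize}
\item They are tight, and $|t|$ is uniformly integrable along them because $p>1$.
\item Every limit point $\sigma$ has first marginal $\mm$ and barycentre $f$.
\item The bound $\int|t|^p\,\d\sigma\le\lim_n\|f_n\|^p_{\L^p}=\|f\|^p_{\L^p}$ forces equality in Jensen, so $\sigma_n\rightharpoonup(id,f)_\#\mm$ with convergence of $p$-th moments.
\item Testing with $\varphi(x)\Phi(t)$ gives the $\L^0$ convergence, and moment convergence gives equi-integrability of $|f_n|^p\mm_n$. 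This is (i)$\Rightarrow$(iii) directly.
\end{itemize}
That route stays at the level of weak convergence of measures and needs only strict convexity of $t\mapsto|t|^p$. Yours needs uniform convexity and an approximating $g$, but it avoids the compactness and identification argument and lands directly on (ii) for every good coupling.

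Two cosmetic points:
\begin{itemize}
\item When $f=0$ your uniform-convexity step degenerates, but then (ii) is immediate from $\|f_n\|_{\L^p}\to0$.
\item In (ii)$\Rightarrow$(iii) the disintegration is unnecessary. The elementary bound $\int_{E_n\times\X}|f|^p\circ\proj_2\,\d\aalpha_n\le M\,\mm_n(E_n)+\int_{\{|f|^p>M\}}|f|^p\,\d\mm$ suffices.
\end{itemize}
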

Finally, one can also define convergence in measure for functions defined on a fibrating sequence whose target is another fibrating sequence.
\begin{definition}
    Let $n\mapsto (\X_n, \mm_n)$ be a sequence fibrating over~$(\X, \mm)$ and let $n\mapsto (\Y_n, \mm_n)$ be a sequence fibrating over~$(\Y, \mm)$ with projections $n\mapsto p_n^\Y$.
    Let $n\mapsto T_n:\X_n\to \Y_n$ be a sequence of Borel maps such that $(T_n)_\#\mm_{\X_n}$ is equi-integrable.
    Finally, let $T:\X\to\Y$ be a Borel map.
    Then we say that $T_n$ converges to $T$ in $\L^0$ (or in measure) if $p_n^\Y\circ T_n \to T$ in $\L^0$.
\end{definition}
\begin{remark}
    Notice that the convergence in measure for functions on varying spaces does not encompass the one for functions defined on a single space as a particular case unless  the functions have equibounded compression.
\end{remark}
\begin{proposition}[{\cite[Proposition 3.29]{gigliStabilityHeatFlow2024}}]\label{proposition: L0 to fibration}
    Let $n\mapsto (\X_n, \mm_n)$ be a sequence fibrating over a space $(\X, \mm)$ and let $n\mapsto (\Y_n, \mm_n)$ be another sequence fibrating over another space $(\Y, \mm)$ with projections $n\mapsto p_n^\Y$.
    Let $n\mapsto T_n:\X_n\to \Y_n$ be a sequence of Borel maps such that $(T_n)_\#\mm_{\X_n}$ is equi-integrable.
    Finally, let $T:\X\to\Y$ be a Borel map.
    Then the following are equivalent:
    \begin{itemize}
        \item $T_n\to T$ in $\L^0$;
        \item for all equi-integrable sequences $n\mapsto \mu_n\in \pr(\X_n)$ with $\mu_n\rightharpoonup\mu$, it holds $(T_n)_\#\mu_n\rightharpoonup T_\#\mu$.
    \end{itemize}
\end{proposition}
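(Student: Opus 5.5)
The plan is to reduce the statement to the fourth characterisation in \Cref{proposition: L^0 properties}, applied to the composed maps $S_n\coloneqq p_n^\Y\circ T_n:\X_n\to\Y$. Both conditions in the statement only involve measures pushed to the fixed Polish space $\Y$ through $p_n^\Y$, so after unwinding the definitions they should be literally the same condition.

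\emph{Step 1: rewrite the first condition.} By the definition preceding the statement, $T_n\to T$ in $\L^0$ means exactly $S_n\to T$ in $\L^0$ along the fibration $n\mapsto(\X_n,\mm_n)$ over $(\X,\mm)$, with values in the Polish space $\Y$. The maps $S_n$ are Borel, since $T_n$ and $p_n^\Y$ are. With Borel representatives they are separably valued (as $\Y$ is Polish), so $S_n\in\L^0(\X_n;\Y)$. Likewise $T\in\L^0(\X;\Y)$. Hence \Cref{proposition: L^0 properties} applies with $\Z=\Y$.

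\emph{Step 2: invoke the characterisation.} By the fourth item of \Cref{proposition: L^0 properties}, $S_n\to T$ in $\L^0$ holds if and only if $(S_n)_\#\mu_n\rightharpoonup T_\#\mu$ in $\pr(\Y)$ for every equi-integrable sequence $\mu_n\in\pr(\X_n)$ with $\mu_n\rightharpoonup\mu$, that is, with $(p_n)_\#\mu_n\rightharpoonup\mu$.

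\emph{Step 3: rewrite the second condition.} For such $\mu_n$, the measures $(T_n)_\#\mu_n$ are finite Borel measures on $\Y_n$. By the definition of weak convergence along the fibration $n\mapsto\Y_n$ over $\Y$, the convergence $(T_n)_\#\mu_n\rightharpoonup T_\#\mu$ means
\[
(p_n^\Y)_\#(T_n)_\#\mu_n=(S_n)_\#\mu_n\rightharpoonup T_\#\mu .
\]
This is exactly the condition obtained in Step 2, which gives the equivalence.

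The only point needing care, rather than being a real obstacle, is bookkeeping. The equi-integrability hypothesis on $(T_n)_\#\mm_{\X_n}$ is only needed to make the notion of $\L^0$-convergence between varying spaces well posed; it plays no role in the equivalence itself. Also, the notion of weak convergence along a fibration does not require equi-integrability of $(T_n)_\#\mu_n$, so no extra condition appears on either side. I would simply check that the quantifier ``for all equi-integrable $\mu_n\rightharpoonup\mu$'' is taken over the same class of sequences on both sides, which it is.
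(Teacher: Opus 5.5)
Your argument is correct. The paper gives no proof of this statement; it only cites \cite[Proposition 3.29]{gigliStabilityHeatFlow2024}. Under the definition adopted here, $T_n\to T$ in $\L^0$ means $p_n^\Y\circ T_n\to T$ in $\L^0$ along the fibration of $\X_n$ over $\X$. With that definition the statement is exactly the last characterisation in \Cref{proposition: L^0 properties}, applied to $S_n=p_n^\Y\circ T_n$ with $\Z=\Y$, as you unwind it. You are also right that the equi-integrability of $(T_n)_\#\mm_{\X_n}$ is never used.

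The one inaccuracy is your claim that the quantifier ranges over the same class on both sides. In \Cref{proposition: L^0 properties} the test sequences are equi-integrable finite Borel measures, whereas here $\mu_n\in\pr(\X_n)$. This only matters for the implication from the second item to the first, and normalisation fixes it. Let $\nu_n\rightharpoonup\nu$ be equi-integrable, with masses $c_n\coloneqq\nu_n(\X_n)\to c\coloneqq\nu(\X)$; testing against the constant $1$ gives this convergence.
\begin{itemize}
\item If $c>0$, apply the hypothesis to the probability measures $\nu_n/c_n$ for $n$ large. They are still equi-integrable because $c_n\ge c/2$ eventually, and they converge to $\nu/c$. Then multiply back by $c_n\to c$.
\item If $c=0$, then $(S_n)_\#\nu_n$ has mass $c_n\to 0$ and $T_\#\nu=0$, so the convergence is trivial.
\end{itemize}
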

As mentioned before, fibrations allow to express extrinsically multiple kind of convergences.
For instance, the following is a reformulation of the well known identity between $\square$-convergence and mGH-convergence.
\begin{definition}[{\cite[Definition 4.21]{Sh16}}] \label{d:eiso}
    Let $\X, \Y$ be mm-spaces, let $\varphi:\X\to\Y$ be a Borel map and $\varepsilon>0$.
    The map $\varphi$ is said to be an $\varepsilon$-mm-isomorphism if there exists $E\subset \X$ such that the following hold:
    \begin{itemize}
        \item $\mm_\X(E)\geq 1-\varepsilon$;
        \item $|\sfd_\X-\varphi^*\sfd_\Y|\leq \varepsilon$ on $E\times E$;
        \item $\sfd_{\sf P}(\varphi_\#\mm_\X, \mm_\Y)\leq \varepsilon$.
    \end{itemize}
\end{definition}
\begin{proposition}\label{prop: mGH fibration}
    Let $n\mapsto \X_n$ be a sequence of mm-spaces and $\X$ be another mm-space.
    Then the following are equivalent:
    \begin{itemize}
        \item $\X_n$ converges to $\X$ in the $\square$-topology;
        \item there exists $\X'$ mm-isomorphic to $\X$ such that $n\mapsto \X_n$ fibrates over $\X'$ with projections $n\mapsto p_n$ and the projections $p_n$ are isometries;
        \item the sequence $n\mapsto \X_n$ fibrates over $\X$ with projections $n\mapsto q_n$ and there is a vanishing sequence $n\mapsto\varepsilon_n>0$ such that $q_n$ is a $\varepsilon_n$-mm-isomorphism for $n\in \N$.
    \end{itemize}
\end{proposition}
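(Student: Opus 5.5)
The plan is to go around the cycle of implications (i)$\Rightarrow$(iii)$\Rightarrow$(ii)$\Rightarrow$(i), where (i) is $\square$-convergence, (ii) is fibration with isometric projections over an isomorphic copy $\X'$, and (iii) is fibration over $\X$ with $\varepsilon_n$-mm-isomorphisms.

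\textbf{(ii)$\Rightarrow$(i).} Assume (ii). Each $p_n:\X_n\to\X'$ is an isometry, so $\X_n$ is mm-isomorphic to $(\X',\sfd_{\X'},(p_n)_\#\mm_{\X_n})$, regarded on the closed set $\overline{p_n(\X_n)}$. By \eqref{in:BP} we get
\[
\square(\X_n,\X')\le 2\,\sfd_{\sf P}\bigl((p_n)_\#\mm_{\X_n},\mm_{\X'}\bigr)\to 0,
\]
because weak convergence is metrised by $\sfd_{\sf P}$. Since $\X'\cong\X$, this gives (i).

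\textbf{(i)$\Rightarrow$(iii).} Assume $\square(\X_n,\X)\to0$.
\begin{itemize}
\item By the Gromov--Prokhorov characterisation recalled in \Cref{subsec:CM}, there are isometric embeddings $\varphi_n:\X_n\to\ell^\infty$ and $\varphi:\X\to\ell^\infty$ with $(\varphi_n)_\#\mm_{\X_n}\rightharpoonup\varphi_\#\mm_\X$.
\item Set $\Z:=\overline{\bigcup_n\varphi_n(\X_n)\cup\varphi(\X)}$. This is a separable closed subset of $\ell^\infty$, hence Polish.
\item Build a Borel nearest-point-type map $r_n:\Z\to\varphi(\supp\mm_\X)$. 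Using separability, pick a countable dense set in $\varphi(\supp \mm_\X)$ and choose the first point within $2\,{\rm dist}+1/n$.
\item Put $q_n:=\varphi^{-1}\circ r_n\circ\varphi_n$.
\end{itemize}
Since $\sfd_{\sf P}((\varphi_n)_\#\mm_n,\varphi_\#\mm)=:\delta_n\to0$, the set $E_n$ of points $x\in\X_n$ with ${\rm dist}(\varphi_n(x),\varphi(\X))\le\delta_n$ has measure at least $1-\delta_n$. On $E_n$, $r_n$ moves points by at most $2\delta_n+1/n$. Hence on $E_n\times E_n$ the distortion satisfies $|\sfd_{\X_n}-q_n^*\sfd_\X|\le 4\delta_n+2/n$. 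Moreover $\sfd_{\sf P}((q_n)_\#\mm_n,\mm_\X)\le 3\delta_n+1/n$. These two bounds give (iii), and the Prokhorov bound also gives the fibration condition.

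\textbf{(iii)$\Rightarrow$(ii).} This is the step I expect to be the main obstacle, because approximate isometries must be turned into exact ones. I would not modify $\X$ metrically. Instead, enlarge $\X$ to an isomorphic copy in which the $\X_n$ embed.
\begin{itemize}
\item By \cite[Lemma 4.22/Prop.~4.25-type]{Sh16}, an $\varepsilon_n$-mm-isomorphism gives $\square(\X_n,\X)\to0$, i.e.\ (i).
\item Take the ambient $\Z\subset\ell^\infty$ from the previous step and let $\X':=(\Z,\sfd_{\ell^\infty},\varphi_\#\mm_\X)$. This is mm-isomorphic to $\X$, since the measure sits on $\varphi(\X)$.
\item Use $p_n:=\varphi_n$, which are exact isometries into $\X'$, and $(p_n)_\#\mm_n\rightharpoonup\mm_{\X'}$.
\end{itemize}
This yields (ii). The delicate point is the cited implication from $\varepsilon$-mm-isomorphisms to $\square$-closeness, with $\square\le 3\varepsilon$ or similar. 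That is exactly \cite[Lemma 4.22]{Sh16}, which I would invoke rather than reprove.
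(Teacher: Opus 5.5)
Your proposal is correct. It differs from the paper's proof mainly in how much is proved by hand.

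The paper just quotes two external results. For (i)$\Leftrightarrow$(ii) it uses the embedding characterisation of $\square$-convergence \cite[Theorem 3.15]{GigMonSav15}. For (i)$\Leftrightarrow$(iii) it uses \cite[Lemma 4.22]{Sh16}, in both directions.

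You instead run the cycle (i)$\Rightarrow$(iii)$\Rightarrow$(ii)$\Rightarrow$(i):
\begin{itemize}
\item For (ii)$\Rightarrow$(i), you argue elementarily from \eqref{in:BP}: an isometric projection makes $\X_n$ mm-isomorphic to $(\X',(p_n)_\#\mm_{\X_n})$.
\item For (i)$\Rightarrow$(iii), you give an explicit construction: a common embedding into $\ell^\infty$, followed by a Borel near-point retraction onto a countable dense subset of $\varphi(\supp\mm_\X)$.
\item Only (iii)$\Rightarrow$(i) is delegated to Shioya's lemma. The subsequent (i)$\Rightarrow$(ii) then uses the same common-embedding fact that the paper cites (recalled in \S2.2).
\end{itemize}
Your route gains self-containedness and an explicit rate, $\varepsilon_n\approx 4\delta_n+2/n$, with $\delta_n$ the Prokhorov distance of the embedded measures. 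The paper's route gains brevity.

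One small fix is needed in the (i)$\Rightarrow$(iii) step. You define $E_n$ through the distance to $\varphi(\X)$, but $r_n$ retracts onto $\varphi(\supp\mm_\X)$. A point within $\delta_n$ of $\varphi(\X)$ may be far from $\varphi(\supp\mm_\X)$, so the displacement bound $2\delta_n+1/n$ is not justified as written. Make the two sets agree, for example by using $\varphi(\supp\mm_\X)$ in both places. This set is closed and has full $\varphi_\#\mm_\X$-measure, so the Prokhorov estimate still gives $\mm_{\X_n}(E_n)\ge 1-\delta_n$, and the rest of your bounds go through unchanged.
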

\begin{proof}
    It is a well-known fact (see \cite[Theorem 3.15]{GigMonSav15} for a far more general statement) that a sequence $(\X_n)_{n\in \N}$ of spaces converges in the $\square$-topology to a limit $\X$ if and only if there exists a larger complete and separable metric space $\X'$, a sequence of isometric embeddings $p_n:\X_n\to \X$ and a limit isometric embedding $\varphi:\X\to \X'$ such that $(p_n)_\#\mm_{\X_n}\rightharpoonup \varphi_\# \mm_{\X}$.
    Notice that the space endowed with the measure $\varphi_\# \mm_{\X}$ is mm-isomorphic to $\X$.
    This proves the equivalence between the first and second item.

    By \cite[Lemma 4.22]{Sh16}, a sequence $(\X_n)_{n\in\N}$ converges in the $\square$-topology to an mm-space $\X$ if and only if there exists a sequence of $\varepsilon_n$-mm-isomorphism $q_n\X_n\to \X$ for a vanishing sequence $n\mapsto\varepsilon_n>0$.
    By the definition of $\varepsilon_n$-mm-isomorphism, such a sequence $n\mapsto q_n$ satisfies $(q_n)_\#\mm_{\X_n}\rightharpoonup \mm_\X$.
    This proves the equivalence between the first and the third item.
\end{proof}
Similarly, fibrating sequences allow to read convergence in concentration as a condition on the convergence in measure of 1-Lipschitz functions.    
\begin{proposition}[{\cite[Proposition 2.2.40]{vinciniVariationalProblemsNonsmooth2025}}, see also \cite{Gro06} and \cite{Sh16}]\label{proposition: definition via fibration}
    Let $n\mapsto \X_n$ be a sequence of mm-spaces and $\X$ be another mm-space.
    Then $\X_n\to \X$ in concentration if and only if it fibrates on $\X$ with projections $(p_n)_{n\in \N}$ satisfying the following two properties:
    \begin{itemize}
        \item for each $f\in \Lip_1(\X, \mm_\X)$ there exists a sequence $n\mapsto f_n\in \Lip_1(\X_n, \mm_{\X_n})$ such that $f_n\to f$ in $\L^0$;
        \item if $n\mapsto f_n\in\Lip_1(\X_n, \mm_{\X_n})$ is a sequence with the Lévy mean of $f_n$ constantly equal to 0, then every subsequence admits a further subsequence converging in $\L^0$ to some $f\in \Lip_1(\X, \mm_\X)$ -- possibly dependent on the subsequence.
    \end{itemize}
\end{proposition}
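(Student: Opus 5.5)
The plan is to prove both implications directly, using as a black box the characterisation of $\square$-convergence by fibrations (\Cref{prop: mGH fibration}) together with the definition of $\sfd_{\sf conc}$ through parameters. The strategy is to compare pulled-back $1$-Lipschitz functions on a common probability space and then transport them along the projections.

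\emph{($\Rightarrow$)} Assume $\sfd_{\sf conc}(\X_n,\X)\to0$.
\begin{itemize}
\item By the coupling formulation (Nakajima), choose optimal couplings $\ppi_n\in\adm(\mm_{\X_n},\mm_\X)$ with $\sfd^{\sppi_n}_{\sf conc}(\X_n,\X)=:\varepsilon_n\to0$.
\item To build Borel maps $p_n\colon \X_n\to\X$ with $(p_n)_\#\mm_{\X_n}\rightharpoonup\mm_\X$, I would disintegrate $\ppi_n$ over $\X_n$. A first attempt is to take $p_n$ to be a measurable selection concentrated near the $\ppi_n$-conditional law; this may fail when $\ppi_n$ is not induced by a map.
\item The more robust choice is to fix a countable dense family $(g_k)$ in $\Lip_1(\X,\mm_\X)$, which is available since $\mathcal L_1(\X)$ is compact. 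For each $k$ there is $h^n_k\in\Lip_1(\X_n)$ that is $\varepsilon_n$-close, modulo constants, to $g_k$ in $\L^0(\ppi_n)$.
\item Using the Kuratowski embedding $\iota=(g_1,g_2,\dots)$ of $\X$, set $p_n:=\operatorname{proj}_{\iota(\X)}\circ(h^n_1+c^n_1,\dots,h^n_{k_n}+c^n_{k_n},0,\dots)$ with $k_n\to\infty$ slowly. One has to check that the nearest-point projection onto the closed set $\iota(\X)$ in $\mathbb R^\infty$ is admissible; I expect this to hold because the pushforward is close in Prokhorov distance to $\iota_\#\mm_\X$, by \eqref{in:LP}.
\item The first bullet (approximation of $f\in\Lip_1(\X)$) then follows from density of $(g_k)$ and $h^n_k\circ$ being close to $g_k\circ p_n$.
\item The second bullet follows from the Hausdorff bound: a Lévy-mean-zero $f_n$ is $\varepsilon_n$-close in $\L^0(\ppi_n)$ to some $g\circ\proj_2$, and compactness of $\mathcal L_1(\X)$, with Lévy means pinning constants, gives subsequential limits.
\end{itemize}

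\emph{($\Leftarrow$)} Given such projections, I would bound $\sfd^{\saalpha_n}_{\sf conc}$ for good couplings $\aalpha_n$, using the triangle inequality of \Cref{remark: triangle inequality}.
\begin{itemize}
\item The inclusion of $\Lip_1(\X)$ into a neighbourhood of $\Lip_1(\X_n)$ follows from the first bullet, made uniform over the compact $\mathcal L_1(\X)$ via a finite net.
\item For the reverse inclusion, argue by contradiction. Suppose there are $f_n$ at distance $\ge\delta$ from $\proj_2^*\Lip_1(\X)$. Normalise $f_n$ by its Lévy mean, and extract an $\L^0$-limit $f$ along the fibration using the second bullet. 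This contradicts $\sfd^{\saalpha_n}_{\L^0}(f_n,f)\to0$, which comes from \Cref{proposition: L^0 properties}.
\end{itemize}

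The main obstacle is the construction of the projections $p_n$ in ($\Rightarrow$), i.e.\ turning coupling information into genuine maps with weakly converging pushforwards. I would handle it through the Kuratowski embedding and finite-dimensional truncation as described above.
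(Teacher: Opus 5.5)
The paper does not prove this proposition; it quotes it from \cite{vinciniVariationalProblemsNonsmooth2025}. The closest in-paper argument is the proof of the emm analogue, \Cref{proposition: definition via fibration concentrated}.

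Your ($\Leftarrow$) direction is essentially that argument and is fine: a finite net in the compact $\mathcal L_1(\X)$ gives one inclusion, and contradiction plus L\'evy-mean normalisation and \Cref{proposition: L^0 properties} gives the other.

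Your ($\Rightarrow$) direction takes a genuinely different route. The paper goes through pyramids. It takes $\X_n'\prec\X_n$ (via $1$-Lipschitz measure-preserving $\psi_n$) with $\X_n'\to\X$ in $\square$, and uses the $\varepsilon_n$-mm-isomorphisms $q_n:\X_n'\to\X$ from \Cref{prop: mGH fibration}. It then sets $p_n=q_n\circ\psi_n$ and applies \Cref{lemma: Hausdorff convergence via GH} to make $\Lip_1(\X_n,\mm_{\X_n})$ and $\psi_n^*\Lip_1(\X_n',\mm_{\X_n'})$ Hausdorff close. You instead read $p_n$ off a coupling $\ppi_n$ realising $\sfd_{\sf conc}$. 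This avoids pyramids and Gromov--Hausdorff arguments, and, done correctly, makes $\ppi_n$ itself a good coupling, so both bullets become immediate. The paper's route instead recycles the $\square$-machinery it also needs for inverse-limit targets.

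There is, however, a real gap exactly at the step you flagged. Three things go wrong.

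\emph{Closedness.} $\iota(\X)$ need not be closed in $\mathbb R^\infty$. Take $\X=\{0,1,2,\dots\}$ with $\sfd(i,j)=1$ for $i\neq j$ and a fully supported $\mm_\X$. The rational-valued, eventually constant functions of oscillation at most $1$ form an $\L^0$-dense family containing every $\sfd(\cdot,j)$. For this family $\iota(m)$ converges coordinatewise to a point outside $\iota(\X)$. Moreover, for a metric $\rho$ metrising $\tau^\infty$ exact nearest points need not exist. For $\ell_\infty$, your truncated vector is at infinite distance from $\iota(\X)$, since a dense family in $\Lip_1(\X,\mm_\X)$ contains functions with arbitrarily large values.

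\emph{The wrong target property.} Prokhorov closeness of pushforwards is not what the bullets need. Your estimates hold in $\L^0(\ppi_n)$, but convergence along the fibration is tested on good couplings for $p_n$. So you must show $\sfd_\X(p_n(x),y)\to0$ in $\ppi_n$-probability, i.e.\ $(p_n\times id)_\#\ppi_n\rightharpoonup(id,id)_\#\mm_\X$. Without this, your second-bullet step (``$f_n$ is $\varepsilon_n$-close in $\L^0(\ppi_n)$ to some $g\circ\proj_2$'') says nothing about convergence along $p_n$.

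\emph{Continuity of $\iota^{-1}$.} Pulling $\rho$-closeness back to $\X$ requires $\iota$ to be a homeomorphism of $\supp(\mm_\X)$ onto its image. An arbitrary $\L^0$-dense family does not give this. In the same example, the functions eventually equal to their value at $0$ are $\L^0$-dense, yet $\iota(m)\to\iota(0)$.

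The repair is short.
\begin{itemize}
\item Include $\sfd(\cdot,x_j)$ for a dense sequence $(x_j)$ in $\supp(\mm_\X)$, so that $\iota$ is a homeomorphism onto $S:=\iota(\supp(\mm_\X))$.
\item With $\tilde H_n:=(h^n_1,\dots,h^n_{k_n},0,\dots)$, let $p_n(x):=x_j$ for the least $j$ with $\rho(\tilde H_n(x),\iota(x_j))<\rho(\tilde H_n(x),S)+1/n$. This is a Borel map.
\item Your coordinatewise $\L^0(\ppi_n)$ bounds give $\rho(\tilde H_n\circ\proj_1,\iota\circ\proj_2)\to0$ in $\ppi_n$-probability for any $k_n\to\infty$. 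Hence $\rho(\iota\circ p_n\circ\proj_1,\iota\circ\proj_2)\le 2\rho(\tilde H_n\circ\proj_1,\iota\circ\proj_2)+1/n\to0$ in $\ppi_n$-probability.
\item It follows that $(\iota\times\iota)_\#(p_n\times id)_\#\ppi_n\rightharpoonup(\iota\times\iota)_\#(id,id)_\#\mm_\X$. All these measures live on $S\times S$, so the portmanteau theorem on $S\times S$ and continuity of $\iota^{-1}$ give $(p_n\times id)_\#\ppi_n\rightharpoonup(id,id)_\#\mm_\X$.
\item Both bullets then follow from the Hausdorff bound in $\L^0(\ppi_n)$, your L\'evy-mean compactness argument, and \Cref{proposition: L^0 properties}.
\end{itemize}
The density of $(g_k)$ is used only to construct $p_n$.
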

\begin{remark}\label{remark: box implies conc}
    It is a well known fact that convergence in concentration is weaker than $\square$-convergence.
    This can be read in the fact that the sequence $n\mapsto q_n$ in \Cref{prop: mGH fibration} satisfy the properties in \Cref{proposition: definition via fibration}.
\end{remark}
Finally, while in general the convergences defined above depend strongly on the choice of projections $p_n$, this is not the case when the spaces converge in concentration -- up to the action of the group of automorphisms.
Thanks to \Cref{proposition: L0 to fibration}, that can be read through the convergence of the projections.
\begin{definition}
    Let $n\mapsto \X_n$ be a sequence of mm-spaces fibrating over an mm-space $\X$ with projections $n\mapsto p_n$ and fibrating over an mm-space $\Y$ with projections $n\mapsto q_n$.
    We say that $p_n$ and $q_n$ are equivalent up to isomorphisms if there exists a sequence $n\mapsto \Phi_n\in {\rm Iso}(\X;\Y)$ and good couplings $n\mapsto \aalpha_n$ with respect to $p_n$ such that $\sfd_{\L^0}^{\saalpha_n}(q_n, \Phi_n)\to 0$.
\end{definition}

\begin{proposition}[{\cite[Theorem 4.3]{gigliStabilityHeatFlow2024}}]
    Let $n\mapsto \X_n$ be a sequence of mm-spaces converging in concentration to a limit mm-space $\X$.
    Let $n\mapsto p_n, q_n:\X_n\to\X$ be two sequences of projections inducing fibrations of $\X_n$ over $\X$ and satisfying the assumptions of \Cref{proposition: definition via fibration}.
    Then $p_n$ and $q_n$ are equivalent up to isomorphisms.
\end{proposition}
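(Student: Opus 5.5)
The plan is to build, along a subsequence, a limit map $\Phi:\X\to\X$ with $q_n\to\Phi$ in $\L^0$ along the fibration given by $p_n$. I then show that $\Phi$ is an isomorphism, and finally remove the passage to subsequences by a contradiction argument.

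\emph{Construction of $\Phi$.} Fix good couplings $\aalpha_n$ for $p_n$. Since $\X$ is an mm-space, I choose a countable family $(g_k)_{k\in\N}$ of bounded $1$-Lipschitz functions with $\sfd_\X=\sup_k|g_k(\cdot)-g_k(\cdot\cdot)|$. Then $\iota=(g_k)_k:\X\to\ell^\infty$ is an isometric embedding and also a homeomorphism onto its image.

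For each $k$, the function $g_k\circ q_n$ lies in $\Lip_1(\X_n,\mm_{\X_n})$. Because $(q_n)_\#\mm_{\X_n}\rightharpoonup\mm_\X$ and $g_k$ is bounded, the Lévy means of $g_k\circ q_n$ are bounded. Subtracting them, the second property in \Cref{proposition: definition via fibration} for $p_n$ lets me extract a subsequence along which $g_k\circ q_n\to h_k\in\Lip_1(\X,\mm_\X)$ in $\L^0$ along $p_n$. A diagonal argument makes this hold simultaneously for all $k$.

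Next I identify the law of $h=(h_k)_k$. I use the fourth characterisation in \Cref{proposition: L^0 properties} with $\mu_n=\mm_{\X_n}$, applied to finite tuples $(g_1,\dots,g_N)\circ q_n$. This gives that the laws of $(h_1,\dots,h_N)$ coincide with those of $(g_1,\dots,g_N)$ under $\mm_\X$, so $h_\#\mm_\X=\iota_\#\mm_\X$. Hence $h$ takes values in $\iota(\X)$ a.e., and $\Phi:=\iota^{-1}\circ h$ is well defined, measure preserving, and $1$-Lipschitz on a full-measure set.

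Finally, since $\iota^{-1}$ is continuous on the image, the joint convergence yields $q_n\to\Phi$ in $\L^0$ along $p_n$. By \Cref{proposition: L^0 properties} this means $\sfd^{\saalpha_n}_{\L^0}(q_n,\Phi)\to0$ along the subsequence.

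\emph{$\Phi$ is an isomorphism.} This is the main obstacle, since a priori $\Phi$ is only $1$-Lipschitz. I would show that $\Phi^*:\Lip_1(\X,\mm_\X)\to\Lip_1(\X,\mm_\X)$ is surjective; the mm-space analogue of \Cref{cor: characterisation isomorphisms}, with $\sfd$ reconstructed from $1$-Lipschitz functions, then gives that $\Phi$ is an a.e.\ isometry.

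So take $g\in\Lip_1(\X,\mm_\X)$. By the first property for $p_n$, there are $g_n\in\Lip_1(\X_n,\mm_{\X_n})$ with $g_n\to g$ along $p_n$. By the compactness property for $q_n$, after normalising Lévy means (which stay bounded because $g_n$ converges in law), a further subsequence gives $g_n\to f$ along $q_n$ for some $f\in\Lip_1(\X,\mm_\X)$.

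It remains to check $g=f\circ\Phi$. I would test the pair $(g_n,q_n)$ with the pushforward characterisation. Along $p_n$, the maps $(g_n,q_n)$ converge jointly to $(g,\Phi)$. Along $q_n$, the graph-type convergence $(g_n,q_n)\to(f,\mathrm{id})$ holds, using \Cref{cor: approximation via projections} for the second component. Comparing the two joint laws on $\R\times\X$, both equal the law of $(g_n,q_n)$ under $\mm_{\X_n}$ in the limit. Hence the law of $(g,\Phi)$ under $\mm_\X$ equals the law of $(f,\mathrm{id})$ under $\mm_\X$. This says $(g,\Phi)_\#\mm_\X$ is concentrated on the graph of $f$, so $g=f\circ\Phi$ a.e. Thus $\Phi^*$ is onto.

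\emph{Full sequence.} Suppose the conclusion fails. Then there are $\delta>0$ and a subsequence along which $\inf_{\Psi\in{\rm Iso}(\X;\X)}\sfd^{\saalpha_n}_{\L^0}(q_n,\Psi)\ge\delta$. Running the argument above on that subsequence produces an isomorphism $\Phi$ with $\sfd^{\saalpha_n}_{\L^0}(q_n,\Phi)\to0$ along a further subsequence, a contradiction. Therefore one can choose $\Phi_n\in{\rm Iso}(\X;\X)$ (near-minimisers for each $n$) with $\sfd^{\saalpha_n}_{\L^0}(q_n,\Phi_n)\to0$, which is the claimed equivalence up to isomorphisms.
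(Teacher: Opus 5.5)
Your architecture matches the one the paper uses for the emm-analogue (\Cref{cor: compactness projections} followed by the proof of \Cref{prop: equivalence of projections}): precompactness of $q_n$ along the fibration of $p_n$, identification of a $1$-Lipschitz measure-preserving limit $\Phi$, surjectivity of $\Phi^*$ on $\Lip_1(\X,\mm_\X)$, and a subsequence argument. Your comparison of the two joint laws of $(g_n,q_n)$ is a rigorous way to obtain $g=f\circ\Phi$.

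There is, however, a genuine gap in the construction of $\Phi$. You assert that $g_k\circ q_n\in\Lip_1(\X_n,\mm_{\X_n})$, and that is what lets you invoke the compactness property of $p_n$. Nothing in the hypotheses makes $q_n$ $1$-Lipschitz. Projections of a fibration are only Borel maps with $(q_n)_\#\mm_{\X_n}\rightharpoonup\mm_\X$, and the two properties of \Cref{proposition: definition via fibration} concern functions, not the maps themselves. For instance, the $\varepsilon_n$-mm-isomorphisms of \Cref{prop: mGH fibration} are admissible by \Cref{remark: box implies conc} but are not $1$-Lipschitz. Also, an admissible $q_n$ may be altered arbitrarily on sets of vanishing measure without affecting the hypotheses. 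So you cannot apply the compactness property to $g_k\circ q_n$. Consequently both the existence of the limits $h_k$ and the fact that they are $1$-Lipschitz, which you need for $\Phi$ to be $1$-Lipschitz, are unsupported.

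The missing ingredient is the domination property of $q_n$, which your construction never uses. Since $g_k$ is continuous and $q_n\to\mathrm{id}$ along $q_n$ (\Cref{cor: approximation via projections}), you get $g_k\circ q_n\to g_k$ along $q_n$. Domination gives $f_{k,n}\in\Lip_1(\X_n,\mm_{\X_n})$ with $f_{k,n}\to g_k$ along $q_n$. The inequalities of \Cref{remark: triangle inequality} then yield $\sfd_{\L^0(\mm_{\X_n})}(g_k\circ q_n,f_{k,n})\to0$. Now apply the compactness property of $p_n$ to $f_{k,n}$; their L\'evy means are bounded because their laws converge. Transfer the subsequential limits, which lie in $\Lip_1(\X,\mm_\X)$, to $g_k\circ q_n$. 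This is exactly the step in \Cref{cor: compactness projections}, and with it the rest of your argument goes through.

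A smaller point concerns the choice of the $g_k$. Your convergence of $(g_k\circ q_n)_k$ is only coordinatewise. To conclude $q_n\to\Phi$ in $\L^0$, you therefore need $\iota^{-1}$ to be continuous for the product topology on $\iota(\X)$, i.e.\ the $g_k$ must also generate the topology of $\X$; being an isometric embedding into $\ell^\infty$ is not enough. Bounded truncated distance functions $\sfd_\X(\cdot,x_i)\wedge j$, with $(x_i)$ dense, meet both requirements.
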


\subsection{The extrinsic perspective}\label{subsection: extrinsic convergence}
The aim of this subsection is to use the notion of fibration described in \Cref{subsection: fibration} in order to give an extrinsic formulation of  the convergence of pyramids as well as the convergence in concentration for emm-spaces.
The definition of fibration relies on the fact that the pushforward measures converge weakly, hence, the topology of the target space will play a non-trivial role in the construction.
As clear from the construction in the proof of \Cref{proposition: definition via fibration concentrated}, however, this choice of topology on the limit space is equivalent to the selection of a suitable representative in the emm-isomorphism class. 

Let us start by defining the framework which we will use to characterise the convergence in concentration of concentrated emm-spaces.
\begin{definition} \label{d:DOC}
    Let $(\X_n)_{n\in \N}$ be a sequence of emm-spaces and let $\Y$ be an extended topological-metric-measure space.
    Assume that $n\mapsto (\X_n, \tau_{\X_n}, \mm_{\X_n})$ fibrates over $(\Y, \tau_\Y, \mm_\Y)$ with projections $n\mapsto p_n$.
    We say that the projections $(p_n)_{n\in \N}$
    \begin{itemize}
        \item induce a {\it domination} if for each $f\in \Lip_1(\Y, \mm_\Y)$ there exists a sequence $n\mapsto f_n\in \Lip_1(\X_n, \mm_{\X_n})$ such that $f_n\to f$ in $\L^0$. 
        \item induce a {\it concentration} if they induce a domination and whenever $n\mapsto f_n\in \Lip_1(\X_n, \mm_{\X_n})$ is such that Lévy mean of $f_n$ is constantly 0, every subsequence of $(f_n)_{n\in \N}$ admits a further subsequence converging in $\L^0$ to some $f\in \Lip_1(\Y, \mm_\Y)$ -- possibly dependent on the subsequence.
    \end{itemize}
\end{definition}
Projections that induce a domination gain automatically some regularity.
\begin{lemma}\label{lemma: regularity projections}
    Let $n\mapsto \X_n$ be a sequence of emm-spaces which fibrates over an extended topological-metric-measure space $\X$ via projections $n\mapsto p_n$ inducing a domination.
    Let $\varphi_1, \ldots, \varphi_k\in \Lip_1(\X, \tau_\X, \sfd_\X)$ be bounded and let $\sfd_k(x, y)=\sup_{j=1, \ldots, k} |\varphi_j(x)-\varphi_j(y)|$.
    Then there exists a sequence $\varepsilon_n\to 0$ such that the following holds: for all $n\in \N$ there exists $E_n\subset \X_n$ with $\mm(E_n)\geq 1-\varepsilon_n$ such that $\sfd_k(p_n(x), p_n(y))\leq \sfd_{\X_n}(x, y)+\varepsilon_n$ for all $x, y\in E_n$.
\end{lemma}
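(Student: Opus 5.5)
The plan is to apply the domination property to each coordinate $\varphi_j$ and then use the good-coupling characterisation of $\L^0$-convergence (\Cref{proposition: L^0 properties}). Since $\varphi_j\in\Lip_1(\X,\tau_\X,\sfd_\X)\subset\Lip_1(\X,\mm_\X)$, domination gives $f^j_n\in\Lip_1(\X_n,\mm_{\X_n})$ with $f^j_n\to\varphi_j$ in $\L^0$ along the fibration. Because $\varphi_j$ is $\tau_\X$-continuous, \Cref{cor: approximation via projections} gives $\varphi_j\circ p_n\to\varphi_j$ in $\L^0$ as well. The aim is then to show that $f^j_n$ and $\varphi_j\circ p_n$ are close in the ordinary Ky Fan distance $\sfd_{\L^0}$ on $(\X_n,\mm_{\X_n})$.

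For this step, fix a sequence of good couplings $\aalpha_n$. By \Cref{proposition: L^0 properties}, $\sfd^{\saalpha_n}_{\L^0}(f^j_n,\varphi_j)\to0$ and $\sfd^{\saalpha_n}_{\L^0}(\varphi_j\circ p_n,\varphi_j)\to0$. The second triangle inequality of \Cref{remark: triangle inequality}, applied with $T=S=\varphi_j$, then yields
\[
\sfd_{\L^0}(f^j_n,\varphi_j\circ p_n)\le \sfd^{\saalpha_n}_{\L^0}(f^j_n,\varphi_j)+\sfd^{\saalpha_n}_{\L^0}(\varphi_j\circ p_n,\varphi_j)=:\delta^j_n\to0 .
\]
(One could instead argue directly: the $\L^0$ convergence makes $(f^j_n,\varphi_j\circ p_n)_\#\mm_n$ converge to the diagonal pushforward of $(\varphi_j)_\#\mm$, via the fourth characterisation applied to the $\R^2$-valued map; the remark route seems cleaner.)

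Now set $\eta_n:=\max_j\delta^j_n$. Let $G_n$ be the set where $|f^j_n-\varphi_j\circ p_n|\le\eta_n$ for all $j$, so $\mm_n(G_n)\ge1-k\eta_n$. Let $F_n$ be a full-measure set on which every $f^j_n$ is $1$-Lipschitz; there are finitely many $j$, so a common such set exists. Put $E_n:=G_n\cap F_n$. For $x,y\in E_n$ and each $j$,
\[
|\varphi_j(p_n(x))-\varphi_j(p_n(y))|\le |f^j_n(x)-f^j_n(y)|+2\eta_n\le \sfd_{\X_n}(x,y)+2\eta_n .
\]
Taking the maximum over $j$ and setting $\varepsilon_n:=\max\{k\eta_n,2\eta_n\}$ concludes the argument. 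Boundedness of the $\varphi_j$ is not really needed here, although it does keep the pushforwards tight.

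The main obstacle is the middle step: transferring convergence along the fibration, which is measured through good couplings on $\X_n\times\X$, into a genuine Ky Fan bound on $\X_n$ alone. I expect the triangle inequality in \Cref{remark: triangle inequality} to handle it. The one point to check is that $\varphi_j\circ p_n\to\varphi_j$ uses $\tau_\X$-continuity of $\varphi_j$, which is exactly why continuous Lipschitz functions are assumed.
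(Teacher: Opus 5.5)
Your proof is correct and follows essentially the same route as the paper. You combine domination with \Cref{cor: approximation via projections}, then use the second inequality of \Cref{remark: triangle inequality} with good couplings to turn convergence along the fibration into a genuine Ky Fan bound on $\X_n$. The only differences are cosmetic: the paper works with the $(\mathbb R^k,\ell_\infty)$-valued map $\Phi=(\varphi_1,\ldots,\varphi_k)$ rather than coordinate-wise with a union bound, and you make explicit the intersection with a full-measure set on which the $f^j_n$ are $1$-Lipschitz, which the paper leaves implicit.
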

\begin{proof}
    Consider the 1-Lipschitz and continuous map $\Phi:\X\to (\mathbb R^k, \ell_\infty)$ defined by $\Phi(x)=(\varphi_1(x), \ldots, \varphi_k(x))$.
    By continuity of $\Phi$ and \Cref{cor: approximation via projections}, $\Phi\circ p_n\to \Phi$ in $\L^0$. 
    
    Since $\Phi$ is 1-Lipschitz and the projections induce a domination, there is a sequence of 1-Lipschitz maps $\Phi_n:\X_n\to (\mathbb R^k, \ell_\infty)$ satisfying $\Phi_n\to \Phi$ in $\L^0$.
    Indeed, by the definition of domination, there exist $(\varphi_{n,1})_{n\in \N}, \ldots, (\varphi_{n,k})_{n\in \N}$ such that $\varphi_{n,j}\in \Lip_1(\X_n, \mm_{\X_n})$ for all $n$ and $j$ and moreover $\varphi_{n,j}\to\varphi_j$ in $\L^0$.
    By defining $\Phi_n=(\varphi_{n, 1}, \ldots, \varphi_{n, j})$, we get that $\Phi_n$ is 1-Lipschitz and $\Phi_n\to \Phi$ in~$\L^0$. 

    Let now $n\mapsto\aalpha_n$ be a good coupling with respect to the fibration.
    By \Cref{remark: triangle inequality},
    \begin{align}
       \frac 12 \varepsilon_n &\eqqcolon\sfd_{\L^0(\mm_{\X_n})}(\Phi\circ p_n, \Phi_n)\\ &\leq \sfd_{\L^0}^{\saalpha_n}(\Phi\circ p_n, \Phi)+\sfd_{\L^0}^{\saalpha_n}(\Phi_n, \Phi)\to 0,
    \end{align} 
    where the first term tends to zero by \Cref{proposition: L^0 properties}, together with the fact that $\Phi\circ p_n \to \Phi$ in $\L^0$, which follows from the definition of the fibration $p_n$. The second term tends to zero by \Cref{proposition: L^0 properties}, together with the convergence $\Phi_n \to \Phi$ in $\L^0$ established above.
    
    By the definition of the $\L^0$-distance $\sfd_{\L^0(\mm_{\X_n})}$, there exists a sequence of sets $E_n\subset \X_n$ such that $\mm_{\X_n}(E_n)\geq 1-\frac 12 \varepsilon_n$ and
    \begin{equation}
        \ell_\infty(\Phi(x),\Phi_n(x))\leq \frac 12 \varepsilon_n, \qquad x \in E_n.
    \end{equation}
    In particular, if $x, y\in E_n$, 
    \begin{equation}
        \sfd_k(x, y)=\ell_\infty(\Phi(x), \Phi(y))\leq \varepsilon+\ell_\infty(\Phi_n(x), \Phi_n(y))\leq \varepsilon +\sfd_{\X_n}(x, y),
    \end{equation}
    which is what we wanted to prove.
\end{proof}
Our first goal is to prove the following two results.
\begin{theorem}\label{proposition: definition via fibration concentrated}
    Let $n\mapsto \X_n$ be a sequence of concentrated emm-spaces. Then $n\mapsto \X_n$ converges in concentration to a limit emm-space $\Y$ if and only if there exists an extended topological-metric-measure space $\tilde{\Y}$ isomorphic to $\Y$ such that $n\mapsto \X_n$ fibrates over $\tilde\Y$ with projections $n\mapsto p_n$ inducing concentration.
\end{theorem}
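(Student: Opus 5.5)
Both implications should reduce to the mm-space characterisation in \Cref{proposition: definition via fibration}, via inverse-limit approximations and \Cref{t:Vt2}.

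\emph{Sufficiency.} Assume $n\mapsto\X_n$ fibrates over $\tilde\Y$ with projections $p_n$ inducing concentration; we show $\sfd_{\sf conc}(\X_n,\Y)\to0$. Since emm-isomorphisms preserve $\Lip_1$, $\mathcal L_1(\tilde\Y)\cong\mathcal L_1(\Y)$, so we may replace $\Y$ by $\tilde\Y$. Fix a good coupling $\aalpha_n$ and compare $\proj_1^*\Lip_1(\X_n)$ with $\proj_2^*\Lip_1(\tilde\Y)$ in $\L^0(\aalpha_n)$ via $\sfd^{\saalpha_n}_{\sf conc}$.
\begin{itemize}
\item By compactness of $\mathcal L_1(\tilde\Y)$, take a finite $\varepsilon$-net $g_1,\dots,g_N$ (Lévy mean $0$). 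Domination gives $g_{i,n}\to g_i$ in $\L^0$. This gives one half of the Hausdorff estimate, provided the approximation is uniform over the whole of $\Lip_1(\tilde\Y)$; up to constants this reduces to the finite net, and additive constants are harmless because the $g_{i,n}$ can be shifted.
\item For the other half, argue by contradiction. If some $f_n\in\Lip_1(\X_n)$, normalised to Lévy mean $0$, stay $\varepsilon$-far from $\proj_2^*\Lip_1(\tilde\Y)$, the concentration property gives a subsequence $f_n\to f\in\Lip_1(\tilde\Y)$ in $\L^0$. By \Cref{proposition: L^0 properties}, $\sfd^{\saalpha_n}_{\L^0}(f_n,f)\to0$, a contradiction.
\end{itemize}
Because the Ky Fan Hausdorff distance on these sets is insensitive to constants only up to the quotient, I would work throughout with Lévy-mean-normalised representatives. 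The coupling formulation from the lemma preceding \Cref{t:Vt2} then yields $\sfd_{\sf conc}\to0$.

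\emph{Necessity.} Assume $\X_n\to\Y$ in concentration. First I would choose the representative: by \Cref{lemma: inverse system}, $\Y\cong\varprojlim\Y_k$, which is an extended topological-metric-measure space; call it $\tilde\Y$. Fix an adapted Kuratowski embedding (\Cref{prop: embedding}), so $\tilde\Y$ sits closed in $\R^\infty$ with the product topology. By the proof of \Cref{t:Vt2}, $\sfd_{\sf conc}(\Y_k,\tilde\Y)\to0$. A diagonal argument then gives $k(n)\to\infty$ with $\sfd_{\sf conc}(\X_n,\Y_{k(n)})\to0$.

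Next I would construct the projections. A Nakajima-type near-optimal coupling $\ppi_n$ between $\mm_{\X_n}$ and $\mm_{\Y_{k(n)}}$ gives, for each of finitely many coordinate functions $h_j\circ\sfp_{k(n)}$ (cylinder functions, via \Cref{corollary: cylinder functions}), a $1$-Lipschitz $\varphi_{n,j}$ on $\X_n$ that is $\L^0(\ppi_n)$-close to it. I would define
\[
p_n=(\varphi_{n,1},\dots,\varphi_{n,m(n)},0,\dots):\X_n\to\R^\infty,
\]
with $m(n)\to\infty$ slowly. This map is Borel, and closeness in $\L^0(\ppi_n)$ to the embedding of $\Y_{k(n)}$ forces $(p_n)_\#\mm_{\X_n}\rightharpoonup\mm_{\tilde\Y}$ in $\tau^\infty$. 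Since $\tilde\Y$ is closed, a further Borel retraction onto $\tilde\Y$, altered only on small sets, makes the targets lie in $\tilde\Y$.

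Then I would verify the two properties.
\begin{itemize}
\item Domination: take $f\in\Lip_1(\tilde\Y)$ and approximate it by a cylinder $1$-Lipschitz $g\circ\iota$ (\Cref{proposition: LAP}). Then $f_n:=g(\varphi_{n,1},\dots)$ is $1$-Lipschitz in $\ell_\infty$ and tends to $g\circ\iota$ in $\L^0$ along the fibration, by \Cref{proposition: L^0 properties}.
\item Concentration: for $f_n$ with Lévy mean $0$, use $\sfd_{\sf conc}$-closeness to find $g_n\in\Lip_1(\Y_{k(n)})$ close to $f_n$ under $\ppi_n$. These $g_n\circ\sfp_{k(n)}$ have Lévy means near $0$ and lie in the compact $\mathcal L_1(\tilde\Y)$, so a subsequence converges. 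Transport this convergence back along the coupling.
\end{itemize}

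The main obstacle is the compatibility between the coupling $\ppi_n$, which is only a measure on $\X_n\times\Y_{k(n)}$, and the genuine maps $p_n$. I need the good-coupling distance $\sfd^{\saalpha_n}_{\L^0}$ built from $p_n$ to control the $\L^0(\ppi_n)$ distances. I would handle this by choosing $\aalpha_n:=(\mathrm{id}\times\sfp_{k(n)}^{-1}\text{-lift})$ of $\ppi_n$ composed with the inverse limit measure, and checking $(p_n\times\mathrm{id})_\#\aalpha_n\rightharpoonup(\mathrm{id},\mathrm{id})_\#\mm_{\tilde\Y}$ coordinatewise on cylinder functions, which determine weak convergence in $\R^\infty$.
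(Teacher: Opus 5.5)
Your sufficiency direction is the paper's argument. For a good coupling $\aalpha_n$, the half in which every $g\in\Lip_1(\tilde\Y)$ must be approximated is handled by compactness of $\mathcal L_1(\tilde\Y)$ plus domination; your finite-net version is a direct form of the paper's second case. The other half is handled by Lévy-mean normalisation plus the concentration property, which is the paper's first case.

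Your necessity direction is correct but takes a genuinely different route.
\begin{itemize}
\item The paper goes through pyramids. It uses $\X_{n,k}\prec\X_n$ with $\X_{n,k}\to\Y_k$ in $\square$, and builds projections by composing the $\varepsilon$-mm-isomorphisms of \Cref{prop: mGH fibration} with the Kuratowski maps of \Cref{lemma: fibration inverse limit}. It then diagonalises in \Cref{cor: intermediate maps} and precomposes with the Lipschitz-order maps $\psi_n\colon\X_n\to\X_{n,k_n}$. Finally it needs \Cref{lemma: Hausdorff convergence via GH} to see that $\Lip_1(\X_n)$ and $\psi_n^*\Lip_1(\X_{n,k_n})$ become Hausdorff close.
\item You instead work directly with near-optimal couplings $\ppi_n$ for $\sfd_{\sf conc}(\X_n,\Y_{k(n)})\to0$. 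You use them to realise the adapted Kuratowski coordinates of $\Y_{k(n)}$ by $1$-Lipschitz functions on $\X_n$.
\end{itemize}
Your route is more economical and gives more. Your $p_n$ is itself a.e.\ $1$-Lipschitz into $(\R^\infty,\ell_\infty)$, whereas the paper's projections are only approximately so (cf.\ \Cref{lemma: regularity projections}). Domination is then immediate by composing with continuous $1$-Lipschitz cylinder functions. The fibration property, and the good-coupling property of the gluing of $\ppi_n$ with $(\sfp_{k(n)},\mathrm{id})_\#\mm_\Y$, reduce to coordinatewise convergence in probability. Concentration follows from compactness of $\mathcal L_1(\Y)$ and transport along that gluing, with no Gromov--Hausdorff lemma. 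The paper's route, in exchange, is the one that also proves \Cref{proposition: general projections}, where only a pyramid inclusion is assumed and no $\sfd_{\sf conc}$-couplings are available.

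Two details should be corrected, though neither is essential.
\begin{itemize}
\item The retraction step is wrongly justified. $(p_n)_\#\mm_{\X_n}\rightharpoonup\iota_\#\mm_\Y$ does not make the mass outside the closed set $\iota(\tilde\Y)$ small; it may be all of it. So an arbitrary Borel retraction can destroy the fibration. You can either use an approximately nearest-point Borel retraction for a metric compatible with $\tau^\infty$, which moves $p_n$ by an amount tending to $0$ in probability. Or, as the paper does, take the target to be $(\R^\infty,\tau^\infty,\ell_\infty,\iota_\#\mm_\Y)$. This is already an extended topological-metric-measure space emm-isomorphic to $\Y$, and a fibration does not require $p_n$ to land in the support.
\item The Lévy mean is not continuous in law, so ``Lévy means near $0$'' is not justified as stated. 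What you actually need, and what holds, is that the shifts provided by compactness of $\mathcal L_1(\Y)$ stay bounded. This follows because the laws of $g_n\circ\sfp_{k(n)}$ are Prokhorov-close to the median-zero laws of the $f_n$.
\end{itemize}
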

In the following theorem, $K-\liminf_n \overline{\mathcal P_{\X_n}}^\square$ denotes the inferior limit in the sense of Painlev\'e--Kuratowski, viz.
    $$ K-\liminf_n \overline{\mathcal P_{\X_n}}^\square:=\{\Z\in\mathcal \X: \exists n\mapsto \Z_n\in \overline{\mathcal P_{\X_n}}^\square, \Z_n\xrightarrow{\square}\Z\}.$$ 
\begin{theorem}\label{proposition: general projections}
    Let $n\mapsto \X_n$ be a sequence of emm-spaces and let $\Y$ be an emm-space.
    Then $$\overline{\mathcal P_\X}^\square\subset K-\liminf_n \overline{\mathcal P_{\X_n}}^\square$$ if and only if there exists an extended topological-metric-measure space $\tilde{\X}$ isomorphic to $\X$ such that $\X_n$ fibrates over $\tilde{\X}$ with projections $p_n$ inducing a domination. 
\end{theorem}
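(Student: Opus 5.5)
We treat the two implications separately. The converse direction (from domination to the inclusion of pyramids) follows the approximation argument of \Cref{teo: piramidi - enmms}. The direct direction is a construction based on inverse limits and a diagonal argument. Throughout we read the statement with $\Y=\X$.

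\emph{Domination $\Rightarrow$ inclusion.} Since $K-\liminf_n \overline{\mathcal P_{\X_n}}^\square$ is $\square$-closed and $\overline{\mathcal P_\X}^\square=\overline{\mathcal P_{\tilde\X}}^\square$, it suffices to show $\Z\in K-\liminf_n$ for every mm-space $\Z\prec\tilde\X$.
\begin{enumerate}
\item As in the proof of \Cref{teo: piramidi - enmms}, by \cite[Theorem 4.47]{Sh16} we may replace $\Z$, up to a $\square$-error $\varepsilon$, by some $\Z_1\prec\Z$ with $\Z_1=(\mathbb R^N,\ell_\infty,f_\#\mm_{\tilde\X})$, where $f=(f_1,\dots,f_N)$ and $f_j\in\Lip_1(\tilde\X,\mm_{\tilde\X})$.
\item Domination gives $f_{n,j}\in\Lip_1(\X_n,\mm_{\X_n})$ with $f_{n,j}\to f_j$ in $\L^0$ along the fibration. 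Hence $g_n:=(f_{n,1},\dots,f_{n,N})$ is a.e.~$1$-Lipschitz into $(\mathbb R^N,\ell_\infty)$, and $g_n\to f$ in $\L^0$.
\item Put $\W_n:=(\mathbb R^N,\ell_\infty,(g_n)_\#\mm_{\X_n})$, so that $\W_n\prec\X_n$.
\item Apply the fourth characterisation in \Cref{proposition: L^0 properties} with $\mu_n=\mm_{\X_n}$, which is trivially equi-integrable and satisfies $\mu_n\rightharpoonup\mm_{\tilde\X}$ by definition of fibration. This yields $(g_n)_\#\mm_{\X_n}\rightharpoonup f_\#\mm_{\tilde\X}$.
\item By \eqref{in:BP} we get $\square(\W_n,\Z_1)\to0$, so $\Z_1\in K-\liminf_n$. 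Letting $\varepsilon\to0$ and using closedness gives $\Z\in K-\liminf_n$.
\end{enumerate}

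\emph{Inclusion $\Rightarrow$ domination: setting up the limit and the approximants.} By \Cref{lemma: inverse system} we take $\tilde\X=\varprojlim\X^{(k)}$, an inverse limit of mm-spaces, with projections $\sfp_k$. By \Cref{remark: inverse system is regular} it is an extended topological-metric-measure space. We also fix an adapted Kuratowski embedding $\iota$ (\Cref{prop: embedding}), so that we may work in $(\mathbb R^\infty,\tau^\infty)$ with index sets $I_k$.
\begin{enumerate}
\item Since $\X^{(k)}\prec\tilde\X$, the hypothesis gives $\Z_{n,k}\in\overline{\mathcal P_{\X_n}}^\square$ with $\Z_{n,k}\to\X^{(k)}$ in $\square$ as $n\to\infty$.
\item Perturbing by $1/n$ in $\square$, we may take $\Z_{n,k}\in\mathcal P_{\X_n}$. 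This gives measure-preserving a.e.~$1$-Lipschitz maps $\varphi_{n,k}:\X_n\to\Z_{n,k}$.
\item By \Cref{prop: mGH fibration}, there are $\varepsilon_{n,k}$-mm-isomorphisms $q_{n,k}:\Z_{n,k}\to\X^{(k)}$ with $\varepsilon_{n,k}\to0$ as $n\to\infty$.
\item Choose a Borel section $s_k:\X^{(k)}\to\tilde\X$ of $\sfp_k$. It exists by measurable selection, since the fibres are closed in the inverse limit.
\item Choose $k(n)\to\infty$ slowly enough that all errors indexed by $(n,k(n))$ are $o(1)$, and set $p_n:=s_{k(n)}\circ q_{n,k(n)}\circ\varphi_{n,k(n)}$.
\end{enumerate}

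\emph{Fibration and domination.}
\begin{enumerate}
\item \emph{Fibration.} Test weak convergence of $(p_n)_\#\mm_{\X_n}$ on bounded continuous cylinder functions, which suffice in $\mathbb R^\infty$. A cylinder function eventually depends only on coordinates in $I_{k}$ with $k\le k(n)$. On those coordinates $\sfp_{k(n)}\circ s_{k(n)}=\mathrm{id}$, and $(q_{n,k(n)}\circ\varphi_{n,k(n)})_\#\mm_{\X_n}$ is Prokhorov-close to $\mm_{\X^{(k(n))}}$. Hence the choice of section is irrelevant and $(p_n)_\#\mm_{\X_n}\rightharpoonup\mm_{\tilde\X}$.
\item \emph{Reduction for domination.} Given $f\in\Lip_1(\tilde\X,\mm)$, \Cref{proposition: LAP} gives $g\in\Lip_1(\X^{(k)})$ with $g\circ\sfp_k$ close to $f$ in $\L^0$.
\item \emph{Approximants.} Because $q_{n,k}$ is only an $\varepsilon$-isometry, the composition $g\circ q_{n,k}$ is not $1$-Lipschitz. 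Instead we use that box convergence implies concentration (\Cref{remark: box implies conc}). Then the first property in \Cref{proposition: definition via fibration}, applied to $\Z_{n,k}\to\X^{(k)}$ fibred by $q_{n,k}$, yields genuinely $1$-Lipschitz $h_n$ on $\Z_{n,k}$ with $h_n\to g\circ p_{k,k(n)}$ in $\L^0$. We set $f_n:=h_n\circ\varphi_{n,k(n)}$, which is a.e.~$1$-Lipschitz on $\X_n$.
\end{enumerate}

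\emph{Main obstacle.} The hardest step is the last one: running the diagonal in $k$ and $n$ so that $f_n\to f$ in $\L^0$ along the fibration $p_n$, and not merely along $q_{n,k}$. I would control this with good couplings and the triangle inequalities of \Cref{remark: triangle inequality}. The argument transfers the $\L^0$-closeness of $h_n$ to $g\circ\sfp_k\circ s_k\circ q_{n,k}$, which holds because $\sfp_k\circ s_k=\mathrm{id}$ and $p_{k,k(n)}$ is compatible with the $\sfp$'s. If needed, $k(n)$ will be chosen still more slowly to absorb the countably many approximations coming from a dense sequence of $f$'s in $\Lip_1(\tilde\X,\mm)$; this density is available via \Cref{cor: density of continuous functions} and separability of $\L^0$.
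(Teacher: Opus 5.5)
\textbf{Gap: the Borel section $s_k$.} Your first implication (domination $\Rightarrow$ inclusion) is essentially the paper's argument. For the converse you use the same scaffolding as the paper: inverse-limit approximants $\X^{(k)}$, box-approximants $\Z_{n,k}\in\mathcal P_{\X_n}$, $\varepsilon_{n,k}$-mm-isomorphisms $q_{n,k}$, a slow diagonal $k(n)$, and $1$-Lipschitz approximants pulled back through $\varphi_{n,k(n)}$. The step that fails is the Borel section $s_k$ of $\sfp_k$.

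The bonding maps of an inverse system are only measure-preserving and $1$-Lipschitz on supports. So $\sfp_k(\tilde\X)$ is in general just an analytic subset of $\X^{(k)}$ of full measure, and fibres of $\sfp_k$ can be empty. The missed set can even be dense. Take disjoint countable dense sets $D_1,D_2,\dots\subset[0,1]$. Let $\X^{(k)}=[0,1]\setminus(D_1\cup\dots\cup D_{k-1})$ carry Lebesgue measure and complete metrics $\rho_k\ge|\cdot|$, increasing in $k$ and inducing the Euclidean topology, with inclusions as bonding maps. Then $\sfp_k(\tilde\X)=[0,1]\setminus\bigcup_j D_j$ misses the dense subset $D_k$ of $\X^{(k)}$. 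Even when $\sfp_k$ is onto, selection only yields a universally measurable section.

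This matters because $(q_{n,k}\circ\varphi_{n,k})_\#\mm_{\X_n}=(q_{n,k})_\#\mm_{\Z_{n,k}}$ is merely Prokhorov-close to $\mm_{\X^{(k)}}$, not absolutely continuous with respect to it. It can therefore be carried entirely by $\X^{(k)}\setminus\sfp_k(\tilde\X)$, for instance if $q_{n,k}$ rounds to points of $D_k$, which is a legitimate $\varepsilon$-mm-isomorphism. There $\sfp_{k(n)}\circ s_{k(n)}\neq\mathrm{id}$. Two steps then lose their justification: your fibration check, and the identity $g\circ\sfp_k\circ p_n=g\circ p_{k,k(n)}\circ q_{n,k(n)}\circ\varphi_{n,k(n)}$ on which your domination step implicitly rests.

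\textbf{How the paper avoids it, and two repairs.} The paper takes the limit to be $\tilde\X=(\mathbb R^\infty,\tau^\infty,\ell_\infty,\iota_\#\mm)$, for an adapted Kuratowski embedding $\iota$. It lands directly in this ambient space through the everywhere-defined continuous isometric embeddings $\iota_k:\X^{(k)}\to\mathbb R^\infty$, so effectively $p_n=\iota_{k(n)}\circ q_{n,k(n)}\circ\varphi_{n,k(n)}$ (\Cref{lemma: fibration inverse limit}, \Cref{cor: intermediate maps}). A fibration only asks $(p_n)_\#\mm_{\X_n}\rightharpoonup\iota_\#\mm$ in the Polish space $\mathbb R^\infty$, so points outside $\iota(\tilde\X)$ are harmless. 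Moreover, $\proj_k\circ\iota_{k'}=\iota_k\circ p_{k,k'}$ holds on all of $\X^{(k')}$, which is exactly what your cylinder-function argument needs. Domination is then tested on $1$-Lipschitz cylinder functions, which are $\tau^\infty$-continuous and $\L^0$-dense.

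Alternatively, you can keep your target and post-compose $q_{n,k}$ with a Borel map onto a countable dense subset of $\sfp_k(\tilde\X)\cap\supp\mm_{\X^{(k)}}$ that moves points by at most $\mathrm{dist}(\cdot,\supp\mm_{\X^{(k)}})+1/k$. A section on a countable set is trivially Borel, and the extra error vanishes along $k(n)\to\infty$.

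With either repair, your remaining ``main obstacle'' closes along the lines you suggest. For a continuous $G$ (namely $g\circ\sfp_k$, or a cylinder function), $G\circ p_n\to G$ along the fibration by \Cref{cor: approximation via projections}. Next, $f_n$ is $\L^0(\mm_{\X_n})$-close to $G\circ p_n$ because $\varphi_{n,k(n)}$ is measure-preserving. Finally, \Cref{remark: triangle inequality} together with density completes the diagonal.
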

\begin{remark} \ 
\begin{itemize}
    \item 
    The set $K-\liminf_n \overline{\mathcal P_{\X_n}}^\square$  can be alternatively given as the intersection of all of the limit pyramids of the sequence $n\mapsto \overline{\mathcal P_{\X_n}}^\square$.
    \item Similarly, one can prove via a diagonal argument that $\overline{\mathcal P_\X}^\square\subset K-\limsup_n \overline{\mathcal P_{\X_n}}^\square$ if and only if there is a subsequence $k\mapsto \X_{n_k}$ such that $\overline{\mathcal P_{\X_{n_k}}}^\square\to \mathcal P'$ and $\mathcal P\subset \mathcal P'$.
    In particular, with a rather similar proof, one can characterise the weaker inclusion 
    $$\overline{\mathcal P_\X}^\square\subset K-\limsup_n \overline{\mathcal P_{\X_n}}^\square$$ by requiring that some subsequence $k\mapsto \X_{n_k}$ fibrates over $\tilde\X$ with projections $p_n$ inducing a domination.
    \end{itemize}
\end{remark}

The idea of the proofs of both~\Cref{proposition: definition via fibration concentrated} and~\Cref{proposition: general projections}  rests on a double approximation.
On one hand, for each emm-space $\Y$ we have an inverse limit $\Y'=\varprojlim \Y_k$ which is isomorphic to $\Y$: this comes with a sequence of mm-spaces $\Y_k\in\mathcal P_\Y$.
On the other hand, in both of the settings of \Cref{proposition: definition via fibration concentrated} and \Cref{proposition: general projections}, there exists sequences $(\X_{n, k})_{n\in \N}$ such that $\X_{n, k}\prec \X_n$ and $\X_{n, k}\to \Y_k$ in the $\square$-topology.
What we will do is to choose suitable projections from $\X_n$ to $\X_{n, k}$, from $\X_{n, k}$ to $\Y_k$ and from $\Y_k$ to $\Y$ which we will then concatenate.

\begin{lemma}\label{lemma: fibration inverse limit}
    Let $\Y=\varprojlim\Y_k$ be an emm-space and let $\iota$ be an adapted Kuratowski embedding as in \Cref{def:KE}.
    Then $\Y_k$ fibrates over $\tilde Y=(\mathbb R^\infty, \tau_\infty, \ell_\infty, \iota_\#\mm_\Y)$ with continuous projections inducing a domination. 
    If $\Y$ is concentrated, the projections can be chosen to induce concentration.
\end{lemma}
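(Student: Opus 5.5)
The projections are the obvious ones. Since $\iota$ is an adapted Kuratowski embedding (\Cref{prop: embedding}), we have isometric embeddings $\iota_k:\Y_k\to\mathbb R^\infty$ with $\iota_k\circ\sfp_k=\proj_k\circ\iota$. I take $q_k:=\iota_k:\Y_k\to\tilde\Y$ as projections; they are continuous, because each $\iota_k$ is continuous into $(\mathbb R^\infty,\tau^\infty)$.

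\textbf{Step 1: fibration.} By measure preservation of $\sfp_k$ we get
\[
(q_k)_\#\mm_{\Y_k}=(\iota_k\circ\sfp_k)_\#\mm_\Y=(\proj_k)_\#\iota_\#\mm_\Y .
\]
Since $I_k\nearrow\N$, we have $\proj_k\to\mathrm{id}$ pointwise on $\mathbb R^\infty$. Dominated convergence against bounded continuous test functions then gives $(q_k)_\#\mm_{\Y_k}\rightharpoonup\iota_\#\mm_\Y$, so the $\Y_k$ fibrate over $\tilde\Y$. Note also that $\tilde\Y$ is an extended topological-metric-measure space: the coordinate functions are $\tau^\infty$-continuous, $\ell_\infty$-$1$-Lipschitz, generate $\tau^\infty$ and recover $\ell_\infty$.

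\textbf{Step 2: domination.} Let $f\in\Lip_1(\tilde\Y,\iota_\#\mm_\Y)$. By \Cref{cor: density of continuous functions} and the truncation argument in the proof of \Cref{proposition: LAP}, $f$ is an $\L^0$-limit of $1$-Lipschitz continuous cylinder functions $g$. For such a $g$ and large $k$, \Cref{corollary: cylinder functions} gives $g\circ\iota=\tilde g_k\circ\sfp_k$ with $\tilde g_k=g\circ\iota_k\in\Lip_1(\Y_k)$. I claim $\tilde g_k\to g$ in $\L^0$ along the fibration. To check this, use the good couplings $\aalpha_k:=(\sfp_k,\iota)_\#\mm_\Y\in\adm(\mm_{\Y_k},\iota_\#\mm_\Y)$. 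These are good because $(q_k\times\mathrm{id})_\#\aalpha_k=(\proj_k\circ\iota,\iota)_\#\mm_\Y\rightharpoonup(\mathrm{id},\mathrm{id})_\#\iota_\#\mm_\Y$. With respect to $\aalpha_k$,
\[
\sfd^{\saalpha_k}_{\L^0}(\tilde g_k,g)=\sfd_{\L^0(\mm_\Y)}\bigl(g\circ\iota_k\circ\sfp_k,\;g\circ\iota\bigr)=0 .
\]
For general $f$, I choose $g_j\to f$ and diagonalise using the quasi-triangle inequality in \Cref{remark: triangle inequality}; \Cref{proposition: L^0 properties} then yields $f_k\to f$ in $\L^0$ with $f_k\in\Lip_1(\Y_k)$.

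\textbf{Step 3: concentration.} Assume $\Y$ is concentrated. Let $f_k\in\Lip_1(\Y_k)$ have Lévy mean $0$. With respect to the couplings $\aalpha_k$, $f_k$ corresponds to $F_k:=f_k\circ\sfp_k\in\Lip_1(\Y,\mm_\Y)$, whose Lévy mean is also $0$ because $\sfp_k$ is measure-preserving. By compactness of $\mathcal L_1(\Y)$, some subsequence $F_k+c_k$ converges in $\L^0$. The Lévy-mean normalisation forces $c_k$ to be bounded: a limit of functions with medians at $0$ has $0$ as a median, and medians are stable under $\L^0$ convergence. So, up to a further subsequence, $F_k\to F$ in $\L^0(\Y)$, and $F$ is $1$-Lipschitz by \Cref{p:LC}.

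Transport $F$ to $\tilde\Y$ via the inverse emm-isomorphism of $\iota$, giving $f\in\Lip_1(\tilde\Y)$ with $f\circ\iota=F$. Then
\[
\sfd^{\saalpha_k}_{\L^0}(f_k,f)=\sfd_{\L^0(\mm_\Y)}(F_k,F)\to 0,
\]
and \Cref{proposition: L^0 properties} gives $f_k\to f$ in $\L^0$.

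\textbf{Main obstacle.} I expect the main difficulty to be verifying that the couplings $\aalpha_k$ are good, together with the measurability bookkeeping needed to identify $\L^0$ along the fibration with $\L^0(\Y)$.
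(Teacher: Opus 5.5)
Your proposal is correct and follows essentially the same route as the paper. You use the same projections $\iota_k$, the same good couplings $(\mathsf p_k,\iota)_\#\mm_\Y$ and the same cylinder approximation for the domination. In the concentration step, working with $F_k=f_k\circ\mathsf p_k$ on $\Y$ and transporting the limit through $\iota^{-1}$ is equivalent to the paper's cylinder extensions $g_k$ of $f_k\circ\iota_k^{-1}$, since $g_k\circ\iota=f_k\circ\mathsf p_k$. You also make explicit, as the paper does not, why the L\'evy-mean normalisation keeps the constants $c_k$ bounded: $c_k$ is a median of the $\L^0$-convergent, hence tight, family $F_k+c_k$. Your sentence ``a limit of functions with medians at $0$ has $0$ as a median'' is not the right justification for this, since $F_k+c_k$ has median $c_k$, not $0$; tightness is what does the work.
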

\begin{proof}
    Let us follow the notation of \Cref{prop: embedding}, namely let $k\mapsto \sfp_k$ be the projections granted by the inverse system structure, $k\mapsto I_k\subset\mathbb N$ be a sequence of index sets granted by \Cref{prop: embedding}, $k\mapsto \proj_k:\mathbb R^\infty\to \mathbb R^\infty$ be the projections map on the coordinate hyperplanes defined by $I_k$, i.e., the sets whose nonzero entries are only the ones in $I_k$. Let also $\iota: \Y\to \mathbb R^\infty$ be the adapted Kuratowski embedding and $k\mapsto \iota_k:\Y_k\to \mathbb R^\infty$ be the sequence of maps granted by \Cref{prop: embedding}.
    
    First, $(\iota_k)_\#\mm_{\Y_k}=(\iota_k\circ \sfp_k)_\#\mm_{\Y}=(\proj_k\circ\iota)_\#\mm_\Y$.
    Now, since $I_k\nearrow\mathbb N$ in \Cref{prop: embedding}, $\proj_k\to id$ pointwise in $\tau_\Y$ and $\iota$ is a $\tau_Y$-continuous map, 
    $(\iota_k)_\# \mm_{\Y_k} \rightharpoonup \iota_\# \mm_\Y$.
    This implies that the sequence $k\mapsto \Y_k$ fibrates over $\Y$ with projections $k\mapsto \iota_k$.
    
    For the same reasons, notice that $\aalpha_k=(\sfp_k, \iota)_\#\mm_Y$ is a sequence of good couplings with respect to the fibration introduced above.
    Indeed, $k\mapsto (\sfp_k, \iota)$ converges pointwise to $(\iota, \iota)$ and $(\iota, \iota)_\#\mm_\Y=(id,id)_\#\iota_\# \mm_\Y$.

    Let now $f\in \Lip_1(\tilde Y, \iota_\#\mm_\Y)$.
    By the same proof as in \Cref{proposition: LAP}, there exists a sequence $k\mapsto g_k$ of 1-Lipschitz cylinder functions such that $g_k\to f$ in $\L^0$.
    But by \Cref{prop: embedding}, up to appropriately relabeling the sequence, we can consider the sequence $k\mapsto f_k=g_k\circ\iota_k\in \Lip_1(\Y_k, \mm_{\Y_k})$.
    For this sequence, 
    \begin{align}
        \sfd_{\L^0}^{\saalpha_k}(f_k, f)&=\sfd_{\L^0(\mm_\Y)}(g_k\circ(\iota_k\circ \sfp_k), f\circ\iota)\\
        &= \sfd_{\L^0(\mm_\Y)}(g_k\circ\iota, f\circ\iota)\\
        &=\sfd_{\L^0(\iota_\#\mm\Y)}(g_k, f)\to 0.
    \end{align}
    In particular, the projections $k\mapsto \iota_k$ induce a domination.

    Finally, assume that $\Y$ is concentrated and let $k\mapsto f_k\in\Lip_1(\Y_k, \mm_{\Y_k})$ be a sequence of functions with Lévy mean equal to 0. 
    Consider the sequence $k\mapsto \tilde g_k=f_k\circ \iota_k^{-1}$ on the domain of definition of $\iota_k^{-1}$.
    It is easy to see that there exist 1-Lipschitz cylinder functions $g_k\in \Lip_1(\tilde \Y, \iota_\#\mm_\Y)$ extending $\tilde g_k$ for $k\in \N$.
    Moreover, since $g_k\circ \proj_k=g_k$ and $(\iota_k)_\#\mm_{\Y_k}=(\proj_k)_\#\iota_\#\mm_\Y$, also the functions $g_k$ have Lévy mean equal to 0.
    
    By the fact that $\Y$ is concentrated, the sequence $k\mapsto g_k\circ\iota$ is pre-compact in $\L^0(\Y)$ with limit points in $\Lip_1(\Y, \mm_\Y)$.
    In particular, $k\mapsto g_k$ is pre-compact in $\L^0(\iota_\#\mm_\Y)$ with limit points in $\Lip_1(\tilde Y, \iota_\#\mm_\Y)$.
    This proves that the projections $k\mapsto \iota_k$ induce concentration. 
\end{proof}

\begin{corollary}\label{cor: intermediate maps}
    Let $\X=\varprojlim\Y_k$ be an emm-space.
    For all $k\in \N$, let $(\X_{n,k})_{n\in \N}$ be a sequence of mm-spaces such that $\X_{n, k}\to \Y_k$ in the $\square$-topology as $n\to \infty$.
    Let $\iota$ be an adapted Kuratowski embedding for $\X=\varprojlim \Y_k$.
    Then there exists a non-decreasing diverging sequence $k_n$ such that $\X_{n, k_n}$ fibrates over $\tilde\Y=(\mathbb R^\infty, \tau_\infty, \iota_\#\mm_\X)$ with projections which induce a domination.
    If $\X$ is concentrated, the projections can be chosen to induce concentration.
\end{corollary}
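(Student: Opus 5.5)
The plan is to compose two fibrations and then diagonalise in $k$. By \Cref{lemma: fibration inverse limit}, the sequence $k\mapsto \Y_k$ fibrates over $\tilde\Y=(\mathbb R^\infty,\tau^\infty,\ell_\infty,\iota_\#\mm_\X)$ with the continuous projections $\iota_k$. These projections induce a domination, and they induce concentration if $\X$ is concentrated. For each fixed $k$, since $\X_{n,k}\to\Y_k$ in the $\square$-topology, \Cref{prop: mGH fibration} gives Borel maps $q_{n,k}:\X_{n,k}\to\Y_k$ that are $\varepsilon_{n,k}$-mm-isomorphisms with $\varepsilon_{n,k}\to0$ as $n\to\infty$. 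We then set $p_{n}\coloneqq \iota_{k_n}\circ q_{n,k_n}$, where $k_n\uparrow\infty$ is chosen slowly enough that every error term indexed by $(n,k_n)$ tends to zero.

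First, we fix a metric $\rho$ metrising $\tau^\infty$ and a countable family of test objects. For the weak convergence we take the Prokhorov distance. For the domination we take, for each $k$, a countable $\sfd_{\L^0}$-dense set $\{f^j\}_{j\in\N}\subset\Lip_1(\tilde\Y,\iota_\#\mm_\X)$, available by the separability of $\L^0$, together with approximants $f^j_k\in\Lip_1(\Y_k)$ satisfying $\sfd^{\saalpha_k}_{\L^0}(f^j_k,f^j)\to0$ with respect to the good couplings $\aalpha_k=(\sfp_k,\iota)_\#\mm_\X$ of \Cref{lemma: fibration inverse limit}. For fixed $k$, the maps $q_{n,k}$ push $\mm_{\X_{n,k}}$ to measures close to $\mm_{\Y_k}$, and $\iota_k$ is an isometry, hence continuous. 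Therefore $d_{\sf P}((\iota_k\circ q_{n,k})_\#\mm_{\X_{n,k}},(\iota_k)_\#\mm_{\Y_k})\to0$ as $n\to\infty$.

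Second, we transport the approximants. The function $f^j_k\circ q_{n,k}$ fails to be $1$-Lipschitz only by the additive error $\varepsilon_{n,k}$ on a set of measure at least $1-\varepsilon_{n,k}$. We correct it by restricting to that set and taking the McShane extension (\Cref{lemma: McShane}) of a mildly rescaled or inf-convolved version. Concretely, on a set $E$ with $|\sfd_{\X_{n,k}}-q_{n,k}^*\sfd_{\Y_k}|\le\varepsilon$, we consider $\inf_{a\in E}\bigl(f(q(a))+\sfd(x,a)\bigr)$. This function is $1$-Lipschitz and differs from $f\circ q$ by at most $\varepsilon$ on $E$. The result is $h^j_{n,k}\in\Lip_1(\X_{n,k})$ that is $\L^0$-close to $f^j_k\circ q_{n,k}$. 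Choosing $k_n$ by a diagonal argument, so that for $j\le k_n$ all the errors (Prokhorov error, coupling error, Lipschitz-correction error) are below $1/k_n$, yields a fibration whose projections approximate each $f^j$. Domination for a general $f$ then follows by density together with the triangle inequality of \Cref{remark: triangle inequality}.

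The concentration part is the main obstacle. Here we are given $g_n\in\Lip_1(\X_{n,k_n})$ with Lévy mean zero, and we must show that a subsequence converges along the fibration to a $1$-Lipschitz function on $\tilde\Y$. We would first use the $\square$-convergence: because $q_{n,k}$ is an $\varepsilon$-mm-isomorphism, the function $g_n$ is close in $\L^0$ to $\tilde g_n\circ q_{n,k_n}$ for some $\tilde g_n\in\Lip_1(\Y_{k_n})$. To construct $\tilde g_n$, we push $g_n$ through a near-inverse of $q_{n,k_n}$ obtained from an optimal coupling, then apply McShane again, and finally renormalise the Lévy mean. This step costs an error that must also be made small in the choice of $k_n$, and it requires uniformity over all $g_n$. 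That uniformity is the delicate point. We plan to obtain it from the compactness of $\mathcal L_1(\Y_k)$ for each fixed $k$ (\Cref{p:LC}): a finite $\varepsilon$-net reduces the problem to finitely many functions, and these can be handled in the diagonal choice. Once $\tilde g_n$ is available, the concentration property of the projections $\iota_k$ from \Cref{lemma: fibration inverse limit} provides a subsequential $\L^0$-limit in $\Lip_1(\tilde\Y)$. The triangle inequality then transfers this limit back to $g_n$.
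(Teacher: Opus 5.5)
Your construction matches the paper's. Your $\iota_k\circ q_{n,k}$ is the paper's $q_{n,k}=p_k\circ\varphi_{n,k}$, the couplings come from gluing the good couplings of the two fibrations, and $k_n$ is chosen diagonally. The genuine gap is in the concentration half, at exactly the step you flag. The function $g_n\in\Lip_1(\X_{n,k_n})$ is arbitrary and is chosen only after $k_n$ is fixed. So the diagonal choice needs, for every fixed $k$,
\[
\sup_{g\in\Lip_1(\X_{n,k})}\ \inf_{\tilde g\in\Lip_1(\Y_k)}\sfd^{\sbbeta_{n,k}}_{\L^0}(g,\tilde g)\xrightarrow{n\to\infty}0 .
\]
A finite $\varepsilon$-net of $\mathcal L_1(\Y_k)$ cannot give this. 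The net controls functions on the target $\Y_k$, but the supremum runs over arbitrary functions on the varying domains $\X_{n,k}$, and nothing in your plan limits which $1$-Lipschitz functions $\X_{n,k}$ carries. To argue through a net you would need an extra input, for instance the Gromov--Hausdorff convergence $\mathcal L_1(\X_{n,k})\to\mathcal L_1(\Y_k)$ combined with \Cref{lemma: Hausdorff convergence via GH}. Pushing $g_n$ through a near-inverse built from an optimal coupling is unnecessary, and as you describe it, it comes with no error bound.

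The missing observation is that the $\varepsilon$-mm-isomorphism already gives a bound independent of $g$. With $E$ as in \Cref{d:eiso}, set $\tilde g(y)\coloneqq\inf_{a\in E}\bigl(g(a)+\sfd_{\Y_k}(y,q_{n,k}(a))\bigr)$; this is the mirror image of the inf-convolution you use for domination. It is $1$-Lipschitz on all of $\Y_k$ and satisfies $g-\varepsilon_{n,k}\le\tilde g\circ q_{n,k}\le g$ on $E$, for every $g$ at once. Moreover $\sfd^{\sbbeta_{n,k}}_{\L^0}(\tilde g\circ q_{n,k},\tilde g)\le\sfd^{\sbbeta_{n,k}}_{\L^0}(q_{n,k},\mathrm{id}_{\Y_k})$. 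The right-hand side does not depend on $\tilde g$ and tends to $0$ by \Cref{cor: approximation via projections} and \Cref{proposition: L^0 properties}. This is the two-sided Hausdorff estimate the paper takes from \Cref{remark: box implies conc}, and with it your diagonal argument closes. You only need to pass to a subsequence on which $k_n$ is strictly increasing before invoking the concentration property of $(\iota_k)$.

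In the domination half your route genuinely differs from the paper's and is the more robust one. You diagonalise over a countable dense family, which uses only pointwise domination. The paper instead asserts $\max_{g\in\Lip_1(\tilde\Y)}\min_{f\in\Lip_1(\Y_k)}\sfd^{\saalpha_k}_{\L^0}(f,g)\to0$. That uniformity needs compactness of $\mathcal L_1(\X)$: for $\X=\Gamma^\infty=\varprojlim\Gamma^k$, taking $g$ to be the $(k+1)$-st coordinate keeps the quantity at least $1/2$ for every $k$.
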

\begin{proof}
    By \Cref{prop: mGH fibration}, for each $k\in \N$ the sequence $n\mapsto \X_{n, k}$ fibrates over $\Y_k$ with projections $n\mapsto \varphi_{n, k}$ that are $\varepsilon_{n, k}$-mm-isomorphisms (recall \Cref{d:eiso}) for some vanishing sequence $n\mapsto \varepsilon_{n, k}$.
    For all $k\in \N$, let $n\mapsto \bbeta_{n, k}\in \adm(\mm_{\X_{n, k}}, \mm_{\Y_k})$ be good couplings with respect to the fibration induced by $n\mapsto \varphi_{n, k}$.

    By \Cref{lemma: fibration inverse limit}, $\Y_k$ fibrates over $\tilde Y=(\mathbb R^\infty, \tau^\infty, \ell_\infty, \iota_\#\mm_\Y)$ with continuous projections $k\mapsto p_k$ inducing a domination (concentration if $\X$ is concentrated). 
    For simplicity of notation, let us write $\mm_{\tilde\Y}=\iota_\#\mm_\Y$. 
    Let $k\mapsto \aalpha_k\in \adm(\mm_{\Y_k}, \mm_{\tilde\Y})$ be good couplings with respect to the fibration induced by $k\mapsto p_k$.

    Let us now define the projections $q_{n, k}: \X_{n, k}\to \tilde\Y$ as $q_{n, k}=p_k\circ \varphi_{n, k}$ for $n, k\in \N$.
    For $n, k\in \N$, let also $\tilde \ggamma_{n, k}\in \pr(\X_{n, k}\times \Y_k\times \tilde\Y)$ be a gluing of $\bbeta_{n, k}$ and $\aalpha_{k}$, meaning that $({\sf proj}_{1, 2})_\# \tilde \ggamma_{n, k}= \bbeta_{n, k}$ and $({\sf proj}_{2,3})_\# \tilde \ggamma_{n, k}= \alpha_k$ (see e.g.~\cite[p.11]{Vil09}).
    Finally, let $\ggamma_{n, k}=({\sf proj}_{1, 3})_\#\tilde \ggamma_{n, k}$.

    We want to prove that a suitable sequence $n\mapsto \X_{n, k_n}$ fibrates over $\tilde\Y$ via the projections $n\mapsto q_{n, k_{n}}$, that $n\mapsto \ggamma_{n, k_n}$ are good couplings and that the projections $n\mapsto q_{n, k_{n}}$ induce a domination (concentration if $\X$ is concentrated).

    Let us start by listing some properties of the objects that we just defined.
    \begin{itemize}
        \item Given $k\in \N$, by the continuity of $p_k$, we get, in the limit as $n\to \infty$,
        \begin{equation}
            (q_{n, k})_{\#}\mm_{\X_{n, k}}= (p_k)_\# (\varphi_{n, k})_\#\mm_{\Y_k}\rightharpoonup (p_k)_\#\mm_{\Y_k};
        \end{equation}
        \item For all $n, k\in \N$, 
        \begin{align}
            (q_{n, k}\times id)_{\#}\ggamma_{n, k}&=({\sf proj}_{1, 3})_\#(q_{n, k}\times p_k\times id)_{\#}\tilde \ggamma_{n, k}\\
            &=({\sf proj}_{1, 3})_\#(p_k\times p_k\times id)_{\#}(\varphi_{n, k}\times id\times id)_\#\tilde \ggamma_{n, k}.
        \end{align}
        Let $k\in \N$ and consider any limit point $\ppi^3$ as $n\to\infty$ of the sequence $(p_k\times p_k\times id)_{\#}(\varphi_{n, k}\times id\times id)_\#\tilde \ggamma_{n, k}$ and notice that, by the continuity of $p_k$, it satisfies 
        \begin{equation}
            ({\sf proj}_{1, 2})_\# \ppi^3=(p_k\times p_k)_\#(id, id)_\# \mm_{\Y_k}=(p_k, p_k)_\#\Y_k.
        \end{equation}
        This in particular implies that
        \begin{equation}
            ({\sf proj}_{1, 3})_\# \ppi^3=({\sf proj}_{2, 3})_\#\ppi^3=(p_k\times id)_\#\alpha_k.
        \end{equation}
        By the computations above, $(q_{n, k}\times id)_{\#}\ggamma_{n, k}\rightharpoonup (p_k\times id)_\#\aalpha_k$ as $n\to \infty$.
        \item By \Cref{remark: box implies conc}, for all $k\in \N$ the projections $n\mapsto\varphi_{n, k}$ induce concentration.
        In particular, this implies that the quantity 
        \begin{align}
            \sfd_{\L^0}^{\sbbeta_{n, k}, {\sf H}}&(\Lip_1(\X_{n, k}, \mm_{\X_{n, k}}), \Lip_1(\Y_k, \mm_{\Y_k}))\\
            &=\max\{\max_{f\in \Lip_1(\X_{n, k})} \min_{g\in \Lip_1(\Y_k)}\sfd_{\L^0}^{\sbbeta_{n, k}}(f, g), \max_{ g\in \Lip_1(\Y_k)} \min_{f\in \Lip_1(\X_{n, k})}\sfd_{\L^0}^{\sbbeta_{n, k}}(f, g)   \}
        \end{align}
        satisfies $\sfd_{\L^0}^{\sbbeta_{n, k}, {\sf H}}(\Lip_1(\X_{n, k}, \mm_{\X_{n, k}}), \Lip_1(\Y_k, \mm_{\Y_k}))\to 0$ as $n \to \infty$;
        \item In a similar fashion, since the projections $k\mapsto p_k$ induce a domination, we have that
        \begin{equation}
            \max_{g\in \Lip_1(\tilde\Y, \iota_\#\mm_\Y)}\min_{f\in \Lip_1(\Y_k, \mm_{\Y_k})} \sfd^{\saalpha_k}_{\L^0}(f, g)\to 0
        \end{equation}
        as $k\to +\infty$.
        \item By the previous two bullet points and the definition of $\sggamma_{n, k}$, 
        \begin{align}
            \max_{g\in \Lip_1(\tilde\Y, \iota_\#\mm_\Y)}&\min_{f\in \Lip_1(\X_{n, k}, \mm_{\X_{n, k}})}\sfd_{\L^0}^{\sggamma_{n, k}}(f, g)\\
            &\leq \sfd_{\L^0}^{\sbbeta_{n, k}, {\sf H}}(\Lip_1(\X_{n, k}, \mm_{\X_{n, k}}), \Lip_1(\Y_k, \mm_{\Y_k}))\\
            &\qquad+ \max_{g\in \Lip_1(\tilde\Y, \iota_\#\mm_\Y)}\min_{f\in \Lip_1(\Y_k, \mm_{\Y_k})} \sfd^{\saalpha_k}_{\L^0}(f, g).
        \end{align}
    \end{itemize}
    Now, by a diagonal argument we can find a diverging and non-decreasing sequence $k_n$ such that the following points hold.
    \begin{enumerate}
        \item \label{it: fibration} The sequence of spaces $n\mapsto \X_{n, k_n}$ fibrates over $\tilde \Y$ with projections $q_n$, i.e.~$(q_{n, k_{n}})_{\#}\mm_{\X_{n, k}}\rightharpoonup \mm_{\tilde\Y}$ as $n\to \infty$;
        \item \label{it: good coupling} The sequence $n\mapsto \ggamma_{n, k_n}$ is a sequence of good couplings, meaning $(q_{n, k_{n}}\times  id)_{\#}\ggamma_{n, k}\rightharpoonup (id, id)_\#\mm_{\Y}$ as $n\to \infty$;
        \item \label{it: induce a domination} The projections $n\mapsto q_{n, k_n}$ induce a domination, thanks to 
        \begin{equation}
            \max_{g\in \Lip_1(\tilde\Y, \iota_\#\mm_\Y)}\min_{f\in \Lip_1(\X_{n, k}, \mm_{\X_{n, k}})}\sfd_{\L^0}^{\sggamma_{n, k}}(f, g)\to 0
        \end{equation}
        as $n\to \infty$.
    \end{enumerate}
    By \cref{it: fibration,it: good coupling,it: induce a domination}, the statement is proved.

    If $\X$ is also concentrated, the last two bullet points above can be strengthened to
    \begin{itemize}
        \item Since the projections $k\mapsto p_k$ induce concentration, we have $$\sfd_{\L^0}^{\saalpha_{k}, {\sf H}}(\Lip_1(\Y_k, \mm_{\Y_k}), \Lip_1(\tilde\Y, \iota_\#\mm_\Y))\to 0 \quad \text{as} \quad  k \to \infty \ ;$$
        \item By the previous bullet points and the definition of $\sggamma_{n, k}$,
        \begin{align}
            \sfd_{\L^0}^{\sggamma_{n, k}, {\sf H}}&(\Lip_1(\X_{n, k}, \mm_{\X_{n, k}}), \Lip_1(\tilde\Y, \iota_\#\mm_\Y))\\
            &\leq \sfd_{\L^0}^{\sbbeta_{n, k}, {\sf H}}(\Lip_1(\X_{n, k}, \mm_{\X_{n, k}}), \Lip_1(\Y_k, \mm_{\Y_k}))+ \sfd_{\L^0}^{\saalpha_{k}, {\sf H}}(\Lip_1(\Y_{k}, \mm_{\Y_k}), \Lip_1(\tilde\Y, \iota_\#\mm_\Y)).
        \end{align}
    \end{itemize}
    Now, by a diagonal argument we can find again a diverging and non-decreasing sequence $k_n$ such that \cref{it: fibration,it: good coupling} hold.
    Moreover, \cref{it: induce a domination} can be strengthened to
    \begin{enumerate}
    \setcounter{enumi}{3}
        \item\label{it: induce convergence in concentration} The projections $n\mapsto q_{n, k_n}$ induce concentration, thanks to 
        \begin{equation}
            \sfd_{\L^0}^{\sggamma_{n, k_n}}(\Lip_1(\X_{n, k_n}, \mm_{\X_{n, k_n}}), \Lip_1(\Y, \mm_\Y))\to 0
        \end{equation}
        as $n\to \infty$.
    \end{enumerate}
    By \cref{it: fibration,it: good coupling,it: induce convergence in concentration}, the statement is proved.
\end{proof}
In order to prove \Cref{proposition: definition via fibration concentrated}, we need the following simple statement, linking Gromov convergence and Gromov--Hausdorff convergence.
\begin{lemma}\label{lemma: Hausdorff convergence via GH}
    Let $n\mapsto \X_n$ be a sequence of metric spaces, $n\mapsto K_n, K_n'\subset \X_n$ be two sequences of compact subsets.
    Assume that the following two assumptions hold:
    \begin{itemize}
        \item there exists a vanishing sequence $n\mapsto \varepsilon_n$ such that $K_n'\subset B(K_n, \varepsilon_n)$ for all $n\in \N$;
        \item there exists a compact metric space $K$ such that both $n\mapsto K_n$ and $n\mapsto K_n'$ converge in the Gromov-Hausdorff sense to $K$.
    \end{itemize}
    Then $\sfd_{\sf H}(K_n, K_n')\to 0$ as $n\to\infty$.
\end{lemma}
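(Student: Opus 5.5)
The plan is a contradiction argument combined with compactness. Suppose $\sfd_{\sf H}(K_n,K_n')\not\to 0$. Since $K_n'\subset B(K_n,\varepsilon_n)$ with $\varepsilon_n\to0$, the only way the Hausdorff distance can fail to vanish is that $K_n$ is not eventually contained in a small neighbourhood of $K_n'$. So, after passing to a subsequence, there exist $\delta>0$ and points $x_n\in K_n$ with $\mathrm{dist}(x_n,K_n')\ge\delta$ for all $n$. We aim to show that this forces $K_n'$ to be ``strictly smaller'' than $K_n$ at a fixed scale. That contradicts the fact that both sequences have the same Gromov--Hausdorff limit $K$.

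The rigidity mechanism is the following. Since $K_n\to K$ in the Gromov--Hausdorff sense, choose a finite $\eta$-net $\{y_1,\dots,y_m\}$ of $K$, with $\eta\ll\delta$. Via $\eta$-GH-approximations, this transfers to $\eta$-nets $\{a^n_1,\dots,a^n_m\}$ of $K_n$ and $\{b^n_1,\dots,b^n_m\}$ of $K_n'$. Both configurations have pairwise distance matrices within $O(\eta)$ of $(\sfd_K(y_i,y_j))_{i,j}$.

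Next, using $K_n'\subset B(K_n,\varepsilon_n)$, pick for each $b^n_i$ a point $c^n_i\in K_n$ with $\sfd(b^n_i,c^n_i)<\varepsilon_n$. The $c^n_i$ then form an $(\eta+\varepsilon_n)$-net of a subset of $K_n$ whose distance matrix is close to that of $K$. Consider the map $\phi_n\colon K_n\to K_n$ that sends $a^n_i$ to $c^n_i$. It is an approximate isometry from the net of $K_n$ into $K_n$, with additive error $O(\eta+\varepsilon_n)$. The key claim is that such a map is approximately surjective: its image is an $O(\eta)$-net of all of $K_n$, for $n$ large.

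To prove the claim, use the standard fact that a compact metric space admits no approximate self-isometry that misses a ball. I would make this quantitative through uniform packing numbers. GH-convergence gives $\sup_n Cap(\delta/2, K_n)<\infty$, and capacities are approximately preserved. Suppose the image of $\phi_n$ misses the ball $B(x_n,\delta/2)$. Then adjoining $x_n$ to a maximal $\delta/4$-separated configuration built from the image gives a separated set in $K_n$ larger than $Cap(\delta/4,K)$ permits. If a naive single count does not work, I would first pass to the limit and argue there. By Gromov compactness, embed the subsequences $K_n$, $K_n'$ and the points $x_n$, $b^n_i$, $c^n_i$ into a common compact space $Z$. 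Along a subsequence, $K_n\to\hat K$ and $K_n'\to\hat K'$ in the Hausdorff sense inside $Z$, with $\hat K'\subset\hat K$, both isometric to $K$, and $x_n\to x\in\hat K$ with $\mathrm{dist}(x,\hat K')\ge\delta$. A compact metric space is not isometric to a proper closed subset of itself, since an isometric self-embedding of a compact space is surjective (it cannot shrink an $\varepsilon$-separated maximal set). This yields the contradiction. I would take this limit route as the primary argument, because it avoids error bookkeeping.

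The main obstacle is rigorously setting up the common embedding in the last step. One must realise the two GH-convergent sequences, which live in varying ambient spaces $\X_n$, inside a single compact $Z$ so that the containment $K_n'\subset B(K_n,\varepsilon_n)$ survives to $\hat K'\subset\hat K$. I plan to embed $K_n\cup K_n'$ isometrically into $\ell^\infty$ via Kuratowski. The union $K_n\cup K_n'$ lies in $B(K_n,\varepsilon_n)$, so the family $\{K_n\cup K_n'\}$ is uniformly totally bounded, because $K_n$ is. Gromov's precompactness theorem then provides the limit, and Blaschke's theorem in the resulting compact space gives Hausdorff limits of $K_n$, $K_n'$ that are isometric to $K$.
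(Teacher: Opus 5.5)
Your limit-route argument is correct and is essentially the paper's proof. Both realise the two sequences inside a common ambient space and pass to Hausdorff limits with $\hat K'\subset\hat K$, both isometric to $K$; both then conclude from the fact that a compact metric space admits no isometric embedding onto a proper subset of itself.

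The differences are only cosmetic. You argue by contradiction with a witness point $x_n$, whereas the paper extracts a subsequential Hausdorff limit of the $K_n'$ directly. You build the common compact space by applying Gromov precompactness to $K_n\cup K_n'$, which automatically preserves the inclusion $K_n'\subset B(K_n,\varepsilon_n)$. The paper instead extends the GH-embeddings of $K_n$ to all of $\X_n$ by a gluing argument and works in a complete, not necessarily compact, space.
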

\begin{proof}
    By the properties of Gromov--Hausdorff convergence, there is a complete metric space $\Z$, a sequence of isometric embeddings $n\mapsto \varphi_n:K_n\to Z$ and a limit embedding $\varphi:K\to \Z$ such that $\varphi(K_n)$ Hausdorff converges to $\varphi(K)$.
    By a gluing argument, up to enlarging the target space $\Z$, we can also assume that $\varphi_n$ is an isometric embedding defined on the whole of $\X_n$ for all $n\in \N$.
    In particular, $\varphi(K_n')\subset B^\Z(\varphi_n(K_n), \varepsilon_n)$ for all $n\in \N$.
    This in turn implies that $n\mapsto \varphi(K_n')$ is Hausdorff precompact.
    Up to subsequences, we can assume that $n\mapsto \varphi(K_n')$ converges in the Hausdorff topology to some $H\subset \varphi(K)$.
    But now this implies that $K_n'\to H$ in the Gromov-Hausdorff topology, thus $H$ is isometric to $K$ and thus to $\varphi(K)$.
    Finally, since $\varphi(K)$ is compact, it coincides with its only compact subset to which it is isometric.
    This implies
    \begin{equation}
        \sfd_{\X_n}^{\sf H}(K_n, K_n')=\sfd_{\Z}^{\sf H}(\varphi_n(K_n), \varphi_n(K_n'))\to 0.
    \end{equation}
\end{proof}

\begin{proof}[Proof of \Cref{proposition: definition via fibration concentrated}]
    Assume first that there exists $\tilde{\Y}$ and projections $n\mapsto p_n$ inducing concentration.
    Let us prove that $\X_n\to \tilde{\Y}$ (and thus to $\Y$) in concentration.
    Let $n\mapsto \aalpha_n$ be a sequence of good couplings and let us prove that $\sfd_{\sf conc}^{\saalpha_n}(\X_n, \Y)\to 0$ as $n\to\infty$.

    Assume by contradiction that there exists $\varepsilon>0$ and a non-relabeled sequence of $(\X_n)_{n\in \N}$ such that $\sfd_{\sf conc}^{\saalpha_n}(\X_n, \Y)>\varepsilon$ for all $n\in \N$.
    Then there exists a further non-relabeled subsequence such that either of the following two properties hold:
    \begin{enumerate}
        \item there exist $n\mapsto f_n\in \Lip_1(\X_n, \mm_{\X_n})$ such that for all sequences $n\mapsto g_n\in \Lip_1(\tilde{\Y}, \mm_{\tilde\Y})$ it holds $\sfd^{\saalpha_n}_{\L^0}(f_n, g_n)>\varepsilon$;\label{it: contradiction case 1}
        \item there exist $n\mapsto g_n\in \Lip_1(\tilde{\Y}, \mm_{\tilde \Y})$ such that for all sequences $n\mapsto f_n\in \Lip_1(\X_n, \mm_{\X_n})$ it holds $\sfd^{\saalpha_n}_{\L^0}(f_n, g_n)>\varepsilon$.\label{it: contradiction case 2}
    \end{enumerate}
    In the first case there exists a sequence $(c_n)_{n\in \N}$ of real numbers such that the Lévy mean of $f_n-c_n$ is 0 for all $n\in \N$.
    By assumption, there exists $g\in \Lip_1(\tilde\Y, \mm_{\tilde Y})$ such that 
    \begin{equation}
        \liminf_n \sfd^{\saalpha_n}_{\L^0}(f_n, g+c_n)=\liminf_n \sfd^{\saalpha_n}_{\L^0}(f_n-c_n, g)=0.
    \end{equation}
    This contradicts \cref{it: contradiction case 1}.
    
    In the second case, since $\tilde{\Y}$ is concentrated, there exists a sequence $(c_n)_{n\in \N}$ of real numbers and $g\in \Lip_1(\tilde{\Y}, \mm_{\tilde Y})$ such that $\liminf_n\sfd_{\L^0(\mm_{\tilde{\Y}})}(g_n-c_n, g)=0$.
    Moreover, by assumption, there exist $n\mapsto f_n\in \Lip_1(\X_n, \mm_{\X_n})$ such that $f_n\to g$ in $\L^0$, thus 
    \begin{align}
        \sfd^{\saalpha_n}_{\L^0}(f_n+c_n, g_n)& =\sfd^{\saalpha_n}_{\L^0}(f_n, g_n-c_n)\\
        &\leq \sfd^{\saalpha_n}_{\L^0}(f_n, g)+\sfd_{\L^0(\mm_{\tilde{\X}})}(g_n-c_n, g).
    \end{align}
    In particular, $\liminf_n \sfd^{\saalpha_n}_{\L^0}(f_n+c_n, g_n)=0$, contradicting \cref{it: contradiction case 2}.

    Viceversa, assume $\X_n\to \Y$ in concentration and up to emm-isomorphism, assume that $\Y=\varprojlim \Y_k$ for some inverse system $k\mapsto \Y_k$.
    Since convergence in concentration implies the weak convergence of pyramids, for all $k\in \N$ there exist $n\mapsto \X_{n, k}\prec\X_n$ such that $\X_{n, k}\to\Y_k$ in the box metric as $n\to\infty$.
    By \Cref{cor: intermediate maps}, there exists $\tilde\Y$ isomorphic to $\Y$ and a subsequence $(k_n)_{n\in \N}$ and projections $q_{n, k_n}$ such that $(\X_{n, k_n})_{n\in \N}$ fibrates over $\tilde\Y$ with projections $n\mapsto q_{n, k_n}$ with the desired properties.
    
    Let now, for each $n\in \N$, $\psi_n:\X_n\to \X_{n, k_n}$ be the 1-Lipschitz, measure preserving maps granted by the definition of Lipschitz order.
    Define $n\mapsto p_n:\X_n\to \tilde\Y$ as $q_{n, k_n}\circ\psi_n$ and notice that $n\mapsto p_n$ is a sequence of projections inducing a fibration of $n\mapsto \X_n$ over $\tilde\Y$.
    Indeed, 
    \begin{equation}
        (p_n)_\#\mm_{\X_n}= (q_{n, k_n})_\#(\psi_n)_\# \mm_{\X_n}=(q_{n, k_n})_\#\mm_{\X_{n, k_n}}\rightharpoonup\mm_{\tilde\Y}.
    \end{equation}
    
    Let $n\mapsto \aalpha_n$ be a sequence of good couplings for the fibration of $n\mapsto \X_{n, k_n}$ over $\tilde\Y$ and  for each $n\in \N$ let $n\mapsto \tilde\bbeta_n$ be a gluing of $ [(id, \psi_n)_\# \mm_{\X_n}]$ and $\aalpha_n$. 
    Finally, for each $n\in \N$, let $\bbeta_n=({\sf proj}_{1, 3})_\#\tilde\bbeta_n$.
    In particular, it is immediate that $n\mapsto \bbeta_n$ is a sequence of good couplings for the fibration of $n\mapsto \X_n$ over $\tilde\Y$.
    Notice also that, by construction, for all $f\in \L^0(\X_{n, k_n})$ and $g\in \L^0(\tilde\Y)$, $\sfd_{\L^0}^{\saalpha_n}(f, g)=\sfd_{\L^0}^{\sbbeta_n}(f\circ \psi_n, g)$.
    
    Finally, \Cref{lemma: Hausdorff convergence via GH} implies that $$\sfd_{\L^0(\mm_n)}^{\sf H}(\Lip_1(\X_n, \mm_{\X_n}), \psi_n^*\Lip_1(\X_{n, k_n}, \mm_{\X_{n, k_n}}))\to 0.$$
    This yields
    \begin{align}
        &\sfd_{\L^0}^{\saalpha_n, {\sf H}}(\Lip_1(\X_n, \mm_{\X_n}), \Lip_1(\tilde\Y, \iota_\#\mm_\Y))
        \\
        &\leq \sfd_{\L^0}^{\saalpha_n, {\sf H}}(\Lip_1(\X_n, \mm_{\X_n}), \psi_n^*\Lip_1(\X_{n, k_n}, \mm_{\X_{n, k_n}}))\\&\quad +\sfd_{\L^0}^{\sbbeta_n, {\sf H}}(\Lip_1(\X_{n, k_n}, \mm_{\X_{n, k_n}}), \Lip_1(\tilde\Y, \iota_\#\mm_\Y))\\
        &=
        \sfd^{\L^0(\mm_n)}_{\sf H}(\Lip_1(\X_n, \mm_{\X_n}), \psi_n^*\Lip_1(\X_{n, k_n}, \mm_{\X_{n, k_n}}))\\
        &\quad+\sfd_{\L^0}^{\sbbeta_n, {\sf H}}(\Lip_1(\X_{n, k_n}, \mm_{\X_{n, k_n}}), \Lip_1(\tilde\Y, \iota_\#\mm_\Y))\to 0,
    \end{align}
    where $\sfd_{\L^0}^{\saalpha_n, {\sf H}}$ and $\sfd_{\L^0}^{\sbbeta_n, {\sf H}}$ are as in the proof of \Cref{cor: intermediate maps}.
    This concludes the proof.
\end{proof}
The proof of \Cref{proposition: general projections} is similar.

\begin{proof}[Proof of \Cref{proposition: general projections}]
    Let us start by assuming that $n\mapsto \X_n$ fibrates over $\tilde\X$ with a sequence of projections $n\mapsto p_n$ which induces a domination.
    Let us prove that there is a dense subset of $\mathcal P_\X$ which is contained in $K-\liminf_n \overline{\mathcal P_{\X_n}}^\square$.

    Let $N\in \N$ and let $\Y=(\mathbb R^N, \ell_\infty, \mm_Y)\prec\X$ be an mm-space. 
    Let $\Phi=(\varphi^1, \ldots, \varphi^N):\X\to \Y$ be a 1-Lipschitz, measure preserving map.
    Since the projections induce a domination, for $k=1, \ldots, N$ there exists a sequence $n\mapsto \varphi^k_n\in \Lip_1(\X_n, \mm_{\X_n})$ such that $\varphi^k_n\to \varphi^k$ in $\L^0$ as $n\to\infty$.
    In particular, for $n\in \N$ the mm-spaces $\Y_n=(\mathbb R^N, \ell_\infty, (\Phi_n)_\# \mm_{\X_n})$, where $\Phi_n=(\varphi^1_n, \ldots, \varphi^N_n)$ satisfy $\Y_n\prec \X_n$.
    Moreover, $\Phi_n\to \Phi$ in $\L^0$, so $(\Phi_n)_\# \mm_{\X_n}\rightharpoonup \Phi_\#\mm_\X$.
    This implies $\Y_n\to \Y$ in the $\square$-topology and $\Y\in K-\liminf_n \overline{\mathcal P_{\X_n}}^\square $.
    Since the mm-spaces of the form above are dense in $\mathcal \X$, $\mathcal P_\X\subset K-\liminf_n \overline{\mathcal P_{\X_n}}^\square$.

    Viceversa, assume that $\mathcal P_\X\subset K-\liminf_n \overline{\mathcal P_{\X_n}}^\square$.
    By following the same lines as in the proof of \Cref{proposition: definition via fibration concentrated}, we get that $n\mapsto X_n$ fibrates over $\tilde\X$ with projections $n\mapsto p_n$ and good couplings $n\mapsto \bbeta_n$.
    We also get a sequence $n\mapsto\X_{n, k_n}\prec\X_n$ which fibrates over $\tilde \X$ with projections which induce concentration, good couplings $n\mapsto \aalpha_n\in \adm(\mm_{\X_{m, k_n}}, \mm_\X)$ and a sequence of 1-Lipschitz measure preserving maps $n\mapsto \psi_n:\X_n\to\X_{n, k_n}$.
    Moreover, we have that for all $f\in \L^0(\X_{n, k_n})$ and $g\in \L^0(\X)$, $\sfd_{\L^0}^{\saalpha_n}(f, g)=\sfd_{\L^0}^{\sbbeta_n}(f\circ \psi_n, g)$.
    
    Let now $f\in\Lip_1(\X, \mm_{\X})$ and let $n\mapsto g_n\in\Lip_1(\X_{n, k_n}, \mm_{\X_{n, k_n}})$ be such that $g_n\to f$ in $\L^0$.
    Consider $n\mapsto f_n=g_n\circ\psi_n\in \Lip_1(\X_n, \mm_{\X_n})$ and notice that $\sfd_{\L^0}^{\sbbeta_n}(f_n, f)=\sfd_{\L^0}^{\saalpha_n}(g_n, f)\to 0$.
    This concludes the proof.
\end{proof}

While we have proved that the convergence in concentration of concentrated emm-spaces can be realised extrinsically via the use of fibrations, the choice of the fibrations themselves is by no mean unique.
For example, assume that there exists a bicontinuous isometry $\Psi:\tilde\X\to\tilde\X$ which preserves the measure of the limit.
Then, if $n\mapsto p_n$ is a sequence of projections inducing concentration, also $n\mapsto \Psi\circ p_n$ is a sequence of projections inducing concentration.

Notice also that the limits of measures and functions along the fibrations are not canonical: in the example above, if $f_n\to f$ in $\L^0$ (or $\L^p$) with respect to the fibration induced by $n\mapsto p_n$, then $f_n\to f\circ\Psi^{-1}$ with respect to $n\mapsto \Psi_n\circ p_n$.
Similarly, if $n\mapsto \mu_n\in \pr(\X_n)$ converges weakly with respect to the projections $n\mapsto p_n$ to $\mu\in \pr(\tilde \X)$ with e.g.~bounded entropy along the sequence, then it converges weakly to $\Psi_\#\mu$ with respect to $n\mapsto \Psi\circ p_n$.

Our goal for the last part of this subsection is to prove that the non-uniqueness in the choice of the projections (and, thus, in the structure of the convergence of measures and functions) is only up to the action of the automorphisms of the target space.
In order to do so, let us start by studying the isomorphisms of concentrated emm-spaces.
In particular, we can prove that as soon as two extended topological-metric-measure spaces are concentrated, the set of their isomorphisms is compact in the $\L^0(\X;\Y)$ topology, where on the target we consider the $\tau_\Y$ topology.

\begin{proposition}\label{pro: compactness isomorphisms}
    Let $\X, \Y$ be extended topological-metric-measure spaces and concentrated emm-spaces. 
    Then the space ${\rm Iso}(\X; \Y)$ of the isomorphisms from $\X$ to $\Y$ is compact in the $\L^0(\X;\Y)$ topology.
\end{proposition}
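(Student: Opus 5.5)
The plan is to show that ${\rm Iso}(\X;\Y)$ is sequentially precompact in $\L^0(\X;\Y)$ and sequentially closed. Since $\Y$ is Polish, $\L^0(\X;\Y)$ is metrisable and complete, so sequential compactness gives compactness. If ${\rm Iso}(\X;\Y)$ is empty there is nothing to prove, so fix a sequence $(\Phi_n)_{n\in\N}\subset{\rm Iso}(\X;\Y)$.

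\emph{Precompactness.} I will use \Cref{lemma: L^0 compactness of maps}, with the sequence generating the topology of $\Y$ taken to be bounded functions $(f_k)_{k\in\N}\subset \Lip_1(\Y,\tau_\Y,\sfd_\Y)$. Such a sequence exists because $\Y$ is an extended topological-metric space, so $\tau_\Y$ is generated by $\Lip_1(\Y,\tau_\Y,\sfd_\Y)$. Truncating these functions keeps them $1$-Lipschitz and still generates $\tau_\Y$, and separability lets us pass to a countable subfamily, as in the proof of \Cref{prop: embedding}. Two hypotheses must be checked. First, tightness: $(\Phi_n)_\#\mm_\X=\mm_\Y$ for every $n$, and a single Borel probability on a Polish space is tight. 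Second, precompactness of each coordinate: for fixed $k$, every $f_k\circ\Phi_n$ lies in $\Lip_1(\X,\mm_\X)$, because $\Phi_n$ is an isometry on a full-measure set. Moreover $(f_k\circ\Phi_n)_\#\mm_\X=(f_k)_\#\mm_\Y$ does not depend on $n$, so this sequence is tight, and \Cref{lemma: compactness of 1-Lipschitz functions} applies since $\X$ is concentrated. The lemma then yields a subsequence with $\Phi_n\to\Phi$ in $\L^0(\X;\Y)$.

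\emph{Closedness.} I need to show that the limit $\Phi$ is an isomorphism.
\begin{itemize}
\item The measure is preserved: weak convergence of push-forwards under $\L^0$ convergence (as in \eqref{in:LP}) gives $\Phi_\#\mm_\X=\mm_\Y$.
\item $\Phi$ is a.e.\ $1$-Lipschitz: pass to an a.e.\ convergent subsequence, on a common full-measure set $E$. For each $k$ we have $|f_k(\Phi_n(x))-f_k(\Phi_n(y))|\le\sfd_\X(x,y)$ on $E\times E$, and this survives the limit by continuity of $f_k$. Taking the supremum over $k$, the representation $\sfd_\Y=\sup_k|f_k(\cdot)-f_k(\cdot\cdot)|$ from the proof of \Cref{prop: embedding} gives $\Phi^*\sfd_\Y\le\sfd_\X$ on $E\times E$. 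Because the truncation does not affect this recovery formula, the generating sequence must be chosen to recover $\sfd_\Y$ as well.
\end{itemize}

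\emph{The reverse inequality.} This is the main obstacle. It would be natural to run the same compactness argument on the inverses $\Psi_n=\Phi_n^{-1}\in{\rm Iso}(\Y;\X)$, using that $\Y$ is also concentrated and $\X$ is an extended topological-metric-measure space, and then obtain $\Psi_n\to\Psi$ along a further subsequence. The difficulty is identifying $\Psi\circ\Phi=\mathrm{id}$, because composition is not jointly continuous in $\L^0$. I will avoid this by arguing as in the proof of \Cref{theorem: isomorphism from pyramid}. The map $\Phi$ is measure-preserving and a.e.\ $1$-Lipschitz from $\X$ to $\Y$, and $\Psi$ (or any $\Psi_1$) is likewise from $\Y$ to $\X$. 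Hence $q=\Psi_1\circ\Phi:\X\to\X$ is measure-preserving and a.e.\ $1$-Lipschitz. Its pullback $[q^*]$ is an isometric self-embedding of the compact space $\mathcal L_1(\X)$, so it is surjective. The reconstruction in \Cref{prop: reconstruction} then shows that $q$ is an isomorphism, and therefore so is $\Phi$, since $\Psi_1$ is an isomorphism. This completes the argument.
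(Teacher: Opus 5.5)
Your proof is correct. The precompactness step matches the paper's. The closedness step takes a genuinely different route.

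\textbf{Precompactness.} As in the paper, you note that the laws $(f\circ\Phi_n)_\#\mm_\X=f_\#\mm_\Y$ are constant, apply \Cref{lemma: compactness of 1-Lipschitz functions} using that $\X$ is concentrated, and conclude with \Cref{lemma: L^0 compactness of maps}.

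\textbf{Closedness: the paper's route.} The paper takes a bounded $f\in\Lip_1(\X,\mm_\X)$ and writes $f=g_n\circ\Phi_n$ with equibounded $g_n\in\Lip_1(\Y,\mm_\Y)$. It uses that $\Y$ is concentrated to extract $g_n\to g$, and passes to $f=g\circ\Phi$ using that the $\Phi_n$ are measure preserving. It then concludes via \Cref{cor: characterisation isomorphisms}.

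\textbf{Closedness: your route.} You only record that the limit $\Phi$ is measure preserving and a.e.\ $1$-Lipschitz. You then compose with a fixed inverse $\Phi_1^{-1}$ to obtain a self-map $q$ of $\X$. On $q$ you run the rigidity argument from \Cref{theorem: isomorphism from pyramid}: an isometric self-embedding of the compact space $\mathcal L_1(\X)$ is onto, and \Cref{prop: reconstruction} then gives $q^*\sfd_\X\geq_\square\sfd_\X$.

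\textbf{What each route buys.} Your route proves something stronger. As soon as ${\rm Iso}(\X;\Y)\neq\emptyset$ and $\X$ is concentrated, every measure-preserving a.e.\ $1$-Lipschitz map $\X\to\Y$ is already an isomorphism. So ${\rm Iso}(\X;\Y)$ coincides with the set of such maps, and closedness is immediate. It also brings further advantages:
\begin{itemize}
\item It never uses that $\Y$ is concentrated, although this is automatic once $\Y\cong\X$.
\item It avoids justifying $g_n\circ\Phi_n\to g\circ\Phi$ for a merely measurable $g$.
\item It avoids upgrading density of $\Phi^*\Lip_1(\Y,\mm_\Y)$ to equality.
\item Reducing to a self-map of $\X$ matches the setting in which \Cref{cor: characterisation isomorphisms} is actually stated.
\end{itemize}
The paper's argument, in turn, stays at the level of pullbacks of observables and does not need the isometric-embedding trick.

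\textbf{A small correction.} Truncating at a single fixed level changes $|f(\cdot)-f(\cdot\cdot)|$. To keep the recovery formula $\sfd_\Y=\sup_k|f_k(\cdot)-f_k(\cdot\cdot)|$ you must include truncations at all integer levels. Alternatively, do not truncate at all: boundedness is never used, since $(f_k)_\#\mm_\Y$ is a single Borel probability on $\R$ and hence tight. You could also get the a.e.\ $1$-Lipschitz bound for $\Phi$ directly from the lower semicontinuity of $\sfd_\Y$, which is a supremum of $\tau_\Y$-continuous functions.
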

\begin{proof}
    Let us start by proving that any sequence $(\Phi_n)\subset {\rm Iso}(\X; \Y)$ admits an $\L^0$ convergent subsequence.
    Let $f\in \Lip_1(\Y, \sfd_\Y)$ and notice that the sequence $(f\circ\Phi_n)_{n\in \N}$ is contained in $\Lip_1(\Y, \mm_{\X_n})$ and such that the common pushforward measure $\mu=f_\#\mm_{\X}=(f\circ\Phi_n)_\#\mm_{\X}$ is tight.
    Since $\X$ is concentrated, $(f\circ\Phi_n)_{n\in \N}$ is precompact in $\L^0$.
    The definition of isomorphism and \Cref{lemma: L^0 compactness of maps} imply that $(\Phi_n)_{n\in \N}$ is precompact in $\L^0$.

    Let us now conclude by proving that ${\rm Iso}(\X; \Y)$ is closed.
    Let then $(\Phi_n)\subset {\rm Iso}(\X; \Y)$ be a converging sequence and let $\Phi$ be its limit.
    Clearly $\Phi$ is measure preserving and 1-Lipschitz.
    By \Cref{cor: characterisation isomorphisms}, it remains to prove that $\Phi^*\Lip_1(\Y, \mm_\Y)$ is dense in $\Lip_1(\X, \mm_{\X})$.
    If $f\in \Lip_1(\X, \mm_{\X})$ is bounded, the fact $\Phi_n\Lip_1(\Y, \mm_{\Y})=\Lip_1(\X, \mm_\X)$ implies that there exist equibounded $g_n\in \Lip_1(\Y, \mm_\Y)$ such that $f=g_n\circ \Phi_n$. 
    Since $\Y$ is concentrated, up to subsequence, $g_n\to g\in \Lip_1(\Y, \mm_\Y)$ in $\L^0$.
    Since $\Phi_n$ are automorphisms, they have bounded compression, thus $f=\lim g_n\circ \Phi_n= g\circ \Phi$.
\end{proof}

We are now ready to discuss the relation between different sequences of projections inducing concentration.
\begin{definition}
    Let $\X$ be an extended topological-metric-measure space and let $\X_n$ be a sequence of emm-spaces.
    Let $p_n$ and $q_n$ be sequences of projections inducing two fibrations of $\X_n$ over $\X$.
    We say that $p_n$ and $q_n$ are equivalent up to isomorphisms if there exist sequences $\Phi_n, \Psi_n\in {\rm Aut}(\X)$ and good couplings $\aalpha_n$ and $\bbeta_n$ with respect to $p_n$ and $q_n$ such that $\sfd_{\L^0}^{\saalpha_n}(q_n, \Phi_n)\to 0$ and $\sfd_{\L^0}^{\sbbeta_n}(p_n, \Psi_n)\to 0$.
\end{definition}
As in the setting of mm-spaces \cite{gigliStabilityHeatFlow2024}, it turns out that all sequences of projections are equivalent up to the action of the isomorphisms.
\begin{proposition}\label{prop: equivalence of projections}
    Let $\X_n$ be a sequence of concentrated emm-spaces converging in concentration to a limit concentrated emm-space $\X$.
    Let $p_n, q_n:\X_n\to\X$ be two sequences of projections inducing fibrations of $\X_n$ over $\X$ and satisfying the assumptions of \Cref{proposition: definition via fibration concentrated}.
    Then $p_n$ and $q_n$ are equivalent up to isomorphisms.
\end{proposition}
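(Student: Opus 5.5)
By symmetry it suffices to build $\Phi_n\in{\rm Aut}(\X)$ and good couplings $\aalpha_n$ for $p_n$ with $\sfd_{\L^0}^{\saalpha_n}(q_n,\Phi_n)\to0$. I argue by contradiction via subsequences. Fix good couplings $\aalpha_n$ for $p_n$. Then $q_n$, viewed as maps $\X_n\to\X$, give a sequence in $\L^0$ along the fibration induced by $p_n$. The plan is to show that every subsequence has a further subsequence along which $q_n\to\Phi$ in $\L^0$ (with respect to $p_n$) for some $\Phi\in{\rm Aut}(\X)$. Together with the compactness of ${\rm Aut}(\X)$ from \Cref{pro: compactness isomorphisms}, this gives $\inf_{\Phi\in{\rm Aut}(\X)}\sfd_{\L^0}^{\saalpha_n}(q_n,\Phi)\to0$. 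Choosing near-minimisers $\Phi_n$ then concludes the argument.

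\textbf{Step 1: subsequential limit of $q_n$.} Fix a continuous Kuratowski embedding of $\X$ given by bounded $f_k\in\Lip_1(\X,\tau,\sfd)$ generating $\tau$. Each $f_k\circ q_n$ is $1$-Lipschitz on $\X_n$ only up to an error, by \Cref{lemma: regularity projections} applied to $q_n$ (which induce domination). However, $f_k\circ q_n$ is uniformly close in $\L^0(\mm_n)$ to genuine $1$-Lipschitz functions $f_{k,n}$, namely those approximating $f_k$ along $q_n$. These $f_{k,n}$ are uniformly bounded, hence have bounded Lévy means. Since $p_n$ induces concentration, a diagonal subsequence gives $f_{k,n}\to g_k$ in $\L^0$ along $p_n$ for some $g_k\in\Lip_1(\X,\mm)$. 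The laws of $(f_k\circ q_n)_k$ equal $((f_k)_k\circ q_n)_\#\mm_n$, which converge to $((f_k)_k)_\#\mm$. Hence $G=(g_k)_k:\X\to\R^\infty$ satisfies $G_\#\mm=\iota_\#\mm$, and $G$ is $1$-Lipschitz for $\ell_\infty$ on a full measure set. Arguing as in \Cref{lemma: L^0 compactness of maps}, $G$ takes values a.e.\ in $\iota(\X)$, so $\Phi:=\iota^{-1}\circ G$ satisfies $q_n\to\Phi$ in $\L^0$ along $p_n$, with $\Phi$ measure preserving and a.e.\ $1$-Lipschitz.

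\textbf{Step 2: $\Phi$ is an automorphism.} This is the main obstacle. By \Cref{cor: characterisation isomorphisms} it suffices to show $\Phi^*\Lip_1(\X,\mm)=\Lip_1(\X,\mm)$. Since $\mathcal L_1(\X)$ is compact and $\Phi^*$ is an isometric embedding of $\mathcal L_1(\X)$ into itself, it is surjective once it is shown to be well-defined, as in the proof of \Cref{theorem: isomorphism from pyramid}. Alternatively, I would run the symmetric construction, exchanging $p_n$ and $q_n$, to obtain $\Psi$ with $p_n\to\Psi$ along $q_n$. Then I would check $\Psi\circ\Phi=id$ a.e.\ by composing the convergences, using \Cref{proposition: L0 to fibration} with equi-integrability of pushforwards, which is where the delicate point lies.

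If the composition argument is troublesome, I would instead show directly that for each $h\in\Lip_1(\X,\mm)$ there is $g\in\Lip_1(\X,\mm)$ with $h=g\circ\Phi$. To do this, approximate $h$ along $p_n$ by $h_n\in\Lip_1(\X_n)$ using domination, then extract an $\L^0$-limit $g$ of $h_n$ along $q_n$ using concentration. Finally, identify $g\circ\Phi=h$ via the coupling triangle inequality of \Cref{remark: triangle inequality}.
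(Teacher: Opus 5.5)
Your proposal is correct. The reduction and Step~1 coincide with the paper's argument. Step~1 is \Cref{cor: compactness projections} (via \Cref{lemma: L^0 compactness of maps fibration}) rederived by hand: you approximate the bounded generators $f_k$ along $q_n$, extract limits along $p_n$, and invert the Kuratowski embedding. For the reduction you do not need compactness of ${\rm Aut}(\X)$; the subsequence principle alone gives $\inf_{\Phi\in{\rm Aut}(\X)}\sfd^{\alpha_n}_{\L^0}(q_n,\Phi)\to 0$.

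The genuine difference is Step~2, and your first option already closes it, so it is not an obstacle. Since $\Phi$ is measure preserving and a.e.\ $1$-Lipschitz, $\Phi^*$ maps $\Lip_1(\X,\mm)$ into itself, commutes with adding constants and preserves the Ky Fan distance. It therefore induces an isometric self-embedding of the compact space $\mathcal L_1(\X)$, which must be onto. The infimum over constants in the quotient distance is attained, so this lifts to $\Phi^*\Lip_1(\X,\mm)=\Lip_1(\X,\mm)$, and \Cref{cor: characterisation isomorphisms} applies. This is the mechanism of \Cref{theorem: isomorphism from pyramid}: on a concentrated emm-space every measure-preserving a.e.\ $1$-Lipschitz self-map is an automorphism, so your Step~2 uses nothing about the fibrations.

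The paper instead proves surjectivity of $\Phi^*$ by hand, which is your third option: approximate along one fibration, extract a limit along the other, and identify the composition. That route constructs the preimage explicitly and mirrors the mm-space proof. Its cost is that it reuses the concentration of both fibrations and needs the identification step, which yours avoids.

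Your ordering in that third option (domination along $p_n$, concentration along $q_n$) is the right one. The two weak limits of $(h_n,q_n)_\#\mm_n$ are $(h,\Phi)_\#\mm$ along $p_n$ and $(g,\mathrm{id})_\#\mm$ along $q_n$; comparing them shows that $h=g\circ\Phi$ a.e. The paper's written proof approximates along $q_n$ and extracts along $p_n$. By the same computation that only yields $g=f\circ\Phi$, i.e.\ the trivial inclusion, so there the roles of $p_n$ and $q_n$ should be exchanged. Your second option (proving $\Psi\circ\Phi=\mathrm{id}$) can be dropped.
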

\begin{remark}
    The above \Cref{prop: equivalence of projections} provides also a different proof of \Cref{theorem: isomorphism from pyramid}.
    Indeed, the existence of a sequence of projections to a concentrated space (isomorphic to) $\X$ inducing concentration due to \Cref{proposition: definition via fibration concentrated} depends only on the fact that $\mathcal P_\X=\lim_n \mathcal P_{\X_n}$.
    The statement above then implies that any couple of concentrated spaces with the same pyramid must be isomorphic.
\end{remark}
The proof of \Cref{prop: equivalence of projections} rests on a compactness result resembling \Cref{pro: compactness isomorphisms}. 
\begin{lemma}\label{lemma: L^0 compactness of maps fibration}
    Let $\X_n$ be a sequence of emm-spaces fibrating on a Polish space $\X$.
    Let $(f_k)_{k\in \N}$ be a sequence of continuous maps generating the topology of $\Y$.
    Let $(T_n)_{n\in \N}$ be a sequence of measurable maps from $\X_n$ to $\Y$ such that $(f_k\circ T_n)_{n\in \N}$ is precompact in $\L^0$ for all $k$ and that $(T_n)_{\#}\mm_{\X_n}$ is tight.
    Then $T_n$ is precompact in $\L^0$. 
\end{lemma}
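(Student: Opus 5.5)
The plan is to copy the proof of \Cref{lemma: L^0 compactness of maps}, changing only the step where a limit map is produced. Here $\L^0$-convergence is understood along the fibration: $T_n\to T$ means $(\Phi\circ T_n)\mm_{\X_n}\rightharpoonup(\Phi\circ T)\mm_\X$ for every $\Phi\in C^0_{\sf b}(\Y)$. I would first set $F=(f_1,f_2,\dots):\Y\to(\mathbb R^\infty,\tau^\infty)$. Since the $f_k$ generate the topology of $\Y$, $F$ is a homeomorphism onto its image. The statement then splits into two parts: precompactness of $F\circ T_n$ in $\L^0$ along the fibration, with values in $\mathbb R^\infty$; and identification of every limit point as $F\circ T$ for some $T\in\L^0(\X;\Y)$.

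For the first part I would use a diagonal argument. Fix a subsequence. For $k=1$, precompactness of $(f_1\circ T_n)_n$ gives a further subsequence converging in $\L^0$ to some $h_1\in\L^0(\X)$. Iterating in $k$ and taking the diagonal subsequence gives $f_k\circ T_n\to h_k$ for every $k$. It remains to pass from componentwise convergence to convergence of the vector map $F\circ T_n\to H:=(h_1,h_2,\dots)$ in $\L^0(\,\cdot\,;\mathbb R^\infty)$. I would use the good-coupling characterisation of \Cref{proposition: L^0 properties}. Fix a sequence of good couplings $\aalpha_n$ and metrise $\mathbb R^\infty$ by $\rho(x,y)=\sum_k 2^{-k}(|x_k-y_k|\wedge1)$. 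Then
\begin{equation}
\sfd^{\saalpha_n}_{\L^0}(F\circ T_n,H)\lesssim \sum_{k\le K}\sfd^{\saalpha_n}_{\L^0}(f_k\circ T_n,h_k)+2^{-K}.
\end{equation}
Each term of the finite sum tends to $0$ because the characterisation holds for every sequence of good couplings. Letting $K\to\infty$ gives $F\circ T_n\to H$.

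For the second part, the fourth item of \Cref{proposition: L^0 properties}, applied with $\mu_n=\mm_{\X_n}$ (equi-integrable, and converging weakly to $\mm_\X$ by definition of fibration), gives $(F\circ T_n)_\#\mm_{\X_n}\rightharpoonup H_\#\mm_\X$. By tightness, $(T_n)_\#\mm_{\X_n}$ has a weak limit point $\nu$ on $\Y$, and by continuity of $F$ we get $H_\#\mm_\X=F_\#\nu$. This is the main delicate point: $F(\Y)$ need not be closed in $\mathbb R^\infty$. However, $F$ is a continuous injection on a Polish space, so $F(\Y)$ is Borel by Lusin--Souslin, and $F_\#\nu$ is concentrated on it. Hence $H\in F(\Y)$ $\mm_\X$-a.e., and $T:=F^{-1}\circ H$ is well defined and Borel, again because the inverse of a Borel injection is Borel.

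Finally I would transfer the convergence back to $\Y$. Given $\Phi\in C^0_{\sf b}(\Y)$, the function $\Phi\circ F^{-1}$ is only continuous on $F(\Y)$, so the last statement of \Cref{proposition: L^0 properties} does not apply directly. Instead I would argue through the couplings. By tightness of $(T_n)_\#\mm_{\X_n}$ and of $T_\#\mm_\X$, outside a set of small $\aalpha_n$-measure both $T_n\circ\proj_1$ and $T\circ\proj_2$ lie in a compact set $C\subset\Y$. On $C$ the map $F^{-1}$ is uniformly continuous from $(F(C),\rho)$ to $\Y$ with a compatible metric. Therefore $\rho$-closeness of $F\circ T_n$ and $F\circ T$ forces closeness of $T_n$ and $T$, which gives $\sfd^{\saalpha_n}_{\L^0}(T_n,T)\to0$. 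I expect this compactness and uniform-continuity step, the replacement for the pointwise argument used in \Cref{lemma: L^0 compactness of maps}, to be the main obstacle.
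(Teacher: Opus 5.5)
Your proposal is correct and follows the paper's route. The paper only says the proof is that of \Cref{lemma: L^0 compactness of maps} with $\L^0$-convergence replaced by convergence along the fibration: embed via $F=(f_1,f_2,\dots)$, extract a limit in $\mathbb R^\infty$, use tightness to place it a.e.\ in $F(\Y)$, and pull back by $F^{-1}$. Your explicit diagonal and good-coupling estimate, and your tightness-plus-uniform-continuity step replacing the pointwise a.e.\ argument at the end, are exactly the details that this one-line reduction leaves implicit.
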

The proof is identical to the one of \Cref{lemma: L^0 compactness of maps}, just replacing the convergence in $\L^0(\X;\Y)$ with the convergence in $\L^0$ along the fibration.

\begin{corollary}\label{cor: compactness projections}
    Let $\X$ and $\Y$ be concentrated extended topological-metric-measure spaces and let $\X_n$ be a sequence of emm-spaces.
    Assume that $\X_n$ fibrates over $\X$ with projections $p_n$ inducing concentration and over $\Y$ with projections $q_n$ also inducing concentration.
    Then the sequence $(q_n)_{n\in \N}$ is precompact with respect to the convergence in $\L^0$ induced by $p_n$ and its limit points are 1-Lipschitz measure preserving maps from $\X$ to $\Y$.
\end{corollary}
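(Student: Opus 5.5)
We follow the pattern of \Cref{pro: compactness isomorphisms}, using \Cref{lemma: L^0 compactness of maps fibration} in place of \Cref{lemma: L^0 compactness of maps}. Since $\Y$ is an extended topological-metric-measure space, its topology is Polish and generated by $\Lip_1(\Y,\tau_\Y,\sfd_\Y)$. By the Lindelöf argument in the proof of \Cref{prop: embedding}, we fix a sequence $(f_k)_{k\in\N}$ of bounded, $\tau_\Y$-continuous, $1$-Lipschitz functions that generates $\tau_\Y$. Tightness of $(q_n)_\#\mm_{\X_n}$ is immediate, since these measures converge weakly to $\mm_\Y$ on the Polish space $\Y$. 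By \Cref{lemma: L^0 compactness of maps fibration}, precompactness of $(q_n)$ in $\L^0$ along the fibration induced by $p_n$ then reduces to precompactness, for each fixed $k$, of the real sequence $n\mapsto f_k\circ q_n$ in $\L^0$ along that fibration.

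To prove the latter, we use that the $p_n$ induce concentration. First, $f_k\circ q_n$ is a.e.~$1$-Lipschitz on $\X_n$. Indeed, the domination property of $q_n$ only gives approximate Lipschitz bounds (\Cref{lemma: regularity projections}), so we avoid it. We instead use the domination of $q_n$: it provides $h_{n,k}\in\Lip_1(\X_n,\mm_{\X_n})$ with $h_{n,k}\to f_k$ along $q_n$. Since $f_k\circ q_n\to f_k$ along $q_n$ by \Cref{cor: approximation via projections}, we get $\sfd_{\L^0(\mm_{\X_n})}(h_{n,k},f_k\circ q_n)\to0$ by \Cref{remark: triangle inequality}. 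It therefore suffices to show that $(h_{n,k})_n$ is precompact along $p_n$.

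The functions $h_{n,k}$ have pushforwards $(h_{n,k})_\#\mm_{\X_n}$ close to $(f_k)_\#\mm_\Y$, so they are tight; in particular their Lévy means $c_n$ are bounded. The concentration property of $p_n$ applied to $h_{n,k}-c_n$ gives subsequential $\L^0$-limits in $\Lip_1(\X,\mm_\X)$ along $p_n$. Because the $c_n$ are bounded, after passing to a further subsequence the $h_{n,k}$ themselves converge. A diagonal argument over $k$ combined with \Cref{lemma: L^0 compactness of maps fibration} then yields a subsequence with $q_n\to T$ in $\L^0$ along $p_n$, for some $T\in\L^0(\X;\Y)$.

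It remains to identify the limit points $T$. \emph{Measure preservation:} apply the fourth characterisation in \Cref{proposition: L^0 properties} (equivalently \Cref{proposition: L0 to fibration}) with $\mu_n=\mm_{\X_n}$, which is equi-integrable and converges to $\mm_\X$. This gives $(q_n)_\#\mm_{\X_n}\rightharpoonup T_\#\mm_\X$. Since the same pushforwards also converge to $\mm_\Y$, we conclude $T_\#\mm_\X=\mm_\Y$. \emph{The Lipschitz property:} for each $k$, the limit of $h_{n,k}$ along $p_n$ lies in $\Lip_1(\X,\mm_\X)$ by the concentration property of $p_n$. That limit equals $f_k\circ T$, because $f_k$ is continuous, so $f_k\circ q_n\to f_k\circ T$ by \Cref{proposition: L^0 properties}. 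Hence $|f_k\circ T(x)-f_k\circ T(y)|\le\sfd_\X(x,y)$ on $E_k\times E_k$ for some full-measure set $E_k$. On $E=\bigcap_kE_k$, taking the supremum over $k$ and using that $\sfd_\Y=\sup_k|f_k(\cdot)-f_k(\cdot\cdot)|$ (the recovery formula \eqref{eq: isometric embedding}, after enlarging the family $(f_k)$ if needed), we obtain $\sfd_\Y(T(x),T(y))\le\sfd_\X(x,y)$ on $E\times E$.

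The main obstacle I anticipate is the a.e.~$1$-Lipschitz replacement $h_{n,k}$ of $f_k\circ q_n$. The difficulty is to check that the $\L^0$-closeness on $\X_n$ transfers to precompactness along $p_n$. By \Cref{remark: triangle inequality}, this holds with respect to good couplings for $p_n$.
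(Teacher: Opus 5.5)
Your proof is correct and follows the paper's route. You replace $f_k\circ q_n$ by the $\L^0(\mm_{\X_n})$-close functions $h_{n,k}\in\Lip_1(\X_n,\mm_{\X_n})$ given by the domination of $q_n$, use the concentration of $p_n$ for precompactness along $p_n$, and conclude with \Cref{lemma: L^0 compactness of maps fibration}; you also make explicit two points the paper leaves implicit, namely the boundedness of the Lévy means and the identification of limit points as measure preserving and a.e.~$1$-Lipschitz. The only blemish is the sentence ``First, $f_k\circ q_n$ is a.e.~$1$-Lipschitz on $\X_n$'', which should say it \emph{need not} be, as your next sentence indicates and as the rest of the argument correctly avoids using.
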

\begin{proof}
    Consider $f\in \Lip_1(\Y, \tau_\Y, \sfd_\Y)$ bounded.
    Notice that $q_n\to id$ in the $\L^0$ convergence induced by $q_n$: since $f$ is continuous, this implies $f\circ q_n\to f$ with respect to $q_n$.
    
    Since the projections $q_n$ induce concentration, there exists a sequence $n\mapsto f_n\in\Lip_1(\X_n, \mm_{\X_n})$ such that $f_n\to f$ in $\L^0$ along the fibration determined by $q_n$.
    In particular, the pushforward measures are tight and $\sfd_{\L^0}(f\circ q_n, f_n)\to 0$.
    
    Since the projections $p_n$ induce concentration, the sequence $(f_n)_{n\in \N}$ is precompact in $\L^0$ with respect to the fibration determined by $p_n$ and its limit points are in $\Lip_1(\Y, \mm_\Y)$.
    The estimate $\sfd_{\L^0}(f\circ q_n, f_n)\to 0$ implies that also $f\circ q_n$ is precompact in $\L^0$ with respect to the fibration determined by $p_n$.
    
    Finally, let us choose a sequence $(f^k)_{k\in \N}\subset \Lip_1(\Y, \tau_\Y, \sfd_\Y)$ of bounded functions generating the topology of $\Y$. 
    The thesis follows by applying \Cref{lemma: L^0 compactness of maps fibration}.
\end{proof}
We are now ready to prove the equivalence up to isomorphisms of any two sequences of projections.
\begin{proof}[Proof of \Cref{prop: equivalence of projections}]
    By \Cref{cor: compactness projections}, it is enough to prove that any limit point of $n\mapsto q_n$ with respect to the fibration determined by $n\mapsto p_n$ is in ${\rm Iso}(\X;\Y)$.
    
    In order to prove that, let $\Phi\in \L^0(\X;\Y)$ be such that, up to subsequences, $q_n\to \Phi$ in $\L^0$ with respect to the fibration determined by $n\mapsto p_n$ and let us prove that $\Phi^*\Lip_1(\Y, \mm_\Y)=\Lip_1(\X, \mm_\X)$.
    
    By \Cref{cor: compactness projections}, $\Phi$ is 1-Lipschitz and thus $\Phi^*\Lip_1(\Y, \mm_\Y)\subset \Lip_1(\X, \mm_\X)$.
    Conversely, by the fact that $q_n$ induce concentration, for any $f\in \Lip_1(\X, \mm_\X)$ there exists $n\mapsto f_n\in \Lip_1(\X_n, \mm_{\X_n})$ such that $f_n\to f$ in $\L^0$ along to the fibration induced by $q_n$.
    In particular, $n\mapsto (f_n)_\#\mm_{\X_n}$ is a tight sequence.
    
    Since also $n\mapsto p_n$ induce concentration, up to a further non-relabeled subsequence, $f_n\to g\in \Lip_1(\X, \mm_\X)$ along the fibration induced by $n\mapsto p_n$.
    Finally, since $q_n\to \Phi$ in $\L^0$ along the fibration induced by $n\mapsto p_n$, it is easy to see that $f=g\circ \Phi$, concluding the proof of $\Phi^*\Lip_1(\Y, \mm_\Y)\subset \Lip_1(\X, \mm_\X)$.
    By \Cref{cor: characterisation isomorphisms}, $\Phi$ is an isomorphism.
\end{proof}

\subsection{Stability}
The notion of fibration and the extrinsic characterisation of convergence in concentration for concentrated emm-spaces allows to study the stability of functional analytic objects such as Cheeger energies already explored by the first author in \cite{Gig10} in the context of the measured Gromov--Hausdorff convergence (see also \cite{KuwShi03}), was recently applied to convergence in concentration in \cite{ozawaStabilityRCDCondition2019a} and \cite{gigliStabilityHeatFlow2024}.

\paragraph{Cheeger energy I: test plan.}
We briefly recall Cheeger energies in emm-spaces based on test plans.  
For a topological space~$(\X, \tau)$, let~$\cu{(\X, \tau)}$ denote the space of continuous maps from $[0,1]$ to~$(\X, \tau)$. We endow~$\cu{(\X, \tau)}$ with the compact-open topology. For an extended metric space~$(\X, \sfd)$,  we say that a map $[0,1] \ni t \mapsto \gamma_t \in (\X, \sfd)$  belongs to $\acq{2}{(\X, \sfd)}$ (the space of $2$-absolutely continuous curves) if there exists $f\in \L^2(0, 1)$ such that 
\begin{equation}
    \sfd(\gamma_s, \gamma_t)\leq \int_s^t f(r)\,\d r
\end{equation}
for all $s<t$ in $(0, 1)$. Let~$\pr(\cu{(\X, \tau)})$ denote the space of Borel probability measures on~$\cu{(\X, \tau)}$.
Following~\cite{AmbGigSav14}, given an emm-space~$(\X, \tau, \sfd, \mm)$, 
we define {\it a test plan} as $\pi\in \pr(\cu{(\X, \tau)})$ such that:
\begin{itemize}
    \item $\pi$ is concentrated on $\acq{2}{(\X, \sfd)}$;
    \item there exists $C>0$ such that  $(e_t)_\#\pi\leq C\mm$ for every $t \ge 0$.
\end{itemize}

We write $\mathscr T(\X)$ for the set of test plans on $\X$.
We  say that a set in $\pr(\cu{(\X, \tau)})$ is $\mathscr T(\X)$-negligible if it is $\pi$-negligible for all $\pi \in \mathscr T(\X)$. 
Similarly, we say that a property holds $\mathscr T(\X)$-a.e.~if the property holds on a set whose  complement is $\mathscr T(\X)$-negligible.
We  recall that, if $\gamma\in \acq{2}{(\X, \sfd)}$, then its metric derivative $|\dot{\gamma}|$ can be defined as the a.e.~limit
\begin{equation}
    |\dot{\gamma}_t|=\lim_{h\to 0}\frac{\sfd_\X(\gamma_{t}, \gamma_{t+h})}{|h|}.
\end{equation}

\begin{definition}[Weak upper gradients and Cheeger energy] \label{d:Che}
    Let $\X$ be an emm-space. 
    A function $G\in \L^2(\X)$ is called a \emph{weak upper gradient} of a function $u\in \L^0(\X)$ if for $\mathscr T(\X)$-a.e.~curve $\gamma$ it holds
    \begin{equation}
        |u\circ\gamma_1-u\circ\gamma_0|\leq \int_0^1 (G\circ\gamma_t)|\dot{\gamma}_t|\,\d t.
    \end{equation}
    The \emph{minimal weak upper gradient} $|Du|_{w, \X}$ of $u$ is the pointwise a.e.~minimum (which exists, see \cite{AmbGigSav14}) among the weak upper gradients of $u$.
    The \emph{Cheeger energy}  $\ch_\X: L^2(\X) \to [0,+\infty]$ is defined as 
    \begin{equation}
        \ch_\X(u)=\frac 12 \int_\X |D u|_{w, \X}^2\,\d\mm_\X.
    \end{equation}
    Conventionally, we define $\ch_\X(u)\coloneqq+\infty$ if there is no weak upper gradient of~$u$. We simply write $\ch$ instead of $\ch_\X$ unless any confusion could occur. We write 
    $$\W^{1,2}(\X):=\{u \in \L^2(\X): \ch_\X(u)<+\infty\}.$$
    The space~$\W^{1,2}(\X)$ is a Banach space endowed with the norm:
    $$\|u\|_{\W^{1,2}(\X)}:=\sqrt{\ch_\X(u)+\|u\|^2_{\L^2(\X)}}.$$
\end{definition}
In \cite{SuYo25+}, the following results have been shown:
\begin{lemma}[{\cite[Section~6]{SuYo25+}}]
     Let $\X, \Y$ be two emm-spaces and let $\varphi:\X\to \Y$ be an emm-isomorphism.
     Then there exists a unique $\mathscr{T}(\X)$-a.e.~defined map $\varphi_*:\cu{(\X, \tau_{\sfd_\X})}\to \cu{(\Y, \tau_{\sfd_\Y})}$ such that for $\mathscr T(\X)$-a.e.~curve $\gamma$ the equality $(\varphi_* \gamma)_t=\varphi(\gamma_t)$ holds for all $t\in [0, 1]$, and 
     the following hold:
     \begin{itemize}
         \item $\varphi_\# (e_t)_\#\pi=(e_t)_\#(\varphi_*)_\#\pi$ for all $t\in[0, 1]$;
         \item the map $(\varphi_*)_\#$ is a bijection between $\mathscr T(\X)$ and $\mathscr T(\Y)$;
         \item for $\mathscr T(\X)$-a.e.~$\gamma$ and $\mathsf{Leb}$-a.e.~$t\in [0, 1]$ it holds
          \begin{equation} |\dot{\gamma}_t|=|\dot{(\varphi_*\gamma)}_t|.
         \end{equation}
     \end{itemize}
\end{lemma}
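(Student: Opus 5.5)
The plan is to build $\varphi_*$ curve by curve and then derive the three properties from two facts: $\varphi$ is an isometry on a full-measure set, and test plans have bounded compression. Fix an emm-isomorphism $\varphi$, a Borel set $A\subset\X$ with $\mm_\X(A)=1$ on which $\varphi$ is an isometry, and an inverse isomorphism $\psi$ (Definition~\ref{d:EMI}). Shrinking $A$, we may assume $\psi\circ\varphi=\mathrm{id}$ on $A$ and $\varphi(A)\subset B$, where $B$ is a full-measure set on which $\psi$ is an isometry.

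\emph{Step 1: curves spend almost no time outside $A$.} For any test plan $\pi$ with $(e_t)_\#\pi\le C\mm_\X$, Fubini gives
\[
\int\int_0^1 \mathbf 1_{\X\setminus A}(\gamma_t)\,\d t\,\d\pi(\gamma)\le C\,\mm_\X(\X\setminus A)=0 .
\]
Hence for $\pi$-a.e.\ $\gamma$ the set $T_\gamma=\{t:\gamma_t\in A\}$ has full Lebesgue measure, and in particular is dense in $[0,1]$. On $T_\gamma$ the curve $t\mapsto\varphi(\gamma_t)$ is $\sfd_\Y$-isometric to $\gamma$, so it is uniformly $\sfd_\Y$-continuous with the same modulus, and $\sfd_\Y(\varphi(\gamma_s),\varphi(\gamma_t))\le\int_s^t f$ holds for $s,t\in T_\gamma$. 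Since $\sfd_\Y$ is complete, it extends uniquely to a $\sfd_\Y$-continuous curve $\varphi_*\gamma$ on $[0,1]$. This curve lies in $\acq2{(\Y,\sfd_\Y)}$ with the same metric derivative: the difference quotients agree on the dense full-measure set $T_\gamma$, and both sides are continuous. Because $\tau_\Y\subset\tau_{\sfd_\Y}$, the curve $\varphi_*\gamma$ is also $\tau_\Y$-continuous.

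\emph{Step 2: identify the curve at every time.} The main obstacle is to show $(\varphi_*\gamma)_t=\varphi(\gamma_t)$ for \emph{all} $t$ and $\mathscr T(\X)$-a.e.\ $\gamma$, not only for $t\in T_\gamma$. For a fixed $t$, the bound $(e_t)_\#\pi\le C\mm_\X$ gives $\gamma_t\in A$ for $\pi$-a.e.\ $\gamma$, so $t\in T_\gamma$ and equality holds. To handle uncountably many $t$ at once, I would show that the $\pi$-full set of curves with $\gamma_t\in A$ for all rational $t$ already forces $\gamma_t\in A$ for every $t$. The first thing to try is to replace $A$ by a $\sfd_\X$-closed full-measure set; alternatively, use continuity of $\varphi$ with respect to the completed structure.

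If this cannot be achieved, I would fall back on the Borel-representative freedom of Definition~\ref{d:EMI}: redefine $\varphi$ on the null set $\X\setminus A$ via the limit along the curve. Well-posedness follows from Step 1, because two test-plan curves through the same point produce the same extended value, by the isometry estimate.

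\emph{Step 3: the listed properties.} Measurability of $\varphi_*$ follows by writing it as a pointwise limit over rational times. The identity $(e_t)_\#(\varphi_*)_\#\pi=\varphi_\#(e_t)_\#\pi\le C\varphi_\#\mm_\X=C\mm_\Y$ follows from Step 2. Together with Step 1 this shows that $(\varphi_*)_\#\pi\in\mathscr T(\Y)$. Running the same construction for $\psi$ produces $\psi_*$. Since $\psi\circ\varphi=\mathrm{id}$ on $A$, we get $\psi_*\varphi_*\gamma=\gamma$ on $T_\gamma$, hence everywhere by continuity, so $(\varphi_*)_\#$ is a bijection with inverse $(\psi_*)_\#$. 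The equality of metric derivatives was obtained in Step 1. Uniqueness holds because any map with $(\varphi_*\gamma)_t=\varphi(\gamma_t)$ is determined on a dense set of times, and hence everywhere by continuity.
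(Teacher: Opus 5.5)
Steps 1 and 3 are sound. Extending $t\mapsto\varphi(\gamma_t)$ by $\sfd_\Y$-continuity from the full-measure time set $T_\gamma$, using bounded compression at each fixed time, and using $\psi_*$ for the inverse give the three bullets and uniqueness. (The paper gives no proof of this lemma; it imports it from Suzuki--Yokota.)

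The genuine gap is Step~2. Your primary route there cannot work, because for an arbitrary representative $\varphi$ the ``for all $t$'' identity is false. Take $\X=\Y=(\R^2,|\cdot|)$ with $\mm$ the normalised Lebesgue measure on $[-2,2]^2$. Let $\varphi$ be the identity off the line $L=\{0\}\times\R$ and the constant $(5,5)$ on $L$. This is an emm-isomorphism, and every full-measure set on which it is isometric must miss $L$. Now let $\pi$ be the law of $\gamma_t=x+te_1$ with $x$ uniform on $[-1,0]\times[0,1]$. It is a test plan with $C=16$. Yet every one of its curves meets $L$ at time $t=-x_1$. At that time, no continuous curve that agrees with $\varphi\circ\gamma$ at all other times can take the value $(5,5)$. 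Consequences for your Step~2:
\begin{itemize}
\item knowing $\gamma_q\in A$ at rational times says nothing about the remaining times;
\item no $\sfd_\X$-closed admissible $A$ exists, since a closed full-measure set contains $[-2,2]^2$ and hence points of $L$;
\item $\varphi$ has no continuity to exploit.
\end{itemize}

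Your fallback is the right fix and should be the actual argument. Replace $\varphi$ on $\overline{A}^{\sfd_\X}\setminus A$ by the unique isometric extension of $\varphi|_A$. This is well defined because $\sfd_\Y$ is complete. It is still $\mm_\X$-measurable and still an emm-isomorphism, because it changes $\varphi$ only inside the null set $\X\setminus A$. Every curve with $\gamma_t\in A$ on a dense set of times stays in $\overline{A}^{\sfd_\X}$ by $\sfd_\X$-continuity. So for this representative the identity holds for every $t$, and $\varphi_*$ itself is unchanged.

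State explicitly that this is the only sense in which the clause can hold. For the given $\varphi$, what is true is the identity for each fixed $t$ and $\mathscr T(\X)$-a.e.\ $\gamma$. You already have that from bounded compression. It is all that Step~3 requires, and all that is needed to transfer weak upper gradients, which involve only endpoint values and time integrals.
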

From these, they conclude the invariance of minimal upper gradients and Cheeger energy up to isomorphism.
\begin{theorem}[{\cite[Theorem~6.13]{SuYo25+}}] \label{t:ECE}
    Let $\X, \Y$ be two emm-spaces and let $\varphi:\X\to\Y$ be an emm-isomorphism.
    Then, for all $u\in \L^2(\Y)$, $$|D(u\circ\varphi)|_{w, \X}=|D u|_{w, \Y}\circ\varphi \qquad \text{$\mm_\X$-a.e.},$$
    and $\varphi$ induces an isometric isomorphism between Banach spaces~$\W^{1,2}(\X)$ and $\W^{1,2}(\Y)$$:$ 
$$\|u\|_{\W^{1,2}(\Y)}=\|u\circ \varphi\|_{\W^{1,2}(\X)}\qquad u \in \W^{1,2}(\Y).$$
\end{theorem}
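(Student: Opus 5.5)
The statement to prove is \Cref{t:ECE}: if $\varphi:\X\to\Y$ is an emm-isomorphism, then $|D(u\circ\varphi)|_{w,\X}=|Du|_{w,\Y}\circ\varphi$ and $u\mapsto u\circ\varphi$ is an isometry $\W^{1,2}(\Y)\to\W^{1,2}(\X)$. I would derive it from the preceding lemma on the induced curve map $\varphi_*$, proving two inequalities between minimal weak upper gradients. The $\W^{1,2}$ isometry then follows from measure preservation, $\int (|Du|_{w,\Y}\circ\varphi)^2\,\d\mm_\X=\int|Du|_{w,\Y}^2\,\d\mm_\Y$, together with $\|u\circ\varphi\|_{\L^2(\X)}=\|u\|_{\L^2(\Y)}$.

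\emph{Step 1: $G\circ\varphi$ is a weak upper gradient of $u\circ\varphi$ whenever $G$ is one for $u$.} Fix $\pi\in\mathscr T(\X)$. By the lemma, $(\varphi_*)_\#\pi\in\mathscr T(\Y)$, so the upper gradient inequality for $G$ holds along $(\varphi_*)_\#\pi$-a.e.\ curve. Pull this back to $\pi$-a.e.\ $\gamma$ using $(\varphi_*\gamma)_t=\varphi(\gamma_t)$ for all $t$ and $|\dot{(\varphi_*\gamma)}_t|=|\dot\gamma_t|$ for a.e.\ $t$. This gives $|u(\varphi(\gamma_1))-u(\varphi(\gamma_0))|\le\int_0^1 G(\varphi(\gamma_t))|\dot\gamma_t|\,\d t$.

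One technical point must be checked here. $G$ and $u$ are only $\mm_\Y$-classes, so changing $G$ on an $\mm_\Y$-null set $N$ must not affect the integral. I would handle this through the marginal bound $(e_t)_\#(\varphi_*)_\#\pi=\varphi_\#(e_t)_\#\pi\le C\varphi_\#\mm_\X=C\mm_\Y$. By Fubini, $\int_0^1\pi(\{\gamma:\varphi(\gamma_t)\in N\})\,\d t=0$, so for $\pi$-a.e.\ $\gamma$ the set $\{t:\varphi(\gamma_t)\in N\}$ is Lebesgue-null. For the endpoint values the same marginal bound is used at $t=0,1$. Taking $G=|Du|_{w,\Y}$ yields $|D(u\circ\varphi)|_{w,\X}\le|Du|_{w,\Y}\circ\varphi$.

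\emph{Step 2: the reverse inequality, by symmetry.} \Cref{d:EMI} provides an inverse emm-isomorphism $\psi$ with $\psi\circ\varphi=\mathrm{id}_\X$ $\mm_\X$-a.e. Apply Step 1 to $\psi$ with the function $v=u\circ\varphi$. This gives $|D(u\circ\varphi\circ\psi)|_{w,\Y}\le|D(u\circ\varphi)|_{w,\X}\circ\psi$.

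Since $\varphi\circ\psi=\mathrm{id}_\Y$ $\mm_\Y$-a.e., we have $u\circ\varphi\circ\psi=u$ in $\L^0(\Y)$. Minimal weak upper gradients depend only on the $\mm$-class, again by the marginal bound on test plans. Composing with $\varphi$ and using $\psi\circ\varphi=\mathrm{id}_\X$ a.e.\ together with measure preservation, which makes composition well defined on classes, we get $|Du|_{w,\Y}\circ\varphi\le|D(u\circ\varphi)|_{w,\X}$ $\mm_\X$-a.e.

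The same argument shows that $u$ has a weak upper gradient if and only if $u\circ\varphi$ does, so the convention $\ch=+\infty$ is preserved. The main obstacle is the null-set bookkeeping in Step 1, namely that $\varphi_*$ is only $\mathscr T(\X)$-a.e.\ defined and that $G$, $u$ are equivalence classes. The marginal bound on test plans is what resolves it.
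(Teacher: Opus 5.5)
Your proposal is correct and follows the route the paper indicates: the paper cites this result and derives it from the lemma on the induced curve map $\varphi_*$ (test plans are transported and metric speeds preserved), with measure preservation then giving equality of the Cheeger energies and $\L^2$ norms. The only variation is that you obtain the reverse inequality by running the one-sided argument for the inverse isomorphism $\psi$, rather than using that $(\varphi_*)_\#$ is a bijection $\mathscr T(\X)\to\mathscr T(\Y)$; this also gives surjectivity of $u\mapsto u\circ\varphi$ (take $v\circ\psi$ as preimage of $v$), which the ``isometric isomorphism'' claim needs and which you should state explicitly.
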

Following the same arguments in \cite[Section 6]{SuYo25+}, we obtain the following.
\begin{lemma}
     Let $\X, \Y$ be two emm-spaces and let $\varphi\in \Lip_1(\X, \mm_\X;\Y)$ be measure preserving.
     Then there exists a unique $\mathscr{T}(\X)$-a.e.~defined map $\varphi_*:\cu{(\X, \tau_{\sfd_\X})}\to \cu{(\Y, \tau_{\sfd_\Y})}$ such that for $\mathscr T(\X)$-a.e.~curve $\gamma$ the equality $(\varphi_* \gamma)_t=\varphi(\gamma_t)$ holds for all $t\in [0, 1]$. 
     Moreover, the following hold:
     \begin{itemize}
         \item $\varphi_\# (e_t)_\#\pi=(e_t)_\#(\varphi_*)_\#\pi$ for all $t\in[0, 1]$;
         \item $(\varphi_*)_\#$ maps $\mathscr T(\X)$ into $\mathscr T(\Y)$;
         \item for $\mathscr T(\X)$-a.e.~$\gamma$ and $\mathsf{Leb}$-a.e.~$t\in [0, 1]$ it holds
          \begin{equation} |\dot{\gamma}_t|\geq|\dot{(\varphi_*\gamma)}_t|.
         \end{equation}
     \end{itemize}
\end{lemma}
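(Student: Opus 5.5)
The plan is to copy the argument of \cite[Section 6]{SuYo25+} for emm-isomorphisms, keeping track only of inequalities in place of equalities. Since a measure-preserving map can be altered on an $\mm_\X$-null set without changing its pushforward, I would first fix a Borel representative $\varphi$ and a Borel set $E\subset\X$ with $\mm_\X(E)=1$ such that $\sfd_\Y(\varphi(x),\varphi(y))\le\sfd_\X(x,y)$ for all $x,y\in E$.

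\textbf{Step 1 (test plans live on $E$).} For every $\pi\in\mathscr T(\X)$, the set of curves $\gamma$ with $\gamma_t\in E$ for every $t\in[0,1]$ has full $\pi$-measure. For a fixed $t$ this follows from $(e_t)_\#\pi\le C\mm_\X$ and $\mm_\X(\X\setminus E)=0$. Taking a countable dense set of times $D\subset[0,1]$, $\pi$-a.e.\ $\gamma$ satisfies $\gamma_t\in E$ for all $t\in D$.

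\textbf{Step 2 (defining $\varphi_*$).} On $D$ the map $t\mapsto\varphi(\gamma_t)$ is $1$-Lipschitz-dominated by $\gamma$ in the sense that
\[
\sfd_\Y(\varphi(\gamma_s),\varphi(\gamma_t))\le\sfd_\X(\gamma_s,\gamma_t)\le\int_s^t f\,\d r .
\]
Since $\sfd_\Y$ is complete, this function extends uniquely to a curve in $\acq{2}{(\Y,\sfd_\Y)}$; call it $\varphi_*\gamma$. It is continuous for $\tau_{\sfd_\Y}$, hence also for $\tau_\Y$. To have $(\varphi_*\gamma)_t=\varphi(\gamma_t)$ for \emph{all} $t$, I would use Step 1 to get $\gamma_t\in E$ for all $t$, and then pass to limits along $D$ with the $1$-Lipschitz bound on $E$. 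Uniqueness holds because two such maps agree on this full-measure set of curves.

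Measurability of $\varphi_*$ (as a map into $\cu{(\Y,\tau_\Y)}$) should come from writing it as a pointwise limit of the evaluations at $D$, exactly as in \cite{SuYo25+}.

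\textbf{Step 3 (the three properties).} With these choices, the properties follow directly:
\begin{itemize}
\item $(e_t)_\#(\varphi_*)_\#\pi=\varphi_\#(e_t)_\#\pi$ by the pointwise identity $(\varphi_*\gamma)_t=\varphi(\gamma_t)$.
\item Hence $(e_t)_\#(\varphi_*)_\#\pi\le C\varphi_\#\mm_\X=C\mm_\Y$, using that $\varphi_\#$ preserves the order of measures and that $\varphi$ is measure-preserving.
\item Since $\varphi_*\gamma$ is $2$-absolutely continuous, $(\varphi_*)_\#\pi$ is a test plan.
\item The metric-derivative inequality $|\dot\gamma_t|\ge|\dot{(\varphi_*\gamma)}_t|$ follows by dividing $\sfd_\Y(\varphi(\gamma_t),\varphi(\gamma_{t+h}))\le\sfd_\X(\gamma_t,\gamma_{t+h})$ by $|h|$ and letting $h\to0$ at times where both derivatives exist.
\end{itemize}

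\textbf{Main obstacle.} I expect the main difficulty to be the measurability of $\varphi_*$ and the statement about \emph{all} $t$ rather than a.e.\ $t$. If $\gamma_t\in E$ for all $t$ cannot be guaranteed, I would instead define $\varphi_*\gamma$ as the continuous extension from $D$. I would then check that it agrees with $\varphi(\gamma_t)$ for every $t$ outside a $\pi$-null set of curves by applying Fubini to $\pi\otimes\mathsf{Leb}$, and use the freedom to modify $\varphi$ on $\mm_\X$-null sets to absorb the remaining discrepancy.
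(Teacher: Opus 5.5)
Your Step 1 asserts that $\pi$-a.e.\ curve satisfies $\gamma_t\in E$ for \emph{every} $t\in[0,1]$. Your argument only gives this for $t$ in the countable set $D$, and the stronger claim is false. Take $\X=\Y=\R$ with the Euclidean topology and distance and the standard Gaussian measure. Let $\varphi$ be the identity except $\varphi(0)=5$; it is measure preserving and $1$-Lipschitz on $E=\R\setminus\{0\}$. Let $\pi$ be the law of $t\mapsto U+t$ with $U$ uniform on $[-1/2,1/2]$. Then $(e_t)_\#\pi$ is uniform on $[t-1/2,t+1/2]$, so $\pi$ is a test plan. Yet with probability $1/2$ the curve passes through $0$, and for those curves $t\mapsto\varphi(\gamma_t)$ is not even continuous.

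So two steps rest on a false premise: the way you obtain $(\varphi_*\gamma)_t=\varphi(\gamma_t)$ for all $t$ in Step 2, and the way you derive the metric-derivative bound in Step 3 from the pointwise inequality at $\gamma_t,\gamma_{t+h}$. The example also shows that the literal ``for a.e.\ $\gamma$, for all $t$'' cannot hold for an arbitrary representative of $\varphi$. Your fallback does not close this as written. Fubini on $\pi\otimes\mathsf{Leb}$ only gives the identity for $\mathsf{Leb}$-a.e.\ $t$, or for each fixed $t$ and $\pi$-a.e.\ $\gamma$, never for all $t$ at once.

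The repair is to make the $D$-extension the definition and never evaluate $\varphi$ along a curve at times outside $D$. Let $\varphi_*\gamma$ be the $\sfd_\Y$-continuous extension of $D\ni t\mapsto\varphi(\gamma_t)$.

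\begin{itemize}
\item Approximating arbitrary $s,t$ by points of $D$ and using $\sfd$-continuity of both curves gives $\sfd_\Y((\varphi_*\gamma)_s,(\varphi_*\gamma)_t)\le\sfd_\X(\gamma_s,\gamma_t)$ for all $s,t$. This yields $2$-absolute continuity and $|\dot\gamma_t|\ge|\dot{(\varphi_*\gamma)}_t|$ for a.e.\ $t$.
\item For each fixed $t$, $\gamma_t\in E$ for $\pi$-a.e.\ $\gamma$, so $(\varphi_*\gamma)_t=\varphi(\gamma_t)$ holds $\pi$-a.e. This is all the first two bullets of the statement need.
\item Evaluations at $t\in D$ generate the Borel $\sigma$-algebra of $C([0,1],(\Y,\tau_\Y))$, and $\varphi\circ e_t$ is $\pi$-measurable because $(e_t)_\#\pi\le C\mm_\X$. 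Together with the previous item, this settles measurability and uniqueness of $\varphi_*$.
\end{itemize}

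If you want the identity for all $t$ simultaneously, you must choose the representative. A $\pi$-a.e.\ curve lies entirely in the $\sfd_\X$-closure $\overline E^{\sfd_\X}$, since it visits $E$ at all times in $D$ and is $\sfd_\X$-continuous. Completeness of $\sfd_\Y$ gives a unique $1$-Lipschitz extension of $\varphi|_E$ to $\overline E^{\sfd_\X}$. Redefining $\varphi$ by this extension on the $\mm_\X$-null set $\overline E^{\sfd_\X}\setminus E$ yields $(\varphi_*\gamma)_t=\varphi(\gamma_t)$ for every $t$. This is the concrete argument your remark about absorbing the discrepancy into a null-set modification needs.
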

In particular, the following corollary holds.
\begin{corollary}\label{corollary: monotonicity Cheeger}
    Let $\X, \Y$ be two emm-spaces and let $\varphi\in \Lip_1(\X, \mm_\X;\Y)$ be measure preserving.
    Then, for all $u\in \L^0(\Y)$, $|D(u\circ\varphi)|_{w, \X}\leq |D u|_{w, \Y}\circ\varphi$ $\mm_\X$-a.e.,
    in particular, 
    $$\ch_\Y(u)\geq \ch_\X(u\circ\varphi) \qquad u \in \L^2(\Y).$$ 
\end{corollary}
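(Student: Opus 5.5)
The plan is to reduce both assertions to the preceding lemma on push-forward of test plans under a measure-preserving, a.e.~$1$-Lipschitz map $\varphi$. The map $\varphi_*$ sends $\mathscr T(\X)$ into $\mathscr T(\Y)$, commutes with evaluations, and does not increase metric speed. First we may assume $\ch_\Y(u)<+\infty$, otherwise there is nothing to prove. Set $G:=|Du|_{w,\Y}\in\L^2(\Y)$. The goal is to show that $G\circ\varphi$ is a weak upper gradient of $u\circ\varphi$ on $\X$. Since $\varphi_\#\mm_\X=\mm_\Y$, the function $G\circ\varphi$ lies in $\L^2(\X)$ with the same norm. Once this is shown, minimality of $|D(u\circ\varphi)|_{w,\X}$ gives the pointwise inequality $\mm_\X$-a.e., and integrating gives $\ch_\X(u\circ\varphi)\le \frac12\int (G\circ\varphi)^2\,\d\mm_\X=\ch_\Y(u)$.

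For the weak upper gradient property, we need a $\mathscr T(\X)$-negligible set $N$ outside of which
\[
|u(\varphi(\gamma_1))-u(\varphi(\gamma_0))|\le\int_0^1 G(\varphi(\gamma_t))|\dot\gamma_t|\,\d t .
\]
Let $\Gamma_\Y$ be the $\mathscr T(\Y)$-negligible set of curves where the weak upper gradient inequality for $(u,G)$ fails. We take $N$ to be the union of three sets: the exceptional set where $\varphi_*$ is undefined or $(\varphi_*\gamma)_t\neq\varphi(\gamma_t)$, the set where the speed comparison $|\dot{(\varphi_*\gamma)}_t|\le|\dot\gamma_t|$ fails on a positive-measure set of times, and $\varphi_*^{-1}(\Gamma_\Y)$. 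The last set is negligible for every $\pi\in\mathscr T(\X)$ because $(\varphi_*)_\#\pi\in\mathscr T(\Y)$, so $\pi(\varphi_*^{-1}(\Gamma_\Y))=(\varphi_*)_\#\pi(\Gamma_\Y)=0$. For $\gamma\notin N$, we apply the inequality for $\varphi_*\gamma$ and then bound the speed of $\varphi_*\gamma$ by that of $\gamma$, using $G\ge0$.

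The main obstacle I expect is a representative issue. The weak upper gradient inequality involves values of $u$ and $G$ along curves, and these depend on the chosen Borel representatives only up to $\mathscr T$-negligible sets. We must check that replacing $u,G$ by other $\mm_\Y$-versions changes $u\circ\varphi$ and $G\circ\varphi$ only on $\mm_\X$-null sets, which holds by measure preservation. We must also check that the curve inequality is insensitive to such changes $\mathscr T(\X)$-a.e. This follows from the bounded compression $(e_t)_\#\pi\le C\mm_\X$ combined with Fubini in $t$. For the endpoint terms, the standard argument of \cite{AmbGigSav14} shows that the weak upper gradient notion is well-posed on $\mm$-classes. Finally, the statement is for $u\in\L^0(\Y)$, while $\ch$ is defined on $\L^2$. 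For general $u\in\L^0$, the same argument gives the gradient inequality whenever $u$ admits a weak upper gradient. The energy inequality is only claimed for $u\in\L^2(\Y)$, where $u\circ\varphi\in\L^2(\X)$ again by measure preservation.
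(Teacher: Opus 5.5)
Your proposal is correct and follows the same route as the paper. The paper states this corollary as an immediate consequence of the preceding lemma: $(\varphi_*)_\#$ maps $\mathscr T(\X)$ into $\mathscr T(\Y)$, $(\varphi_*\gamma)_t=\varphi(\gamma_t)$, and metric speed does not increase. Your argument, which shows that $|Du|_{w,\Y}\circ\varphi$ is a weak upper gradient of $u\circ\varphi$ outside a $\mathscr T(\X)$-negligible set and then uses minimality together with measure preservation, supplies exactly the verification the paper leaves implicit.
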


\paragraph{Cheeger energy II: relaxation.} The Cheeger energy given above coincides with the one constructed through the $L^2$-relaxation in extended topological-metric measure spaces. As we need both characterisations for the following sections, we briefly present the relaxation approach as well below.

Given a pre-extended topological-metric space $\X$ and $u\in\Lip(\X, \tau, \sfd_\X)$, the \emph{asymptotic Lipschitz constant} $\lip_a(u)$ of $u$ is defined as
\begin{equation}
    \lip_a(u)(x)=\limsup_{y, z\to x}\frac{|u(y)-u(z)|}{\sfd(z, y)},
\end{equation}
where the $\limsup$ is considered with respect to the topology $\tau$ and is assumed to be 0 if taken over an empty set of sequences -- i.e.~if $x$ is isolated.
Notice that $\lip_a(u)$ is a $\tau$-upper semicontinuous function of $x$.
For a pre-extended topological-metric-measure space $\X$ and $u\in\L^0(\X)$, the \emph{relaxed Cheeger energy} $\ch_*(u)$ of $u$ is defined as
\begin{equation}
    \ch_*(u)=\inf\bigg\{\liminf_n\frac 12 \int_\X \lip_a^2 (u_n)\,\d\mm: (u_n)_{n\in \N}\subset \Lip(\X, \tau, \sfd_\X), \; u_n\to u\text{ in }\L^2(\X)\bigg\}.
\end{equation}

\begin{theorem}[\cite{Sav19}]
    Let $\X$ be an extended topological-metric-measure space.
    Then $\ch_*=\ch$.
\end{theorem}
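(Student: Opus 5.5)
The plan is to prove the two inequalities $\ch\le\ch_*$ and $\ch_*\le\ch$ separately, following the strategy of \cite{AmbGigSav14} adapted to the decoupled setting $(\X,\tau,\sfd,\mm)$. Throughout, the assumption that $\X$ is an extended topological-metric-measure space (\Cref{d:ETMS}) is used to guarantee that the class $\Lip(\X,\tau,\sfd)$ is rich. It is $\L^0$-dense in $\Lip_1(\X,\mm)$ by \Cref{cor: density of continuous functions}, it recovers $\sfd$, and it generates $\tau$.

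\textbf{Step 1: $\ch\le\ch_*$.} Fix $u\in\Lip(\X,\tau,\sfd)$, a test plan $\pi$, and a curve $\gamma\in\acq{2}{(\X,\sfd)}$ in the support of $\pi$. Since $\tau_\sfd$ is finer than $\tau$, $\sfd$-convergence $\gamma_{t+h}\to\gamma_t$ implies $\tau$-convergence. The $\tau$-$\limsup$ defining $\lip_a(u)$ therefore dominates the difference quotients along $\gamma$, which gives
\[
|(u\circ\gamma)'_t|\le \lip_a(u)(\gamma_t)\,|\dot\gamma_t| \qquad \text{for a.e.\ } t .
\]
Hence $\lip_a(u)$ is a weak upper gradient of $u$, and so $\ch(u)\le\frac12\int\lip_a^2(u)\,\d\mm$. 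Next I show that $\ch$ is $\L^2$-lower semicontinuous, i.e.\ that weak upper gradients are stable. Take $u_n\to u$ in $\L^2$ with $G_n\rightharpoonup G$ weakly in $\L^2$. By the bounded compression $(e_t)_\#\pi\le C\mm$, the maps $\pi\mapsto\int (G\circ\gamma_t)|\dot\gamma_t|\,\d t$ pass to the limit after a Mazur convex-combination argument, applied to $G_n$ and to a.e.\ converging subsequences of $u_n\circ e_0$, $u_n\circ e_1$. Taking the infimum over approximating sequences then yields $\ch\le\ch_*$.

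\textbf{Step 2: $\ch_*\le\ch$.} This is the hard direction, and I would follow the AGS energy-dissipation route. First, $\ch_*$ is convex, $\L^2$-lower semicontinuous and $2$-homogeneous, so its $\L^2$-gradient flow $(\rho_t)$ exists and is mass preserving and a sub-Markovian positivity preserving flow. For the flow started from a density $\rho_0$ bounded away from $0$ and $\infty$, I compute the entropy dissipation
\[
\tfrac{\d}{\d t}\int\rho_t\log\rho_t\,\d\mm=-\int\frac{|D\rho_t|_*^2}{\rho_t}\,\d\mm .
\]
Second, via Kuwada's lemma I show that $t\mapsto\rho_t\mm$ is $W_2$-absolutely continuous with metric speed at most the Fisher information $\int|D\rho_t|_*^2/\rho_t\,\d\mm$. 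This requires the Hopf--Lax semigroup $Q_sf$ for $f\in\Lip(\X,\tau,\sfd)$, together with the subsolution estimate
\[
\partial_sQ_sf+\tfrac12\lip_a(Q_sf)^2\le0 .
\]
Third, the superposition principle lifts $(\rho_t\mm)$ to a plan $\pi$ on $\cu{(\X,\tau)}$, and the $L^\infty$ bounds on $\rho_t$ make $\pi$ a test plan. Finally, testing the definition of $|D\rho|_w$ against this plan gives the reverse dissipation inequality. Comparing the two dissipation estimates forces $|D\rho|_*\le|D\rho|_w$ along the flow, and a density/truncation argument extends this to all $u\in\W^{1,2}(\X)$.

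\textbf{Main obstacle.} I expect the hardest step to be the Kuwada/Hopf--Lax part of Step 2 in the extended setting. In the definition of $Q_sf$, the infimum over $y$ with possibly $\sfd(x,y)=+\infty$ must still produce a $\tau$-continuous (or at least $\tau$-lower semicontinuous and universally measurable, as in \Cref{lemma: McShane}) $\sfd$-Lipschitz function, and $\lip_a$ must be controlled in terms of the $\tau$-topology rather than $\tau_\sfd$. I would first try to work with $\tau$-compact sets, using tightness of $\mm$ and the extension result \Cref{lemma: continuous extension} to restrict the infimum to a compact set. I would also use the dual representation
\[
\sfd=\sup_{f\in\Lip_1(\X,\tau,\sfd)}|f(\cdot)-f(\cdot\cdot)|
\]
to write $W_2$ through Kantorovich duality over $\tau$-continuous $1$-Lipschitz potentials. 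A secondary difficulty is tightness in $\cu{(\X,\tau)}$ for the superposition step, which I would obtain from the Polishness of $\tau$ and the $\tau\times\tau$-lower semicontinuity of $\sfd$.
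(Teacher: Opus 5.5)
The paper gives no proof of this statement: it is quoted from \cite{Sav19}. There, as far as I know, the identification comes from a convex-duality argument (a dual description of the Cheeger energy through dynamic plans), which never uses the heat flow, Kuwada's lemma or the Hopf--Lax semigroup. Your Step~1 is correct. Since $\tau\subset\tau_\sfd$, difference quotients of $u\in\Lip(\X,\tau,\sfd)$ along a $\sfd$-absolutely continuous curve are dominated by the $\tau$-asymptotic Lipschitz constant. Hence $\lip_a(u)$ is a weak upper gradient, and $\L^2$-lower semicontinuity of $\ch$ finishes. Step~2 instead follows the energy-dissipation route of \cite{AmbGigSav14}. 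That is a legitimate alternative, but as written it has a genuine gap exactly at the step you flag, and the remedies you suggest do not close it.

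\textbf{The gap.} Kuwada's lemma needs the Hopf--Lax evolutions $Q_s\varphi$ of the Kantorovich potentials to lie in $\Lip(\X,\tau,\sfd)$. It also needs them to satisfy the subsolution inequality with $\lip_a$ taken along $\tau$-converging pairs, since only for such functions do you know $|DQ_s\varphi|_*\le\lip_a(Q_s\varphi)$. For the $\sfd$-Hopf--Lax semigroup this fails in general:
\begin{itemize}
\item $x\mapsto\sfd(x,y)$ is only $\tau$-lower semicontinuous, so $Q_s\varphi$ need not be $\tau$-continuous when $\sfd$-balls are not $\tau$-compact;
\item the metric-space subsolution estimate concerns $\tau_\sfd$-limits, not $\tau$-limits.
\end{itemize}

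\textbf{Why your remedies do not work.}
\begin{itemize}
\item Restricting the infimum to a $\tau$-compact $K$ gives a function that is $\tau$-lower but not necessarily upper semicontinuous. It is also $+\infty$ wherever $\sfd(x,K)=+\infty$.
\item Extending from $K$ by \Cref{lemma: continuous extension} controls only a global Lipschitz constant, not the pointwise $\lip_a$ that enters Kuwada's estimate.
\item A Kantorovich duality over $1$-Lipschitz potentials is the $W_1$ duality, not the $W_2$ one.
\end{itemize}

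\textbf{The missing idea.} The right way to use $\sfd=\sup_f|f(\cdot)-f(\cdot\cdot)|$ is to approximate $\sfd$ from below by $\tau$-continuous pseudo-distances. By $\tau$-recovery and the Lindel\"of property, as in \eqref{eq: isometric embedding} (after truncation), there are bounded $f_j\in\Lip_1(\X,\tau,\sfd)$ with $\sfd_k:=\max_{j\le k}|f_j(\cdot)-f_j(\cdot\cdot)|\uparrow\sfd$. For bounded $\varphi$, the $\sfd_k$-Hopf--Lax semigroup $Q^k_s\varphi$ is $\sfd_k$-Lipschitz, hence $\tau$-continuous and in $\Lip(\X,\tau,\sfd)$. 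Moreover:
\begin{itemize}
\item $\tau$-convergence implies $\sfd_k$-convergence;
\item $\sfd\ge\sfd_k$;
\item $Q^k_s\varphi$ is constant on $\sfd_k$-null classes.
\end{itemize}
So its $\lip_a$ is bounded by the $\sfd_k$-asymptotic Lipschitz constant, for which the metric-space subsolution inequality holds. Kuwada's argument then gives $W_2^{\sfd_k}(\mu_s,\mu_t)^2\le(t-s)\int_s^t\int|D\rho_r|_*^2/\rho_r\,\d\mm\,\d r$ uniformly in $k$. Finally, $W_2^{\sfd_k}\uparrow W_2^{\sfd}$ by tightness of couplings with fixed marginals and monotone convergence.

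With this fix, and a superposition principle for extended distances, the rest of your Step~2 goes through along the lines of \cite{AmbGigSav14}. The duality proof in \cite{Sav19} bypasses this whole layer of $\tau$-regularity issues for the Hopf--Lax semigroup.
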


\paragraph{Cheeger energies under inverse limits.} 
The following stability result, the full Mosco convergence for the Cheeger energy for the inverse limit, has been proved by the second author and Yokota in \cite[Theorem 4.6]{SuYo25+} (see also \cite[Theorem 9.1]{AmbErbSav16}).
\begin{proposition}[\cite{AmbErbSav16,SuYo25+}]\label{proposition: mosco inverse limits}
    Let $\X=\varprojlim \X_n$ be an inverse limit with projections $n\mapsto p_n$.
    If we identify $\ch_{\X_n}$ with the following functional
    \begin{equation}
        \L^0(\X)\ni u\mapsto\begin{cases}
            \ch_{\X_n}(v\circ p_n)\quad &\text{if }u=v\circ p_n\text{ for some }v\in \L^0(\X_n),\\
            +\infty&\text{otherwise},
        \end{cases}
    \end{equation}
    then $\ch_\X$ is the largest $\L^2$-lower-semicontinuous functional satisfying $\ch_\X\leq \inf_n \ch_{\X_n}$.
\end{proposition}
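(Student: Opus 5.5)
The statement has two halves: an inequality and a maximality claim. I would prove them in that order, working with the pulled-back functionals on $\L^0(\X)$ as in the statement and writing $F_n$ for the functional identified with $\ch_{\X_n}$. Let $F$ denote the $\L^2$-lower semicontinuous envelope of $\inf_n F_n$. The claim is then exactly $\ch_\X=F$, and I would prove $\ch_\X\le F$ and $F\le\ch_\X$ separately.

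\emph{Step 1: $\ch_\X\le F$.} Let $u=v\circ p_n$. The projection $p_n:\X\to\X_n$ is measure-preserving and $1$-Lipschitz, since $\sfd=\sup_k\sfd_k\ge\sfd_n\circ(p_n\times p_n)$. Hence \Cref{corollary: monotonicity Cheeger} gives $\ch_\X(v\circ p_n)\le\ch_{\X_n}(v)=F_n(u)$, so $\ch_\X\le\inf_nF_n$. Since $\ch_\X$ is $\L^2$-lower semicontinuous (standard from \cite{AmbGigSav14}, or from the relaxation formula $\ch=\ch_*$ of \cite{Sav19}, using that the inverse limit is an extended topological-metric-measure space by \Cref{remark: inverse system is regular}), it lies below the largest l.s.c.\ minorant of $\inf_nF_n$. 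That minorant is $F$, so $\ch_\X\le F$.

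\emph{Step 2: $F\le\ch_\X$.} This is a recovery-sequence statement: for every $u\in\L^2(\X)$ with $\ch_\X(u)<\infty$, I need $u_k\to u$ in $\L^2$ with $u_k=v_k\circ p_{n_k}$ and $\limsup_k\ch_{\X_{n_k}}(v_k)\le\ch_\X(u)$. I would proceed in three moves.
\begin{itemize}
\item[(a)] Use $\ch=\ch_*$ to reduce to $u$ Lipschitz and $\tau$-continuous, with energy measured by $\tfrac12\int\lip_a^2(u)\,\d\mm$.
\item[(b)] Reduce further to a class of cylinder functions that is dense in energy,
\[
u=\phi\bigl(g_1\circ p_m,\dots,g_N\circ p_m\bigr),\qquad \phi\in C^1_b(\R^N),\ g_i\in\Lip_1(\X_m).
\]
For this I would use the adapted Kuratowski embedding of \Cref{prop: embedding} and the cylinder approximation of \Cref{corollary: cylinder functions}, together with the density in energy of the algebra generated by $\tau$-continuous $1$-Lipschitz functions, which I would take from \cite{Sav19,AmbErbSav16}.
\item[(c)] For such a $u$, set $v^{(n)}=\phi(g_1\circ p_{m,n},\dots,g_N\circ p_{m,n})$ for $n\ge m$. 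Then $u=v^{(n)}\circ p_n$ exactly, and it remains to show $\limsup_n\ch_{\X_n}(v^{(n)})\le\ch_\X(u)$.
\end{itemize}

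For move (c) I would bound $\ch_{\X_n}(v^{(n)})\le\tfrac12\int\lip_{a,\sfd_n}(v^{(n)})^2\circ p_n\,\d\mm$. The plan is to show that these asymptotic Lipschitz constants, computed with respect to $\sfd_n\uparrow\sfd$, decrease pointwise to $\lip_a(u)$. Dominated convergence would then give the $\limsup$ bound. A diagonal argument over moves (a)--(c) finishes the proof.

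\emph{Main obstacle.} I expect the hardest step to be (b) together with the limit in (c). The density in energy of finite-dimensional cylinder functions is exactly where the decoupling of $\tau$ and $\sfd$ enters, and the monotone convergence $\lip_{a,\sfd_n}\to\lip_{a,\sfd}$ is not automatic. If this direct route fails, my fallback is the approach of \cite[Theorem 9.1]{AmbErbSav16}. That means identifying the $\Gamma$-limit of the increasing sequence $F_n$ as a quadratic form dominating $\ch_*$, and passing through the dual (Kuwae--Shioya/Mosco) description via the heat flows on $\X_n$. This is the argument carried out in \cite[Theorem 4.6]{SuYo25+}.
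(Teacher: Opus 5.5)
Your Step~1 is correct. The main route of Step~2, however, fails at move (c).

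\textbf{What holds and what fails in (c).} The monotonicity part is true. Since $p_{n,n'}$ and $p_n$ are continuous and $1$-Lipschitz, $\lip_{a,\sfd_n}(v^{(n)})\circ p_n$ is non-increasing in $n$ and bounded below by $\lip_a(u)$. But its limit need not be $\lip_a(u)$. The limit in $n$ does not commute with the infinitesimal $\limsup$ defining $\lip_a$. Each $\sfd_n$ can have ``cheap'' directions arbitrarily close to every point, and these disappear only in the limit.

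\textbf{Counterexample.} Take closed nowhere dense sets $E_1\subset E_2\subset\cdots\subset[0,1]$ with $\mathrm{Leb}(E_n)\to1$. For instance, set $E_n=[0,1]\setminus\bigcup_k(q_k-2^{-k-n},q_k+2^{-k-n})$ for an enumeration $(q_k)$ of $\mathbb Q\cap[0,1]$.
\begin{itemize}
\item Let $\X_n=([0,1],\sfd_n,\mathrm{Leb})$ with $\sfd_n(s,t)=|s-t|+\mathrm{Leb}(E_n\cap[s,t])$, and use identity bonding maps. Each $\sfd_n$ is bi-Lipschitz to the Euclidean distance and $\sfd_n\le\sfd_{n+1}$, so this is an inverse system.
\item Its inverse limit is $([0,1],\tau_{\rm Eucl},2|\cdot|,\mathrm{Leb})$.
\item Let $u\in C^1([0,1])$. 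This is of your cylinder form with $m=N=1$ and $g_1(s)=s$.
\item Every neighbourhood of every point contains an interval disjoint from $E_n$, on which $\sfd_n$ is Euclidean. Hence $\lip_{a,\sfd_n}(u)=|u'|$ everywhere and for every $n$, whereas $\lip_a(u)=|u'|/2$.
\end{itemize}
Dominated convergence therefore only yields $\limsup_n\ch_{\X_n}(u)\le\frac12\int_0^1|u'|^2\,\mathrm ds$. This is four times $\frac12\int\lip_a^2(u)=\ch_\X(u)$.

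The statement itself does hold here, since $\ch_{\X_n}(u)=\frac12\int_0^1|u'|^2(1+\mathbf 1_{E_n})^{-2}\,\mathrm ds\to\ch_\X(u)$. But the convergence is carried by the minimal weak upper gradients $|u'|/(1+\mathbf 1_{E_n})$. The bound $\ch_{\X_n}(v^{(n)})\le\frac12\int\lip_{a,\sfd_n}(v^{(n)})^2$ throws exactly this information away. A working recovery argument must control $|Dv^{(n)}|_{w,\X_n}$ directly, or go through a duality or heat-flow argument. Bounding the asymptotic Lipschitz constants of a fixed cylinder representative is not enough. Choosing level-$n$ approximants adapted to $\sfd_n$ would also work in principle, but that presupposes the convergence you are trying to prove.

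\textbf{Comparison with the paper and your fallback.} The missing inequality is precisely the nontrivial content of \cite[Theorem 4.6]{SuYo25+} (cf.\ \cite[Theorem 9.1]{AmbErbSav16}). The paper does not reprove it; it cites these results. It only adds a remark that, because $n\mapsto\ch_{\X_n}$ is non-increasing, the statement amounts to $\Gamma$- (indeed Mosco) convergence.

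Your fallback amounts to the same citation, but as you describe it, it is off in two places:
\begin{itemize}
\item The $F_n$ are non-increasing, not increasing, as your own Step~1 shows.
\item A $\Gamma$-limit ``dominating $\ch_*$'' is the easy inequality, which you already have from Step~1. The heat-flow or duality argument is needed for the reverse bound $F\le\ch_\X$.
\end{itemize}
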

\begin{remark}
    \Cref{proposition: mosco inverse limits} can be regarded as a Mosco convergence result. 
    Indeed, since $n\mapsto \ch_{\X_n}$ is non increasing, \cite[Proposition 5.7]{dalmasoIntroductionGConvergence1993} implies that $\ch_\X$ is the strong $\L^2$ $\Gamma$-limit. 
    By its convexity and standard functional analytic results, $\ch_\X$ also coincides with the weak $\L^2$ lower semicontinuous envelope, hence applying again \cite[Proposition 5.7]{dalmasoIntroductionGConvergence1993} in the weak $\L^2$ topology proves the  Mosco convergence.  
\end{remark}

\paragraph{$\Gamma$-convergence.}
We recall the definition of $\Gamma$-convergence.
\begin{definition}\label{def: Gamma-conv}
    Given a sequence of emm-spaces fibrating over a limit topological-metric-measure space $\X$, the $\Gamma-\limsup$ of the functionals $\ch_{\X_n}$ in $\L^2$ is defined as 
    \begin{equation}
        \Gamma-\limsup_n \ch_{\X_n}(u)=\sup \{\liminf_n \ch_{\X_n}(u_n): \L^0(\X_n)\ni u_n\to u \text{ in }\L^2\}.
    \end{equation}
    Similarly, 
     the $\Gamma-\liminf$ of the functionals $\ch_{\X_n}$ in $\L^2$ is defined as 
    \begin{equation}
        \Gamma-\liminf_n \ch_{\X_n}(u)=\inf \{\liminf_n \ch_{\X_n}(u_n): \L^0(\X_n)\ni u_n\to u \text{ in }\L^2\}.
    \end{equation}
    While it always holds $\Gamma-\limsup_n \ch_{\X_n}\leq \Gamma-\limsup_n \ch_{\X_n}$, in case equality holds we call the limit functional the $\Gamma-\lim$ of the sequence.
\end{definition}
\begin{remark}
    The convergence in $\L^2$ along the fibration can be metrised (see \cite{gigliStabilityHeatFlow2024}): in this case, what is defined above actually coincides with the classically defined $\Gamma-\limsup$ of functionals over a separable metric space. 
\end{remark}

Our first goal is to prove the $\Gamma$-$\limsup$ inequality for the Cheeger energies along fibrations inducing a domination.
\begin{theorem}\label{theorem: gamma limsup}
    Let $(\X_n)_{n\in \N}$ be a sequence of emm-spaces and assume that $\X_n$ fibrates over an extended topological-metric-measure space $\X$ with projections $n\mapsto p_n$ inducing a domination.
    Then \begin{equation}
        \Gamma-\limsup_n \ch_{\X_n}\leq \ch_\X.
    \end{equation}
\end{theorem}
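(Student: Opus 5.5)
The plan is to exploit the relaxation characterisation of the Cheeger energy on the limit space, $\ch_\X=\ch_*$, which holds by \cite{Sav19} since $\X$ is an extended topological-metric-measure space. By lower semicontinuity of the $\Gamma$-$\limsup$ in $\L^2$ (a standard diagonal argument, made rigorous through the metrisability of $\L^2$ convergence along fibrations, see the remark after \Cref{def: Gamma-conv}), it will suffice to prove
$$\Gamma\text{-}\limsup_n\ch_{\X_n}(u)\le \tfrac12\int_\X \lip_a^2(u)\,\d\mm$$
for $u$ in a class of functions that is dense in energy. I would take $u=\psi(\varphi_1,\dots,\varphi_k)$, where $\varphi_j\in\Lip_1(\X,\tau,\sfd)$ are bounded and $\psi:\R^k\to\R$ is smooth and bounded. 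Indeed, any $u\in \Lip(\X,\tau,\sfd)$ appearing in the relaxation can be approximated in $\L^2$ by such cylinder functions $u_m$, with $\limsup_m\int\lip_a^2(u_m)\le\int\lip_a^2(u)$. I expect to justify this through the Kuratowski embedding (\Cref{prop: embedding}) and the cylinder approximation used in \Cref{proposition: LAP}, combined with the upper semicontinuity of $\lip_a$.

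For such a $u$, domination provides $\varphi_{j,n}\in\Lip_1(\X_n,\mm_{\X_n})$ with $\varphi_{j,n}\to\varphi_j$ in $\L^0$ along the fibration; truncating them keeps them bounded. I then set $u_n:=\psi(\varphi_{1,n},\dots,\varphi_{k,n})$. Applying the last part of \Cref{proposition: L^0 properties} to $\Phi_n=(\varphi_{j,n})_j\to\Phi$ gives $u_n\to u$ in $\L^0$, and uniform boundedness upgrades this to $\L^2$ convergence. For the energy, the chain rule for minimal weak upper gradients, together with $|D\varphi_{j,n}|_w\le1$, gives the pointwise bound
$$|Du_n|_{w,\X_n}\le G\circ\Phi_n,\qquad G(z):=\sup_{|v|_{\ell_1}\le1}\langle\nabla\psi(z),v\rangle=|\nabla\psi(z)|_{\ell_\infty}.$$
This expresses the dual norm to $\ell_\infty$ on $\R^k$, and it is the right choice because $\Phi$ is $1$-Lipschitz into $(\R^k,\ell_\infty)$. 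Since $G\circ\Phi_n\to G\circ\Phi$ in $\L^0$ and $G$ is bounded, I obtain $\limsup_n 2\ch_{\X_n}(u_n)\le\int G^2(\Phi)\,\d\mm$.

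The main obstacle is the comparison $\int G^2(\Phi)\,\d\mm$ versus $\int\lip_a^2(u)\,\d\mm$. The pointwise inequality $G(\Phi(x))\le\lip_a u(x)$ is false in general, since $\lip_a u$ depends on how finely the functions $\varphi_j$ separate points near $x$. I would therefore enlarge the family $\varphi_1,\dots,\varphi_k$ so that $\sup_j|\varphi_j(\cdot)-\varphi_j(\cdot\cdot)|$ approximates $\sfd$, as in \eqref{eq: isometric embedding}. I would then argue by a second relaxation: the functional $u\mapsto\liminf$ of the right-hand side over cylinder approximants is bounded above by $\ch_*$. To close this step, I would invoke the known fact from \cite{Sav19} that $\ch_*$ can equivalently be computed by relaxing $\frac12\int|\nabla\psi|_{\ell_\infty^*}^2(\Phi)$ over cylinder functions built from $\Phi=(\varphi_j)$ with $\varphi_j\in\Lip_1(\X,\tau,\sfd)$, which is the density of the cylinder algebra in energy. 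If that reference is not directly applicable, the fallback is to compare with the test-plan definition (\Cref{d:Che}) and show that $G\circ\Phi$ is a weak upper gradient minimised in the limit.
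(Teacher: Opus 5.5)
\textbf{There is a genuine gap, at exactly the step you flag as the main obstacle.} The way you propose to close it does not work.

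\emph{Why the cited fact does not apply.} The density-in-energy theorem for compatible algebras in \cite{Sav19} relaxes $\frac12\int\lip_a^2(f)\,\d\mm$, where $\lip_a$ is the asymptotic Lipschitz constant of $f=\psi\circ\Phi$ computed on $\X$. This quantity only satisfies $\lip_a(\psi\circ\Phi)\le G\circ\Phi$, and it can be much smaller. For instance, with redundant generators $\varphi_1=\varphi_2$ and $\psi(z)=z_1-z_2$ one gets $f=0$ while $G\equiv 2$. So that theorem gives approximants with small $\int\lip_a^2(f_m)$. What your construction actually transports to $\X_n$ is $\int G^2(\Phi)$, and nothing in \cite{Sav19} says that relaxing $\frac12\int G^2(\Phi)$ over cylinder representations returns $\ch_*$.

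For the same reason your first reduction step (approximating Lipschitz $u$ by cylinders with control of $\int\lip_a^2$) does not help. The test-plan fallback is not yet an argument.

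\emph{A smaller slip.} Since $\Phi_n$ is $1$-Lipschitz into $(\R^k,\ell_\infty)$, the chain-rule density must be the dual norm of $\ell_\infty$, namely $G=|\nabla\psi|_{\ell_1}=\sup_{|v|_{\ell_\infty}\le 1}\langle\nabla\psi,v\rangle$. With $|\nabla\psi|_{\ell_\infty}$ your bound is false: take $\varphi_{1,n}=\varphi_{2,n}$ and $\psi(z)=z_1+z_2$.

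\emph{The missing idea.} Your instinct to enlarge the family as in \eqref{eq: isometric embedding} is right, but it must be paired with the inverse-limit Mosco result, \Cref{proposition: mosco inverse limits}. This is how the paper concludes. The repair goes as follows.
\begin{enumerate}
\item Fix the sequence $(\varphi_j)$ of \eqref{eq: isometric embedding}, and set $\Phi_k=(\varphi_1,\dots,\varphi_k)$ and $\nu_k=(\Phi_k)_\#\mm$ on $(\R^k,\ell_\infty)$.
\item For smooth $\psi$ one has $|\nabla\psi|_{\ell_1}=\lip_a\psi$ in $(\R^k,\ell_\infty)$. Smooth functions are dense in energy in the mm-space $(\R^k,\ell_\infty,\nu_k)$: mollify a Lipschitz function and use upper semicontinuity of $\lip_a$.
\item Your scalar construction, together with lower semicontinuity of the $\Gamma$-upper limit, then gives $\Gamma\text{-}\limsup_n\ch_{\X_n}(v\circ\Phi_k)\le\ch_{(\R^k,\ell_\infty,\nu_k)}(v)$ for every $k$ and every $v$.
\item Because the $\varphi_j$ generate $\tau$ and recover $\sfd$, $\X$ is emm-isomorphic to $\varprojlim(\R^k,\ell_\infty,\nu_k)$. \Cref{proposition: mosco inverse limits} then upgrades the bound by $\inf_k$ to the bound by $\ch_\X$.
\end{enumerate}

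\emph{Comparison with the paper.} The paper runs the same scheme with the inverse system $\Y_k\subset\ell^\infty$ of \Cref{lemma: inverse system}. It transfers each level to $\X_n$ via \Cref{lemma: polish approximation fibration}, weak convergence of the image measures together with upper semicontinuity of $\lip_a$ in $\ell^\infty$, and \Cref{corollary: monotonicity Cheeger}. With finite-dimensional factors as above, scalar domination and your (corrected) chain rule would suffice instead.
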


A key for the proof of \Cref{theorem: gamma limsup} is the fact that, similarly to how \Cref{lemma: L^0 compactness of maps fibration} extends the compactness of 1-Lipschitz maps with a Polish target, also the approximation result can be generalised.
\begin{lemma}\label{lemma: good subspaces}
    Let $(\Omega, \mu)$ be a measure space.
    Let $x_1, \ldots, x_n\in \L^\infty(\mu)$ and let $\varepsilon>0$.
    Then there exist $y_1, \ldots, y_n\in \L^\infty(\mu)$ and  a finite dimensional subspace $V\subset \L^\infty(\mu)$ containing $y_1, \ldots, y_n$ such that the following two hold:
    \begin{itemize}
        \item $\sfd_{\L^\infty(\mu)}(x_j, y_j)\leq \varepsilon$ for $j=1, \ldots, n$;
        \item $V$ endowed with the restriction metric is linearly isometric to $(\mathbb R^k, \ell_\infty)$ for some $k\in \N$.
    \end{itemize}
    In particular, the statement above holds for $\ell^\infty=\L^\infty(\mathbb N, \sum_{n=1}^\infty\delta_n)$.
\end{lemma}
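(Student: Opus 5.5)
The plan is to approximate the $x_j$ uniformly by simple functions, which take finitely many values. Then I take $V$ to be the span of the indicator functions of a common finite partition: on such a span the sup-norm is literally an $\ell_\infty$-norm of the coefficient vector.

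\textbf{Step 1 (common partition).} Since each $x_j\in\L^\infty(\mu)$ is essentially bounded, say by $M:=\max_j\|x_j\|_{\L^\infty}$, I fix representatives and partition $[-M,M]$ into finitely many intervals $J_1,\dots,J_m$ of length at most $\varepsilon$. For each multi-index $\mathbf i=(i_1,\dots,i_n)\in\{1,\dots,m\}^n$ set
\begin{equation}
A_{\mathbf i}:=\bigcap_{j=1}^n x_j^{-1}(J_{i_j}).
\end{equation}
These are finitely many measurable sets, and they partition $\Omega$ up to a $\mu$-null set. I discard those with $\mu(A_{\mathbf i})=0$ and enumerate the remaining ones as $A_1,\dots,A_k$. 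Each of these has positive measure, and together they cover $\Omega$ up to a null set.

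\textbf{Step 2 (approximants).} For each $l$ choose a point $c_{j,l}\in J_{i_j}$, where $\mathbf i$ is the multi-index labelling $A_l$, and define
\begin{equation}
y_j:=\sum_{l=1}^k c_{j,l}\mathbf 1_{A_l}.
\end{equation}
On $A_l$ both $x_j$ and $c_{j,l}$ lie in the same interval of length at most $\varepsilon$. Hence $\|x_j-y_j\|_{\L^\infty}\le\varepsilon$, which gives the first bullet.

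\textbf{Step 3 (isometry).} Let $V:=\mathrm{span}\{\mathbf 1_{A_1},\dots,\mathbf 1_{A_k}\}$, which contains every $y_j$. Consider the linear map $T:(\mathbb R^k,\ell_\infty)\to V$, $T(a)=\sum_l a_l\mathbf 1_{A_l}$. Because the $A_l$ are disjoint and each has positive measure, the essential supremum of $|T(a)|$ equals $\max_l|a_l|$. This is the one point needing care: positivity of the measures is what makes the essential supremum see every coefficient, and it is also what makes $T$ injective in $\L^\infty$. So $T$ is a linear isometry onto $V$, which gives the second bullet.

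\textbf{Special case.} The $\ell^\infty$ case follows by taking $\Omega=\mathbb N$ with counting measure. There every nonempty set has positive measure and $\L^\infty$ coincides with $\ell^\infty$.

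I expect no real obstacle. The only subtlety is discarding null cells, which ensures the essential sup-norm on $V$ is exactly the $\ell_\infty$-norm of the coefficients.
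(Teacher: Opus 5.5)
Your proposal is correct and follows essentially the same route as the paper: approximate the $x_j$ uniformly by simple functions on a common finite measurable partition, and take $V$ to be the span of the indicators of the positive-measure cells, on which the essential sup-norm is exactly the $\ell_\infty$-norm of the coefficient vector. The paper simply invokes density of simple functions for the common partition and counts only the positive-measure cells, whereas you build the partition explicitly from a grid of $[-M,M]$ and discard the null cells yourself.
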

\begin{proof}
    By the density of simple functions in $\L^\infty(\mu)$, there exists a partition $\Omega=\bigsqcup_{1\leq l\leq M} E_l$ of $\Omega$ in measurable sets $E_1, \ldots, E_M$ and simple functions $y_j=\sum_{1\leq l\leq M} a_{j, l}\One_{E_l}$ such that $\sfd_{\L^\infty(\mu)}(x_j, y_j)\leq \varepsilon$ for $j=1, \ldots, n$.

    The space $V={\sf span}\{\One_{E_1}, \ldots, \One_{E_M}\}$ is then linearly isometric to $(\mathbb R^k, \ell_\infty)$, where $k$ is the number of sets in $E_1, \ldots, E_M$ with positive measure.
\end{proof}
\begin{lemma}\label{lemma: polish approximation fibration}
    Let $\X_n$ be a sequence of emm-spaces fibrating on a Polish space $\X$ via projections $n\mapsto p_n$ inducing a domination.
    Let $T\in \Lip_1(\X, \mm_\X;\ell^\infty)$.
    Then there exists a sequence $n\mapsto T_n\in \Lip_1(\X_n, \mm_{\X_n};\ell^\infty)$ such that $T_n\to T$ in $\L^0$.
\end{lemma}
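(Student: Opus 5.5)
The plan is to reduce to finitely many real-valued coordinates, using the finite-dimensional $\ell_\infty$ structure supplied by \Cref{lemma: good subspaces}, and then to apply the domination property coordinatewise.

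Since $T$ is essentially separably valued, fix $\varepsilon>0$ and first truncate. I pick a compact set $K\subset\ell^\infty$ with $T_\#\mm_\X(K)\ge 1-\varepsilon$. I cover $K$ by finitely many $\ell_\infty$-balls of radius $\varepsilon$ with centres $x_1,\dots,x_N\in\ell^\infty$.

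\Cref{lemma: good subspaces} then gives points $y_j$ with $\|x_j-y_j\|_\infty\le\varepsilon$. These lie in a subspace $V\cong(\mathbb R^k,\ell_\infty)$, spanned by indicators $\One_{E_l}$ of a partition of $\N$. Let $P:\ell^\infty\to V$ be the map sending a sequence $z$ to $\sum_l (\inf_{E_l} z\vee\cdots)$. More precisely, the natural choice is the $1$-Lipschitz retraction-type map $z\mapsto\sum_l c_l(z)\One_{E_l}$, with $c_l(z)$ equal to the midpoint $\tfrac12(\sup_{E_l}z+\inf_{E_l}z)$.

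This $P$ is $1$-Lipschitz for $\ell_\infty$. It moves each point of $K$ by at most a quantity of order $3\varepsilon$, because on the ball around $x_j$ the oscillation of $z$ on each $E_l$ is controlled by $4\varepsilon$ (from the oscillation of $y_j$, which is $0$, plus the approximation errors).

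Then $P\circ T$ is an a.e.\ $1$-Lipschitz map into $(\mathbb R^k,\ell_\infty)$, with coordinates $c_l\circ T\in\Lip_1(\X,\mm_\X)$. Moreover $\sfd_{\L^0}(P\circ T,T)\lesssim\varepsilon$.

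By domination, each coordinate $c_l\circ T$ is the $\L^0$-limit along the fibration of some $g^l_n\in\Lip_1(\X_n,\mm_{\X_n})$. The map $S^\varepsilon_n:=\sum_l g^l_n\One_{E_l}$ is then $1$-Lipschitz into $\ell^\infty$, being the sup of $1$-Lipschitz coordinates. It converges to $P\circ T$ in $\L^0$ along the fibration, since the target is finite-dimensional and we may use good couplings (\Cref{proposition: L^0 properties}).

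Next I use the triangle-type inequality of \Cref{remark: triangle inequality}. With good couplings $\aalpha_n$ this gives $\limsup_n\sfd^{\saalpha_n}_{\L^0}(S^\varepsilon_n,T)\lesssim\varepsilon$. A diagonal choice $\varepsilon=\varepsilon_n\downarrow0$ slowly then yields $T_n$ with $\sfd^{\saalpha_n}_{\L^0}(T_n,T)\to0$, i.e.\ $T_n\to T$ in $\L^0$.

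The main obstacle is the construction of a $1$-Lipschitz $P:\ell^\infty\to V$ that nearly fixes $K$. I expect it to work with the midpoint-on-each-block formula because $\ell_\infty$ splits as a sup over the blocks $E_l$. If not, the fallback is to use only the coordinates $z\mapsto\langle\One_{\{i\}},z\rangle$ for finitely many indices. This fails for non-compact tails, which is exactly why the partition from \Cref{lemma: good subspaces} is needed.
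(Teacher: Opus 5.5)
Your proof is correct. It differs from the paper's in one step: how you obtain a finite-dimensional, genuinely $1$-Lipschitz approximant of $T$. The paper composes $T$ with a Borel nearest-point projection onto the net $\{y_1,\dots,y_N\}\subset V$. That map is only $1$-Lipschitz up to an additive error on a large set, so the paper then repairs it with a McShane-type lemma (\cite[Lemma 5.4]{Sh16}, applied in the coordinates of $V\cong(\mathbb R^k,\ell_\infty)$). In the emm setting this repair implicitly relies on the measurable extension of \Cref{lemma: McShane}.

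You instead use the block-midpoint map $P(z)=\sum_l \tfrac12\bigl(\sup_{E_l}z+\inf_{E_l}z\bigr)\One_{E_l}$. This is a $1$-Lipschitz retraction of $\ell^\infty$ onto $V$, so $P\circ T$ is automatically a.e.\ $1$-Lipschitz and $V$-valued. Because $P$ fixes $V$, you get $\|Pz-z\|_\infty\le 2\,\mathrm{dist}(z,V)$, which is at most $4\varepsilon$ on $K$. So the hedging and fallback in your last paragraph are unnecessary: the construction works as stated.

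Your route is shorter and avoids the notion of maps that are Lipschitz up to $\varepsilon$, the nearest-point selection, and the McShane repair altogether. The paper's route is the standard discretise-and-repair technique from mm-space theory and does not need a global $1$-Lipschitz retraction onto $V$. In the end both arguments exploit the $\ell_\infty$ structure of the target.

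The remaining steps match the paper: coordinatewise domination, convergence of the resulting $V$-valued maps along good couplings, the estimate from \Cref{remark: triangle inequality}, and the diagonal choice of $\varepsilon_n\downarrow 0$.
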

\begin{proof}
    Let us start by proving the statement under the assumption that there exists a finite dimensional space $V$ such that $T_\#\mm_\X(V)=1$ and such that, endowed with the restriction metric, it is linearly isometric to $(\mathbb R^k, \ell_\infty)$ for some $k\in \N$.
    Let $\Phi=(\varphi_1, \ldots, \varphi_k):V\to\mathbb R^k$ be an isometry and let $T_j=\varphi_j\circ T$ for $j=1, \ldots, k$.
    By the domination, since $T_1, \ldots, T_k$ are 1-Lipschitz, there exist $(T_{1, n})_{n\in \N}, \ldots, (T_{k, n})_{n\in \N}$ such that $T_{j, n}\to T_j$ in $\L^0$ as $n\to \infty$ for $j=1, \ldots, k$.
    In particular, by inspecting the distance, $\tilde T_n\to \Phi\circ T$ in $\L^0$, where $\tilde T_n=(T_{1, n}, \ldots, T_{k, n})$.
    Since $\Phi$ is an isometry, $T_n=\Phi^{-1}\circ T_n$ converges to $T$ in $\L^0$.

    Let us now consider a general $T\in \Lip_1(\X, \mm_\X;\ell^\infty)$ and let us start by noticing that $T_\#\mm_\X$ is tight.
    In particular, given $\varepsilon>0$ there exist $x_1, \ldots, x_N\in \ell^\infty$ such that 
    \begin{equation}
        T_\#\mm_\X\Bigg(\bigcup_{j=1}^N B^{\ell^\infty}(x_j, \varepsilon/4)\Bigg)\geq 1-\varepsilon/2.
    \end{equation}
    By \Cref{lemma: good subspaces}, we can assume that there are $y_1, \ldots, y_N$ such that $\sfd_{\ell^\infty}(x_j, y_j)\leq \varepsilon/4$ for $j=1, \ldots, N$ and a finite dimensional subspace $V$ which contains $y_1, \ldots, y_N$ such that, endowed with the restriction metric, it is linearly isometric to $(\mathbb R^k, \ell_\infty)$ for some $k\in \N$ via an isomorphism $\Phi:V\to \mathbb R^k$. 
    
    If $\pi:\ell^\infty\to \{y_1, \ldots, y_N\}$ is a Borel nearest point projection as in \cite[Lemma 3.4]{Sh16}, then $\tilde T \coloneqq\pi\circ T$ is 1-Lipschitz up to $\varepsilon/2$ (see \cite[Definition 4.38]{Sh16}) and satisfies $\sfd_{\L^0}(T, \tilde T)\leq \varepsilon/2$.

    Since $\Phi\circ \tilde T$ is 1-Lipschitz up to $\varepsilon/2$, by \cite[Lemma 5.4]{Sh16}, there exists a 1-Lipschitz function $\tilde S:\X\to V$ such that $\ell_\infty(\tilde T, \tilde S)\leq \varepsilon/2$.
    In particular, $S=\Phi^{-1}\circ \tilde S$ satisfies $\ell_\infty(S, T)\leq \varepsilon$ and $S_\#\mm_\X(V)=1$.
    By the first part, there exist $n\mapsto S_n$ such that $S_n\to S$ in $\L^0$.
    Sending $\varepsilon$ to 0, by a diagonal argument we conclude.
\end{proof}

\begin{proof}[Proof of \Cref{theorem: gamma limsup}]
    Let us start by recalling that by \Cref{lemma: inverse system} there exist an inverse limit $\Y=\varprojlim \Y_k$ and an isomorphism $\Phi:\X\to \Y$.
    
    By the Kuratowski embedding for mm-spaces, $\Y_k\simeq (\ell^\infty,  \mm_{\Y_k})$ for all $k\in \N$ and some measures $\mm_k$.
    In order to simplify the notation, and in view of the equivalence of $\ch$ up to isometries, we will identify $\Y_k$ with $(\ell^\infty, \mm_{\Y_k})$.

    For $k\in \N$, let $\tilde q_k:\X\to \Y_k$ be the 1-Lipschitz measure preserving projections granted by the inverse limit and let $q_k=\tilde q_k\circ\Phi$.
    Clearly, for each $k\in \N$, $q_k$ is a.e.~1-Lipschitz and measure preserving.
    By \Cref{lemma: polish approximation fibration}, there exist a.e.~1-Lipschitz maps $q_{n, k}:\X_n \to \ell^\infty$ such that $q_{n, k}\to q_k$ in $\L^0$.
    By \Cref{proposition: L^0 properties}, for each $f\in \Lip_b(\Y_k)$ it holds  $f\circ q_{n, k}\to f\circ q_k $ in $\L^0$.
    Let also $\X_{n, k}=(\ell^\infty, (q_{n, k})_\#\mm_{\X_n})$ for $n, k\in \N$ and notice that the $\X_{n, k}$ are mm-spaces and satisfy $\X_{n, k}\prec \X_n$ for all $n, k\in \N$ and $(q_{n, k})_\#\mm_{\X_n}\rightharpoonup \mu_{\Y_k}$, implying also $\X_{n, k}\to \Y_k$ as $n\to \infty$ in the $\square$-topology for all $k\in \N$.

    Let us now fix $k\in \N$.
    Consider $g\in \Lip_b(\Y_k)$.
    By upper semicontinuity of the asymptotic Lipschitz constant and the properties of weak convergence,
    \begin{equation}\label{eq: semi C^0 up}
        \limsup_n\int \lip_a^2(g)\d(q_{n, k})_\#\mm_{\X_n}\leq \int \lip^2_a(g)\d\mm_{\Y_k}.
    \end{equation}
    By the definition of Cheeger energy via relaxation, if $f$ is in $\Dom(\ch_{\Y_k})$ with, there exist $f_j\in \Lip_b(\Y_k)$ such that $f_j\to f$ in $\L^2$ and 
    \begin{equation}\label{eq: relax}
        \ch_{\Y_k}(f)=\lim_j \frac 12 \int \lip^2_a(f_j)\d\mm_{\Y_k}.
    \end{equation}
    
    Coupling \eqref{eq: semi C^0 up} and \eqref{eq: relax}, by a diagonal argument there exists a sequence $n\mapsto f_n\in \Lip_b(\ell^\infty)$, a not necessarily injective relabeling of $j\mapsto f_j$, such that  $f_n\circ q_{n, k}\to f\circ q_n$ and
    \begin{equation}
        \frac 12 \limsup_n\int \lip_a(f_n)\d(q_{n, k})_\#\mm_{\X_n}\leq \ch_{\Y_k}(f).
    \end{equation}
    Again, by the definition of relaxation, 
    \begin{equation}
        \limsup_n\ch_{\X_{n, k}}(f_n)\leq \frac 12 \limsup_n\int \lip_a(f_n)\d(q_{n, k})_\#\mm_{\X_n}\leq \ch_{\Y_k}(f)
    \end{equation}
    and, by \Cref{corollary: monotonicity Cheeger}, 
    \begin{equation}
        \limsup_n \ch_{\X_n}(f_n\circ q_{n, k})\leq \limsup_n\ch_{\X_{n, k}}(f_n)\leq \ch_{\Y_k}(f).
    \end{equation}

    By the density of bounded Lipschitz functions, then $\Gamma-\limsup \ch_{\X_n}\leq \Phi^*\ch_{\Y_k}$, where we identify $\ch_{\Y_k}$ with a functional on $\X$ as in \Cref{proposition: mosco inverse limits} and by $\Phi^*$ we mean $\Phi^*\ch_{\Y_k}(u)=\ch_{\Y_k}(u\circ \Phi^{-1})$.
    Finally, this implies $\Gamma-\limsup \ch_{\X_n}\leq \inf_k \Phi^*\ch_{\Y_k}=\Phi^*\inf_k \ch_{\Y_k}$: by \Cref{proposition: mosco inverse limits} it follows that $\Gamma-\limsup \ch_{\X_n}\leq \Phi^*\ch_{\Y}=\ch_\X$, where the last equality follows from \Cref{t:ECE}.
\end{proof}
This yields a different proof of \Cref{corollary: stability constants}, which we restate here in a slightly more general form.
\begin{corollary} \label{c:SLP}
    Let $n\mapsto \X_n$ be a sequence of emm-spaces  supporting the log-Sobolev inequality~$\LS(C)$ {\rm(}resp.~the Poincar\'e inequality $\P(C)${\rm)} and let $\X$ be an emm-space such that $\overline{\mathcal P_\X}^\square\subset K-\liminf_n \overline{\mathcal P_{\X_n}}^\square$.
    Then $\X$ supports~$\LS(C)$ {\rm (}resp.~$\P(C)${\rm )}.
\end{corollary}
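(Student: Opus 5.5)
The plan is to reduce to the $\Gamma$-$\limsup$ inequality of \Cref{theorem: gamma limsup} via the extrinsic characterisation of \Cref{proposition: general projections}. By that theorem, the hypothesis $\overline{\mathcal P_\X}^\square\subset K-\liminf_n \overline{\mathcal P_{\X_n}}^\square$ yields an extended topological-metric-measure space $\tilde\X$ emm-isomorphic to $\X$ such that $\X_n$ fibrates over $\tilde\X$ with projections $p_n$ inducing a domination. Since $\LS(C)$ and $\P(C)$ are invariant under emm-isomorphism (measure preservation handles the left-hand sides, and \Cref{t:ECE} handles the Cheeger energies), it suffices to prove the inequality on $\tilde\X$. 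We therefore rename $\tilde\X$ as $\X$.

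Fix $f\in\L^2(\X)$ with $\ch_\X(f)<\infty$; if $\ch_\X(f)=\infty$ there is nothing to prove. \Cref{theorem: gamma limsup} provides a recovery sequence $f_n\in\L^2(\X_n)$ with $f_n\to f$ strongly in $\L^2$ along the fibration and $\limsup_n\ch_{\X_n}(f_n)\le\ch_\X(f)$. We then pass to the limit on the left-hand sides.

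\emph{Poincar\'e case.} Strong $\L^2$ convergence gives $\|f_n\|_{\L^2}\to\|f\|_{\L^2}$. Testing the weak convergence $f_n\mm_n\rightharpoonup f\mm$ against the constant function $1$ gives $\int f_n\,\d\mm_n\to\int f\,\d\mm$. Hence the variances converge, and
\[
\mathrm{Var}(f)=\lim_n\mathrm{Var}(f_n)\le 2C\limsup_n\ch_{\X_n}(f_n)\le 2C\ch_\X(f).
\]

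\emph{Log-Sobolev case.} Normalise $\|f\|_{\L^2}=1$ and set $g_n:=f_n/\|f_n\|_{\L^2}$; for large $n$ the denominator is nonzero and tends to $1$. Then $g_n\to f$ strongly in $\L^2$ and $\ch_{\X_n}(g_n)=\|f_n\|^{-2}_{\L^2}\ch_{\X_n}(f_n)$, so its $\limsup$ is still at most $\ch_\X(f)$. It remains to show
\[
\int f^2\log f^2\,\d\mm\le\liminf_n\int g_n^2\log g_n^2\,\d\mm_n .
\]
By \Cref{proposition: L^0 properties}, $g_n\to f$ in $\L^0$ along the fibration, and $g_n^2\mm_n$ is equi-integrable by the third characterisation of strong $\L^p$ convergence. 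Consider $\mu_n:=g_n^2\mm_n$, a probability measure, and $\mu:=f^2\mm$. The convergence $g_n^2\to f^2$ strongly in $\L^1$ along the fibration gives $(p_n)_\#\mu_n\rightharpoonup\mu$. Since $\int g_n^2\log g_n^2\,\d\mm_n=\mathrm{Ent}(\mu_n\,|\,\mm_n)$, the data processing inequality gives
\[
\mathrm{Ent}(\mu_n\,|\,\mm_n)\ge\mathrm{Ent}\bigl((p_n)_\#\mu_n\,\big|\,(p_n)_\#\mm_n\bigr).
\]
Joint weak lower semicontinuity of relative entropy, together with $(p_n)_\#\mm_n\rightharpoonup\mm$, then yields $\liminf_n\mathrm{Ent}(\mu_n\,|\,\mm_n)\ge\mathrm{Ent}(\mu\,|\,\mm)$. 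Combining this with $\LS(C)$ on $\X_n$ gives $\LS(C)$ on $\X$.

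The main obstacle is this entropy lower semicontinuity step. One must check that the push-forwards, which are taken in the $\tau$-topology of the extended topological-metric-measure space $\tilde\X$ (Polish), converge weakly, so that the Donsker--Varadhan duality is available. One must also handle functions $f$ that are not bounded. For the latter, the fallback is to first prove the inequality for bounded $f$ and then truncate, using the Markov property of $\ch_\X$ and monotone convergence.
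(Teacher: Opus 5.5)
Your proof is correct. Its skeleton matches the paper's: you reduce via \Cref{proposition: general projections} (and emm-invariance) to a fibration over an extended topological-metric-measure space with projections inducing a domination, and then take the recovery sequence from \Cref{theorem: gamma limsup}. The difference is in how you pass to the limit on the left-hand side.

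\textbf{The paper's route.} It reduces to $u\in\L^\infty(\X)$ ``by approximation'' and truncates the recovery sequence at $\pm\|u\|_{\L^\infty}$, which does not increase $\ch_{\X_n}$. It then uses that bounded sequences converging in $\L^0$ along the fibration converge in every $\L^p$, so $\int u_n^2\log u_n^2\,\d\mm_{\X_n}\to\int u^2\log u^2\,\d\mm_\X$ by continuity. The Poincar\'e case is declared analogous.

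\textbf{Your route.} You only need lower semicontinuity of the left-hand side. You get it from data processing under $p_n$ together with joint weak lower semicontinuity of relative entropy on the Polish space $(\tilde\X,\tau)$. For the Poincar\'e inequality you use directly the convergence of norms and means given by strong $\L^2$ convergence.

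\textbf{What each buys.} Your argument needs neither the reduction to bounded $f$ nor truncation. So the ``fallback'' in your last paragraph is superfluous: relative entropy takes values in $[0,+\infty]$, and both data processing and Donsker--Varadhan lower semicontinuity hold for arbitrary probability measures. Your normalisation $g_n=f_n/\|f_n\|_{\L^2}$ also handles the constraint $\|f\|_{\L^2}=1$ in $\LS(C)$ explicitly, which the paper's write-up leaves implicit. The paper's route is more elementary, since it uses no entropy duality, and it gives actual convergence of the entropy terms rather than only a lower bound.

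\textbf{One step to write out.} You should justify $(p_n)_\#(g_n^2\mm_{\X_n})\rightharpoonup f^2\mm$, because strong $\L^p$ convergence along a fibration is only defined in the paper for $p\in(1,\infty)$. For a suitable sequence of good couplings $\aalpha_n$ as in the characterisation of strong $\L^2$ convergence,
\[
\int\bigl|(g_n\circ\proj_1)^2-(f\circ\proj_2)^2\bigr|\,\d\aalpha_n\le\|g_n\circ\proj_1-f\circ\proj_2\|_{\L^2(\aalpha_n)}\bigl(\|g_n\|_{\L^2}+\|f\|_{\L^2}\bigr)\to0.
\]
Then test against $\varphi\in C^0_{\sf b}(\tilde\X)$. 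Approximate $f^2$ in $\L^1(\mm)$ by a bounded continuous function, and conclude using $(p_n\times id)_\#\aalpha_n\rightharpoonup(id,id)_\#\mm$.
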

\begin{proof}
    By \Cref{proposition: general projections} and up to isomorphism, we can assume that $\X$ is an extended topological-metric-measure space and that $n\mapsto\X_n$ fibreates over $\X$ with projections $n\mapsto p_n$ inducing a domination.
    
    Let us prove the stability of the log-Sobolev inequality, for the Poincaré inequality the proof is analogous.
    Let $u\in \L^0(\X)$ and assume that $\ch_\X(u)<+\infty$, otherwise the inequality is automatically true.
    By approximation, we can also assume that $u\in\L^\infty(\X)$.
    By \Cref{theorem: gamma limsup}, there exists a sequence $n\mapsto \tilde u_n\in \L^0(\X_n)$ such that $\tilde u_n\to u$ in $\L^0$ and $\limsup_n \ch_{\X_n}(\tilde u_n)\leq \ch_\X(u)$.
    For $n\in \N$, define $u_n=(-\|u\|_{\L^\infty})\vee \tilde u_n\wedge \|u\|_{\L^\infty}$ and notice that the following two properties hold:
    \begin{itemize}
        \item $u_n\to u$ in $\L^p$ for all $p\in (1, +\infty)$;
        \item $\limsup_n \ch_{\X_n}(u_n)\leq \limsup_n \ch_{\X_n}(\tilde u_n)\leq \ch_\X(u)$.
    \end{itemize}

    By assumption, for all $n\in \N$ it holds
    \begin{equation}
        \int_{\X_n} u_n^2\log(u_n^2)\,\d\mm_{\X_n}\leq 4C^2 \ch_{\X_n}(u_n).
    \end{equation}
    By the convergence in $\L^p$ for all $p\geq 1$, 
    \begin{equation}
        \int_{\X_n} u_n^2\log(u_n^2)\,\d\mm_{\X_n}\to \int_\X u^2\log(u^2)\,\d\mm_\X.
    \end{equation}
    In particular, by the $\limsup$-inequality, 
    \begin{align}
        \int_\X u^2\log(u^2)\,\d\mm_\X&=\lim_n \int_{\X_n} u_n^2\log(u_n^2)\,\d\mm_{\X_n}\\
        &\leq 4C^2 \limsup_n \ch_{\X_n}(u_n)\\
        &\leq 4C^2 \ch_\X(u).
    \end{align}  
\end{proof}
\begin{remark}
    \Cref{c:SLP} is a stronger statement than~\Cref{corollary: stability constants} in the sense that it proves the stability of the log-Sobolev and the Poincar\'e inequalities under a weaker assumption than the weak convergence of pyramids since we only assume the limit infimum inclusion in \Cref{c:SLP}. 
\end{remark}
We conclude this section by proving the lower semicontinuity for the optimal transport cost.
To this aim, we recall that, given a Polish space $\X$, a lower semicontinuous (pseudo-)metric $\sfd$ on $\X$, $p\geq 1$ and two Borel probability measures $\mu, \nu$ on $\X$, we define 
\begin{equation}
    W^\sfd_p(\mu, \nu)\coloneqq\inf_{\sppi\in\adm(\mu, \nu)} \int\sfd^p\,\d\ppi=\min_{\sppi\in\adm(\mu, \nu)} \int\sfd^p\,\d\ppi.
\end{equation}
In the case that $\mu$ and $\nu$ are defined on an emm-space which is clear from context, we will drop the explicit mention of $\sfd$.

\begin{proposition}\label{proposition: semicontinuity cost} Let $1 \le p<\infty$.
     Let $n\mapsto \X_n$ be a sequence of emm-spaces fibrating over an extended topological-metric-measure space $\X$ with projections $n\mapsto p_n$ inducing a domination.
     Let $n\mapsto \mu_n, \nu_n\in \pr(\X_n)$ be two sequences of measures satisfying $\mu_n\to \mu$, $\nu_n\to \nu$ weakly in $\L^1$ for some $\mu, \nu\in \pr(\X)$ $($recall \Cref{d:L1W}$)$.
     Then $W_p(\mu, \nu)\leq\liminf_n W_p(\mu_n, \nu_n)$.
\end{proposition}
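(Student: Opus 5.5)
We will use the standard duality route: lower semicontinuity of the transport cost follows once we know that (i) the limit cost is a supremum of integrals of pairs of $1$-Lipschitz potentials, and (ii) each such pair can be approximated along the fibration by admissible pairs on $\X_n$. Concretely, we first reduce to bounded, truncated costs: replace $\sfd$ by $\sfd\wedge M$ on $\X$ and $\sfd_{\X_n}\wedge M$ on $\X_n$. Since $W_p^{\sfd_{\X_n}\wedge M}\le W_p^{\sfd_{\X_n}}$ and $W_p^{\sfd\wedge M}\uparrow W_p^{\sfd}$ as $M\to\infty$ by monotone convergence of the optimal couplings (lower semicontinuity of the cost and tightness of $\adm(\mu,\nu)$), it suffices to prove the inequality for the truncated costs. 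We then use the reconstruction of the distance. By \Cref{prop: reconstruction} and the Lindel\"of argument in \Cref{prop: embedding}, there are bounded $\varphi_1,\varphi_2,\dots\in\Lip_1(\X,\tau,\sfd)$ with $\sfd=\sup_k|\varphi_k(\cdot)-\varphi_k(\cdot\cdot)|$. Setting $\sfd_k:=\max_{j\le k}|\varphi_j(\cdot)-\varphi_j(\cdot\cdot)|$, which is continuous and increases to $\sfd$, the same monotone argument gives $W_p^{\sfd\wedge M}(\mu,\nu)=\lim_k W_p^{\sfd_k\wedge M}(\mu,\nu)$. It is therefore enough to show, for fixed $k$ and $M$,
\[
W_p^{\sfd_k\wedge M}(\mu,\nu)\le \liminf_n W_p^{\sfd_{\X_n}\wedge M}(\mu_n,\nu_n).
\]

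The key tool is \Cref{lemma: regularity projections}, applied to $\varphi_1,\dots,\varphi_k$. It provides $\varepsilon_n\to0$ and sets $E_n$ with $\mm_{\X_n}(E_n)\ge 1-\varepsilon_n$ such that $\sfd_k(p_n(x),p_n(y))\le\sfd_{\X_n}(x,y)+\varepsilon_n$ on $E_n\times E_n$. Take optimal plans $\ppi_n\in\adm(\mu_n,\nu_n)$ for the truncated cost and push them forward, $\tilde\ppi_n:=(p_n\times p_n)_\#\ppi_n\in\adm((p_n)_\#\mu_n,(p_n)_\#\nu_n)$. Split the cost into the part on $E_n\times E_n$ and its complement. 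On $E_n\times E_n$ the pointwise bound gives $(\sfd_k\wedge M)^p(p_n x,p_n y)\le((\sfd_{\X_n}\wedge M)(x,y)+\varepsilon_n)^p$. On the complement the integrand is at most $M^p$, and the mass is at most $\mu_n(\X_n\setminus E_n)+\nu_n(\X_n\setminus E_n)$, which tends to $0$ by the equi-integrability contained in the $\L^1$-weak convergence (\Cref{d:L1W}). This yields
\[
\int(\sfd_k\wedge M)^p\,\d\tilde\ppi_n\le \bigl(W_p^{\sfd_{\X_n}\wedge M}(\mu_n,\nu_n)^{1/p}+\varepsilon_n\bigr)^p+o(1).
\]

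Finally, $(p_n)_\#\mu_n\rightharpoonup\mu$ and $(p_n)_\#\nu_n\rightharpoonup\nu$ by definition of weak convergence along the fibration, so the $\tilde\ppi_n$ are tight. Passing to a subsequence realizing the $\liminf$, they converge weakly to some $\tilde\ppi\in\adm(\mu,\nu)$. Because $(\sfd_k\wedge M)^p$ is bounded and continuous, the cost passes to the limit, and we conclude $W_p^{\sfd_k\wedge M}(\mu,\nu)\le\int(\sfd_k\wedge M)^p\,\d\tilde\ppi\le\liminf_n W_p^{\sfd_{\X_n}\wedge M}(\mu_n,\nu_n)$. The main obstacle is the interplay between the almost-Lipschitz control of $p_n$, which holds only on large sets and only for finitely many $\varphi_j$, and the fact that the limit distance is merely lower semicontinuous. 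This is exactly why we approximate $\sfd$ from below by continuous $\sfd_k$ and use equi-integrability to discard the bad sets. The remaining point to check carefully is the monotone convergence $W_p^{\sfd_k\wedge M}\uparrow W_p^{\sfd\wedge M}$, which should follow from compactness of $\adm(\mu,\nu)$ and l.s.c.\ of the costs.
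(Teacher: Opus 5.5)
Your proof is correct and follows essentially the same route as the paper. You approximate $\sfd$ from below by the continuous bounded $\sfd_k$, apply \Cref{lemma: regularity projections}, push forward optimal plans by $p_n\times p_n$, discard the complement of $E_n\times E_n$ by equi-integrability, and let $k\to\infty$ by monotone convergence of the costs; your explicit tightness and weak-limit step for the pushed-forward plans makes precise a passage the paper states loosely. Two minor points: the truncation at $M$ is harmless but redundant, since $\sfd_k$ is already bounded (the $\varphi_j$ are bounded); and your opening sentence announces a ``duality route'' via potentials, whereas the argument you actually give is purely primal and never uses potentials.
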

\begin{proof}
    Let $(\varphi_j)_{j\in \N}$ be a sequence of 1-Lipschitz, bounded and continuous functions on $\X$ which generate the distance and topology of $\X$ and let $\sfd_k(x, y)=\sup_{j\leq k}|\varphi_j(x)-\varphi_j(y)|$.
    By definition, $\sfd_\X=\sup_k \sfd_k$.
    
    By \Cref{lemma: regularity projections}, for all $k\in \N$ there exists a vanishing sequence $(\varepsilon_n)_{n\in \N}$ and a sequence of sets $n\mapsto E_n\subset \X_n$ with $\mm_{\X_n}(E_n)\geq 1-\varepsilon_n$ for all $n\in \N$ satisfying
    \begin{equation}
        \sfd_k(p_n(x), p_n(y))\leq \sfd_{\X_n}(x, y)+\varepsilon_n
    \end{equation}
    for all $x, y\in E_n$.
    
    Let $\tilde \ppi_n$ be an optimal coupling between $\mu_n$ and $\nu_n$ and let $\ppi_n=(p_n\times p_n)_\#\tilde\ppi_n$.
    Then
    \begin{align}
        W_p^{\sfd_k}(\mu, \nu)\leq \int_\X \sfd_k\,\d\ppi_n &=\int_{\X_n} \sfd_k^p(p_n(x), p_n(y))\,\d\tilde\ppi_n\\
        &\leq \int_{E_n\times E_n} (\varepsilon_n+\sfd_{\X_n}(x, y))^p\,\d\tilde\ppi_n\\
        &\quad+(\max \sfd_k)^p\Big(\mu_n(\X_n\setminus E_n)+ \nu_n(\X_n\setminus E_n)\Big).
    \end{align}
    Since both $n\mapsto \mu_n$ and $n\mapsto \nu_n$ are equi-integrable sequences, the second term in the right-hand side is vanishing when $n$ goes to $\infty$.
    In particular, sending $n$ to $\infty$, we get $W_p^{\sfd_k}(\mu, \nu)\leq\liminf W_p(\mu_n, \nu_n)$.
    
    Since $\sfd_k\nearrow \sfd_\X$, then $W_p^{\sfd_k}(\mu, \nu)\nearrow W_p(\mu, \nu)$ as $k\to\infty$.
    The thesis then follows by sending $k$ to~$\infty$.
\end{proof}

\begin{remark} Let $p=1$. 
    If a sequence of concentrated emm-spaces $\X_n$ has an equi-bounded diameter and converges to a extended topological-metric-measure space $\X$ in $\sfd_{\sf conc}$,  one can improve the conclusion of~\Cref{proposition: semicontinuity cost} from the limit infimum inequality to the limit equality.
    The proof is a simple argument for the convergence of the Kantorovich potentials: thanks to the uniform bound on the diameter, the Kantorovich potentials are  equi-Lipschitz, thus pre-compact; the definition of convergence in concentration guarantees that the limits of Kantorovich potentials are $c$-concave.

    The general case~$p>1$ is not as simple as $p=1$: while the compactness argument still works for $p>1$, it is not clear if the inequality $f_n\oplus g_n\leq \sfd_{\X_n}^p$ of admissible functions in the $W_p$-Kantorovich duality is  stable under the convergence. The following example tells a delicate point for this argument: let $\X_n=\mathbb S^n(1)$ be the sequence of $n$-dimensional unit spheres; $\X_n\to \star$ in concentration (see \Cref{c:HDS}) and $\X_n\times \X_n\to \star\times\star \cong \star$. In particular, the limit distance $\sfd_\infty$ on the one-point mm-space~$\star$ is identically zero while the sequence of the distances $\sfd_{\X_n}$ converges to the non-zero constant $\frac{\pi}{2}$ in $\L^0$. 
    
    This is closely related the stability of the $\CD$ and $\RCD$ conditions, again in the assumption of uniformly bounded diameter. If one could prove that for each $\mu, \nu$ there exist $\mu_n, \nu_n$ converging weakly in $\L^1$ to $\mu, \nu$ and $W_2(\mu_n, \nu_n)\to W_2(\mu, \nu)$, this would imply the stability of the $\CD$ condition. That would also be enough to prove the stability of the $\RCD$ condition and of the heat flow: see the arguments in \cite{gigliStabilityHeatFlow2024}.
\end{remark}

\section{Applications} \label{s:AP}
In this section, we discuss various examples of concentrated emm-spaces and their convergence in the concentration topology.
In the following sections, one of our focal points is to determine whether a given emm-space is emm-isomorphic to an mm-space. For this purpose, we introduce the following definition: 
\begin{definition}[Truly emm-space] \label{d:TE}An emm-space~$\X$ is {\it truly extended} if there is no mm-space that is emm-isomorphic to $\X$, namely, 
$$[\X] \in \mathfrak X\setminus \mathcal X,$$
where $[\X]$ is the equivalence class of $\X$ under emm-isomorphism.
\end{definition}

\subsection{Product spaces}
Let $({\rm M}_n,\mathsf{d}_n,\mathfrak{m}_n)$ be mm-spaces. Set
\[{\rm X}_{(N)} \coloneqq \prod_{n=1}^N {\rm M_n}, \quad {\rm X}\coloneqq\prod_{n=1}^\infty {\rm M}_n,\qquad \mfm_{(N)}\coloneqq \bigotimes_{n=1}^N \mathfrak{m}_n,\quad \mfm\coloneqq\bigotimes_{n=1}^\infty \mathfrak{m}_n,
\]
with product topology and the product distances
\[
\mathsf{d}_{(N)}(x,y)\coloneqq\Big(\sum_{n=1}^N \mathsf{d}_n(x_n,y_n)^2\Big)^{1/2}, \quad 
\mathsf{d}(x,y)\coloneqq\Big(\sum_{n=1}^\infty \mathsf{d}_n(x_n,y_n)^2\Big)^{1/2}\in[0,\infty].
\]
Define 
\[
{\rm X}^{(N)}\coloneqq\prod_{n>N} {\rm M}_n\quad\text{with metric }\mathsf{d}^{(N)}(\xi,\eta)=\Big(\sum_{n>N} \mathsf{d}_n(\xi_n,\eta_n)^2\Big)^{1/2},
\]
and 
\begin{equation}\label{eq1:tail_levy}
\mathfrak{m}^{(N)}\coloneqq\bigotimes_{n>N}\mathfrak{m}_n.
\end{equation}
\begin{proposition} \label{p:CPC}
Assume that for every $\varepsilon>0$ there exists $N$ such that for every $1$-Lipschitz
$h$ on the tail block $(\X^{(N)}, \mssd^{(N)})$, 
we have
\begin{equation}\label{eq:tail_levy}
\mathfrak{m}^{(N)}\big(|h-\mssm_h|>\varepsilon\big)\le \varepsilon,
\end{equation}
where $\mssm_h$ is the L\'evy mean of~$h$ $($see~\eqref{def: lévy mean}$)$.
Then, $\X$ is concentrated. 
In particular, 
$$\overline{\bigcup_{n \in \N}\mathcal P_{X_n}}^\square = \mathcal P_X. $$
\end{proposition}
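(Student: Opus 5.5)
The plan is to verify the mm-factor approximation of \Cref{def:obs} and then apply \Cref{thm:equiv}. Fix $\varepsilon>0$ and let $N$ be given by the tail hypothesis \eqref{eq:tail_levy}. Take as factor the finite product $K_\varepsilon\coloneqq \X_{(N)}$ with the distance $\sfd_{(N)}$, and as map the coordinate projection $\pi_\varepsilon:\X\to \X_{(N)}$. This map is Borel and $1$-Lipschitz for $\sfd$, and it pushes $\mm$ to $\mm_{(N)}$, so $(\X_{(N)},\sfd_{(N)},\mm_{(N)})$ is an mm-space.

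Given $f\in\Lip_1(\X,\mm)$, write $\X=\X_{(N)}\times \X^{(N)}$, so that $\mm=\mm_{(N)}\otimes\mm^{(N)}$. By the corollary after the McShane extension (\Cref{lemma: McShane}), we may take an everywhere defined $1$-Lipschitz, universally measurable representative $f$. It may be $+\infty$ somewhere, but it is finite $\mm$-a.e.

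By Fubini, for $\mm_{(N)}$-a.e.\ $a\in\X_{(N)}$ the slice $h_a\coloneqq f(a,\cdot)$ is $1$-Lipschitz on $(\X^{(N)},\sfd^{(N)})$ and a.e.\ finite. Indeed $\sfd((a,\xi),(a,\eta))=\sfd^{(N)}(\xi,\eta)$. Define
\[
g(a)\coloneqq \mssm_{h_a}.
\]
By \eqref{eq:tail_levy} applied to $h_a$, we get $\mm^{(N)}(|h_a-g(a)|>\varepsilon)\le\varepsilon$ for a.e.\ $a$. Integrating in $a$ then gives $\mm(|f-g\circ\pi_\varepsilon|>\varepsilon)\le\varepsilon$, which is \eqref{eq:Aeps}, provided $g$ is measurable and $1$-Lipschitz.

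\textbf{Main obstacle: showing $g\in\Lip_1(K_\varepsilon)$.} For $a,b$ in the good set and every $\xi$,
\[
|h_a(\xi)-h_b(\xi)|\le \sfd((a,\xi),(b,\xi))=\sfd_{(N)}(a,b),
\]
so $h_b\le h_a+\sfd_{(N)}(a,b)$ pointwise. The extremal medians $m_\pm$ are monotone under pointwise order and commute with constant shifts, so $m_\pm(h_b)\le m_\pm(h_a)+\sfd_{(N)}(a,b)$. The same holds with $a,b$ swapped, hence the Lévy mean $g$ is $1$-Lipschitz on a full-measure set. Some care is needed where $h_a$ takes the value $+\infty$ on a null set; those points are discarded. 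Measurability of $a\mapsto m_\pm(h_a)$ follows because $a\mapsto\mm^{(N)}(h_a\ge t)$ is measurable by Fubini, and $m_+(h_a)=\sup\{t\in\Q:\mm^{(N)}(h_a\ge t)\ge1/2\}$ is a countable supremum (similarly for $m_-$). Finally, extend $g$ to all of $K_\varepsilon$ as a $1$-Lipschitz function via McShane, which is harmless since $K_\varepsilon$ is an mm-space.

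This yields the mm-factor approximation, so $\X$ is concentrated by \Cref{thm:equiv}. For the pyramid identity: $\X$ is the inverse limit of the $\X_{(N)}$ (the $\X_n$ in the statement), so \Cref{c:PD} gives $\overline{\bigcup_N\mathcal P_{\X_{(N)}}}^\square=\overline{\mathcal P_\X}^\square$. Since $\X$ is concentrated, \Cref{prop: the pyramid is closed} gives $\overline{\mathcal P_\X}^\square=\mathcal P_\X$.
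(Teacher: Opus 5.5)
Your proposal is correct and follows the paper's proof: the mm-factor approximation with $K_\varepsilon=\X_{(N)}$, $\pi_\varepsilon$ the projection onto the first $N$ coordinates, $g(u)=\mssm_{h_u}$ shown to be $1$-Lipschitz by the shift-of-distribution argument, and Fubini to integrate \eqref{eq:tail_levy}. The one difference is that the paper first uses \Cref{cor: density of continuous functions} to restrict to $\tau$-continuous $f\in\Lip_1(\X,\tau,\sfd)$, so all slices are continuous and real-valued and no measurability or $+\infty$ bookkeeping is needed, while you treat a general $f$ directly through its universally measurable McShane representative; you also spell out the pyramid identity via \Cref{c:PD} and \Cref{prop: the pyramid is closed}, which the paper leaves implicit.
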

\begin{proof}
First notice that as the space $\X$ is an infinite-product space, it is in particular  an inverse limit, thus an extended topological-metric-measure space.
By \Cref{cor: density of continuous functions}, it suffices to check the mm-factor condition~\eqref{eq:Aeps}  for all functions $f\in \Lip_1(\X, \tau, \sfd)$. Here recall that $\Lip_1(\X, \tau, \sfd)$ denotes the space of $\tau$-continuous and $1$-Lipschitz functions on~$\X$.  

Fix $\varepsilon>0$ and choose $N$ as above.
Let $\pi_\varepsilon\coloneqq\mathrm{pr}_{\le N}:{\rm X}\to K_\e:=\X_{(N)}=\prod_{n\le N}{\rm M}_n$.
Then $\pi_\varepsilon$ is $1$-Lipschitz because
$\mathsf{d}_{(N)}(\pi_\varepsilon(x),\pi_\varepsilon(y))\le \mathsf{d}(x,y)$.
Now let $f\in\Lip_1(\X, \tau, \sfd)$. For each fixed $u\in \X_{(N)}$ define a tail function
\[
h_u(\xi)\coloneqq f(u \times \xi)\qquad (\xi\in {\rm X}^{(N)}).
\]
Then, $h_u \in \Lip_1(\X^{(N)})$. 
We note that the L\'evy mean~
$$g(u):=\mssm_{h_u}$$ is $1$-Lipschitz on $(\X_{(N)},\mathsf{d}_{(N)})$:
indeed, if $u,v\in \X_N$, then $|h_u(\xi)-h_v(\xi)|\le \mathsf{d}_{(N)}(u,v)$ for every $\xi$,
so the distribution functions of $h_u$ and $h_v$ are shifted by at most $\mathsf{d}_{(N)}(u,v)$, hence the medians (thus the L\'evy mean also) differ
by at most $\mathsf{d}_{(N)}(u,v)$.
By \eqref{eq:tail_levy} applied to each $h_u$ and integrating in $u$,
\begin{align} \label{e:UEC}
\mfm\big(|f-(g\circ\pi_\varepsilon)|>\varepsilon\big)
=\int_{{\rm K}_\varepsilon} \mathfrak{m}^{(N)}\big(|h_u-g(u)|>\varepsilon\big)\,\diff\mfm_{N}(u)
\le \varepsilon.
\end{align}
This is \eqref{eq:Aeps} with  $K_\e=\X_N$.
\end{proof}

One of the simplest examples is the product of spheres. 
\begin{example} \label{e:IPS}
    Let $\X_n=\prod_{j=1}^n \mathbb S^j(1)$ be the product mm-space, where $\mathbb S^j(1)$ is the $j$-dimensional unit sphere endowed with the standard geodesic distance and the standard normalised spherical volume measure. Let $$\mathcal P=\overline{\bigcup_{n \in \N} \mathcal P_{\X_n}}^\square$$ and $\X=\prod_{n=1}^\infty \mathbb S^n(1) $ be the infinite-product emm-space.
    By~\cite[Example~7.36]{Sh16} (or \Cref{p:CPC} combined with the standard measure-concentration~of high-dimensional spheres),  $\X$ is concentrated  and 
    $$\mathcal P = \mathcal P_X. $$
    Furthermore, by a simple application of Borel--Cantelli to the set 
    $$A_n=\Big\{\big((x_n)_{n \in \N}, (y_n)_{n \in \N}\big) \in \X \times \X: \sfd_{\mathbb S^n}(x_n, y_n) \ge \frac{\pi}{2}\Big\}$$ and noting $\mm_\X^{\otimes 2}(A_n)=1/2$,  we obtain 
    $\sfd_{\X}=+\infty$ $\mm_{\X}^{\otimes 2}$-a.e.. In particular, it is not emm-isomorphic to any mm-space, thus is is truly extended in the sense of \Cref{d:TE}. Combined with the fact that $\X$ is concentrated as seen above, we conclude $$\X \in \partial \mathfrak X_{\sf conc} = \mathfrak X_{\sf conc} \setminus \mathcal X.$$ 
\end{example}

\subsection{Spheres}
Let $\mathbb S^n(r)$ denote the $n$-sphere with radius $r$ endowed with the standard geodesic metric and the standard normalised spherical volume measure. We denote by $\star$ the one-point mm-space (the mm-space consisting of one point), whose pyramid $\mathcal P_\star$ is the smallest pyramid.  The largest pyramid is the entire~$\mathcal X$, which can be represented by the emm-space~$\mathbb I_\infty$ as we have seen in~\Cref{p:ME}.  
Define an emm-space
$$\Gamma^\infty_{\lambda^2}=(\R^\infty, \tau^\infty, \ell_2, \gamma_{\lambda^2}^{\otimes \infty}),$$
where 
$\gamma_{\lambda^2}$ is the centred Gaussian measure with variance $\lambda^2>0$. 

\begin{corollary} \label{c:HDS}
   \begin{equation}
   \mathbb S^n(r_n) \xrightarrow{n \to \infty}
       \begin{cases}
           \star \qquad &\text{in $\sfd_{\sf conc}$} \quad\ \text{if} \quad \frac{r_n}{\sqrt{n}} \to 0
           \\
           \Gamma^\infty_{\lambda^2} \qquad &\text{in $\tau_{\Pi}$} \qquad \text{if} \quad \frac{r_n}{\sqrt{n}} \to \lambda
           \\
           \mathbb I_\infty\qquad &\text{in $\tau_{\Pi}$} \qquad \text{if} \quad \frac{r_n}{\sqrt{n}} \to \infty
       \end{cases}
   \end{equation}
   where by convergence in $\tau_{\Pi}$ we mean the convergence of the associated pyramid. In particular, if $\frac{r_n}{\sqrt{n}} \to 1$, then 
   $$\mathbb S^n(r_n) \xrightarrow{n \to \infty} \mathbb W \quad \text{in $\tau_{\Pi}$},$$
   where $\mathbb W$ is the abstract Wiener space. 
\end{corollary}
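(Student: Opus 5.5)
The plan is to treat the three regimes separately, in each case reducing to a known statement about pyramids or concentration of spheres (Shioya's phase transition theorem, \cite[Theorem 1.1]{Shi17}) and then identifying the limit pyramid with the pyramid of an explicit emm-space via the representation results of Section~3.

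\emph{First regime} ($r_n/\sqrt n\to 0$). Here I will use the classical Lévy concentration on spheres. Every $f\in\Lip_1(\mathbb S^n(r_n))$ satisfies
\[
\mm\bigl(|f-\mssm_f|>\varepsilon\bigr)\le C e^{-c n\varepsilon^2/r_n^2}\longrightarrow 0,
\]
uniformly in $f$. This shows that the Hausdorff distance in $\L^0$ between $\iota^*\Lip_1(\mathbb S^n(r_n))$ and the constants, which are exactly $\iota_\star^*\Lip_1(\star)$, tends to $0$. Hence $\sfd_{\sf conc}(\mathbb S^n(r_n),\star)\to 0$; this is the standard observable-diameter argument, cf.~\cite[Corollary 5.8]{Sh16}.

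\emph{Second regime} ($r_n/\sqrt n\to\lambda$). By \cite[Theorem 1.1]{Shi17}, the pyramids $\mathcal P_{\mathbb S^n(r_n)}$ converge in $\tau_\Pi$ to the virtual Gaussian pyramid $\mathcal P_{\Gamma^\infty_{\lambda^2}}:=\overline{\bigcup_N\mathcal P_{\Gamma^N_{\lambda^2}}}^\square$, where $\Gamma^N_{\lambda^2}=(\R^N,\ell_2,\gamma_{\lambda^2}^{\otimes N})$. The emm-space $\Gamma^\infty_{\lambda^2}=(\R^\infty,\tau^\infty,\ell_2,\gamma_{\lambda^2}^{\otimes\infty})$ is the inverse limit of $\Gamma^N_{\lambda^2}$ under the coordinate projections. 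Therefore \Cref{c:PD}, or equivalently \Cref{teo: piramidi - enmms}(a), gives
\[
\overline{\mathcal P_{\Gamma^\infty_{\lambda^2}}}^\square=\overline{\bigcup_N\mathcal P_{\Gamma^N_{\lambda^2}}}^\square,
\]
which identifies the limit. For $\lambda=1$, \Cref{c:PWI} replaces $\Gamma^\infty_1$ by $\mathbb W$, since pyramids are invariant under emm-isomorphism.

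\emph{Third regime} ($r_n/\sqrt n\to\infty$). Again by \cite[Theorem 1.1]{Shi17}, the spheres $\mathbb S^n(r_n)$ $\tau_\Pi$-converge to the largest pyramid $\mathcal X$. By \Cref{p:ME} we have $\mathcal X=\mathcal P_{\mathbb I_\infty}$, so the limit is represented by $\mathbb I_\infty$.

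The main obstacle is the dissipation case, and it only arises if one wants a self-contained argument rather than quoting \cite{Shi17}. In that case I would show that every $\Y\in\mathcal X$ lies in the $K$-$\liminf$ by approximating $\Y$ with finite weighted point sets in $(\R^N,\ell_\infty)$ at fixed mutual distance $D$. These are obtained as $1$-Lipschitz images of $\mathbb S^n(r_n)$ by partitioning the sphere into cells whose mutual distances, by isoperimetry and the large radius, eventually exceed $D$. By contrast, the Gaussian identification is routine once \cite{Shi17} is invoked.
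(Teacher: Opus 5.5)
Your proposal is correct and is essentially the paper's proof: quote Shioya's phase-transition theorem \cite[Theorem 1.1]{Shi17} for the three regimes, then identify the limit pyramids via \Cref{c:PD} (the inverse limit of the $\Gamma^N_{\lambda^2}$), \Cref{c:PWI} (for $\lambda=1$) and \Cref{p:ME}. The only differences are that you check the L\'evy-family regime directly from L\'evy's concentration inequality instead of citing \cite{Shi17}, which is harmless, and that your self-contained dissipation sketch is not needed; as written it would also have to discard a small-measure set, since a genuine partition of the connected sphere cannot have cells at positive mutual distance, and then use maps that are $1$-Lipschitz up to an additive error as in \cite{Sh16}.
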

\begin{proof}
This follows by combination of \cite[Theorem 1.1]{Shi17} with the representations~given by Propositions~\ref{p:ME}, \ref{c:PD} and \Cref{c:PWI}. 
\end{proof}

\subsection{Configuration space with Poisson measure}

\begin{proposition} \label{p:CNC}
The configuration space~$\U=(\U(\R), \tau_{\sf vague}, \mssd_\U, \pi)$ with the Poisson measure $\pi$ of unit intensity $($recall \Cref{subsec:CF}$)$ is not concentrated. 
\end{proposition}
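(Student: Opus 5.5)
We prove that $\mathcal L_1(\U)$ is not compact by exhibiting an infinite $\varepsilon$-separated sequence of $1$-Lipschitz functions modulo constants. The natural candidates are linear statistics with slowly varying test functions. For $\phi\in C_c(\R)$ with $|\phi'|\le 1$, consider
\[
F_\phi(\gamma)=\int_\R \phi\,\diff\gamma .
\]
For any coupling $\mssc\in\mathsf{Cpl}(\gamma,\eta)$ we have $|F_\phi(\gamma)-F_\phi(\eta)|\le \int|x-y|\diff\mssc$. This is an $\ell_1$-type bound, not an $\ell_2$ one, so it does not directly give a $1$-Lipschitz function. We therefore restrict to $\phi$ supported in a window of length $L$ and use Cauchy--Schwarz on the coupled points inside the support. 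Instead, we use the cleaner choice
\[
G_n(\gamma):=\Bigl(\sum_{x\in\gamma\cap I_n}\ \cdots\Bigr).
\]
Concretely, fix disjoint unit intervals $I_k=[3k,3k+1]$ and a tent $\psi_k(x)=\mathrm{dist}(x,\R\setminus [3k-1,3k+2])\wedge 1$, which is $1$-Lipschitz and supported in disjoint windows. Then $H_k(\gamma)=\int\psi_k\diff\gamma$ satisfies $|H_k(\gamma)-H_k(\eta)|\le \int_{\text{pairs touching the window}}|x-y|\diff\mssc$. Points far from the window contribute only when matched across a long distance, so this is at most $\sqrt{N}\,\mssd_\U(\gamma,\eta)$, where $N$ is the number of matched pairs touching the window. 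Since $N$ is unbounded, we instead truncate: we replace $H_k$ by $\widetilde H_k:=\min(H_k,M)$ and show, via McShane (\Cref{lemma: McShane}) applied on the full-measure set where the count is controlled, that $c\,\widetilde H_k$ is a.e.\ $1$-Lipschitz for some fixed $c=c(M)>0$. The key inequality is that changing $\widetilde H_k$ by $t$ requires moving some point by at least $t/(M+2)$ inside the window, which costs $\mssd_\U\ge t/(M+2)$.

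Once the $f_k:=c\widetilde H_k\in\Lip_1(\U,\pi)$ are in hand, separation follows from independence of the Poisson process on disjoint sets. The variables $f_k$ are i.i.d.\ and nondegenerate (their law is $c\min(\mathrm{Poisson\ functional},M)$, with positive variance). For $k\ne l$ and any constant $a$, the difference $f_k-f_l-a$ has a law independent of $k,l$ that is not concentrated near any point. Hence
\[
\inf_a\pi\bigl(|f_k-f_l-a|>\varepsilon\bigr)>\varepsilon
\]
for some fixed $\varepsilon>0$, and this holds uniformly in $k\neq l$. Thus $([f_k])_k$ is $\varepsilon$-separated in the quotient Ky Fan distance, so $\mathcal L_1(\U)$ is not totally bounded and $\U$ is not concentrated.

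The main obstacle is the Lipschitz estimate for $f_k$ with respect to $\mssd_\U$. Couplings may rearrange infinitely many points, and a configuration change inside the window can be paid for by mass entering from outside. We first handle this by working on the full-measure set where $\mssd_\U(\gamma,\eta)<\infty$ forces $\gamma$ and $\eta$ to have matched points within distance $\mssd_\U$. We then count the points within $\mssd_\U$ of the window boundary using the tent profile, which enters linearly, and we bound these counts by $M$ via the truncation. If this estimate turns out to be awkward, a fallback is to use $\gamma\mapsto \mssd_\U(\gamma,\gamma|_{\R\setminus W_k})\wedge 1$, which is $1$-Lipschitz by the triangle inequality. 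Separation then follows from the same i.i.d.\ argument, once we note that it is a nonconstant function of $\gamma|_{W_k}$.
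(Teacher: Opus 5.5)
\textbf{Where the proof breaks.} Your separation step is fine. For i.i.d.\ nondegenerate variables the bound $\inf_a\pi(|f_k-f_l-a|>\varepsilon)>\varepsilon$ does hold for small $\varepsilon$; the paper packages this as its lemma on i.i.d.\ sequences. The gap is the step you flag as the main obstacle, and neither of your candidates gets through it.

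\emph{The truncated tent statistic.} You correctly noticed that $H_k$ only obeys an $\ell_1$-type bound. Truncating the \emph{value} of $H_k$ at $M$ does not bound the number of points in the window, because points near the endpoints of the tent carry arbitrarily small weight. So the ``key inequality'' is false. Place $N$ points at $3k-1+t_i$ with $t_i\in(\delta,2\delta)$ and $N\delta=M/2$, with no other points in $[3k-1-2\delta,3k+2]$. Shifting these points by $-2\delta$ lowers $\widetilde H_k$ by more than $M/2$, at cost $\mssd_\U\le 2\delta\sqrt N=M/\sqrt N$. Conditionally on the configuration outside a window and on the number of points inside, Poisson points are i.i.d.\ uniform, so such pairs occur inside every set of full $\pi$-measure. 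Hence $c\,\widetilde H_k$ is not a.e.\ $1$-Lipschitz for any $c>0$. There is also no full-measure set on which the window count is bounded. It can only be bounded on a positive-measure set such as $\{\gamma(W_k)\le M'\}$, and a McShane extension from there is a different function; you would still have to prove it depends only on $\gamma|_{W_k}$.

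\emph{The fallback.} The map $\gamma\mapsto\mssd_\U(\gamma,\gamma|_{\R\setminus W_k})$ is not a distance to a \emph{fixed} set. The triangle inequality therefore leaves the uncontrolled term $\mssd_\U(\gamma|_{\R\setminus W_k},\eta|_{\R\setminus W_k})$. In fact, deleting finitely many points from a Poisson configuration on $\R$ has infinite $\ell_2$-matching cost almost surely. Far out, a finite-cost matching displaces points by less than $\frac12$, so it cannot carry points across the midpoints of gaps of length $\ge1$, yet the two configurations have different numbers of points between two such midpoints. So your fallback is a.e.\ equal to $\mathbf 1_{\{\gamma(W_k)\ge1\}}$, which jumps when a point is nudged across $\partial W_k$.

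\textbf{The missing idea.} Aggregate in $\ell_2$, or equivalently take the distance to a fixed set. For instance $F_k(\gamma)=\bigl(\int\psi_k^2\,\diff\gamma\bigr)^{1/2}$ is $1$-Lipschitz. Along any matching $\sigma$ of $\gamma=\sum_i\delta_{x_i}$ and $\eta=\sum_j\delta_{y_j}$, Minkowski's inequality gives $|F_k(\gamma)-F_k(\eta)|\le\bigl(\sum_i|\psi_k(x_i)-\psi_k(y_{\sigma(i)})|^2\bigr)^{1/2}\le\bigl(\sum_i|x_i-y_{\sigma(i)}|^2\bigr)^{1/2}$. Without the cap at $1$ in $\psi_k$, this is exactly $\mssd_\U\bigl(\gamma,\{\eta:\eta((3k-1,3k+2))=0\}\bigr)$, which is presumably what your fallback was after.

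The paper instead uses $u_n(\gamma)=(\inf_{x\in\gamma}|x-n|)\wedge\frac14$. It is $1$-Lipschitz because, under any matching, the partner of the point of $\gamma$ closest to $n$ is displaced by at most the $\ell_2$ matching cost. The $u_n$ depend only on $\gamma$ restricted to the disjoint intervals $(n-\frac14,n+\frac14)$, are i.i.d.\ and nondegenerate, and the i.i.d.\ lemma concludes. With either replacement for your $f_k$, the rest of your argument goes through.
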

\begin{proof}
Define a $\tau_{\sf vague}$-Borel function as follows: for each $n\in\mathbb Z$,
\[
u_n(\eta)\coloneqq\Bigl(\inf_{x\in\eta}|x-n|\Bigr)\wedge \frac14,
\qquad \eta\in \U.
\]
We show that $u_n\in\Lip_1(\U, \mssd_\U)$ for every $n$, and also show that the $\pi$-classes $[u_n]\in \mathcal L_1$ do not admit any subsequence converging in probability modulo constants. In particular, $\mathcal L_1$ is not compact.

 We first prove that each $u_n$ is $1$-Lipschitz.
Fix $n\in\mathbb Z$ and set
 \[
 r_n(\eta)\coloneqq\inf_{x\in\eta}|x-n|\in[0,\infty]\]
Let $\eta,\zeta\in \U$ with $\mathsf d_\U(\eta,\zeta)<\infty$. Choose representatives
\(
\eta=\sum_{i= 1}^\infty\delta_{x_i}\), \(\zeta=\sum_{i= 1}^\infty\delta_{y_i},
\)
and a bijection $\sigma$ such that
\[
\sum_{i = 1}^\infty |x_i-y_{\sigma(i)}|^2<\infty.
\]
Fix $\delta>0$ and choose $i$ with $|x_i-n|\le r_n(\eta)+\delta$. Then
\[
r_n(\zeta)\le |y_{\sigma(i)}-n|
\le |x_i-n|+|x_i-y_{\sigma(i)}|
\le r_n(\eta)+\delta+|x_i-y_{\sigma(i)}|.
\]
Since
\[
\Bigl(\sum_{j = 1}^\infty |x_j-y_{\sigma(j)}|^2\Bigr)^{1/2}\ge |x_i-y_{\sigma(i)}|,
\]
we obtain
\[
r_n(\zeta)\le r_n(\eta)+\delta+\Bigl(\sum_{j =1}^\infty |x_j-y_{\sigma(j)}|^2\Bigr)^{1/2}.
\]
Letting $\delta\downarrow 0$ and then taking the infimum over $\sigma$ yields
\[
r_n(\zeta)-r_n(\eta)\le \mathsf d_\U(\eta,\zeta).
\]
By symmetry also $r_n(\eta)-r_n(\zeta)\le \mathsf d_\U(\eta,\zeta)$, hence
\[
|r_n(\eta)-r_n(\zeta)|\le \mathsf d_\U(\eta,\zeta).
\]
The truncation $t\mapsto t\wedge \frac14$ is $1$-Lipschitz on $\mathbb R$, so
\[
|u_n(\eta)-u_n(\zeta)|
\le |r_n(\eta)-r_n(\zeta)|
\le \mathsf d_\U(\eta,\zeta),
\]
proving $u_n\in\Lip_1(\U, \mssd_\U)$.

We now prove that $(u_n)_{n \in \N}$ is independent and identically distributed as random variables on the probability space~$(\U, \pi)$. 
Let $I_n\coloneqq(n-\frac14,n+\frac14)$. Then $u_n(\eta)$ depends only on the restriction of $\eta$ to $I_n$.
Since $(I_n)_{n\in\mathbb Z}$ are disjoint and the Poisson process has independent increments in disjoint intervals, the family $(u_n(\eta))_{n\in\mathbb Z}$ is independent, and by construction, identically distributed. 
Let
\[
R\coloneqq\inf_{x\in\eta}|x|
\]
(distance from the origin to the nearest point in $\eta$). As $\pi$ is the Poisson distribution with intensity~$1$,  the distance between the nearest points to the left and right of $0$ is the exponential distributed (with intensity $1$)~${\rm Exp}(1)$, hence $R\sim{\rm Exp}(2)$ and
\[
\pi(R>t)=e^{-2t}\qquad (t\ge 0).
\]
Thus $u_0=R\wedge \frac14$ has an atom at $\frac14$ of mass
\[
q\coloneqq\pi (R\ge \tfrac14)=e^{-1/2},
\]
and on $[0,\frac14)$ it has density $f(t)=2e^{-2t}\le 2$. 

By \Cref{lemma: no convergence iid} below,  we conclude the desired claim. \qedhere

\end{proof}

\begin{lemma}\label{lemma: no convergence iid}
    Let $\X$ be a Polish space endowed with a Borel probability measure~$\mm_\X$.
    Let $(U_n)_{n\geq}$ be a sequence of independent and  identically distributed random variables on $\X$ which are not deterministic.
    Then the sequence $n\mapsto U_n$ is not precompact either in~$\L^0(\X)$ or $\L^0(\X)/\R$.
\end{lemma}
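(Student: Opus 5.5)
The plan is to argue by contradiction, and to treat $\L^0(\X)/\R$ first, since precompactness in $\L^0(\X)$ implies precompactness of the images in the quotient. So assume some subsequence $(U_{n_k})_k$ is Cauchy in $\L^0(\X)/\R$. This means there are constants $c_{k,l}\in\R$ with $U_{n_k}-U_{n_l}-c_{k,l}\to 0$ in probability as $k,l\to\infty$. The aim is to contradict this by showing that, for $k\neq l$, the law of $U_{n_k}-U_{n_l}$ does not depend on $(k,l)$ and is not concentrated near any single point.

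First, by independence and identical distribution, for every $k\neq l$ the pair $(U_{n_k},U_{n_l})$ has law $\mu\otimes\mu$, where $\mu$ is the common law. Hence $D_{k,l}:=U_{n_k}-U_{n_l}$ has a fixed law $\nu=\mu*\check\mu$, independent of $k\neq l$.

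Second, $\nu$ is not a Dirac mass when $\mu$ is non-degenerate. To see this, pick $a<b$ with $\mu((-\infty,a])>0$ and $\mu([b,\infty))>0$. Then $\nu((-\infty,a-b])>0$ and $\nu([b-a,\infty))>0$, and these two sets are separated by the gap $2(b-a)>0$. Let $p$ be the smaller of these two masses and set $\varepsilon_0:=\min\{p,(b-a)/2\}$. For any $c\in\R$, the interval $[c-\varepsilon_0,c+\varepsilon_0]$ has length at most $b-a$, so it misses at least one of the two sets. Therefore
$$\nu(\{t:|t-c|>\varepsilon_0\})\ge p\ge\varepsilon_0 .$$
It follows that $\sfd_{\L^0}(D_{k,l},c)\ge\varepsilon_0$ for every constant $c$ and every $k\neq l$. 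Consequently the quotient Ky Fan distance satisfies $\sfd_{\L^0/\R}([U_{n_k}],[U_{n_l}])\ge\varepsilon_0$, which contradicts the Cauchy property. The statement for $\L^0(\X)$ then follows, either directly (take $c=0$) or from the continuity of the quotient map.

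I expect no serious obstacle. The only point needing care is choosing $\varepsilon_0$ uniformly in $c$, and the separation argument above handles this.
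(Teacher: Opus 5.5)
Your proof is correct, and it takes a different route from the paper's. The paper argues qualitatively. If a subsequence converged in $\L^0$ to some $U$, then $U$ would be independent of every $U_n$, hence of itself, hence deterministic. On the other hand the laws $(U_n)_\#\mm_\X$ would converge weakly to the law of $U$, which is the non-degenerate common law, a contradiction. The paper then reduces the quotient case to this one: if $U_n-c_n\to U$ in $\L^0$, tightness of $(U_n-c_n)_\#\mm_\X$ and identical distribution force $(c_n)$ to be bounded, so one may pass to $c_n\to c$ and obtain $U_n\to U+c$.

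You instead work directly in the quotient and argue quantitatively. All differences $U_{n_k}-U_{n_l}$ share the fixed law $\mu*\check\mu$, and your two-tail separation shows this law stays at Ky Fan distance at least $\varepsilon_0$ from every constant. So the sequence is $\varepsilon_0$-separated in $\L^0(\X)/\R$, and a fortiori in $\L^0(\X)$.

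What your route buys:
\begin{itemize}
\item an explicit separation constant;
\item it uses only pairwise independence;
\item it never controls constants or identifies a limit, so it avoids the step the paper leaves implicit: that an $\L^0$-limit of independent copies is independent of each $U_m$, which needs passing to the limit in the joint laws of $(U_{n_k},U_m)$;
\item it gives the stronger conclusion that the family is not even totally bounded.
\end{itemize}

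The paper's argument is shorter and more conceptual. Its ``independent of itself'' mechanism never subtracts values, so it applies verbatim to random variables with values in a general Polish space. Your argument is close in spirit to the Ky Fan separation estimate for the functions $n\mathbf 1_C$ in the proof of Proposition~\ref{p:FDP}.
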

\begin{proof}
    We prove by contradiction. Assume that there exists a non-relabled $\L^0$ converging subsequence to a limit $U$.
    In particular, $U$ is independent of all of the random variables $U_n$ for $n\in \N$ and thus $U$ is independent from $U$.
    This implies that $U$ is deterministic, which is a contradiction. 
    Indeed, convergence in $\L^0$ implies weak convergence of the laws $n\mapsto (U_n)_\#\mm_\X$ to $U_\#\mm_\X$, which contradicts the assumptions on the $U_n$.

    Similarly, we prove the statement in~$\L^0(\X)/\R$ by contradiction. 
    Assume that there is a non-relabelled converging subsequence $U_n\to U$ in $\L^0(\X)/\R$ and let $n\mapsto c_n$ be such that $U_n-c_n$ converges to $U$ in $\L^0$. 
    In particular, the sequence $(U_n-c_n)_\#\mm_\X$ is tight and, since $U_n$ is identically distributed, $c_n$ is bounded. 
    Taking a converging (non-relabelled) subsequence $c_n \to c$, it follows $U_n \to U+c$ in probability, which contradicts the statement above in~$\L^0(\X)$.
\end{proof}

\subsection{General framework of Gaussian fields} \label{ss:GFF}
Let $H$ be a separable real Hilbert space with orthonormal basis
$(e_k)_{k\in \N}$.  Fix $(\lambda_k)_{k \in \N}$ with 
\[
1\le \lambda_1\le\lambda_2\le\cdots\lambda_k\longrightarrow\infty.
\]
For $t\in\mathbb R$, define
\[
H^t:=H^t_{(\lambda_k)}
:=
\left\{
x=\sum_{k=1}^\infty x_ke_k:
\sum_{k=1}^\infty\lambda_k^t x_k^2<\infty
\right\},
\]
with Hilbert norm
\[
\|x\|_{H^t}^2
:=
\sum_{k=1}^\infty\lambda_k^t x_k^2.
\]
We simply write $H^t$ instead of~$H^t_{(\lambda_k)}$ unless any confusion could occur.
For negative~$t<0$, $H^t$ is the completion of $H$ for this norm.  If $t_1>t_2$, then
$H^{t_1}\subset H^{t_2}$.

\paragraph{Gaussian measures.}
Let $(q_k)_{k\in \N}$ be positive numbers.  Fix $r\in\mathbb R$ such that
\begin{equation}\label{eq:support-assumption}
\sum_{k=1}^{\infty}q_k\lambda_k^{-r}<\infty.
\end{equation}
Define the series
\begin{align} \label{e:GRV}
\Phi:=\sum_{k=1}^{\infty}\sqrt{q_k}\,\xi_k e_k
\end{align}
 where  $\xi_k: \Omega \to \R$ are independent real-valued centred Gaussian random variables with variance $1$ defined in a probability space~$(\Omega, \mathbb P)$. The series converges
 in $L^2(\Omega;H^{-r})$ and almost surely in $H^{-r}$. 
 Indeed,  let
\[
\Phi_N:=\sum_{k=1}^N\sqrt{q_k}\,\xi_ke_k.
\]
Since \(\|e_k\|_{H^{-r}}^2=\lambda_k^{-r}\) and  \(e_k\) are
orthogonal in \(H^{-r}\),
\[
\|\Phi_m-\Phi_n\|_{H^{-r}}^2
=
\sum_{k=n+1}^m q_k\lambda_k^{-r}\xi_k^2.
\]
Moreover, by monotone convergence, the expectation can be evaluated as 
\[
\mathbb E\left[\sum_{k=1}^\infty
q_k\lambda_k^{-r}\xi_k^2\right]
:=\int_{\Omega} \biggl(\sum_{k=1}^\infty
q_k\lambda_k^{-r}\xi_k^2\biggr)\diff \mathbb P
=
\sum_{k=1}^\infty q_k\lambda_k^{-r}<\infty.
\]
Hence the series inside the expectation on the left is finite almost surely, so \((\Phi_N)\) is
almost surely Cauchy in \(H^{-r}\) and therefore converges almost surely to an element $\Phi \in H^{-r}$. Finally,
\[
\mathbb E\|\Phi-\Phi_N\|_{H^{-r}}^2
=
\sum_{k>N}q_k\lambda_k^{-r}\longrightarrow0,
\]
which proves convergence in \(L^2(\Omega;H^{-r})\).

Let $\mathfrak m$ be its law (i.e., the push-forward $\Phi_{\#} \mathbb P$) on the Hilbert space
\[
\X:=H^{-r}
\]
equipped with its norm topology $\tau_{H^{-r}}$.
Equivalently, $\mathfrak m$ is the centred Gaussian measure characterised by 
$$\mathbb E[\Phi_k\Phi_\ell]=q_k\delta_{kl}.$$
With respect to the $H^{-r}$ orthonormal basis~$f_k=\lambda_k^{r/2}e_k$, its covariance
operator~$Q$ on $H^{-r}$ satisfies
\[
Qf_k=q_k\lambda_k^{-r}f_k.
\]

\paragraph{The emm-property for weighted Sobolev extended distances.}
We start proving the emm-property for general weighted Sobolev extended distances. 
Let $(w_k)_{k\in \N}$ be positive numbers. Assume
 \begin{align} \label{e:FLW}
 C:=\sup_{k \in \N} \frac{\lambda_k^{-r}}{w_k}<\infty.
 \end{align}
 Define
\[
H_w
:=
\left\{
x=\sum_{k=1}^\infty x_ke_k:
\sum_{k=1}^\infty w_kx_k^2<\infty
\right\},
\qquad
\|x\|_w^2:=\sum_{k=1}^\infty w_kx_k^2.
\]
Noting $\|x\|_{H^{-r}}^2 = \sum_{k=1}^\infty\lambda_k^{-r}x_k^2 \le C \sum_{k=1}^\infty w_k^{-r}x_k^2 = C\|x\|_{w}^2$, we have 
$$H_w \hookrightarrow H^{-r}.$$ 
On $\X=H^{-r}$, define the extended distance~
\begin{equation}\label{eq:weighted-distance}
\mathsf d_w(x,y)
:=
\begin{cases}
\|x-y\|_w,&x-y\in H_w,\\
+\infty,&x-y\notin H_w.
\end{cases}
\end{equation}
Split $\X=\X_{(N)} \times \X^{(N)}$ such that \begin{align} \label{e:SX}
\X_{(N)}:={\rm span}\{e_1, e_2, \ldots, e_N\}, \qquad \X^{(N)}:=\{x \in H^{-r}: x_1=x_2=\cdots = x_N=0\}
\end{align}
endowed with 
$$\sfd_{w, (N)}(x,y):=\sqrt{\sum_{k=1}^N w_k(x_k-y_k)^2}$$
and 
\begin{equation} \sfd_{w}^{(N)}(x,y):=
\begin{cases}\displaystyle
\sqrt{\sum_{k > N}^\infty w_k(x_k-y_k)^2} \qquad &\text{if} \quad \sum_{k > N} w_k(x_k-y_k)^2<+\infty
\\
+\infty \qquad &\text{otherwise}.
\end{cases}
\end{equation}
The Gaussian measure~$\mm$ similarly decomposes by push-forward through projections on $\X_{(N)}$ and $\X^{(N)}$:
\[
\mathfrak m=\mathfrak m_{(N)}\otimes\mathfrak m^{(N)}.
\]
Define the tail scale
\begin{equation}\label{eq:tail-scale}
A_N:=\sup_{k>N}\sqrt{q_kw_k}\in[0,\infty].
\end{equation}

\begin{lemma}[Gaussian tail concentration]\label{lem:GT}
Assume $\sup_{k \in \N} \frac{\lambda_k^{-r}}{w_k}<\infty$ and $A_N<\infty$.  If
\[
h:\X^{(N)}\to\mathbb R
\]
is Borel and $1$-Lipschitz,
then for the L\'evy mean~$\mssm_h$ of $h$ and every $t>0$,
\[
\mathfrak m^{(N)}(|h-\mssm_h|>t)
\le
2\exp\left(-\frac{t^2}{2A_N^2}\right).
\]
\end{lemma}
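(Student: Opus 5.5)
Our plan is to reduce the statement to the classical Gaussian concentration inequality in a standardised coordinate system. Under $\mathfrak m^{(N)}$, the tail coordinates $x_k$, $k>N$, are independent centred Gaussians with variances $q_k$. We therefore write $x_k=\sqrt{q_k}\,z_k$, where $z=(z_k)_{k>N}$ has law $\gamma^{\otimes\infty}$, the standard product Gaussian on $\R^\infty$. This gives a measure-preserving map
\[
S:(\R^\infty,\gamma^{\otimes\infty})\to(\X^{(N)},\mathfrak m^{(N)}),\qquad z\mapsto\sum_{k>N}\sqrt{q_k}\,z_ke_k,
\]
which is defined $\gamma^{\otimes\infty}$-a.e. by the convergence argument after~\eqref{e:GRV}. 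The key metric observation is that
\[
\sfd^{(N)}_w(S z,S z')^2=\sum_{k>N}w_kq_k(z_k-z_k')^2\le A_N^2\,\ell_2(z,z')^2 .
\]
In particular $S z-S z'\in H_w$ whenever $z-z'\in\ell_2$. Consequently $g:=A_N^{-1}\,h\circ S$ is $1$-Lipschitz with respect to the Cameron--Martin extended distance $\ell_2$ on $\R^\infty$, at least on a full-measure set; if $h$ is only a.e.~$1$-Lipschitz, we first replace it by a McShane extension (\Cref{lemma: McShane}). The case $A_N=0$ is handled separately or by a limiting argument: then $h\circ S$ is a.e.\ constant on Cameron--Martin cosets.

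Next we apply the Gaussian concentration inequality for functions that are $1$-Lipschitz along the Cameron--Martin space. We would prove it by finite-dimensional approximation. Set $g_n(z):=\mathbb E[g\mid z_{N+1},\dots,z_{N+n}]$. Each $g_n$ is $1$-Lipschitz on $(\R^n,|\cdot|)$: for fixed remaining coordinates, shifting the first $n$ coordinates is a Cameron--Martin shift, and conditional expectation preserves the Lipschitz bound. Martingale convergence gives $g_n\to g$ a.e. The Tsirelson--Ibragimov--Sudakov inequality for the standard Gaussian on $\R^n$ yields
\[
\gamma^{\otimes n}(|g_n-\mssm_{g_n}|>s)\le 2e^{-s^2/2}
\]
for the median $\mssm_{g_n}$; the usual statement is a one-sided bound $e^{-s^2/2}$ for deviations above and below a median, and the two sides add to the factor $2$. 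Passing to the limit $n\to\infty$, and using that medians of $g_n$ converge to medians of $g$ (up to subsequences, with a.e.\ convergence and an open/closed event argument for $s$ fixed), we obtain the same bound for $g$ with any of its medians, in particular the Lévy mean.

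Finally, we rescale. Since $S$ is measure-preserving, $\mssm_{h\circ S}=\mssm_h$ and
\[
\mathfrak m^{(N)}(|h-\mssm_h|>t)=\gamma^{\otimes\infty}\bigl(|g-\mssm_g|>t/A_N\bigr)\le 2e^{-t^2/(2A_N^2)}.
\]
We expect the main obstacle to be the infinite-dimensional step: checking that the conditional expectations $g_n$ are genuinely $1$-Lipschitz, given that $h$ is only a.e.~$1$-Lipschitz for an extended distance, and that the median bound survives the limit. An alternative that avoids this is to truncate $h$ to the first $n$ tail coordinates via $h_n(x)=h(x_{N+1},\dots,x_{N+n},0,\dots)$, mimicking \Cref{proposition: LAP}, but that route requires $\tau$-continuity. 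We would therefore first prove the claim for continuous $h$ and then invoke the density statement \Cref{cor: density of continuous functions} in $\L^0$, together with the stability of Lévy means under $\L^0$ convergence up to boundary atoms.
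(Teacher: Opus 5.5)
Your reduction is the same as the paper's. The paper notes that $\|u\|_{w,(N)}\le A_N\|u\|_{K^{(N)}}$ on the Cameron--Martin space $K^{(N)}$ of $\mathfrak m^{(N)}$, so $h$ is $A_N$-Lipschitz along Cameron--Martin shifts, and then simply invokes the Gaussian isoperimetric inequality on abstract Wiener spaces (Bogachev's Theorem~4.5.6). Your map $S$ and the estimate $\sfd^{(N)}_w(Sz,Sz')\le A_N\,\ell_2(z,z')$ are exactly this observation in standardised coordinates. Also, $A_N>0$ automatically because $q_k,w_k>0$, so the degenerate case you set aside never occurs.

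Where you differ is that you reprove the infinite-dimensional inequality by finite-dimensional approximation, and as written the limit step does not give the stated conclusion. Passing to the limit in $\gamma^{\otimes n}(|g_n-m_n|>s)\le 2e^{-s^2/2}$ only yields the bound around a subsequential limit $m$ of the medians $m_n$ of $g_n$. Such an $m$ is a median of $g$, but nothing you say shows it is the L\'evy mean $\mssm_g$. A bound around one median also does not transfer to another if the median interval of $g$ is non-degenerate, so ``with any of its medians'' is unjustified.

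The repair is to pass to the limit in the isoperimetric (enlargement) form rather than in the median-centred bound. Let $\Psi$ be the standard normal distribution function. Every $1$-Lipschitz $F$ on $(\R^n,|\cdot|)$ satisfies $\gamma^{\otimes n}(F\le a+s)\ge\Psi\bigl(\Psi^{-1}(\gamma^{\otimes n}(F\le a))+s\bigr)$ for all $a\in\R$ and $s>0$. This survives convergence in probability at every fixed level $a$: replace $a$ by $a+\delta$, let $n\to\infty$, then let $\delta\downarrow0$ using right-continuity of the distribution function of $g$. Apply it to $g$ at any $a$ with $\gamma^{\otimes\infty}(g\le a)\ge\frac12$, and to $-g$ at any $a$ with $\gamma^{\otimes\infty}(g\ge a)\ge\frac12$. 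This gives the tail bound around every median, in particular around $\mssm_g$, and it also shows that the median of $g$ is unique.

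There is a second, smaller omission. Forming $\mathbb E[g\mid z_{N+1},\dots,z_{N+n}]$ requires $g\in\L^1$, which you do not know before proving concentration. Truncate first to $(g\wedge M)\vee(-M)$, which is still $1$-Lipschitz for $\ell_2$ and has the same sublevel sets $\{\cdot\le a\}$ for $|a|<M$, and let $M\to\infty$ at the end. With these two fixes, or by simply citing the infinite-dimensional theorem as the paper does, your argument is complete.
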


\begin{proof}
The Cameron--Martin space of the tail Gaussian measure \(\mathfrak m^{(N)}\) is the following Hilbert space
\[
K^{(N)}
=
\left\{
u=\sum_{k>N}u_ke_k:
\sum_{k>N}\frac{u_k^2}{q_k}<\infty
\right\},
\qquad
\|u\|_{K^{(N)}}^2
=
\sum_{k>N}\frac{u_k^2}{q_k}.
\]
For \(u\in K^{(N)}\), the assumption \(A_N<\infty\) gives
\[
\|u\|_{w,(N)}^2
=
\sum_{k>N}w_ku_k^2
\le
A_N^2
\sum_{k>N}\frac{u_k^2}{q_k}
=
A_N^2\|u\|_{K^{(N)}}^2.
\]
Hence, since \(h\) is \(1\)-Lipschitz with respect to \(\mathsf d_w^{(N)}\),
\[
|h(x+u)-h(x)|
\le
A_N\|u\|_{K^{(N)}}
\]
for Cameron--Martin shifts \(u\in K^{(N)}\). Thus \(h\) is \(A_N\)-Lipschitz along the Cameron--Martin directions of the tail abstract Wiener space. The Gaussian isoperimetric inequality for the abstract Wiener space~(see, e.g., \cite[Theorem 4.5.6]{Bog98}) therefore yields
\[
\mathfrak m^{(N)}(|h-\mssm_h|>t)
\le
2\exp\left(-\frac{t^2}{2A_N^2}\right).
\]
\end{proof}

\begin{theorem} \label{t:GT}
Let $(\lambda_k)_{k \in \N}$,  $(q_k)_{k \in \N}$ and $(w_k)_{k \in \N}$ be positive numbers as above, and $H^{-r}$, $\mm$ and $\sfd_w$ be the associated Sobolev space with order $-r$, the Gaussian measure and the Sobolev-type extended distance as given above. 
Fix $r>0$ such that 
$$\sum_{k = 1}^\infty q_k\lambda_k^{-r}<+\infty, \qquad \sup_{k \in \N} \frac{\lambda_k^{-r}}{w_k}<\infty$$ and let $a_k:=\sqrt{q_k w_k}$. 
Then the following hold:
\begin{enumerate}
    \item \label{GT0} $(\X,\tau_{H^{-r}},\mathsf d_w,\mathfrak m)$ is an extended topological-metric-measure space.
    \item \label{GT1} $\X$ is concentrated $($i.e., $\mathcal L_1(\X)$ is compact$)$ if and only if 
    $$a_k \xrightarrow{k \to \infty} 0;$$
    \item \label{GT2} $\sfd_w(x,y)<+\infty$ for $\mm^{\otimes 2}$-a.e.~$(x,y)$ if and only if 
    $$\sum_{k=1}^\infty a_k^2<+\infty;$$
    \item \label{GT3} $\sfd_w(x,y)=+\infty$ for $\mm^{\otimes 2}$-a.e.~$(x,y)$ if and only if 
    $$\sum_{k=1}^\infty a_k^2=+\infty.$$
\end{enumerate}
\end{theorem}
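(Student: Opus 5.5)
We treat the four items in order, working throughout in coordinates $x_k=\langle x,e_k\rangle$. Under $\mm$ these are independent centred Gaussians with variance $q_k$. The extended distance is
\[
\sfd_w(x,y)^2=\sum_k w_k(x_k-y_k)^2 .
\]

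\textbf{Item \ref{GT0}.} The aim is to show that $\sfd_w$ is $\tau_{H^{-r}}$-recovered by coordinate-type continuous Lipschitz functions. Consider
\[
f_{N,c}(x)=\sum_{k\le N} c_k\sqrt{w_k}\,x_k,\qquad |c|_{\ell^2}\le 1 .
\]
Each $f_{N,c}$ is continuous on $H^{-r}$, since it is a finite linear combination of coordinates. It is $1$-Lipschitz for $\sfd_w$ by Cauchy--Schwarz. Taking the supremum over $N$ and over unit $c\in\Q^N$ recovers $\sfd_w$ exactly, including the value $+\infty$.

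To see that these functions generate $\tau_{H^{-r}}$, use the norm $\|x\|_{H^{-r}}^2=\sum\lambda_k^{-r}x_k^2$. It is a countable supremum of continuous functions built from coordinates and truncations. Alternatively, one can apply the embedding bound $\|\cdot\|_{H^{-r}}\le \sqrt C\,\sfd_w$ to truncated versions $\bigl(\sum_{k\le N}\lambda_k^{-r}x_k^2\bigr)^{1/2}$, which after scaling are $\sfd_w$-Lipschitz and continuous. Polishness of $H^{-r}$ is clear. I would check the generation of the topology by noting that the coordinate functionals together with the functions $x\mapsto \|x-a\|_{H^{-r}}\wedge 1$ (rescaled by $\sqrt C$ to be $1$-Lipschitz) suffice.

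\textbf{Items \ref{GT2}--\ref{GT3}.} These are a zero-one law. Under $\mm^{\otimes2}$ the differences $x_k-y_k$ are independent $N(0,2q_k)$, so
\[
\sfd_w^2=\sum_k 2a_k^2\xi_k^2 \quad\text{in law, with } \xi_k \text{ i.i.d. standard.}
\]
If $\sum a_k^2<\infty$, the expectation is finite and the series converges a.s. If $\sum a_k^2=\infty$, the event $\{\sum a_k^2\xi_k^2<\infty\}$ is a tail event, so Kolmogorov's zero-one law gives probability $0$ or $1$. Kolmogorov's three-series theorem (or the Laplace transform $\prod(1+4a_k^2)^{-1/2}$) then shows it has probability $0$. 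This yields both directions of \ref{GT2} and \ref{GT3}.

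\textbf{Item \ref{GT1}.} This is the main part. For sufficiency, assume $a_k\to0$, so $A_N\to0$. The plan is to verify the mm-factor approximation, i.e.\ Proposition~\ref{thm:equiv}, by repeating the proof of Proposition~\ref{p:CPC}. By Corollary~\ref{cor: density of continuous functions} it suffices to treat $f\in\Lip_1(\X,\tau,\sfd_w)$. Take $\pi_\e$ to be the projection onto $(\X_{(N)},\sfd_{w,(N)})$, which is $1$-Lipschitz, and let $g(u)=\mssm_{h_u}$ with $h_u(\xi)=f(u+\xi)$. Lemma~\ref{lem:GT} gives $\mm^{(N)}(|h_u-g(u)|>\e)\le 2e^{-\e^2/2A_N^2}$. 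Choosing $N$ so that this is at most $\e$ and then integrating over $u$ gives \eqref{eq:Aeps}.

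For necessity, suppose $a_{k_j}\ge\delta>0$ along a subsequence. Set
\[
U_j(x)=\frac{x_{k_j}}{\sqrt{w_{k_j}}}\wedge M \vee(-M).
\]
This function is $1$-Lipschitz for $\sfd_w$. The variables $U_j$ are independent, and each has law a clipped $N(0,a_{k_j}^2)$ distribution. They are not identically distributed, so Lemma~\ref{lemma: no convergence iid} does not apply verbatim. The expected obstacle is to adapt it: the standard deviations $a_{k_j}$ may diverge, but after clipping at $M$ and passing to a subsequence, the laws converge weakly to a nondegenerate law (nondegenerate because $a_{k_j}\ge\delta$). Then any $\L^0$-limit modulo constants would be independent of itself, hence deterministic. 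Its law would nevertheless be a nondegenerate weak limit, which is a contradiction. The shift-by-constants case is handled exactly as in Lemma~\ref{lemma: no convergence iid}, using tightness of the clipped laws. Hence $\mathcal L_1(\X)$ is not compact.
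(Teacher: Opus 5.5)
Your proposal is correct and follows essentially the same route as the paper. For item~1 you use finite-dimensional sections of $\mathsf d_w$ together with $\|\cdot\|_{H^{-r}}\le\sqrt{C}\,\mathsf d_w$; for items~3--4 the expectation/Laplace-transform dichotomy; and for sufficiency in item~2 \Cref{p:CPC} combined with \Cref{lem:GT}.

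Two small points on the necessity part of item~2:
\begin{itemize}
\item The $1$-Lipschitz coordinate observable is $\sqrt{w_{k_j}}\,x_{k_j}$, not $x_{k_j}/\sqrt{w_{k_j}}$. The latter is only $w_{k_j}^{-1}$-Lipschitz and has variance $q_{k_j}/w_{k_j}$, which contradicts the law you state.
\item The clipping and the adapted lemma are unnecessary. The paper takes $U_j=\frac{c}{a_{k_j}}\sqrt{w_{k_j}}\,x_{k_j}$, which stays $1$-Lipschitz because $c/a_{k_j}\le 1$. This makes the $U_j$ i.i.d.\ $N(0,c^2)$, so \Cref{lemma: no convergence iid} applies verbatim. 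Your clipped version also works.
\end{itemize}
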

\begin{proof}
{\it Proof of~\ref{GT0}}. 
The space $\X=H^{-r}$ is Polish and $(H_w, \sfd_{w})$ is complete. 
Symmetry and separation are immediate.
If $\mathsf d_w(x,y),\mathsf d_w(y,z)<\infty$, then
$x-y,y-z\in H_w$, hence $x-z\in H_w$, and the triangle inequality follows
from the Hilbert norm $\|\cdot\|_w$.  If one of the two distances on the
right is infinite, the triangle inequality is automatic.
For every $N$ define
\[
S_N(x,y)
:=
\sum_{k=1}^Nw_k
\left|
 x_k-y_k
\right|^2.
\]
The coordinate functional $x\mapsto x_k$ is continuous on
$H^{-r}$, so $(S_N)^{1/2}$ is continuous on $\X\times \X$ and, obviously, 1-Lipschitz.
Moreover,
\[
\mathsf d_w(x,y)^2=\sup_{N\in \N}S_N(x,y).
\]
 
In particular, $\sfd_w$ is $\tau_{H^{-r}}$-recovered (recall \Cref{d:PST}).

Let now $C=\sup_{k \in \N} \frac{\lambda_k^{-r}}{w_k}<\infty$.
By an easy estimate, it holds 
$$\|\cdot\|_{H^{-r}}\leq \sqrt{C} \|\cdot \|_w.$$
In particular, $\Lip_1(\X, \|\cdot\|_{H^{-r}})\subset \Lip(\X, \tau_{H^{-r}}, \sfd_w)$.
This implies that $\tau_{H^{-r}}$ is the initial topology of $\Lip(\X, \tau_{H^{-r}}, \sfd_w)$.
This proves that $(\X, \tau_{H^{-r}}, \sfd_w, \mm)$ is an extended topological-metric-measure space in the sense of \Cref{d:ETMS}.

{\it Proof of \ref{GT1}.} Taking $A_N:=\sup_{k>N}a_k$, the sufficiency is a consequence of   \Cref{p:CPC} combined with \Cref{lem:GT}. 

We now prove the necessity. Assume $a_k\not\to0$.  Then there exist $c>0$ and a sequence of distinct indices $j\mapsto k_j$ such
that
\[
a_{k_j}\ge c\qquad\forall j.
\]
Define $U_j=\frac{c}{a_{k_j}}\sqrt{w_{k_j}} x_{k_j}$. Recalling~\eqref{e:GRV}, $j \mapsto U_j$ is independent and identically distributed as the centred Gaussian random variable with variance~$c^2$ under $\mm$. Furthermore, $\Lip_{\sfd_w}(U_j) \le \frac{c}{a_{k_j}} \le 1$, thus $U_j$ is $1$-Lipschitz with respect to~$\sfd_w$. 
By~\Cref{lemma: no convergence iid}, therefore, $j\mapsto U_j$ admits no converging subsequence in $L^0/\R$, thus, $\mathcal L_1(\X)$ is not compact, i.e., $\X$ is not concentrated. 

{\it Proof of \ref{GT2} and \ref{GT3}.}  Recalling the expression of our Gaussian random variables in \eqref{e:GRV}, it is equivalent to show that, for  any two independent Gaussian random variables $\Phi$ and $\tilde\Phi$ having the law $\mm$, we have $\sfd_w(\Phi, \tilde{\Phi})<+\infty$ almost surely in $(\Omega, \mathbb P)$. By a simple computation, 
\[
\Phi-\widetilde\Phi
=
\sum_{k=1}^\infty\sqrt{q_k}\,(\xi_k-\widetilde\xi_k)e_k.
\]
Thus
\begin{align}\label{e:ISD}
\mathsf d_w(\Phi,\widetilde\Phi)^2
=
\sum_{k=1}^{\infty}q_kw_k(\xi_k-\widetilde\xi_k)^2.
\end{align}
The expectation of this nonnegative random
variable is
\[
2\sum_{k=1}^\infty a_k^2,
\]
which is finite if $\sum_{k=1}^\infty a_k^2<\infty$.  In this case,  the sum~\eqref{e:ISD} is finite almost surely. 

Conversely, assume $\sum_{k=1}^\infty a_k^2=\infty$.  Choose $c_0>0$ such that
\[
p:=\mathbb P((\xi_1-\widetilde\xi_1)^2\ge c_0)>0.
\]
Let
\[
B_k:=\mathbf 1_{\{(\xi_k-\widetilde\xi_k)^2\ge c_0\}}.
\]
The random variables~$B_k$ are independent Bernoulli random variable with success probability $p$, and
\[
\mathsf d_w(\Phi,\widetilde\Phi)^2
\ge
c_0\sum_{k=1}^\infty a_k^2B_k.
\]
This implies that the right-hand side is infinite almost
surely. Indeed, to see this, letting $S=\sum_{k=1}^\infty a_k^2B_k$, taking the moment expectation yields 
$$\mathbb E[e^{-S}]=\prod_{k=1}^\infty (1-p+pe^{-a_k^2}) \le \exp\Bigl(-p\sum_{k=1}^\infty (1-e^{-a_k^2})\Bigr).$$
Using $1-e^{-x} \ge c \min\{x, 1\}$ $(x \ge 0)$ with an universal constant $c>0$ and the hypothesis $\sum_{k=1}^\infty a_k^2 =+\infty$, we conclude that the LHS equals $0$. Thus $S=+\infty$ almost surely, which concludes $\mathsf d_w(\Phi,\widetilde\Phi)=+\infty$ almost surely.  
\end{proof}

\begin{corollary}
    Suppose that  the assumptions of \Cref{t:GT} hold. Then
    \begin{itemize}
        \item if $\sum_{k=1}^\infty a_k^2<+\infty$, then $(\X,\tau_{H^{-r}},\mathsf d_w,\mathfrak m)$ is emm-isomorphic to the mm-space~$(H_w, \mathsf d_w,\mathfrak m|_{H_w})$.
        \item if $\sum_{k=1}^\infty a_k^2=+\infty$, then $(\X,\tau_{H^{-r}},\mathsf d_w,\mathfrak m)$ is not emm-isomorphic to any mm-space, i.e., truly extended. 
        In particular, if also $a_n\to 0$, then 
        $$(\X,\tau_{H^{-r}},\mathsf d_w,\mathfrak m) \in \partial\mathfrak X_{\sf conc}.$$
    \end{itemize}
\end{corollary}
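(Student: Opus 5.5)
The plan is to treat the two bullets of the corollary separately, using \Cref{t:GT} together with \Cref{p:FDP} and \Cref{p:CEM}.

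\emph{First bullet.} Assume $\sum_k a_k^2<+\infty$. The first task is to show that $\mm(x+H_w)=1$ for $\mm$-a.e.\ $x$, so that a single finite-distance component carries full measure.

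\begin{itemize}
\item By \ref{GT2} of \Cref{t:GT}, $\sfd_w<+\infty$ holds $\mm^{\otimes 2}$-a.e.
\item Fubini then gives a point $x_0$ such that $\mm(\{y:\ y-x_0\in H_w\})=1$.
\item Since $\mm$ is centred and we may translate, we need to replace $x_0+H_w$ by $H_w$ itself. Note that $\mm(H_w)=\mathbb P(\Phi\in H_w)$, and $\mathbb E\|\Phi\|_w^2=\sum_k q_kw_k<\infty$. Hence $\mm(H_w)=1$ directly, with no translation needed.
\end{itemize}

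Next I check that $(H_w,\sfd_w)$ is a complete separable metric space. It is a separable Hilbert space because $(\sqrt{w_k}^{-1}e_k)$ is an orthonormal basis. I also need $\mm|_{H_w}$ to be Borel for the $\|\cdot\|_w$-topology. For this I argue as in the proof of \Cref{p:FDP}: $\sfd_w$ is $\tau$-l.s.c., so metric balls are $\tau$-Borel, and separability then gives $\mathscr B_{\sfd_w}(H_w)\subset\mathscr B_\tau(H_w)$.

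The identity map from $H_w\subset\X$ to $(H_w,\sfd_w,\mm|_{H_w})$ is measure-preserving and isometric on the full-measure set $H_w$. This is exactly an emm-isomorphism in the sense of \Cref{d:EMI}. Alternatively, one can invoke \Cref{p:FDP} with $\Z=H_w$ and $C_0=H_w$.

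\emph{Second bullet.} Assume $\sum_k a_k^2=+\infty$. By \ref{GT3} of \Cref{t:GT}, $\sfd_w=+\infty$ holds $\mm^{\otimes2}$-a.e.

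Suppose for contradiction that $\varphi:\X\to\Y$ is an emm-isomorphism onto an mm-space $\Y$, isometric on a full-measure set $A$.

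\begin{itemize}
\item For $\mm^{\otimes 2}$-a.e.\ $(x,y)\in A\times A$ we get $\sfd_\Y(\varphi(x),\varphi(y))=+\infty$.
\item But $\sfd_\Y$ is a finite metric, so this is impossible unless $\mm^{\otimes2}(A\times A)=0$.
\item Yet $\mm^{\otimes 2}(A\times A)=1$, which is a contradiction.
\end{itemize}

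Hence $\X$ is truly extended.

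If moreover $a_k\to0$, then \ref{GT1} of \Cref{t:GT} shows that $\X$ is concentrated. Therefore $[\X]\in\mathfrak X_{\sf conc}\setminus\mathcal X=\partial\mathfrak X_{\sf conc}$.

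The main obstacle is the measurability and Borel-structure check in the first bullet. That is where the decoupling of $\tau$ and $\sfd_w$ enters, and I would handle it exactly as in \Cref{p:FDP}.
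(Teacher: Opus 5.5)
Your proposal is correct and follows essentially the paper's route: $\mm(H_w)=1$ from $\mathbb E\|\Phi\|_w^2=\sum_k a_k^2<\infty$, the inclusion $H_w\hookrightarrow H^{-r}$ (or its a.e.\ inverse) as the emm-isomorphism, and, for the second bullet, the fact that $\sfd_w=+\infty$ $\mm^{\otimes 2}$-a.e.\ cannot survive a map that is isometric on a full-measure set into a finite metric. Your Fubini step is redundant, as you note yourself. Your Borel-structure check, via lower semicontinuity of $\sfd_w$ and separability of $(H_w,\sfd_w)$, makes explicit a measurability point that the paper leaves implicit.
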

\begin{proof}
    When $\sum_{k=1}^\infty a_k^2<+\infty$, we have $\mathbb E\|\Phi\|_w^2 = \sum_{k=1}^\infty a_k^2<+\infty$, thus $\mm(H_w)=1$. Hence $\mm$ can be regarded as a probability measure on~$H_w$ by the restriction~$\mm|_{H_w}$. Therefore,  the~embedding $H_w \hookrightarrow H^{-r}$ is an emm-isomorphism between  the mm-space~$(H_w, \mathsf d_w,\mathfrak m|_{H_w})$ and  the emm-space~$(\X, \tau_{H^{-r}}, \mathsf d_w,\mathfrak m)$, which proves the first item.

    The second item is immediate, since the condition $\sfd=+\infty$ $\mm\times \mm$-a.e.~is invariant up to emm-isomorphism and is false for mm-spaces.
\end{proof}

\begin{corollary}\label{corollary: ideality gaussian}
Let
$\X=\mathbb{R}^{\infty}$ with the product topology $\tau^\infty$ and 
\[\ell_2(x-y)\coloneqq\Big(\sum_{n=1}^\infty (x_n-y_n)^2\Big)^{1/2}\in[0,\infty],
\]
and let 
$$\gamma^\infty := \bigotimes_{n=1}^\infty\gamma_{\sigma_n^2},$$
where $\gamma_{\sigma_n^2}$ is  the one-dimensional centred Gaussian measure with variance $\sigma_n^2$. 
Then 
\begin{enumerate}
    \item $\X=(\R^\infty, \tau^\infty,\ell_2, \gamma^\infty)$ is concentrated if and only if  $\sigma_n \to 0$ as $n \to \infty$
    \item  $\ell_2=+\infty$ $\gamma^{\infty} \otimes \gamma^{\infty}$-a.e.,~ if and only if 
    $$\sum_{n=1}^\infty \sigma_n^2=+\infty.$$
\end{enumerate}
\end{corollary}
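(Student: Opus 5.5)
The plan is to realise the product Gaussian space as a special case of the general Gaussian framework of \Cref{t:GT} and to read off both items from its conclusions. Concretely, take $H=\ell_2$ with the standard basis $(e_k)$ and $\lambda_k\equiv$ suitably chosen, e.g.\ $\lambda_k:=k$. Take $q_k:=\sigma_k^2$, $w_k:=1$, and choose $r$ large enough that $\sum_k \sigma_k^2 k^{-r}<\infty$. For instance $r=2$ works whenever $\sigma_k$ is bounded. In general one picks weights $\lambda_k$ growing fast enough: any increasing $\lambda_k\ge1$ with $\sum_k\sigma_k^2\lambda_k^{-1}<\infty$ and $r=1$.

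Then $\sup_k\lambda_k^{-r}/w_k\le 1$, so the hypotheses of \Cref{t:GT} hold. With these choices $\sfd_w$ coincides with $\ell_2$, $a_k=\sqrt{q_kw_k}=\sigma_k$, and $\mm$ is the law of $\sum_k\sigma_k\xi_ke_k$. This law is exactly $\gamma^\infty$ transported into $H^{-r}$.

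Next I would check that the inclusion $J\colon H^{-r}\hookrightarrow\R^\infty$ given by $x\mapsto(x_k)_k$ is an emm-isomorphism onto $(\R^\infty,\tau^\infty,\ell_2,\gamma^\infty)$. The argument has three parts.
\begin{itemize}
\item $J$ is continuous, since coordinates are continuous on $H^{-r}$, hence Borel.
\item $J$ pushes $\mm$ to $\gamma^\infty$, because the coordinates are independent centred Gaussians with variances $\sigma_k^2$.
\item $\ell_2(J x,J y)=\sfd_w(x,y)$ holds for all $x,y$.
\end{itemize}
The image $J(H^{-r})$ has full $\gamma^\infty$-measure, since $\Phi\in H^{-r}$ almost surely. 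So the definition of emm-isomorphism in \Cref{d:EMI} is satisfied with $A=H^{-r}$.

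Concentration is invariant under emm-isomorphism, as is the property $\sfd=+\infty$ a.e. Therefore item 1 follows from \Cref{t:GT}~\ref{GT1}: $\X$ is concentrated iff $a_k=\sigma_k\to0$. Item 2 follows from \Cref{t:GT}~\ref{GT3}: $\ell_2=+\infty$ a.e.\ iff $\sum_k\sigma_k^2=+\infty$.

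The only delicate point I expect is the choice of $(\lambda_k)$ and $r$ when $\sigma_k$ is unbounded. In that case one has to pick $\lambda_k$ increasing to $\infty$ and dominating $\sigma_k^2$ summably, e.g.\ $\lambda_k:=\max_{j\le k}(1+\sigma_j^2)\,k^2$ with $r=1$. One must then confirm that nothing in \Cref{t:GT} used more than $\lambda_k\uparrow\infty$ together with the two displayed summability and comparison conditions. Alternatively, I could bypass \Cref{t:GT} and rerun its proof directly on $\R^\infty$. That route uses \Cref{p:CPC} together with \Cref{lem:GT} for sufficiency in item 1, the i.i.d.\ argument via \Cref{lemma: no convergence iid} with $U_j=c\,x_{k_j}/\sigma_{k_j}$ for necessity, and the Bernoulli/Laplace-transform argument for item 2.
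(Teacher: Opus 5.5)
Your proposal is correct and follows the paper's proof essentially step for step. The paper also takes $w_n=1$, $q_n=\sigma_n^2$, $r=1$ and a fast-growing $\lambda_n=2^n(1+\max_{1\le j\le n}\sigma_j^2)$, which plays the same role as your $\max_{j\le k}(1+\sigma_j^2)k^2$; it then applies \Cref{t:GT} and transfers both conclusions through the coordinate map $J\colon H^{-1}\to\R^\infty$, which is a measure-preserving isometry and hence an emm-isomorphism.
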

\begin{proof}
Take  $w_n=1$, $r=1$, $q_n=\sigma_n^2$,  and $\lambda_n=2^n(1+\max_{1 \le j \le n}\sigma_j^2)$. 
Then, 
$$\sum_{n=1}^\infty q_n \lambda_n^{-1} \le \sum_{n=1}^\infty 2^{-n}<\infty, \qquad \sup_{n \in \N} \lambda_n^{-1}<\infty.$$
Thus, by~\Cref{t:GT}, we obtain $\X=H^{-1},$ $\tau=\tau_{H^{-1}}$, $\sfd_w=\ell_2$, $\mm=\gamma^\infty$ and $a_n=\sqrt{q_nw_n}=\sigma_n$.
The map $J: H^{-1} \to \R^\infty$ defined as 
$$x=\sum_{n=1}^\infty x_ne_n \mapsto (x_n)_{n \in \N}$$
is a measure-preserving isometry, hence $(\R^\infty, \tau^\infty, \ell_2, \gamma^\infty)$ is emm-isomorphic to $(H^{-1}, \tau_{H^{-1}}, \sfd_w, \mm)$, which  concludes the claim. 
\end{proof}

\begin{corollary}
Let $\mathbb W=(W,H,\mu)$ be an abstract Wiener space with infinite-dimensional
Cameron--Martin space $H$ $($recall \Cref{d:WS}$)$. 
Then the emm-space~$\mathbb W=(W, \tau_W, \sfd_H, \mu)$ is not concentrated  and $\sfd_H=+\infty$ $\mu^{\otimes 2}$-a.e.. In particular, $\mathbb W$ is truly extended. 
\end{corollary}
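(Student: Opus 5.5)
The plan is to reduce everything to the Gaussian product case already treated in \Cref{corollary: ideality gaussian}, via the emm-isomorphism invariance of all three properties in the statement. By \Cref{example: isomorphism} (i.e.~\cite[Proposition 8.3]{SuYo25+}), any abstract Wiener space with infinite-dimensional Cameron--Martin space $H$ is emm-isomorphic to $\Gamma^\infty=(\R^\infty,\tau^\infty,\ell_2,\gamma_1^{\otimes\infty})$. Concretely, I would fix an orthonormal basis $(h_k)$ of $H$ and the measurable linear extensions $\hat h_k$ of $\langle\cdot,h_k\rangle_H$ to $W$. The map $w\mapsto(\hat h_k(w))_k$ is then measure preserving onto $\gamma_1^{\otimes\infty}$ and, on a full-measure linear subspace, isometric from $\sfd_H$ to $\ell_2$. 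I will take this identification as given, as the paper does.

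First, concentration. The paper remarks after the definition of concentrated emm-spaces that it is invariant under emm-isomorphism, since an emm-isomorphism induces an isometry between the $\mathcal L_1$ spaces. So it suffices to apply \Cref{corollary: ideality gaussian} with $\sigma_n\equiv1$. Since $\sigma_n\not\to0$, $\Gamma^\infty$ is not concentrated. A direct alternative is to note that the coordinates $x_k$ are $1$-Lipschitz for $\ell_2$, i.i.d.\ and non-deterministic, so \Cref{lemma: no convergence iid} shows $\mathcal L_1$ is not compact.

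Second, the a.e.\ infinite distance. The second item of \Cref{corollary: ideality gaussian} gives $\ell_2=+\infty$ $\gamma^{\otimes\infty}\otimes\gamma^{\otimes\infty}$-a.e., because $\sum\sigma_n^2=\infty$. This property transfers along the isomorphism $\varphi$. We have $\sfd_H(x,y)=\ell_2(\varphi(x),\varphi(y))$ for $x,y$ in a full-measure set $A$, and $A\times A$ has full $\mu^{\otimes2}$-measure. Moreover $(\varphi\times\varphi)_\#\mu^{\otimes2}=(\gamma_1^{\otimes\infty})^{\otimes2}$. Hence $\sfd_H=+\infty$ $\mu^{\otimes 2}$-a.e.

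Finally, truly extended. Suppose $\mathbb W\cong\Y$ for an mm-space $\Y$. The same transfer argument would give $\sfd_\Y=+\infty$ $\mm_\Y^{\otimes2}$-a.e., which is impossible because $\sfd_\Y$ is finite everywhere. This is the argument of the preceding corollary's second item. The only delicate step is the transfer of an $\mm^{\otimes2}$-a.e.\ statement through an emm-isomorphism. This works because isometry holds on $A\times A$, the box-type full-measure set, which is exactly why \Cref{d:EMI} is formulated with a full-measure $A$. I expect no real obstacle beyond invoking the Wiener-space/product identification.
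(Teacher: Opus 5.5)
Your proposal is correct and follows the paper's route. The paper likewise invokes \cite[Proposition 8.3]{SuYo25+} to identify $\mathbb W$ with $(\R^\infty,\tau^\infty,\ell_2,\gamma_1^{\otimes\infty})$ up to emm-isomorphism, and then reads off all three claims from \Cref{corollary: ideality gaussian} with $\sigma_n\equiv 1$. Your explicit check that compactness of $\mathcal L_1$ and the condition ``$\sfd=+\infty$ $\mm^{\otimes 2}$-a.e.'' transfer along an emm-isomorphism (using the full-measure set $A\times A$ and $(\varphi\times\varphi)_\#\mu^{\otimes 2}=(\gamma_1^{\otimes\infty})^{\otimes 2}$) only spells out what the paper leaves implicit.
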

\begin{proof}
Since every abstract Wiener space with inifinite-dimensional Cameron--Martin space~$H$ is emm-ismorphic to $\X=(\R^\infty, \tau^\infty,\ell_2, \gamma^{\otimes \infty}_{1})$ by \cite[Proposition 8.3]{SuYo25+}, the claim follows by \Cref{corollary: ideality gaussian}.
\end{proof}

\paragraph{Convergence in concentration.}
\begin{theorem}[Continuity under spectral convergence]\label{t:CT} Suppose the same assumptions in \Cref{t:GT}.
For each $n$, let $w^{(n)}:=(w_k^{(n)})_{k\in \N}$ be positive weights and let
$\mathsf d_n:=\sfd_{w^{(n)}}$ be the corresponding extended metric. 
Assume:

\begin{enumerate}[label=(\roman*)]
\item for every fixed $k$,
\[
w_k^{(n)}\longrightarrow w_k;
\]
\item (Uniform tail condition) there exists $n_0$ such that
\[
\lim_{N\to\infty}
\sup_{n\ge n_0}\sup_{k>N}\sqrt{q_kw_k^{(n)}}=0.
\]
\end{enumerate}
Then
\[
(\X,\tau_{H^{-r}},\mathsf d_n,\mathfrak m)
\xrightarrow{\sfd_{\sf conc}}
(\X,\tau_{H^{-r}},\mathsf d,\mathfrak m).
\]

\end{theorem}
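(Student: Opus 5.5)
The plan is to compare both spaces with the common finite-dimensional factor $\X_{(N)}$ and to use the observable distance through couplings. All spaces share the same underlying measure space $(\X,\mm)$, so I will use the diagonal coupling $\ppi=(id,id)_\#\mm$. This gives
\[
\sfd_{\sf conc}(\X_n,\X)\le \sfd^{\L^0(\mm),{\sf H}}\bigl(\Lip_1(\X,\mm;\sfd_n),\Lip_1(\X,\mm;\sfd)\bigr),
\]
where $\Lip_1(\X,\mm;\sfd_n)$ denotes the a.e.\ $1$-Lipschitz functions for $\sfd_n$. Both spaces are concentrated by \Cref{t:GT}. For $\X$ this uses the hypothesis that $a_k\to0$; for $\X_n$ with $n\ge n_0$ this is the uniform tail condition (ii). Since constants can be added freely, it suffices to show the following: for every $\varepsilon>0$ there is $n_1$ such that for $n\ge n_1$, every $f\in\Lip_1(\sfd_n)$ is $\L^0$-within $O(\varepsilon)$ of some $g\in\Lip_1(\sfd)$, and conversely.

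First I fix $\varepsilon$ and use (ii) to choose $N$ with $\sup_{n\ge n_0}\sup_{k>N}\sqrt{q_kw^{(n)}_k}$ and $\sup_{k>N}\sqrt{q_kw_k}$ both so small that the bound $2\exp(-t^2/2A_N^2)$ of \Cref{lem:GT} is at most $\varepsilon$ at $t=\varepsilon$. Applying the proof of \Cref{p:CPC} verbatim to the splitting $\X=\X_{(N)}\times\X^{(N)}$, with $\mm=\mm_{(N)}\otimes\mm^{(N)}$, gives the following. Every $f\in\Lip_1(\X,\tau,\sfd_n)$ satisfies $\mm(|f-g\circ\pi_N|>\varepsilon)\le\varepsilon$, where $g(u)=\mssm_{f(u,\cdot)}$ is $1$-Lipschitz on $(\X_{(N)},\sfd_{w^{(n)},(N)})$. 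By \Cref{cor: density of continuous functions} this extends to all of $\Lip_1(\X,\mm;\sfd_n)$ with $2\varepsilon$. The same statement holds for $\sfd$ with $\sfd_{w,(N)}$ and is uniform in $n\ge n_0$. This tail concentration step is where the real content lies, and I expect it to be the main obstacle. One must check that the tail $\X^{(N)}$ with $\sfd_{w^{(n)}}^{(N)}$ fits \Cref{lem:GT} uniformly in $n$, and that $\X$ is an extended topological-metric-measure space for each $w^{(n)}$ so that continuous Lipschitz functions are dense. For the latter I would invoke \Cref{t:GT}\ref{GT0}, which requires $\sup_k\lambda_k^{-r}/w_k^{(n)}<\infty$; I will assume this is part of ``the same assumptions'' for each $n$. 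Failing that, I would use McShane's extension from \Cref{lemma: McShane} instead of density.

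It then remains to compare the finite-dimensional factors. By (i), on $\R^N$ the norms $\sfd_{w^{(n)},(N)}$ and $\sfd_{w,(N)}$ differ by factors $\theta_n=\max_{k\le N}\sqrt{w_k^{(n)}/w_k}^{\pm1}\to1$. Given $g\in\Lip_1(\sfd_{w^{(n)},(N)})$, the function $g/\theta_n$ lies in $\Lip_1(\sfd_{w,(N)})$, and $g\circ\pi_N$ and $(g/\theta_n)\circ\pi_N$ are close in $\L^0$ modulo constants. To see this, normalise $g$ to have Lévy mean $0$. The family of such $g$ composed with $\pi_N$ has uniformly tight laws, since $\pi_N$ is Lipschitz into $\R^N$ and $\mm_{(N)}$ is a fixed Gaussian. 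Hence $|g-g/\theta_n|\le|1-\theta_n^{-1}|\,|g|$ is small in probability uniformly. The symmetric direction is identical. Combining the two $O(\varepsilon)$ errors with this term and the triangle inequality gives $\limsup_n\sfd_{\sf conc}(\X_n,\X)\lesssim\varepsilon$, and letting $\varepsilon\to0$ concludes the proof.
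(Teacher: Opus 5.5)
Your argument is correct. Its core is the same as the paper's: use the uniform tail condition to pick one $N$ for which \Cref{lem:GT}, combined with the Fubini/L\'evy-mean argument of \Cref{p:CPC}, approximates every observable of every $\mathsf d_n$ ($n\ge n_0$) and of $\mathsf d$ by observables of the first $N$ coordinates, uniformly in $n$.

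One small correction: $\sup_{k>N}\sqrt{q_kw_k}\to0$ is not a hypothesis of the theorem. It follows from (i) and (ii), because $\sqrt{q_kw_k}=\lim_n\sqrt{q_kw_k^{(n)}}\le\sup_{n\ge n_0}\sup_{j>N}\sqrt{q_jw_j^{(n)}}$ for $k>N$, as the paper notes. The condition $\sup_k\lambda_k^{-r}/w_k^{(n)}<\infty$ that you flag is implicitly assumed in the paper as well.

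Where you differ is the finite-dimensional comparison.
\begin{itemize}
\item The paper does not work with a single coupling. It uses the triangle inequality for $\mathsf d_{\mathsf{conc}}$ (established in the proof of \Cref{t:Vt2}) through the intermediate mm-spaces $(\X_{(N)},\mathsf d_{w^{(n)},(N)},\mathfrak m_{(N)})$ and $(\X_{(N)},\mathsf d_{w,(N)},\mathfrak m_{(N)})$. It shows these converge in the box distance: the diagonal rescaling $T_n$ is an isometric embedding of the former into the latter, and $(T_n)_\#\mathfrak m_{(N)}\rightharpoonup\mathfrak m_{(N)}$. It then invokes $\mathsf d_{\mathsf{conc}}\le\square$.
\item You keep the identity coupling throughout and compare observables directly. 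You rescale by $\theta_n$ and control $(1-\theta_n^{-1})|g|$ by the uniform tightness of L\'evy-mean-zero functions that are uniformly Lipschitz for the fixed norm $\mathsf d_{w,(N)}$ under the fixed Gaussian $\mathfrak m_{(N)}$. That uniform Lipschitz bound, rather than the Lipschitz property of $\pi_N$, is what actually gives the tightness.
\end{itemize}

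The paper's route is more modular: it avoids any normalisation or tightness estimate by reducing to standard box-convergence facts. Yours is more self-contained. It needs no triangle inequality for $\mathsf d_{\mathsf{conc}}$ on $\mathfrak X_{\mathsf{conc}}$, and it shows directly that the identity maps realise the convergence, which is the content of the remark following the theorem.
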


\begin{proof} 
Fix $\varepsilon>0$.
By the uniform tail assumption, using \eqref{e:UEC} and \Cref{lem:GT}, we can choose $N$ so large that every
$1$-Lipschitz observable for any $\mathsf d_n$ with $n\ge n_0$, and every
$1$-Lipschitz observable for $\mathsf d$, is $\varepsilon$-close in probability to a
$1$-Lipschitz function depending only on the first $N$ coordinates. 
In other words, for every~$\varepsilon$, there exists $N_0=N_0(\varepsilon)$ independent of $n$ such that for every $N \ge N_0$
$$\sfd_{\sf conc}\big((\X, \tau_{H^{-r}}, \sfd_n, \mm), (\X_{(N)}, \sfd_{w^{(n)}, (N)}, \mm_{(N)})\big)\leq \varepsilon.$$
The uniform bound in the assumptions implies also that $$\lim_{N\to\infty}\sup_{k>N}\sqrt{q_kw_k}=0.$$ 
Thus, similarly, 
$$\sfd_{\sf conc}\big((\X, \tau_{H^{-r}}, \sfd, \mm), (\X_{(N)}, \sfd_{w, (N)}, \mm_{(N)})\big)\leq \varepsilon.$$

It remains to compare the finite-dimensional metrics~$\mathsf d_{w,(N)}$ and $\mathsf d_{w^{(n)},(N)}$.  On
$\X_{(N)}$ define
\[
T_n\left(\sum_{k=1}^Nx_ke_k\right)
:=
\sum_{k=1}^N
\sqrt{\frac{w_k^{(n)}}{w_k}}\,x_ke_k.
\]
Then
\[
\mathsf d_{w,(N)}(T_nu,T_nv)=\mathsf d_{w^{(n)},(N)}(u,v).
\]
Thus, $T_n$ is an isometric embedding of $(\X_{(N)}, \sfd_{w^{(n)}, N})$ into~$(\X_{(N)}, \sfd_{w, N})$.
Moreover, by coordinatewise convergence, it holds $T_n\to id$ pointwise, thus $$(T_n)_\#\mm_{(N)}\rightharpoonup \mm_{(N)}.$$
In particular, $n\mapsto (\X_{(N)}, \sfd_{w^{(n)}, N},\mm_{(N)})$ converges in the measure Gromov-Hausdorff sense to $(\X_{(N)}, \sfd_{w, N},\mm_{(N)})$, which implies 
\begin{equation}
    \square\big((\X_{(N)}, \sfd_{w^{(n)}, (N)}, \mm_{(N)}), (\X_{(N)}, \sfd_{w, (N)}, \mm_{(N)})\big)\xrightarrow{n \to \infty} 0 \qquad \forall N \in \N.
\end{equation}
Finally, 
\begin{align}
    &\sfd_{\sf conc}\big( (\X, \tau_{H^{-r}}, \sfd_n, \mm), (\X, \tau_{H^{-r}}, \sfd, \mm)\big)
    \\
    &\leq \sfd_{\sf conc}\big((\X, \tau_{H^{-r}}, \sfd_n, \mm), (\X_{(N)}, \sfd_{w^{(n)}, (N)}, \mm_{(N)})\big)
    \\
    &\quad+\sfd_{\sf conc}\big((\X_{(N)}, \sfd_{w^{(n)}, (N)},\mm_{(N)}), (\X_{(N)}, \sfd_{w,  (N)},\mm_{(N)})\big)
    \\
    &\quad +\sfd_{\sf conc}\big((\X, \tau_{H^{-r}}, \sfd, \mm), (\X_{(N)}, \sfd_{w, (N)}, \mm_{(N)})\big)\\
    &\leq 2\varepsilon+ \square\big((\X_{(N)}, \sfd_{w^{(n)}, (N)}, \mm_{(N)}), (\X_{(N)}, \sfd_{w,  (N)}, \mm_{(N)})\big),
\end{align}
where the inequality $\sfd_{\sf conc} \le \square$ in \eqref{e:BXD} is used in the last line. 
Sending $n\to \infty$ and then $\varepsilon \to 0$ proves the convergence.
\end{proof}

\begin{remark}
    One can easily check  that the sequence $n\mapsto (\X,\tau_{H^{-r}},\mathsf d_n,\mathfrak m)$ fibrates over $(\X,\tau_{H^{-r}},\mathsf d,\mathfrak m)$ with the constant sequence of projections $n\mapsto p_n$ where $p_n\equiv id$, which induces concentration.
    The same holds for the sequence of projections $n\mapsto q_n$, where $q_n=T_n\times id_{\X^{(N)}}$ and $T_n$ are as in the proof above.
\end{remark}
\paragraph{Inverse limit construction.}
Recall $\X_{(N+1)}$ be the space of the first $N$-coordinates given in~\eqref{e:SX} and 
let $P_{N+1, N}: \X_{(N+1)} \to \X_{(N)}$ be the canonical projection $x=\sum_{k=1}^{N+1} x_k e_k \mapsto \sum_{k=1}^{N} x_k e_k$. 
\begin{corollary}
 Under the same setting as \Cref{t:GT}, the following hold:
 \begin{enumerate}
     \item $(\X_{(N)}, \sfd_{w, (N)}, \mm_{(N)})$ and $P_{N+1, N}$ form an inverse system;
     \item $\X \cong \varprojlim \X_{(N)}$ $($emm-isomorphic$)$. 
     In particular, 
     $$\overline{\mathcal P_\X}^{\square}= \overline{\bigcup_{n\in \N} \mathcal P_{\X_n}}^\square = \pi_{\Pi}\text{-}\lim_{n \to \infty} \mathcal P_{\X_n}.$$
     \item If $\X$ is concentrated (i.e., $a_k \to 0$ as $k \to \infty$), then 
     $$\X_{(N)} \xrightarrow{\sfd_{\sf conc}} \X.$$
     In particular, 
     $$\mathcal P_\X= \overline{\bigcup_{n\in \N} \mathcal P_{\X_n}}^\square = \pi_{\Pi}\text{-}\lim_{n \to \infty} \mathcal P_{\X_n}.$$
 \end{enumerate}
\end{corollary}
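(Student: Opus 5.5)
The statement has three parts, and I will prove them in order: the inverse-system property, the emm-isomorphism with the inverse limit, and the concentration convergence.

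\emph{Part (1).} I will check directly that $P_{N+1,N}$ is measure-preserving and $1$-Lipschitz. Measure preservation follows from the product structure $\mm_{(N+1)}=\mm_{(N)}\otimes(\text{law of the }(N+1)\text{-th coordinate})$, since the coordinates $\sqrt{q_k}\xi_k$ are independent. The Lipschitz bound is immediate:
\[
\sfd_{w,(N)}(P_{N+1,N}x,P_{N+1,N}y)^2=\sum_{k\le N}w_k(x_k-y_k)^2\le \sfd_{w,(N+1)}(x,y)^2 .
\]
Each $(\X_{(N)},\sfd_{w,(N)},\mm_{(N)})$ is a finite-dimensional Euclidean space with a weighted norm and a Borel probability measure, so it is an mm-space.

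\emph{Part (2).} I will define $J:\X\to\varprojlim\X_{(N)}$ by $J(x)=(\sum_{k\le N}x_ke_k)_{N\in\N}$. Its image lies in the inverse limit because the truncations are compatible with the $P_{N+1,N}$. Since the $N$-th marginal of $J_\#\mm$ is $\mm_{(N)}$, uniqueness of the Kolmogorov extension gives $J_\#\mm=\mm_\infty$. The map is an isometry everywhere:
\[
\sup_N\sfd_{w,(N)}=\Bigl(\sup_N S_N\Bigr)^{1/2}=\sfd_w,
\]
with $S_N$ as in the proof of \Cref{t:GT}\ref{GT0}. So $J$ is an emm-isomorphism, exactly as for the configuration space in \Cref{c:CFP}. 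The pyramid identities then follow from \Cref{teo: piramidi - enmms}(a) and \Cref{c:IVI}, using invariance of pyramids under emm-isomorphism.

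\emph{Part (3).} I will use $\sfd_{\sf conc}(\X,\X_N)\to0$ for a concentrated inverse limit, which is proved inside \Cref{t:Vt2}; it needs \Cref{lemma: hausdorff convergence} together with \Cref{thm: concentrated iff concentrated}. This is the step I expect to require the most care. As an alternative, I can give the bound directly from \eqref{e:UEC} and \Cref{lem:GT}. For $f\in\Lip_1(\X,\mm)$, the Lévy-mean function $g$ on $\X_{(N)}$ is $1$-Lipschitz and satisfies $\mm(|f-g\circ\pi_N|>\varepsilon_N)\le\varepsilon_N$ with $\varepsilon_N\to0$ determined by $A_N=\sup_{k>N}a_k\to0$. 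Conversely, every $h\in\Lip_1(\X_{(N)})$ pulls back exactly to $h\circ\pi_N\in\Lip_1(\X,\mm)$. Taking the parameter $\iota_{\X_{(N)}}=\pi_N\circ\iota_\X$, the Hausdorff distance in \eqref{eq: conc emm} is therefore at most $\varepsilon_N$. To apply \eqref{e:UEC} one also needs the reduction to $\tau$-continuous $f$, which \Cref{cor: density of continuous functions} provides. Finally, concentration gives $\mathcal P_\X=\overline{\mathcal P_\X}^\square$ by \Cref{prop: the pyramid is closed}, and this yields the last identity.
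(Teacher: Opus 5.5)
Your proposal is correct and follows the paper's proof. You use the same truncation map $J$, with measure preservation from agreement of the finite-dimensional marginals and the isometry from $\sup_N S_N=\sfd_w^2$, and you derive the pyramid identities from \Cref{c:IVI}. The only differences are in part (3): the paper simply invokes \Cref{t:2} (pyramid convergence from part (2) is equivalent to $\sfd_{\sf conc}$-convergence among concentrated spaces), whereas you use either the inverse-limit step inside the proof of \Cref{t:Vt2} or the direct Gaussian-tail estimate as in \Cref{t:CT}. You also cite \Cref{prop: the pyramid is closed} for $\mathcal P_\X=\overline{\mathcal P_\X}^\square$, which is a valid (and slightly more precise) citation.
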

\begin{proof}
    The first assertion is straightforward by definition of $\X_{(N)}$.
    Let $J: H^{-r} \to \varprojlim \X_{(N)}$ by 
    $$x=\sum_{k =1}^\infty x_ke_k \mapsto \Bigl(x_1e_1, x_1e_1+x_2e_2, \ldots, \sum_{k=1}^N x_ke_k, \ldots\Bigr).$$
    It is an isometry by definition, and 
    by the hypothesis of~\Cref{t:GT} the image $J(H^{-r})$ is a set of full measure with repect to the inverse limit measure~$\mm_\infty$. As $(P_{N+1, N})_\# \mm = \mm_{(N)}$ for every $N$, we have $J_\#\mm=\mm_\infty$. Therefore $J$ is an emm-isomorphism. 
    The third assertion is a direct application of \Cref{t:2}. The pyramid expressions follow by \Cref{c:IVI} and \Cref{thm: concentrated iff concentrated}.
\end{proof}
\paragraph{Fractional Gaussian fields.}
Assume that the eigenvalues satisfy the Weyl-type asymptotics
\begin{align} \label{e:WTA}
\lambda_k\asymp k^{2/d}
\end{align}
for some spectral dimension $d>0$. 
Assume now that
\[
q_k=\lambda_k^{-\alpha},
\qquad
w_k=\lambda_k^s.
\]
Then
\[
a_k^2=q_kw_k=\lambda_k^{s-\alpha}.
\]
Let $\X_s=(H^{-r}, \tau_{H^{-r}}, \sfd_{s}, \mm)$ be the corresponding emm-space for $s \in \R$ and assume $s+r \ge 0$ to have $\tau_{H^{-r}} \subset \tau_{\mssd_s}$ and $\alpha+r >d/2$ to have~$\mm(H^{-r})=1$. 
\begin{corollary}\label{cor:frac} Under the above assumption, the following hold:
\begin{enumerate}
 \item $\X_s$ is concentrated if and only if $s<\alpha$;
    \item $\sfd_s=+\infty$ $\mm^{\otimes 2}$-a.e.,~if and only if $s \ge \alpha - \frac{d}{2}$;
    \item if $s_n \to s \in (-\infty, \alpha)$ with $s_n + r \ge 0$ for sufficiently large $n$, then 
    $$\X_{s_n} \xrightarrow{\sfd_{\sf conc}} \X_s.$$
\end{enumerate}
\end{corollary}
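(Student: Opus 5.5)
The plan is to reduce all three items to \Cref{t:GT} and \Cref{t:CT}, specialised to $q_k=\lambda_k^{-\alpha}$ and $w_k=\lambda_k^s$. The first task is to check the standing hypotheses of \Cref{t:GT}.

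The summability condition $\sum_k q_k\lambda_k^{-r}=\sum_k\lambda_k^{-(\alpha+r)}<\infty$ follows from \eqref{e:WTA} together with $\alpha+r>d/2$. Indeed, $\lambda_k^{-(\alpha+r)}\asymp k^{-2(\alpha+r)/d}$, and the exponent $2(\alpha+r)/d$ exceeds $1$.

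The condition $\sup_k \lambda_k^{-r}/w_k=\sup_k\lambda_k^{-(r+s)}<\infty$ follows from $s+r\ge0$ and $\lambda_k\ge1$.

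With both hypotheses in hand, \Cref{t:GT} applies with $a_k^2=\lambda_k^{s-\alpha}$. The first two items then come out as follows.

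For item 1, \ref{GT1} of \Cref{t:GT} says $\X_s$ is concentrated iff $a_k\to0$. Since $\lambda_k\to\infty$, we have $\lambda_k^{s-\alpha}\to0$ iff $s<\alpha$.

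For item 2, \ref{GT3} says $\sfd_s=+\infty$ $\mm^{\otimes2}$-a.e.\ iff $\sum_k\lambda_k^{s-\alpha}=\infty$. By \eqref{e:WTA} this sum is comparable to $\sum_k k^{2(s-\alpha)/d}$. It diverges iff $2(s-\alpha)/d\ge-1$, that is, iff $s\ge\alpha-d/2$. This gives the stated threshold; combined with item 1 it yields the boundary characterisation announced in the introduction.

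For item 3, I would apply \Cref{t:CT} with $w^{(n)}_k=\lambda_k^{s_n}$ and $w_k=\lambda_k^{s}$. For $n$ large, $s_n+r\ge0$, so each $\X_{s_n}$ is an admissible emm-space. Coordinatewise convergence $\lambda_k^{s_n}\to\lambda_k^s$ is immediate, since $k$ is fixed and the exponent converges.

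For the uniform tail condition, fix $\delta>0$ with $s+\delta<\alpha$ and pick $n_0$ such that $s_n\le s+\delta$ for all $n\ge n_0$. Because $\lambda_k\ge1$, this gives
\[
\sup_{n\ge n_0}\sup_{k>N}\sqrt{q_kw^{(n)}_k}\le\sup_{k>N}\lambda_k^{(s+\delta-\alpha)/2},
\]
and the right-hand side tends to $0$ as $N\to\infty$ because $\lambda_k\to\infty$. Then \Cref{t:CT} gives $\X_{s_n}\xrightarrow{\sfd_{\sf conc}}\X_s$.

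The only point needing care is in item 3: \Cref{t:CT} inherits the assumptions of \Cref{t:GT} for each $w^{(n)}$. The condition $\sup_k\lambda_k^{-r}/w^{(n)}_k<\infty$ is only guaranteed once $s_n+r\ge0$, so the sequence should be truncated to those $n$, which is harmless for convergence. The summability of $q_k\lambda_k^{-r}$ does not depend on $n$.

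The main obstacle is modest. It is the comparison $\sum_k\lambda_k^{s-\alpha}<\infty \iff s<\alpha-d/2$, which requires \eqref{e:WTA} to be two-sided with constants uniform in $k$. The upper and lower comparison constants enter only as a fixed power, so the p-series test applies directly.
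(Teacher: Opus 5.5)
Your proposal is correct and follows the paper's proof: specialise \Cref{t:GT} to $a_k^2=\lambda_k^{s-\alpha}$ for items 1--2 (via the Weyl comparison and the $p$-series test), and for item 3 verify the uniform tail condition of \Cref{t:CT} by bounding $\lambda_k^{(s_n-\alpha)/2}$ with a fixed exponent $\bar s<\alpha$. Your explicit check of the standing hypotheses of \Cref{t:GT}, namely summability from $\alpha+r>d/2$ and the weight bound from $s+r\ge 0$, including truncating to $n$ with $s_n+r\ge 0$, is a small addition that the paper leaves implicit.
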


\begin{proof}
The compactness condition is
$
\lambda_k^{(s-\alpha)/2}\to0$, 
equivalent to $s<\alpha$.
By~\Cref{t:GT}, $\sfd_{s}=+\infty$ $\mm^{\otimes 2}$-a.e.~exactly when
\[
\sum_{k=1}^\infty\lambda_k^{s-\alpha}=+\infty.
\]
By the Weyl-type growth~\eqref{e:WTA},  this is equivalent to
\[
\sum_{k=1}^\infty k^{2(s-\alpha)/d}=+\infty,
\]
which holds exactly when
\[
\frac{2(s-\alpha)}d \ge -1,
\]
that is, $s \ge \alpha-\frac d2$.
If $s_n\to s<\alpha$, choose $\overline s$ with
$
s,s_n\le\overline s<\alpha
$
for all sufficiently large $n$.  Then
\[
\sup_{k>N}\lambda_k^{(s_n-\alpha)/2}
\le
\lambda_{N+1}^{(\overline s-\alpha)/2}
\longrightarrow0
\]
uniformly in $n$, and \Cref{t:CT} applies.
\end{proof}

\subsection{Examples for Gaussian fields} \label{ss:EGF} 
In this section, we discuss several examples of fractional Gaussian fields originated in probability theory. 
Let \(M\) be a compact \(d\)-dimensional Riemannian manifold, possibly with
smooth boundary, equipped with its volume measure~\(\mathrm{vol}_M\). Let \(A\) be a positive self-adjoint operator on
\(L^2(M,\mathrm{vol}_M)\) with compact resolvent.  We choose an
\(L^2(M,\mathrm{vol}_M)\)-orthonormal eigenbasis \((e_k)_{k\in \N}\) of \(A\),
\[
Ae_k=\lambda_k e_k,
\qquad
\langle e_k,e_\ell\rangle_{L^2(M,\mathrm{vol}_M)}=\delta_{k\ell},
\]
and assume the Weyl-type growth
\[
\lambda_k\asymp k^{2/d}.
\]
If \(M\) has boundary, the boundary condition is included in the choice of the
domain of \(A\).  

The extended Sobolev distance with $w_k=\lambda_k^s$ is
\[
\mathsf d_s(\phi,\psi)
=
\begin{cases}
\|\phi-\psi\|_{H^s}=\left(
\displaystyle\sum_{k=1}^{\infty}
\lambda_k^s
|\langle \phi-\psi,e_k\rangle|^2
\right)^{1/2},
&
\displaystyle
\sum_{k=1}^{\infty}
\lambda_k^s
|\langle \phi-\psi,e_k\rangle|^2<\infty,
\\[3mm]
+\infty,
&
\text{otherwise.}
\end{cases}
\]
In this setting,
\[
\mathcal L_1\text{ is compact}\iff s<\alpha,
\qquad
\mathsf d_s=+\infty\quad\mathfrak m^{\otimes2}\text{-a.e.}
\iff s\ge\alpha-\frac d2.
\]
Hence $\X$ is concentrated and truly extended (i.e., $\X \in \partial \mathfrak X_{\sf conc}$) if and only if
\[
\alpha-\frac d2\le s<\alpha.
\]
\begin{remark}[Associated diffusion/stochastic quantisation]
Here we briefly discuss a canonical Dirichlet form associated with~$(\X, \sfd_{w}, \mm)$.  Let
\[
F(x)=\varphi(x_1,\ldots,x_N),
\]
where $\varphi\in C_b^2(\mathbb R^N)$.  The dual norm to
\(
\|h\|_w^2=\sum_{k=1}^\infty w_kh_k^2
\)
is
\[
\|\alpha\|_{w,*}^2=\sum_{k=1}^\infty w_k^{-1}\alpha_k^2.
\]
Consequently the squared slope of $F$ is
\[
|DF|_w^2
=
\sum_{k=1}^Nw_k^{-1}|\partial_k\varphi|^2.
\]
The corresponding Sobolev form is
\[
\mathcal E_w(F)
=
\frac12
\int
\sum_{k=1}^Nw_k^{-1}|\partial_k\varphi|^2\,\d\mathfrak m.
\]
It is closable in $\L^2(\X, \mm)$ (see \cite[Corollary~1.7]{AlbeverioRockner1989}). By the standard integration by parts with the Gaussian law~$\mm$, the self-adjoint generator on smooth cylinder functions is explicitly 
\[
\mathcal L_wF
=
\sum_{k=1}^N
w_k^{-1}
\left(
\partial_{kk}\varphi
-\frac{x_k}{q_k}\partial_k\varphi
\right).
\]
In the fractional case $q_k=\lambda_k^{-\alpha}$ and
$w_k=\lambda_k^s$,
\[
\mathcal L_sF
=
\sum_{k=1}^N
\lambda_k^{-s}
\left(
\partial_{kk}\varphi
-\lambda_k^\alpha x_k\partial_k\varphi
\right),
\]
and the associated diffusion semigroup satisfies
\[
P_t x_k=e^{-t\lambda_k^{\alpha-s}}x_k.
\]
Thus the compact $\mathcal L_1$ regime $s<\alpha$ is exactly the regime in which
the damping rates of high modes tend to infinity.
The associated diffusion  on $\X$ to this Dirichlet form is the $\mm$-reversible
Ornstein--Uhlenbeck evolution
\[
d\Phi_t=-A^{\alpha-s}\Phi_t\,dt
+\sqrt2\,A^{-s/2}\,dW_t,
\]
whose invariant law~$\mm$ is the law of \(A^{-\alpha/2}\xi\). Equivalently, letting $\Phi_t=\sum_{k=1}^\infty X_k(t) e_k$, the coordinate-wise SDE expression is 
\[
dX_k(t)=-\lambda_k^{\alpha-s}X_k(t)\,dt
+\sqrt{2\lambda_k^{-s}}\,dW^{(k)}_t, \quad k \in \N,
\]
where $W^{(1)}_t, W^{(2)}_t, \ldots$ are   independent standard Brownian motions. 
Fractional Gaussian fields and their white-noise representations can be found
in \cite{LodhiaSheffieldSunWatson2016}.  The connection between Gaussian
random fields, infinite-dimensional Ornstein--Uhlenbeck semigroups, and
symmetric Markov processes is considered in \cite{Kolsrud1988}.  The general
Dirichlet-form framework on rigged Hilbert and topological vector spaces,
including closability and construction of the associated diffusion, is
developed in
\cite{AlbeverioHoeghKrohn1977,AlbeverioRockner1989,AlbeverioRockner1990}.
The identification of such diffusions with infinite-dimensional stochastic
differential equations is studied in \cite{AlbeverioRockner1991}. For the
semigroup and mild-solution theory of linear stochastic evolution equations,
see \cite{DaPratoZabczyk2014}. 

\end{remark}
\begin{example}[Standard Brownian motion]
Let \(d=1\), \(\alpha=1\), $M=[0,1]$ and
\[
A=-\frac{d^2}{dr^2},
\qquad u(0)=0,\quad u'(1)=0.
\]
Then \(A^{-1/2}\xi\) has the law of standard Brownian motion on
\(C_0([0,1]) \subset H^{-r}([0,1])\).  The space \(\mathcal L_1\) is compact for \(s<1\), and $\sfd_w=+\infty$ $\mm^{\otimes 2}$-a.e. for 
\[
\frac12\le s.
\]
The corresponding path-space diffusion is
\[
d\omega_t=-A^{1-s}\omega_t\,dt
+\sqrt2\,A^{-s/2}\,dW_t.
\]
At \(s=1\) this becomes the classical Ornstein--Uhlenbeck process on Wiener
space. In this case, the emm-space is truly extended and the  compactness of \(\mathcal L_1\) fails.
\end{example}

\begin{example}[Brownian bridge]
Let \(d=1\), \(\alpha=1\), $M=[0,1]$ and
\[
A=-\frac{d^2}{dr^2},
\qquad u(0)=u(1)=0.
\]
Then \(A^{-1/2}\xi\) is a Brownian bridge.  Again,
\[
\mathcal L_1\text{ is compact for }s<1,
\qquad
\frac12\le s<1
\]
is the truly extended compact range.  The diffusion is
\[
d\omega_t=-A^{1-s}\omega_t\,dt
+\sqrt2\,A^{-s/2}\,dW_t,
\]
now with Dirichlet boundary conditions at both endpoints.
\end{example}

\begin{example}[Massive Gaussian free field]
Let \(M\) be a compact \(d\)-dimensional Riemannian manifold and
\[
A=I-\Delta_M,\qquad \Phi=A^{-1/2}\xi ,
\]
where $\Delta_M$ is the Laplace-Beltrami operator. 
The Gaussian field is realized on \(H^{-r}(M)\), with \(1+r>d/2\).  One has
\[
\mathcal L_1\text{ compact}\iff s<1,
\qquad
\mathsf d_s=+\infty\ \mathfrak m^{\otimes2}\text{-a.e.}
\iff s\ge1-\frac d2.
\]
Thus the truly extended and compact~$\mathcal L_1$ range is
\[
1-\frac d2\le s<1.
\]
The diffusion is
\[
d\Phi_t=-A^{1-s}\Phi_t\,dt
+\sqrt2\,A^{-s/2}\,dW_t.
\]
For \(s=0\), this is the stationary stochastic heat equation with invariant
law $\mm$ equal to the massive GFF.
\end{example}

\begin{example}[Spatial white noise]
Let \(M\) be a compact \(d\)-dimensional Riemannian manifold, and let
\(A\) be a positive self-adjoint second-order elliptic operator on
\(L^2(M)\) with compact resolvent. 
Take
\[
\Phi=\xi,\qquad \alpha=0.
\]
White noise is realized on \(H^{-r}(M)\) for \(r>d/2\).  Then
\[
\mathcal L_1\text{ compact}\iff s<0,
\qquad
-\frac d2\le s<0
\]
is the truly extended compact $\mathcal L_1$ range.  The diffusion is
\[
d\Phi_t=-A^{-s}\Phi_t\,dt
+\sqrt2\,A^{-s/2}\,dW_t.
\]
In dimension \(2\), this gives the range \(-1\le s<0\), related to
white-noise vorticity and enstrophy-type equilibrium measures.
\end{example}

\begin{example}[Massive membrane or massive bi-Laplacian field]
Let \(M\) be a compact \(d\)-dimensional Riemannian manifold,
\[
A=I-\Delta_M,\qquad \Phi=A^{-1}\xi ,
\]
so that \(\alpha=2\) and the covariance is \(A^{-2}\).  The observable space~$\mathcal L_1(\X)$
is compact for \(s<2\), while the truly extended range is
\[
2-\frac d2\le s.
\]
The diffusion is
\[
d\Phi_t=-A^{2-s}\Phi_t\,dt
+\sqrt2\,A^{-s/2}\,dW_t.
\]
This is the natural Gaussian dynamics preserving the membrane-field law.
\end{example}

\newpage
\begin{landscape}
\begin{table}[p]
\centering
\caption{Gaussian-field.
\(\mathcal L_1\) is compact for \(s<\alpha\), and the truly extended (i.e., $\sfd_{w}=+\infty$ $\mm^{\otimes 2}$-a.e.) if \(\alpha-\frac d2\le s\).}
\label{tab:gaussian-emm-examples}

\footnotesize
\setlength{\tabcolsep}{3.5pt}
\renewcommand{\arraystretch}{1.25}

\begin{tabularx}{\linewidth}{
@{}
>{\centering\arraybackslash}p{0.55cm}
>{\centering\arraybackslash}p{0.75cm}
>{\raggedright\arraybackslash}p{3.6cm}
>{\raggedright\arraybackslash}p{2.8cm}
>{\raggedright\arraybackslash}p{3.25cm}
>{\centering\arraybackslash}p{1.45cm}
>{\centering\arraybackslash}p{2.35cm}
>{\raggedright\arraybackslash}X
@{}}

\toprule
\multicolumn{2}{c}{Parameters}
&
\multicolumn{3}{c}{Gaussian model}
&
\multicolumn{2}{c}{Sobolev regimes}
&
\multicolumn{1}{c}{Diffusion}
\\

\cmidrule(lr){1-2}
\cmidrule(lr){3-5}
\cmidrule(lr){6-7}
\cmidrule(l){8-8}

\(d\)
&
\(\alpha\)
&
Operator \(A\) with eigenvalues~$(\lambda_k)$
&
$\mm= {\rm Law}(\Phi)$ invariant measure
&
Model name
&
\(\mathcal L_1\) compact
&
Truly extended ($\sfd_{w}=+\infty$ $\mm^{\otimes 2}$-a.e.)
&
heat flow/diffusion (stochastic quantisation)
\\
\midrule

\(1\)
&
\(1\)
&
\(\displaystyle
 A=-\frac{d^2}{dr^2}\),
\quad
\(u(0)=0,\ u'(1)=0\)
&
\(\Phi=A^{-1/2}\xi\)
&
Standard Brownian motion on \([0,1]\)
&
\(s<1\)
&
\(\displaystyle \frac12\le s\)
&
\(\displaystyle
 d\Phi_t
 =-A^{1-s}\Phi_t\,dt
 +\sqrt2\,A^{-s/2}\,dW_t
\)
\\
\addlinespace[3pt]

\(1\)
&
\(1\)
&
\(\displaystyle
 A=-\frac{d^2}{dr^2}\),
\quad
\(u(0)=u(1)=0\)
&
\(\Phi=A^{-1/2}\xi\)
&
Brownian bridge
&
\(s<1\)
&
\(\displaystyle \frac12\le s\)
&
\(\displaystyle
 d\Phi_t
 =-A^{1-s}\Phi_t\,dt
 +\sqrt2\,A^{-s/2}\,dW_t
\)
\\
\addlinespace[3pt]

\(d\)
&
\(1\)
&
\(A=I-\Delta_M\) on a compact
\(d\)-dimensional manifold \(M\)
&
\(\Phi=A^{-1/2}\xi\)
&
Massive Gaussian free field
&
\(s<1\)
&
\(\displaystyle 1-\frac d2\le s\)
&
\(\displaystyle
 d\Phi_t
 =-A^{1-s}\Phi_t\,dt
 +\sqrt2\,A^{-s/2}\,dW_t
\)
\\
\addlinespace[3pt]

\(d\)
&
\(0\)
&
A positive elliptic operator on $d$-dimensional compact manifold~$M$
&
\(\Phi=\xi\)
&
Spatial white noise
&
\(s<0\)
&
\(\displaystyle -\frac d2\le s\)
&
\(\displaystyle
 d\Phi_t
 =-A^{-s}\Phi_t\,dt
 +\sqrt2\,A^{-s/2}\,dW_t
\)
\\
\addlinespace[3pt]

\(d\)
&
\(2\)
&
\(A=I-\Delta_M\) on a compact
\(d\)-dimensional manifold \(M\)
&
\(\Phi=A^{-1}\xi\)
&
Membrane / bi-Laplacian field
&
\(s<2\)
&
\(\displaystyle 2-\frac d2\le s\)
&
\(\displaystyle
 d\Phi_t
 =-A^{2-s}\Phi_t\,dt
 +\sqrt2\,A^{-s/2}\,dW_t
\)
\\

\bottomrule
\end{tabularx}
\end{table}
\end{landscape}

\bibliographystyle{alpha} 
\bibliography{references} 
\end{document}